\documentclass[a4paper,11pt]{amsart}

\usepackage[T1]{fontenc}
\usepackage{amssymb,amsfonts,amsmath,amsthm}
\usepackage{url}
\usepackage{color}
\usepackage{setspace}
\usepackage{enumerate}
\usepackage{epsfig,graphicx,pinlabel}
\usepackage{mathrsfs}
\usepackage{tikz}

\input{xy}
\xyoption{all}
\objectmargin={3mm}

\linespread{1}

\newcounter{notes}
%



\newcommand{\ignore}[1]{}


\newtheorem{theorem}{Theorem}
\newtheorem{proposition}[theorem]{Proposition}
\newtheorem{corollary}[theorem]{Corollary}
\newtheorem{lemma}[theorem]{Lemma}
\newtheorem{claim}[theorem]{Claim}

\newtheorem{conjecture}[theorem]{Conjecture}
\newtheorem{fact}[theorem]{Fact}

\theoremstyle{definition}
\newtheorem{definition}[theorem]{Definition}
\newtheorem{remark}[theorem]{Remark}
\newtheorem{remarks}[theorem]{Remarks}

\newtheorem{notation}[theorem]{Notation}
\newtheorem{example}[theorem]{Example}

\newtheoremstyle{theoremwithref}{}{}{\itshape}{}{\bfseries}{.}{.5em}{#1 #2 #3}
\theoremstyle{theoremwithref}

\setcounter{tocdepth}{1}

\newcommand{\ie}{i.e.\ }
\newcommand{\eg}{e.g.\ }
\newcommand{\resp}{\text{resp.\ }}
\newcommand{\C}{\mathbb{C}}
\newcommand{\R}{\mathbb{R}}
\newcommand{\Z}{\mathbb{Z}}
\newcommand{\N}{\mathbb{N}}
\newcommand{\SL}{\mathrm{SL}}
\newcommand{\PSL}{\mathrm{PSL}}
\newcommand{\PGL}{\mathrm{PGL}}
\newcommand{\OO}{\mathrm{O}}
\newcommand{\PO}{\mathrm{PO}}
\newcommand{\SO}{\mathrm{SO}}

\newcommand{\PSU}{\mathrm{PSU}}

\newcommand{\Hom}{\mathrm{Hom}}

\newcommand{\Diag}{\mathrm{Diag}}
\newcommand{\HH}{\mathbb{H}}
\newcommand{\Isom}{\mathrm{Isom}}

\newcommand{\Lip}{\mathrm{Lip}}

\newcommand{\arccosh}{\mathrm{arccosh}}
\newcommand{\arcsinh}{\mathrm{arcsinh}}
\newcommand{\A}{\mathcal{A}}
\newcommand{\D}{\mathcal{D}}
\newcommand{\T}{\mathcal{T}}
\newcommand{\F}{\mathcal{F}}
\newcommand{\LL}{\mathscr{L}}
\newcommand{\wl}{\mathtt{wl}}
\newcommand{\Conv}[1]{\mathrm{Conv}(#1)}
\newcommand*\circled[1]{\tikz[baseline=(char.base)]{\node[shape=circle,draw,inner sep=1pt] (char) {#1};}}

\title[Maximally stretched laminations]{Maximally stretched laminations on geometrically finite hyperbolic manifolds}

\author{Fran\c{c}ois Gu\'eritaud}
\author{Fanny Kassel}
\address{CNRS and Universit\'e Lille 1, Laboratoire Paul Painlev\'e, 59655 Villeneuve d'Ascq Cedex, France}
\email{\newline francois.gueritaud@math.univ-lille1.fr \newline fanny.kassel@math.univ-lille1.fr}

\thanks{The authors were partially supported by the Agence Nationale de la Recherche under the grants ANR-11-BS01-013: DiscGroup (``Facettes des groupes discrets'') and ANR-09-BLAN-0116-01: ETTT (``Extension des th\'eories de Teichm\"uller--Thurston''), and through the Labex CEMPI (ANR-11-LABX-0007-01)}

\begin{document}

\numberwithin{theorem}{section}
\numberwithin{equation}{section}

\begin{abstract}
Let $\Gamma_0$ be a discrete group.
For a pair $(j,\rho)$ of representations of $\Gamma_0$ into $\PO(n,1)=\Isom(\HH^n)$ with $j$ geometrically finite, we study the set of $(j,\rho)$-equivariant Lipschitz maps from the real hyperbolic space~$\HH^n$ to itself that have minimal Lipschitz constant.
Our main result is the existence of a geodesic lamination that is ``maximally stretched'' by all such maps when the minimal constant is at least~$1$.
As an application, we generalize two-dimensional results and constructions of Thurston and extend his asymmetric metric on Teichm\"uller space to a geometrically finite setting and to higher dimension.
Another application is to actions of discrete subgroups $\Gamma$ of $\PO(n,1)\times\PO(n,1)$ on $\PO(n,1)$ by right and left multiplication: we give a double properness criterion for such actions, and prove that for a large class of groups~$\Gamma$ the action remains properly discontinuous after any small deformation of $\Gamma$ inside $\PO(n,1)\times\PO(n,1)$.
\end{abstract}

\maketitle

\tableofcontents


\section{Introduction}\label{sec:intro}

For $n\geq 2$, let $G$ be the group $\PO(n,1)=\OO(n,1)/\{ \pm 1\}$ of isometries of the real hyperbolic space~$\HH^n$.
In this paper we consider pairs $(j,\rho)$~of~represen\-tations of a discrete group $\Gamma_0$ into~$G$ with $j$ injective, discrete, and~$j(\Gamma_0)\backslash\HH^n$ geometrically finite, and we investigate the set of $(j,\rho)$-equivariant Lipschitz maps $\HH^n\rightarrow\HH^n$ with minimal Lipschitz constant.
We develop applications, both to properly discontinuous actions on~$G$ and to the geometry of some generalized Teichm\"uller spaces (via a generalization of Thurston's asymmetric metric).
Some of our main results,~in~parti\-cular Theorems \ref{thm:adm}~and \ref{thm:deform}, Corollary~\ref{cor:CC'}, and Theorem~\ref{thm:sharp}, were initially obtained in \cite[Chap.\,5]{kasPhD} in the case $n=2$ with $j$ convex cocompact.

\subsection{Equivariant maps of~$\HH^n$ with minimal Lipschitz constant}

Let $\Gamma_0$ be a discrete group.
We say that a representation $j\in\Hom(\Gamma_0,G)$ of $\Gamma_0$ in $G=\PO(n,1)$ is \emph{convex cocompact} (\resp \emph{geometrically finite}) if it is injective with a discrete image $j(\Gamma_0)\subset G$ and if the convex core of the hyperbolic orbifold $M:=j(\Gamma_0)\backslash\HH^n$ is compact (\resp has finite $m$-volume, where $m\leq n$ is its dimension).
In this case, the group~$\Gamma_0$ identifies with the (orbifold) fundamental group of~$M$.
Parabolic elements in $j(\Gamma_0)$ correspond to \emph{cusps} in~$M$; they do not exist if $j$ is convex cocompact.
We refer to Section~\ref{subsec:geo-finiteness} for full definitions.

Let $j\in\Hom(\Gamma_0,G)$ be geometrically finite and let $\rho\in\Hom(\Gamma_0,G)$ be another representation, not necessarily injective or discrete.
In this paper we examine \emph{$(j,\rho)$-equivariant} Lipschitz maps~of~$\HH^n$, \ie Lipschitz maps $f :\nolinebreak\HH^n\rightarrow\HH^n$ such that
$$f(j(\gamma)\cdot x) = \rho(\gamma)\cdot f(x)$$
for all $\gamma\in\Gamma_0$ and $x\in\HH^n$.
A constant that naturally appears is the infimum of all possible Lipschitz constants of such maps:
\begin{equation}\label{eqn:defC}
C(j,\rho) := \inf\big\{ \Lip(f)~|~f : \HH^n\rightarrow\HH^n\text{ $(j,\rho)$-equivariant}\big\}.
\end{equation}
A basic fact (Section~\ref{subsec:C<infty}) is that $C(j,\rho)<+\infty$ unless there is an obvious obstruction, namely an element $\gamma\in\Gamma_0$ with $j(\gamma)$ parabolic and $\rho(\gamma)$ hyperbolic.
Here we use the usual terminology: a nontrivial element of~$G$ is \emph{elliptic} if it fixes a point in~$\HH^n$, \emph{parabolic} if it fixes exactly one point on the boundary at infinity of~$\HH^n$, and \emph{hyperbolic} otherwise (in which case it preserves a unique geodesic line in~$\HH^n$).
To make the statements of our theorems simpler, we include the identity element of~$G$ among the elliptic elements.

We shall always assume $C(j,\rho)<+\infty$.
Then there exists a $(j,\rho)$-equiva\-riant map $f : \HH^n\rightarrow\HH^n$ with minimal constant $C(j,\rho)$, except possibly if the group $\rho(\Gamma_0)$ has a unique fixed point on the boundary at infinity $\partial_{\infty}\HH^n$ of~$\HH^n$ (see Section~\ref{subsec:Fnonempty}, as well as Sections \ref{ex:nonreductive1} and~\ref{ex:nonreductive2} for examples).

We fix once and for all a geometrically finite representation $j_0\in\Hom(\Gamma_0,G)$.
Dealing with cusps is a substantial aspect of the paper; we make the following definitions, which are relevant only when $j$ is \emph{not} convex cocompact.

\begin{definition}\label{def:typedet}
We say that $j\in\Hom(\Gamma_0,G)$ has the \emph{cusp type} of $j_0$ if for any $\gamma\in\Gamma_0$, the element $j(\gamma)$ is parabolic if and only if $j_0(\gamma)$ is parabolic.
We say that $\rho\in\Hom(\Gamma_0,G)$ is \emph{cusp-deteriorating} with respect to $j$ (or that the pair $(j,\rho)$ is cusp-deteriorating) if for any $\gamma\in\Gamma_0$ with $j(\gamma)$ parabolic, the element $\rho(\gamma)$ is elliptic.
\end{definition}

In the sequel, we will always assume that $j$ has the cusp type of the fixed representation~$j_0$.
Therefore, we will often just use the phrase ``$\rho$ cusp-deteriorating'', leaving $j$ implied.
Of course, this is an empty condition when $j$ is convex cocompact.

\subsection{The stretch locus}\label{subsec:introstretchlocus}

The main point of the paper is to initiate a systematic study of the \emph{stretch locus} of equivariant maps of~$\HH^n$ with minimal Lipschitz constant.

\begin{definition}\label{def:stretchlocus}
Let $f : \HH^n\rightarrow\HH^n$ be a $(j,\rho)$-equivariant map realizing the minimal Lipschitz constant $C(j,\rho)$.
The \emph{stretch locus} $E_f$ of~$f$ is the ($j(\Gamma_0)$-invariant) set of points $x\in\HH^n$ such that the restriction of~$f$ to any neighborhood of~$x$ in~$\HH^n$ has Lipschitz constant $C(j,\rho)$ (and no smaller).
\end{definition}

It follows from our study that the geometry of the stretch locus depends on the value of $C(j,\rho)$.
We prove the following.

\begin{theorem}\label{thm:lamin}
Let $(j,\rho)\in\Hom(\Gamma_0,G)^2$ be a pair of representations with $j$ geometrically finite and $C(j,\rho)<+\infty$.
Assume that there exists a $(j,\rho)$-equivariant map $f : \HH^n\rightarrow\HH^n$ with minimal Lipschitz constant $C(j,\rho)$, and let $E(j,\rho)$ be the intersection of the stretch loci of all such maps.
Then
\begin{itemize}
  \item $E(j,\rho)$ is nonempty, except possibly if $C(j,\rho)=\nolinebreak 1$ and $\rho$ is \emph{not} cusp-deteriorating (see Section~\ref{ex:nondeteriorating} for an example);
  \item there exists an ``optimal'' $(j,\rho)$-equivariant, $C(j,\rho)$-Lipschitz map\linebreak $f_0 : \HH^n\rightarrow\HH^n$ whose stretch locus is exactly $E(j,\rho)$;
  \item if $C(j,\rho)>1$ (\resp if $C(j,\rho)=1$ and $\rho$ is cusp-deteriorating), then $E(j,\rho)$ is a geodesic lamination (\resp contains a $k$-dimensional geodesic lamination for some $k\geq 1$) with the following properties:
  \begin{itemize}
    \item the lamination is ``maximally stretched'' by any $(j,\rho)$-equivariant map $f : \HH^n\rightarrow\HH^n$ with minimal Lipschitz constant $C(j,\rho)$, in the sense that $f$ multiplies all distances by $C(j,\rho)$ on every leaf of the lamination;
    \item the projection to $j(\Gamma_0)\backslash\HH^n$ of the lamination is compact and contained in the convex core.
  \end{itemize}
\end{itemize}
\end{theorem}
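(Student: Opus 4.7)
The plan is to prove the existence and optimality claims first, and treat the lamination structure last. The key technical device is a ``barycentric'' averaging construction exploiting the convexity of the distance function on $\HH^n$. Given two $(j,\rho)$-equivariant $C$-Lipschitz maps $f_1,f_2$ with $C=C(j,\rho)$, define $f:\HH^n\to\HH^n$ by letting $f(x)$ be the geodesic midpoint of $f_1(x)$ and $f_2(x)$. By convexity of distance in $\HH^n$, the map $f$ is $(j,\rho)$-equivariant and again $C$-Lipschitz; moreover, at any point where either $f_1$ or $f_2$ is strictly sub-optimal on a neighborhood, so is~$f$. Hence the stretch locus of $f$ is contained in $E_{f_1}\cap E_{f_2}$, showing that the family of stretch loci of minimal maps is closed under pairwise intersection up to replacement.

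To turn this into a nonempty limit, I would confine all such stretch loci to a fixed compact subset of $j(\Gamma_0)\backslash\HH^n$. Postcomposing with the nearest-point projection onto a suitable closed convex set (e.g.\ a neighborhood of the convex hull of the limit set of $\rho$) does not increase the Lipschitz constant, so one may assume the stretch locus projects into a neighborhood of the convex core of~$j$. Under the cusp-deteriorating hypothesis, a local construction on each cusp---using that a parabolic $j(\gamma)$ with elliptic $\rho(\gamma)$ admits equivariant ``contracting'' modifications of $f$ over horoball neighborhoods---shows that the stretch locus avoids a uniform horospherical neighborhood of each cusp. This confines its projection to a compact piece of the convex core. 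An Arzel\`a--Ascoli argument on a descending net of minimal maps, combined with the intersection principle of the previous paragraph, then produces an optimal $f_0$ with $E_{f_0}=E(j,\rho)$, nonempty except in the indicated borderline case.

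For the lamination structure, at each $x\in E(j,\rho)$ the pointwise-Lipschitz constant equal to $C$ forces $f_0$ to multiply lengths by exactly $C$ along some infinitesimal direction. The crucial step is to integrate such directions into bi-infinite geodesic leaves. Given a local stretch direction at $x$, a short arc $\sigma$ in that direction is mapped to an arc of length $C\cdot|\sigma|$; by the $C$-Lipschitz bound and equality in the hyperbolic triangle inequality, this image must in fact be a geodesic segment, so $f_0$ acts as a $C$-dilation on $\sigma$. Iterating along $\sigma$ and using closedness of $E(j,\rho)$ together with the compactness of its projection, one extends this arc to a complete geodesic line $L\subset E(j,\rho)$. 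When $C>1$, a Thurston-style rigidity argument---two distinct $C$-stretched geodesics meeting at a positive angle would violate $\Lip(f_0)=C$ via a quadrilateral estimate in $\HH^n$---forbids transverse crossings, yielding a $1$-dimensional geodesic lamination. When $C=1$ and $\rho$ is cusp-deteriorating, transverse crossings are not excluded but the same extension procedure still produces a $k$-dimensional lamination for some $k\geq 1$.

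The main obstacle is this integration step: showing that a pointwise maximal-stretch direction extends to a complete geodesic leaf without terminating in the interior of the projection. This rests on the interplay between equality cases in the hyperbolic triangle inequality and the closed, compact nature of the projected stretch locus; a stretch direction cannot ``stop'' without forcing a directional adjustment ruled out by the Lipschitz bound. Once the geodesic leaves are in hand, the ``maximally stretched by any such $f$'' clause is automatic: every minimal $(j,\rho)$-equivariant $f$ satisfies $E(j,\rho)\subset E_f$, and the same equality-in-triangle-inequality argument applied to $f$ shows that $f$ must act as a $C$-dilation along each leaf.
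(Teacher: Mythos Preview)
Your overall architecture (barycentric averaging to get an optimal map, confining the stretch locus to a compact piece of the convex core via cusp modifications, then extracting geodesic leaves) is the right shape and matches the paper's strategy. However, there is a genuine gap at exactly the place you flag as ``the main obstacle,'' and your proposed resolution does not close it.

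The claim ``the pointwise-Lipschitz constant equal to $C$ forces $f_0$ to multiply lengths by exactly $C$ along some infinitesimal direction'' is not justified, and the subsequent appeal to ``equality in the hyperbolic triangle inequality'' presupposes what you need to prove. Knowing $\Lip_p(f_0)=C$ only says that arbitrarily near $p$ there are pairs $x,x'$ with $d(f_0(x),f_0(x'))/d(x,x')$ close to $C$; it gives neither differentiability at $p$, nor a direction of exact stretch, nor a nondegenerate segment through $p$ on which $f_0$ is an honest $C$-dilation. The triangle-inequality equality case tells you that \emph{if} a segment is $C$-stretched then its image is geodesic, but it does not produce such a segment. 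Likewise, your ``quadrilateral estimate'' forbidding transverse crossings only makes sense once you already have two genuine $C$-stretched segments through $p$.

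The paper resolves this by a completely different mechanism, namely a local Kirszbraun--Valentine extension argument (Lemma~5.2). For $p\in E(j,\rho)$ and a small ball $B$ of radius $r$ about $p$, one looks at the unique smallest ball $B'$ containing $f_0(\partial B)$, with center $q$ and radius $r'$. If $r'<Cr$, then $f_0|_{\partial B}$ extends $C$-Lipschitz to $\overline{B}$ while being \emph{constant} near $p$; patching this in equivariantly would remove $p$ from the stretch locus, contradicting $p\in E(j,\rho)$. Hence $r'=Cr$ and $f_0(p)=q$. Now the proof of the Kirszbraun theorem itself (the measure $\nu$ on the contact set $X=\{x\in\partial B:\ f_0(x)\in\partial B'\}$ and the integral inequality it satisfies) produces points $x\neq y$ in $X$ with $\widehat{xpy}\leq\widehat{f_0(x)\,q\,f_0(y)}$; since $d(q,f_0(x))/d(p,x)=d(q,f_0(y))/d(p,y)=C>1$, Toponogov's comparison theorem forces $\widehat{xpy}=\pi$ and $d(f_0(x),f_0(y))=C\,d(x,y)$. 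This simultaneously gives a genuine $C$-stretched segment through $p$ and the uniqueness of the stretch direction (hence the lamination property). The extension to a full leaf then follows because a maximal $C$-stretched segment cannot stop at an interior point $p'$ without $p'$ acquiring a second, distinct stretch direction, which the same argument rules out. For $C=1$ the angle $\widehat{xpy}$ need not be $\pi$, and the paper runs a refined version of this argument to obtain the convex strata $\Omega_p$ and the $k$-dimensional lamination; your one-sentence treatment of this case is far from sufficient.

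A secondary remark: in your compactness paragraph you speak of ``postcomposing with the nearest-point projection onto a suitable closed convex set (e.g.\ a neighborhood of the convex hull of the limit set of $\rho$).'' This is the wrong side. The stretch locus lives in the source copy of $\HH^n$; to confine it one \emph{precomposes} with the closest-point projection onto the preimage of the convex core of $j(\Gamma_0)\backslash\HH^n$ (Lemma~4.5 in the paper). Postcomposing with a projection on the $\rho$-side does not control where the local Lipschitz constant is attained in the source.
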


By a geodesic lamination (\resp a $k$-dimensional geodesic lamination) of~$\HH^n$ we mean a nonempty disjoint union $\LL$ of geodesic lines (\resp $k$-planes) of~$\HH^n$, called \emph{leaves}, that is closed in the space of geodesic lines (\resp $k$-planes) of~$\HH^n$.
The image in $j(\Gamma_0)\backslash\HH^n$ of a $j(\Gamma_0)$-invariant geodesic lamination of~$\HH^n$ is a geodesic lamination in the usual sense.

We note that an ``optimal'' map~$f_0$ is usually not unique since it can be slightly perturbed outside of the stretch locus $E(j,\rho)$.

In Section~\ref{subsec:Thurstonchainrec} we explain how, in the case that $n=2$ and that $j$ and~$\rho$ are both injective and discrete with finite covolume, Theorem~\ref{thm:lamin} follows from Thurston's theory \cite{thu86} of the asymmetric metric on Teichm\"uller space.

More precise results in the case $C(j,\rho)=1$ are given (for arbitrary~$n$) in Section~\ref{sec:Kirszbraunopt}, leading to a reasonable understanding of the stretch locus when $C(j,\rho)\geq 1$.
On the other hand, for $C(j,\rho)<1$ the stretch locus is more mysterious; we make the following conjecture.

\begin{conjecture} \label{conj:gramination}
For $n=2$, let $(j,\rho)\in\Hom(\Gamma_0,G)^2$ be a pair of representations with $j$ geometrically finite and let $E(j,\rho)$ be the intersection of the stretch loci of all $(j,\rho)$-equivariant maps with minimal Lipschitz constant $C(j,\rho)\in (0,1)$.
Then $E(j,\rho)$ is the lift to~$\HH^2$ of a \emph{gramination} (contraction of ``graph'' and ``lamination'') of $M:=j(\Gamma_0)\backslash\HH^2$, by which we mean the union of a finite set~$F$ and of a lamination in $M\smallsetminus F$ with finitely many leaves terminating on~$F$.
\end{conjecture}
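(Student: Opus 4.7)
The plan is to reduce the conjecture to a local regularity statement for an optimal $(j,\rho)$-equivariant map, and then exploit two-dimensionality to extract the gramination structure.

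First I would construct an optimal $C(j,\rho)$-Lipschitz map $f_0$ whose stretch locus equals $E(j,\rho)$, by the same procedure as in Theorem~\ref{thm:lamin}: take a limit of $(j,\rho)$-equivariant $C(j,\rho)$-Lipschitz maps whose stretch loci Hausdorff-converge to the intersection $E(j,\rho)$. The condition $C(j,\rho)<1$ actually removes most cusp difficulties, since each horoball is strictly contracted into a smaller horoball of the same type; compactness of the projection of $E_{f_0}$ to $M$ should go through essentially unchanged. For each $x \in E_{f_0}$ I then define the set $D_x \subseteq T^1_x\HH^2$ of unit tangent vectors along which $f_0$ stretches locally by exactly $C(j,\rho)$; by realization of the local Lipschitz constant, $D_x$ is closed and nonempty.

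The heart of the argument is a local dichotomy at each $x \in E_{f_0}$: either $D_x$ consists of a single antipodal pair $\{\pm v\}$, so that a full maximally contracted geodesic through $x$ lies in $E_{f_0}$; or $D_x$ is finite with more than two elements and $x$ is a branch point. Finiteness of $D_x$ should follow from the observation that a Lipschitz map whose differential at $x$ stretches a full arc of directions by the maximal rate is forced to behave like an infinitesimal similitude of ratio $C(j,\rho)\neq 1$ there, which is incompatible with negatively curved geometry on any open set; the argument would be in the spirit of the ``two maximally stretched lines cannot cross'' rigidity used in Theorem~\ref{thm:lamin}, but genuinely adapted to the contracting regime, where branching \emph{is} allowed (two contracting germs meeting at a common point can be realized by a single equivariant Lipschitz map without forcing $C(j,\rho)$ to drop, in sharp contrast with the $C(j,\rho)\geq 1$ case).

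With this local picture in place, set $F$ equal to the projection of the set of branch points to $M$; off $F$ the set $E(j,\rho)$ is a union of maximally contracted geodesic arcs, which patch into a geodesic lamination of $M\smallsetminus F$ by closedness and local finiteness of $D_x$. The main obstacle is to prove that $F$ is finite and that only finitely many leaves terminate on it. The natural route is a quantitative ``energy per branch point'' estimate: each branch point should force a definite lower bound on the deficit $C(j,\rho) - \|\mathrm{d}f_0\|$ integrated over a uniform neighborhood, so that summing these contributions over the compact image of $E_{f_0}$ in $M$ gives finiteness. Establishing such an estimate seems to require genuinely new regularity theory for optimal maps in the strictly contracting regime, going beyond what is needed in the $C(j,\rho)\geq 1$ case where no branching occurs; ruling out Cantor-like accumulations of vertices along a lamination is, to my mind, the hardest part of the conjecture.
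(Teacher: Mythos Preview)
This statement is a \emph{conjecture} in the paper, not a theorem: the paper does not prove it. Section~\ref{subsec:EforC<1} only provides supporting evidence (explicit examples where $E(j,\rho)$ is a trivalent graph, and Lemma~\ref{lem:Ecoconvex} showing that the complementary components of $E(j,\rho)$ are convex), and Appendix~\ref{sec:questions} lists the gramination structure among the open questions. So there is no ``paper's own proof'' to compare against; your proposal is an attempted attack on an open problem.

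Your outline is a reasonable sketch of how one might try to proceed, and you are commendably honest that the finiteness of the branch set~$F$ is the real obstacle. However, there are genuine gaps beyond the one you flag. First, you assert that ``compactness of the projection of $E_{f_0}$ to~$M$ should go through essentially unchanged'' from the $C(j,\rho)\geq 1$ case; this is not known. The paper's compactness argument (Proposition~\ref{prop:goodincusps} and Corollary~\ref{cor:Ecompact}) relies on being able to replace~$f$ by a map that is constant, or $1$-Lipschitz, deep in each cusp \emph{without increasing the global Lipschitz constant}, and this crucially uses $C(j,\rho)\geq 1$ via the Kirszbraun--Valentine extension (Proposition~\ref{prop:amenableKirszbraun}). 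For $C(j,\rho)<1$ the paper only obtains a map that is constant in the cusps at the cost of increasing the Lipschitz constant by~$\varepsilon$ (Proposition~\ref{prop:goodincusps}.(3)), and the Open Questions section explicitly says that for $C(j,\rho)<1$ ``we do not even know if the stretch locus $E(j,\rho)$ has a compact image in $j(\Gamma_0)\backslash\HH^n$''. Second, your ``local dichotomy'' step presupposes that $D_x$ is well defined and finite; the heuristic you give (that stretching an arc of directions forces an infinitesimal similitude) does not obviously survive the failure of differentiability of Lipschitz maps at arbitrary points, and in the contracting regime there is no analogue of the Toponogov argument in Lemma~\ref{lem:MaxStretchedLam} that forces maximally stretched directions to be antipodal. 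So both the local structure and the global finiteness remain genuinely open, in agreement with the paper's assessment.
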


We discuss this conjecture and provide evidence for it in Section~\ref{subsec:EforC<1}.

We also examine the behavior of the minimal Lipschitz constant $C(j,\rho)$ and of the stretch locus $E(j,\rho)$ under small deformations of $j$ and~$\rho$.
We prove that the constant $C(j,\rho)$ behaves well for convex cocompact~$j$.

\begin{proposition}\label{prop:contCcc}
The map $(j,\rho)\mapsto C(j,\rho)$ is continuous on the set of pairs $(j,\rho)\in\Hom(\Gamma_0,G)^2$ with $j$ convex cocompact.
\end{proposition}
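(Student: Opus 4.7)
We establish continuity by proving upper and lower semi-continuity separately at each pair $(j_0,\rho_0)$ with $j_0$ convex cocompact.

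\emph{Lower semi-continuity.} Suppose $(j_n,\rho_n)\to(j_0,\rho_0)$, and pick $(j_n,\rho_n)$-equivariant maps $f_n:\HH^n\to\HH^n$ with $\Lip(f_n)\leq C(j_n,\rho_n)+1/n$. The plan is to extract a subsequence converging locally uniformly to a map $f_\infty:\HH^n\to\HH^n$: passing to the limit in the equivariance relation yields that $f_\infty$ is $(j_0,\rho_0)$-equivariant with $\Lip(f_\infty)\leq\liminf_n\Lip(f_n)$, giving $C(j_0,\rho_0)\leq\liminf_nC(j_n,\rho_n)$. Equicontinuity of the family $(f_n)$ is immediate from the uniform Lipschitz bound; for Arzel\`a--Ascoli we need pointwise boundedness. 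Fix a basepoint $x_0\in\HH^n$ and two elements $\gamma_1,\gamma_2\in\Gamma_0$ whose $j_0$-images are hyperbolic with distinct axes; equivariance and the Lipschitz bound give $d(f_n(x_0),\rho_n(\gamma_i)f_n(x_0))\leq\Lip(f_n)\cdot d(x_0,j_n(\gamma_i)x_0)=O(1)$, and since $\rho_n(\gamma_i)\to\rho_0(\gamma_i)$, the sequence $(f_n(x_0))$ stays in a bounded region provided $\rho_0(\gamma_1)$ and $\rho_0(\gamma_2)$ have no common fixed point on $\partial_\infty\HH^n$. The exceptional case where $\rho_0(\Gamma_0)$ fixes a unique point $\xi\in\partial_\infty\HH^n$ is handled by a further normalization using parabolic isometries fixing $\xi$, with a matching conjugation of $\rho_n$; a diagonal extraction preserves convergence of the relevant data.

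\emph{Upper semi-continuity.} Given $\varepsilon>0$, fix a $(j_0,\rho_0)$-equivariant $f_0$ with $\Lip(f_0)\leq C(j_0,\rho_0)+\varepsilon$. The goal is to build, for $(j,\rho)$ close to $(j_0,\rho_0)$, a $(j,\rho)$-equivariant Lipschitz map with constant at most $C(j_0,\rho_0)+2\varepsilon$. By structural stability of convex cocompact representations, $j$ remains convex cocompact in a neighborhood of $j_0$, and there exists a $\Gamma_0$-equivariant (via $j_0$ and $j$) bi-Lipschitz homeomorphism $h_j:\HH^n\to\HH^n$ with $\Lip(h_j),\Lip(h_j^{-1})\to 1$ as $j\to j_0$; replacing $f_0$ by $f_0\circ h_j^{-1}$ reduces us to the case $j=j_0$. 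Now choose a compact Dirichlet-type fundamental polyhedron $D$ for $j_0(\Gamma_0)$ in a thickening of its convex hull, with its finitely many face identifications realized by $j_0(s)$ for $s$ in a symmetric generating set $S\subset\Gamma_0$. Set $f_\rho(j_0(\gamma)x):=\rho(\gamma)f_0(x)$ for $x\in D$ and $\gamma\in\Gamma_0$. This candidate fails continuity only on the faces of $D$, where the jumps are bounded uniformly by $\max_{y\in D,\,s\in S}d(\rho(s)f_0(y),\rho_0(s)f_0(y))$; since $S$ is finite and $f_0(D)$ is compact, this bound is $o(1)$ as $\rho\to\rho_0$. A Lipschitz interpolation in an equivariant tubular neighborhood of the $j_0(\Gamma_0)$-translates of $\partial D$ smooths these jumps at an $o(1)$ cost to the overall Lipschitz constant.

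\emph{Main obstacle.} The technical heart is the boundary-smoothing step in upper semi-continuity: the fact that the cost stays $o(1)$ as $\rho\to\rho_0$ relies crucially on convex cocompactness, which provides a compact fundamental polyhedron with finitely many face identifications. In the merely geometrically finite setting, cusp neighborhoods would contribute infinitely many face pairings requiring uniform control, which is why the present proposition is restricted to convex cocompact~$j$.
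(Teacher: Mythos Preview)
Your overall structure (upper and lower semicontinuity separately) matches the paper's, and your generic-case Arzel\`a--Ascoli argument for lower semicontinuity is exactly what the paper does in Section~\ref{subsubsec:lowersemicont}. But there is a genuine gap in the degenerate cases of lower semicontinuity.

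You only treat the exceptional case where $\rho_0(\Gamma_0)$ fixes a \emph{unique} boundary point, and even there the ``normalization by parabolic isometries with matching conjugation of $\rho_n$'' is not an argument: conjugating $\rho_n$ by large parabolics need not preserve $\rho_n\to\rho_0$, so it is unclear what limit problem you are solving. More seriously, you omit the case where $\rho_0(\Gamma_0)$ preserves a geodesic line $\A$ (two fixed boundary points): then \emph{every} $\rho_0(\gamma)$ is either elliptic or has axis $\A$, so no choice of $\gamma_1,\gamma_2$ can satisfy your ``no common fixed point'' criterion, and Arzel\`a--Ascoli fails outright. The paper does \emph{not} rescue Arzel\`a--Ascoli here; instead it proves a separate lemma (Lemma~\ref{lem:Ewith2fixedpoints}) showing that for such $\rho_0$ the stretch locus is a compact lamination (by passing to the ``powered'' representation $\rho_{\underline{A}}^m$ to force $C>1$ and invoke Theorem~\ref{thm:lamin}), and then uses length ratios $\lambda(\rho(\gamma))/\lambda(j(\gamma))$ close to $C(j_0,\rho_0)$ to conclude. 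For nonreductive $\rho_0$, the paper reduces to the reductive case via Lemma~\ref{lem:nonred-lip-cc} ($C(j,\rho)=C(j,\rho^{\mathrm{red}})$), whose proof itself uses the upper semicontinuity already established. Both of these steps rely on nontrivial machinery from the paper; your proposal does not supply a substitute.

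On upper semicontinuity your approach differs from the paper's: you reduce to $j=j_0$ via a bi-Lipschitz equivariant homeomorphism and then smooth a piecewise-defined map across the faces of a Dirichlet domain, whereas the paper works directly with varying $j_k$ via the partition-of-unity ``Leibniz rule'' (Lemma~\ref{lem:partofunity}), covering a common compact set by small balls and averaging the local equivariant extensions. Your route is plausible but the smoothing step is left as a slogan; the paper's partition-of-unity argument is cleaner and gives an explicit Lipschitz bound $\Lip_p(f_k)\leq rLR_{p,k}+\Lip(f)$ with $R_{p,k}\to 0$, avoiding any interpolation near codimension-one faces.
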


Here $\Hom(\Gamma_0,G)$ is endowed with the natural topology (see Section~\ref{sec:lipcont}).

For $j$ geometrically finite but \emph{not} convex cocompact, the constant $C(j,\rho)$ behaves in a more chaotic way.
For $n\leq 3$, we prove that continuity holds when $C(j,\rho)>1$ and that the condition $C(j,\rho)<1$ is open on the set of pairs $(j,\rho)$ with $j$ geometrically finite of fixed cusp type and $\rho$ cusp-deteriorating (Proposition~\ref{prop:contpropertiesC}).
However, semicontinuity (both upper and lower) fails when $C(j,\rho)\leq 1$ (see Sections \ref{ex:discontinu} and~\ref{ex:discontinu2}).
For $n\geq 4$, the condition $C(j,\rho)<1$ is not open and upper semicontinuity fails for any value of $C(j,\rho)$ (see Sections \ref{ex:dim4upper} and~\ref{ex:dim4C<1}).
This is related to the fact that geometrical finiteness itself is not an open condition when $n\geq 4$, even under fixed cusp type.
We refer to Section~\ref{sec:lipcont} for a more thorough discussion.

It is natural to hope that when the function $(j,\rho)\mapsto C(j,\rho)$ is continuous the map $(j,\rho)\mapsto E(j,\rho)$ should be at least upper semicontinuous with respect to the Hausdorff topology.
We prove this semicontinuity in dimension $n=2$ when $C(j,\rho)>1$ and $\rho(\Gamma_0)$ does not have a unique fixed point at infinity (Proposition~\ref{prop:semicontE}), generalizing a result of Thurston \cite{thu86}.

\subsection{Extension of Lipschitz maps in~$\HH^n$}

In order to prove Theorem~\ref{thm:lamin}, following the approach of \cite{kasPhD}, we develop the extension theory of Lipschitz maps in~$\HH^n$ and, more precisely, refine an old theorem of Kirszbraun \cite{kir34} and Valentine \cite{val44}, which states that any Lipschitz map from a compact subset of~$\HH^n$ to~$\HH^n$ with Lipschitz constant $\geq 1$ can be extended to a map from $\HH^n$ to itself with the same Lipschitz constant.
We prove the following.

\begin{theorem}\label{thm:Kirszbraunequiv}
Let $\Gamma_0$ be a discrete group and $(j,\rho)\in\Hom(\Gamma_0,G)^2$ a pair of representations of $\Gamma_0$ in~$G$ with $j$ geometrically finite.
\begin{enumerate}
  \item For any $j(\Gamma_0)$-invariant subset $K\neq\emptyset$ of~$\HH^n$ and any $(j,\rho)$-equivariant map $\varphi : K\rightarrow\HH^n$ with Lipschitz constant $C_0\geq 1$, there exists a $(j,\rho)$-equivariant extension $f : \HH^n\rightarrow\HH^n$ of~$\varphi$ with Lipschitz constant~$C_0$.
  \item For any $j(\Gamma_0)$-invariant subset $K\neq\emptyset$ of~$\HH^n$ whose image in $j(\Gamma_0)\backslash\HH^n$ is bounded and for any $(j,\rho)$-equivariant map $\varphi : K\rightarrow\HH^n$ with Lipschitz constant $C_0<1$, there exists a $(j,\rho)$-equivariant extension $f : \HH^n\rightarrow\HH^n$ of~$\varphi$ with Lipschitz constant $<1$.
\end{enumerate}
\end{theorem}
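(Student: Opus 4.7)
My plan for Part (1) is to upgrade the classical Kirszbraun--Valentine extension theorem in $\HH^n$ \cite{kir34,val44} to its $(j,\rho)$-equivariant version by extending $\varphi$ one $j(\Gamma_0)$-orbit at a time via Zorn's lemma. The one new ingredient compared to the classical proof is Cartan's fixed point theorem, invoked to handle stabilizers under the $j(\Gamma_0)$-action.

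Concretely, I would consider the poset $\mathcal{P}$ of pairs $(K',\varphi')$ where $K \subseteq K' \subseteq \HH^n$ is $j(\Gamma_0)$-invariant and $\varphi' : K' \to \HH^n$ is a $(j,\rho)$-equivariant $C_0$-Lipschitz extension of $\varphi$, ordered by extension. Chains have upper bounds by taking unions, so a maximal element $(K_*,\varphi_*)$ exists. It suffices to show that $K_* = \HH^n$: otherwise, pick $x_0 \in \HH^n \smallsetminus K_*$, whose stabilizer $H := \{\gamma \in \Gamma_0 \mid j(\gamma) x_0 = x_0\}$ is finite since $j$ is discrete. To enlarge $\varphi_*$ to $K_* \cup j(\Gamma_0) \cdot x_0$ one must find $z \in \HH^n$ lying in the intersection
\[
  I \,:=\, \bigcap_{y \in K_*} \overline{B}\bigl(\varphi_*(y),\, C_0\, d(x_0,y)\bigr)
\]
(so that the Lipschitz bound persists) and fixed by $\rho(H)$ (so that $j(\gamma) x_0 \mapsto \rho(\gamma) z$ is well-defined). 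The set $I$ is nonempty by classical Kirszbraun--Valentine (applicable since $C_0 \geq 1$), closed and convex as an intersection of hyperbolic balls, and $\rho(H)$-invariant thanks to the equivariance of $\varphi_*$ combined with $j(H) x_0 = x_0$. Since $\rho(H)$ is a finite group of isometries of the CAT$(-1)$ space $\HH^n$, Cartan's fixed point theorem produces $z \in I$ fixed by $\rho(H)$, contradicting maximality.

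For Part (2) the hypothesis $C_0 < 1$ rules out direct use of Kirszbraun--Valentine. My plan is to first apply Part (1) with constant $1$ to obtain a $(j,\rho)$-equivariant $1$-Lipschitz extension $\widetilde f : \HH^n \to \HH^n$ of $\varphi$, and then to use the boundedness of $K/j(\Gamma_0)$ to perturb $\widetilde f$ into a strict contraction. The heuristic is that every point of $\HH^n$ lies at uniformly bounded distance from a point of $K$ modulo $j(\Gamma_0)$, so the strict contraction of $\varphi$ on $K$ (with $C_0 < 1$) forces a strict global contraction once the extension is built carefully; one reruns the Zorn argument of Part (1) with target constant $C_1 \in (C_0,1)$ and checks that the intersection-of-balls set $I$ remains nonempty because the bounded-quotient hypothesis reduces each local check to finitely many balls coming from a bounded fundamental region.

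\textbf{Main obstacle.} Part (1) is almost classical once the equivariance step is identified as a fixed-point problem for a finite isometry group, handled by Cartan. The genuine difficulty is Part (2): the classical Kirszbraun--Valentine theorem \emph{fails} for Lipschitz constants strictly less than $1$ in sufficiently negatively curved settings, and one must leverage the bounded-quotient assumption as the precise ingredient that restores an intersection-of-balls property in the strict-contraction regime.
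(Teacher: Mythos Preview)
Your Zorn argument for Part~(1) has a genuine gap: when you enlarge $K_*$ to $K_*\cup j(\Gamma_0)\cdot x_0$ by declaring $j(\gamma)x_0\mapsto\rho(\gamma)z$, the condition $z\in I$ only guarantees $d(\rho(\gamma)z,\varphi_*(y))\leq C_0\,d(j(\gamma)x_0,y)$ for $y\in K_*$ (via equivariance of $\varphi_*$). It does \emph{not} ensure the within-orbit Lipschitz condition
\[
d(z,\rho(\gamma)z)\;\leq\;C_0\,d\bigl(x_0,j(\gamma)x_0\bigr)\quad\text{for all }\gamma\in\Gamma_0,
\]
which is required for the extended map to be $C_0$-Lipschitz on the enlarged set. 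Since $j(\gamma)x_0\notin K_*$, these constraints are absent from $I$; and you cannot simply intersect further balls to encode them, as they are self-referential in~$z$ through $\rho(\gamma)z$. This is precisely what makes the equivariant problem harder than the classical one for non-amenable $\Gamma_0$, and why the geometric finiteness of $j$ (never used in your argument) must enter.

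The paper's route is entirely different. An equivariant Kirszbraun--Valentine theorem is first proved for \emph{amenable} groups (Proposition~\ref{prop:amenableKirszbraun}) by averaging a non-equivariant extension over F{\o}lner sets; this handles cusp stabilizers. For general geometrically finite $j$, Theorem~\ref{thm:Kirszbraunequiv} is deduced from the stronger Theorem~\ref{thm:Kirszbraunopt}, whose proof (Sections~\ref{sec:stretchlocus}--\ref{sec:Kirszbraunopt}) develops the stretch-locus machinery and shows $C_{K,\varphi}(j,\rho)=C_0$ whenever $C_{K,\varphi}(j,\rho)\geq 1$ (Proposition~\ref{prop:C_0=C}). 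Part~(2) then follows by contraposition from Proposition~\ref{prop:C_0=C} together with $\F^{j,\rho}_{K,\varphi}\neq\emptyset$ (Lemma~\ref{lem:Fnonempty}, using $K\neq\emptyset$). Your Part~(2) sketch---rerunning Zorn with a target $C_1<1$ and hoping the bounded-quotient hypothesis rescues the ball-intersection---has no content as stated: the classical one-point extension already fails for constants $<1$ (Example~\ref{ex:K=3points}), and you give no mechanism by which boundedness modulo $j(\Gamma_0)$ would restore it.
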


The point of Theorem~\ref{thm:Kirszbraunequiv} is that we can extend~$\varphi$ \emph{in an equivariant way}, without increasing the Lipschitz constant~$C_0$ if it is $\geq 1$, and still keeping it $<1$ if it was originally $<1$.
Moreover, we control the \emph{local} Lipschitz constant when $C_0\geq 1$ (Theorem~\ref{thm:Kirszbraunopt}).
Intuitively (at least when $C_0>1$), the idea is that one should be able to choose an $f$ whose stretch locus consists of \emph{stretch segments} with endpoints in~$K$, moved apart by a factor $C_0$ under~$\varphi$.

We believe (see Appendix~\ref{sec:questionlip}) that in Theorem~\ref{thm:Kirszbraunequiv}.(2) the best Lipschitz constant of an equivariant extension~$f$ could be bounded away from $1$ in terms of~$C_0$ alone.
This would allow to remove the assumption that $K$ has a bounded image in $j(\Gamma_0)\backslash\HH^n$, using the Arzel\`a--Ascoli theorem (see Section~\ref{subsec:coroElamin}).

In Theorem~\ref{thm:Kirszbraunopt} below we refine Theorem~\ref{thm:Kirszbraunequiv} and actually allow $K$ to be the empty set.
(In this case we define $C_0$ to be the supremum of ratios $\lambda(\rho(\gamma))/\lambda(j(\gamma))$ for $\gamma\in\Gamma_0$ with $j(\gamma)$ hyperbolic, where
\begin{equation}\label{eqn:deflambda}
\lambda(g) := \inf_{x\in\HH^n} d(x,g\cdot x)
\end{equation}
is the translation length of $g$ in~$\HH^n$ if $g\in G$ is hyperbolic, and $0$ if $g$ is~para\-bolic or elliptic.)
Theorem~\ref{thm:lamin} is equivalent to the case $K=\emptyset$ in Theorem~\ref{thm:Kirszbraunopt}.

Theorem~\ref{thm:Kirszbraunequiv} and its refinements such as Theorem~\ref{thm:Kirszbraunopt} should be compared to a number of recent results in the theory of extension of Lipschitz maps: see Lang--Schr\"oder~\cite{ls97}, Lang--Pavlovi\'c--Schr\"oder~\cite{lps00}, Buyalo--Schr\"oder~\cite{bs01}, Lee--Naor~\cite{ln05}, etc.
We also point to \cite{dgk12} for an infinitesimal version.

\subsection{An application to the study of complete manifolds locally modeled on $G=\PO(n,1)$}\label{subsec:introquotientsG}

One important motivation for examining equivariant Lipschitz maps of minimal Lipschitz constant is the link with certain manifolds locally modeled on~$G$, namely quotients of~$G$ by discrete subgroups of $G\times G$ acting properly discontinuously and freely on~$G$ by right and left multiplication: $(g_1,g_2)\cdot g=g_2gg_1^{-1}$.
This link was first noticed in \cite{sal00}, then developed in \cite{kasPhD}.

For $n=2$, the manifolds locally modeled on $\PO(2,1)_0\cong\PSL_2(\R)$ are the \emph{anti-de Sitter} $3$-manifolds, or Lorentzian $3$-manifolds of constant negative curvature, which are Lorentzian analogues of the hyperbolic $3$-manifolds.
For $n=3$, the manifolds locally modeled on $\PO(3,1)_0\cong\PSL_2(\C)$ are the $3$-dimensional complex \emph{holomorphic-Riemannian} manifolds of constant nonzero curvature, which can be considered as complex analogues of the hyperbolic $3$-manifolds (see \cite{dz09} for details).
For $n=2$, all \emph{compact} manifolds locally modeled on~$G$ are quotients of~$G$ by discrete subgroups of $G\times G$, up to a finite covering \cite{kli96,kr85}; for $n=3$, a similar property has been conjectured in \cite{dz09} (see Section~\ref{subsec:(G,X)struct}).

Recall that the quotient of $G$ by a discrete group~$\Gamma$ is Hausdorff (\resp is a manifold) if and only if the action of $\Gamma$ on~$G$ is properly discontinuous (\resp properly discontinuous and free).
Let $\Gamma$ be a discrete subgroup of $G\times G$ acting properly discontinuously on~$G$ by right and left multiplication.
The key point here is that if $\Gamma$ is torsion-free, then it is a graph of the form
\begin{equation}\label{eqn:Gamma(j,rho)}
\Gamma_0^{j,\rho} = \{ (j(\gamma),\rho(\gamma))~|~\gamma\in\Gamma_0\}
\end{equation}
where $\Gamma_0$ is a discrete group and $j,\rho\in\Hom(\Gamma_0,G)$ are representations with $j$ injective and discrete (up to switching the two factors): this was proved in \cite{kr85} for $n=2$, and in \cite{kas08} (strengthening partial results of \cite{kob93}) for general rank-one groups~$G$.
The group~$\Gamma$ is thus isomorphic to the fundamental group of the hyperbolic $n$-manifold $M:=j(\Gamma_0)\backslash\HH^n$, and the quotient of $G$ by $\Gamma=\Gamma_0^{j,\rho}$ is compact if and only if $M$ is compact (by a classical cohomological argument, see Section~\ref{subsec:proofdeform}).
In general, if $\Gamma$ is finitely generated, the Selberg lemma \cite[Lem.\,8]{sel60} ensures the existence of a finite-index subgroup of~$\Gamma$ that is torsion-free, hence of the form $\Gamma_0^{j,\rho}$ or $\Gamma_0^{\rho,j}$ as above.

As before, we set $\lambda(g):=\inf_{x\in\HH^n} d(x,g\cdot x)$ for any $g\in G$.
The following terminology is partly adopted from Salein \cite{sal00}.

\begin{definition}\label{def:admissible}
A pair $(j,\rho)\in\Hom(\Gamma_0,G)^2$ is called \emph{admissible} if the action of $\Gamma_0^{j,\rho}$ on~$G$ by right and left multiplication is properly discontinuous.
It is called \emph{left} (\resp \emph{right}) \emph{admissible} if, in addition, there exists $\gamma\in\Gamma_0$ such that $\lambda(j(\gamma))>\lambda(\rho(\gamma))$ (\resp $\lambda(j(\gamma))<\lambda(\rho(\gamma))$).
\end{definition}

By \cite{sal00} (for $n=2$) and \cite{kas08} (for general~$n$), an admissible pair $(j,\rho)$ is either left admissible or right admissible; it cannot be both.
Without loss of generality, we may restrict to left admissible pairs.

For a pair $(j,\rho)\in\Hom(\Gamma_0,G)^2$ with $j$ injective and discrete, we set
\begin{equation}\label{eqn:defClambda}
C'(j,\rho) := \sup_{\gamma\in\Gamma_0\text{ with $j(\gamma)$ hyperbolic}}\ \frac{\lambda(\rho(\gamma))}{\lambda(j(\gamma))}
\end{equation}
if the group $j(\Gamma_0)$ contains hyperbolic elements, and $C'(j,\rho):=C(j,\rho)$ otherwise (case of an elementary group fixing a point in~$\HH^n$ or a unique point in $\partial_{\infty}\HH^n$).
With~this notation, a consequence of Theorem~\ref{thm:lamin} is the following (double) left admissibility criterion, which was first established in \cite[Chap.\,5]{kasPhD} for $n=2$ and convex cocompact~$j$.

\begin{theorem}\label{thm:adm}
Let $\Gamma_0$ be a discrete group.
A pair $(j,\rho)\in\Hom(\Gamma_0,G)^2$ with $j$ geometrically finite is left admissible if and only if
\begin{enumerate}
  \item the infimum $C(j,\rho)$ of Lipschitz constants of $(j,\rho)$-equivariant Lipschitz maps $f : \HH^n\rightarrow\HH^n$ is $<1$.
\end{enumerate}
This is equivalent to the condition that
\begin{enumerate}
  \item[(2)] the supremum $C'(j,\rho)$ of ratios of translation lengths $\lambda(\rho(\gamma))/\lambda(j(\gamma))$ for $\gamma\in\Gamma_0$ with $j(\gamma)$ hyperbolic is $<1$,
\end{enumerate}
except possibly in the degenerate case where $\rho(\Gamma_0)$ has a unique fixed point in $\partial_{\infty}\HH^n$ and $\rho$ is not cusp-deteriorating.
In particular, left admissibility is always equivalent to (1) and to~(2) if $j$ is convex cocompact.
\end{theorem}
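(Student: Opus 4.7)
The plan is to deduce Theorem~\ref{thm:adm} from Theorem~\ref{thm:lamin} together with the standard rank-one properness criterion for actions on $G$ by left-and-right translation. The elementary inequality $C'(j,\rho)\leq C(j,\rho)$ holds in general: for any $(j,\rho)$-equivariant $C_0$-Lipschitz map $f$ and any $\gamma\in\Gamma_0$ with $j(\gamma)$ hyperbolic, choosing $x$ on the translation axis of $j(\gamma)$ gives
\[
\lambda(\rho(\gamma))\leq d(f(x),\rho(\gamma)\cdot f(x))=d(f(x),f(j(\gamma)\cdot x))\leq C_0\,\lambda(j(\gamma)).
\]
The content of the theorem lies in the two reverse implications, and both will be extracted from the same geometric mechanism supplied by Theorem~\ref{thm:lamin}.

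For the implication $(1)\Rightarrow$~left admissibility, fix a base point $x_0\in\HH^n$ and take a $(j,\rho)$-equivariant map $f$ with Lipschitz constant $C<1$. Equivariance and the triangle inequality yield
\[
d(x_0,\rho(\gamma)\cdot x_0)\leq 2\,d(x_0,f(x_0))+C\cdot d(x_0,j(\gamma)\cdot x_0),
\]
so $d(x_0,j(\gamma)\cdot x_0)-d(x_0,\rho(\gamma)\cdot x_0)\to+\infty$ as $\gamma\to\infty$ in~$\Gamma_0$, since $j$ is injective and discrete. In rank one, this divergence of Cartan projections is Kobayashi's criterion for proper discontinuity of the $\Gamma_0^{j,\rho}$-action on~$G$. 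Left admissibility moreover requires a $\gamma\in\Gamma_0$ with $\lambda(j(\gamma))>\lambda(\rho(\gamma))$: whenever $j(\Gamma_0)$ contains a hyperbolic element, the inequality $C'\leq C<1$ supplies one, while the elementary cases where $j(\Gamma_0)$ has no hyperbolic element are handled directly by the convention $C'(j,\rho):=C(j,\rho)$.

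For the converse and the byproduct $(1)\Leftrightarrow(2)$, I would argue by contraposition. Assume $C(j,\rho)\geq 1$; Theorem~\ref{thm:lamin} then supplies an optimal map $f_0$ and a nonempty maximally stretched geodesic lamination whose image in $M:=j(\Gamma_0)\backslash\HH^n$ is compact. Poincar\'e recurrence applied to this compact lamination, followed by the hyperbolic closing lemma, produces elements $\gamma_n\in\Gamma_0$ with $j(\gamma_n)$ hyperbolic, $\lambda(j(\gamma_n))\to\infty$, and axis passing within uniformly bounded distance of $x_0$ while fellow-traveling a leaf of $E(j,\rho)$ on a proportion of its length tending to~$1$ (the precise localization near $x_0$ may require replacing $\gamma_n$ by a $j$-conjugate, which preserves both $\lambda(j(\gamma_n))$ and $\lambda(\rho(\gamma_n))$). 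Since $f_0$ multiplies distances by exactly $C(j,\rho)$ along every leaf, we obtain $\lambda(\rho(\gamma_n))/\lambda(j(\gamma_n))\to C(j,\rho)$ and $d(x_0,j(\gamma_n)\cdot x_0)=\lambda(j(\gamma_n))+O(1)$. Combining this with $d(x_0,\rho(\gamma_n)\cdot x_0)\geq\lambda(\rho(\gamma_n))$ and the equivariant-map upper bound from the second paragraph (now with $C=C(j,\rho)\geq 1$) forces
\[
\bigl|d(x_0,j(\gamma_n)\cdot x_0)-d(x_0,\rho(\gamma_n)\cdot x_0)\bigr|=O(1),
\]
which violates Kobayashi's criterion and so contradicts left admissibility. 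The same recurrence argument yields $C'(j,\rho)\geq C(j,\rho)$ whenever Theorem~\ref{thm:lamin} applies, giving $(1)\Leftrightarrow(2)$ outside the degenerate case stated.

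The main obstacle is the recurrence step of the third paragraph: one must verify rigorously that a compact geodesic lamination in the convex core of a geometrically finite manifold produces closed geodesics of arbitrarily large length shadowed uniformly by leaves and passing near a prescribed base point. The key inputs are that the projected lamination stays a positive distance from the cusps and that the geodesic flow on $M$ is uniformly hyperbolic on the convex core (hence enjoys a shadowing/closing property); these are precisely the ingredients Theorem~\ref{thm:lamin} is built to provide. When $j$ is convex cocompact there are no cusps at all, which both removes the only delicate geometric point and explains why the concluding sentence of the theorem—the unconditional equivalence with~(2)—holds in that setting. The degenerate case (where $\rho(\Gamma_0)$ fixes a unique point at infinity and $\rho$ is not cusp-deteriorating) corresponds exactly to the configurations in which Theorem~\ref{thm:lamin} may fail to produce a lamination, so the recurrence machinery is unavailable and a separate ad hoc analysis is needed for $(1)\Leftrightarrow(2)$ there.
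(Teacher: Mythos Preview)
Your approach is the paper's: extract from the maximally stretched lamination of Theorem~\ref{thm:lamin} a sequence $(\gamma_n)$ contradicting the Benoist--Kobayashi criterion (compare Lemma~\ref{lem:ClambdaCLipred}). Two points need fixing.

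The displayed $O(1)$ bound is false when $C:=C(j,\rho)>1$. Done carefully, the closing-lemma argument gives the \emph{additive} estimate $\lambda(\rho(\gamma_n))\geq C\,\lambda(j(\gamma_n))-O(1)$ (your ratio statement is too weak even for $C=1$), and together with your other inequalities this yields $\mu(\rho(\gamma_n))-\mu(j(\gamma_n))=(C-1)\,\mu(j(\gamma_n))+O(1)\to+\infty$ for $C>1$. This does \emph{not} violate the properness criterion; it only shows that if the action is proper it is \emph{right} admissible. To conclude ``not left admissible'' you must invoke the one-sidedness result of \cite{kas08} (an admissible pair cannot be simultaneously left and right admissible). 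The paper's route is to use \cite{kas08} first to get $C'(j,\rho)\leq 1$ from left admissibility, then Corollary~\ref{cor:CC'} to force $C(j,\rho)=1$, where your bounded-difference argument is valid.

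You also invoke Theorem~\ref{thm:lamin} as if it always applies when $C\geq 1$, but the theorem requires $\F^{j,\rho}\neq\emptyset$, which may fail for nonreductive~$\rho$ (Lemma~\ref{lem:Fnonempty}, Section~\ref{ex:nonreductive2}). Since ``left admissible $\Leftrightarrow(1)$'' is asserted \emph{unconditionally}, this must be handled; the paper does so by passing to the reductive part $\rho^{\mathrm{red}}$ (Sections~\ref{subsec:nonred} and~\ref{subsec:proofadmnonred}). Separately, when $C=1$ and $\rho$ is not cusp-deteriorating, the stretch locus may be empty even if $\F^{j,\rho}\neq\emptyset$; but then some $\gamma$ has $j(\gamma)$ and $\rho(\gamma)$ both parabolic, and $\mu(j(\gamma^k))-\mu(\rho(\gamma^k))=O(1)$ directly by Lemma~\ref{lem:disthorosphere}, so properness fails. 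This easy case should be dispatched first and is \emph{not} confined to the degenerate clause of the theorem (it occurs for reductive $\rho$ as well).
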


In other words, Theorem~\ref{thm:adm} states that $(j,\rho)$ is left admissible if and only if ``$\rho$ is uniformly contracting with respect to~$j$''; this uniform contraction can be expressed in two equivalent ways: in terms of Lipschitz maps (condition~(1)) and in terms of ratios of lengths (condition~(2)).

Note that the inequality $C'(j,\rho)\leq C(j,\rho)$ is always true (see \eqref{eqn:ClambdaCLip}).
It can occur quite generically that $C'(j,\rho)<C(j,\rho)$ below~$1$, even when $j$ and~$\rho$ are both convex cocompact (see Sections \ref{ex:C'neqC} and~\ref{ex:C'neqCnoncompact}).
In the degenerate case where $\rho(\Gamma_0)$ has a unique fixed point in $\partial_{\infty}\HH^n$ and $\rho$ is not cusp-deteriorating, it can also happen that $C'(j,\rho)<C(j,\rho)=1$ (see Section~\ref{ex:CC'nonreductive}).
However, when we are not in this degenerate case, it follows from Theorem~\ref{thm:lamin} that $C(j,\rho)\geq\nolinebreak 1$ implies $C'(j,\rho)=C(j,\rho)$ (Corollary~\ref{cor:CC'}); in particular, $C'(j,\rho)<1$ implies $C(j,\rho)<1$.

In Theorem~\ref{thm:adm}, the fact that if $C(j,\rho)<1$ then $(j,\rho)$ is left admissible easily follows from the general \emph{properness criterion} of Benoist \cite{ben96} and Kobayashi \cite{kob96} (see Section~\ref{subsec:Cartanproj}); this was first observed by Salein \cite{sal00}.
Conversely, suppose that $(j,\rho)$ is left admissible.
Then $C'(j,\rho)\leq 1$ (because $(j,\rho)$ cannot be simultaneously left and right admissible, as mentioned above); the point is to prove that $C'(j,\rho)=1$ is impossible.
This is done in Section~\ref{subsec:proofadmred}, following the strategy of \cite{kasPhD}: we use Theorem~\ref{thm:lamin} to establish that $C'(j,\rho)=1$ implies, not only that $C(j,\rho)=1$ (Corollary~\ref{cor:CC'}), but also that the stretch locus $E(j,\rho)$ contains a geodesic line of~$\HH^n$; it is then easy to find a sequence of elements of~$\Gamma_0$ contradicting proper discontinuity by following this geodesic line.

We note that in Theorem~\ref{thm:adm} it is necessary for $\Gamma_0$ to be finitely generated: indeed, for infinitely generated~$\Gamma_0$ there exist left admissible pairs $(j,\rho)\in\Hom(\Gamma_0,G)^2$ of injective and discrete representations that satisfy $C(j,\rho)=C'(j,\rho)=1$ (see Section~\ref{ex:infinite}).
It would be interesting to know whether Theorem~\ref{thm:adm} still holds for finitely generated but geometrically infinite~$j$ (Appendix~\ref{sec:geom-infinite}).

Here is a consequence of Proposition~\ref{prop:contCcc} and Theorem~\ref{thm:adm}.

\begin{theorem}\label{thm:deformcompact}
Let $G=\PO(n,1)$ and let $\Gamma$ be a discrete subgroup of $G\times G$ acting properly discontinuously, freely, and cocompactly on~$G$ by right and left multiplication.
There is a neighborhood $\mathcal{U}\subset\Hom(\Gamma,G\times G)$ of the natural inclusion such that for any $\varphi\in\mathcal{U}$, the group $\varphi(\Gamma)$ is discrete in $G\times G$ and acts properly discontinuously, freely, and cocompactly on~$G$.
\end{theorem}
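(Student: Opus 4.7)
The plan is to use Theorem~\ref{thm:adm} to translate proper discontinuity into the strict Lipschitz inequality $C<1$, propagate this inequality to nearby deformations via the continuity of $C$ (Proposition~\ref{prop:contCcc}), and recover cocompactness from a classical cohomological equivalence.

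First, since $\Gamma$ is torsion-free (it acts freely on $G$), we may write $\Gamma=\Gamma_0^{j,\rho}$ with $j,\rho\in\Hom(\Gamma_0,G)$ and $j$ injective and discrete, possibly after swapping the two factors (as recalled before Definition~\ref{def:admissible}). Any $\varphi\in\Hom(\Gamma,G\times G)$ close to the inclusion is then given by a pair $(j',\rho')\in\Hom(\Gamma_0,G)^2$ close to $(j,\rho)$. The cocompactness of $\Gamma\backslash G$ combined with the equivalence ``$\Gamma_0^{j,\rho}\backslash G$ compact iff $j(\Gamma_0)\backslash\HH^n$ compact'' (recalled just before Definition~\ref{def:admissible}) forces $j$ to be cocompact on $\HH^n$, in particular convex cocompact with every nontrivial element hyperbolic. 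Without loss of generality $(j,\rho)$ is left admissible, so $C(j,\rho)<1$ by Theorem~\ref{thm:adm}.

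Next, by the Ehresmann--Thurston--Weil openness of cocompact representations in $\Hom(\Gamma_0,G)$, for $\varphi$ in a small enough neighborhood $\mathcal{U}$ of the inclusion the deformed $j'$ remains injective, discrete, and cocompact. Proposition~\ref{prop:contCcc} gives continuity of $(j',\rho')\mapsto C(j',\rho')$ at $(j,\rho)$; shrinking $\mathcal{U}$ further, I may assume $C(j',\rho')<1$ throughout. Theorem~\ref{thm:adm} then yields proper discontinuity of the action of $\varphi(\Gamma)$ on $G$. For freeness, note that $C'(j',\rho')\leq C(j',\rho')<1$, so for every $\gamma_0\neq e$ the hyperbolic element $j'(\gamma_0)$ satisfies $\lambda(\rho'(\gamma_0))<\lambda(j'(\gamma_0))$; in particular $j'(\gamma_0)$ and $\rho'(\gamma_0)$ have distinct translation lengths, hence are non-conjugate in $G$, and $(j'(\gamma_0),\rho'(\gamma_0))$ acts on $G$ without fixed points. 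The same inequality $\lambda(j'(\gamma_0))>0$ also ensures injectivity of $\varphi$, and discreteness of $\varphi(\Gamma)$ in $G\times G$ follows from discreteness of $j'(\Gamma_0)$. Finally, applying the compactness equivalence above to the still cocompact $j'$ gives compactness of $\varphi(\Gamma)\backslash G=\Gamma_0^{j',\rho'}\backslash G$.

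The principal obstacle is the openness of the condition $C<1$, which rests on the continuity of $C$ at convex cocompact $j$ (Proposition~\ref{prop:contCcc}); this continuity is known to fail for general geometrically finite $j$, and it is precisely the cocompactness of the original action that makes the whole argument go through cleanly.
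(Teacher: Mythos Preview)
Your overall strategy matches the paper's: reduce properness to $C(j,\rho)<1$ via Theorem~\ref{thm:adm}, propagate this via the continuity of $C$ (Proposition~\ref{prop:contCcc}) and the openness of (convex) cocompactness, then recover cocompactness from the cohomological equivalence. Your freeness argument via the strict inequality $\lambda(\rho'(\gamma_0))<\lambda(j'(\gamma_0))$ is clean and correct for infinite-order elements, and is essentially what the paper means when it says that ``freeness is seen exclusively on torsion elements''.

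There is, however, a genuine gap: a free action on $G$ does \emph{not} force $\Gamma$ to be torsion-free. An element $(g_1,g_2)\in\Gamma$ acts freely on $G$ if and only if $g_1$ and $g_2$ are non-conjugate in $G$; for $n\geq 4$ there exist non-conjugate elliptic elements of the same finite order (with different tuples of rotation angles), so torsion can occur. Concretely, if $\overline{\Gamma}_0$ is a cocompact orbifold group with torsion and $\rho$ is the trivial representation, then $\overline{\Gamma}_0^{\,j,\rho}$ acts freely on $G$ (since $j(\sigma)\neq 1=\rho(\sigma)$ for any torsion element~$\sigma$) yet has torsion. The decomposition $\Gamma=\Gamma_0^{j,\rho}$ you invoke at the outset is only guaranteed for torsion-free~$\Gamma$, so your argument does not start.

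The fix is the paper's: pass to a finite-index torsion-free subgroup $\Gamma'\subset\Gamma$ via the Selberg lemma, run your argument on $\Gamma'=\Gamma_0^{j,\rho}$, and lift properness, discreteness, and cocompactness back to $\varphi(\Gamma)$. Freeness does not lift automatically from $\varphi(\Gamma')$ to $\varphi(\Gamma)$; for the torsion elements of~$\Gamma$ one uses that $\Gamma$ has only finitely many conjugacy classes of torsion elements (since $\Gamma'$ is convex cocompact), and that for each such class the condition ``$\mathrm{pr}_1(\boldsymbol\gamma)$ and $\mathrm{pr}_2(\boldsymbol\gamma)$ are non-conjugate in~$G$'' is open in $\Hom(\Gamma,G\times G)$, so $\mathcal{U}$ can be shrunk accordingly.
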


A particular case of Theorem~\ref{thm:deformcompact} was proved by Kobayashi \cite{kob98}, namely the so-called ``special standard'' case (terminology of \cite{zeg98}) where $\Gamma$ is contained in $G\times\{ 1\}$; for $n=3$, this was initially proved by Ghys \cite{ghy95}.
The general case for $n=2$ follows from the completeness of compact anti-de Sitter manifolds, due to Klingler \cite{kli96}, and from the Ehresmann--Thurston principle on the deformation of holonomies of $(\mathbf{G},\mathbf{X})$-structures on compact manifolds.
An interpretation of Theorem~\ref{thm:deformcompact} in terms of $(\mathbf{G},\mathbf{X})$-structures is given in Section~\ref{subsec:(G,X)struct}.

We extend Theorem~\ref{thm:deformcompact} to proper actions on~$G$ that are not necessarily cocompact, using the following terminology.

\begin{definition}\label{def:cococo}
We say that a quotient of~$G$ by a discrete subgroup $\Gamma$ of $G\times\nolinebreak G$ is \emph{convex cocompact} (\resp \emph{geometrically finite}) if, up to switching the two factors and to passing to a finite-index subgroup, $\Gamma$ is of the form $\Gamma_0^{j,\rho}$ as in \eqref{eqn:Gamma(j,rho)} with $j$ convex cocompact (\resp geometrically finite) and $(j,\rho)$ left admissible.
\end{definition}

This terminology is justified by the fact that convex cocompact (\resp geo\-metrically finite) quotients of 
$\PO(n,1)$ fiber, with compact fiber $\OO(n)$, over convex cocompact (\resp geometrically finite) hyperbolic manifolds, up to a finite covering (see Proposition~\ref{prop:quotients} or \cite[Th.\,1.2]{dgk12}).

We prove the following extension of Theorem~\ref{thm:deformcompact}
(see also \cite{kas09} for a $p$-adic analogue).

\begin{theorem}\label{thm:deform}
Let $G=\PO(n,1)$ and let $\Gamma$ be a discrete subgroup of $G\times\nolinebreak G$ acting properly discontinuously on~$G$, with a convex cocompact quotient.
There is a neighborhood $\mathcal{U}\subset\Hom(\Gamma,G\times G)$ of the natural inclusion such that for any $\varphi\in\mathcal{U}$, the group $\varphi(\Gamma)$ is discrete in $G\times G$ and acts properly discontinuously on~$G$, with a convex cocompact quotient.
Moreover, if the quotient of $G$ by $\Gamma$ is compact, then so is the quotient of $G$ by $\varphi(\Gamma)$ for $\varphi\in\mathcal{U}$.
If the quotient of $G$ by $\Gamma$ is a manifold, then so is the quotient of $G$ by $\varphi(\Gamma)$ for $\varphi\in\mathcal{U}$ close enough to the natural inclusion.
\end{theorem}

Note that Theorem~\ref{thm:deform} is not true if we replace ``convex cocompact'' with ``geometrically finite'': for a given $j$ with cusps, the constant representation $\rho=\nolinebreak 1$ may have small deformations~$\rho'$ that are not cusp-deteriorating, hence for which $(j,\rho')$ cannot be admissible.
However, we prove that Theorem~\ref{thm:deform} is true in dimension $n=2$ or~$3$ if we restrict to deformations of $\Gamma$ of the form $\Gamma_0^{j,\rho}$ with geometrically finite~$j$ and \emph{cusp-deteriorating}~$\rho$ (Theorem~\ref{thm:deformwithcusps}); a similar statement is \emph{not} true for $n>3$ (see Section~\ref{ex:dim4C<1}).

Theorem~\ref{thm:adm} implies that any geometrically finite quotient of~$G$ is \emph{sharp} in the sense of \cite{kk12}; moreover, by Theorem~\ref{thm:deform}, if the quotient is convex cocompact, then it remains sharp after any small deformation of the discrete group~$\Gamma$ inside $G\times G$ (see Section~\ref{subsec:proofdeform}).
This implies the existence of an infinite discrete spectrum for the (pseudo-Riemannian) Laplacian on any geometrically finite quotient of~$G$: see \cite{kk12}.

\subsection{A generalization of Thurston's asymmetric metric on Teichm\"uller space}\label{subsec:Thurstonmetric}

Let $S$ be a hyperbolic surface of finite volume.
The Teichm\"uller space $\T(S)$ of~$S$ can be defined as the space of conjugacy classes of geometrically finite representations of $\Gamma_0:=\pi_1(S)$ into $\PO(2,1)\cong\PGL_2(\R)$ corresponding to finite-volume hyperbolic surfaces homeomorphic to~$S$.\linebreak
Thurston \cite{thu86} proved that $C(j,\rho)=C'(j,\rho)\geq 1$ for all $j,\rho\in\T(S)$; the function
$$d_{\mathrm{Th}} := \log C = \log C' : \T(S)\times\T(S) \longrightarrow \R_+$$
is the \emph{Thurston metric} on $\T(S)$, which was introduced and extensively studied in \cite{thu86}.
It is an ``asymmetric metric'', in the sense that $d_{\mathrm{Th}}(j,\rho)\geq 0$ for all $j,\rho\in\T(S)$, that $d_{\mathrm{Th}}(j,\rho)=0$ if and only if $j=\rho$ in $\T(S)$, that $d_{\mathrm{Th}}(j_1,j_3)\leq d_{\mathrm{Th}}(j_1,j_2)+d_{\mathrm{Th}}(j_2,j_3)$ for all $j_i\in\T(S)$, but that in general $d_{\mathrm{Th}}(j,\rho)\neq d_{\mathrm{Th}}(\rho,j)$.

We generalize Thurston's result that $C(j,\rho)=C'(j,\rho)$ to any dimension $n\geq 2$, to geometrically finite representations $j$ that are not necessarily of finite covolume, and to representations~$\rho$ that are not necessarily injective or discrete: as a consequence of Theorem~\ref{thm:lamin}, we obtain the following.

\begin{corollary}\label{cor:CC'}
For $G=\PO(n,1)$, let $(j,\rho)\in\Hom(\Gamma_0,G)^2$ be a pair of representations with $j$ geometrically finite.
If $C(j,\rho)\geq 1$, then
\begin{equation}\label{eqn:CC'}
C(j,\rho) = C'(j,\rho),
\end{equation}
except possibly in the degenerate case where $C(j,\rho)=1$, where $\rho(\Gamma_0)$ has a unique fixed point in $\partial_{\infty}\HH^n$, and where $\rho$ is not cusp-deteriorating.
\end{corollary}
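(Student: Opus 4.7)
The easy direction $C'(j,\rho) \leq C(j,\rho)$ is the content of \eqref{eqn:ClambdaCLip} and does not use geometric finiteness: for any $(j,\rho)$-equivariant $C_0$-Lipschitz map $f : \HH^n \to \HH^n$ and any $\gamma \in \Gamma_0$ with $j(\gamma)$ hyperbolic of axis $A$, picking $x \in A$ gives
$$\lambda(\rho(\gamma)) \;\leq\; d(f(x), \rho(\gamma)\cdot f(x)) \;=\; d(f(x), f(j(\gamma)\cdot x)) \;\leq\; C_0\, \lambda(j(\gamma)),$$
so the inequality follows by taking infima and suprema. The content of the corollary is thus the reverse inequality $C'(j,\rho) \geq C(j,\rho) =: C$ in the non-degenerate regime.

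To establish this, we would invoke Theorem~\ref{thm:lamin}: since $C \geq 1$ and we are outside the degenerate case, there exist an optimal $(j,\rho)$-equivariant, $C$-Lipschitz map $f_0$ and a nonempty $j(\Gamma_0)$-invariant geodesic lamination $\LL \subset E(j,\rho)$ whose projection to $M:=j(\Gamma_0)\backslash \HH^n$ is compact, and such that $f_0$ multiplies distances by exactly $C$ on every leaf of $\LL$ (passing to a $1$-dimensional sublamination in the $C(j,\rho)=1$ cusp-deteriorating case). The plan is to use leaves of $\LL$ to exhibit elements $\gamma\in\Gamma_0$ realizing ratios $\lambda(\rho(\gamma))/\lambda(j(\gamma))$ arbitrarily close to~$C$.

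The favorable case is when $\LL$ contains a closed leaf, stabilized by some hyperbolic $j(\gamma)$ of axis $A$. Then $f_0|_A$ is an exact $C$-dilation, i.e.\ $d(f_0(x),f_0(y))=C\cdot d(x,y)$ for all $x,y\in A$. Since collinearity along $A$ is preserved by $f_0$ (three consecutive points on $A$ satisfying $d(x,z)=d(x,y)+d(y,z)$ still do after dilation by $C$), the image $f_0(A)$ is a complete geodesic of~$\HH^n$; it is $\rho(\gamma)$-invariant and is translated by $\rho(\gamma)$ by the positive distance $C\lambda(j(\gamma))$. Hence $\rho(\gamma)$ is forced to be hyperbolic with axis $f_0(A)$ and $\lambda(\rho(\gamma)) = C\lambda(j(\gamma))$, proving $C'(j,\rho) \geq C$.

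When $\LL$ has no closed leaf, we pass to a minimal sublamination $\LL'\subset\LL$, whose leaves are all recurrent in the compact projection to~$M$. A closing-lemma argument for the geodesic flow on the non-wandering set of the convex core then produces a sequence $\gamma_n \in \Gamma_0$ with $j(\gamma_n)$ hyperbolic, $\lambda(j(\gamma_n)) \to +\infty$, and axis $A_n$ approximating a chosen leaf $\tilde\ell$ of $\LL'$ within $\epsilon_n \to 0$ on a fundamental segment. For $x_n\in A_n$ close to $\tilde\ell$, the exact $C$-stretching of $\tilde\ell$ combined with the global $C$-Lipschitz property of $f_0$ yields
$$d(f_0(x_n), \rho(\gamma_n)\cdot f_0(x_n)) \;=\; d(f_0(x_n), f_0(j(\gamma_n)\cdot x_n)) \;\geq\; C\,\lambda(j(\gamma_n)) - O(\epsilon_n).$$
The standard hyperbolic-geometry inequality $d(y,g\cdot y) \leq \lambda(g) + 2\, d(y,A_g)$ for $g$ hyperbolic, together with the $\rho(\gamma_n)$-invariance of the $C$-Lipschitz curve $f_0(A_n)$ (which stays close to the geodesic $f_0(\tilde\ell)$ on the closing region, forcing $\rho(\gamma_n)$ to be hyperbolic with axis close to $f_0(x_n)$), should then give $\lambda(\rho(\gamma_n)) \geq C\,\lambda(j(\gamma_n)) - o(\lambda(j(\gamma_n)))$, whence $C'(j,\rho) \geq C$. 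The main obstacle is precisely this last step: propagating the \emph{local} information that $f_0$ stretches $\tilde\ell$ by~$C$ in a bounded closing region into a \emph{global} control on the translation length $\lambda(\rho(\gamma_n))$, which requires ensuring $\rho(\gamma_n)$ is hyperbolic and that $d(f_0(x_n), A_{\rho(\gamma_n)})$ grows sublinearly compared to $\lambda(j(\gamma_n))$.
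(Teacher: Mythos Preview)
Your overall strategy matches the paper's (Lemma~\ref{lem:maxstretchedlamin}): use the maximally stretched lamination from Theorem~\ref{thm:lamin} and a closing argument to produce elements $\gamma$ with $\lambda(\rho(\gamma))/\lambda(j(\gamma))$ arbitrarily close to~$C$. The obstacle you flag at the end is real but has a clean resolution that you nearly have: do not work with the approximating axis~$A_n$ (on which $f_0$ is only \emph{approximately} a $C$-dilation, so $f_0(A_n)$ is not a geodesic), but stay on the leaf~$\tilde\ell$ throughout. Since $f_0$ multiplies distances by exactly~$C$ on~$\tilde\ell$, the image $f_0(\tilde\ell)$ is a genuine geodesic line of~$\HH^n$. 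Recurrence gives arbitrarily large $t$ and $\gamma\in\Gamma_0$ with $j(\gamma)\cdot[p_0,p_1]$ and $[p_t,p_{t+1}]$ being $\varepsilon$-close oriented unit segments of~$\tilde\ell$; then $\rho(\gamma)\cdot f_0([p_0,p_1]) = f_0(j(\gamma)\cdot[p_0,p_1])$ and $f_0([p_t,p_{t+1}])$ are $C\varepsilon$-close oriented \emph{geodesic} segments of $f_0(\tilde\ell)$, and the closing lemma (Lemma~\ref{lem:closinglemma}) applied a second time, now on the target side, yields directly $\lambda(\rho(\gamma)) \geq C(t - 2\varepsilon) \geq C(\lambda(j(\gamma)) - 4\varepsilon)$. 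No control on $d(f_0(x_n),A_{\rho(\gamma_n)})$ is needed.

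There is also a coverage gap in your invocation of Theorem~\ref{thm:lamin}. That theorem presupposes $\F^{j,\rho}\neq\emptyset$, which Lemma~\ref{lem:Fnonempty} only guarantees for reductive~$\rho$; and its lamination clause requires $C>1$ or ($C=1$ and $\rho$ cusp-deteriorating). Being ``outside the degenerate case'' of the corollary does not ensure either hypothesis: you still owe an argument for (a)~nonreductive~$\rho$ with $C>1$, and (b)~reductive~$\rho$ with $C=1$ and $\rho$ not cusp-deteriorating, where $E(j,\rho)$ can be empty (Section~\ref{ex:nondeteriorating}). The paper handles~(a) by reducing to~$\rho^{\mathrm{red}}$ via Lemmas~\ref{lem:nonred-lip} and~\ref{lem:muparabolic} (which give $C(j,\rho)=C(j,\rho^{\mathrm{red}})$ and $C'(j,\rho)=C'(j,\rho^{\mathrm{red}})$), and handles~(b) in Lemma~\ref{lem:ClambdaCLipred} by picking $\gamma$ with $j(\gamma)$, $\rho(\gamma)$ both parabolic and combining the logarithmic growth of~$\mu$ on parabolics with the equality $C'(j,\rho)=C_\mu(j,\rho)$ for reductive~$\rho$ (Lemma~\ref{lem:ClambdaCmured}).
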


In particular, $C(j,\rho)\geq 1$ always implies \eqref{eqn:CC'} if $j$ is convex cocompact, and $C'(j,\rho)\geq 1$ always implies \eqref{eqn:CC'} without any assumption on~$j$.

In order to generalize the Thurston metric, we consider a nonelementary hyperbolic manifold $M$ of any dimension $n\geq 2$ and let $\T(M)$ be the set of conjugacy classes of geometrically finite representations of $\Gamma_0:=\pi_1(M)$ into $G=\PO(n,1)$ with the homeomorphism type and cusp type of~$M$.
We allow $M$ to have infinite volume, otherwise $\T(M)$ is trivial for $n>2$ by Mostow rigidity.
We set
\begin{equation} \label{eqn:thurston-distance-corrected}
d_{\mathrm{Th}}(j,\rho) := \log\bigg(C(j,\rho) \, \frac{\delta(\rho)}{\delta(j)}\bigg)
\end{equation}
for all $j,\rho\in\T(M)$, where $\delta : \T(M)\rightarrow (0,n-1]$ is the \emph{critical exponent} (see \eqref{eqn:defcritexp}) giving the exponential growth rate of orbits in~$\HH^n$ or, equivalently in this setting, the Hausdorff dimension of the limit set \cite{sul79,sul84}.
It easily follows from the definition of~$\delta$ that $d_{\mathrm{Th}}(j,\rho)\geq 0$ for all $j,\rho \in \T(M)$~(Lemma~\ref{lem:dThgeq0}).
Let $\T(M)_{\mathrm{Zs}}$ be the subset of $\T(M)$ consisting of elements $j$ such that the Zariski closure of $j(\Gamma_0)$ in~$G$ is simple (for instance equal to $G$ itself).
We prove the following.

\begin{proposition}\label{prop:generalThurston}
The function $d_{\mathrm{Th}}$ defines an asymmetric metric on\linebreak $\T(M)_{\mathrm{Zs}}$.
\end{proposition}

The point of Proposition~\ref{prop:generalThurston} is that $d_{\mathrm{Th}}(j,\rho)=0$ implies $j=\rho$ on $\T(M)_{\mathrm{Zs}}$.

On the other hand, for convex cocompact~$M$, it follows from work of Burger \cite[Th.\,1]{bur93} (see also \cite{bcls15}) that
$$d'_{\mathrm{Th}}(j,\rho) := \log\!\bigg(\!C'(j,\rho) \, \frac{\delta(\rho)}{\delta(j)}\bigg)$$
defines an asymmetric metric on~$\T(M)_{\mathrm{Zs}}$.
Kim \cite[Cor.\,3]{kim01} also proved that if $\log C'(j,\rho)=0$ and $\delta(j)=\delta(\rho)$, then $j=\rho$ in $\T(M)_{\mathrm{Zs}}$.
By Corollary~\ref{cor:CC'}, the asymmetric metrics $d_{\mathrm{Th}}$ and $d'_{\mathrm{Th}}$ are equal on the set
$$\big\{ (j,\rho)\in\T(M)_{\mathrm{Zs}}^2 ~|~ C(j,\rho)\geq 1\big\} = \big\{ (j,\rho)\in\T(M)_{\mathrm{Zs}}^2 ~|~ C'(j,\rho)\geq 1\big\}.$$
However, they differ in general (see Sections \ref{ex:C'neqC} and~\ref{ex:C'neqCnoncompact}).
It would be interesting to compare them.

In dimension $n\leq 3$ the asymmetric metric $d_{\mathrm{Th}}$ is always continuous, and in dimension $n\geq 4$ it is continuous when $M$ is convex cocompact (Lemma~\ref{lem:dThcont}).

\subsection{Organization of the paper}

Section~\ref{sec:prelim} contains reminders and basic facts on geometrical finiteness, Lipschitz maps, and convex interpolation in the real hyperbolic space~$\HH^n$.
In Section~\ref{sec:Kirszbraun} we recall the classical Kirszbraun--Valentine theorem and establish an equivariant version of it for amenable groups.
We then derive general properties of the stretch locus in Section~\ref{sec:stretchlocus}.
In Section~\ref{sec:Kirszbraunopt} we prove an optimized, equivariant Kirszbraun--Valentine theorem for geometrically finite representations of discrete groups; this yields in particular Theorems \ref{thm:lamin} and~\ref{thm:Kirszbraunequiv}, as well as Corollary~\ref{cor:CC'}.
In Section~\ref{sec:lipcont} we examine the continuity properties of the minimal Lipschitz constant $C(j,\rho)$; in particular, we prove Proposition~\ref{prop:contCcc}.
In Section~\ref{sec:properactions} we apply the theory to properly discontinuous actions on $G=\PO(n,1)$ (proving Theorems \ref{thm:adm}, \ref{thm:deformcompact}, and~\ref{thm:deform}), and in Section~\ref{sec:Thurston} we generalize Thurston's asymmetric metric on Teichm\"uller space (proving Proposition~\ref{prop:generalThurston}).
In Section~\ref{sec:dim2} we focus on the case $n=2$: we recover and extend results of Thurston for $C(j,\rho)>1$, and discuss the nature of the stretch locus for $C(j,\rho)<1$.
Finally, in Section~\ref{sec:ex} we give a number of examples and counterexamples designed to make the theory more concrete while pointing out some subtleties.
We collect useful formulas in Appendix~\ref{sec:appendix}, technical facts on geometrically finite representations in Appendix~\ref{sec:conv-fundamental}, and open questions in Appendix~\ref{sec:questions}.

\medskip
\noindent
\textit{Note.}
We have tried, inside each section, to clearly separate the arguments needed for the convex cocompact case from the ones specific to the cusps.
Skipping the latter should decrease the length of the paper substantially.

\subsection*{Acknowledgements}

We are grateful to Maxime Wolff for his comments on a preliminary version of this paper, to Jeff Danciger for numerous discussions on related topics, and to Samuel Tapie for his indications on the Bowen--Margulis--Sullivan measure.
We are indebted to Marc Burger for the idea of introducing the correcting factor $\delta(\rho)/\delta(j)$ in the definition of $d_{\mathrm{Th}}$ (in an earlier version of this paper we had to restrict to the case $\delta(j)=\delta(\rho)$).
We would like to thank him for interesting discussions on the two possible generalizations $d_{\mathrm{Th}}$ and $d'_{\mathrm{Th}}$ of the Thurston metric.
We are grateful to an anonymous referee for carefully reading the manuscript and making many valuable suggestions.
Finally, we thank the University of Chicago for its support and the Institut CNRS-Pauli (UMI 2842) in Vienna for its hospitality.

\section{Preliminary results}\label{sec:prelim}

In this section we recall a few well-known facts and definitions on geometrically finite hyperbolic orbifolds, on Lipschitz constants, and on barycenters in the real hyperbolic space~$\HH^n$.
We also expand on the notion of cusp-deterioration introduced in Definition~\ref{def:typedet}.
In the whole paper, $G$ is the full group $\PO(n,1)=\OO(n,1)/\{ \pm 1\}$ of isometries of~$\HH^n$.
If $n$ is even, then $G$ identifies with $\SO(n,1)$.

\subsection{Geometrical finiteness} \label{subsec:geo-finiteness}

Let $j\in\Hom(\Gamma_0,G)$ be an injective representation of a discrete group~$\Gamma_0$, with $j(\Gamma_0)$ discrete.
The quotient $M:=j(\Gamma_0)\backslash\HH^n$ is a smooth, $n$-dimensional orbifold; it is a manifold if and only if $\Gamma_0$ is torsion-free.
The \emph{convex core} of~$M$ is the smallest closed convex subset of~$M$ containing all closed geodesics; its preimage in~$\HH^n$ is the convex hull of the limit set $\Lambda_{j(\Gamma_0)}\subset\partial_{\infty}\HH^n$ of~$j(\Gamma_0)$.
(The convex hull is empty only in the degenerate case where the group $j(\Gamma_0)$ has a fixed point in~$\HH^n$ or a unique fixed point in $\partial_{\infty}\HH^n$; we do not exclude this case.)
Following \cite{bow93}, we will say that the injective and discrete representation~$j$ is \emph{geometrically finite} if $\Gamma_0$ is finitely generated and if for any $\varepsilon>0$, the $\varepsilon$-neighborhood of the convex core of~$M$ has finite volume.
In dimension $n=2$, any injective and discrete representation in~$G$ of a finitely generated group is geometrically finite.
In general, $j$ is geometrically finite if and only if the convex core of~$M$ is contained in the union of a compact set and of finitely many disjoint \emph{cusps}, whose boundaries have compact intersection with the convex core.
We now explain what we mean by cusp, following \cite{bow93}.

Let $B$ be a horoball of~$\HH^n$, centered at a point $\xi\in\partial_{\infty}\HH^n$, and let $S\subset\Gamma_0$ be the stabilizer of $B$ under~$j$.
The group $j(S)$ is discrete (possibly trivial) and consists of nonhyperbolic elements.
It preserves the horosphere $\partial B\simeq\R^{n-1}$ and acts on it by affine Euclidean isometries.
By the first Bieberbach theorem (see \cite[Th.\,2.2.5]{bow93}), there is a finite-index normal subgroup $S'$ of~$S$ that is isomorphic to~$\Z^m$ for some $0\leq m<n$, and whose index in~$S$ is bounded by some $\nu(n)\in\N$ depending only on the dimension~$n$; we have $m\geq 1$ if and only if $S$ contains a parabolic element.
The group $j(S)$ preserves and acts cocompactly on some $m$-dimensional affine subspace $\mathcal{V}$ of $\partial B\simeq\R^{n-1}$, unique up to translation; the subgroup $j(S')$ acts on~$\mathcal{V}$ by translation.
Let $\HH_{\mathcal{V}}$ be the closed $(m+1)$-dimensional hyperbolic subspace of~$\HH^n$ containing $\xi$ in its boundary such that $\HH_{\mathcal{V}}\cap\partial B=\mathcal{V}$, and let $\pi : \HH^n\rightarrow\HH_{\mathcal{V}}$ be the closest-point projection (see Figure~\ref{fig:A}).
The group $j(S)$ preserves the convex set $\mathfrak{C}:=\pi^{-1}(\HH_{\mathcal{V}}\cap B)\subset\HH^n$.
Following \cite{bow93}, we say that the image of $\mathfrak{C}$ in~$M$ is a \emph{cusp} if $m\geq 1$ and $\mathfrak{C}\cap j(\gamma)\cdot\mathfrak{C}=\emptyset$ for all $\gamma\in\Gamma_0\smallsetminus S$.
The cusp is then isometric to $j(S)\backslash\mathfrak{C}$; its intersection with the convex core of~$M$ is contained in $j(S)\backslash B'$ for some horoball $B'\supset B$.
The integer~$m$ is called the \emph{rank} of the cusp.

\begin{figure}[h!]
\begin{center}
\labellist
\small\hair 2pt
 \pinlabel{$(\xi=\infty)$} at 275 300
 \pinlabel{$\HH_{\mathcal{V}}$} at 178 150
 \pinlabel{$j(S')$} at 225 217
 \pinlabel{$\mathcal{V}$} at 300 170
 \pinlabel{$\mathfrak{C}$} at 30 48
 \pinlabel{$B$} at 470 220
 \pinlabel{$\partial B$} at 490 171
 \pinlabel{$\partial_{\infty}\HH^3$} at 390 8
 \endlabellist
\includegraphics[width=10cm]{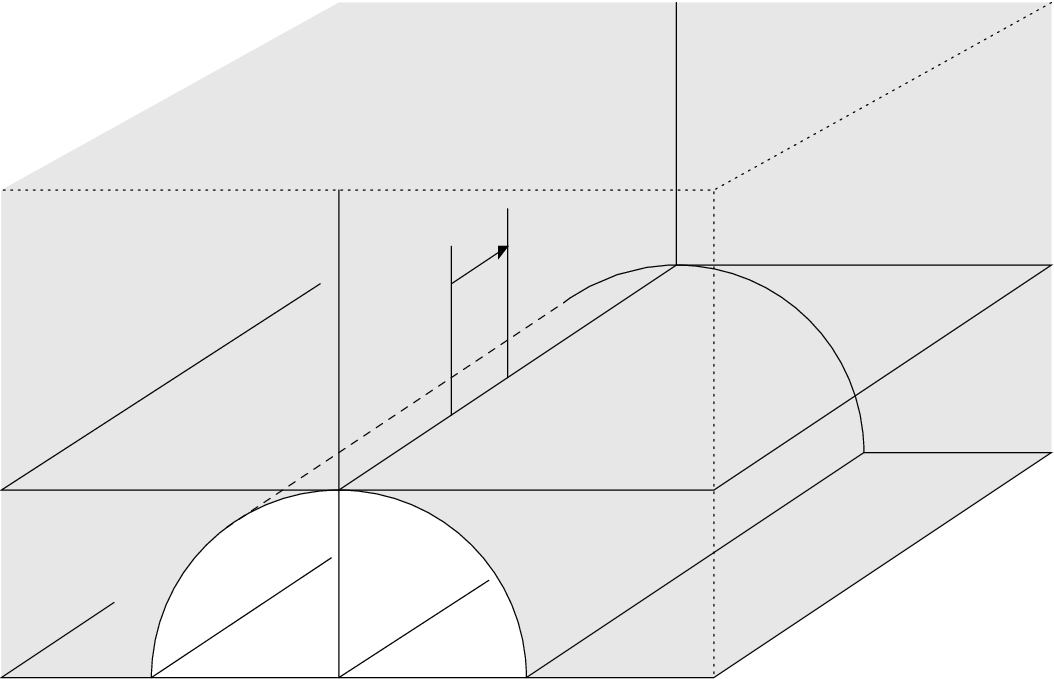}
\caption{A rank-one cusp centered at $\xi=\infty$ in the upper half-space model of~$\HH^3$. The limit set is contained in $\{\xi\}\cup\partial_\infty(\HH^3\smallsetminus\mathfrak{C})$.}
\label{fig:A}
\end{center}
\end{figure}

When the convex core of~$M$ is nonempty, we may assume that it contains the image of~$\mathcal{V}$, after possibly replacing $B$ by some smaller horoball and $\mathcal{V}$ by some translate.

We shall use the following description.

\begin{fact}\label{fact:geomfinite}
If $j$ is geometrically finite, then $M=j(\Gamma_0)\backslash\HH^n$ is the union of a closed subset $M'$ and of finitely many disjoint quotients $j(S_i)\backslash B_i$, where $B_i$ is a horoball of~$\HH^n$ and $j(S_i)$ a discrete group of isometries of~$B_i$ containing a parabolic element, such that
\begin{itemize}
  \item the intersection of~$M'$ with the convex core of~$M$ is compact;
  \item for any~$i$ we have $M'\cap (j(S_i)\backslash B_i)=j(S_i)\backslash\partial B_i$; in particular, the intersection of $j(S_i)\backslash\partial B_i$ with the convex core of~$M$ is compact;
  \item for any~$i$ the intersection in~$\HH^n$ of~$B_i$ with the preimage $N$ of the convex core of~$M$ is the convex hull of $\partial B_i\cap N$, and $j(S_i) \backslash (\partial B_i\cap N)$ is compact.
\end{itemize}
\end{fact}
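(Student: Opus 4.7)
The plan is to unpack the equivalent characterization of geometric finiteness recalled just above the statement, namely that the convex core of $M$ is the union of a compact set and finitely many disjoint cusps whose boundaries meet the convex core in a compact set, and to then promote these cusps to full horoball quotients in~$M$. Concretely, I would enumerate the finitely many cusp-generating conjugacy classes of maximal parabolic subgroups of $j(\Gamma_0)$ — one representative~$S_i$ per class, fixing a point $\xi_i\in\partial_{\infty}\HH^n$ — and, for each, apply the Margulis-type argument to shrink a horoball $B_i$ centered at~$\xi_i$ until the family $\{j(\gamma)\cdot B_i\}_{\gamma,i}$ is pairwise disjoint (this is the standard ``precisely invariant'' choice, possible because two horoballs centered at distinct limit points that are linked by a nontrivial non-parabolic element must intersect transversely, bounding the size from below only finitely many times).

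Once the $B_i$ are fixed, I would simply set
\[
M' := M \,\Big\backslash\, \bigsqcup_i j(S_i)\backslash \mathrm{int}(B_i).
\]
The second bullet is then immediate from the definition, and the first bullet follows because the convex core decomposes as a compact piece~$K$ plus the finitely many cusp tails $j(S_i)\backslash(\mathfrak{C}_i\cap B_i)$; removing the interiors of the horoballs from each tail leaves only the slab $j(S_i)\backslash(\partial B_i\cap N)$, which is compact by the Bieberbach-type description of~$j(S_i)$ acting cocompactly on the affine plane $\mathcal{V}_i\subset\partial B_i$, together with the fact that (by geometric finiteness) $\partial B_i\cap N$ is contained in a bounded neighborhood of~$\mathcal{V}_i$.

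The only nontrivial step is the third bullet: $N\cap B_i=\mathrm{Conv}(\partial B_i\cap N)$. The inclusion ``$\supseteq$'' is automatic from convexity of~$N$ and~$B_i$. For ``$\subseteq$'', I would argue as follows: any $x\in N\cap B_i$ lies on a geodesic (or is a convex combination of points on geodesics) between two limit points $\eta,\eta'\in\Lambda_{j(\Gamma_0)}$. If neither endpoint equals~$\xi_i$, the geodesic $(\eta,\eta')$ crosses $\partial B_i$ in two points, both of which are automatically in~$N$, so $x\in\mathrm{Conv}(\partial B_i\cap N)$. If one endpoint, say~$\eta'$, equals~$\xi_i$, I would approximate $\xi_i$ by a sequence $j(\gamma_k)\cdot\eta''$ with $\gamma_k\in S_i$ parabolic and $\eta''\in\Lambda\smallsetminus\{\xi_i\}$ (such $\eta''$ exists unless $\Lambda=\{\xi_i\}$, the excluded degenerate case); then $x$ is a limit of points on geodesics with both endpoints in $\Lambda\smallsetminus\{\xi_i\}$, reducing to the first case. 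Cocompactness of $j(S_i)$ on $\partial B_i\cap N$ (and hence on $N\cap B_i$) is then a direct consequence of cocompactness of $j(S_i)$ on~$\mathcal{V}_i$ together with the boundedness of $\partial B_i\cap N$ transverse to~$\mathcal{V}_i$.

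The main obstacle I expect is the case $\eta'=\xi_i$ in the convex-hull argument, where one must take a closure/limit of convex combinations rather than a single convex combination; this is really where the parabolic fixed point sits on the frontier of~$B_i$ at infinity and forces the use of the closed convex hull. Everything else is either bookkeeping with the Bieberbach structure of~$j(S_i)$ or a direct consequence of the geometric-finiteness hypothesis already recalled.
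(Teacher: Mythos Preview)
The paper does not prove this statement: it is recorded as a \emph{Fact}, with the surrounding discussion pointing to Bowditch's paper for the underlying structure theory. So there is no ``paper's own proof'' to compare against; your sketch is essentially an attempt to extract the statement from the standard description of cusps, which is the right thing to do.

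Your setup (precisely invariant horoballs, the definition of~$M'$, and the verification of the first two bullets via the Bieberbach structure and bounded parabolic fixed points) is correct and standard.

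There is, however, a genuine gap in your argument for the inclusion $N\cap B_i\subseteq\Conv{\partial B_i\cap N}$. You write that any $x\in N\cap B_i$ ``lies on a geodesic (or is a convex combination of points on geodesics) between two limit points~$\eta,\eta'$'', and then do a case split on whether $\eta'=\xi_i$. But a point of $N=\Conv{\Lambda}$ need not lie on any bi-infinite geodesic with both endpoints in~$\Lambda$ (think of the incenter of an ideal triangle in~$\HH^3$), so the case split does not apply directly, and your parenthetical does not explain how to propagate the conclusion through iterated convex combinations. You flag the $\eta'=\xi_i$ limiting step as the main obstacle, but that step is actually fine; the missing piece is the passage from ``geodesics between limit points'' to all of~$N$.

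A clean fix, which also absorbs your approximation argument, is to show directly that
\[
D \;:=\; \Conv{\partial B_i\cap N}\;\cup\;\bigl(N\smallsetminus\mathrm{int}(B_i)\bigr)
\]
is closed, convex, and has $\Lambda$ in its ideal boundary; then $N=\Conv{\Lambda}\subseteq D$, giving $N\cap B_i\subseteq\Conv{\partial B_i\cap N}$. Convexity of~$D$ is elementary: for $p,q\in D$, the segment $[p,q]\subset N$ meets~$B_i$ in a (possibly empty) subsegment with endpoints in $\partial B_i\cap N\subseteq\Conv{\partial B_i\cap N}$, so that subsegment lies in $\Conv{\partial B_i\cap N}$ and the rest lies in $N\smallsetminus\mathrm{int}(B_i)$. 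That $\xi_i$ lies in the closure of $\Conv{\partial B_i\cap N}$ follows, as you say, from any $j(S_i)$-orbit in $\partial B_i\cap N$; the other points of~$\Lambda$ are in the closure of $N\smallsetminus\mathrm{int}(B_i)$. Your cocompactness claim for $j(S_i)$ on $\partial B_i\cap N$ is correct as stated.
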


\begin{definition}\label{def:standardcusp}
We shall call the intersections of the sets $j(S_i)\backslash B_i$ with the convex core of~$M$ \emph{standard cusp regions}.
\end{definition}

If $j$ is geometrically finite, then the complement of the convex core of~$M$ has finitely many connected components, called the \emph{funnels} of~$M$.
By definition, $j$ is \emph{convex cocompact} if it is geometrically finite with no cusp; when $\Gamma_0$ is infinite, this is equivalent to the convex core being nonempty and compact.
The set of convex cocompact representations is open in $\Hom(\Gamma_0,G)$ (see \cite[Prop.\,4.1]{bow98} or Proposition~\ref{prop:rem-ccopen}).

In Sections \ref{sec:stretchlocus} and~\ref{sec:Kirszbraunopt}, we shall consider a $j(\Gamma_0)$-invariant subset $K$ of~$\HH^n$ whose image in~$M$ is compact.
We shall then use the following notation.

\begin{notation}\label{not:ConvK}
In the rest of the paper, $\Conv{K}\subset\HH^n$ denotes:
\begin{itemize}
  \item the convex hull of~$K$ if $K$ is nonempty,
  \item the preimage in~$\HH^n$ of the convex core of $M=j(\Gamma_0)\backslash\HH^n$ if $K$ is empty and the convex core is nonempty (leaving $j$ implicit),
  \item any \emph{nonempty} $j(\Gamma_0)$-invariant convex subset of~$\HH^n$ if $K$ and the convex core of $M=j(\Gamma_0)\backslash\HH^n$ are both empty (case when $j(\Gamma_0)$ is an elementary group fixing a point in~$\HH^n$ or a unique point in $\partial_{\infty}\HH^n$).
\end{itemize}
\end{notation}

\noindent
In all three cases the set $\Conv{K}$ is nonempty and contains the preimage in~$\HH^n$ of the convex core of~$M$.
In Fact~\ref{fact:geomfinite}, we can take $M'$ and the $B_i$ with the following properties:
\begin{itemize}
  \item $M'$ contains the (compact) image of $K$ in~$M$;
  \item the intersection of~$M'$ with the image of $\Conv{K}$ in~$M$ is compact;
  \item for any~$i$ the set $B_i\cap\Conv{K}$ is the convex hull of $\partial B_i\cap\Conv{K}$, and $j(S_i)\backslash (\partial B_i\cap\Conv{K})$ is compact.
\end{itemize}

\subsection{Cusp deterioration}\label{subsec:cuspdet}

Let $j\in\Hom(\Gamma_0,G)$ be a geometrically finite~re\-presentation and let $B_1,\dots,B_c$ be horoballs of~$\HH^n$ whose projections $j(S_i)\backslash B_i$ to $M=j(\Gamma_0)\backslash\HH^n$ are disjoint and intersect the convex core in standard cusp regions representing all the cusps, as in Section~\ref{subsec:geo-finiteness}.
Consider $\rho\in\nolinebreak\Hom(\Gamma_0,G)$.

\begin{definition}\label{def:detinacusp}
For $1\leq i\leq c$, we say that $\rho$ is \emph{deteriorating in~$B_i$} if $\rho(S_i)$ contains only elliptic elements.
\end{definition}

Thus $\rho$ is cusp-deteriorating in the sense of Definition~\ref{def:typedet} if and only if it is deteriorating in~$B_i$ for all $1\leq i\leq c$.

Depending on whether $\rho$ is deteriorating in~$B_i$ or not, we shall use the following classical fact with $\Gamma'=\rho(S_i)$.

\begin{fact}[see {\cite[Th.\,III.3.1]{parPhD}}]\label{fact:fixedpoint}
Let $\Gamma'$ be a finitely generated subgroup of~$G$.
\begin{enumerate}
  \item If all elements of~$\Gamma'$ are elliptic, then $\Gamma'$ has a fixed point in~$\HH^n$.
  \item If all elements of~$\Gamma'$ are elliptic or parabolic and if $\Gamma'$ contains at least one parabolic element, then $\Gamma'$ has a unique fixed point in the boundary at infinity $\partial_{\infty}\HH^n$ of~$\HH^n$.
\end{enumerate}
\end{fact}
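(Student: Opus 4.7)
My plan is to handle the two parts separately, starting with the more self-contained part~(2). Fix a parabolic $\gamma_0 \in \Gamma'$ with unique fixed point $\xi \in \partial_{\infty}\HH^n$. Suppose for contradiction some $\gamma \in \Gamma'$ satisfies $\gamma \xi \neq \xi$; then $\gamma_0' := \gamma \gamma_0 \gamma^{-1}$ is parabolic with fixed point $\gamma \xi \neq \xi$. The idea is to invoke the classical dichotomy for subgroups of $\Isom(\HH^n)$: any ``elementary'' subgroup (fixing a point in $\HH^n \cup \partial_{\infty}\HH^n$ or an unordered pair of boundary points) containing a parabolic must fix exactly the parabolic's boundary fixed point. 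Concretely, I would show by a ping-pong / Shimizu--Jorgensen style argument on $\partial_{\infty}\HH^n$ that $\langle \gamma_0, \gamma_0' \rangle$ contains a hyperbolic element: iterating the two parabolics alternately produces products whose dynamics has well-separated attracting and repelling loci near $\xi$ and $\gamma \xi$, forcing some such product to be hyperbolic. (In dimension~$2$ this is a direct trace computation: two parabolics with distinct fixed points give, after iterating one of them, a product of trace with absolute value~$>2$.) This contradicts our hypothesis on~$\Gamma'$, so every $\gamma \in \Gamma'$ fixes~$\xi$. Uniqueness of the fixed point in $\partial_{\infty}\HH^n$ is immediate since $\xi$ is already the unique fixed point of the parabolic~$\gamma_0$.

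For part~(1), the plan is to apply Cartan's fixed point theorem for complete CAT(0) spaces: a group of isometries has a global fixed point as soon as it has a bounded orbit. Picking generators $g_1, \ldots, g_k$ of~$\Gamma'$, I consider the continuous convex function $\delta(x) := \max_i d(x, g_i x)$ on $\HH^n$. If $\delta$ attains the value~$0$ somewhere, then the generators have a common fixed point and we are done. Otherwise either $\delta$ does not attain its infimum, or it attains a positive infimum; in both subcases, an escape-to-infinity argument (respectively along a minimizing sequence, or along a geodesic ray where $\delta$ stays controlled) produces a subsequential limit $\xi \in \partial_{\infty}\HH^n$ fixed by every~$g_i$, hence by~$\Gamma'$. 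Thus we are reduced to the case where $\Gamma'$ lies in the stabilizer $G_\xi$ of a boundary point, consisting only of elliptic elements.

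The main obstacle is then analyzing a finitely generated subgroup of $G_\xi$ whose elements are all elliptic. Using the Iwasawa decomposition $G_\xi = N \rtimes (MA)$ with $N \cong \R^{n-1}$ (horospherical translations), $A \cong \R$ (axial flow toward~$\xi$), and $M \cong \OO(n-1)$ (rotations around an axis to~$\xi$), a direct computation shows that an element of $G_\xi$ is elliptic if and only if its $A$-component is trivial and its $N$-component lies in the image of $\mathrm{id}$ minus its $M$-component. Hence $\Gamma' \subset N \rtimes M$ acts on the horosphere at $\xi$ by affine Euclidean isometries, each with a fixed point. The Euclidean analogue of our statement---that a finitely generated affine isometry group whose elements all have fixed points has a common fixed point---then follows by a Bieberbach-style induction on the number of generators, adding one generator at a time to a group with known common fixed affine subspace and using finite generation together with ellipticity of the enlarged group to force the new generator's fixed locus to meet this subspace. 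This yields a common fixed point on the horosphere, and hence in $\HH^n$, completing part~(1). The Bieberbach step is where finite generation is essential and where the delicate combinatorics lie.
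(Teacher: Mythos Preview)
The paper does not prove this statement at all: it is recorded as a \emph{Fact} with a reference to Parreau's thesis, and is used as a black box throughout. So there is no ``paper's own proof'' to compare against; you are supplying an argument where the authors chose to cite one.

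Your sketch for part~(2) is fine in outline: two parabolics with distinct fixed points generate a group containing a hyperbolic element, and this is indeed a standard ping-pong or trace argument.

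Your sketch for part~(1), however, has a real gap in the reduction step. You consider $\delta(x)=\max_i d(x,g_i x)$ and assert that if $\delta$ attains a \emph{positive} infimum, then ``an escape-to-infinity argument along a geodesic ray where $\delta$ stays controlled'' yields a common boundary fixed point for the generators. But no such ray need exist. Take two elliptic generators in $\HH^2$ (rotations about distinct points with no common boundary fixed point): then $\delta$ is proper, attains a positive minimum on a bounded set, and tends to infinity along every ray --- yet the \emph{generators} alone have no common fixed point anywhere in $\overline{\HH^2}$. What rules this configuration out is precisely that \emph{products} of such generators become hyperbolic; your displacement-function argument uses only that the $g_i$ themselves are elliptic, not that every word in them is. The genuine content of part~(1) is exactly this step --- showing that ``no common fixed point in $\overline{\HH^n}$'' forces a hyperbolic element --- and it is not addressed. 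The subsequent Euclidean reduction (once one is inside $G_\xi$) is also where most of the work lies, and ``Bieberbach-style induction'' is too vague to count as a proof; this is essentially the content of the cited reference.
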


\begin{lemma}\label{lem:disthorosphere}
Let $\Gamma'$ be as in Fact~\ref{fact:fixedpoint}.(2) and let $\wl : \Gamma'\rightarrow\N$ be the word length function with respect to some fixed finite generating subset $F'$ of~$\Gamma'$.
Fix $p\in\HH^n$.
\begin{itemize}
  \item There exists $R>0$ such that for all $\gamma'\in\Gamma'$,
  $$d(p,\gamma'\cdot p) \leq 2 \log\big(1+\wl(\gamma')\big) + R.$$
  \item If $\Gamma'$ is discrete in~$G$, then there exists $R'>0$ such that for all $\gamma'\in\Gamma'$,
  $$d(p,\gamma'\cdot p) \geq 2 \log\big(1+\wl(\gamma')\big) - R'.$$
\end{itemize}
\end{lemma}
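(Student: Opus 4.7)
The plan is to transfer both inequalities to a horosphere and use the standard logarithmic comparison between the induced Euclidean metric and the hyperbolic metric. Let $\xi\in\partial_\infty\HH^n$ be the unique fixed point of $\Gamma'$ given by Fact~\ref{fact:fixedpoint}.(2), and let $H$ be the horosphere through~$p$ centered at~$\xi$, equipped with its induced Euclidean metric~$d_H$. Then $\Gamma'$ acts on~$H$ by Euclidean isometries, and a direct computation in the upper half-space model (with $\xi=\infty$) gives $d(x,y) = 2\arcsinh(d_H(x,y)/2)$ for $x,y\in H$, from which one reads off
\[
d(x,y) \;=\; 2\log\bigl(1+d_H(x,y)\bigr) + O(1)
\]
with a uniform additive error. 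It therefore suffices to show that $d_H(p,\gamma'\cdot p)$ is, respectively, at most and at least linear in $\wl(\gamma')$.

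For the upper bound I would argue purely by the triangle inequality on~$H$: with $T := \max_{f\in F'} d_H(p,f\cdot p)$, the decomposition $\gamma'=f_{i_1}\cdots f_{i_N}$ with $N=\wl(\gamma')$, together with the fact that each partial product $f_{i_1}\cdots f_{i_{j-1}}$ acts as a $d_H$-isometry of~$H$, gives $d_H(p,\gamma'\cdot p)\leq T\,\wl(\gamma')$. Substituting into the asymptotic and absorbing $\log(1+T)$ into~$R$ yields the claimed upper bound. Note that discreteness of $\Gamma'$ plays no role here.

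For the lower bound, discreteness of $\Gamma'$ in~$G$ and the injectivity of the restriction homomorphism $\mathrm{Stab}_G(\xi)\to\mathrm{Isom}(H)$ (an isometry fixing $\xi$ and all of a horosphere through~$\xi$ must be trivial) make $\Gamma'$ a discrete group of Euclidean isometries of~$H$. The first Bieberbach theorem recalled in Section~\ref{subsec:geo-finiteness} then produces a finite-index normal subgroup $S'\lhd\Gamma'$ isomorphic to~$\Z^m$, with $m\geq 1$ since $\Gamma'$ contains a parabolic element, acting on $H\simeq\R^{n-1}$ by pure translations through some rank-$m$ lattice $\Lambda$. For $s'\in S'$ translating by $v_{s'}\in\Lambda$ one has $d_H(p,s'\cdot p)=\|v_{s'}\|$, which is comparable to the word length of $s'$ in $\Z^m\cong S'$ and, via the quasi-isometric inclusion $S'\hookrightarrow\Gamma'$ (finite index), to the word length of~$s'$ in~$\Gamma'$. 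Writing an arbitrary $\gamma'=s'h$ with~$h$ in a fixed finite system of coset representatives of~$S'$ in~$\Gamma'$ and combining the isometry property of~$s'$ with the triangle inequality on~$H$ then yields $d_H(p,\gamma'\cdot p)\geq c\,\wl(\gamma')-O(1)$ for some $c>0$; re-inserting into the $d$-vs-$d_H$ asymptotic and choosing $R'$ large enough to absorb all additive errors completes the lower bound.

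The main obstacle is not conceptual but organizational: one has to chain the several quasi-isometric identifications between $\Gamma'$, $S'$, $\Z^m$, and the lattice~$\Lambda$, bearing in mind that $p$ need not lie on the invariant affine subspace on which $S'$ acts cocompactly, and verify that the conversion $d\leftrightarrow d_H$ produces a uniform additive (rather than multiplicative) error. Once these identifications are cleanly set up, both inequalities reduce to short computations.
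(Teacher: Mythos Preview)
Your proof is correct and follows essentially the same route as the paper: reduce to the horosphere via $d=2\arcsinh(d_H/2)$ and the resulting uniform bound $|d-2\log(1+d_H)|\leq\mathrm{const}$, use the triangle inequality for the upper bound, and for the lower bound invoke Bieberbach to get a finite-index $\Z^m$ acting by translations so that $d_H(p,\gamma'\cdot p)\geq c\,\wl(\gamma')-Q$. Your write-up is in fact slightly more explicit than the paper's in passing from the lattice estimate on $S'\cong\Z^m$ to all of~$\Gamma'$ via coset representatives; the paper compresses this into one line and then appeals to properness of $\wl$ to absorb the finitely many remaining cases.
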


\begin{proof}
Let $\xi\in\partial_{\infty}\HH^n$ be the fixed point of~$\Gamma'$ and let $\partial B$ be the horosphere through~$p$ centered at~$\xi$.
For any $q,q'\in\partial B$, let $d_{\partial B}(q,q')$ be the length of the shortest path from $q$ to~$q'$ that is contained in $\partial B$.
Then $d_{\partial B}$ is a Euclidean metric on $\partial B\simeq\R^{n-1}$ and
\begin{equation}\label{eqn:disthorosph}
d(q,q') = 2\,\arcsinh\left(\frac{d_{\partial B}(q,q')}{2}\right)
\end{equation}
for all $q,q'\in\partial B$ (see \eqref{eqn:horomu}).
In particular, $|d-2\log(1+d_{\partial B})|$ is bounded on $\partial B\times\partial B$.
By the triangle inequality,
$$d_{\partial B}(p,\gamma'\cdot p) \leq \Big(\max_{f'\in F'}\, d_{\partial B}(p,f'\cdot p)\Big) \cdot \wl(\gamma')$$
for all $\gamma'\in\Gamma'$, which implies the first statement of the lemma.

If $\Gamma'$ is discrete in~$G$, then it acts properly discontinuously on $\partial B$ and has a finite-index subgroup isomorphic to $\Z^m$ (for some $0<m<n$), acting as a lattice of translations on some $m$-dimensional affine subspace $\mathcal{V}$ of the Euclidean space $\partial B\simeq\R^{n-1}$ (see Section~\ref{subsec:geo-finiteness}). 
In a Euclidean lattice, the norm of a vector is estimated, up to a bounded multiplicative factor, by its word length in any given finite generating set: therefore there exist $c,Q>0$ such that 
$$d_{\partial B}(p,\gamma' \cdot p) \geq c\,\wl(\gamma') - Q$$
for all $\gamma'\in\Gamma'$.
The second statement of the lemma follows by using \eqref{eqn:disthorosph} and the properness of the function $\wl$ on~$\Gamma'$.
\end{proof}

Here is a consequence of Lemma~\ref{lem:disthorosphere}, explaining why the notion of cusp-deterioration naturally appears in our setting.

\begin{lemma}\label{lem:parabdet}
Let $\rho\in\Hom(\Gamma_0,G)$.
If there exists a $(j,\rho)$-equivariant map $f : \HH^n\rightarrow\HH^n$ with Lipschitz constant $<1$, then $\rho$ is cusp-deteriorating with respect to~$j$.
\end{lemma}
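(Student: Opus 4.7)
The plan is to argue by contrapositive: assume there is a $(j,\rho)$-equivariant map $f:\HH^n\to\HH^n$ with $\Lip(f)=C_0<1$, and for each $\gamma\in\Gamma_0$ with $j(\gamma)$ parabolic show that $\rho(\gamma)$ must be elliptic, ruling out in turn the hyperbolic and the parabolic cases.

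Ruling out $\rho(\gamma)$ hyperbolic is immediate and does not even use $C_0<1$: for any $x\in\HH^n$, equivariance gives
$$d\bigl(f(x),\rho(\gamma)\cdot f(x)\bigr)=d\bigl(f(x),f(j(\gamma)\cdot x)\bigr)\le C_0\,d\bigl(x,j(\gamma)\cdot x\bigr),$$
and taking the infimum over $x$ yields $\lambda(\rho(\gamma))\le C_0\,\lambda(j(\gamma))=0$, incompatible with $\rho(\gamma)$ hyperbolic.

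The main step is to rule out $\rho(\gamma)$ parabolic. Here I would apply the same inequality to all positive powers $\gamma^n$ and use the precise logarithmic asymptotics of parabolic displacement supplied by Lemma~\ref{lem:disthorosphere}. The cyclic groups $\langle j(\gamma)\rangle$ and (under the assumption toward contradiction) $\langle \rho(\gamma)\rangle$ are both infinite discrete subgroups of~$G$ generated by a single parabolic element, all of whose nontrivial elements are parabolic, with word length $\wl(g^n)=|n|$ in the obvious generating set. Lemma~\ref{lem:disthorosphere} then provides constants $R,R'>0$ with
$$d\bigl(x,j(\gamma)^n\cdot x\bigr)\le 2\log(1+n)+R \quad\text{and}\quad d\bigl(f(x),\rho(\gamma)^n\cdot f(x)\bigr)\ge 2\log(1+n)-R',$$
and feeding these into the Lipschitz bound $d(f(x),\rho(\gamma)^n\cdot f(x))\le C_0\,d(x,j(\gamma)^n\cdot x)$ gives
$$2(1-C_0)\log(1+n)\le R'+C_0 R\qquad\text{for all }n\ge 1,$$
contradicting $C_0<1$ as $n\to\infty$.

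The only subtlety is making sure that the lower bound half of Lemma~\ref{lem:disthorosphere} is available on the $\rho$-side, which requires $\langle\rho(\gamma)\rangle$ to be discrete in~$G$; this is automatic, since a parabolic isometry of~$\HH^n$ has infinite order and acts on any invariant horosphere as a nontrivial Euclidean isometry without fixed points, whose iterates escape every compact set. Apart from that verification, the argument is just an asymptotic comparison: under a uniform contraction $C_0<1$, the displacement $d(f(x),\rho(\gamma)^n\cdot f(x))$ cannot grow like $2\log n$ while $C_0\,d(x,j(\gamma)^n\cdot x)$ only grows like $2C_0\log n$.
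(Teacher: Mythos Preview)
Your proof is correct and follows essentially the same route as the paper's: both argue by contrapositive, invoke Lemma~\ref{lem:disthorosphere} for the logarithmic asymptotics of parabolic displacement, and compare growth rates of $d(f(x),\rho(\gamma)^n\cdot f(x))$ and $C_0\,d(x,j(\gamma)^n\cdot x)$. Your treatment is in fact slightly more careful than the paper's, since you explicitly verify that $\langle\rho(\gamma)\rangle$ is discrete (needed for the lower bound in Lemma~\ref{lem:disthorosphere}), and your handling of the hyperbolic case via $\lambda(\rho(\gamma))\le C_0\,\lambda(j(\gamma))=0$ is a clean shortcut compared to the paper's asymptotic argument.
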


\begin{proof}
Let $f : \HH^n\rightarrow\HH^n$ be a $(j,\rho)$-equivariant map.
Suppose that $\rho$ is \emph{not} cusp-deteriorating.
Then there is an element $\gamma\in\Gamma_0$ such that $j(\gamma)$ is parabolic and $\rho(\gamma)$ is either parabolic or hyperbolic.
Fix a point $p\in\HH^n$.
By Lemma~\ref{lem:disthorosphere}, we have $d(p,j(\gamma^k)\cdot p)\sim 2\log k$ as $k\rightarrow +\infty$.
If $\rho(\gamma)$ is parabolic, then similarly $d(f(p),\rho(\gamma^k)\cdot f(p))\sim 2\log k$, and if $\rho(\gamma)$ is hyperbolic, then $|d(f(p),\rho(\gamma^k)\cdot f(p))-k\,\lambda(\rho(\gamma))|$ is uniformly bounded (for instance by twice the distance from $f(p)$ to the translation axis of $\rho(\gamma)$ in~$\HH^n$).
In both cases, we see that
$$\limsup_{k\rightarrow +\infty}\ \frac{d\big(f(p),\rho(\gamma^k)\cdot f(p)\big)}{d(p,j(\gamma^k)\cdot p)} \geq 1,$$
hence the $(j,\rho)$-equivariant map~$f$ cannot have Lipschitz constant $<1$.
\end{proof}

\subsection{Lipschitz constants}
For any subset $X$ of~$\HH^n$ and any map $f$ from $X$ to some metric space $(Z,d_Z)$ (in practice, $\HH^n$ or~$\R$), we denote by
$$\Lip(f) = \sup_{x,x'\in X,\ x\neq x'} \frac{d_Z(f(x),f(x'))}{d(x,x')}$$
the Lipschitz constant of~$f$.
For any $Y\subset X$ and any $x\in X$, we set
\begin{eqnarray*}
\Lip_Y(f) & = & \Lip(f|_Y),\\
\Lip_x(f) & = & \inf_{r>0}\ \Lip_{B_x(r)}(f),
\end{eqnarray*}
where $B_x(r)$ is the closed ball of radius~$r$ centered at~$x$ in~$\HH^n$.
We call $\Lip_x(f)$ the \emph{local Lipschitz constant} of~$f$ at~$x$.

\begin{remarks}\label{rem:pathlength}
\begin{enumerate}
  \item Let $f$ be a $C$-Lipschitz map from a geodesic segment $[x,x']$ of~$\HH^n$ to~$\HH^n$.
  If $d(f(x),f(x'))=Cd(x,x')$, then $f$ ``stretches maximally'' $[x,x']$, in the sense that $d(f(y),f(y'))=Cd(y,y')$ for all $y,y'\in [x,x']$.
  \item Let $X$ be a convex subset of~$\HH^n$, covered by a collection of open sets $\mathcal{U}_t$, $t\in T$.
  For any map $f : X\rightarrow\HH^n$,
  $$\Lip(f) = \sup_{t\in T}\ \Lip_{X\cap\mathcal{U}_t}(f).$$
  \item For any rectifiable path $\mathscr{C}$ in some subset $X$ of~$\HH^n$ and for any map $f : X\rightarrow\HH^n$,
$$\mathrm{length}(f(\mathscr{C})) \leq \sup_{x\in\mathscr{C}}\ \Lip_x(f) \cdot \mathrm{length}(\mathscr{C}).$$
\end{enumerate}
\end{remarks}

\noindent
Indeed, (1) follows from the fact that if the points $x,y,y',x'$ lie in this order, then $d(x,x')=d(x,y)+d(y,y')+d(y',x')$ while $d(f(x),f(x'))\leq d(f(x),f(y))+d(f(y),f(y'))+d(f(y'),f(x'))$ by the triangle inequality.
To prove~(2), we just need to check that the right-hand side is an upper bound for $\Lip(f)$ (it is also clearly a lower bound). Any geodesic segment $[p,q]\subset X$ can be divided into finitely many subsegments, each contained in one of the open sets~$\mathcal{U}_t$; we use again the additivity of distances at the source and the subadditivity of distances at the target.
Finally, (3) follows from the definition of the length of a path (obtained by summing up the distances between points of smaller and smaller subdivisions and taking a limit) and from the definition of the local Lipschitz constant.

\begin{lemma}\label{lem:localLip}
The local Lipschitz constant function $x\mapsto\Lip_x(f)$ is upper semicontinuous: for any converging sequence $x_k\rightarrow x$,
$$\Lip_x(f) \geq \limsup_{k\rightarrow +\infty}\, \Lip_{x_k}(f).$$
In particular, for any compact subset $K$ of~$X$, the supremum of $\Lip_x(f)$ for $x\in K$ is achieved on some nonempty closed subset of~$K$.
Moreover, if $X$ is convex, then
\begin{equation}\label{eqn:supLip}
\Lip(f) = \sup_{x\in X}\, \Lip_x(f).
\end{equation}
\end{lemma}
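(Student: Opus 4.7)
The plan is to treat the three assertions in order.

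\textbf{Upper semicontinuity.} I would unwind the definition: given $\varepsilon>0$, pick $r>0$ with $\Lip_{B_x(r)}(f)\leq\Lip_x(f)+\varepsilon$. For any sequence $x_k\to x$, eventually $B_{x_k}(r/2)\subset B_x(r)$, so
$$\Lip_{x_k}(f)\leq\Lip_{B_{x_k}(r/2)}(f)\leq\Lip_{B_x(r)}(f)\leq\Lip_x(f)+\varepsilon.$$
Taking $\limsup_{k\to\infty}$ and then $\varepsilon\to 0$ gives the claim.

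\textbf{Attainment on compact sets.} Once upper semicontinuity is in hand, this is the standard argument for u.s.c.\ functions $K\to[0,+\infty]$: choose a sequence $(x_k)$ in~$K$ with $\Lip_{x_k}(f)\to\sup_{y\in K}\Lip_y(f)$ (possibly $+\infty$), extract a subsequence converging to some $x\in K$ by compactness, and apply the u.s.c.\ inequality at~$x$ to conclude that $\Lip_x(f)$ equals the supremum. The level set $\{x\in K:\Lip_x(f)\geq\sup_K\Lip_{\bullet}(f)\}$ is closed by u.s.c.\ and nonempty by what we just proved.

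\textbf{Equality \eqref{eqn:supLip} on convex $X$.} The inequality $\sup_{x\in X}\Lip_x(f)\leq\Lip(f)$ is immediate from the definitions (a local Lipschitz constant at~$x$ is obtained by restriction). For the reverse, set $M:=\sup_{x\in X}\Lip_x(f)\in[0,+\infty]$ and fix $\varepsilon>0$ (if $M=+\infty$ there is nothing to prove). For every $x\in X$, pick $r_x>0$ with $\Lip_{B_x(r_x)}(f)\leq\Lip_x(f)+\varepsilon\leq M+\varepsilon$. The open balls $\{B_x(r_x)\}_{x\in X}$ cover~$X$, so Remark~\ref{rem:pathlength}.(2), which applies because $X$ is convex, yields
$$\Lip(f)\leq\sup_{x\in X}\Lip_{X\cap B_x(r_x)}(f)\leq M+\varepsilon.$$
Letting $\varepsilon\to 0$ finishes the proof.

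None of these steps looks genuinely hard; the only mildly delicate point is remembering that $\Lip_x(f)$ is defined as an infimum of Lipschitz constants over shrinking balls, so one needs to engineer the radii $r$ and $r_x$ carefully to apply Remark~\ref{rem:pathlength}.(2) in the last step, where the convexity of~$X$ is essential (otherwise geodesic segments between nearby points of~$X$ could leave~$X$ and the covering argument would break down).
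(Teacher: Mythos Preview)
Your proof is correct. The first two steps match the paper's approach (which dispatches upper semicontinuity as an ``easy diagonal extraction argument'' and leaves attainment on compacta implicit). For the equality \eqref{eqn:supLip} you take a slightly different route: you cover $X$ by balls on which $f$ is $(M+\varepsilon)$-Lipschitz and invoke Remark~\ref{rem:pathlength}.(2), whereas the paper invokes Remark~\ref{rem:pathlength}.(3) directly on a geodesic segment $[p,q]\subset X$, getting $d(f(p),f(q))\leq\mathrm{length}(f([p,q]))\leq\sup_{x\in[p,q]}\Lip_x(f)\cdot d(p,q)$ without any $\varepsilon$. Both routes use convexity in the same essential way (to ensure the segment $[p,q]$ stays in~$X$), and both are equally short; the paper's version just avoids the choice of radii~$r_x$.
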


\begin{proof}
Upper semicontinuity follows from an easy diagonal extraction argument.
The inequality $\Lip(f)\geq\sup_{x\in X} \Lip_x(f)$ is clear.
The converse inequality for convex~$X$ follows from Remark~\ref{rem:pathlength}.(3) where $\mathscr{C}$ is any geodesic segment $[p,q]\subset X$.
\end{proof}

Note that the convexity of~$X$ is required for \eqref{eqn:supLip} to hold: for instance, an arclength-preserving map taking a horocycle~$X$ to a straight line is not even Lipschitz, although its local Lipschitz constant is everywhere~$1$.

As a consequence of Lemma~\ref{lem:localLip}, the \emph{stretch locus} of any Lipschitz map $f : X\rightarrow\HH^n$ is closed in~$X$ for the induced topology.
Here we use the following terminology, which agrees with Definition~\ref{def:stretchlocus}.

\begin{definition}\label{def:stretchlocusgen}
For any subset $X$ of~$\HH^n$ and any Lipschitz map $f : X\rightarrow\nolinebreak\HH^n$, the \emph{stretch locus} $E_f$ of~$f$ is the set of points $x\in X$ such that $\Lip_x(f)=\nolinebreak\Lip(f)$.
The \emph{enhanced stretch locus} $\widetilde{E}_f$ of~$f$ is the union of $\{(x,x)\in X\times X~|~x\in\nolinebreak E_f\}$ and
$$\big\{(x,x')\in X\!\times\!X ~|~ x\neq x' \text{ and } d(f(x),f(x'))=\Lip(f)\,d(x,x')\big\}.$$
\end{definition}

By Remark~\ref{rem:pathlength}.(1), both projections of $\widetilde{E}_f$ to~$X$ are equal to~$E_f$, but $\widetilde{E}_f$ records a little extra information, namely the positions of the maximally stretched segments between points of the stretch locus~$E_f$.

\subsection{Barycenters in~$\HH^n$}\label{subsec:bary}

For any index set $I$ equal to $\{1,2,\dots,k\}$ for some $k\geq 1$ or to~$\N^{\ast}$, and for any tuple $\underline{\alpha}=(\alpha_i)_{i\in I}$ of nonnegative reals summing up to~$1$, we set
$$(\HH^n)^I_{\underline{\alpha}} := \bigg\{ (p_i)\in(\HH^n)^I~\bigg|~\sum_{i\in I} \alpha_i\,d(p_1,p_i)^2<+\infty\bigg\} .$$
This set contains at least all bounded sequences $(p_i)\in(\HH^n)^I$, and it is just the direct product $(\HH^n)^k$ if $k<+\infty$.

The following result is classical, and actually holds in any $\mathrm{CAT}(0)$ space.

\begin{lemma}\label{lem:bary}
For any index set $I$ equal to $\{1,2,\dots,k\}$ for some $k\geq 1$ or to~$\N^{\ast}$ and for any tuple $\underline{\alpha}=(\alpha_i)_{i\in I}$ of nonnegative reals summing up to~$1$, the map
$$\boldsymbol{m}^{\underline{\alpha}} : (\HH^n)^I_{\underline{\alpha}} \longrightarrow \HH^n$$
taking $(p_i)_{i\in I}$ to the minimizer of $\sum_{i\in I} \alpha_i\,d(\,\cdot\, , p_i)^2$ is well defined and $\alpha_i$-Lipschitz in its $i$-th entry: for any $(p_i),(q_i)\in (\HH^n)^I_{\underline{\alpha}}$,
\begin{equation}\label{eqn:bary}
d\big(\boldsymbol{m}^{\underline{\alpha}}(p_1, p_2,\dots),\boldsymbol{m}^{\underline{\alpha}}(q_1, q_2,\dots) \big) \leq \sum_{i\in I} \alpha_i\,d(p_i,q_i).
\end{equation}
\end{lemma}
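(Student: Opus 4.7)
This is the standard fact about $\mathrm{CAT}(0)$-barycenters specialized to the Riemannian manifold $\HH^n$. I will prove existence and uniqueness via strong convexity, derive an ``energy-comparison'' variational inequality, and then combine it with the Cartan--Hadamard theorem to get the Lipschitz bound.

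\emph{Step 1 (well-definedness of the minimizer).} Set $F_p(x) := \sum_{i \in I} \alpha_i\,d(x,p_i)^2$. The finiteness of $F_p(p_1)$ is precisely the defining condition of $(\HH^n)^I_{\underline{\alpha}}$, and the triangle inequality combined with $(a+b)^2 \leq 2a^2+2b^2$ gives $d(x,p_i)^2 \leq 2\,d(x,p_1)^2 + 2\,d(p_1,p_i)^2$, so
\[
F_p(x) \leq 2\,d(x,p_1)^2 + 2\,F_p(p_1) < +\infty \quad \text{for every } x \in \HH^n.
\]
The $\mathrm{CAT}(0)$-inequality for $\HH^n$,
\[
d(\gamma(t),p)^2 \leq (1-t)\,d(\gamma(0),p)^2 + t\,d(\gamma(1),p)^2 - t(1-t)\,d(\gamma(0),\gamma(1))^2,
\]
applied to each term $d(\cdot,p_i)^2$ and weighted by $\alpha_i$, shows that $F_p$ is $2$-strongly convex along geodesics. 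Continuity of $F_p$ follows by dominated convergence, and coercivity from the bound $F_p(x) \geq \alpha_j\,d(x,p_j)^2$ for any fixed $j$ with $\alpha_j > 0$. Hence $F_p$ attains its minimum at a unique point, which we denote $m_p := \boldsymbol{m}^{\underline{\alpha}}(p_1,p_2,\dots)$.

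\emph{Step 2 (variational inequality).} For any $z \in \HH^n$, apply the strong-convexity inequality along the geodesic $\gamma$ from $m_p$ to $z$ and use $F_p(\gamma(t)) \geq F_p(m_p)$; dividing by~$t$ and letting $t\to 0^+$ yields
\[
F_p(z) - F_p(m_p) \geq d(m_p,z)^2.
\]

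\emph{Step 3 (Lipschitz bound).} Writing the inequality of Step~2 once for $F_p$ with $z=m_q$ and once for $F_q$ with $z=m_p$, and adding, one finds
\[
2\,d(m_p,m_q)^2 \leq \sum_{i \in I} \alpha_i\bigl(\phi(p_i) - \phi(q_i)\bigr), \qquad \phi(z) := d(m_q,z)^2 - d(m_p,z)^2.
\]
The smooth function $\phi$ has gradient $\nabla\phi(z) = 2\bigl(\exp_z^{-1}(m_p) - \exp_z^{-1}(m_q)\bigr)$. The Cartan--Hadamard theorem asserts that the exponential map at each point of $\HH^n$ is distance-non-decreasing, so its inverse is $1$-Lipschitz: $|\exp_z^{-1}(m_p) - \exp_z^{-1}(m_q)|_{T_z\HH^n} \leq d(m_p,m_q)$. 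Therefore $\phi$ is $2\,d(m_p,m_q)$-Lipschitz on $\HH^n$, and each summand satisfies $\phi(p_i)-\phi(q_i) \leq 2\,d(m_p,m_q)\,d(p_i,q_i)$. Substituting and dividing by $d(m_p,m_q)$ (if nonzero) gives the claimed inequality~\eqref{eqn:bary}. The ``$\alpha_i$-Lipschitz in the $i$-th entry'' formulation is the special case where $(p_j)$ and $(q_j)$ coincide outside index~$i$.

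\emph{Main obstacle.} The substantive step is the uniform $2\,d(m_p,m_q)$-Lipschitz control on $\phi$ in Step~3; this is precisely what converts the ``energy gap'' coming from strong convexity into a sharp $\ell^1$-bound on $d(m_p,m_q)$ (as opposed to the weaker $\ell^2$-bound that would follow from simply viewing the barycenter as a closest-point projection of the diagonal in $(\HH^n)^I$). Everything else is direct application of $\mathrm{CAT}(0)$-convexity. For infinite $I$ some standard dominated-convergence arguments are needed to justify interchanging sums and limits, but they pose no conceptual difficulty.
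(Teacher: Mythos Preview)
Your proof is correct and rests on the same two ingredients as the paper's: (i)~$2$-strong convexity of $x\mapsto\sum_i\alpha_i\,d(x,p_i)^2$, and (ii)~the fact that $z\mapsto d(z,a)^2-d(z,b)^2$ is $2\,d(a,b)$-Lipschitz, proved via the $1$-Lipschitzness of $\exp_z^{-1}$ in nonpositive curvature.

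The only difference is organizational. The paper bounds the Lipschitz constant of $\Psi=F_q-F_p=\sum_i\alpha_i\bigl(d(\cdot,q_i)^2-d(\cdot,p_i)^2\bigr)$, obtaining $\Lip(\Psi)\leq 2\sum_i\alpha_i\,d(p_i,q_i)=:L$, and then argues directly that a $2$-strongly convex function perturbed by an $L$-Lipschitz term can only shift its minimizer by $L/2$. You instead write down the variance inequality $F_p(z)-F_p(m_p)\geq d(m_p,z)^2$ (the integrated form of strong convexity), symmetrize, and bound the Lipschitz constant of $\phi=d(m_q,\cdot)^2-d(m_p,\cdot)^2$. This is the same Lipschitz lemma with the roles of the ``anchor'' and ``variable'' points swapped; each route gives the sharp $\ell^1$ bound in one clean stroke.
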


\begin{proof}
Fix $I$ and $\underline{\alpha}=(\alpha_i)_{i\in I}$, and consider an element $(p_i)\in (\HH^n)^I_{\underline{\alpha}}$.
For any $x\in\HH^n$,
\begin{eqnarray*}
\Phi(x) := \sum_{i\in I} \alpha_i\,d(x,p_i)^2 & \leq & \sum_{i\in I} \alpha_i\,\big(d(x,p_1) + d(p_1,p_i)\big)^2\\
& \leq & 2\, \sum_{i\in I} \alpha_i\,\big(d(x,p_1)^2 + d(p_1,p_i)^2\big) \,<\, +\infty.
\end{eqnarray*}
The function $\Phi : \HH^n\rightarrow\R$ thus defined is proper on $\HH^n$ since it is bounded from below by any proper function $\alpha_i\,d(\cdot,p_i)^2$ with $\alpha_i>0$, and it achieves its minimum on the convex hull of the~$p_i$.
Moreover, $\Phi$ is analytic: to see this on any ball $B$ of $\HH^n$, note that the unweighted summands $d(\cdot, p_i)^2$ for $p_i$ in a $1$-neighborhood of $B$ are analytic with derivatives (of any nonnegative order) bounded independently of~$i$, while the other summands can be written $\phi_i^2 + 2\phi_i d(p_1, p_i) + d(p_1, p_i)^2$, where $\phi_i:=d(\cdot, p_i)-d(p_1, p_i)$ again is analytic on~$B$, and $\phi_i$ and~$\phi_i^2$ have their derivatives (of any nonnegative order) bounded independently of~$i$.

On any unit-speed geodesic $(x_t)_{t\in\R}$ of~$\HH^n$, we have $\frac{\mathrm{d}^2}{\mathrm{d}t^2}\big|_{t=0}\,d(x_t, p_i)^2\geq 2$.
Indeed, let $\log_{x_0}:\HH^n\rightarrow T_{x_0}\HH^n$ be the inverse of the exponential map at~$x_0$.
Standard $\text{CAT}(0)$ comparison inequalities with the Euclidean metric $d_{\mathrm{Eucl}}$ yield
$$d_{\mathrm{Eucl}}(\log_{x_0}(x_t),\log_{x_0}(p_i))^2 \leq d(x_t,p_i)^2$$
for all $t\in\R$; both sides are equal at $t=0$, the first derivatives are equal at $t=0$, and the left-hand side has second derivative $\equiv 2$.
It follows that $t\mapsto \Phi(x_t)$ has second derivative at least $2\sum_I \alpha_i=2$ everywhere.
While $\boldsymbol{m}^{\underline{\alpha}}(p_1,p_2,\dots)$ is the minimizer of~$\Phi$, the point $\boldsymbol{m}^{\underline{\alpha}}(q_1,q_2,\dots)$ is the minimizer of $\Phi+\Psi$, where
$$\Psi(x) := \sum_{i\in I} \left(-d(x,p_i)^2 + d(x,q_i)^2\right) \alpha_i.$$
We claim that $\psi_i: x\mapsto -d(x,p_i)^2 + d(x,q_i)^2$ is $2d(p_i,q_i)$-Lipschitz: indeed, with $(x_t)_{t\in\R}$ as above,
\begin{eqnarray*}
\left | \frac{\mathrm{d}}{\mathrm{d}t}\Big|_{t=0} \psi_i(x_t)\right | &=& \left | 2d(x_0, p_i)\cos \widehat{p_i x_0 x_1} - 2d(x_0, q_i)\cos \widehat{q_i x_0 x_1} \right | \\
&=& 2d_{\mathrm{Eucl}}\big(\pi_{\ell}(\log_{x_0} p_i),\pi_{\ell} (\log_{x_0} q_i)\big) ~ \leq ~ 2d(p_i,q_i),
\end{eqnarray*}
where $\ell\subset T_{x_0}\HH^n$ is the tangent line to $(x_t)_{t\in\R}$ at $t=0$, and $\pi_{\ell} : T_{x_0}\HH^n\rightarrow\nolinebreak\ell$ is the closest-point projection.
Therefore, $\Psi$ is Lipschitz with constant $L:=2\sum_{i\in I} \alpha_i d(p_i,q_i)$.
Thus, for any unit-speed geodesic ray $(x_t)_{t\geq 0}$ starting from $x_0=\boldsymbol{m}^{\underline{\alpha}}(p_1,p_2,\dots)$, as soon as $t>L/2$ we have $\frac{\mathrm{d}}{\mathrm{d}t}\Phi(x_t)>L$, hence $\frac{\mathrm{d}}{\mathrm{d}t}(\Phi+\Psi)(x_t)>\nolinebreak 0$.
The minimizer of $\Phi+\Psi$ is within $L/2$ from~$x_0$, as promised.
\end{proof}

Note that the map~$\boldsymbol{m}^{\underline{\alpha}}$ is $G$-equivariant:
\begin{equation}\label{eqn:baryequiv}
\boldsymbol{m}^{\underline{\alpha}}(g\cdot p_1,g\cdot p_2,\dots) = g\cdot \boldsymbol{m}^{\underline{\alpha}}(p_1,p_2,\dots)
\end{equation}
for all $g\in G$ and $(p_i)\in(\HH^n)^I_{\underline{\alpha}}$.
It is also diagonal:
\begin{equation}\label{eqn:barydiag}
\boldsymbol{m}^{\underline{\alpha}}(p,p,\dots) = p
\end{equation}
for all $p\in\HH^n$.
If $\sigma$ is a permutation of~$I$, then
\begin{equation}\label{eqn:barypermut}
\boldsymbol{m}^{(\alpha_{\sigma(1)}, \alpha_{\sigma(2)},\dots )}(p_{\sigma(1)},p_{\sigma(2)},\dots) = \boldsymbol{m}^{(\alpha_1, \alpha_2,\dots)}(p_1,p_2,\dots)
\end{equation}
for all $(p_i)\in(\HH^n)^I_{\underline{\alpha}}$; in particular, $\boldsymbol{m}_k:=\boldsymbol{m}^{(\frac{1}{k},\dots,\frac{1}{k})}$ is symmetric in its~$k$ entries.
Unlike barycenters in vector spaces however, $\boldsymbol{m}$ has only weak associativity properties: the best one can get is associativity over equal entries, \ie if $p_1=\dots=p_k=p$ then 
$$\boldsymbol{m}^{(\alpha_1,\dots, \alpha_{k+1},\dots)}(p_1,\dots, p_{k+1},\dots )=\boldsymbol{m}^{(\alpha_1+\dots+\alpha_k,\alpha_{k+1},\dots)}(p,p_{k+1},\dots)~.$$
We will often write $\sum_{i\in I} \alpha_i\,p_i$ for $\boldsymbol{m}^{\underline{\alpha}}(p_1,p_2,\dots)$.

While \eqref{eqn:bary} controls the displacement of a barycenter under a change of points, the following lemma deals with a change of weights.

\begin{lemma}\label{lem:baryLipschitz2}
Let $I=\{1,2,\dots,k\}$ for some $k\geq 1$ or~$\N^{\ast}$ and let $\underline{\alpha}=(\alpha_i)_{i\in I}$ and $\underline{\smash{\beta}}=(\beta_i)_{i\in I}$ be two nonnegative sequences, each summing up to~$1$.
Consider points $(p_i)_{i\in I}$ of $\HH^n$, all within distance~$R$ of some $p\in \HH^n$ (in particular $(p_i)\in(\HH^n)^I_{\underline{\alpha}}\cap(\HH^n)^I_{\underline{\smash{\beta}}}$).
Then
$$d\big(\boldsymbol{m}^{\underline{\alpha}}(p_1,p_2,\dots),\boldsymbol{m}^{\underline{\smash{\beta}}}(p_1,p_2,\dots)\big) \leq R\, \sum_{i\in I} |\alpha_i-\beta_i|.$$
\end{lemma}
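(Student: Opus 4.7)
The plan is to reduce the problem to equal-weight barycenters by means of a duplication trick, then exploit the pointwise Lipschitz bound \eqref{eqn:bary} together with a combinatorial matching. The key observation is that although the barycenter map has no general associativity, the equal-entry associativity recalled just before the lemma (combined with the symmetry property \eqref{eqn:barypermut}) is enough to write any \emph{rationally weighted} barycenter as $\boldsymbol{m}_N$ applied to a suitably duplicated tuple of points.

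I would first handle the case of a finite index set $I = \{1,\dots,k\}$ and of rational weights with common denominator: $\alpha_i = a_i/N$ and $\beta_i = b_i/N$ with $a_i,b_i\in\N$ and $\sum a_i = \sum b_i = N$. By equal-entry associativity plus \eqref{eqn:barypermut}, one has
$$\boldsymbol{m}^{\underline{\alpha}}(p_1,\dots,p_k) = \boldsymbol{m}_N\big(\underbrace{p_1,\dots,p_1}_{a_1},\dots,\underbrace{p_k,\dots,p_k}_{a_k}\big),$$
and similarly for $\boldsymbol{m}^{\underline{\beta}}$. Now I reorder the $N$ slots in both tuples so that, for each $i$, exactly $c_i := \min(a_i,b_i)$ slots carry $p_i$ in \emph{both} tuples; this matches $\sum_i c_i$ slots identically and leaves $N - \sum_i c_i = \tfrac{1}{2}\sum_i |a_i - b_i|$ mismatched slot-pairs. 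Since both entries in any mismatched pair lie within $R$ of $p$, they are at distance at most $2R$ from each other. Applying \eqref{eqn:bary} to $\boldsymbol{m}_N$, which is $1/N$-Lipschitz in each entry, yields
$$d\big(\boldsymbol{m}^{\underline{\alpha}}(p_1,\dots,p_k),\boldsymbol{m}^{\underline{\beta}}(p_1,\dots,p_k)\big) \leq \frac{1}{N}\cdot 2R\cdot \frac{1}{2}\sum_i |a_i - b_i| = R\sum_i |\alpha_i - \beta_i|,$$
which is the desired bound.

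To remove the rationality assumption I would use the fact that for a fixed bounded tuple $(p_i)$, the minimizer $\boldsymbol{m}^{\underline{\alpha}}(p_1,\dots,p_k)$ depends continuously on $\underline{\alpha}$: the functional $\Phi_{\underline{\alpha}}$ varies continuously with $\underline{\alpha}$ and is uniformly convex with second derivative at least $2$ along geodesics (as shown in the proof of Lemma~\ref{lem:bary}), so its minimizer is a continuous function of $\underline{\alpha}$. The bound then extends by density of rationals. For $I = \N^{\ast}$, I would truncate: replacing the tail $(p_{k+1},p_{k+2},\dots)$ by $(p,p,\dots)$ changes the $\underline{\alpha}$-barycenter by at most $R\sum_{i>k}\alpha_i$ (by \eqref{eqn:bary}), and likewise for the $\underline{\beta}$-barycenter; by equal-entry associativity the truncated barycenters are already covered by the finite case, and letting $k\to\infty$ yields the infinite-$I$ statement.

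The main (mild) obstacle is organizing the matching so that one really can pair positions to obtain exactly $\tfrac{1}{2}\sum_i |a_i-b_i|$ mismatches and using only the associativity-over-equal-entries property available. Once the duplication bookkeeping is clean, the rest is a direct consequence of \eqref{eqn:bary} and the continuity argument for real weights.
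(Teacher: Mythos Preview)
Your proof is correct but takes a different route from the paper's. The paper argues directly, without any rationality or limiting step: for each index $i$ with $\alpha_i>\beta_i$, it transfers the excess weight $\alpha_i-\beta_i$ from $p_i$ to the reference point~$p$, which by \eqref{eqn:bary} moves the barycenter by at most $R(\alpha_i-\beta_i)$. After all such transfers, the $\underline{\alpha}$-barycenter has been moved by at most $R\delta$ (where $\delta=\sum_{\alpha_i>\beta_i}(\alpha_i-\beta_i)=\tfrac12\sum_i|\alpha_i-\beta_i|$) to the symmetric ``midpoint'' barycenter $\boldsymbol{m}^{(\delta,\min(\alpha_1,\beta_1),\min(\alpha_2,\beta_2),\dots)}(p,p_1,p_2,\dots)$; by symmetry the $\underline{\beta}$-barycenter lies within $R\delta$ of the same point, and the triangle inequality finishes.

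Your duplication--matching argument reaches the same intermediate bookkeeping (the $\min(\alpha_i,\beta_i)$ appear as the matched slots), but you pay for the detour through rational weights with a continuity argument and a truncation step. The paper's weight-transfer trick handles arbitrary real weights and $I=\N^{\ast}$ in one stroke, with no approximation; your approach is perhaps more combinatorially transparent but longer.
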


\begin{proof}
For any $i\in I$, we set $\delta_i:=\alpha_i-\beta_i$.
The basic observation is that if for example $\delta_1>0$, then we can transfer $\delta_1$ units of weight from $p_1$ to $p$, at the moderate cost of moving the barycenter by $\leq R\delta_1$: by Lemma~\ref{lem:bary}, the point
$$m:= \boldsymbol{m}^{(\alpha_1,\alpha_2,\alpha_3,\dots)}(p_1,p_2,p_3,\dots)=\boldsymbol{m}^{(\delta_1,\beta_1,\alpha_2,\alpha_3,\dots)}(p_1,p_1,p_2,p_3,\dots)$$
lies at distance $\leq R\delta_1$ from $\boldsymbol{m}^{(\delta_1,\beta_1,\alpha_2,\alpha_3,\dots)}(p,p_1,p_2,p_3,\dots)$.
Repeating this procedure for all indices $i\geq 1$ such that $\delta_i>0$, we find that $m$ lies at distance $\leq R\delta$ from
$$\boldsymbol{m}^{(\delta, \min\{\alpha_1,\beta_1\}, \min\{\alpha_2,\beta_2\}, \min\{\alpha_3,\beta_3\}, \dots)}(p,p_1,p_2,p_3,\dots),$$
where we set $\delta :=\sum_{\delta_i>0} \delta_i$.
This expression is symmetric in $\underline{\alpha}$ and~$\underline{\smash{\beta}}$, so $m$ lies at distance $\leq 2R\delta= R\,\sum |\alpha_i-\beta_i|$ from $\boldsymbol{m}^{(\beta_1,\beta_2,\dots)}(p_1,p_2,\dots)$.
\end{proof}

\subsection{Barycenters of Lipschitz maps and partitions of unity}

Here is an easy consequence of Lemma~\ref{lem:bary}.

\begin{lemma}\label{lem:baryLipschitz}
Let $I=\{1,2,\dots,k\}$ for some $k\geq 1$ or~$\N^{\ast}$ and let $\underline{\alpha}=(\alpha_i)_{i\in I}$ be a nonnegative sequence summing up to~$1$.
Consider $p\in X\subset\HH^n$ and a sequence of Lipschitz maps $f_i : X\rightarrow\HH^n$ with $(f_i(p))\in (\HH^n)^I_{\underline{\alpha}}$ and with $(\Lip(f_i))_{i\in I}$ bounded.
Then the map
$$f = \sum_{i\in I} \alpha_i\,f_i \,:\ x \longmapsto \boldsymbol{m}^{\underline{\alpha}}(f_1(x),f_2(x),\dots)$$
is well defined on~$X$ and satisfies
$$\Lip_{x}(f) \leq \sum_{i\in I} \alpha_i\,\Lip_{x}(f_i) \quad\mathrm{and}\quad \Lip_Y(f) \leq \sum_{i\in I} \alpha_i\,\Lip_Y(f_i)$$
for all $x\in Y\subset X$.
In particular, if $\Lip(f_i)=C=\Lip(f)$ for all $i\in I$, then the (enhanced) stretch locus of~$f$ (Definition~\ref{def:stretchlocusgen}) is contained in the intersection of the (enhanced) stretch loci of the maps~$f_i$.
\end{lemma}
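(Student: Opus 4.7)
The plan is to derive all three estimates directly from Lemma~\ref{lem:bary}, which controls the displacement of a barycenter by a weighted sum of per-entry displacements and is the only nontrivial input.

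First I would check that $f$ is well defined on all of $X$. The hypothesis $(f_i(p))\in(\HH^n)^I_{\underline{\alpha}}$ handles the basepoint; for any other $x\in X$, the triangle inequality combined with the uniform bound $L:=\sup_i \Lip(f_i)<+\infty$ gives an estimate of the form
$$d(f_1(x),f_i(x))^2 \leq 3\bigl(2L^2 d(x,p)^2 + d(f_1(p),f_i(p))^2\bigr),$$
whose right-hand side is summable against $\alpha_i$. Hence $(f_i(x))\in(\HH^n)^I_{\underline{\alpha}}$ and $f(x)\in\HH^n$ is defined.

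The global bound on $Y$ is then immediate from \eqref{eqn:bary}: for $x,y\in Y$,
$$d(f(x),f(y)) \leq \sum_{i\in I} \alpha_i\, d(f_i(x),f_i(y)) \leq \Bigl(\sum_{i\in I} \alpha_i \Lip_Y(f_i)\Bigr) d(x,y).$$

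The delicate step is the local estimate at $p$, because in general one cannot exchange an infimum over radii with an infinite weighted sum. My plan is, given $\varepsilon>0$, to truncate: choose $N$ with $L\sum_{i>N}\alpha_i<\varepsilon$, then for each $i\leq N$ pick $r_i>0$ with $\Lip_{B_p(r_i)}(f_i)<\Lip_p(f_i)+\varepsilon$, and finally set $r:=\min_{i\leq N} r_i$. Applying the global estimate on the ball $B_p(r)$, splitting the sum at $N$ and bounding the tail by $L$ yields
$$\Lip_{B_p(r)}(f) \leq \sum_{i\leq N} \alpha_i \bigl(\Lip_p(f_i)+\varepsilon\bigr) + L\sum_{i>N}\alpha_i \leq \sum_{i\in I} \alpha_i \Lip_p(f_i) + 2\varepsilon,$$
and letting $\varepsilon\to 0$ closes the argument. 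This tail argument, which is where the uniform boundedness of $\Lip(f_i)$ is essential, is the only nonroutine point; for finite $I$ the bound is immediate by taking a common radius.

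The stretch locus claim then follows by tightness: assuming $\Lip(f_i)=C=\Lip(f)$ for all $i$, if $p\in E_f$ the chain $C=\Lip_p(f)\leq \sum \alpha_i \Lip_p(f_i)\leq \sum \alpha_i C=C$ is forced to be a sequence of equalities, so $\Lip_p(f_i)=C$ for every $i$ with $\alpha_i>0$. The same idea applied to
$$C\, d(p,q) = d(f(p),f(q)) \leq \sum_{i\in I} \alpha_i\, d(f_i(p),f_i(q)) \leq C\, d(p,q)$$
forces $d(f_i(p),f_i(q))=C\,d(p,q)$ for every $i$ with $\alpha_i>0$, i.e.\ $(p,q)\in\widetilde{E}_{f_i}$, handling the enhanced stretch locus.
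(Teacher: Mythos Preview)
Your proof is correct and follows the same approach as the paper: both derive everything from the basic barycenter inequality \eqref{eqn:bary} of Lemma~\ref{lem:bary}, first checking well-definedness via the same triangle-inequality/$(a+b+c)^2\le 3(a^2+b^2+c^2)$ estimate. The paper is in fact terser than you are --- it simply states the pointwise inequality $d(f(x),f(y))\le\sum_i\alpha_i\,d(f_i(x),f_i(y))$ and says ``which implies the lemma'', leaving both the limit-interchange for $\Lip_p$ in the infinite-$I$ case (your truncation/tail argument) and the stretch-locus tightness argument to the reader.
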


\begin{proof}
We first note that $(f_i(x))\in (\HH^n)^I_{\underline{\alpha}}$ for any $x\in\HH^n$.
Indeed, using the triangle inequality and the general inequality $(a+b+c)^2\leq 3\,(a^2+b^2+c^2)$ for $a,b,c\geq 0$, we have
\begin{eqnarray*}
& & \sum_{i\in I} \alpha_i\,d\big(f_1(x),f_i(x)\big)^2\\
& \leq & 3\,\sum_{i\in I} \alpha_i\,\Big(d\big(f_1(x),f_1(p)\big)^2 + d\big(f_1(p),f_i(p)\big)^2 + d\big(f_i(p),f_i(x)\big)^2\Big),
\end{eqnarray*}
which is finite since $(f_i(p))\in (\HH^n)^I_{\underline{\alpha}}$ and $(\Lip(f_i))_{i\in I}$ is bounded.
By Lem\-ma~\ref{lem:bary}, the map~$f$ is well defined and for any $x,y\in\HH^n$,
$$d\big(f(x),f(y)\big) \leq \sum_{i\in I} \alpha_i\,d\big(f_i(x),f_i(y)\big),$$
which implies Lemma~\ref{lem:baryLipschitz}.
\end{proof}

We also consider barycenters of maps with variable coefficients.
The following result, which combines Lemmas \ref{lem:baryLipschitz2} and~\ref{lem:baryLipschitz} in an equivariant setting, is one of our main technical tools; it will be used extensively throughout Sections \ref{sec:stretchlocus} and~\ref{sec:lipcont}.

\begin{lemma}\label{lem:partofunity}
Let $\Gamma_0$ be a discrete group, $(j,\rho)\in\Hom(\Gamma_0,G)^2$ a pair of representations with $j$ injective and discrete, and $B_1,\dots,B_r$ open subsets of~$\HH^n$.
For $1\leq i\leq r$, let $f_i : j(\Gamma_0)\cdot B_i\rightarrow\HH^n$ be a $(j,\rho)$-equivariant map that is Lipschitz on~$B_i$.
For $p\in\HH^n$, let $I_p$ denote the set of indices $1\leq i \leq r$ such that $p\in j(\Gamma_0)\cdot B_i$, and define
$$R_p := \mathrm{diam}\{ f_i(p)~|~i\in I_p \} < +\infty.$$
For $1\leq i \leq r$, let also $\psi_i: \HH^n \rightarrow [0,1]$ be a $j(\Gamma_0)$-invariant Lipschitz map supported in $j(\Gamma_0)\cdot B_i$. Assume that $\psi_1,\dots, \psi_r$ induce a partition of unity on a $j(\Gamma_0)$-invariant open subset $\mathcal{B}$ of $\bigcup_{i=1}^r j(\Gamma_0)\cdot B_i$.
Then the map
$$\begin{array}{crcl}
f = \sum_{i\in I} \psi_i f_i : & \mathcal{B} & \longrightarrow & \hspace{1.1cm} \HH^n\\ 
& p &  \longmapsto & \sum_{i\in I_p} \psi_i(p) f_i(p) \end{array}$$
is $(j,\rho)$-equivariant and for any $p\in\mathcal{B}$, the following ``Leibniz rule'' holds:
\begin{equation}\label{eqn:Lipbound1}
\Lip_p(f) \leq \sum_{i\in I_p} \big(\Lip_p(\psi_i)\,R_p + \psi_i(p)\,\Lip_p(f_i)\big)\, .
\end{equation}
\end{lemma}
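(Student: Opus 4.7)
The plan is to verify the three asserted properties in turn: well-definedness of $f$, $(j,\rho)$-equivariance, and the local Leibniz bound~\eqref{eqn:Lipbound1}. Well-definedness is immediate: for $p\in\mathcal{B}$ the weights $\psi_i(p)$ sum to~$1$ and only finitely many (those with $i\in I_p$) are nonzero, so the barycenter $\boldsymbol{m}^{(\psi_i(p))_{i\in I_p}}((f_i(p))_{i\in I_p})$ makes sense in~$\HH^n$.

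For equivariance, fix $\gamma\in\Gamma_0$ and $p\in\mathcal{B}$. Since each set $j(\Gamma_0)\cdot B_i$ is $j(\Gamma_0)$-invariant, we have $I_{j(\gamma)\cdot p}=I_p$. The $j(\Gamma_0)$-invariance of the $\psi_i$ gives $\psi_i(j(\gamma)\cdot p)=\psi_i(p)$, while the $(j,\rho)$-equivariance of~$f_i$ gives $f_i(j(\gamma)\cdot p)=\rho(\gamma)\cdot f_i(p)$. Applying the $G$-equivariance~\eqref{eqn:baryequiv} of the barycenter map with $g=\rho(\gamma)$ yields $f(j(\gamma)\cdot p)=\rho(\gamma)\cdot f(p)$.

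The core of the proof is the local Lipschitz estimate. Fix $p\in\mathcal{B}$. For any $j\notin I_p$, the point $p$ lies in the open complement of the closed set $\mathrm{supp}(\psi_j)\subset j(\Gamma_0)\cdot B_j$, so there is a neighborhood~$U$ of~$p$ in which $\psi_j$ vanishes identically; taking $U$ small enough, only the $\psi_i$ with $i\in I_p$ contribute, and for such~$i$ the point $p$ lies in the open set $j(\Gamma_0)\cdot B_i$, so both $\psi_i$ and~$f_i$ are Lipschitz near~$p$. For $q\in\mathcal{B}\cap U$, the triangle inequality gives
\[
d\big(f(p),f(q)\big) \leq d\big(\boldsymbol{m}^{(\psi_i(p))}((f_i(p))),\boldsymbol{m}^{(\psi_i(p))}((f_i(q)))\big) + d\big(\boldsymbol{m}^{(\psi_i(p))}((f_i(q))),\boldsymbol{m}^{(\psi_i(q))}((f_i(q)))\big).
\]
By Lemma~\ref{lem:bary}, the first term is bounded by $\sum_{i\in I_p}\psi_i(p)\,d(f_i(p),f_i(q))$. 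For the second, Lemma~\ref{lem:baryLipschitz2} yields a bound of $R\sum_{i\in I_p}|\psi_i(p)-\psi_i(q)|$, where $R$ is any number such that all $f_i(q)$ for $i\in I_p$ lie within distance~$R$ of a common point. Choosing $f_1(p)$ (say) as a reference point, we may take $R=R_p+\varepsilon_q$ where $\varepsilon_q\rightarrow 0$ as $q\rightarrow p$ by Lipschitz continuity of the~$f_i$ near~$p$.

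Dividing by $d(p,q)$, passing to the supremum over $q$ in a shrinking neighborhood of~$p$, and using the definitions of $\Lip_p(f_i)$ and~$\Lip_p(\psi_i)$, we obtain
\[
\Lip_p(f) \leq \sum_{i\in I_p}\psi_i(p)\,\Lip_p(f_i) + R_p\,\sum_{i\in I_p}\Lip_p(\psi_i),
\]
which is~\eqref{eqn:Lipbound1}. The main subtlety is the neighborhood argument ensuring that only indices $i\in I_p$ need to be tracked, and the corresponding choice of the reference point making Lemma~\ref{lem:baryLipschitz2}'s parameter $R$ converge to $R_p$; everything else follows from the two barycenter lemmas applied in sequence.
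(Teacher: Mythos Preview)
Your decomposition \emph{via} the intermediate barycenter and the two lemmas (\ref{lem:bary} and~\ref{lem:baryLipschitz2}) is exactly the right idea, and matches the paper's approach. But there is a genuine gap in the last step: by fixing~$p$ and varying only~$q$, what you bound is the \emph{pointwise dilatation}
\[
\limsup_{q\to p}\ \frac{d(f(p),f(q))}{d(p,q)},
\]
not the local Lipschitz constant $\Lip_p(f)=\inf_{r>0}\Lip_{B_p(r)}(f)$, which by definition requires controlling $d(f(x),f(y))/d(x,y)$ for \emph{all} pairs $x,y$ in a shrinking ball around~$p$. These two quantities can differ (think of $t\mapsto t^2\sin(1/t)$ at~$0$: pointwise dilatation~$0$, local Lipschitz constant~$1$), so the conclusion~\eqref{eqn:Lipbound1} does not follow from what you have established.

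The fix is short: run the same triangle-inequality splitting with \emph{two} variable points $x,y$ in a neighborhood~$\mathcal{U}$ of~$p$ (the paper uses the intermediate point $\sum_i\psi_i(y)f_i(x)$), bound the first term by $R_x\sum_i|\psi_i(x)-\psi_i(y)|$ via Lemma~\ref{lem:baryLipschitz2} and the second by $\sum_i\psi_i(y)\,d(f_i(x),f_i(y))$ via Lemma~\ref{lem:bary}, and then shrink~$\mathcal{U}$ using the continuity of $\psi_i$, $p\mapsto R_p$ and the upper semicontinuity of $p\mapsto\Lip_p(\psi_i),\Lip_p(f_i)$ (Lemma~\ref{lem:localLip}) to push all constants to within~$\varepsilon$ of their values at~$p$. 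Letting $\varepsilon\to 0$ then gives~\eqref{eqn:Lipbound1}. The neighborhood argument ensuring only indices $i\in I_p$ contribute, which you already have, is needed here as well.
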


\begin{proof}
The map~$f$ is $(j,\rho)$-equivariant because the barycentric construction is, see \eqref{eqn:baryequiv}.
Fix $p\in\mathcal{B}$ and $\varepsilon>0$.
By definition of~$I_p$, continuity of $\psi_i$ and~$f_i$, and upper semicontinuity of the local Lipschitz constant (Lemma~\ref{lem:localLip}), there is a neighborhood $\mathcal{U}$ of $p$ in~$\mathcal{B}$ such that for all $x\in \mathcal{U}$,
\begin{itemize}
  \item $\psi_i|_{\mathcal{U}}=0$ for all $i\notin I_p$,
  \item $\psi_i(x)\leq \psi_i(p)+\varepsilon$ for all $i\in I_p$,
  \item $R_x\leq R_p+\varepsilon$,
  \item $\Lip_{\mathcal{U}}(\psi_i)\leq\Lip_p(\psi_i)+\varepsilon$ for all $i\in I_p$,
  \item $\Lip_{\mathcal{U}}(f_i)\leq\Lip_p(f_i)+\varepsilon$ for all $i\in I_p$.
\end{itemize}
By the triangle inequality, for any $x,y\in\mathcal{U}$,
\begin{eqnarray*}
d(f(x),f(y)) & = & d\Big(\sum_{i\in I_p} \psi_i(x)f_i(x),\sum_{i\in I_p} \psi_i(y)f_i(y)\Big)\\
& \leq & d\Big(\sum_{i\in I_p} \psi_i(x)f_i(x),\sum_{i\in I_p} \psi_i(y)f_i(x)\Big)\\
& & +\ d\Big(\sum_{i\in I_p} \psi_i(y)\,f_i(x),\sum_{i\in I_p} \psi_i(y)f_i(y)\Big).
\end{eqnarray*}
Using Lemma~\ref{lem:baryLipschitz2}, we see that the first term of the right-hand side is bounded by
$$\sum_{i\in I_p} \big(\Lip_{\mathcal{U}}(\psi_i)\,d(x,y)\big)\,R_x \,\leq\, d(x,y) \bigg(\sum_{i\in I_p} (\Lip_p(\psi_i)+\varepsilon)\bigg) (R_p+\varepsilon)\, ;$$
and using Lemma~\ref{lem:baryLipschitz}, that the second term is bounded by 
$$d(x,y)\sum_{i\in I_p} \psi_i(y) \Lip_{\mathcal{U}}(f_i) \,\leq\, d(x,y)\sum_{i\in I_p} (\psi_i(p) +\varepsilon) (\Lip_p(f_i)+\varepsilon)\, .$$
The bound \eqref{eqn:Lipbound1} follows by letting $\varepsilon$ go to $0$.
\end{proof}

\section{An equivariant Kirszbraun--Valentine theorem for amenable~groups}\label{sec:Kirszbraun}

One of the goals of this paper is to refine the classical Kirszbraun--Valentine theorem \cite{kir34,val44}, which states that any Lipschitz map from a compact subset of~$\HH^n$ to~$\HH^n$ with Lipschitz constant $\geq 1$ can be extended to a map from $\HH^n$ to itself with the same Lipschitz constant.
We shall in particular extend this theorem to an equivariant setting, for two actions $j,\rho\in\Hom(\Gamma_0,G)$ of a discrete group~$\Gamma_0$ on~$\HH^n$, with $j$ geometrically finite (Theorem~\ref{thm:Kirszbraunequiv}).
Before we prove Theorem~\ref{thm:Kirszbraunequiv}, we shall
\begin{itemize}
  \item reprove the classical Kirszbraun--Valentine theorem (Section~\ref{subsec:classicalKirszbraun}), both for the reader's convenience and because the main technical step (Lemma~\ref{lem:moteur}) will be useful later to control the local Lipschitz constant;
  \item examine the case when the Lipschitz constant is $<1$ (Section~\ref{subsec:KirszbraunC<1});
  \item extend the classical Kirszbraun--Valentine theorem to an equivariant setting for two actions $j,\rho\in\Hom(S,G)$ of an \emph{amenable} group~$S$ (Section~\ref{subsec:amenableKirszbraun}).
  We shall use this as a technical tool to extend maps in cusps when dealing with geometrically finite representations $j\in\nolinebreak\Hom(\Gamma_0,G)$ that are \emph{not} convex cocompact.
\end{itemize}

\subsection{The classical Kirszbraun--Valentine theorem}\label{subsec:classicalKirszbraun}

We first give a proof of the classical Kirszbraun--Valentine theorem \cite{kir34,val44}.

\begin{proposition}\label{prop:classicalKirszbraun}
Let $K\neq\emptyset$ be a compact subset of~$\HH^n$.
Any Lipschitz map $\varphi : K\rightarrow\HH^n$ with $\Lip(\varphi)\geq 1$ admits an extension $f : \HH^n\rightarrow\HH^n$ with the same Lipschitz constant.
\end{proposition}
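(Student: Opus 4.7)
The plan is to follow the classical two-step approach: reduce via Zorn's lemma to adding a single point, and then solve the one-point extension by producing a common point in a family of closed balls through a Helly-type argument.

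For the reduction, I would consider the poset $\mathcal{P}$ of $C$-Lipschitz extensions $\psi \colon K' \to \HH^n$ of $\varphi$, with $K \subseteq K' \subseteq \HH^n$, partially ordered by inclusion of graphs. The union of a totally ordered chain is still a $C$-Lipschitz extension, so Zorn's lemma yields a maximal element $\psi^{\ast}$. It suffices to show that $\mathrm{dom}(\psi^{\ast})=\HH^n$: otherwise, for any $y \in \HH^n \smallsetminus \mathrm{dom}(\psi^{\ast})$, the existence of a value $z \in \HH^n$ at which $\psi^{\ast}$ can be extended while keeping the $C$-Lipschitz property would contradict maximality. Finding such $z$ amounts to showing that the intersection
$$\bigcap_{x \in \mathrm{dom}(\psi^{\ast})} \overline{B}\bigl(\psi^{\ast}(x),\, C\, d(y, x)\bigr)$$
is nonempty. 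Closed balls in $\HH^n$ are compact and convex, and at least one of them (say the one centered at $\psi^{\ast}(x_0)$ for any fixed $x_0$) is bounded. By compactness, the intersection is nonempty provided every \emph{finite} subfamily is nonempty; hence the problem reduces to the \emph{Kirszbraun intersection property in $\HH^n$}: given points $x_1, \dots, x_k, y$ and $p_1, \dots, p_k$ in $\HH^n$ with $d(p_i, p_j) \leq C\, d(x_i, x_j)$ for all $i, j$, the intersection $\bigcap_{i=1}^{k} \overline{B}(p_i, C\, d(y, x_i))$ is nonempty.

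To prove this intersection property I would use a direct variational argument based on the barycenter machinery of Section~\ref{subsec:bary}. Consider the proper, continuous, convex function
$$\Phi(z) := \max_{1 \leq i \leq k} \Bigl( d(z, p_i)^2 - C^2\, d(y, x_i)^2 \Bigr),$$
which attains its minimum at some $z^{\ast} \in \HH^n$. I want to show $\Phi(z^{\ast}) \leq 0$. Suppose not. Let $I \subseteq \{1,\dots,k\}$ be the active set where the max in $\Phi(z^{\ast})$ is attained; a first-order (Clarke) subdifferential analysis for~$\Phi$ at its minimizer produces nonnegative weights $\underline{\alpha} = (\alpha_i)_{i \in I}$ summing to~$1$ such that $z^{\ast} = \boldsymbol{m}^{\underline{\alpha}}(p_i)_{i \in I}$ in the sense of Lemma~\ref{lem:bary}, \ie $z^{\ast}$ is the weighted barycenter of the $p_i$'s.

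The main obstacle is to convert this critical-point information into a contradiction using the Lipschitz inequalities $d(p_i,p_j) \leq C\, d(x_i,x_j)$. Setting $y^{\ast} := \boldsymbol{m}^{\underline{\alpha}}(x_i)_{i \in I}$, I would compare the two ``variance'' quantities $\sum_{i \in I} \alpha_i\, d(z^{\ast}, p_i)^2$ and $\sum_{i \in I} \alpha_i\, d(y^{\ast}, x_i)^2$ to the pairwise expressions $\sum_{i,j} \alpha_i \alpha_j\, d(p_i,p_j)^2$ and $\sum_{i,j} \alpha_i \alpha_j\, d(x_i,x_j)^2$, using on the one hand the $\mathrm{CAT}(0)$ variance inequality of Sturm (valid in $\HH^n$), and on the other hand the minimizing property of the barycenter together with $\sum \alpha_i\, d(y^{\ast},x_i)^2 \leq \sum \alpha_i\, d(y,x_i)^2$. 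The delicate point is that in the non-positively curved setting of $\HH^n$ these comparisons run in the direction compatible with the Lipschitz hypothesis; the hypothesis $C \geq 1$ is used here to absorb an extra factor. Chaining the inequalities forces $\Phi(z^{\ast}) \leq 0$, contradicting our assumption and producing the required common point. (Alternatively, one could quote Valentine's original intersection theorem in spaces of constant curvature, or the Lang--Schr\"oder extension to $\mathrm{CAT}(\kappa)$ spaces with $\kappa \leq 0$.)
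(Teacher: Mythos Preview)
Your reduction (Zorn plus compactness to a finite ball-intersection) is fine and equivalent to the paper's density argument. The gap is in the variance chain. From Sturm's $\mathrm{CAT}(0)$ inequality on the target side you get $\sum_{i\in I}\alpha_i\,d(z^\ast,p_i)^2 \leq \tfrac12\sum_{i,j}\alpha_i\alpha_j\,d(p_i,p_j)^2 \leq \tfrac{C^2}{2}\sum_{i,j}\alpha_i\alpha_j\,d(x_i,x_j)^2$, and to contradict $\Phi(z^\ast)>0$ you would now need $\tfrac12\sum_{i,j}\alpha_i\alpha_j\,d(x_i,x_j)^2 \leq \sum_{i\in I}\alpha_i\,d(y,x_i)^2$. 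In Euclidean space this holds (with equality at $y=y^\ast$), which is exactly why Kirszbraun there needs no condition on~$C$ (Remark~\ref{rem:EuclKirszbraun}). In $\HH^n$ it fails: Sturm on the source gives only $\sum_i\alpha_i\,d(y^\ast,x_i)^2\leq \tfrac12\sum_{i,j}\alpha_i\alpha_j\,d(x_i,x_j)^2$ --- the \emph{same} direction --- and the barycenter property of $y^\ast$ is again an upper bound on the left side. Both source inequalities point the wrong way, the $C^2$'s have already cancelled, and there is no ``extra factor'' for $C\geq 1$ to absorb. Your fallback of citing Valentine or Lang--Schr\"oder is of course legitimate, but that is quoting the result rather than proving it.

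The paper's one-point argument is different and makes the role of $C\geq 1$ explicit. It minimizes the \emph{ratio} $q'\mapsto\max_{k\in K} d(q',\varphi(k))/d(p,k)$; the first-order condition at the minimizer~$q$ puts $q$ in the convex hull of $\varphi(X)$ for the active set~$X$. An integral identity in the tangent spaces at $p$ and~$q$ (equation~\eqref{eqn:integrale}) then produces two active points $k_1,k_2$ with $\widehat{k_1pk_2}\leq\widehat{\varphi(k_1)\,q\,\varphi(k_2)}$. Since the legs at~$q$ are $C\geq 1$ times the legs at~$p$ and the angle is no smaller, Toponogov's hinge comparison in constant curvature $-1$ gives $d(\varphi(k_1),\varphi(k_2))\geq C\,d(k_1,k_2)$, whence $C\leq\Lip(\varphi)$. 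The hypothesis $C\geq 1$ enters precisely here: dilating a hyperbolic hinge by a factor $\geq 1$ enlarges the opposite side at least proportionally.
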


The following is an important technical step in the proof of Proposition~\ref{prop:classicalKirszbraun}.
It will also be used in the proofs of Lemmas \ref{lem:extend-neighb-p}, \ref{lem:MaxStretchedLam}, and~\ref{lem:1StretchedLam} below.

\begin{lemma}\label{lem:moteur}
Let $\mathbf{K}\neq\emptyset$ be a compact subset of~$\HH^n$ and ${\boldsymbol\varphi} : \mathbf{K}\rightarrow\HH^n$ a nonconstant Lipschitz map.
For any $p\in\HH^n\smallsetminus \mathbf{K}$, the function
\begin{eqnarray*}
\HH^n & \longrightarrow & \hspace{1.45cm} \R^+\\
q'\ & \longmapsto & C_{q'} := \max_{k\in\mathbf{K}} \frac{d(q',{\boldsymbol\varphi}(k))}{d(p,k)}
\end{eqnarray*}
admits a minimum $C_q$ at a point $q\in\HH^n$, and $q$ belongs to the convex hull of ${\boldsymbol\varphi}(\mathbf{K}')$ where
$$\mathbf{K}' := \bigg\{ k\in\mathbf{K} ~\Big|~ \frac{d(q,{\boldsymbol\varphi}(k))}{d(p,k)} = C_q\bigg\}.$$
Moreover,
\begin{itemize}
  \item either there exist $k_1,k_2\in\mathbf{K}'$ such that $0\leq\widehat{k_1 p k_2}<\widehat{\varphi(k_1) q \varphi(k_2)}\leq\pi$,
  \item or $\widehat{k_1 p k_2}=\widehat{\varphi(k_1) q \varphi(k_2)}$ for $(\nu\times\nu)$-almost all $(k_1,k_2)\in\mathbf{K}'\times\mathbf{K}'$, where $\nu$ is some probability measure on~$\mathbf{K}'$ such that $q$ belongs to the convex hull of the support of ${\boldsymbol\varphi}_*\nu$.
\end{itemize}
\end{lemma}

Here we denote by $\widehat{abc}\in [0,\pi]$ the angle at~$b$ between three points $a,b,c\in\nolinebreak\HH^n$.

\begin{proof}[Proof of Lemma~\ref{lem:moteur}]
The function~$q'\mapsto C_{q'}$ is proper and convex, hence admits a minimum $C_q$ at some point $q\in\HH^n$.
We have $C_q>0$ since $\boldsymbol\varphi$ is nonconstant.

Suppose by contradiction that $q$ does \emph{not} belong to the convex hull of ${\boldsymbol\varphi}(\mathbf{K}')$, and let $q'$ be the projection of $q$ to this convex hull.
If $q''$ is a point of the geodesic segment $[q,q']$, close enough to~$q$, then $\frac{d(q'',{\boldsymbol\varphi}(k))}{d(p,k)}$ is uniformly $<C_q$ for $k\in\mathbf{K}$: indeed, for $k\in\mathbf{K}$ in a small neighborhood of~$\mathbf{K}'$ this follows from the inequality $d(q'',{\boldsymbol\varphi}(k))< d(q,{\boldsymbol\varphi}(k))$; for $k\in\mathbf{K}$ away from~$\mathbf{K}'$ it follows from the fact that $\frac{d(q,{\boldsymbol\varphi}(k))}{d(p,k)}$ is itself bounded away from~$C_q$ by continuity and compactness of~$\mathbf{K}$.
Thus $C_{q''}<C_q$, a contradiction.
It follows that $q$ belongs to the convex hull of ${\boldsymbol\varphi}(\mathbf{K}')$.

Let $\mathbf{K}'_{\log}\subset T_p\HH^n$ (resp.\ $\mathbf{L}_{\log}\subset T_q\HH^n$) be the (compact) set of\,vectors\,whose image by the exponential map $\exp_p : T_p\HH^n\to\HH^n$ (resp.\ $\exp_q :\nolinebreak T_q\HH^n\to\nolinebreak\HH^n$) lies in~$\mathbf{K}'$ (resp.\ in~${\boldsymbol\varphi}(\mathbf{K}')$). Let
$${\boldsymbol\varphi}_{\log} := \exp_q^{-1} \circ\, {\boldsymbol\varphi} \circ \exp_p : \mathbf{K}'_{\log} \longrightarrow \mathbf{L}_{\log}$$
be the map induced by~${\boldsymbol\varphi}$.
The fact that $q$ belongs to the convex hull of ${\boldsymbol\varphi}(\mathbf{K}')$ implies that $0$ belongs to the convex hull of $\mathbf{L}_{\log}={\boldsymbol\varphi}_{\log}(\mathbf{K}'_{\log})$.
Therefore, there exists a probability measure $\nu_{\log}$ on~$\mathbf{K}'_{\log}$ such that
$$\int_{\mathbf{K}'_{\log}} \frac{{\boldsymbol\varphi}_{\log}(x)}{\Vert{\boldsymbol\varphi}_{\log}(x)\Vert} \,\mathrm{d}\nu_{\log}(x)=0 \in T_q\HH^n.$$
(The division is legitimate since $\Vert{\boldsymbol\varphi}_{\log}(x)\Vert=d(q,{\boldsymbol\varphi}(\exp_q(x)))\geq C_q\,d(p,\mathbf{K})>\nolinebreak 0$ for all $x\in\mathbf{K}'_{\log}$.)
We set $\nu:=(\exp_p)_{\ast}\nu_{\log}$, so that $q$ belongs to the convex hull of the support of ${\boldsymbol\varphi}_*\nu$.
We then have
\begin{eqnarray*}
& & \iint_{\mathbf{K}'\times\mathbf{K}'} \left( \cos \widehat{k_1pk_2} - \cos \widehat{{\boldsymbol\varphi}(k_1)q{\boldsymbol\varphi}(k_2)} \right) \, \mathrm{d}(\nu \times \nu)(k_1,k_2)\\
& = & \iint_{\mathbf{K}'_{\log}\times \mathbf{K}'_{\log}} \left ( \big\langle {\textstyle \frac{x_1}{\Vert x_1\Vert}} \big| {\textstyle \frac{x_2}{\Vert x_2\Vert}} \big\rangle - \big\langle {\textstyle \frac{{\boldsymbol\varphi}_{\log}(x_1)}{\Vert{\boldsymbol\varphi}_{\log}(x_1)\Vert}} \big| {\textstyle\frac{{\boldsymbol\varphi}_{\log}(x_2)}{\Vert{\boldsymbol\varphi}_{\log}(x_2)\Vert}} \big\rangle \right ) \, \mathrm{d}(\nu_{\log}\times\nu_{\log})(x_1,x_2)\\
& = & \left | \int_{\mathbf{K}'_{\log}} \frac{x}{\Vert x\Vert} \,\mathrm{d}\nu_{\log}(x) \right |^2 - \left | \int_{\mathbf{K}'_{\log}} \frac{{\boldsymbol\varphi}_{\log}(x)}{\Vert{\boldsymbol\varphi}_{\log}(x)\Vert} \,\mathrm{d}\nu_{\log}(x) \right |^2 \ \geq\ 0.
\end{eqnarray*}
If the function $(k_1,k_2)\mapsto\cos \widehat{k_1pk_2} - \cos \widehat{{\boldsymbol\varphi}(k_1)q{\boldsymbol\varphi}(k_2)}$ takes a positive value at some pair $(k_1,k_2)\in\mathbf{K}'\times\mathbf{K'}$, then we have $0\leq\widehat{k_1 p k_2}<\widehat{\varphi(k_1) q \varphi(k_2)}\leq\pi$.
Otherwise, the function takes only nonpositive values, hence is zero $(\nu\times\nu)$-almost everywhere since its integral is nonnegative; in other words, $\widehat{k_1 p k_2}=\widehat{\varphi(k_1) q \varphi(k_2)}$ for $(\nu\times\nu)$-almost all $(k_1,k_2)\in\mathbf{K}'\times\mathbf{K}'$.
\end{proof}

The other main ingredient in the proof of Proposition~\ref{prop:classicalKirszbraun} is the following consequence of Toponogov's theorem, a comparison theorem expressing the divergence of geodesics in negative curvature (see \cite[Lem.\,II.1.13]{bh99}).

\begin{lemma} \label{lem:toponogov}
In the setting of Lemma~\ref{lem:moteur}, we have $C_q\leq\max(\Lip(\boldsymbol\varphi),1)$. 
\end{lemma}

\begin{proof}
We may assume $C_q\geq 1$, otherwise there is nothing to prove.
By Lemma~\ref{lem:moteur}, there exist $k_1,k_2\in\mathbf{K}'$ such that $\widehat{k_1 p k_2}\leq\widehat{{\boldsymbol\varphi}(k_1) q {\boldsymbol\varphi}(k_2)}\neq 0$.
Since
$$\frac{d(q,{\boldsymbol\varphi}(k_1))}{d(p,k_1)} = \frac{d(q,{\boldsymbol\varphi}(k_2))}{d(p,k_2)} = C_q \geq 1,$$
Toponogov's theorem implies $d({\boldsymbol\varphi}(k_1),{\boldsymbol\varphi}(k_2))\geq C_q\,d(k_1,k_2)$.
On the other hand, we have $d({\boldsymbol\varphi}(k_1),{\boldsymbol\varphi}(k_2))\leq\Lip(\boldsymbol\varphi)\,d(k_1,k_2)$ by definition of $\Lip(\boldsymbol\varphi)$, hence $C_q\leq\Lip(\boldsymbol\varphi)$.
\end{proof}

\begin{proof}[Proof of Proposition~\ref{prop:classicalKirszbraun}]
It is enough to prove that for any point $p\in\HH^n\smallsetminus K$ we can extend~$\varphi$ to $K\cup\{ p\}$ keeping the same Lipschitz constant $C_0:=\Lip(\varphi)$.
Indeed, if this is proved, then we can consider a dense sequence $(p_i)_{i\in\N}$ of points of $\HH^n\smallsetminus K$, construct by induction a $C_0$-Lipschitz extension of~$\varphi$ to $K\cup\{ p_i\,|\,i\in\N\}$, and finally extend it to~$\HH^n$ by continuity.

Let $p\in\HH^n\smallsetminus K$.
If $\varphi$ is constant, then the constant extension of $\varphi$ to $K\cup\{p\}$ still has the same Lipschitz constant.
Otherwise we apply Lemmas \ref{lem:moteur} and~\ref{lem:toponogov} with $(\mathbf{K},{\boldsymbol\varphi}):=(K,\varphi)$.
\end{proof}

\begin{remark}\label{rem:classicalKirszbraun<1}
The proof actually shows that for $C\geq 1$, any map $\varphi :\nolinebreak K\to\nolinebreak\HH^n$ with $\Lip(\varphi)\leq C$ admits an extension $f : \HH^n\to\HH^n$ with $\Lip(f)\leq C$.
\end{remark}

\begin{remark}\label{rem:EuclKirszbraun}
The same proof shows that if $K$ is a nonempty compact subset of~$\R^n$, then any Lipschitz map $\varphi : K\rightarrow\R^n$ admits an extension $f : \R^n\rightarrow\R^n$ with the same Lipschitz constant.
There is no constraint on the Lipschitz constant for~$\R^n$ since the Euclidean analogue of Toponogov's theorem holds for any $C\geq 0$.
This Euclidean extension result is the one originally proved by Kirszbraun \cite{kir34}, by a different approach based on Helly's theorem.
The hyperbolic version is due to Valentine \cite{val44}.
\end{remark}

\begin{remark}\label{rem:Knoncompact}
Proposition~\ref{prop:classicalKirszbraun} actually holds for \emph{any} subset $K$ of~$\HH^n$, not necessarily compact.
Indeed, we can always extend~$\varphi$ to the closure $\overline{K}$ of~$K$ by continuity, with the same Lipschitz constant, and view $\overline{K}$ as an increasing union of compact sets $K_i$, $i\in\N$.
Proposition~\ref{prop:classicalKirszbraun} gives extensions $f_i : \HH^n\rightarrow\nolinebreak\HH^n$ of $\varphi|_{K_i}$ with $\Lip(f_i)\leq\Lip(\varphi)$, and by the Arzel\`a--Ascoli theorem we can extract a pointwise limit $f$ from the~$f_i$, extending $\varphi$ with $\Lip(f)=\Lip(\varphi)$.
\end{remark}

\subsection{A weaker version when the Lipschitz constant is $<1$}\label{subsec:KirszbraunC<1}

Proposition~\ref{prop:classicalKirszbraun} does not hold when the Lipschitz constant is $<1$: see Example~\ref{ex:K=3points}.
However, we prove the following strengthening of Remark~\ref{rem:classicalKirszbraun<1} with $C=1$.

\begin{proposition}\label{prop:KirszbraunC<1}
Let $K\neq\emptyset$ be a compact subset of~$\HH^n$.
Any Lipschitz map $\varphi : K\rightarrow\HH^n$ with $\Lip(\varphi)<1$ admits an extension $f : \HH^n\rightarrow\HH^n$ with $\Lip(f)<1$.
\end{proposition}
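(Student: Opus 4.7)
My plan is to use a hyperbolic scaling trick: post-compose $\varphi$ with a radial dilation centered at a basepoint to boost its Lipschitz constant above $1$, apply Proposition~\ref{prop:classicalKirszbraun}, and then post-compose with the inverse dilation to recover an extension of $\varphi$ whose Lipschitz constant is strictly less than~$1$.

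More precisely, fix $k_0 \in K$ and let $q_0 := \varphi(k_0)$. For $\mu > 0$, let $D_\mu : \HH^n \to \HH^n$ denote the radial dilation centered at $q_0$ with factor $\mu$, sending $x$ to the point at hyperbolic distance $\mu\,d(q_0, x)$ along the geodesic ray from $q_0$ through $x$. A direct computation of the differential of $D_\mu$, together with the inequality $\sinh(\mu r) \leq \mu \sinh(r)$ valid for $\mu \in [0, 1]$ and $r \geq 0$ (and hence $\sinh(\mu r)/\sinh(r) \geq \mu$ for $\mu \geq 1$), shows that $D_\mu$ is globally $\mu$-Lipschitz when $\mu \leq 1$; equivalently, $d(D_\mu x, D_\mu y) \geq \mu\,d(x, y)$ for all $x, y \in \HH^n$ when $\mu \geq 1$. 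Choosing $\mu := 1/C_0 > 1$, the composition $\hat\varphi := D_\mu \circ \varphi : K \to \HH^n$ has $\Lip(\hat\varphi) \geq \mu C_0 = 1$, so Proposition~\ref{prop:classicalKirszbraun} applies and extends it to $\hat f : \HH^n \to \HH^n$ with $\Lip(\hat f) = \Lip(\hat\varphi)$. Setting $\tilde f := D_{C_0} \circ \hat f$ yields an extension of $\varphi$ (since $D_{C_0} \circ D_{1/C_0} = \mathrm{id}$) whose Lipschitz constant satisfies $\Lip(\tilde f) \leq C_0\, \Lip(\hat\varphi) \leq C_0^2 \cdot \sinh(L/C_0)/\sinh(L)$, where $L$ denotes the radius of $\varphi(K)$ seen from $q_0$. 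Using the convexity of $\sinh$, one verifies that this quantity is strictly less than~$1$ for $L$ below an explicit threshold $L_*(C_0) > 0$ depending only on~$C_0$.

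The main obstacle is the case $L > L_*(C_0)$, where the exponential tangential distortion of $D_\mu$ is too severe for the above bound to close. In that case I would first reduce to a compact domain via the $1$-Lipschitz nearest-point projection $\pi : \HH^n \to V$ onto the closed convex hull $V$ of~$K$ (which is compact since $K$ is, and $\pi$ is $1$-Lipschitz because $V$ is convex in the CAT($-1$) space~$\HH^n$), then cover $V$ by finitely many balls $B_1, \ldots, B_N$ of diameter below $L_*(C_0)$. On each $B_i$ the scaling trick produces a local strict contraction $f_i$ extending $\varphi|_{K \cap B_i}$; these are then glued via a Lipschitz partition of unity $\{\psi_i\}$ subordinate to the cover, forming the barycentric combination $\tilde f(p) := \sum_i \psi_i(p)\, f_i(p)$ via Lemma~\ref{lem:baryLipschitz}. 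By the Leibniz-type bound of Lemma~\ref{lem:partofunity}, $\Lip_p(\tilde f) \leq \sum_i \psi_i(p)\,\Lip_p(f_i) + \sum_i \Lip_p(\psi_i)\, R_p$, with $R_p := \mathrm{diam}\{f_i(p) : p \in \mathrm{supp}(\psi_i)\}$. Since $\Lip(\psi_i)$ and $\mathrm{diam}(B_i)$ scale inversely, the essential difficulty is making the gluing term small enough; this requires choosing the local extensions $f_i$ coherently across overlaps (for instance, as local contractions of a common $1$-Lipschitz Kirszbraun extension of $\varphi$), so that $R_p$ becomes much smaller than $\mathrm{diam}(B_i)$ and the overall contribution remains below the slack $1 - \max_i \Lip(f_i)$.
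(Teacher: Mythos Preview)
Your dilation trick for the small-diameter case is correct and elegant: when the image $\varphi(K)$ sits in a ball of radius $L<L_*(C_0)$ around $q_0$, the composition $D_{C_0}\circ\hat f$ genuinely extends $\varphi$ with Lipschitz constant $\leq C_0^2\,\sinh(L/C_0)/\sinh(L)<1$.

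The gap is in the general case. With balls $B_i$ of diameter~$\delta$, the partition functions satisfy $\Lip(\psi_i)\asymp\delta^{-1}$, so the Leibniz term in Lemma~\ref{lem:partofunity} contributes $\asymp\delta^{-1}R_p$. For this to be absorbed into the slack $1-\max_i\Lip(f_i)$ you would need $R_p=o(\delta)$, but there is no mechanism producing that. Your suggested fix (taking $f_i=D_{C_0}^{q_i}\circ g$ for a common $1$-Lipschitz extension~$g$ of~$\varphi$, with centers $q_i=g(p_i)$) fails on two counts: first, such an $f_i$ does \emph{not} restrict to~$\varphi$ on $K\cap B_i$ (it restricts to $D_{C_0}^{q_i}\circ\varphi$), so the glued map is not an extension of~$\varphi$; second, even ignoring that, a Euclidean first-order computation gives $d(f_i(p),f_j(p))\approx(1-C_0)\,d(q_i,q_j)$, which is of order $(1-C_0)\,\delta$, not $o(\delta)$. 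The resulting error term is then of order $N(1-C_0)$ (with $N$ the covering multiplicity), which does not fit under the available slack.

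The paper bypasses the gluing problem altogether by averaging with \emph{constant} weights rather than a partition of unity. For each $p\in\Conv{K}$ it builds a map $f_p$ that extends $\varphi$ on \emph{all} of~$K$ (not just $K\cap B_i$), is globally $1$-Lipschitz, and has $\Lip_{\mathcal{U}_p}(f_p)<1$ on some neighborhood $\mathcal{U}_p$ of~$p$. (For $p\notin K$ this uses the optimal one-point extension from the proof of Proposition~\ref{prop:classicalKirszbraun}; for $p\in K$ it uses the Euclidean Kirszbraun theorem via $\exp_p$.) Covering $\Conv{K}$ by finitely many $\mathcal{U}_{p_1},\dots,\mathcal{U}_{p_m}$ and setting $f:=\frac{1}{m}\sum f_{p_i}$, Lemma~\ref{lem:baryLipschitz} gives $\Lip_x(f)\leq\frac{1}{m}\sum_i\Lip_x(f_{p_i})<1$ at every~$x$, with no Leibniz correction. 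The point is that because each $f_{p_i}$ already agrees with~$\varphi$ on all of~$K$, the barycentric average still extends~$\varphi$, and constant coefficients kill the $R_p$ term. Your local dilation idea could in fact be used to produce such $f_p$'s near points of~$K$ (in place of the paper's exponential-chart argument), but the global $1$-Lipschitz extension step and the constant-weight averaging are the missing ingredients.
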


It is not clear whether the analogue of Remark~\ref{rem:Knoncompact} holds for $\Lip(\varphi)<1$: see Appendix~\ref{sec:questionlip}.

Here is the main technical step in the proof of Proposition~\ref{prop:KirszbraunC<1}.

\begin{lemma} \label{lem:extend-neighb-p}
Let $K\neq\emptyset$ be a compact subset of~$\HH^n$ with convex hull $\Conv{K}$ in~$\HH^n$, and let $\varphi : K\rightarrow\HH^n$ be a Lipschitz map with $\Lip(\varphi)<1$.
For any $p\in\Conv{K}$, there is a neighborhood $\mathcal{U}_p$ of $p$ in~$\HH^n$ and a $1$-Lipschitz extension $f_p : K\cup\mathcal{U}_p\to\HH^n$ of~$\varphi$ such that $\Lip_{\mathcal{U}_p}(f_p)<1$.
\end{lemma}

\begin{proof}
We first extend $\varphi$ to a map $h : K\cup\{p\}\to\HH^n$ with $\Lip(h)<1$.
For this we may assume $p\notin K$.
By Lemma~\ref{lem:moteur} with $(\mathbf{K},{\boldsymbol\varphi}):=(K,\varphi)$, we can find points $q\in\HH^n$ and $k_1,k_2\in K$ such that $C_q:=\max_{k\in K} d(q,\varphi(k))/d(p,k)$ is minimal and such that $d(q,\varphi(k_i))=C_q\,d(p,k_i)$ for $i\in\{1,2\}$ and $\widehat{k_1pk_2}\leq \widehat{\varphi(k_1) q \varphi(k_2)} \neq 0$.
We cannot have $C_q=1$, otherwise we would have $d(\varphi(k_1),\varphi(k_2))\geq d(k_1,k_2)$ by basic trigonometry, contradicting $\Lip(\varphi)<1$.
Therefore $C_q<1$ by Lemma~\ref{lem:toponogov}.
We can then take $h : K\cup\{p\}\to\HH^n$ to be the extension of~$\varphi$ sending $p$ to~$q$.

Next, choose a small $\varepsilon>0$ such that $\Lip(h)(1+\varepsilon)^4<1$.
Let us prove that there is a ball $B$ of radius $r>0$ centered at~$p$ and an extension $h' : B\to\HH^n$ of $h|_{K\cap B}$ such that $\Lip_B(h')\leq\Lip(h)(1+\varepsilon)^4<\nolinebreak 1$.
If $p\notin K$, we just take $B$ to be disjoint from~$K$ and $h'(B)=\{q\}$. 
If $p\in K$, we remark that there is a constant $r>0$ such that for any $x\in\HH^n$, the exponential map $\exp_x : T_x\HH^n\rightarrow\HH^n$ and its inverse $\log_x : \HH^n\rightarrow T_x\HH^n$ are both $(1+\varepsilon)$-Lipschitz when restricted to the ball $B_x(r)\subset\HH^n$ of radius~$r$ centered at~$x$ and to its image $\log_xB_x(r)\subset T_x\HH^n$.
We set $B:=B_p(r)$.
Consider the map
$$\log_{q} \circ\, h \circ \exp_p : \log_p (K) \longrightarrow T_q\HH^n.$$
Its restriction to $\log_p(K\cap B)\subset T_p\HH^n$ is $\Lip(h)(1+\varepsilon)^2$-Lipschitz.
By Remark~\ref{rem:EuclKirszbraun}, this restriction admits an extension $\psi : \log_p B\rightarrow T_q\HH^n$ with the same Lipschitz constant.
Then
$$h' := \exp_q \circ\, \psi \circ \log_p \,:\ B \longrightarrow \HH^n$$
is an extension of $h|_{K\cap B}$ with $\Lip_B(h')\leq\Lip(h)(1+\varepsilon)^4<\nolinebreak 1$.

Let $B'\subset B$ be another ball centered at~$p$, of radius $r'>0$ small enough so that $\Lip(h)r+\Lip(h')r'<r-r'$.
We claim that we may take $\mathcal{U}_p:=B'$ and $f_p : K\cup\mathcal{U}_p\to\HH^n$ to be the map that coincides with $\varphi$ on~$K$ and with $h'$ on~$\mathcal{U}_p$.
Indeed, for any distinct points $(x,y)\in K\times B'$, if $x\in B$ then $\Lip_{\{x,y\}}(f_p)\leq\Lip(h')<1$, and otherwise 
\begin{eqnarray}
\frac{d(f_p(x),f_p(y))}{d(x,y)} & \leq & \frac{d(f_p(x),f_p(p)) + d(f_p(p),f_p(y))}{d(x,p) - d(p,y)} \label{eqn:cutup} \\
& \leq & \frac{\Lip(h) \, d(x,p) + \Lip(h') \, r'}{d(x,p) - r'} < 1, \notag
\end{eqnarray}
where the last inequality uses the fact that $d(x,p)\geq r$ and the monotonicity of real M\"obius maps $t\mapsto (t+a)/(t-b)$ for $a,b\geq 0$.
\end{proof}

\begin{proof}[Proof of Proposition~\ref{prop:KirszbraunC<1}]
It is sufficient to prove that any Lipschitz map $\varphi :\nolinebreak K\rightarrow\HH^n$ with $C_0:=\Lip(\varphi)<1$ admits an extension $f : \Conv{K}\rightarrow\nolinebreak\HH^n$ with $\Lip(f)<1$, because we can always precompose with the closest-point projection $\pi :\nolinebreak\HH^n\rightarrow\nolinebreak\Conv{K}$, which is $1$-Lipschitz.

By Lemma~\ref{lem:extend-neighb-p}, for any $p\in\Conv{K}$ we can find a neighborhood $\mathcal{U}_p$ of $p$ in~$\HH^n$ and a $1$-Lipschitz extension $f_p : K\cup\mathcal{U}_p\to\HH^n$ of~$\varphi$ such that $\Lip_{\mathcal{U}_p}(f_p)<1$.
By compactness of $\Conv{K}$, we can find finitely many points $p_1,\ldots,p_m\in\Conv{K}$ such that $\Conv{K}\subset\bigcup_{i=1}^m \mathcal{U}_{p_i}$.
For any $1\leq i\leq m$, using Remark~\ref{rem:classicalKirszbraun<1}, we extend $f_{p_i}$ to a $1$-Lipschitz map on $\Conv{K}\cup\mathcal{U}_{p_i}$, still denoted by~$f_{p_i}$.
By \eqref{eqn:supLip} and Lemma \ref{lem:baryLipschitz}, the symmetric barycenter
$$f \,:=\, \sum_{i=1}^m {\scriptsize\frac{1}{m}} f_{p_i}|_{\Conv{K}} :\ \Conv{K} \longrightarrow \HH^n,$$
which extends~$\varphi$, satisfies
$$\Lip(f) \leq \max_{1\leq i\leq m} \frac{\Lip_{\mathcal{U}_{p_i}}(f_{p_i})+(m-1)}{m} < 1.\qedhere$$
\end{proof}

\subsection{An equivariant Kirszbraun--Valentine theorem for amenable groups}\label{subsec:amenableKirszbraun}

We now extend Proposition~\ref{prop:classicalKirszbraun} to an equivariant setting with respect to two actions of an amenable group.
Recall that a discrete group~$S$ is said to be \emph{amenable} if there exists a sequence $(F_i)_{i\in\N}$ of finite subsets of~$S$ (called a \emph{F{\o}lner sequence}) such that for any $g\in S$,
$$\frac{\# (gF_i\,\triangle\, F_i)}{\# F_i}\ \underset{i\to +\infty}{\longrightarrow}\ 0,$$
where $\triangle$ denotes the symmetric difference.
For instance, any group which is abelian or solvable up to finite index is amenable.

The following proposition will be used throughout Section~\ref{sec:stretchlocus} to extend Lipschitz maps in horoballs of~$\HH^n$ corresponding to cusps of the geometrically finite manifold $j(\Gamma_0)\backslash\HH^n$, taking $S$ to be a cusp stabilizer.

\begin{proposition}\label{prop:amenableKirszbraun}
Let $S$ be an amenable discrete group, $(j,\rho)\subset\Hom(S,G)^2$ a pair of representations with $j$ injective and $j(S)$ discrete in~$G$, and $K\neq\emptyset$ a $j(S)$-invariant subset of~$\HH^n$ whose image in $j(S)\backslash\HH^n$ is compact.
Any $(j,\rho)$-equivariant Lipschitz map $\varphi : K\rightarrow\HH^n$ with $\Lip(\varphi)\geq 1$ admits a $(j,\rho)$-equivariant extension $f : \HH^n\rightarrow\HH^n$ with the same Lipschitz constant.
\end{proposition}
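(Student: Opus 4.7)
The plan is to first produce some a priori non-equivariant $C_0$-Lipschitz extension $f_0 : \HH^n \to \HH^n$ of $\varphi$ via the classical Kirszbraun--Valentine theorem (Proposition~\ref{prop:classicalKirszbraun} combined with Remark~\ref{rem:Knoncompact}, where $C_0 := \Lip(\varphi) \geq 1$), and then to average $f_0$ using F{\o}lner sets in $S$ in order to gain exact $(j,\rho)$-equivariance without increasing the Lipschitz constant. Note that $\varphi$ is itself automatically $(j,\rho)$-equivariant, since any $(j,\rho)$-equivariant extension forces this property on the $j(S)$-invariant set $K$.

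For each $s \in S$ consider the twisted conjugate
$$g_s(x) := \rho(s)^{-1} \cdot f_0\bigl(j(s)\cdot x\bigr),$$
which is $C_0$-Lipschitz and, by equivariance of $\varphi$, satisfies $g_s|_K = \varphi$ for every $s$. The crucial boundedness observation is that, for any fixed $k_0 \in K$,
$$d\bigl(g_s(x),\,\varphi(k_0)\bigr) \,=\, d\bigl(g_s(x),\,g_s(k_0)\bigr) \,\leq\, C_0\,d(x,k_0),$$
so the orbit $\{g_s(x)\}_{s \in S}$ has diameter at most $2 C_0\,d(x,K)$ for each $x \in \HH^n$, uniformly in $s$. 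This is what will make the averaging well behaved.

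For any finite subset $F \subset S$, form the equi-weighted barycenter of Section~\ref{subsec:bary},
$$f_F(x) \,:=\, \boldsymbol{m}^{(1/|F|,\dots,1/|F|)}\bigl(g_s(x)\bigr)_{s \in F}.$$
By Lemma~\ref{lem:baryLipschitz} we have $\Lip(f_F) \leq C_0$, and by the diagonal identity~\eqref{eqn:barydiag} we have $f_F|_K = \varphi$. A short reindexing $t = s\gamma$, together with the $G$-equivariance~\eqref{eqn:baryequiv} of barycenters, yields the twisted equivariance
$$f_F\bigl(j(\gamma) x\bigr) \,=\, \rho(\gamma) \cdot f_{F\gamma}(x) \qquad (\gamma \in S,\ x \in \HH^n),$$
and Lemma~\ref{lem:baryLipschitz2} combined with the boundedness above gives
$$d\bigl(f_F(x), f_{F\gamma}(x)\bigr) \,\leq\, 2 C_0\,d(x,K) \cdot \frac{|F \triangle F\gamma|}{|F|}.$$
Now choose an exhaustion $A_1 \subset A_2 \subset \cdots$ of $S$ by finite subsets with union $S$, and F{\o}lner sets $F_n := F_{A_n, 1/n}$ as in~\eqref{eqn:Folner}. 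The maps $f_n := f_{F_n}$ are uniformly $C_0$-Lipschitz and agree with $\varphi$ on $K\neq\emptyset$, hence are pointwise bounded on every compact subset of $\HH^n$. By the Arzel\`a--Ascoli theorem a subsequence converges, uniformly on compact sets, to a $C_0$-Lipschitz extension $f : \HH^n \to \HH^n$ of $\varphi$. For each fixed $\gamma \in S$, once $n$ is large enough that $\gamma \in A_n$, the estimate above forces $d(f_n(x), f_{F_n\gamma}(x)) \to 0$ uniformly on compact sets, so $f_n(j(\gamma)x) = \rho(\gamma) f_{F_n\gamma}(x)$ passes to the limit to give $f(j(\gamma)x) = \rho(\gamma) f(x)$.

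The step I expect to require the most care is the bookkeeping of the F{\o}lner data: a single subsequence of $(f_n)$ must be extracted that works for all $\gamma \in S$ and all compact subsets of $\HH^n$, via a standard diagonal argument whose success relies on the orbit diameter bound $2 C_0\,d(x,K)$ being uniform on compacta. The role of the assumption that the image of $K$ in $j(S)\backslash\HH^n$ is compact is to provide such uniform control, by reducing estimates to a compact fundamental domain for the $j(S)$-action.
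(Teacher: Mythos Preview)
Your argument is correct and follows the same strategy as the paper: produce a non-equivariant Kirszbraun--Valentine extension, average its twisted conjugates over F{\o}lner sets via the barycenter construction, and extract a limit by Arzel\`a--Ascoli. Two cosmetic differences: the paper fixes a single finite generating set~$A$ of~$S$ (tacitly assuming $S$ finitely generated, which holds in all the applications) and bounds the averaging error via Lemma~\ref{lem:bary} together with the permutation identity~\eqref{eqn:barypermut}, whereas you exhaust~$S$ by finite sets~$A_n$ and invoke Lemma~\ref{lem:baryLipschitz2}; also, your final paragraph over-credits the hypothesis that $j(S)\backslash K$ is compact---the bound $2C_0\,d(x,K)$ is automatically locally uniform in~$x$ since $d(\cdot,K)$ is continuous, and in fact neither proof uses that hypothesis.
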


\begin{proof}
Set $C_0:=\Lip(\varphi)\geq 1$.
By Proposition~\ref{prop:classicalKirszbraun} and Remark~\ref{rem:Knoncompact}, we can find an extension $f' : \HH^n\rightarrow\HH^n$ of~$\varphi$ with $\Lip(f')= C_0$, but $f'$ is not equivariant a priori.
We shall modify it into a $(j,\rho)$-equivariant map.
For any $\gamma\in S$, the $C_0$-Lipschitz map
$$f_{\gamma} := \rho(\gamma)\circ f' \circ j(\gamma)^{-1} : \HH^n \longrightarrow \HH^n$$
extends~$\varphi$. 
For all $\gamma,\gamma'\in S$ and all $p\in\HH^n$, since $f_{\gamma}$ and~$f_{\gamma'}$ agree on~$K$, the triangle inequality gives
\begin{equation}\label{eqn:fgammaamenable}
d\big(f_{\gamma}(p),f_{\gamma'}(p)\big) \leq 2C_0\cdot d(p,K) .
\end{equation}

Fix a finite generating subset $A$ of~$S$.
Using a F{\o}lner sequence of~$S$, we see that for any $\varepsilon>0$ there is a finite subset $F$ of~$S$ such that $\# (\gamma F\,\triangle\, F) \leq \varepsilon\,\# F$ for all $\gamma\in A$.
Write $F=\{\gamma_1,\dots, \gamma_k\}$ where $k=\#F$, and set
\begin{equation}\label{eqn:fepsilonamenable}
f^{\varepsilon}(p) := \boldsymbol{m}_k(f_{\gamma_1}(p),\dots, f_{\gamma_k}(p))
\end{equation}
for all $p\in\HH^n$, where $\boldsymbol{m}_k=\boldsymbol{m}^{(\frac{1}{k},\dots,\frac{1}{k})}$ is the averaging map of Lemma~\ref{lem:bary}.
By \eqref{eqn:barydiag}, the map~$f^{\varepsilon}$ still coincides with $\varphi$ on~$K$.
Moreover, as a barycenter of $C_0$-Lipschitz maps, $f^{\varepsilon}$ is $C_0$-Lipschitz (Lemma~\ref{lem:baryLipschitz}).
Note, using \eqref{eqn:baryequiv}, that
\begin{equation}\label{eqn:amenableequiv}
\rho(\gamma) \circ f^{\varepsilon} \circ j(\gamma)^{-1} (p) = \boldsymbol{m}_k(f_{\gamma\gamma_1}(p), \dots, f_{\gamma\gamma_k}(p))
\end{equation}
for all $\gamma\in S$ and $p\in\HH^n$.
Since $\# (\gamma F\,\triangle\, F) \leq \varepsilon\,\# F$ for all $\gamma\in A$, all but $\leq\varepsilon k$ of the $k$ entries of~$\boldsymbol{m}_k$ in \eqref{eqn:amenableequiv} are the same as in \eqref{eqn:fepsilonamenable} up to order, hence
$$d\big(\rho(\gamma)\circ f^{\varepsilon}\circ j(\gamma)^{-1}(p),f^{\varepsilon}(p)\big) \leq 2C_0 \cdot d(p,K) \cdot \varepsilon$$
for all $p\in\HH^n$ by \eqref{eqn:barypermut}, Lemma~\ref{lem:bary}, and \eqref{eqn:fgammaamenable}.
We conclude by letting $\varepsilon$ go to~$0$ and extracting a pointwise limit~$f$ from the~$f^{\varepsilon}$: such a map $f : \HH^n\rightarrow\HH^n$ is $C_0$-Lipschitz, extends~$\varphi$, and is equivariant under the action of any element $\gamma$ of~$A$, hence of~$S$.
\end{proof}

\section{The relative stretch locus}\label{sec:stretchlocus}

We now fix a discrete group~$\Gamma_0$, a pair $(j,\rho)\in\Hom(\Gamma_0,G)^2$ of representations of $\Gamma_0$ in~$G$ with $j$ geometrically finite, a $j(\Gamma_0)$-invariant subset $K$ of~$\HH^n$ whose image in $M:=j(\Gamma_0)\backslash\HH^n$ is compact (possibly empty), and a $(j,\rho)$-equivariant Lipschitz map $\varphi : K\rightarrow\HH^n$.
We shall use the following terminology and notation.

\begin{definition}\label{def:CE}
\begin{itemize}
  \item The \emph{relative minimal Lipschitz constant} $C_{K,\varphi}(j,\rho)$ is the infimum of Lipschitz constants $\Lip(f)$ of $(j,\rho)$-equivariant maps $f : \HH^n\rightarrow\HH^n$ with $f|_K=\varphi$.
  \item We denote by $\F_{K,\varphi}^{j,\rho}$ the set of $(j,\rho)$-equivariant maps $f : \HH^n\rightarrow\HH^n$ with $f|_K=\varphi$ that have minimal Lipschitz constant $C_{K,\varphi}(j,\rho)$.
  \item If $\F_{K,\varphi}^{j,\rho}\neq\emptyset$, the \emph{relative stretch locus} $E_{K,\varphi}(j,\rho)\subset \HH^n$ is the intersection of the stretch loci $E_f$ (Definition~\ref{def:stretchlocusgen}) of all maps $f\in\F_{K,\varphi}^{j,\rho}$.
  \item Similarly, the \emph{enhanced relative stretch locus} $\widetilde{E}_{K,\varphi}(j,\rho)\subset (\HH^n)^2$ is the intersection of the enhanced stretch loci $\widetilde{E}_f$ (Definition~\ref{def:stretchlocusgen}) of all maps $f\in\F_{K,\varphi}^{j,\rho}$.
\end{itemize}
\end{definition}

Note that $E_{K,\varphi}(j,\rho)$ is always $j(\Gamma_0)$-invariant and closed in~$\HH^n$, because $E_f$ is for each $f\in\F_{K,\varphi}^{j,\rho}$ (Lemma~\ref{lem:localLip}).
Similarly, $\widetilde{E}_{K,\varphi}(j,\rho)$ is always $j(\Gamma_0)$-invariant (for the diagonal action of $j(\Gamma_0)$ on $(\HH^n)^2$) and closed in~$(\HH^n)^2$.

If $K$ is empty, then $C_{K,\varphi}(j,\rho)$ is the minimal Lipschitz constant $C(j,\rho)$ of \eqref{eqn:defC} and $E_{K,\varphi}(j,\rho)$ is the intersection of stretch loci $E(j,\rho)$ of Theorem~\ref{thm:lamin}, which we shall simply call the stretch locus of $(j,\rho)$.
For empty~$K$ we shall sometimes write $\F^{j,\rho}$ instead of $\F_{K,\varphi}^{j,\rho}$.

\subsection{Elementary properties of the (relative) minimal Lipschitz constant and the (relative) stretch locus}\label{subsec:elementarystretch}

We start with an easy observation for empty~$K$.

\begin{remark}\label{rem:boundedrho}
If all elements of $\rho(\Gamma_0)$ are elliptic, then $C(j,\rho)=0$ and $\F^{j,\rho}$ is the set of constant maps with image a fixed point of $\rho(\Gamma_0)$ in~$\HH^n$ (such a fixed point exists by Fact~\ref{fact:fixedpoint}); in particular, $E(j,\rho)=\HH^n$.
\end{remark}

Here are now some elementary properties of $C_{K,\varphi}(j,\rho)$ and $E_{K,\varphi}(j,\rho)$ for general~$K$.

\begin{remark}\label{rem:CLipconj}
Conjugating by elements of~$G$ leaves the relative minimal Lipschitz constant invariant and modifies the relative stretch locus (if $\F^{j,\rho}_{K,\varphi}\neq\nolinebreak\emptyset$) by a translation: for any $j,\rho\in\nolinebreak\Hom(\Gamma_0,G)$ and $g,h\in G$, we have
$$\left\{ \begin{array}{ccl}
C_{g\cdot K,\, h\circ \varphi\circ g^{-1}}(j^g, \rho^h) & = & C_{K,\varphi}(j,\rho),\\
E_{g\cdot K,\, h\circ \varphi\circ g^{-1}}(j^g, \rho^h) & = & g\cdot E_{K,\varphi}(j,\rho),
\end{array} \right.$$
where $j^g:=gj(\cdot) g^{-1}$ and $\rho^h=h\rho(\cdot) h^{-1}$.
\end{remark}

\noindent
Indeed, for any $(j,\rho)$-equivariant Lipschitz map $f : \HH^n\rightarrow\HH^n$ extending $\varphi$, the map $h\circ f\circ g^{-1} : \HH^n\rightarrow\HH^n$ extends $h\circ \varphi\circ g^{-1}$, is $(j^g,\rho^h)$-equivariant with the same Lipschitz constant, and $\Lip_{g\cdot p}(h\circ f\circ g^{-1})=\Lip_p(f)$ for all $p\in\HH^n$.

\begin{lemma}\label{lem:finiteindex}
For any finite-index subgroup $\Gamma'_0$ of~$\Gamma_0$, if we set $j':=j|_{\Gamma'_0}$ and $\rho':=\rho|_{\Gamma'_0}$, then
\begin{itemize}
  \item $C_{K,\varphi}(j,\rho)=C_{K,\varphi}(j',\rho')$;
  \item $\F_{K,\varphi}^{j,\rho}\subset\F_{K,\varphi}^{j',\rho'}$, and $\F_{K,\varphi}^{j,\rho}\neq\emptyset$ if and only if $\F_{K,\varphi}^{j',\rho'}\neq\emptyset$;
  \item in this case, $E_{K,\varphi}(j,\rho)=E_{K,\varphi}(j',\rho')$.
\end{itemize}
\end{lemma}

By Lemma~\ref{lem:finiteindex}, we may always assume that
\begin{itemize}
  \item the finitely generated group $\Gamma_0$ is torsion-free (using the Selberg lemma \cite[Lem.\,8]{sel60});
  \item $j$ and~$\rho$ take values in the group $G_0=\PO(n,1)_0\simeq\SO(n,1)_0$ of orientation-preserving isometries of~$\HH^n$.
\end{itemize}
This will sometimes be used in the proofs without further notice.

\begin{proof}[Proof of Lemma~\ref{lem:finiteindex}]
The inequality $C_{K,\varphi}(j',\rho')\leq C_{K,\varphi}(j,\rho)$ holds because any $(j,\rho)$-equivariant map is $(j',\rho')$-equivariant.
We now prove the converse inequality.
Write $\Gamma_0$ as a disjoint union of cosets $\alpha_1\Gamma'_0,\dots,\alpha_r\Gamma'_0$ where $\alpha_i\in\nolinebreak\Gamma_0$.
Let $f'$ be a $(j',\rho')$-equivariant Lipschitz extension of~$\varphi$.
For $\gamma\in \Gamma_0$, the map $f_\gamma:=\rho(\gamma)\circ f'\circ j(\gamma)^{-1}$ depends only on the coset $\gamma \Gamma'_0$.
By \eqref{eqn:barypermut}, the symmetric barycenter $f:=\sum_{i=1}^r \frac{1}{r}\,f_{\alpha_i}$ satisfies, for any $\gamma \in \Gamma_0$,
$$\rho(\gamma)\circ f \circ j(\gamma)^{-1}= \sum_{i=1}^r \,\frac{1}{r}\, f_{\gamma\alpha_i} = f, $$
because the cosets $\gamma\alpha_i\Gamma'_0$ are the $\alpha_i\Gamma'_0$ up to order.
This means that $f$ is $(j,\rho)$-equivariant.
By Lemma~\ref{lem:baryLipschitz}, we have $\Lip(f)\leq\Lip(f')$, hence $C_{K,\varphi}(j,\rho)\leq C_{K,\varphi}(j',\rho')$ by minimizing $\Lip(f')$.

Since $C_{K,\varphi}(j,\rho)=C_{K,\varphi}(j',\rho')$, it follows from the definitions that $\F_{K,\varphi}^{j,\rho}\subset\F_{K,\varphi}^{j',\rho'}$ and that, if these are nonempty, then $E_{K,\varphi}(j,\rho)\supset E_{K,\varphi}(j',\rho')$.
In fact, if $\F_{K,\varphi}^{j',\rho'}$ is nonempty, then so is $\F_{K,\varphi}^{j,\rho}$, and $E_{K,\varphi}(j,\rho)=E_{K,\varphi}(j',\rho')$.
Indeed, for any $f'\in\F_{K,\varphi}^{j',\rho'}$, the symmetric barycenter $f=\sum_{i=1}^r \frac{1}{r}\,f_{\alpha_i}$ introduced above belongs to $\F_{K,\varphi}^{j,\rho}$, and the stretch locus of~$f$ is contained in that of~$f'$ by Lemma~\ref{lem:baryLipschitz}.
\end{proof}

\begin{lemma}
The inequalities
\begin{equation}\label{eqn:ClambdaCLip}
C'(j,\rho) \,\leq\, C(j,\rho) \,\leq\, C_{K,\varphi}(j,\rho),
\end{equation}
always hold, where $C'(j,\rho)$ is given by \eqref{eqn:defClambda}.
\end{lemma}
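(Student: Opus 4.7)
The second inequality $C(j,\rho)\leq C_{K,\varphi}(j,\rho)$ is tautological: the set of $(j,\rho)$-equivariant Lipschitz maps $\HH^n\to\HH^n$ extending $\varphi$ is a subset of the set of all $(j,\rho)$-equivariant Lipschitz maps $\HH^n\to\HH^n$, so its infimum of Lipschitz constants is at least as large. This needs only one sentence to state.

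For the first inequality $C'(j,\rho)\leq C(j,\rho)$, I would first dispose of the degenerate case in which $j(\Gamma_0)$ contains no hyperbolic element, since then $C'(j,\rho)=C(j,\rho)$ by definition \eqref{eqn:defClambda}. So assume there exists $\gamma\in\Gamma_0$ with $j(\gamma)$ hyperbolic. Let $f:\HH^n\to\HH^n$ be any $(j,\rho)$-equivariant $C$-Lipschitz map (if none exists, $C(j,\rho)=+\infty$ and the inequality is trivial). Pick $x$ on the translation axis of $j(\gamma)$, so that $d(x,j(\gamma)\cdot x)=\lambda(j(\gamma))$. By $(j,\rho)$-equivariance and the Lipschitz condition,
\[
\lambda(\rho(\gamma))\ \leq\ d\bigl(f(x),\rho(\gamma)\cdot f(x)\bigr)\ =\ d\bigl(f(x),f(j(\gamma)\cdot x)\bigr)\ \leq\ C\,\lambda(j(\gamma)),
\]
where the first inequality is the definition \eqref{eqn:deflambda} of $\lambda$. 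Dividing by $\lambda(j(\gamma))>0$ and taking the supremum over $\gamma$ with $j(\gamma)$ hyperbolic yields $C'(j,\rho)\leq C$; then infimizing over $f$ gives $C'(j,\rho)\leq C(j,\rho)$.

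There is no real obstacle here: both inequalities are essentially immediate from the definitions. The only mildly subtle point is remembering to treat separately the case where $j(\Gamma_0)$ has no hyperbolic elements, so that the convention built into \eqref{eqn:defClambda} applies.
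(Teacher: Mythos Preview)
Your proof is correct and follows essentially the same approach as the paper: the right-hand inequality is immediate by inclusion of the relevant sets of maps, and the left-hand inequality comes from picking a point on the translation axis of $j(\gamma)$ and using equivariance plus the Lipschitz bound. You are in fact slightly more careful than the paper in explicitly noting the degenerate case where $j(\Gamma_0)$ has no hyperbolic element.
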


\begin{proof}
The right-hand inequality follows from the definitions.
For the left-hand inequality, we observe that for any $\gamma\in\Gamma_0$ with $j(\gamma)$ hyperbolic and any $p\in\HH^n$ on the translation axis of~$j(\gamma)$, if $f:\HH^n\rightarrow \HH^n$ is $(j,\rho)$-equivariant and Lipschitz, then
\begin{eqnarray*}
\lambda(\rho(\gamma)) & \leq & d\big(f(p),\rho(\gamma)\cdot f(p)\big) = d\big(f(p),f(j(\gamma)\cdot p)\big)\\
& \leq & \Lip(f)\,d(p,j(\gamma)\cdot p) = \Lip(f)\,\lambda(j(\gamma)),
\end{eqnarray*}
and we conclude by letting $\Lip(f)$ tend to $C(j,\rho)$.
\end{proof}

Here is a sufficient condition for the left inequality of \eqref{eqn:ClambdaCLip} to be an equality.
We shall see in Theorem~\ref{thm:Kirszbraunopt} that for $C(j,\rho)\geq 1$ this sufficient condition is also necessary, at least when $\F_{K,\varphi}^{j,\rho}$ and $E_{K,\varphi}(j,\rho)$ are nonempty.

\begin{lemma} \label{lem:ClambdaCLipLipf}
Let $\ell$ be a geodesic ray in~$\HH^n$ whose image in $j(\Gamma_0)\backslash\HH^n$ is bounded.
If $\ell$ is maximally stretched by some $(j,\rho)$-equivariant Lipschitz map $f : \HH^n\to\HH^n$, in the sense that $f$ multiplies all distances on~$\ell$ by $\Lip(f)$, then
$$C'(j,\rho) = C(j,\rho) = \Lip(f).$$
\end{lemma}

\begin{proof}
By \eqref{eqn:ClambdaCLip} it is enough to prove that $C'(j,\rho)\geq\Lip(f)$.
Parametrize $\ell$ by arc length as $(p_t)_{t\geq 0}$.
Since the image of~$\ell$ in $j(\Gamma_0)\backslash\HH^n$ is bounded, for any $\varepsilon>0$ we can find $\gamma\in\Gamma_0$ and $0<s<t$ with $t-s\geq 1$ such that the oriented segments $j(\gamma)\cdot [p_s, p_{s+1}]$ and $[p_t,p_{t+1}]$ of $\HH^n$ are $\varepsilon$-close in the $C^1$ sense.
By the closing lemma (Lemma~\ref{lem:closinglemma}), this implies
$$\big|\lambda(j(\gamma)) - (t-s)\big| \leq 2 \varepsilon.$$
The images under~$f$ of the unit segments above are also $\varepsilon\Lip(f)$-close geodesic segments.
By the closing lemma again and the $(j,\rho)$-equivariance of~$f$,
$$\big|\lambda(\rho(\gamma)) - (t-s)\Lip(f)\big| \leq 2 \varepsilon\,\Lip(f).$$
Taking $\varepsilon$ very small, we see that $\lambda(\rho(\gamma))/\lambda(j(\gamma))$ takes values arbitrarily close to $\Lip(f)$ for $\gamma\in\Gamma_0$ with $j(\gamma)$ hyperbolic, hence $C'(j,\rho)\geq \Lip(f)$.
\end{proof}

\subsection{Finiteness of the (relative) minimal Lipschitz constant}\label{subsec:C<infty}

\begin{lemma}\label{lem:C<infty}
\begin{enumerate}
  \item If $j$ is convex cocompact, then $C_{K,\varphi}(j,\rho)<+\infty$.
  \item In general, if $j$ is geometrically finite, then $C_{K,\varphi}(j,\rho)<+\infty$ unless there exists an element $\gamma\in\Gamma_0$ such that $j(\gamma)$ is parabolic and $\rho(\gamma)$ hyperbolic.
\end{enumerate}
\end{lemma}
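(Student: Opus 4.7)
The proof splits into three parts.

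First, for the ``only if'' direction of~(2), suppose $\gamma \in \Gamma_0$ has $j(\gamma)$ parabolic and $\rho(\gamma)$ hyperbolic. Pick $p \in \HH^n$; by Lemma~\ref{lem:disthorosphere}, $d(p, j(\gamma^k) \cdot p) = 2 \log k + O(1)$, while projecting to the translation axis of $\rho(\gamma)$ (a $1$-Lipschitz, $\rho(\gamma)$-equivariant operation) gives $d(q, \rho(\gamma^k) \cdot q) \geq k\,\lambda(\rho(\gamma))$ for every $q \in \HH^n$. For any $(j,\rho)$-equivariant~$f$, equivariance identifies $d(f(p), f(j(\gamma^k) p))$ with $d(f(p), \rho(\gamma^k) f(p))$, so the ratio blows up and $\Lip(f) = +\infty$.

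For Part~(1), assume $j$ convex cocompact; the elementary subcases ($j(\Gamma_0)$ finite or virtually cyclic hyperbolic) are handled directly, so I focus on the non-elementary case. Convex cocompactness and the compactness of $K$'s image in~$M$ (which forces $K$ to lie in a bounded neighborhood of $\Conv{\Lambda_{j(\Gamma_0)}}$) let me choose a $j(\Gamma_0)$-invariant closed convex $W \subset \HH^n$ containing $\Conv{\Lambda_{j(\Gamma_0)}} \cup K$ with $j(\Gamma_0) \backslash W$ compact, for instance a sufficiently large closed tubular neighborhood of $\Conv{\Lambda_{j(\Gamma_0)}}$. Pick a compact $K_1 \subset W$ with $j(\Gamma_0) \cdot K_1 = W$ and $K \cap W = j(\Gamma_0) \cdot (K \cap K_1)$, and apply the classical Kirszbraun theorem (Proposition~\ref{prop:classicalKirszbraun}, viewing $\varphi$ as $1$-Lipschitz if $\Lip(\varphi) < 1$) to extend $\varphi|_{K \cap K_1}$ to a Lipschitz $\tilde\varphi : K_1 \to \HH^n$ (any Lipschitz $\tilde\varphi$ if $K = \emptyset$). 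Choose a smooth compactly supported $\psi : \HH^n \to [0,1]$ with support in $K_1$, whose $j(\Gamma_0)$-translates $\psi_\gamma(p) := \psi(j(\gamma)^{-1} p)$ form a partition of unity on $W$, and set
$$f(p) := \sum_{\gamma \in \Gamma_0} \psi_\gamma(p)\, \rho(\gamma) \tilde\varphi(j(\gamma)^{-1} p) \qquad (p \in W).$$
A reindexing of the sum shows $f$ is $(j,\rho)$-equivariant, Lemma~\ref{lem:partofunity} bounds $\Lip(f)$, and equivariance of $\varphi$ makes every nonzero summand equal $\varphi(p)$ for $p \in K$, so $f|_K = \varphi$. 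Precomposing with the $j(\Gamma_0)$-equivariant $1$-Lipschitz nearest-point projection $\HH^n \to W$ extends $f$ to all of $\HH^n$.

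For Part~(2), suppose $j$ is geometrically finite with cusps and no parabolic-to-hyperbolic pair. Fact~\ref{fact:geomfinite} splits $M$ into a thick part plus finitely many standard cusp regions $j(S_i) \backslash B_i$; each $S_i$ is virtually abelian (Bieberbach), hence amenable. Discreteness of $j(\Gamma_0)$ forces elliptic $j(\gamma)$ to have finite order, and hence $\rho(\gamma)$ to also be elliptic; combined with the no-parabolic-to-hyperbolic hypothesis, $\rho|_{S_i}$ has only elliptic and parabolic elements, so Fact~\ref{fact:fixedpoint} gives a fixed point of $\rho(S_i)$ in $\HH^n \cup \partial_\infty \HH^n$. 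In each cusp, apply the amenable equivariant Kirszbraun (Proposition~\ref{prop:amenableKirszbraun}, viewing $\varphi$ as $1$-Lipschitz if needed) to extend $\varphi|_{K \cap B_i}$ to a $(j|_{S_i}, \rho|_{S_i})$-equivariant Lipschitz $f_i : B_i \to \HH^n$; if $K \cap B_i = \emptyset$, first manufacture a Lipschitz equivariant seed $\varphi_0(j(\gamma) p_0) := \rho(\gamma) q_0$ on one $j(S_i)$-orbit (with Lipschitz estimate from Lemma~\ref{lem:disthorosphere}) and then apply Proposition~\ref{prop:amenableKirszbraun}. Extend each $f_i$ to $j(\Gamma_0) \cdot B_i$ by $(j,\rho)$-equivariance (well-defined since $j(\Gamma_0)$-translates of $B_i$ are disjoint), build $f_0$ on the thick part via Part~(1), and glue with a $j(\Gamma_0)$-invariant partition of unity made of $j(S_i)$-invariant bumps translated by $j(\Gamma_0)$ together with a cocompact bump on the thick part; Lemma~\ref{lem:partofunity} yields the final Lipschitz bound.

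The \emph{main obstacle} is arranging the $j(\Gamma_0)$-equivariant, locally finite partition of unity compatibly with the cusp decomposition; the explicit cusp geometry in Fact~\ref{fact:geomfinite} and Section~\ref{subsec:geo-finiteness} makes this routine once set up. A minor point: the hypothesis $\Lip(\varphi) \geq 1$ in Propositions \ref{prop:classicalKirszbraun} and~\ref{prop:amenableKirszbraun} is harmless here since any map with Lipschitz constant $<1$ is also $1$-Lipschitz and we only require finiteness of $C_{K,\varphi}(j,\rho)$.
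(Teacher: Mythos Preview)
Your proof is correct and follows the same strategy as the paper: partition-of-unity gluing of local Kirszbraun extensions on the thick part, and Proposition~\ref{prop:amenableKirszbraun} (seeded on a single orbit via the Lemma~\ref{lem:disthorosphere} growth estimates) in each cusp. One minor technical point: your bumps $\psi_\gamma$ are $\Gamma_0$-translates of a single compactly supported function rather than the $j(\Gamma_0)$-\emph{invariant} functions $\psi_i$ required in the hypotheses of Lemma~\ref{lem:partofunity}, so that lemma does not apply verbatim; the paper instead covers the thick part by finitely many small balls $B_1,\dots,B_r$ projecting injectively to $j(\Gamma_0)\backslash\HH^n$ and takes each $\psi_i$ to be $j(\Gamma_0)$-invariant and supported in $j(\Gamma_0)\cdot B_i$. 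The Leibniz-rule estimate underlying Lemma~\ref{lem:partofunity} works equally well in your formulation (local finiteness and the reindexing you mention give the same bound), so this is a cosmetic rather than substantive discrepancy.
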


The following proof uses Lemma~\ref{lem:partofunity} applied to an appropriate partition of unity.
A similar proof scheme will be used again throughout Section~\ref{sec:lipcont}.

\begin{proof}[Proof of Lemma~\ref{lem:C<infty}.(1) (Convex cocompact case)]
Recall Notation~\ref{not:ConvK} for\linebreak $\Conv{K}$.
If $j$ is convex cocompact, then $\Conv{K}$ is compact modulo $j(\Gamma_0)$, hence we can find open balls $B_1,\dots,B_r$ of~$\HH^n$, projecting injectively to $j(\Gamma_0)\backslash\HH^n$, such that $\Conv{K}$ is contained in the union of the $j(\Gamma_0)\cdot B_i$.
For any~$i$, let $f_i : B_i\rightarrow\HH^n$ be a Lipschitz extension of $\varphi|_{B_i\cap K}$ (such an extension exists by Proposition~\ref{prop:classicalKirszbraun}).
We extend~$f_i$ to $j(\Gamma_0)\cdot B_i$ in a $(j,\rho)$-equivariant way (with no control on the global Lipschitz constant a priori).
The function
$$p \longmapsto R_p := \mathrm{diam}\big\{ f_i(p)~|~1\leq i\leq r\text{ and }p\in j(\Gamma_0)\cdot B_i\big\} $$
is locally bounded above and $j(\Gamma_0)$-invariant, hence uniformly bounded on $\bigcup_i j(\Gamma_0)\cdot\nolinebreak B_i$.
Let $(\psi_i)_{1\leq i\leq r}$ be a partition of unity on~$\Conv{K}$, subordinated to the covering $(j(\Gamma_0)\cdot B_i)_{1\leq i\leq r}$, with $\psi_i$ Lipschitz and $j(\Gamma_0)$-invariant for all~$i$.
Lemma~\ref{lem:partofunity} gives a $(j,\rho)$-equivariant map
$$f := \sum_{i=1}^r \psi_i f_i : \Conv{K} \longrightarrow \HH^n$$
with $\Lip_p(f)$ bounded by some constant $L$ independent of $p\in\Conv{K}$.
Then $\Lip_{\Conv{K}}(f)\leq L$ by \eqref{eqn:supLip}.
By precomposing $f$ with the closest-point projection $\pi : \HH^n\to\Conv{K}$, which is $1$-Lipschitz and $(j,j)$-equivariant, we obtain a $(j,\rho)$-equivariant Lipschitz extension of~$\varphi$ to~$\HH^n$.
\end{proof}

\begin{proof}[Proof of Lemma~\ref{lem:C<infty}.(2) (General geometrically finite case)]
Suppose that for any $\gamma\in\Gamma_0$ with $j(\gamma)$ parabolic, the element $\rho(\gamma)$ is \emph{not} hyperbolic.
The idea is the same as in the convex cocompact case, but we need to deal with the presence of cusps, which make $\Conv{K}$ noncompact modulo $j(\Gamma_0)$.
We shall apply Proposition~\ref{prop:amenableKirszbraun} (the equivariant version of Proposition~\ref{prop:classicalKirszbraun} for amenable groups) to the stabilizers of the cusps.

Let $B_1,\dots,B_c$ be open horoballs of~$\HH^n$, disjoint from~$K$, whose images in $j(\Gamma_0)\backslash\HH^n$ are disjoint and intersect the convex core in standard cusp regions (Definition~\ref{def:standardcusp}), representing all the cusps.
Let $B_{c+1},\dots,B_r$ be open balls of~$\HH^n$ that project injectively to $j(\Gamma_0)\backslash\HH^n$, such that the union of the $j(\Gamma_0)\cdot B_i$ for $1\leq i\leq r$ covers~$\Conv{K}$.
For $c+1\leq i\leq r$, we construct a $(j,\rho)$-equivariant Lipschitz map $f_i : j(\Gamma_0)\cdot B_i\rightarrow\HH^n$ as in the convex cocompact case.
For $1\leq i\leq c$, we now explain how to construct a $(j,\rho)$-equivariant Lipschitz map $f_i : j(\Gamma_0)\cdot\nolinebreak B_i\rightarrow\nolinebreak\HH^n$.

Let $S_i$ be the stabilizer of~$B_i$ in~$\Gamma_0$ for the $j$-action.
We claim that there exists a $(j|_{S_i},\rho|_{S_i})$-equivariant Lipschitz map $f_i : B_i\rightarrow\HH^n$.
Indeed, choose $p\in B_i$, not fixed by any element of $j(S_i)$, and $q\in\HH^n$. Set
$$f_i(j(\gamma)\cdot p) := \rho(\gamma)\cdot q$$
for all $\gamma\in S_i$. 
Let $\wl:S_i\rightarrow\N$ be the word length with respect to some fixed finite generating set of $S_i$.
By Lemma~\ref{lem:disthorosphere}, there exists $R'>0$ such that
$$d(p,j(\gamma)\cdot p) \geq 2 \log\big(1+\wl(\gamma)\big) - R'$$
for all $\gamma\in S_i$.
On the other hand, there exists $R>0$ such that
$$d(q,\rho(\gamma)\cdot q) \leq 2 \log\big(1+\wl(\gamma)\big) + R$$
for all $\gamma\in S_i$: if $\rho(\gamma)$ is elliptic for all $\gamma\in S_i$, this follows from the fact that the group $\rho(S_i)$ admits a fixed point in~$\HH^n$ (Fact~\ref{fact:fixedpoint}), hence $d(q,\rho(\gamma)\cdot q)$ is bounded for $\gamma\in S_i$; otherwise this follows from Lemma~\ref{lem:disthorosphere}.
Since the function $\wl$ is proper, we see that $\limsup_{\gamma\in S_i} \frac{d(q,\rho(\gamma)\cdot q)}{d(p,j(\gamma)\cdot p)}\leq 1$, hence 
$$\sup_{\gamma\in S_i\smallsetminus \{1\}} \frac{d(q,\rho(\gamma)\cdot q)}{d(p,j(\gamma)\cdot p)} < +\infty.$$
In other words, $f_i$ is Lipschitz on $j(S_i)\cdot p$.
We then use Proposition~\ref{prop:amenableKirszbraun} to extend~$f_i$ to a $(j|_{S_i},\rho|_{S_i})$-equivariant Lipschitz map $f_i : B_i\rightarrow\HH^n$.

Let us extend $f_i$ to $j(\Gamma_0)\cdot B_i$ in a $(j,\rho)$-equivariant way (with no control on the global Lipschitz constant a priori).
We claim that
$$R_p := \mathrm{diam}\{ f_i(p)~|~1\leq i\leq r\text{ and }p\in j(\Gamma_0)\cdot B_i\} $$
is uniformly bounded on~$\Conv{K}$.
Indeed, $j(\Gamma_0)\cdot B_i\, \cap \, j(\Gamma_0)\cdot B_k=\emptyset$ for all $1\leq i\neq k\leq c$ by definition of standard cusp regions.
Therefore, if $p\in\HH^n$ belongs to $j(\Gamma_0)\cdot B_i$ for more than one index $1\leq i\leq r$, then it belongs to the ``thick'' part $j(\Gamma_0)\cdot\big(\bigcup_{c<i\leq r} B_i\big)$.
But $\bigcup_{c<i\leq r} B_i$ is bounded and $p\mapsto R_p$ is locally bounded above and $j(\Gamma_0)$-invariant, hence $R_p$ is uniformly bounded on~$\Conv{K}$.

We conclude as in the convex cocompact case.
\end{proof}

The converse to Lemma~\ref{lem:C<infty} is clear: if there exists an element $\gamma\in\Gamma_0$ such that $j(\gamma)$ is parabolic and $\rho(\gamma)$ hyperbolic, then $C_{K,\varphi}(j,\rho)=+\infty$.
Indeed, for any $p,q\in\HH^n$, the distance $d(p,j(\gamma^k)\cdot p)$ grows logarithmically in~$k$ (Lemma~\ref{lem:disthorosphere}) whereas $d(q,\rho(\gamma^k)\cdot q)$ grows linearly.

\medskip

\emph{In the rest of the paper, we shall assume $C_{K,\varphi}(j,\rho)<+\infty$ whenever we discuss a fixed pair $(j,\rho)$.}
We shall allow $C(j,\rho)=+\infty$ only in Section~\ref{sec:lipcont} and Proposition~\ref{prop:semicontE}, where we discuss semicontinuity properties of the maps $(j,\rho)\mapsto C(j,\rho)$ and $(j,\rho)\mapsto E(j,\rho)$, for empty~$K$.

\subsection{Projecting onto the convex core}

In the proof of Lemma~\ref{lem:C<infty}, we used the closest-point projection $\pi : \HH^n\to\Conv{K}$, which is $1$-Lipschitz and $(j,j)$-equivariant.
This projection will be used many times in Sections \ref{sec:stretchlocus}, \ref{sec:Kirszbraunopt}, and~\ref{sec:lipcont}, with the following more precise properties.

\begin{lemma}\label{lem:proj}
Suppose $\F_{K,\varphi}^{j,\rho}\neq\emptyset$ and let $\pi : \HH^n\rightarrow\Conv{K}$ be the closest-point projection.
For any $f\in\F_{K,\varphi}^{j,\rho}$,
\begin{enumerate}
  \item $\Lip(f\circ\pi) = \Lip(f|_{\Conv{K}}) = \Lip(f) = C_{K,\varphi}(j,\rho)$ and $f\circ\pi\in\F_{K,\varphi}^{j,\rho}$;
  \item if $C_{K,\varphi}(j,\rho)>0$, then the stretch loci and enhanced stretch loci (Definition~\ref{def:stretchlocusgen}) satisfy
  $$\left\{ \begin{array}{l}
  E_{f\circ\pi} = E_{f|_{\Conv{K}}} \subset E_f \cap \Conv{K},\\
  \widetilde{E}_{f\circ\pi} = \widetilde{E}_{f|_{\Conv{K}}} \subset \widetilde{E}_f \cap \big(\Conv{K}\times\Conv{K}\big).
  \end{array}\right.$$
\end{enumerate}
In particular, if $C_{K,\varphi}(j,\rho)>0$, then the relative stretch locus $E_{K,\varphi}(j,\rho)$ is always contained in $\Conv{K}$.
\end{lemma}

(This is not true if $C_{K,\varphi}(j,\rho)=0$: see Remark~\ref{rem:boundedrho}.)

\begin{proof}
For any $f\in\F_{K,\varphi}^{j,\rho}$ we have $\Lip(f\circ\pi) = \Lip(f|_{\Conv{K}}) \leq \Lip(f)$ since $\pi : \HH^n\rightarrow\Conv{K}$ is $1$-Lipschitz.
Equality holds and $f\circ\pi\in\F_{K,\varphi}^{j,\rho}$ since $\pi$ is $(j,j)$-equivariant and $\Lip(f)=C_{K,\varphi}(j,\rho)$ is minimal.
This proves~(1).

For~(2), it is enough to consider the enhanced stretch locus $\widetilde{E}\subset\HH^n\times\HH^n$, since the stretch locus~$E$ is the projection of $\widetilde{E}$ to either of the $\HH^n$ factors.
Note that
$$\widetilde{E}_{f|_{\Conv{K}}} \subset \widetilde{E}_{f\circ\pi} \subset \Conv{K}\times\Conv{K},$$
because $\pi$ is the identity on $\Conv{K}$ and is contracting outside $\Conv{K}$.
Let us prove that $\widetilde{E}_{f\circ\pi}\subset\widetilde{E}_{f|_{\Conv{K}}}$.
Consider a pair $(x,x')\in\widetilde{E}_{f\circ\pi}$.
By definition, there are sequences $(x_k)_{k\in\N}$ converging to~$x$ and $(x'_k)_{k\in\N}$ converging to~$x'$ such that $x_k\neq x'_k$ and
$$\frac{d(f\circ\pi(x_k),f\circ\pi(x'_k))}{d(x_k,x'_k)} \underset{\scriptscriptstyle k\rightarrow +\infty}{\longrightarrow} C_{K,\varphi}(j,\rho).$$
By continuity of~$\pi$ we have $\pi(x_k)\rightarrow\pi(x)=x$ and $\pi(x'_k)\rightarrow\pi(x')=x'$.
Since
$$\frac{d(f\circ\pi(x_k),f\circ\pi(x'_k))}{d(x_k,x'_k)} \leq \frac{d(f\circ\pi(x_k),f\circ\pi(x'_k))}{d(\pi(x_k),\pi(x'_k))} \leq C_{K,\varphi}(j,\rho),$$
the middle term also tends to $C_{K,\varphi}(j,\rho)$, which shows that $(\pi(x), \pi(x'))=(x,x')$ belongs to $\widetilde{E}_{f|_{\Conv{K}}}$.
Thus $\widetilde{E}_{f\circ\pi} = \widetilde{E}_{f|_{\Conv{K}}} \subset \widetilde{E}_f$.
\end{proof}

\subsection{Equivariant extensions with minimal Lipschitz constant}\label{subsec:Fnonempty}

We shall use the following terminology.

\begin{definition}\label{def:reductive}
A representation $\rho\in\Hom(\Gamma_0,G)$ is \emph{reductive} if the Zariski closure of $\rho(\Gamma_0)$ in~$G$ is reductive, or equivalently if the number of fixed points of the group~$\rho(\Gamma_0)$ in the boundary at infinity $\partial_{\infty}\HH^n$ of~$\HH^n$ is different from~$1$.
\end{definition}

\begin{lemma}\label{lem:Fnonempty}
The set $\F_{K,\varphi}^{j,\rho}$ of Definition~\ref{def:CE} is nonempty as soon as either $K\neq\emptyset$ or $\rho$ is reductive.
\end{lemma}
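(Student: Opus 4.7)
The plan is to take a minimizing sequence $(f_k)_{k\in\N}$ of $(j,\rho)$-equivariant extensions of $\varphi$ with $\Lip(f_k)\to C:=C_{K,\varphi}(j,\rho)$ (finite by the standing assumption) and extract, via Arzel\`a--Ascoli, a pointwise limit $f:\HH^n\to\HH^n$. Any such $f$ is automatically $(j,\rho)$-equivariant, $C$-Lipschitz, and equal to $\varphi$ on $K$, hence lies in $\F_{K,\varphi}^{j,\rho}$. Equicontinuity is immediate from the Lipschitz bound, so the only real issue is equiboundedness of $(f_k(x_0))_{k\in\N}$ at some basepoint $x_0\in\HH^n$, possibly after an equivariance-preserving normalization. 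When $K\neq\emptyset$ we simply pick $x_0\in K$, so that $f_k(x_0)=\varphi(x_0)$ is constant in $k$, and Arzel\`a--Ascoli applies directly.

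Assume now $K=\emptyset$ and $\rho$ is reductive. The key observation is that any divergence of $f_k(x_0)$ towards the boundary is constrained: if $f_k(x_0)\to\xi\in\partial_\infty\HH^n$ along a subsequence, then for each $\gamma\in\Gamma_0$,
\[ d(f_k(x_0),\rho(\gamma)\cdot f_k(x_0)) = d(f_k(x_0),f_k(j(\gamma)\cdot x_0)) \leq \Lip(f_k)\,d(x_0,j(\gamma)\cdot x_0)\]
stays bounded in $k$, so $\rho(\gamma)\cdot f_k(x_0)\to\xi$ as well, and continuity of $\rho(\gamma)$ on the compactification forces $\rho(\gamma)\cdot\xi=\xi$. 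Thus every escape direction is a boundary fixed point of $\rho(\Gamma_0)$. The reductivity hypothesis rules out exactly one boundary fixed point; three or more would force $\rho(\Gamma_0)$ to act trivially on a totally geodesic $\HH^2$ and hence to fix a point of $\HH^n$. Three subcases remain. If $\rho(\Gamma_0)$ has a fixed point $p_0\in\HH^n$, the constant map $x\mapsto p_0$ already belongs to $\F^{j,\rho}$ with Lipschitz constant $0$, and we are done. If $\rho(\Gamma_0)$ has no fixed point at all in $\HH^n\cup\partial_\infty\HH^n$, no escape direction exists and $(f_k(x_0))_k$ is automatically bounded.

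The delicate subcase is when $\rho(\Gamma_0)$ fixes exactly two boundary points $\xi_\pm$ and no point in $\HH^n$. Passing if necessary to an index-$2$ subgroup via Lemma~\ref{lem:finiteindex}, we may assume $\rho(\Gamma_0)$ fixes $\xi_\pm$ individually, so it lies in the stabilizer $A\times\OO(n-1)$ of the geodesic $\ell$ joining $\xi_\pm$, where $A\cong\R$ denotes the hyperbolic translations along $\ell$. Since $\rho(\Gamma_0)$ has no fixed point on $\ell$, its projection to $A$ is nontrivial, so it contains some hyperbolic $\rho(\gamma_0)$ of translation length $\lambda>0$ along $\ell$. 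The subgroup $A$ centralizes $\rho(\Gamma_0)$, so for each $k$ we may post-compose $f_k$ by a suitable translation $a_k\in A$: the resulting $g_k:=a_k\circ f_k$ is again $(j,\rho)$-equivariant with the same Lipschitz constant, and we arrange that the closest-point projection of $g_k(x_0)$ to $\ell$ equals a fixed point $t_0\in\ell$. An upper half-space calculation then shows that $d(p,\rho(\gamma_0)\cdot p)\to+\infty$ with the Fermi height of $p$ above $\ell$, so an unbounded transverse component of $g_k(x_0)$ would contradict the uniform bound $d(g_k(x_0),\rho(\gamma_0)\cdot g_k(x_0))\leq\Lip(g_k)\,d(x_0,j(\gamma_0)\cdot x_0)$. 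Hence $(g_k(x_0))_k$ is bounded, Arzel\`a--Ascoli concludes, and the main obstacle in the whole argument is precisely this combined centralizer normalization plus Fermi-coordinate estimate in the elementary reductive case.
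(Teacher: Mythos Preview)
Your proof is correct and follows essentially the same Arzel\`a--Ascoli strategy as the paper: take a minimizing sequence, handle $K\neq\emptyset$ trivially, and for $K=\emptyset$ split according to the structure of $\rho(\Gamma_0)$, using a centralizer normalization in the line-preserving case and the constant map when there is an interior fixed point. The differences are cosmetic. Where the paper argues the generic case by exhibiting two hyperbolic elements $\rho(\gamma_1),\rho(\gamma_2)$ with non-asymptotic axes and intersecting their tubular neighborhoods, you argue more abstractly that any boundary accumulation point of $(f_k(x_0))$ must be $\rho(\Gamma_0)$-fixed; this is a clean repackaging of the same idea and has the mild advantage of automatically covering the subcase where $\rho(\Gamma_0)$ preserves a geodesic but swaps its endpoints (no boundary fixed point, hence bounded), which the paper's phrasing ``preserves a geodesic line $\Rightarrow$ commutes with pure translations along it'' glosses over. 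Your explicit invocation of Lemma~\ref{lem:finiteindex} to pass to the index-$2$ subgroup fixing $\xi_\pm$ individually before normalizing is likewise more careful than the paper on this point. The Fermi-height displacement estimate you cite is exactly \eqref{eqn:mixedmu} specialized to a hyperbolic element with axis~$\ell$.
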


When $K=\emptyset$ \emph{and} $\rho$ is nonreductive, there may or may not exist a $(j,\rho)$-equivariant map $f : \HH^n\rightarrow\HH^n$ with minimal constant $C(j,\rho)=C_{K,\varphi}(j,\rho)$: see examples in Sections \ref{ex:nonreductive1} and~\ref{ex:nonreductive2}.

\begin{proof}[Proof of Lemma~\ref{lem:Fnonempty}]
The idea is to apply the Arzel\`a--Ascoli theorem.
Set $C:=C_{K,\varphi}(j,\rho)$ and let $(f_k)_{k\in\N}$ be a sequence of $(j,\rho)$-equivariant Lipschitz maps with $f_k|_K=\varphi$ and $C+1\geq\Lip(f_k)\rightarrow C$.
The sequence $(f_k)$ is equicontinuous.
We first assume that $K\neq\emptyset$, and fix $q\in K$.
For any $k\in\N$ and any $p\in\HH^n$,
\begin{equation}\label{eqn:distf_iKnonempty}
d\big(f_k(p),\varphi(q)\big) \leq (C+1)\,d(p,q).
\end{equation}
Therefore, for any compact subset $\mathscr{C}$ of~$\HH^n$, the sets $f_k(\mathscr{C})$ for $k\in\N$ all lie in some common compact subset of~$\HH^n$.
The Arzel\`a--Ascoli theorem applies, yielding a subsequence with a $C$-Lipschitz limit; this limit necessarily belongs to $\F_{K,\varphi}^{j,\rho}$.
We now assume that $K=\emptyset$ and $\rho$ is reductive.
\begin{figure}[h!]
\begin{center}
\labellist
\small\hair 2pt
\pinlabel{$\HH^2$} at 200 320
\pinlabel{$\A_{\rho(\gamma_1)}$} at 156 260
\pinlabel{$\A_{\rho(\gamma_2)}$} at 290 200
\endlabellist
\includegraphics[width=7cm]{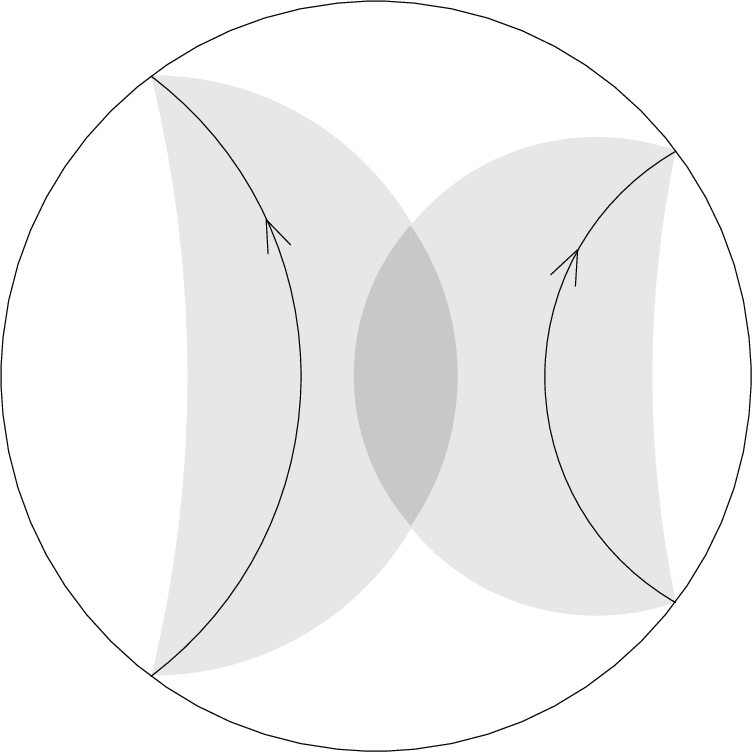}
\caption{Uniform neighborhoods of lines in $\HH^n$ with disjoint endpoints have a compact intersection.}
\label{fig:B}
\end{center}
\end{figure}
\begin{itemize}
  \item If the group $\rho(\Gamma_0)$ has no fixed point in $\HH^n$ and does not preserve any geodesic line of~$\HH^n$ (this is the generic case), then $\rho(\Gamma_0)$ contains two hyperbolic elements $\rho(\gamma_1),\rho(\gamma_2)$ whose translation axes have no common endpoint in $\partial_{\infty}\HH^n$.
  Fix a point $p\in\HH^n$.
  For any $k\in\N$ and $i\in\{ 1,2\}$,
  $$d\big(f_k(p),\rho(\gamma_i)\cdot f_k(p)\big) \leq (C+1)\,d(p,j(\gamma_i)\cdot p).$$
  Therefore, the points $f_k(p)$ for $k\in\N$ belong to some uniform neighborhood of the translation axis $\A_{\rho(\gamma_i)}$ of~$\rho(\gamma_i)$ for $i\in\{ 1,2\}$.
  Since $\mathcal{A}_{\rho(\gamma_1)}$ and~$\mathcal{A}_{\rho(\gamma_2)}$ have no common endpoint at infinity, the points $f_k(p)$ belong to some compact subset of~$\HH^n$ (see Figure~\ref{fig:B}).
  Since $\Lip(f_k)$ stays bounded, we obtain that for any compact subset $\mathscr{C}$ of~$\HH^n$, the sets $f_k(\mathscr{C})$ for $k\in\N$ all lie inside some common compact subset of~$\HH^n$, and we conclude as above using the Arzel\`a--Ascoli theorem.
  \item If the group $\rho(\Gamma_0)$ preserves a geodesic line $\A$ of~$\HH^n$, then it commutes with any hyperbolic element of~$G$ acting as a pure translation along~$\mathcal{A}$.
  For any $k\in\N$ and any such hyperbolic element~$g_k$, the map $g_k\circ f_k$ is still $(j,\rho)$-equivariant, with $\Lip(g_k\circ f_k)=\Lip(f_k)$.
  Fix $p\in\HH^n$.
  By the previous paragraph, the points $f_k(p)$ for $k\in\N$ belong to some uniform neighborhood of~$\A$.
  Therefore, after replacing $(f_k)_{k\in\N}$ by $(g_k\circ f_k)_{k\in\N}$ for some appropriate sequence $(g_k)_{k\in\N}$, we may assume that the points $f_k(p)$ for $k\in\N$ all belong to some compact subset of~$\HH^n$, and we conclude as above.
  \item If the group $\rho(\Gamma_0)$ has a fixed point in~$\HH^n$, we use Remark~\ref{rem:boundedrho}.\qedhere
\end{itemize}
\end{proof}

\subsection{The stretch locus of an equivariant extension with minimal Lipschitz constant}

\begin{lemma}\label{lem:Efnonempty}
\begin{enumerate}
  \item If $j$ is convex cocompact, then the stretch locus $E_f$ of any $f\in\F_{K,\varphi}^{j,\rho}$ is nonempty.
  \item In general, the stretch locus of any $f\in\F_{K,\varphi}^{j,\rho}$ is nonempty except possibly if $C_{K,\varphi}(j,\rho)=1$ and $\rho$ is not cusp-deteriorating.
\end{enumerate}
\end{lemma}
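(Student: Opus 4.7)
The plan is to argue by contradiction: assume $E_f=\emptyset$, so that $\Lip_x(f)<C:=C_{K,\varphi}(j,\rho)$ for every $x\in\HH^n$, and produce a $(j,\rho)$-equivariant extension $\tilde{f}$ of~$\varphi$ with $\Lip(\tilde{f})<C$, contradicting the infimum definition of~$C$. The construction will split $\Conv{K}$ into a thick portion that projects to a compact subset of $j(\Gamma_0)\backslash\HH^n$ (including bounded annular necks around each cusp, cf.\ Fact~\ref{fact:geomfinite}) and, in each cusp~$B_i$, a deep sub-horoball $B_i'\subset B_i$. The core tools are the upper semicontinuity and convex patching properties of the local Lipschitz constant (Lemma~\ref{lem:localLip} and Remark~\ref{rem:pathlength}.(2)), together with Lemma~\ref{lem:proj}, which turns any Lipschitz bound on $\Conv{K}$ into an equivariant extension of~$\varphi$ over all of~$\HH^n$ with the same constant.

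For~(1), the convex cocompact case is direct: since $\Conv{K}/j(\Gamma_0)$ is compact, the pointwise strict inequality $\Lip_x(f)<C$ combined with $j(\Gamma_0)$-invariance and upper semicontinuity yields a finite cover of a fundamental domain by open balls with Lipschitz constant at most $C-\delta$ for some uniform $\delta>0$. Translating by $j(\Gamma_0)$ and applying Remark~\ref{rem:pathlength}.(2), we obtain $\Lip_{\Conv{K}}(f)\leq C-\delta$, and Lemma~\ref{lem:proj} produces the sought extension.

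For~(2), the same argument controls $f$ on the thick portion with uniform gap $\delta_0>0$. It remains to modify $f$ inside each deep sub-horoball $B_i'$ into a $(j|_{S_i},\rho|_{S_i})$-equivariant map $g_i$ agreeing with $f$ on $\partial B_i'$ and satisfying $\Lip(g_i)<C$; gluing then yields $\tilde{f}$ with $\Lip(\tilde{f})<C$ by a segment-splitting bound across $\partial B_i'$. Since $\partial B_i'/j(S_i)$ is compact, upper semicontinuity gives $\Lip_{\partial B_i'}(f)\leq C-\delta_i$ for some $\delta_i>0$. If $C>1$, one may arrange $C-\delta_i\geq 1$ and apply Proposition~\ref{prop:amenableKirszbraun}---the equivariant Kirszbraun theorem for the amenable group $S_i$---to provide $g_i$ with $\Lip(g_i)\leq C-\delta_i$. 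If instead $C\leq 1$ and $\rho$ is cusp-deteriorating, then $\rho(S_i)$ has a fixed point $q_i\in\HH^n$ by Fact~\ref{fact:fixedpoint}, and one defines $g_i$ by barycentric interpolation $g_i(p):=\psi(t(p))\,f(p)+(1-\psi(t(p)))\,q_i$, where $t(p)$ is the depth of~$p$ inside $B_i'$ and $\psi\colon[0,\infty)\to[0,1]$ is Lipschitz, equal to~$1$ at~$0$, and vanishing past some large threshold~$T_0$. This $g_i$ is $(j|_{S_i},\rho|_{S_i})$-equivariant (by the $G$-equivariance of the barycenter and the $\rho(S_i)$-invariance of~$q_i$), and the Leibniz estimate of Lemma~\ref{lem:partofunity} bounds $\Lip_p(g_i)$ by $2\|\psi'\|_{\infty}\,d(f(p),q_i)+\psi(t(p))\Lip_p(f)$. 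Cusp-deterioration forces $d(f(p),q_i)\to 0$ as $p$ goes deep in $B_i$: for each $\gamma\in S_i$, $d(j(\gamma)\cdot p,p)\to 0$ by the exponential contraction of horospheres, hence $d(\rho(\gamma)\cdot f(p),f(p))\to 0$ by the Lipschitz bound on~$f$, so $f(p)$ becomes asymptotically fixed by $\rho(S_i)$ and thus converges to~$q_i$. Combined with the boundary gap $\delta_i$ and a large enough threshold~$T_0$, this yields $\Lip(g_i)<C$.

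The main obstacle is the case $C\leq 1$ with cusp-deterioration, where Proposition~\ref{prop:amenableKirszbraun} does not apply and the replacement map must be built by hand. The delicate quantitative point is the rate at which $d(f(p),q_i)$ decays along the cusp: one needs it fast enough (in effect, exponentially in the depth) that, together with a slowly varying cutoff~$\psi$, the Leibniz bound on $\Lip_p(g_i)$ stays strictly below~$C$ uniformly in~$p$. The excluded case $C=1$ with $\rho$ not cusp-deteriorating is precisely where this mechanism breaks down: no $\rho(S_i)$-fixed point is available in~$\HH^n$ to interpolate towards, and the asymptotic ratio of displacements in the cusp is exactly $1=C$, offering no uniform gap.
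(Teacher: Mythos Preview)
Your overall plan matches the paper's: contradict minimality by improving $f$ cusp by cusp, with compactness handling the thick part and Lemma~\ref{lem:proj} passing from $\Conv{K}$ to~$\HH^n$. Two of your cusp steps, however, are not justified.

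\textbf{The horosphere bound when $C>1$.} Upper semicontinuity and compactness of $j(S_i)\backslash\partial B_i'$ give only $\sup_{p\in\partial B_i'}\Lip_p(f)\leq C-\delta_i$, not the global bound $\Lip_{\partial B_i'}(f)\leq C-\delta_i$ that Proposition~\ref{prop:amenableKirszbraun} needs. The horosphere is not convex, so \eqref{eqn:supLip} does not apply: for $q,q'\in\partial B_i'$ far apart, the geodesic $[q,q']$ dives arbitrarily deep into the horoball, where you have no uniform gap on $\Lip_p(f)$. The paper fills this with Lemma~\ref{lem:disthorosphere}: both $d(q,q')$ and $d(f(q),f(q'))$ grow like $2\log\wl(\gamma)+O(1)$, so their ratio tends to $1<C$ as $d(q,q')\to\infty$, while for bounded $d(q,q')$ the segment stays in a compact region and compactness gives the gap.

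\textbf{The deteriorating case $C\leq 1$.} Your claim that $d(f(p),q_i)\to 0$ (let alone exponentially) is false in general. From $d(\rho(\gamma)\cdot f(p),f(p))\to 0$ you may conclude only that $d(f(p),\mathcal{F}_i)\to 0$, where $\mathcal{F}_i$ is the full fixed set of $\rho(S_i)$; when $\mathcal{F}_i$ is a positive-dimensional copy of $\HH^d$ (e.g.\ $\rho(S_i)$ trivial, or a rotation about a line in~$\HH^3$), nothing prevents $f(p)$ from drifting along it. Since $f$ is $C$-Lipschitz along rays into the cusp, $d(f(p),q_i)$ can grow linearly in the depth~$t$, and then your Leibniz estimate yields at best $2\|\psi'\|_\infty\cdot d(f(p),q_i)\sim (1/T_0)\cdot CT_0$, producing no gap below~$C$. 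The paper uses a different interpolation: instead of letting the weight on~$f$ drop to~$0$ deep in the cusp, let it drop from $1$ to a constant $1-\varepsilon$ across a \emph{bounded} collar and remain $1-\varepsilon$ thereafter. Deep inside, the cutoff derivatives vanish and one gets $\Lip_p(f_0)\leq(1-\varepsilon)\Lip_p(f)\leq(1-\varepsilon)C$; on the bounded collar, $d(f(p),q_i)$ is bounded (compactness modulo $j(S_i)$) and $\sup_p\Lip_p(f)<C$ (empty stretch locus), so for small~$\varepsilon$ the Leibniz bound is again $<C$.
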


Recall from Definition~\ref{def:typedet} that ``$\rho$ is not cusp-deteriorating'' means there is an element $\gamma\in\Gamma_0$ such that $j(\gamma)$ and~$\rho(\gamma)$ are both parabolic.
When $C_{K,\varphi}(j,\rho)=\nolinebreak 1$, there exist examples of pairs $(j,\rho)$ with $\rho$ non-cusp-deteriora\-ting such that the stretch locus $E_f$ is empty for some maps $f\in\F_{K,\varphi}^{j,\rho}$ (see Sections \ref{ex:nondeteriorating} and~\ref{ex:CC'nonreductive}, as well as Corollary~\ref{cor:jrhononred} for elementary~$\Gamma_0$).

\begin{proof}[Proof of Lemma~\ref{lem:Efnonempty}.(1) (Convex cocompact case)]
By Lemma~\ref{lem:proj}, it is sufficient to prove that the stretch locus of $f|_{\Conv{K}}$ is nonempty.
The function $x\mapsto\Lip_x(f|_{\Conv{K}})$ is upper semicontinuous (Lemma~\ref{lem:localLip}) and $j(\Gamma_0)$-invariant.
If $j$ is convex cocompact, then $\Conv{K}$ is compact modulo~$j(\Gamma_0)$, and so $x\mapsto\Lip_x(f|_{\Conv{K}})$ achieves its maximum on~$\Conv{K}$, at a point that belongs to the stretch locus of~$f|_{\Conv{K}}$.
\end{proof}

\begin{proof}[Proof of Lemma~\ref{lem:Efnonempty}.(2) (General geometrically finite case)]
Assume either\linebreak that $C\neq 1$, or that $C=1$ and $\rho$ is cusp-deteriorating, where we set $C:=C_{K,\varphi}(j,\rho)$.
Consider $f\in\F_{K,\varphi}^{j,\rho}$.
As in the convex cocompact case, it is sufficient to prove that the stretch locus of $f|_{\Conv{K}}$ is nonempty.
Suppose by contradiction that it is empty: this means (Lemma~\ref{lem:localLip}) that $\Lip_{K'}(f)<\nolinebreak C$ for any compact subset $K'$ of $\Conv{K}$, or equivalently that the $j(\Gamma_0)$-invariant function $x\mapsto\Lip_x(f)$ only approaches~$C$ asymptotically (from below) in some cusps.
Let $B_1,\dots,B_c$ be open horoballs of~$\HH^n$, disjoint from~$K$, whose images in $j(\Gamma_0)\backslash\HH^n$ are disjoint and intersect the convex core in standard cusp regions (Definition~\ref{def:standardcusp}), representing all the cusps.
Our strategy is, for each $B_i$ on which $x\mapsto\Lip_x(f)$ approaches~$C$ asymptotically, to modify $f|_{\Conv{K}}$ on $\Conv{K}\cap j(\Gamma_0)\cdot B_i$ in a $(j,\rho)$-equivariant way so as to decrease the Lipschitz constant on $\Conv{K}\cap B_i$.
By \eqref{eqn:supLip}, this will yield a new $(j,\rho)$-equivariant extension of~$\varphi$ to~$\Conv{K}$ with a smaller Lipschitz constant than $f|_{\Conv{K}}$, which will contradict the minimality of $\Lip(f|_{\Conv{K}})=\Lip(f)$.
Let us now explain the details.

Let $B=B_i$ be an open horoball as above, on which $x\mapsto\Lip_x(f)$ approaches~$C$ asymptotically, and let $S$ be the stabilizer of $B$ in~$\Gamma_0$ for the $j$-action.
The group $j(S)$ is discrete and contains only parabolic and elliptic elements.
Since $C_{K,\varphi}(j,\rho)<+\infty$ by assumption, the group $\rho(S)$ also contains only parabolic and elliptic elements (Lemma~\ref{lem:C<infty}).

First we assume that $\rho(S)$ contains a parabolic element, \ie $\rho$ is \emph{not} deteriorating in~$B$ (Definition~\ref{def:detinacusp}).
In particular, $\rho$ is not cusp-deteriorating, hence $C\geq 1$ by Lemma~\ref{lem:parabdet} and so $C>1$ by the assumption made at the beginning of the proof.
Since $S$ is amenable, in order to decrease the Lipschitz constant on $\Conv{K}\cap B$ it is enough to prove that $\Lip_{\Conv{K}\cap\partial B}(f)<C$, because we can then apply Proposition~\ref{prop:amenableKirszbraun}.
By geometrical finiteness and the assumption that the image of $K$ in $j(\Gamma_0)\backslash\HH^n$ is compact (see Fact~\ref{fact:geomfinite} and the remarks after Notation~\ref{not:ConvK}), we can find a \emph{compact} fundamental domain $\D$ of $\Conv{K}\cap j(\Gamma_0)\cdot \partial B$ for the action of~$j(\Gamma_0)$.
Fix $p\in\D$.
By Lemma~\ref{lem:disthorosphere}, there exist $R,R'>0$ such that
\begin{equation}\label{eqn:LipEf1}
d(p,j(\gamma)\cdot p)\geq 2 \log(1+\wl(\gamma)) - R'
\end{equation}
and
\begin{equation}\label{eqn:LipEf2}
d\big(f(p),f(j(\gamma)\cdot p)\big) = d\big(f(p),\rho(\gamma)\cdot f(p)\big) \leq 2 \log(1+\wl(\gamma)) + R
\end{equation}
for all $\gamma\in S$, where $\wl : S\rightarrow\N$ denotes the word length with respect to some fixed finite generating set of~$S$.
Consider $q,q'\in\Conv{K}\cap\partial B$ with $q\in\D$; there is an element $\gamma\in\Gamma_0$ such that $d(j(\gamma)\cdot p,q')\leq\Delta$, where $\Delta>0$ is the diameter of~$\D$.
By the triangle inequality, \eqref{eqn:LipEf1}, and \eqref{eqn:LipEf2}, we have
\begin{eqnarray*}
d(q,q') & \geq & d(p,j(\gamma)\cdot p) - d(p,q) - d(j(\gamma)\cdot p,q')\\
& \geq & 2 \log(1+\wl(\gamma)) - (R'+2\Delta)
\end{eqnarray*}
and, using $\Lip(f)=C$,
\begin{eqnarray*}
d(f(q),f(q')) & \leq & d\big(f(p),f(j(\gamma)\cdot p)\big) + d(f(p),f(q)) + d(f(j(\gamma)\cdot p),f(q'))\\
& \leq & 2 \log(1+\wl(\gamma)) + (R+2C\Delta).
\end{eqnarray*}
Since $C>1$, this implies
$$\frac{d(f(q),f(q'))}{d(q,q')} \leq \frac{1+C}{2} < C$$
as soon as $\wl(\gamma)$ is large enough, or equivalently as soon as $d(q,q')$ is large enough.
However, this ratio is also bounded away from~$C$ when $d(q,q')$ is bounded, because the segment $[q,q']$ then stays in a compact part of $j(\Gamma_0)\backslash \Conv{K}$.
Therefore there is a constant $C''<C$ such that\linebreak $d(f(q),f(q'))\leq C''d(q,q')$ for all $q,q'\in\Conv{K}\cap\partial B$ with $q\in\D$, hence $\Lip_{\Conv{K}\cap\partial B}(f)\leq C''<C$ by equivariance.
By Proposition~\ref{prop:amenableKirszbraun}, we can redefine $f$ inside $\Conv{K}\cap B$ so that $\Lip_{\Conv{K}\cap B}(f)<C$.
We then extend $f$ to $\Conv{K}\cap j(\Gamma_0)\cdot B$ in a $(j,\rho)$-equivariant way.

We now assume that $\rho(S)$ consists entirely of elliptic elements, \ie $\rho$ is deteriorating in~$B$ (Definition~\ref{def:detinacusp}).
Then $\rho(S)$ admits a fixed point $q$ in~$\HH^n$ by Fact~\ref{fact:fixedpoint}.
Let $f_1 : j(\Gamma_0)\cdot B\rightarrow\HH^n$ be the $(j,\rho)$-equivariant map that is constant equal to~$q$ on~$B$, and let $\psi_1 : \HH^n\rightarrow [0,1]$ be the $j(\Gamma_0)$-invariant function supported on $j(\Gamma_0)\cdot B$ given by
$$\psi_1(p) = \varepsilon\,\psi\big(d(p,\partial B)\big)$$
for all $p\in B$, where $\psi : \R^+\rightarrow [0,1]$ is the $3$-Lipschitz function with $\psi|_{[0,1/3]}=\nolinebreak 0$ and $\psi|_{[2/3,+\infty)}=1$, and $\varepsilon>0$ is a small parameter to be adjusted later.
Let $f_2:=f$, and let $\psi_2:=1-\psi_1$.
The $(j,\rho)$-equivariant map
$$f_0 := \psi_1 f_1 + \psi_2 f_2 :\ \HH^n \longrightarrow \HH^n$$
coincides with~$f$ on $\Conv{K}\cap\partial B$.
Let us prove that if $\varepsilon$ is small enough, then $\Lip_p(f_0)$ is bounded by some uniform constant $<C$ for $p\in\Conv{K}\cap \nolinebreak B$.
Let $p\in\Conv{K}\cap B$.
Since $\Lip_p(f_1)=0$, since $f_1(p)=q$, and since $f_2=f$, Lemma~\ref{lem:partofunity} yields
$$\Lip_p(f_0) \leq \big(\Lip_p(\psi_1) + \Lip_p(\psi_2)\big)\,d(q,f(p)) + \psi_2(p)\,\Lip_p(f).$$
Let $B'$ be a horoball contained in~$B$, at distance~$1$ from~$\partial B$.
If $p\in\nolinebreak\Conv{K}\cap\nolinebreak B'$, then $\Lip_p(\psi_1)=\Lip_p(\psi_2)=0$ and $\psi_2(p)=1-\varepsilon$, hence
$$\Lip_p(f_0) \leq (1-\varepsilon)\,\Lip_p(f) \leq (1-\varepsilon)\,C.$$
If $p\in\Conv{K}\cap (B\smallsetminus B')$, then $\Lip_p(\psi_1),\Lip_p(\psi_2)\leq 3 \varepsilon$ and $\psi_2(p)\leq 1$, hence
$$\Lip_p(f_0) \leq 6\varepsilon\,d(q,f(p)) + \sup_{x\in\Conv{K}\cap (B\smallsetminus B')}\Lip_x(f)\,.$$
Note that the set $\Conv{K}\cap (B\smallsetminus B')$ is compact modulo $j(S)$, which implies, on the one hand that the $j(S)$-invariant, continuous function $p\mapsto d(q,f(p))$ is bounded on $\Conv{K}\cap (B\smallsetminus B')$, on the other hand that the $j(S)$-invariant, upper semicontinuous function $x\mapsto\Lip_x(f)$ is bounded away from~$C$ on $\Conv{K}\cap (B\smallsetminus B')$ (recall that the stretch locus of $f|_{\Conv{K}}$ is empty by assumption).
Therefore, if $\varepsilon$ is small enough, then $\Lip_p(f_0)$ is bounded by some uniform constant $<C$ for $p\in\Conv{K}\cap B$, which implies $\Lip_{\Conv{K}\cap B}(f_0)<C$ by \eqref{eqn:supLip}.
We can redefine $f$ to be~$f_0$ on $\Conv{K}\cap B$.
We then extend $f$ to $\Conv{K}\cap j(\Gamma_0)\cdot B$ in a $(j,\rho)$-equivariant way.

After redefining~$f$ as above in each cusp where the local Lipschitz constant $x\mapsto\Lip_x(f)$ approaches~$C$ asymptotically, we obtain a $(j,\rho)$-equivariant map on $\Conv{K}$ with Lipschitz constant $<C$, which contradicts the minimality of~$C$.
\end{proof}

\subsection{Optimal extensions with minimal Lipschitz constant}

\begin{definition}\label{def:relstretchlocus}
An element $f_0\in\F_{K,\varphi}^{j,\rho}$ (Definition~\ref{def:CE}) is called \emph{optimal} if its enhanced stretch locus $\widetilde{E}_{f_0}$ (Definition~\ref{def:stretchlocusgen}) is minimal, equal to
$$\widetilde{E}_{K,\varphi}(j,\rho) = \bigcap_{f\in\F_{K,\varphi}^{j,\rho}} \widetilde{E}_f.$$
\end{definition}

This means that the ordinary stretch locus $E_{f_0}$ of~$f_0$ is minimal, equal to $E_{K,\varphi}(j,\rho)=\bigcap_{f\in\F_{K,\varphi}^{j,\rho}} E_f$, and that the set of maximally stretched segments of~$f_0$ is minimal (using Remark~\ref{rem:pathlength}.(1)).
This last condition will be relevant only when $C_{K,\varphi}(j,\rho)=1$, in the proof of Lemma~\ref{lem:1StretchedLam}: indeed, when $C_{K,\varphi}(j,\rho)>1$, Theorem~\ref{thm:Kirszbraunopt} shows that $f_0\in\F_{K,\varphi}^{j,\rho}$ is optimal if and only if its ordinary stretch locus $E_{f_0}$ is minimal.

As mentioned in the introduction, in general an optimal map~$f_0$ is by no means unique, since it may be perturbed away from $E_{K,\varphi}(j,\rho)$.

\begin{lemma}\label{lem:optimalmap}
If $\F_{K,\varphi}^{j,\rho}\neq\emptyset$, then there exists an optimal element $f_0\in\F_{K,\varphi}^{j,\rho}$.
\end{lemma}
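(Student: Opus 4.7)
The idea is to realize $\widetilde{E}_{K,\varphi}(j,\rho)$ as a \emph{countable} intersection of enhanced stretch loci, and then to average the corresponding maps by means of the infinite-barycenter construction of Section~\ref{subsec:bary}. Set $C:=C_{K,\varphi}(j,\rho)$.

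First I would exploit second countability of $(\HH^n)^2$. Each $\widetilde{E}_f$ is closed in $(\HH^n)^2$ (both pieces in Definition~\ref{def:stretchlocusgen} are closed by continuity of~$f$ and by Lemma~\ref{lem:localLip}), so the open sets $U_f:=(\HH^n)^2\smallsetminus\widetilde{E}_f$, for $f\in\F_{K,\varphi}^{j,\rho}$, form an open cover of the open set $(\HH^n)^2\smallsetminus\widetilde{E}_{K,\varphi}(j,\rho)$. Since $(\HH^n)^2$ is a second-countable (hence Lindel\"of) topological space, this cover admits a countable subcover, yielding a sequence $(f_k)_{k\geq 1}$ in $\F_{K,\varphi}^{j,\rho}$ with
$$\bigcap_{k\geq 1}\widetilde{E}_{f_k} \,=\, \widetilde{E}_{K,\varphi}(j,\rho).$$

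Next I would build $f_0$ as a weighted infinite barycenter of the~$f_k$. Fix a basepoint $p_0\in\HH^n$ and set $D_k:=1+d(f_1(p_0),f_k(p_0))$. Choose positive weights $\alpha_k$ summing to~$1$ such that $\sum_{k}\alpha_k D_k^{2}<+\infty$ (for instance $\alpha_k$ proportional to $D_k^{-2}\,2^{-k}$). For any $p\in\HH^n$, the triangle inequality combined with $\Lip(f_k)=\Lip(f_1)=C$ gives
$$d(f_1(p),f_k(p)) \,\leq\, 2C\,d(p,p_0)+D_k,$$
so $(f_k(p))_{k\geq 1}\in(\HH^n)^{\N^{\ast}}_{\underline{\alpha}}$ for every~$p$. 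Lemma~\ref{lem:baryLipschitz} then defines a $C$-Lipschitz map $f_0:=\sum_{k\geq 1}\alpha_k f_k$ on all of~$\HH^n$, which is $(j,\rho)$-equivariant by \eqref{eqn:baryequiv} and satisfies $f_0|_K=\varphi$ by \eqref{eqn:barydiag} (since every $f_k$ restricts to~$\varphi$ on~$K$). Minimality of~$C$ forces $\Lip(f_0)=C$, whence $f_0\in\F_{K,\varphi}^{j,\rho}$.

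Finally, because every $f_k$ and also $f_0$ achieve the same Lipschitz constant~$C$, the last clause of Lemma~\ref{lem:baryLipschitz} gives
$$\widetilde{E}_{f_0} \,\subseteq\, \bigcap_{k\geq 1}\widetilde{E}_{f_k} \,=\, \widetilde{E}_{K,\varphi}(j,\rho);$$
the reverse inclusion is automatic from $f_0\in\F_{K,\varphi}^{j,\rho}$, so $f_0$ is optimal. The only delicate point is the convergence of the infinite barycenter, but this is handled uniformly in~$p$ by the weight choice, using only the common Lipschitz bound on the~$f_k$.
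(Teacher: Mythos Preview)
Your proof is correct and takes essentially the same approach as the paper: extract a countable subfamily (you use Lindel\"of on the complements $(\HH^n)^2\smallsetminus\widetilde{E}_f$, the paper uses $\sigma$-compactness on point-indexed neighborhoods $\mathcal{U}_x$), choose weights decaying fast enough for the infinite barycenter of Lemma~\ref{lem:bary} to converge, and conclude via Lemma~\ref{lem:baryLipschitz}. The only cosmetic difference is that you invoke the final clause of Lemma~\ref{lem:baryLipschitz} directly, whereas the paper redoes that estimate explicitly with the quantitative gap $\alpha_i\delta_{x_i}$.
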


\begin{proof}
For any $f\in\F_{K,\varphi}^{j,\rho}$, the enhanced stretch locus $\widetilde{E}_f$ is closed in $\HH^n\times\HH^n$ (Lemma~\ref{lem:localLip} and Remark~\ref{rem:pathlength}.(1)) and $j(\Gamma_0)$-invariant for the diagonal action. Therefore $\widetilde{E}_{K,\varphi}(j,\rho)$ is also closed and $j(\Gamma_0)$-invariant.
By definition, for any $x=(p,q)\in(\HH^n\times\HH^n)\smallsetminus \widetilde{E}_{K,\varphi}(j,\rho)$ (possibly with $p=q$), we can find a neighborhood $\mathcal{U}_x$ of $x$ in $\HH^n\times\HH^n$, a $(j,\rho)$-equivariant map $f_x\in\F_{K,\varphi}^{j,\rho}$, and a constant $\delta_x>0$ such that
$$\sup_{(p',q')\in \mathcal{U}_x \atop p'\neq q'}\frac{d(f_x(p'),f_x(q'))}{d(p',q')} = C_{K,\varphi}(j,\rho) - \delta_x < C_{K,\varphi}(j,\rho).$$
Since $(\HH^n\times\HH^n)\smallsetminus \widetilde{E}_{K,\varphi}(j,\rho)$ is exhausted by countably many compact sets, we can write
$$(\HH^n\times\HH^n)\smallsetminus \widetilde{E}_{K,\varphi}(j,\rho) = \bigcup_{i=1}^{+\infty}\ \mathcal{U}_{x_i}$$
for some sequence $(x_i)_{i\geq 1}$ of points of $(\HH^n\times\HH^n)\smallsetminus \widetilde{E}_{K,\varphi}(j,\rho)$.
Choose a point $p\in\HH^n$ and let $\underline{\alpha}=(\alpha_i)_{i\geq 1}$ be a sequence of positive reals summing up to~$1$ and decreasing fast enough so that
$$\sum_{i=1}^{+\infty} \alpha_i\,d(f_{x_1}(p),f_{x_i}(p))^2 < +\infty\,.$$
By Lemma~\ref{lem:baryLipschitz}, the map $f_0:=\sum_{i=1}^{\infty} \alpha_i f_{x_i}$ is well defined and satisfies
$$\sup_{(p,q)\in \mathcal{U}_{x_i} \atop p\neq q}\frac{d(f_0(p ),f_0(q))}{d(p,q)} \leq C_{K,\varphi}(j,\rho) - \alpha_i\,\delta_{x_i} <C_{K,\varphi}(j,\rho)$$
for all $i$, hence $\widetilde{E}_{f_0}\cap\mathcal{U}_{x_i}=\emptyset$, which means that $\widetilde{E}_{f_0}=\widetilde{E}_{K,\varphi}(j,\rho)$.
\end{proof}

Here is an immediate consequence of Lemmas \ref{lem:Efnonempty} and~\ref{lem:optimalmap}.

\begin{corollary}\label{cor:Enonempty}
If $\F_{K,\varphi}^{j,\rho}\neq\emptyset$, then the relative stretch locus $E_{K,\varphi}(j,\rho)$
\begin{itemize}
  \item is nonempty for convex cocompact~$j$;
  \item is nonempty for geometrically finite~$j$ in general, except possibly if $C_{K,\varphi}(j,\rho)=1$ and $\rho$ is \emph{not} cusp-deteriorating.
\end{itemize}
\end{corollary}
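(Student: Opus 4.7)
The plan is essentially to concatenate the two preceding lemmas, with only a small bookkeeping step in between. First I would invoke Lemma~\ref{lem:optimalmap} to produce an optimal map $f_0 \in \F_{K,\varphi}^{j,\rho}$, i.e.\ one whose enhanced stretch locus $\widetilde{E}_{f_0}$ equals the intersection $\widetilde{E}_{K,\varphi}(j,\rho)$. By the definition of ``optimal'' (Definition~\ref{def:relstretchlocus}) and the fact that both projections of $\widetilde{E}_f$ land inside $E_f$ (Remark~\ref{rem:pathlength}.(1)), the ordinary stretch locus $E_{f_0}$ then coincides with $E_{K,\varphi}(j,\rho) = \bigcap_{f\in \F_{K,\varphi}^{j,\rho}} E_f$. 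Hence it suffices to show that $E_{f_0}$ itself is nonempty.

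For this I would simply apply Lemma~\ref{lem:Efnonempty} to the single map $f_0$. Part~(1) of that lemma directly handles the convex cocompact case. Part~(2) handles the general geometrically finite case, with the exception precisely stated in the corollary (namely $C_{K,\varphi}(j,\rho)=1$ and $\rho$ not cusp-deteriorating). Combining these two facts with the identification $E_{f_0} = E_{K,\varphi}(j,\rho)$ yields the conclusion.

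There is no real obstacle here; the only thing worth being careful about is the distinction between the enhanced stretch locus and the ordinary one in Definition~\ref{def:relstretchlocus}, to make sure that ``optimal $f_0$'' indeed has its ordinary stretch locus equal to $E_{K,\varphi}(j,\rho)$ and not merely containing it. This is immediate from the observation above that projecting $\widetilde{E}_{f_0}$ onto either factor lands in $E_{f_0}$, together with the reverse inclusion $E_{f_0}\subset E_{K,\varphi}(j,\rho)$ that holds automatically whenever $f_0\in \F_{K,\varphi}^{j,\rho}$ has $E_{f_0}\subset \bigcap_{f} E_f$ (which is the case when $\widetilde{E}_{f_0}$ is minimal, by going through $(p,p)\in \widetilde{E}_{f_0}$ for each $p\in E_{f_0}$).
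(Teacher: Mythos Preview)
Your proof is correct and is exactly the approach the paper takes: the paper simply states that the corollary is ``an immediate consequence of Lemmas~\ref{lem:Efnonempty} and~\ref{lem:optimalmap}.'' The extra care you take with the ordinary versus enhanced stretch locus is already handled in the paper's remark immediately following Definition~\ref{def:relstretchlocus}, so you could also just cite that sentence instead of re-deriving it.
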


In fact, the following holds.

\begin{lemma}\label{lem:optimallocallycst}
If $\F_{K,\varphi}^{j,\rho}\neq\emptyset$, then for any $p\in\HH^n\smallsetminus (E_{K,\varphi}(j,\rho)\cup K)$ there exists an optimal element $f_0\in\F_{K,\varphi}^{j,\rho}$ that is constant on a neighborhood of~$p$.
\end{lemma}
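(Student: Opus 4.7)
The plan is to take an optimal map $\tilde f\in\F_{K,\varphi}^{j,\rho}$ (produced by Lemma~\ref{lem:optimalmap}), so that $\widetilde E_{\tilde f}=\widetilde E_{K,\varphi}(j,\rho)$ and in particular $p\notin E_{\tilde f}$, and then locally modify $\tilde f$ near the orbit $j(\Gamma_0)\cdot p$ by a suitable ``radial retraction'' so as to obtain a new optimal map that is constant on a small ball around~$p$. Set $C:=C_{K,\varphi}(j,\rho)$. Upper semicontinuity of $q\mapsto\Lip_q(\tilde f)$ (Lemma~\ref{lem:localLip}) together with $\Lip_p(\tilde f)<C$ yields a constant $C''$ with $\Lip_p(\tilde f)<C''<C$ and a closed ball $\bar B$ of some radius $r_B>0$ around $p$ such that $\Lip_{\bar B}(\tilde f)\le C''$. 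Shrinking $r_B$, I further arrange $\bar B\cap K=\emptyset$, $\bar B\cap E_{\tilde f}=\emptyset$, and $j(\gamma)\bar B\cap\bar B=\emptyset$ for every $\gamma\in\Gamma_0\smallsetminus S_p$, where $S_p:=\{\gamma\in\Gamma_0:j(\gamma)p=p\}$ is the finite $j$-stabilizer of~$p$ (this uses the proper discontinuity of the $j$-action). The finite group $j(S_p)$ acts by isometries fixing~$p$ and hence preserves~$\bar B$.

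Now I build a radial retraction $\sigma:\bar B\to\bar B$. Pick $0<a<b<r_B$ with $L:=b/(b-a)<C/C''$ and let $\phi:[0,r_B]\to[0,r_B]$ be the continuous function that vanishes on $[0,a]$, equals the identity on $[b,r_B]$, and is linear of slope~$L$ on $[a,b]$. Define $\sigma(\exp_p(rv)):=\exp_p(\phi(r)v)$ for all unit vectors $v\in T_p\HH^n$. In geodesic normal coordinates at~$p$, the differential $d\sigma$ has singular values $\phi'(r)\le L$ (radial) and $\sinh(\phi(r))/\sinh(r)\le 1$ (tangential, using $\phi(r)\le r$), so $\sigma$ is $L$-Lipschitz on the convex set~$\bar B$; moreover $\sigma$ commutes with the $j(S_p)$-action, since elements of $j(S_p)$ fix~$p$ and preserve the radial coordinate. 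Set $g_0:=\tilde f\circ\sigma:\bar B\to\HH^n$. Then $g_0$ is $LC''$-Lipschitz with $LC''<C$, is constant equal to $\tilde f(p)$ on $B_p(a)$, agrees with $\tilde f$ on $\bar B\smallsetminus B_p(b)$ (in particular on $\partial\bar B$), and is $(j|_{S_p},\rho|_{S_p})$-equivariant.

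Next I glue $g_0$ and $\tilde f$ equivariantly: set $f_0:=\tilde f$ on $\HH^n\smallsetminus j(\Gamma_0)\cdot\bar B$ and $f_0(j(\gamma)\cdot x):=\rho(\gamma)\cdot g_0(x)$ for $x\in\bar B$, $\gamma\in\Gamma_0$. The $(j|_{S_p},\rho|_{S_p})$-equivariance of $g_0$ makes this well defined, continuity across orbit boundaries follows from $g_0|_{\partial\bar B}=\tilde f|_{\partial\bar B}$ and the equivariance of $\tilde f$, and $f_0|_K=\varphi$ because $K\cap j(\Gamma_0)\cdot\bar B=\emptyset$ by the $j(\Gamma_0)$-invariance of~$K$. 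Every point of $\HH^n$ has an open neighborhood on which $f_0$ is either a $(j,\rho)$-conjugate of $g_0$ (so Lipschitz constant $\le LC''<C$) or equal to $\tilde f$ (so $\le C$), hence $\Lip(f_0)\le C$ by Lemma~\ref{lem:localLip}; equality follows from $f_0|_K=\varphi$ and the definition of~$C$, so $f_0\in\F_{K,\varphi}^{j,\rho}$. For optimality, $\Lip_q(f_0)<C$ on the orbit $j(\Gamma_0)\cdot\bar B$, which is therefore disjoint from $E_{f_0}$; outside that orbit $f_0=\tilde f$, so $E_{f_0}=E_{\tilde f}\smallsetminus j(\Gamma_0)\cdot\bar B=E_{\tilde f}$. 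Any maximally stretched segment of $f_0$ has local Lipschitz constant $\geq C$ at each of its points (Remark~\ref{rem:pathlength}.(1)), hence cannot enter $j(\Gamma_0)\cdot\bar B$; it must lie where $f_0=\tilde f$, forcing $\widetilde E_{f_0}\subseteq\widetilde E_{\tilde f}=\widetilde E_{K,\varphi}(j,\rho)$ and thus equality. So $f_0$ is optimal and constant on the neighborhood $B_p(a)$ of~$p$.

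The key obstacle is the design of~$\sigma$: a radial retraction that crushes some non-empty ball $B_p(a)$ to the point $p$ while fixing $\partial\bar B$ cannot be $1$-Lipschitz; it must expand by the factor $L=b/(b-a)>1$ across the interpolation annulus. This is precisely why I need the strict gap $\Lip_p(\tilde f)<C''<C$---produced by $p\notin E_{K,\varphi}(j,\rho)$ together with upper semicontinuity---to absorb the expansion of $\sigma$ and keep $LC''<C$.
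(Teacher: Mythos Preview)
Your proof is correct and follows essentially the same strategy as the paper: start from an optimal $\tilde f$, find a ball $\bar B$ around $p$ on which $\Lip_{\bar B}(\tilde f)=C''<C$, and precompose with a self-map of $\bar B$ that is the identity near $\partial\bar B$, constant near the center, and has Lipschitz constant $<C/C''$.

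The only difference is in how this self-map is produced. The paper defines it on $\partial B\cup B'$ (identity on $\partial B$, constant on the inner ball $B'$), checks via Lemma~\ref{lem:constext} that this is $r/(r-r')$-Lipschitz, and then invokes the Kirszbraun--Valentine theorem (Proposition~\ref{prop:classicalKirszbraun}) to extend to all of $B$. You instead write down an explicit radial retraction $\sigma(\exp_p(rv))=\exp_p(\phi(r)v)$ and read off the Lipschitz bound $L=b/(b-a)$ from the singular values $\phi'(r)$ and $\sinh\phi(r)/\sinh r$. Your route is slightly more elementary in that it avoids Kirszbraun--Valentine; the paper's route is a bit shorter since the extension lemma is already available. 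You also handle a possible finite stabilizer $S_p$ directly via the rotational symmetry of $\sigma$, whereas the paper simply assumes the ball projects injectively (legitimate after passing to a torsion-free finite-index subgroup per Lemma~\ref{lem:finiteindex}). Both executions arrive at the same $f_0=\tilde f\circ(\text{retraction})$ and the same verification of optimality.
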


\begin{proof}
Assume that $\F_{K,\varphi}^{j,\rho}\neq\emptyset$ and let $f\in\F_{K,\varphi}^{j,\rho}$ be optimal (given by Lemma~\ref{lem:optimalmap}).
Fix $p\in\HH^n\smallsetminus (E_{K,\varphi}(j,\rho)\cup K)$.
Let $B\subset\HH^n$ be a closed ball centered at~$p$, with small radius $r>0$, such that $B$ does not meet $K\cup E_{K,\varphi}(j,\rho)$ and projects injectively to $j(\Gamma_0)\backslash\HH^n$.
By Lemma~\ref{lem:localLip},
$$C^{\ast} := \Lip_B(f) < C := C_{K,\varphi}(j,\rho).$$
For any small enough ball $B'\subset B$ of radius $r'$ centered at~$p$, the map
$$\iota_p : \partial B \cup B' \longrightarrow \HH^n$$
that coincides with the identity on $\partial B$ and is constant with image $\{ p\} $ on~$B'$, satisfies $1<\Lip(\iota_p)=\frac{r}{r-r'}<C/C^{\ast}$.
Proposition~\ref{prop:classicalKirszbraun} enables us to extend $\iota_p$ to a map $\iota'_p : B\rightarrow\HH^n$ fixing $\partial B$ pointwise with $\Lip(\iota'_p)<C/C^{\ast}$.
We may moreover assume $\iota'_p(B)\subset B$ up to postcomposing with the closest-point projection onto $B$.
The $(j,j)$-equivariant map $J_p:\HH^n\rightarrow \HH^n$ that coincides with $\iota'_p$ on~$B$ and with the identity on $\HH^n\smallsetminus j(\Gamma_0)\cdot B$ satisfies $\Lip_x(J_p)\leq\Lip(\iota'_p)<C/C^{\ast}$ if $x\in j(\Gamma_0)\cdot B$ and $\Lip_x(J_p)=1$ otherwise.
Thus, by \eqref{eqn:supLip}, we see that the $(j,\rho)$-equivariant map $f_0 :=f\circ J_p$ satisfies $\Lip_x(f_0)\leq C^{\ast}\,\Lip(\iota'_p)<C$ if $x\in j(\Gamma_0)\cdot B$ and $\Lip_x(f_0)=\Lip_x(f)$ otherwise.
In particular, $f_0$ is $C$-Lipschitz, constant on~$B'$, extends~$\varphi$, and its (enhanced) stretch locus is contained in that of the optimal map~$f$.
Therefore $f_0$ is optimal.
\end{proof}

\subsection{Behavior in the cusps for (almost) optimal Lipschitz maps}

In this section we consider representations~$j$ that are geometrically finite but \emph{not} convex cocompact.
We show that when $\F_{K,\varphi}^{j,\rho}$ is nonempty, we can find optimal maps $f_0\in\F_{K,\varphi}^{j,\rho}$ (in the sense of Definition~\ref{def:relstretchlocus}) that ``show no bad behavior'' in the cusps.
To express this, we consider open horoballs $B_1,\dots,B_c$ of~$\HH^n$ whose images in $M:=j(\Gamma_0)\backslash\HH^n$ are disjoint and intersect the convex core in standard cusp regions (Definition~\ref{def:standardcusp}), representing all the cusps.
We take them small enough so that $K\cap j(\Gamma_0)\cdot B_i=\emptyset$ for all~$i$.
Then the following holds.

\begin{proposition}\label{prop:goodincusps}
Consider $C^{\ast}<+\infty$ such that there exists a $C^{\ast}$-Lipschitz, $(j,\rho)$-equivariant extension $f : \HH^n\rightarrow\HH^n$ of~$\varphi$.
\begin{enumerate}
  \item If $C^{\ast}\geq 1$, then we can find a $C^{\ast}$-Lipschitz, $(j,\rho)$-equivariant extension $f_0 : \HH^n\rightarrow\HH^n$ of~$\varphi$ and horoballs $B'_i\subset B_i$ such that $\Lip_{B'_i}(f_0)=\nolinebreak 0$ for all deteriorating~$B_i$ and $\Lip_{B'_i}(f_0)=1$ for all non-deteriorating~$B_i$.
  \item If $C^{\ast}<1$, then we can find a $C^{\ast}$-Lipschitz, $(j,\rho)$-equivariant extension $f_0 : \HH^n\rightarrow\HH^n$ of~$\varphi$ that converges to a point~$p_i$ in any~$B_i$ (\ie the sets $f_0(B'_i)$ converge to $\{ p_i\}$ for smaller and smaller horoballs $B'_i\subset B_i$).
  \item If $C^{\ast}<1$, then for any $\varepsilon>0$ we can find a $(C^{\ast}+\varepsilon)$-Lipschitz, $(j,\rho)$-equivariant extension $f_0 : \HH^n\rightarrow\HH^n$ of~$\varphi$ and horoballs $B'_i\subset B_i$ such that  $\Lip_{B'_i}(f_0)=0$ for all~$i$.
\end{enumerate}
Moreover, if $C^{\ast}=C_{K,\varphi}(j,\rho)$, then in (1) and~(2) we can choose~$f_0$ such that its enhanced stretch locus is contained in that of~$f$. In particular, $f_0$ is optimal if $f$ is.
\end{proposition}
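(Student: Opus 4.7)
My plan is to work cusp by cusp, using the disjointness of the translates $j(\Gamma_0)\cdot B_i$ for distinct $i$. For fixed $B_i$ with stabilizer $S_i\subset\Gamma_0$, I construct a $(j|_{S_i},\rho|_{S_i})$-equivariant modification of $f$ on $B_i$ and extend it $(j,\rho)$-equivariantly to $j(\Gamma_0)\cdot B_i$. The modification is a (possibly multi-stage) barycentric interpolation
$$f_0 := \psi\,h_i + (1-\psi)\,f,$$
where $\psi=\psi_T:B_i\to [0,1]$ depends only on the depth $t(p):=d(p,\partial B_i)$, vanishes near $\partial B_i$, and equals $1$ on a deeper sub-horoball $B'_i\subset B_i$ of threshold $T$; the target $h_i:B_i\to\HH^n$ is chosen to suit the case.

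For deteriorating $B_i$ (automatic in Cases~(2) and~(3) by Lemma~\ref{lem:parabdet}), I take $h_i$ to be the constant map equal to a fixed point $q_i\in\HH^n$ of $\rho(S_i)$ (Fact~\ref{fact:fixedpoint}.(1)), so $\Lip(h_i)=0$. For the non-deteriorating subcase of~(1), $\rho(S_i)$ has a unique parabolic fixed point $\xi_{\rho,i}\in\partial_\infty\HH^n$ (Fact~\ref{fact:fixedpoint}.(2)), and I construct $h_i$ with $\Lip(h_i)=1$ by reconciling the Bieberbach translation structures of $j(S_i)$ and $\rho(S_i)$: pick a $j(S_i)$-invariant affine subspace $\mathcal{V}_j\subset\partial B_i$ on which a finite-index $\Z^m$ subgroup acts by translation, an analogous $\mathcal{V}_\rho$ on a horosphere centered at $\xi_{\rho,i}$, and an equivariant affine map $\mathcal{V}_j\to\mathcal{V}_\rho$ of operator norm at most $1$, extended to $B_i$ using the radial flow toward $\xi_{\rho,i}$.

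By Lemma~\ref{lem:partofunity}, in the transition shell one has
$$\Lip_p(f_0) \leq 2\,\Lip_p(\psi)\,d(f(p),h_i(p)) + \psi(p)\,\Lip(h_i) + (1-\psi(p))\,\Lip_p(f).$$
The $j(S_i)$-invariant function $p\mapsto d(f(p),h_i(p))$ is bounded on each horosphere (by cocompactness of the $j(S_i)$-action on $\partial B_i\cap\Conv{K}$) and grows at most affinely in $t$, say $\leq a+bt$. The crux is to choose $\psi$ so that the right-hand side stays $\leq C^*$ in Cases~(1), (2) and $\leq C^*+\varepsilon$ in Case~(3). For Case~(3), a linear profile of slope $1/T$ with $T$ large makes the cross term $\leq\varepsilon$ since $\Lip(h_i)=0$. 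For Cases~(1) and~(2), I would proceed in two stages: first, use a profile satisfying the Gronwall-type inequality $\psi'(t)\leq (C^*-\Lip(h_i))(1-\psi(t))/(2(a+bt))$ to drive $\psi$ exponentially close to $1$ on a long initial shell, with Lipschitz bound strictly $<C^*$; then, on a second short shell, finish the interpolation to $\psi=1$ while exploiting that $f$ is itself already close to $h_i$ there, so the remaining cross term is absorbed in the slack. Convergence of $f_0(B'_k)$ to $q_i$ in Case~(2) is automatic since $f_0\equiv q_i$ on $B'_i$.

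For the final optimality claim ($C^*=C_{K,\varphi}(j,\rho)$): $f_0$ coincides with $f$ outside the cusps, and strict inequality in the ODE gives $\Lip_p(f_0)<C^*$ throughout the shells; on $B'_i$ one has $\Lip(f_0)=\Lip(h_i)\in\{0,1\}$, which is strictly less than $C^*$ in Case~(2) and in the deteriorating subcase of~(1), whence $\widetilde{E}_{f_0}\subset\widetilde{E}_f$ immediately. The main obstacle will be the non-deteriorating subcase of~(1) with $C^*=1$: one must choose $h_i$ so that its enhanced stretch locus in $B'_i$ is contained in $\widetilde{E}_f$. This should be achievable by aligning the affine map defining $h_i$ with the canonical parabolic directions of $f$, along which $f$ is itself asymptotically $1$-stretching by the logarithmic estimates of Lemma~\ref{lem:disthorosphere}; verifying this compatibility is the most delicate step of the construction.
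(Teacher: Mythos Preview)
Your interpolation scheme has a genuine gap at the starting point. With $f_0=\psi\,h_i+(1-\psi)\,f$ and $h_i$ constant, the Leibniz bound from Lemma~\ref{lem:partofunity} reads
\[
\Lip_p(f_0)\ \le\ 2\,|\psi'(t)|\,R_p\ +\ (1-\psi(t))\,C^{\ast},\qquad R_p=d(f(p),q_i),
\]
so the constraint $\Lip_p(f_0)\le C^{\ast}$ forces $|\psi'|\le C^{\ast}\psi/(2R_p)$, \emph{not} the inequality you wrote with $(1-\psi)$ on the right. Since $\psi(0)=0$, this pins $\psi\equiv 0$; there is no way to ``get off the ground'' while keeping the Lipschitz constant $\le C^{\ast}$. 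The same obstruction kills your Case~(3) argument: with a linear profile $\psi'=1/T$ and the generic affine bound $R_p\le a+C^{\ast}t$, the cross term $2\psi'R_p$ is of order $C^{\ast}$ on the shell, not of order~$\varepsilon$. Your second stage (``$f$ is itself already close to $h_i$ there'') presupposes exactly what must be proved: nothing prevents $d(f(p),q_i)$ from growing linearly in depth (take $\rho|_{S_i}$ trivial, or $\rho(S_i)$ with a positive-dimensional fixed set along which $f$ drifts).

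The paper supplies the missing idea as a separate preliminary step (its step~(A)): in a deteriorating cusp, post-compose $f$ with the closest-point projection onto a small $\rho(S_i)$-invariant ball centered on the fixed set $\mathcal{F}_i\subset\HH^n$. This is $1$-Lipschitz, does not enlarge the enhanced stretch locus, and forces the image deep in the cusp into an arbitrarily small ball; iterating yields a $C^{\ast}$-Lipschitz $f$ that genuinely converges to a point~$p_i$. Once $R_p$ is bounded (indeed $\to 0$), your interpolation works for~(3), and for~(1) the paper bypasses interpolation entirely by applying the amenable Kirszbraun theorem (Proposition~\ref{prop:amenableKirszbraun}) to extend the map that equals $f$ on $\partial B_i$ and equals $p_i$ on a deep sub-horoball. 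For the non-deteriorating subcase of~(1), the paper likewise uses Proposition~\ref{prop:amenableKirszbraun} across the shell rather than a barycentric bump; and your ``most delicate step'' (optimality when $C^{\ast}=1$, non-deteriorating) is a non-issue: one simply takes $f_0=f$.
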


By ``$B_i$ deteriorating'' we mean that $\rho$ is deteriorating in~$B_i$ in the sense of Definition~\ref{def:detinacusp}.
Recall that all $B_i$ are deteriorating when $C^{\ast}<1$ (Lemma~\ref{lem:parabdet}).
If $B_i$ is not deteriorating, then any $(j,\rho)$-equivariant map has Lipschitz constant $\geq 1$ in~$B_i$ (see Lemma~\ref{lem:disthorosphere}), hence the property $\Lip_{B'_i}(f_0)=1$ in~(1) cannot be improved.
We believe that the condition $C^{\ast}\geq 1$ could be dropped in~(1), which would then supersede both (2) and~(3) (see Appendix \ref{app:cusps}).

Note that if $f_0$ converges to a point $p_i$ in~$B_i$, then $p_i$ must be a fixed point of the group $\rho(S_i)$, where $S_i\subset\Gamma_0$ is the stabilizer of $B_i$ under~$j$.

Here is an immediate consequence of Proposition~\ref{prop:goodincusps}.(1), of Lemma~\ref{lem:proj}, and of the fact that the complement of the cusp regions in $\Conv{K}$ is compact (Fact~\ref{fact:geomfinite}).
Recall that $\F_{K,\varphi}^{j,\rho}$ is nonempty as soon as $K\neq\emptyset$ or $\rho$ is reductive (Lemma~\ref{lem:Fnonempty}).

\begin{corollary}\label{cor:Ecompact}
Suppose $\F_{K,\varphi}^{j,\rho}\neq\emptyset$.
If
\begin{itemize}
  \item $C_{K,\varphi}(j,\rho)>1$, or
  \item $C_{K,\varphi}(j,\rho)=1$ and $\rho$ is cusp-deteriorating,
\end{itemize}
then the image of the relative stretch locus $E_{K,\varphi}(j,\rho)$ in $j(\Gamma_0)\backslash\HH^n$ is compact.
\end{corollary}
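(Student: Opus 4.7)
The plan is to assemble three ingredients already available: an optimal extension whose stretch locus realizes $E_{K,\varphi}(j,\rho)$, a refinement of this extension that is nicely behaved in the cusps, and the geometrically finite description of $\Conv{K}$ modulo $j(\Gamma_0)$.

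First I would invoke Lemma~\ref{lem:optimalmap} to produce an optimal $f\in\F_{K,\varphi}^{j,\rho}$, so that the stretch locus $E_f$ equals $E_{K,\varphi}(j,\rho)$. Set $C^\ast:=C_{K,\varphi}(j,\rho)\geq 1$, where the inequality is forced by either branch of the hypothesis. The ``moreover'' statement of Proposition~\ref{prop:goodincusps}.(1) then supplies a new optimal $f_0\in\F_{K,\varphi}^{j,\rho}$ together with horoballs $B'_i\subset B_i$ such that $\Lip_{B'_i}(f_0)=0$ for every deteriorating~$B_i$ and $\Lip_{B'_i}(f_0)=1$ for every non-deteriorating~$B_i$.

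Next I would verify that in both branches of the hypothesis these local Lipschitz constants are strictly smaller than~$C^\ast$. If $C^\ast>1$ this is immediate, since $0,1<C^\ast$. If $C^\ast=1$, then $\rho$ is cusp-deteriorating by assumption, so every $B_i$ is deteriorating and $\Lip_{B'_i}(f_0)=0<1=C^\ast$. In both cases the stretch locus $E_{f_0}$ is disjoint from the $j(\Gamma_0)$-saturation $\bigcup_{i=1}^c j(\Gamma_0)\cdot B'_i$ of the sub-horoballs.

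Finally, since $f_0$ is optimal, $E_{f_0}=E_{K,\varphi}(j,\rho)$; and by Lemma~\ref{lem:proj} (applicable because $C^\ast\geq 1>0$) this stretch locus is contained in $\Conv{K}$. Combining these facts,
$$E_{K,\varphi}(j,\rho) \,\subset\, \Conv{K}\,\smallsetminus\,\bigcup_{i=1}^c j(\Gamma_0)\cdot B'_i,$$
whose image in $j(\Gamma_0)\backslash\HH^n$ is compact by Fact~\ref{fact:geomfinite} (after shrinking the horoballs representing the standard cusp regions to the $B'_i$, which is permissible since there are finitely many cusps). This closed subset of a compact set is itself compact, giving the conclusion. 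I do not expect any real obstacle here: the only point to watch is the strict inequality $\Lip_{B'_i}(f_0)<C^\ast$, which is exactly the reason both branches of the hypothesis are stated the way they are.
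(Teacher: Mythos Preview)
Your proof is correct and follows essentially the same approach as the paper, which simply states that the corollary is an immediate consequence of Proposition~\ref{prop:goodincusps}.(1), Lemma~\ref{lem:proj}, and Fact~\ref{fact:geomfinite}. You have spelled out the details carefully; note that you could even dispense with optimality of~$f_0$, since $E_{K,\varphi}(j,\rho)\subset E_{f_0}$ for any $f_0\in\F_{K,\varphi}^{j,\rho}$, but invoking the ``moreover'' clause as you do is perfectly valid.
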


Here is another consequence of Proposition~\ref{prop:goodincusps} and Lemma~\ref{lem:proj}, in the case when the group $j(\Gamma_0)$ is virtually $\Z^m$ for some $m<n$.

\begin{corollary}\label{cor:jrhononred}
If the groups $j(\Gamma_0)$ and $\rho(\Gamma_0)$ both have a unique fixed point in $\partial_{\infty}\HH^n$, then $C(j,\rho)=1$ and $\F^{j,\rho}\neq\emptyset$ and $E(j,\rho)=\emptyset$.
\end{corollary}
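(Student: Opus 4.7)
The strategy is to establish the equality $C(j,\rho)=1$ by proving the two inequalities separately, and to produce an element of $\F^{j,\rho}$ realising this optimal constant along the way.

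For the lower bound $C(j,\rho)\geq 1$, I would show that $\rho$ is not cusp-deteriorating and then invoke Lemma~\ref{lem:parabdet}. Since $j(\Gamma_0)$ is discrete and elementary of parabolic type, the Bieberbach structure recalled in Section~\ref{subsec:geo-finiteness} shows it is virtually abelian; thus $\Gamma_0$ admits a normal finite-index subgroup $\Gamma_0'\trianglelefteq\Gamma_0$ with $\Gamma_0'\simeq\Z^m$ on which $j$ acts by pure translations along an affine subspace of a horosphere centered at $\xi_j$, so every nontrivial element of $\Gamma_0'$ has $j$-image parabolic. If $\rho$ were cusp-deteriorating, then $\rho(\Gamma_0')$ would consist entirely of elliptic elements, hence would fix a point $p\in\HH^n$ by Fact~\ref{fact:fixedpoint}.(1); combined with the fact that every element of $\rho(\Gamma_0')\subset\rho(\Gamma_0)$ fixes $\xi_\rho$, the group $\rho(\Gamma_0')$ would fix the entire geodesic through $p$ and~$\xi_\rho$, producing a second fixed point $\eta\in\partial_\infty\HH^n\setminus\{\xi_\rho\}$. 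Normality of $\Gamma_0'$ would then force $\rho(\Gamma_0)$ to permute $\{\xi_\rho,\eta\}$ and, since it already fixes~$\xi_\rho$, to fix $\eta$ as well, contradicting the uniqueness assumption on the fixed point of $\rho(\Gamma_0)$ at infinity.

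For the upper bound, Lemma~\ref{lem:C<infty}.(2) first gives $C(j,\rho)<+\infty$ (no element of $\rho(\Gamma_0)$ can be hyperbolic, by an argument parallel to the one above applied to the two axis endpoints of a hypothetical hyperbolic). Pick any $C^*$-Lipschitz $(j,\rho)$-equivariant map $f:\HH^n\to\HH^n$ with $C^*\geq 1$. Apply Proposition~\ref{prop:goodincusps}.(1) with the single horoball $B_1$ centered at $\xi_j$: this horoball is non-deteriorating because $\rho(\Gamma_0)$ contains a parabolic element (not all its elements can be elliptic, by the same "fix $p\in\HH^n$ and $\xi_\rho$ forces a second fixed point" argument as in the first paragraph). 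We obtain a $C^*$-Lipschitz extension $f_0$ and a sub-horoball $B_1'\subset B_1$ with $\Lip_{B_1'}(f_0)=1$. The restriction $\varphi:=f_0|_{\partial B_1'}$ is then a $(j,\rho)$-equivariant map on the horosphere $\partial B_1'$, which is compact modulo $j(\Gamma_0)$ by cocompactness of the elementary parabolic action on horospheres, and satisfies $\Lip(\varphi)\leq 1$. Since $\Gamma_0$ is amenable (virtually abelian), Proposition~\ref{prop:amenableKirszbraun} (applied with $C_0=1$, viewing $\varphi$ as $1$-Lipschitz) produces a $1$-Lipschitz $(j,\rho)$-equivariant extension $\widetilde{f}:\HH^n\to\HH^n$, which gives $C(j,\rho)\leq 1$ and an element $\widetilde{f}\in\F^{j,\rho}$.

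The main delicate point in this plan is to check that Proposition~\ref{prop:goodincusps} truly applies in the degenerate setting at hand, where $j(\Gamma_0)$ is itself elementary parabolic so that the convex core of $j(\Gamma_0)\backslash\HH^n$ is empty and ``all of $M$ is cusp'': the horoball $B_1$ is the whole relevant part of $M$ and there is no compact piece to patch against. If this turns out to require nontrivial adaptation, a fully self-contained alternative is to use Remark~\ref{rem:CLipconj} to conjugate $\rho$ so that $\xi_j=\xi_\rho$, work in the upper half-space model with this common point at infinity, and construct directly $\phi(x,y):=(\psi(x),Ly)$ where $\psi:\R^{n-1}\to\R^{n-1}$ is a Euclidean-Lipschitz $(j,\rho)$-equivariant map of operator norm $L$ (produced via the Euclidean analog of Proposition~\ref{prop:amenableKirszbraun}, noting that both $j(\Gamma_0)$ and $\rho(\Gamma_0)$ are then contained in $\mathrm{O}(n-1)\ltimes\R^{n-1}$); a direct Jacobian computation shows that such $\phi$ is $1$-Lipschitz in the hyperbolic metric.
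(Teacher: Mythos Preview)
Your strategy is essentially the paper's: Lemma~\ref{lem:parabdet} for the lower bound (your argument that $\rho$ is not cusp-deteriorating is exactly what the paper leaves implicit) and Proposition~\ref{prop:goodincusps}.(1) to produce a $(j,\rho)$-equivariant map $f_0$ with $\Lip_{B'}(f_0)=1$ on some $j(\Gamma_0)$-invariant horoball $B'$.

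The difference, and a genuine gap, is in your final step. You restrict $f_0$ to $\partial B_1'$ and invoke Proposition~\ref{prop:amenableKirszbraun}, but that proposition requires the set $K=\partial B_1'$ to have compact image in $j(\Gamma_0)\backslash\HH^n$. A horosphere is compact modulo an elementary parabolic group only when the cusp has maximal rank $n-1$; for rank $m<n-1$ the quotient $j(\Gamma_0)\backslash\partial B_1'$ is noncompact (it fibers over $\R^{n-1-m}$), and your argument does not go through. The paper sidesteps this with one line: since $B'$ is $j(\Gamma_0)$-invariant, the closest-point projection $\pi_{B'}:\HH^n\to B'$ is $1$-Lipschitz and $(j,j)$-equivariant, so $f_0\circ\pi_{B'}$ is already a $1$-Lipschitz $(j,\rho)$-equivariant map on all of $\HH^n$. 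No re-extension is needed.

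Your fallback direct construction $\phi(x,y)=(\psi(x),Ly)$ is correct and is precisely what step~(D) in the proof of Proposition~\ref{prop:goodincusps} does; so if you trust that proposition (and the paper's convention that $\Conv{K}$ may be taken to be a horoball when the convex core is empty), the alternative is redundant. Your stated concern about whether Proposition~\ref{prop:goodincusps} applies in this degenerate setting is legitimate but resolved by that convention.
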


\begin{proof}[Proof of Corollary~\ref{cor:jrhononred}]
If $j(\Gamma_0)$ and $\rho(\Gamma_0)$ both have a unique fixed point in $\partial_{\infty}\HH^n$, then $\rho$ is not cusp-deteriorating with respect to~$j$, and so $C(j,\rho)\geq 1$ by Lemma~\ref{lem:parabdet}.
By Proposition~\ref{prop:goodincusps}.(1) we can find a $(j,\rho)$-equivariant map $f : \HH^n\rightarrow\nolinebreak\HH^n$ and a $j(\Gamma_0)$-invariant horoball $B$ of~$\HH^n$ such that $\Lip_B(f)=1$.
If we denote by $\pi_B : \HH^n\rightarrow B$ the closest-point projection, then $f\circ\nolinebreak\pi_B : \HH^n\rightarrow\nolinebreak\HH^n$ is $(j,\rho)$-equivariant and $1$-Lipschitz.
Thus $C(j,\rho)=1$ and $f\in\nolinebreak\F^{j,\rho}$.
Lemma~\ref{lem:proj} shows that $E_f$ is contained in any $j(\Gamma_0)$-invariant horoball $B'\subset B$, hence it is empty.
In particular, $E(j,\rho)=\emptyset$.
\end{proof}


\begin{proof}[Proof of Proposition~\ref{prop:goodincusps}]
For any $1\leq i\leq c$ we explain how $f|_{\Conv{K}}$ can be modified on $j(\Gamma_0)\cdot B_i\cap\Conv{K}$ to obtain a new $(j,\rho)$-equivariant Lipschitz extension $f_0 : \Conv{K}\rightarrow\HH^n$ of~$\varphi$ such that $f_0$ (precomposed as per Lemma~\ref{lem:proj} with the closest-point projection $\pi_{\Conv{K}}$ onto $\Conv{K}$) has the desired properties, namely (A)--(B)--(C)--(D) below.
More precisely, the implications will be $(\text{A})\Rightarrow (2)$, $(\text{B})\Rightarrow (3)$, and $(\text{C})\text{--}(\text{D})\Rightarrow (1)$.
We denote by~$S_i$ the stabilizer of $B_i$ in~$\Gamma_0$ under~$j$.

\smallskip
\noindent
$\bullet$ \textbf{(A) Convergence in deteriorating cusps.}
We first consider the case where $B_i$ is deteriorating and prove that there is a $C^{\ast}$-Lipschitz, $(j,\rho)$-equiva\-riant extension $f_0 : \Conv{K}\rightarrow\HH^n$ of~$\varphi$ such that $f_0$ converges to a point on $B_i\cap\Conv{K}$, agrees with~$f$ on $\Conv{K}\smallsetminus j(\Gamma_0)\cdot B_i$, and satisfies $d(f_0(p),f_0(q))\leq d(f(p),f(q))$ for all $p,q\in\Conv{K}$.
If $C^{\ast}=C_{K,\varphi}(j,\rho)$, then this last condition implies that the enhanced stretch locus of~$f_0$ is contained in that of~$f$.

It is sufficient to prove that for any $\delta>0$ there is a $C^{\ast}$-Lipschitz, $(j,\rho)$-equivariant extension $f_{\delta} : \Conv{K}\rightarrow\HH^n$ of~$\varphi$ such that $f_{\delta}$ agrees with $f$ on $\Conv{K}\smallsetminus j(\Gamma_0)\cdot B_i$, satisfies $d(f_{\delta}(p),f_{\delta}(q))\leq d(f(p),f(q))$ for all $p,q\in B_i\cap\Conv{K}$, and for some horoball $B'_i\subset B_i$, the set $f_{\delta}(B'_i\cap\Conv{K})$ is contained in the intersection of the convex hull of $f(B'_i\cap\Conv{K})$ with a ball of radius~$\delta$.
Indeed, if this is proved, then we can apply the process to $f$ and $\delta=1$ to construct a map~$f_{(1)}$, and then inductively to $f_{(i)}$ and $\delta=1/2^i$ for any $i\geq 1$ to construct a map~$f_{(i+1)}$; extracting a pointwise limit, we obtain a map~$f_0$ satisfying the required properties.

Fix $\delta>0$ and let us construct~$f_{\delta}$ as above.
Choose a generating subset $\{s_1,\dots,s_m\}$ of~$S_i$, a compact fundamental domain $\D$ of $\partial B_i\cap\Conv{K}$ for the action of~$j(S_i)$ (use Fact~\ref{fact:geomfinite}), and a point $p\in\D$.
For $t\geq 0$, the closest-point projection $\pi_t$ from $B_i$ onto the closed horoball at distance $t$ of~$\partial B_i$ inside~$B_i$ commutes with the action of $j(S_i)$.
Set $p_t:=\pi_t(p)$; by \eqref{eqn:expdistcusps}, the number $\max_{1\leq k\leq m} d(p_t,j(s_k)\cdot p_t)$ goes to~$0$ as $t\rightarrow +\infty$.
We can also find fundamental domains $\D_t$ of $\pi_t(\partial B_i)\cap\Conv{K}$, containing~$p_t$, whose diameters go to~$0$ as $t\rightarrow +\infty$.
Since $f$ is Lipschitz and $(j,\rho)$-equivariant, the diameter of $f(\D_t)$ and the function $t\mapsto\max_{1\leq k\leq m} d(f(p_t),\rho(s_k)\cdot f(p_t))$ also tend to~$0$ as $t\rightarrow +\infty$.
Let $\mathcal{F}_i\subset\HH^n$ be the fixed set of~$\rho(S_i)$ (a single point or a copy of $\HH^d$, for some $d\leq n$).
There exists $\eta>0$ such that for any $x\in\HH^n$, if $\max_{1\leq k\leq m} d(x,\rho(s_k)\cdot x)<\eta$, then $d(x,\mathcal{F}_i)<\delta/2$.
Applying this to $x=f(p_t)$, we see that for large enough~$t$ there is a point $q_t\in\mathcal{F}_i$ such that $d(f(p_t),q_t)<\delta/2$ and the diameter of $f(\D_t)$ is $<\delta/2$, which implies that the $\rho(S_i)$-invariant set $f(\Conv{K}\cap \pi_t (\partial B_i))$ is contained in the ball $\Omega:=B_{q_t}(\delta)$ of radius~$\delta$ centered at~$q_t$.
Let $\pi_{\Omega} : \HH^n\rightarrow\Omega$ be the closest-point projection onto~$\Omega$ (see Figure~\ref{fig:C}).
The $(j,\rho)$-equivariant map $f_{\delta} : \Conv{K}\rightarrow\HH^n$ that agrees with $f$ on $\Conv{K}\smallsetminus j(\Gamma_0)\cdot\pi_t(B_i)$ and with $\pi_{\Omega}\circ f$ on $\Conv{K}\cap \pi_t(B_i)$ satisfies the required properties.

\begin{figure}[h!]
\begin{center}
\labellist
\small\hair 2pt
\pinlabel{$\HH^n$} at 203 440
\pinlabel{$B_i$} at 248 440
\pinlabel{$\mathcal{D}$} at 242 343
\pinlabel{$\pi_t(B_i)$} at 315 437
\pinlabel{$\pi_t(\mathcal{D})$} at 311 365
\pinlabel{$f$} at 160 285
\pinlabel{$f(\mathcal{D})$} at 60 88
\pinlabel{$\mathcal{F}_i=\mathrm{Fix}\,(\rho(S_i))$} at 143 50
\pinlabel{$B_{q_t}(\delta)=\Omega$} at 180 120
\pinlabel{$f(\mathcal{D})$} at 365 88
\pinlabel{$\mathcal{F}_i=\mathrm{Fix}\,(\rho(S_i))$} at 448 50
\pinlabel{$B_{q_t}(\delta)=\Omega$} at 485 120
\pinlabel{$\pi_{\Omega}$} at 275 138
\endlabellist
\includegraphics[width=12cm]{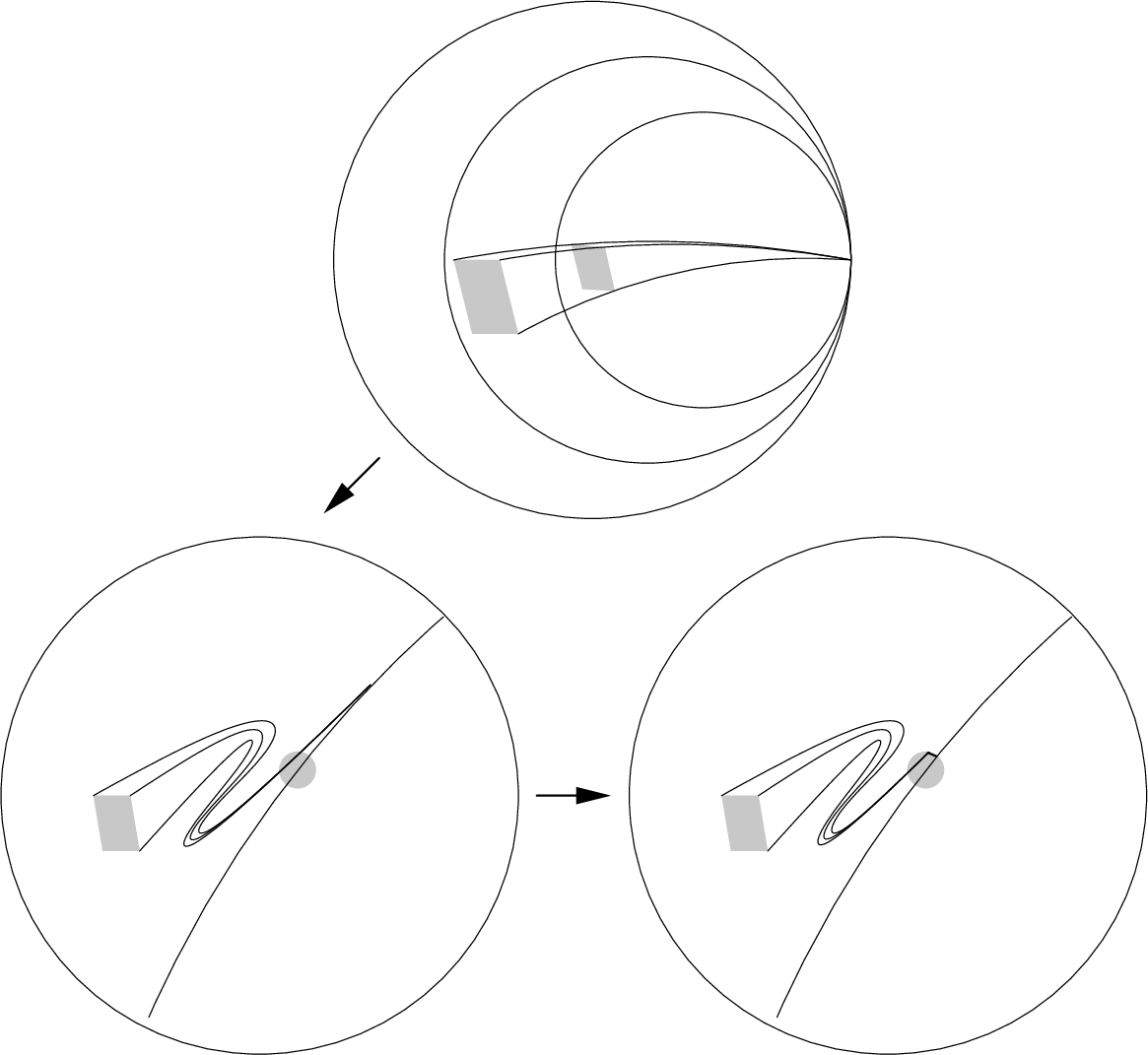}
\caption{Step~(A): Postcomposition with the closest-point projection onto the small, $\rho(S_i)$-invariant ball~$\Omega$.}
\label{fig:C}
\end{center}
\end{figure}

\smallskip
\noindent
$\bullet$ \textbf{(B) Constant maps with a slightly larger Lipschitz constant in deteriorating cusps.}
We still consider the case when $B_i$ is deteriorating.
For $\varepsilon>0$, we prove that there is a $(C^{\ast}+\varepsilon)$-Lipschitz, $(j,\rho)$-equivariant extension $f_0 : \HH^n\rightarrow\HH^n$ of~$\varphi$ that is constant on some horoball $B'_i\subset B_i$ and that agrees with $f$ on $\HH^n\smallsetminus j(\Gamma_0)\cdot B_i$.

Fix $\varepsilon>0$.
By~$(\text{A})$, we may assume that $f$ converges to a point~$p_i$ on~$B_i$, hence there is a horoball $B''_i\subset B_i$ such that $f(B''_i)$ is contained in the ball of diameter~$\varepsilon/2$ centered at~$p_i$.
Let $f_i:j(\Gamma_0)\cdot B''_i\rightarrow \HH^n$ be the $(j,\rho)$-equivariant map that extends the constant map~$B''_i\rightarrow \{p_i\}$, and let $\psi : \HH^n\rightarrow [0,1]$ be a $j(\Gamma_0)$-invariant, $1$-Lipschitz function equal to~$1$ on a neighborhood of $\HH^n\smallsetminus j(\Gamma_0)\cdot B''_i$ and vanishing far inside $B''_i$.
The map
$$f_0 := \psi f + (1-\psi) f_i$$
is a $(j,\rho)$-equivariant extension of~$\varphi$ that is constant on some horoball $B'_i\subset B_i$ and that agrees with $f$ on $\HH^n\smallsetminus j(\Gamma_0)\cdot B_i$.
By Lemma~\ref{lem:partofunity},
$$\Lip_p(f_0) \leq \Lip_p(f) \leq C^{\ast}$$
for all $p\in\HH^n\smallsetminus j(\Gamma_0)\cdot B''_i$, and
$$\Lip_p(f_0) \leq \Lip_p(f) + \varepsilon \leq C^{\ast} + \varepsilon$$
for all $p\in j(\Gamma_0)\cdot B''_i$, hence $f_0$ is $(C^{\ast}+\varepsilon)$-Lipschitz by \eqref{eqn:supLip}.

\smallskip
\noindent
$\bullet$ \textbf{(C) Constant maps in deteriorating cusps when $C^{\ast}\geq 1$.}
We now consider the case when $B_i$ is deteriorating and $C^{\ast}\geq 1$.
We construct a $C^{\ast}$-Lipschitz, $(j,\rho)$-equivariant extension $f_0 : \Conv{K}\rightarrow\HH^n$ of~$\varphi$ that is constant on $B'_i\cap\Conv{K}$ for some horoball $B'_i\subset B_i$ and agrees with $f$ on $\Conv{K}\smallsetminus j(\Gamma_0)\cdot B_i$.
We also prove that if $C^{\ast}=C_{K,\varphi}(j,\rho)$, then the enhanced stretch locus of~$f_0$ (hence of $f_0\circ\pi_{\Conv{K}}$ by Lemma~\ref{lem:proj}) is included in that of~$f$.

By~$(\text{A})$, we may assume that $f$ converges to a point~$p_i$ on~$B_i$.
Let $B'_i$ be a horoball strictly contained in~$B_i$.
Since the set $\partial B_i\cap\Conv{K}$ is compact modulo $j(S_i)$ (Fact~\ref{fact:geomfinite}), its image under~$f$ lies within bounded distance from~$p_i$.
Therefore, if $B'_i$ is far enough from $\partial B_i$, then the map from $(\Conv{K}\smallsetminus j(\Gamma_0)\cdot B_i)\cup (B'_i\cap\Conv{K})$ to~$\HH^n$ that agrees with $f$ on $\Conv{K}\smallsetminus j(\Gamma_0)\cdot B_i$ and that is constant equal to $p_i$ on $B'_i\cap\Conv{K}$ is $C^{\ast}$-Lipschitz.
By Proposition~\ref{prop:amenableKirszbraun}, we can extend it to a $C^{\ast}$-Lipschitz, $(j|_{S_i},\rho|_{S_i})$-equivariant map from $(\Conv{K}\smallsetminus j(\Gamma_0)\cdot B_i)\cup (B_i\cap\Conv{K})$ to~$\HH^n$.
Finally we extend this map to a $(j,\rho)$-equivariant map $f^{(1)} : \Conv{K}\rightarrow\HH^n$.
Then $f^{(1)}$ is $C^{\ast}$-Lipschitz, agrees with $f$ on $\Conv{K}\smallsetminus j(\Gamma_0)\cdot B_i$, and is constant on $B'_i\cap\Conv{K}$.

Suppose that $C^{\ast}=C_{K,\varphi}(j,\rho)$.
Then $\Lip(f^{(1)})=C^{\ast}$ (and no smaller).
The stretch locus (and maximally stretched segments) of~$f^{(1)}$ are included in those of~$f$, except possibly between $\partial B_i$ and $\partial B'_i$.
To deal with this issue, we consider two horoballs $B'''_i\subsetneq\nolinebreak B''_i$ strictly contained in~$B'_i$ and, similarly, construct a $C^{\ast}$-Lipschitz, $(j,\rho)$-equivariant map $f^{(2)} : \Conv{K}\rightarrow\HH^n$ that agrees with $f$ on $\Conv{K}\smallsetminus j(\Gamma_0)\cdot B''_i$ and is constant on $B'''_i\cap\Conv{K}$.
The $(j,\rho)$-equivariant map
$$f_0 := \frac{1}{2} f^{(1)} + \frac{1}{2} f^{(2)}$$
still agrees with $f$ on $\Conv{K}\smallsetminus j(\Gamma_0)\cdot B_i$ and is constant on $B'''_i\cap\Conv{K}$.
By Lemma~\ref{lem:baryLipschitz}, its (enhanced) stretch locus is included in that of~$f$.

\smallskip
\noindent
$\bullet$ \textbf{(D) Lipschitz constant~$1$ in non-deteriorating cusps.}
We now consider the case when $B_i$ is not deteriorating; in particular, $C^{\ast}\geq 1$ by Lemma~\ref{lem:parabdet}.
We construct a $C^{\ast}$-Lipschitz, $(j,\rho)$-equivariant extension\linebreak $f_0 :\nolinebreak \Conv{K}\rightarrow\HH^n$ of~$\varphi$ such that $\Lip_{B'_i\cap\Conv{K}}(f_0)=1$ for some horoball $B'_i\subset B_i$ and $f_0$ agrees with $f$ on $\Conv{K}\smallsetminus j(\Gamma_0)\cdot B_i$.
We also prove that if $C^{\ast}=C_{K,\varphi}(j,\rho)$ then the enhanced stretch locus of $f_0$ (hence of $f_0\circ\pi_{\Conv{K}}$) is included in that of~$f$.

We assume $C^{\ast}>1$ (otherwise we may take $f_0=f$).
It is sufficient to construct a $1$-Lipschitz, $(j|_{S_i},\rho|_{S_i})$-equivariant map $f_i : B'_i\cap\Conv{K}\rightarrow\HH^n$, for some horoball $B'_i\subset B_i$, such that the $(j|_{S_i},\rho|_{S_i})$-equivariant map
$$f^{(1)} : \big(\Conv{K}\smallsetminus j(\Gamma_0)\cdot B_i\big) \cup (B'_i\cap\Conv{K}) \longrightarrow \HH^n$$
that agrees with $f$ on $\Conv{K}\smallsetminus j(\Gamma_0)\cdot B_i$ and with $f_i$ on $B'_i\cap\Conv{K}$ satisfies $\Lip(f^{(1)})\leq C^{\ast}$.
Indeed, we can then extend~$f^{(1)}$ to a $C^{\ast}$-Lipschitz, $(j,\rho)$-equivariant map $\Conv{K}\rightarrow\HH^n$ using Proposition~\ref{prop:amenableKirszbraun}, as in step~(C).
Proceeding with two other horoballs $B'''_i\subsetneq B''_i$ to get a map~$f^{(2)}$ and averaging as in step~(C), we obtain a map~$f_0$ with the required properties.

To construct~$f_i$, we use explicit coordinates: in the upper half-space model $\R^{n-1}\times\R_+^{\ast}$ of~$\HH^n$, we may assume (using Remark~\ref{rem:CLipconj}) that $j(S_i)$ and~$\rho(S_i)$ both fix the point at infinity, that the horosphere $\partial B_i$ is $\R^{n-1}\times\{ 1\} $, and that $f$ fixes the point $(\underline{0},1)\in\R^{n-1}\times\R_+^{\ast}$.
Let $W_i$ be the orthogonal projection to $\R^{n-1}$ of $\Conv{K}\subset\R^n$; the group $j(S_i)$ preserves and acts cocompactly on any set $W_i\times\{ b\}$ with $b\in\R_+^{\ast}$ (use Fact~\ref{fact:geomfinite}).
The restriction of $f$ to $W_i\times\{ 1\}$ may be written as
$$f(\underline{a},1) = \big(f'(\underline{a}),f''(\underline{a})\big)$$
for all $\underline{a}\in W_i$, where $f' : W_i\rightarrow\R^{n-1}$ and $f'' : W_i\rightarrow\R_+^{\ast}$.
Let
$$L := \max\big(1,\Lip(f')\big),$$
where $\Lip(f')$ is measured with respect to the Euclidean metric $\mathrm{d}s_{\R^{n-1}}$ of~$\R^{n-1}$, and let $B'_i\subset B_i$ be a horoball $\R^{n-1}\times [b_0,+\infty)$, with large $b_0>L$ to be adjusted later.
The map $f_i : W_i\times [b_0,+\infty)\rightarrow\HH^n$ given by
$$f_i(\underline{a},b) := \big(f'(\underline{a}), Lb\big)$$
is $(j|_{S_i},\rho|_{S_i})$-equivariant, since $f$ is and the groups $j(S_i)$ and~$\rho(S_i)$ both preserve the horospheres $\R^{n-1}\times\{ b\}$ (see Figure~\ref{fig:D}).
Moreover, $f_i$ is $1$-Lipschitz, since by construction it preserves the directions of $\R^{n-1}$ (horizontal) and $\R_+^{\ast}$ (vertical) and it stretches by a factor $\leq 1$ in the $\R^{n-1}$-direction and $1$ in the $\R_+^{\ast}$-direction, for the hyperbolic metric
$$\mathrm{d}s^2 = \frac{\mathrm{d}s_{\R^{n-1}}^2 + \mathrm{d}b^2}{b^2}.$$
Let $\D_i\subset W_i\times\{ 1\}$ be a compact fundamental domain for the action of $j(\Gamma_0)$ on $\partial B_i\cap\Conv{K}$, and let $R:= \max_{x\in\D_i} d((\underline{0},1),x)>\nolinebreak 0$.

\begin{figure}[h!]
\begin{center}
\labellist
\small\hair 2pt
\pinlabel{$b_0$} at -5 90
\pinlabel{$x$} at 50 60
\pinlabel{$(\underline{0},1)$} at 126 63
\pinlabel{$(\underline{0},1)$} at 356 63
\pinlabel{$\partial B_i$} at 85 45
\pinlabel{$B'_i$} at 100 205
\pinlabel{$\partial_\infty\HH^n$} at 150 9
\pinlabel{$\partial_\infty\HH^n$} at 290 9
\pinlabel{$x'=(\underline{a},b)$} at 196 172
\pinlabel{$f_i$} at 223 99
\pinlabel{$f_i(x')$} at 418 234
\pinlabel{$f(x)$} at 323 80
\pinlabel{$L b_0$} at 466 120
\pinlabel{$f(\partial B_i)$} at 372 29
\endlabellist
\includegraphics[width=12cm]{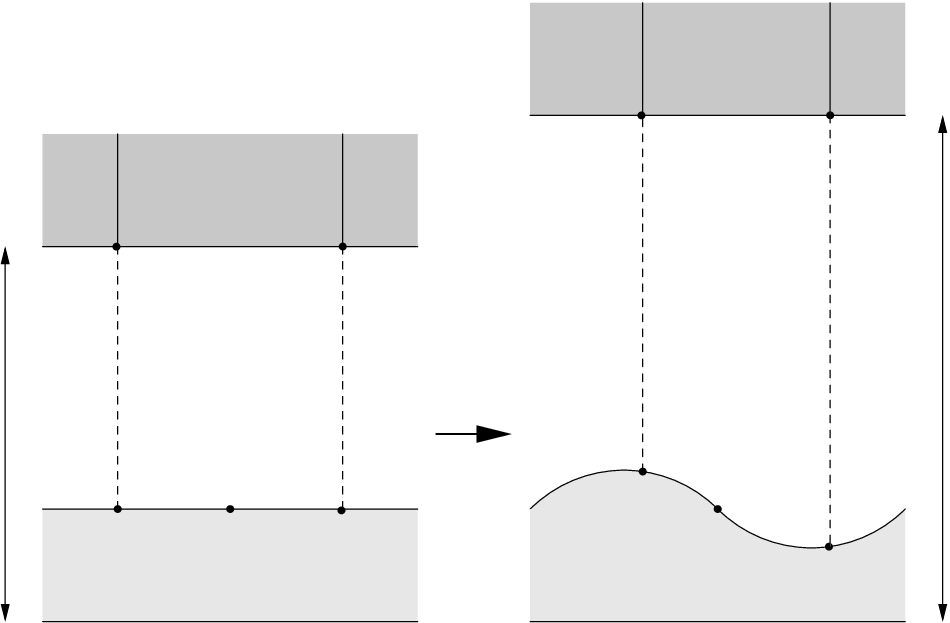}
\caption{Definition of a $1$-Lipschitz extension $f_i$ in the cusp in Step~(D).}
\label{fig:D}
\end{center}
\end{figure}

Recall (see \eqref{eqn:distinH2}) that for any $(\underline{a},b)\in\R^{n-1}\times\R_+^{\ast}$,
$$d\big((\underline{0},1),(\underline{a},b)\big) = \arccosh \bigg(\frac{\Vert\underline{a}\Vert^2+b^2+1}{2b}\bigg).$$
In particular,
$$\left| d\big((\underline{0},1),(\underline{a},b)\big) - \log \left(\frac{\Vert\underline{a}\Vert^2}{b}+b\right)\right| \leq 1$$
as soon as $b$ exceeds some constant, which we shall assume from now on.
Therefore, for any $x\in\D_i$ and $x'=(\underline{a},b)\in B'_i\cap\Conv{K}$,
\begin{equation}\label{eqn:logbound1}
d(x,x') \geq \log\left ( \frac{\Vert\underline{a}\Vert^2}{b}+b \right ) -1- R,
\end{equation}
and (using the expression of~$f_i$, and the fact that $f$ fixes $(\underline{0},1)$ and $f'$ is $L$-Lipschitz)
\begin{eqnarray}\label{eqn:logbound2}
d\big(f(x),f_i(x')\big) & \leq & d\big(f(x),f(\underline{0},1)\big) + d\big(f(\underline{0},1),f_i(x')\big)\\
& \leq & \log\left ( \frac{\Vert L\underline{a}\Vert^2}{Lb}+Lb \right ) + 1 + C^{\ast} R \nonumber\\
& = & \log\left ( \frac{\Vert\underline{a}\Vert^2}{b}+b \right ) + \log(L)+1+ C^{\ast} R. \nonumber
\end{eqnarray}
In particular, if $B'_i$ is far enough from $\partial B_i$ (\ie $b_0>0$ is large enough), then the log term dominates in \eqref{eqn:logbound1} and \eqref{eqn:logbound2} (where $b\geq b_0$), and so
$$d(f(x),f_i(x')) \leq C^{\ast}\,d(x,x')$$
for all $x\in\D_i$ and $x'\in B'_i\cap\Conv{K}$ (recall $C^{\ast}>1$).
Therefore, the $(j|_{S_i},\rho|_{S_i})$-equivariant map
$$f^{(1)} : \big(\Conv{K}\smallsetminus j(\Gamma_0)\cdot B_i\big) \cup (B'_i\cap\Conv{K}) \longrightarrow \HH^n$$
that agrees with $f$ on $\Conv{K}\smallsetminus j(\Gamma_0)\cdot B_i$ and with $f_i$ on $B'_i\cap\Conv{K}$ satisfies $\Lip(f^{(1)})\leq C^{\ast}$.
This completes the proof of~(D), hence of Proposition~\ref{prop:goodincusps}.
\end{proof}

\section{An optimized, equivariant Kirszbraun--Valentine theorem}\label{sec:Kirszbraunopt}

The goal of this section is to prove the following analogue and extension of Proposition~\ref{prop:amenableKirszbraun}.
We refer to Definitions \ref{def:stretchlocusgen} and~\ref{def:CE} for the notion of stretch locus.
We denote by $\Lambda_{j(\Gamma_0)}\subset\partial_{\infty}\HH^n$ the limit set of~$j(\Gamma_0)$.
Recall that for geometrically finite~$j$, the sets $\F_{K,\varphi}^{j,\rho}$ and $E_{K,\varphi}(j,\rho)$ of Definition~\ref{def:CE} are nonempty as soon as $K$ is nonempty or $\rho$ is reductive, except possibly if $C_{K,\varphi}(j,\rho)=1$ and $\rho$ is not cusp-deteriorating (Lemma~\ref{lem:Fnonempty} and Corollary~\ref{cor:Enonempty}).

\begin{theorem}\label{thm:Kirszbraunopt}
Let $\Gamma_0$ be a discrete group, $(j,\rho)\in\Hom(\Gamma_0,G)^2$ a pair of representations of $\Gamma_0$ in~$G$ with $j$ geometrically finite, $K$ a $j(\Gamma_0)$-invariant subset of~$\HH^n$ whose image in $j(\Gamma_0)\backslash\HH^n$ is compact, and $\varphi : K\rightarrow\HH^n$ a $(j,\rho)$-equivariant Lipschitz map.
Suppose that $\F_{K,\varphi}^{j,\rho}$ and $E_{K,\varphi}(j,\rho)$ are nonempty.
Set
$$C_0 :=
  \left \{ \begin{array}{lll}  \Lip(\varphi) & \text{if }K\neq\emptyset, \\ C'(j,\rho) & \text{if }K=\emptyset,
  \end{array} \right .$$
where $C'(j,\rho)$ is given by \eqref{eqn:defClambda}.

$\bullet$ If $C_0\geq 1$, then there exists a $(j,\rho)$-equi\-variant extension $f : \HH^n\rightarrow\HH^n$ of~$\varphi$ with Lipschitz constant~$C_0$, optimal in the sense of Definition~\ref{def:relstretchlocus}, whose stretch locus is the union of 
the stretch locus $E_{\varphi}$ of~$\varphi$ (defined to be empty if $K=\emptyset$) and of a closed set~$E'$ such~that:
\begin{enumerate}
  \item[--] if $C_0>1$, then $E'$ is equal to the closure of a geodesic lamination of $\HH^n\smallsetminus K$ that is maximally stretched by~$f$, and $j(\Gamma_0)\backslash E'$ is compact;
  \item[--] if $C_0=1$, then $E'$ is a union of convex sets, each isometrically preserved by~$f$, with extremal points only in the union of $K$ and of the limit set $\Lambda_{j(\Gamma_0)}\subset\partial_\infty\HH^n$; moreover, $j(\Gamma_0)\backslash E'$ is compact provided that $\rho$ is cusp-deteriorating.
\end{enumerate}
In particular, in these two cases $C_{K,\varphi}(j,\rho)=C_0$ and $E_{K,\varphi}(j,\rho)=E_{\varphi}\cup E'$.

$\bullet$ If $C_0<1$ then $C_{K,\varphi}(j,\rho)<1$.
\end{theorem}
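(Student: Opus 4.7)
The plan is to produce the extension $f$ achieving $\Lip(f) = C_0$ by an equivariant version of the classical Kirszbraun construction, and then to analyze the stretch locus of an optimal such $f$ using Toponogov's comparison theorem, whose effective form is available precisely because $C_0 \geq 1$.

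First I reduce to the case where $j$ is convex cocompact and $K \neq \emptyset$. For the cusps I invoke Proposition~\ref{prop:goodincusps}: any candidate extension may be modified to be constant on a sub-horoball of a deteriorating cusp, or $1$-Lipschitz on a sub-horoball of a non-deteriorating cusp, both of which have local Lipschitz constant $\leq C_0$ (using $C_0 \geq 1$). This reduces the problem to the compact-modulo-$j(\Gamma_0)$ part of $\Conv{K}$, which is the convex cocompact situation. For the case $K = \emptyset$ I choose hyperbolic elements $j(\gamma_m)$ with $\lambda(\rho(\gamma_m))/\lambda(j(\gamma_m)) \to C'(j,\rho) = C_0$, take $K_m$ to be the $j(\Gamma_0)$-orbit of a point on the axis of $j(\gamma_m)$ with $\varphi_m$ sending it to the corresponding point on the axis of $\rho(\gamma_m)$, and pass to an Arzel\`a--Ascoli limit of the solutions to the $K_m \neq \emptyset$ problem.

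For existence of $f$ with $\Lip(f) = C_0$, I iterate the one-point Kirszbraun step of Proposition~\ref{prop:classicalKirszbraun}: given a $(j,\rho)$-equivariant $C_0$-Lipschitz extension $\varphi'$ to $K' \supseteq K$ and a new point $p \notin K'$, the proof of that proposition produces a candidate image $q \in \HH^n$ with $d(q, \varphi'(k)) \leq C_0\, d(p,k)$ for all $k \in K'$ (Toponogov uses $C_0 \geq 1$); I propagate $q$ to the whole orbit $j(\Gamma_0) \cdot p$ equivariantly, treating any nontrivial stabilizer of $p$ via the amenable Proposition~\ref{prop:amenableKirszbraun}. A dense sequence of new points and an Arzel\`a--Ascoli extraction yield $f$ with $\Lip(f) = C_0$, and the trivial lower bound $C_{K,\varphi}(j,\rho) \geq C_0$ gives equality.

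For the structure of $E' := E_{K,\varphi}(j,\rho) \setminus K$, I pick an optimal $f$ (Lemma~\ref{lem:optimalmap}) and prove that through each $p \in E'$ passes a maximally stretched geodesic segment of $f$. If none did, upper semicontinuity of $\Lip_{\cdot}(f)$ (Lemma~\ref{lem:localLip}) would let me cover a $j(\Gamma_0)$-saturated neighborhood of $p$ by balls on which $f$ admits Kirszbraun modifications strictly decreasing the local Lipschitz constant, reassembled equivariantly by Lemma~\ref{lem:partofunity} into an $f' \in \F_{K,\varphi}^{j,\rho}$ whose enhanced stretch locus strictly shrinks at $p$, contradicting optimality. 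Prolonging the segment by iterating the argument at its interior endpoints, and ruling out by the strict Toponogov inequality (when $C_0 > 1$) any second stretched segment meeting it at an angle $<\pi$, produces a geodesic leaf of a lamination; compactness of $j(\Gamma_0) \backslash E'$ then follows from Corollary~\ref{cor:Ecompact}. When $C_0 = 1$ the Toponogov inequality degenerates, so stretched configurations fill convex sets $f$-isometrically rather than being confined to lines, and an extremal point of such a convex piece lying in $\HH^n \setminus (K \cup \Lambda_{j(\Gamma_0)})$ would again yield a stretched direction by the local argument, forcing all extremal points into $K \cup \Lambda_{j(\Gamma_0)}$.

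The main obstacle is the local argument at $p \in E'$: one must convert the hypothetical absence of a stretched geodesic through $p$ into an actual equivariant decrease of the enhanced stretch locus, which requires a delicate combination of Kirszbraun-type local extensions with partition-of-unity averaging, controlled uniformly as $p$ varies and especially near the cusps.
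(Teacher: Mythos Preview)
Your existence step has a genuine gap. When you ``propagate $q$ to the whole orbit $j(\Gamma_0)\cdot p$ equivariantly,'' you set $f(j(\gamma)\cdot p) = \rho(\gamma)\cdot q$; the constraints against points of $K'$ are indeed preserved by equivariance of $\varphi'$, but you also need
\[
d\big(q,\,\rho(\gamma)\cdot q\big) \leq C_0\, d\big(p,\,j(\gamma)\cdot p\big) \quad \text{for all } \gamma \in \Gamma_0,
\]
and the one-point Kirszbraun step gives no control on this. This is precisely why the paper only proves the equivariant Kirszbraun extension directly for \emph{amenable} groups (Proposition~\ref{prop:amenableKirszbraun}), and derives the general case (Theorem~\ref{thm:Kirszbraunequiv}) \emph{from} Theorem~\ref{thm:Kirszbraunopt}, not before it. Your reduction of $K=\emptyset$ to $K_m \neq \emptyset$ has the same defect: with $K_m = j(\Gamma_0)\cdot p_m$ for a single point on the axis of $j(\gamma_m)$, the constant $\Lip(\varphi_m)$ is governed by the whole group $\Gamma_0$, not just by $\gamma_m$, so there is no reason for it to approach $C_0$.

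The paper inverts your order of argument. It does \emph{not} first construct $f$ with $\Lip(f)=C_0$. Instead it starts from an optimal $f\in\F_{K,\varphi}^{j,\rho}$ with Lipschitz constant $C:=C_{K,\varphi}(j,\rho)\geq C_0$ (the easy inequality \eqref{eqn:ClambdaCLip}), and analyzes its stretch locus via the minimal-circumscribed-ball argument of Lemmas~\ref{lem:MaxStretchedLam} and~\ref{lem:1StretchedLam}. Your third paragraph is in the right spirit here, though the paper's mechanism is more direct than partition-of-unity reassembly: for $p\in E\smallsetminus K$, one small ball $B$ around $p$ and the ball $B'$ of minimal radius containing $f(\partial B)$ already force (via the integral inequality \eqref{eqn:integrale} and Toponogov) a $C$-stretched segment through $p$, unique when $C>1$. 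The structure of $E$ thus obtained is then used to prove $C_0=C$ (Proposition~\ref{prop:C_0=C}): the leaves of $E\smallsetminus K$ terminate only on $K\cup\Lambda_{j(\Gamma_0)}$, so endpoints in $K$ are $C$-stretched by $\varphi$ itself, forcing $\Lip(\varphi)\geq C$; when $K=\emptyset$, recurrent leaves give $C'(j,\rho)\geq C$ by Lemma~\ref{lem:maxstretchedlamin}. The extension with constant exactly $C_0$ then comes for free, since the optimal $f$ already had it.
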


By a geodesic lamination of $\HH^n\smallsetminus K$ we mean a nonempty disjoint union $\LL$ of geodesic intervals of $\HH^n\smallsetminus K$ (called leaves), with no endpoint in $\HH^n\smallsetminus K$, such that $\LL$ is closed for the $\mathcal{C}^1$ topology (\ie any Hausdorff limit of segments of leaves of~$\LL$ is a segment of a leaf of~$\LL$).
By ``maximally stretched by~$f$'' we mean that $f$ multiplies all distances by~$C_0$ on any leaf of the lamination.

For $\Gamma_0=\{ 1\}$ and $K\neq\emptyset$, Theorem~\ref{thm:Kirszbraunopt} improves the classical Kirszbraun--Valentine theorem (Proposition~\ref{prop:classicalKirszbraun}) by adding a control on the local Lipschitz constant of the extension (through a description of its stretch locus).

We shall give a proof of Theorem~\ref{thm:Kirszbraunopt} in Sections \ref{subsec:EforC>1} to~\ref{subsec:proofElamin}, and then a proof of Theorem~\ref{thm:Kirszbraunequiv}, as well as Corollary~\ref{cor:CC'} under the extra assumption $E(j,\rho)\neq\emptyset$, in Section~\ref{subsec:coroElamin} (this extra assumption will be removed in Section~\ref{subsec:proofadmred}).
For $K=\emptyset$, we shall finally examine how far the stretch locus $E(j,\rho)$ goes in the cusps in Section~\ref{subsec:injradius}.

\subsection{The stretch locus when $C_{K,\varphi}(j,\rho)>1$}\label{subsec:EforC>1}

We now fix $(j,\rho)$ and $(K,\varphi)$ as in Theorem~\ref{thm:Kirszbraunopt}.
To simplify notation, we set
\begin{align}\label{eqn:CEE}
C \,:=\, C_{K,\varphi}(j,\rho) & \quad\geq C_0,\nonumber\\
E \,:=\, E_{K,\varphi}(j,\rho) & \quad\subset\HH^n,\\
\widetilde{E} \,:=\, \widetilde{E}_{K,\varphi}(j,\rho) & \quad\subset \HH^n\times\HH^n\nonumber
\end{align}
(see Definition~\ref{def:CE}).
Recall that $E\subset\Conv{K}$ and $\widetilde{E}\subset\Conv{K}\times\Conv{K}$ as soon as $C>0$ (Lemma~\ref{lem:proj}).
In order to prove Theorem~\ref{thm:Kirszbraunopt}, we first establish the following.

\begin{lemma}\label{lem:MaxStretchedLam}
In the setting of Theorem~\ref{thm:Kirszbraunopt}, if $C>1$, then $E\smallsetminus K$ is a geodesic lamination of $\HH^n\smallsetminus K$, and any $f\in\F_{K,\varphi}^{j,\rho}$ multiplies arc length by~$C$ on the leaves of this lamination.
\end{lemma}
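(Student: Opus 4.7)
The plan is to prove three facts for every $f \in \F_{K,\varphi}^{j,\rho}$: (i) through each $p \in E \smallsetminus K$ passes a maximally stretched geodesic segment; (ii) the unoriented direction of such a segment at~$p$ is unique; (iii) this direction is independent of~$f$. Granting these, the leaves of the lamination will be the maximal arcs in $\HH^n \smallsetminus K$ of those common maximally stretched geodesics, and their union will coincide with $E \smallsetminus K$.

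For~(i), I would fix $p \in E \smallsetminus K$ and $f \in \F_{K,\varphi}^{j,\rho}$ and choose $r_0 > 0$ small enough that $\bar B_p(r_0) \cap K = \emptyset$ and the $j(\Gamma_0)$-translates of $\bar B_p(r_0)$ are pairwise disjoint. The first step is to observe that $\Lip_{\partial B_p(r)}(f) = C$ for every $r \in (0, r_0)$: otherwise Proposition~\ref{prop:classicalKirszbraun} (or Proposition~\ref{prop:KirszbraunC<1}) would provide a Kirszbraun extension of $f|_{\partial B_p(r)}$ to $\bar B_p(r)$ with Lipschitz constant strictly less than~$C$, and propagating this extension $(j,\rho)$-equivariantly over the orbit of $\bar B_p(r)$ would produce an element of $\F_{K,\varphi}^{j,\rho}$ violating $p \in E_f$. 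Compactness of $\partial B_p(r)$ then yields a maximally stretched pair $(x_r, y_r) \in \partial B_p(r)^2 \cap \widetilde E_f$. Passing to the limit $r \to 0$ and combining the triangle inequality $d(f(p), f(x_r)) + d(f(p), f(y_r)) \geq d(f(x_r), f(y_r)) = C \, d(x_r, y_r)$ with the strict negative curvature of $\HH^n$ and the hypothesis $C > 1$, I would argue that $(x_r, y_r)$ must become antipodal about~$p$ in the limit, producing a maximally stretched geodesic segment of positive length through~$p$; Remark~\ref{rem:pathlength}.(1) then forces $f$ to multiply arc length by~$C$ along it.

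For~(ii), suppose for contradiction that two maximally stretched geodesics through~$p$ have unoriented tangent directions making angle $\theta \in (0, \pi)$ at~$p$. By the equality case of the triangle inequality applied to sub-segments emanating from~$p$, their images under~$f$ are geodesic lines through~$f(p)$ making some angle~$\theta'$ there. Comparing the hyperbolic law of cosines in the original triangle (angle~$\theta$, two sides of length~$t$) with its image triangle (angle~$\theta'$, two sides of length~$Ct$), the $C$-Lipschitz bound on the opposite side together with the fact that $C$-scaling a nondegenerate hyperbolic triangle with $C > 1$ \emph{strictly} decreases its angles gives $\theta' < \theta$; applied to the supplementary angle $\pi - \theta$ between one geodesic and the \emph{reverse} of the other, the same argument gives $\pi - \theta' < \pi - \theta$, a contradiction.

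For~(iii), given $f_1, f_2 \in \F_{K,\varphi}^{j,\rho}$, I would consider the barycenter $f := \frac{1}{2} f_1 + \frac{1}{2} f_2 \in \F_{K,\varphi}^{j,\rho}$; by Lemma~\ref{lem:baryLipschitz} its enhanced stretch locus is contained in $\widetilde E_{f_1} \cap \widetilde E_{f_2}$, so the unique direction at~$p$ for~$f$ is also a maximally stretched direction for each~$f_i$, and by~(ii) coincides with the unique such direction for each~$f_i$. The lamination axioms then follow: nonemptiness from Corollary~\ref{cor:Enonempty}, absence of endpoints in $\HH^n \smallsetminus K$ from~(i)--(ii) applied at any putative endpoint, pairwise disjointness of leaves from the uniqueness in~(ii), and closedness in the $\mathcal C^1$ topology from continuity of~$f$ and closedness of~$E$. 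The principal obstacle is step~(i): forcing the sphere-maximizing pairs $(x_r, y_r)$ to align through~$p$ in the limit is exactly where the hypothesis $C > 1$ enters in an essential way.
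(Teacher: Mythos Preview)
Your outline has the right overall architecture, and steps~(ii) and~(iii) are essentially sound. The gap is precisely where you flag it, in step~(i), and the mechanism you propose does not close it.

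From $\Lip_{\partial B_p(r)}(f)=C$ (which your Kirszbraun-replacement argument does establish), compactness does not guarantee an off-diagonal pair $(x_r,y_r)$ with $d(f(x_r),f(y_r))=C\,d(x_r,y_r)$: the supremum could be realized only as $x_r,y_r$ coalesce, yielding merely a point of $E_f\cap\partial B_p(r)$. More seriously, even granting such a pair, nothing in the triangle inequality you invoke forces $x_r,y_r$ to be nearly antipodal about~$p$. The bounds $d(f(p),f(x_r))\leq Cr$, $d(f(p),f(y_r))\leq Cr$ together with $d(f(x_r),f(y_r))=C\,d(x_r,y_r)$ are perfectly compatible with $x_r,y_r$ lying on the same hemisphere, with $[x_r,y_r]$ nowhere near~$p$; no curvature comparison helps, because you have no lower control on the \emph{radial} distances $d(f(p),f(x_r))$, only the upper bound~$Cr$. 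And a limit $r\to 0$ would at best give a tangent direction at~$p$, not a segment of positive length through~$p$.

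The paper works at a \emph{fixed} radius and supplies the missing radial control via the Kirszbraun center construction. Let $B'$ be the smallest ball enclosing $f(\partial B_p(r))$, with center~$q$ and radius~$r'$. One shows $r'=Cr$: otherwise Lemma~\ref{lem:constext} and Proposition~\ref{prop:classicalKirszbraun} produce a $C$-Lipschitz extension of $f|_{\partial B_p(r)}$ that is constant near~$p$, contradicting $p\in E$. Hence $q=f(p)$, and the contact set $X=\{x\in\partial B_p(r):f(x)\in\partial B'\}$ satisfies the crucial radial equality $d(f(p),f(x))=Cr=C\,d(p,x)$ for every $x\in X$, with $f(p)$ in the convex hull of~$f(X)$. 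The integral inequality~\eqref{eqn:integrale} from the proof of Proposition~\ref{prop:classicalKirszbraun} then yields $x,y\in X$ with $\widehat{xpy}\leq\widehat{f(x)\,f(p)\,f(y)}$; combined with the radial equality and $C>1$, Toponogov forces $\widehat{xpy}=\pi$ and in fact $X=\{x,y\}$, giving an honest $C$-stretched segment through~$p$ at scale~$r$. The paper also streamlines your step~(iii) by working from the outset with an optimal $f$ (Lemma~\ref{lem:optimalmap}) whose enhanced stretch locus equals~$\widetilde E$, so that any segment it stretches maximally is automatically stretched by every element of~$\F_{K,\varphi}^{j,\rho}$.
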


Note that the projection of~$E$ to $j(\Gamma_0)\backslash\HH^n$ is compact (even in the presence of cusps) by Corollary~\ref{cor:Ecompact}.

The proof is a refinement of the classical Kirszbraun--Valentine theorem (Proposition~\ref{prop:classicalKirszbraun}).

\begin{proof}[Proof of Lemma~\ref{lem:MaxStretchedLam}]
By Lemma~\ref{lem:optimalmap}, there exists an optimal $f\in\F_{K,\varphi}^{j,\rho}$, whose stretch locus is exactly~$E$.
Fix $p\in E\smallsetminus K$ and consider a small closed ball $B\subset\HH^n\smallsetminus K$, of radius $r>0$, centered at~$p$, which projects injectively to $j(\Gamma_0)\backslash\HH^n$.
By Lemma~\ref{lem:moteur} with $(\mathbf{K},{\boldsymbol\varphi}):=(\partial B,f|_{\partial B})$, we can find points $q\in\HH^n$ and $k_1,k_2\in \partial B$ such that $C_q:=\max_{k\in\partial B} d(q,f(k))/d(p,k)$ is minimal and such that $d(q,f(k_i))=C_q\,d(p,k_i)$ for $i\in\{1,2\}$ and $\widehat{k_1pk_2}\leq \widehat{f(k_1) q f(k_2)} \neq 0$.
By minimality of~$C_q$, we have $C_q\leq C_{f(p)}\leq\Lip(f)=\nolinebreak C$.

We claim that $C_q=C$.
Indeed, suppose by contradiction that $C_q<C$.
Let $B'\subset B$ be another ball centered at~$p$, of radius $r'\in (0,r)$ small enough so that $C_qr/(r-r')\leq C$.
Then, as in the proof of Lemma~\ref{lem:optimallocallycst}, the extension of $f|_{\partial B}$ to $\partial B\cup B'$ which is constant equal to $q$ on~$B'$ is still $C$-Lipschitz.
Using Proposition~\ref{prop:classicalKirszbraun} (see also Remark~\ref{rem:classicalKirszbraun<1}), we extend it to a $C$-Lipschitz map $f' : B\to\HH^n$.
Working by equivariance, we obtain an element of $\F_{K,\varphi}^{j,\rho}$, agreeing with~$f$ on $\HH^n\smallsetminus j(\Gamma_0)\cdot B$, which is constant on a neighborhood of~$p$, contradicting $p\in E$.
Thus $C_q=C$.

We claim that $k_1,k_2\in\partial B$ are diametrically opposite, that the geodesic segment $[k_1,k_2]$ is maximally stretched by~$f$, that $f(p)=q$, and that the set $\mathbf{K}'$ of points $k\in\partial B$ such that $d(q,f(k))=C\,d(p,k)$ is reduced to $\{k_1,k_2\}$.
Indeed, since $f$ is $C$-Lipschitz, we have
$$\frac{d(f(k_1),f(k_2))}{d(k_1, k_2)} \leq C = \frac{d(q,f(k_1))}{d(p,k_1)} = \frac{d(q,f(k_2))}{d(p,k_2)},$$
and since $C>1$, Toponogov's theorem \cite[Lem.\,II.1.13]{bh99} implies that $d(f(k_1),f(k_2))=C\,d(k_1, k_2)$ and $\widehat{k_1 p k_2}=\pi$ (the case $\widehat{k_1 p k_2}=0$ is ruled out since $k_1,k_2\in\partial B$ are distinct).
In particular, the geodesic segment $[k_1,k_2]$ has midpoint~$p$ and is maximally stretched by~$f$, and $f(p)=q$ by Remark~\ref{rem:pathlength}.(1).
For any $k\in\mathbf{K}'$ we have $\widehat{k_1 p k}\leq\widehat{f(k_1) q f(k)}\neq 0$ or $\widehat{k p k_2}\leq\widehat{f(k) q f(k_2)}\neq 0$ (since $\widehat{k_1 p k} + \widehat{k p k_2} = \widehat{f(k_1) q f(k)} + \widehat{f(k) q f(k_2)} = \pi$), and so the above reasoning shows that $k\in\{k_1,k_2\}$.

Taking $B$ arbitrarily small, we see that there are exactly two germs of geodesic rays through $p$ in~$E$ that are maximally stretched by~$f$, and they are diametrically opposite.
Let $\ell$ be the largest geodesic interval in $E\smallsetminus K$ through~$p$ that is maximally stretched by~$f$, corresponding to these two germs.
We claim that $\ell$ terminates on the union of $K$ and of the limit set $\Lambda_{j(\Gamma_0)}$.
Indeed, any infinite end of~$\ell$ terminates on $\Lambda_{j(\Gamma_0)}$ since $\ell\subset E_{\varphi,K}(j,\rho)\subset\Conv{K}$ by Lemma~\ref{lem:proj}; moreover, $\ell$ cannot terminate on a point $p'\in E\smallsetminus K$ since there are exactly two germs of geodesic rays through $p'$ in~$E$ that are maximally stretched by~$f$ and they are diametrically opposite.
This implies that $E\smallsetminus K$ is a geodesic lamination of $\HH^n\smallsetminus K$, maximally stretched by~$f$.
\end{proof}

It is possible for a point $p\in K$ to belong to the stretch locus~$E$ without being an endpoint of a leaf of $E$, or even without belonging to any closed $C$-stretched segment of $f$ at all (for instance if $x\mapsto \Lip_x(\varphi)$ immediately drops away from~$p$).
However, the following holds.

\begin{lemma}\label{lem:partsofK}
In the setting of Theorem~\ref{thm:Kirszbraunopt}, if $C>1$, then any $p\in E\cap K$ lies either in the closure of $E\smallsetminus K$ or in $E'_{\varphi}:=\{k\in K~|~\Lip_k(\varphi)=C\}$.
\end{lemma}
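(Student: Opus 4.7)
\medskip

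\noindent\textit{Plan of proof.}
The plan is to suppose for contradiction that $p\in E\cap K$, that $p\notin\overline{E\smallsetminus K}$, and that $\Lip_p(\varphi)<C$, and to derive a contradiction by producing a map $f'\in\F_{K,\varphi}^{j,\rho}$ with $\Lip_p(f')<C$.

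First, since $p\notin\overline{E\smallsetminus K}$ and $K$ may be replaced by its closure (to which $\varphi$ extends with the same Lipschitz constant, without altering $\F_{K,\varphi}^{j,\rho}$ or~$E$), I would pick $r>0$ small enough that $\bar B:=\bar B_p(r)$ projects injectively to $j(\Gamma_0)\backslash\HH^n$, satisfies $\bar B\cap E\subset K$, and has $C^{**}:=\Lip_{K\cap\bar B}(\varphi)<C$. Set $K_0:=K\cap\bar B$, and fix an optimal $f\in\F_{K,\varphi}^{j,\rho}$ with $\widetilde E_f=\widetilde E$ (Lemma~\ref{lem:optimalmap}). I would then adapt the maximally-stretched-segment argument from the proof of Lemma~\ref{lem:MaxStretchedLam} to~$p$: any $k\in\partial B$ with $d(\varphi(p),f(k))=Cr$ would, by Remark~\ref{rem:pathlength}.(1), make the entire segment $[p,k]$ maximally stretched by~$f$ and hence contained in $E_f=E$, hence in $\bar B\cap E\subset K$; this would force $d(\varphi(p),\varphi(k))/d(p,k)=C>C^{**}$, contradicting the choice of~$r$. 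By compactness and continuity this yields some $C^{\#}<C$ with $\max_{k\in\partial B} d(\varphi(p),f(k))\leq C^{\#}r$, and therefore
$$\frac{d(\varphi(p),f(k))}{d(p,k)} \leq C^{\dagger}:=\max(C^{\#},C^{**})<C
\quad\text{for all $k\in(\partial B\cup K_0)\smallsetminus\{p\}$.}$$

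Next, I would promote this pointed Lipschitz bound at~$p$ into a $C$-Lipschitz extension $h:\bar B\to\HH^n$ of $f|_{\partial B\cup K_0}$ satisfying $\Lip_p(h)<C$. Pulling back through $\exp_p$ and $\log_{\varphi(p)}$, which are bi-Lipschitz with constant arbitrarily close to~$1$ on a sufficiently small ball (after further shrinking~$r$), the problem reduces to a Euclidean one: extend $\tilde\psi:=\log_{\varphi(p)}\circ f\circ\exp_p$ from a closed subset of $T_p\HH^n$ to a $C$-Lipschitz map with small local Lipschitz constant at the origin, given that its star-Lipschitz constant from the origin is bounded by $C^{\dagger}(1+o(1))$. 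This can be done, in the spirit of the proof of Proposition~\ref{prop:KirszbraunC<1} at a point of~$K$, by combining the Euclidean Kirszbraun theorem (Remark~\ref{rem:EuclKirszbraun}) with a radial construction adapted to the pointed constraint at~$0$. Patching the resulting~$h$ equivariantly with $f$ on $\HH^n\smallsetminus j(\Gamma_0)\cdot B$ then yields $f'\in\F_{K,\varphi}^{j,\rho}$ with $\Lip_p(f')<C$, producing the desired contradiction with $p\in E$.

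The main obstacle is the construction of~$h$ with both $\Lip(h)\leq C$ and $\Lip_p(h)<C$. A naive convex interpolation between a global $C$-Lipschitz Kirszbraun extension of $f|_{\partial B\cup K_0}$ and a local $<C$-Lipschitz extension of $\varphi|_{K_0}$ is bound to fail: the Leibniz-type rule in Lemma~\ref{lem:partofunity} contributes a term $\Lip_x(\psi)\cdot d(g(x),h(x))$ of order~$1$ in any transition annulus around~$p$, which destroys the $C$-Lipschitz bound. The resolution is to carry out the extension infinitesimally via the exponential map, where the arbitrarily small bi-Lipschitz distortion of $\exp_p$ and $\log_{\varphi(p)}$ lets one realize both Lipschitz bounds through a single Euclidean Kirszbraun extension tailored to the star constraint at the origin.
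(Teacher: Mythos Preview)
Your setup (steps 1--6) is correct, and the observation that the star-Lipschitz constant $C^\dagger$ from $p$ to $\partial B\cup K_0$ is strictly less than $C$ is right: if some $q\in(\partial B\cup K_0)\smallsetminus\{p\}$ had $d(\varphi(p),f(q))=Cd(p,q)$, then $[p,q]$ would be $C$-stretched, hence contained in $E\cap\bar B\subset K_0$, forcing $\Lip_{K_0}(\varphi)\geq C$. This is in fact the same observation that drives the paper's case $C^\ast=C$, so you have essentially subsumed that case into the verification of $C^\dagger<C$.

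Step~7, however, has a real gap. Passing to Euclidean coordinates introduces a bi-Lipschitz distortion $(1+\delta)$ with $\delta>0$ for every $r>0$, and you have no slack to absorb it: nothing in your argument bounds $\Lip_{\partial B\cup K_0}(f)$ strictly below~$C$ (only the \emph{star} constant $C^\dagger$ from~$p$ is controlled), so $\tilde\psi$ may have Euclidean Lipschitz constant $C(1+\delta)$, and any extension-then-pushforward produces $\Lip(h)\leq C(1+\delta)^2>C$. The analogy with the proof of Proposition~\ref{prop:KirszbraunC<1} breaks down precisely because there the slack $C_0<1$ was available to spend on distortion. Separately, neither the Euclidean nor the hyperbolic Kirszbraun extension controls $\Lip_p$, and your ``radial construction tailored to the star constraint'' is left unspecified.

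The paper's argument fixes both issues with one move, and it uses precisely your $C^\dagger<C$. First extend $\varphi|_{K_0}$ to $\overline\varphi:\bar B\to\HH^n$ with $\Lip(\overline\varphi)<C$ (Proposition~\ref{prop:classicalKirszbraun} with constant $\max(1,C^{**})$). For a small ball $B'\subset B$ centred at~$p$, the map equal to $f$ on $\partial B\cup K_0$ and to $\overline\varphi$ on~$B'$ is then $C$-Lipschitz: on each piece this is clear, pairs in $B'\times K_0$ are handled by $\Lip(\overline\varphi)$ (since $\overline\varphi=\varphi=f$ on~$K_0$), and for $(x,y)\in B'\times\partial B$ one has, in the spirit of Lemma~\ref{lem:constext},
\[
\frac{d(\overline\varphi(x),f(y))}{d(x,y)}\ \le\ \frac{\Lip(\overline\varphi)\,r_{B'}+C^\dagger\,r}{r-r_{B'}}\ \longrightarrow\ C^\dagger<C\quad\text{as }r_{B'}\to 0.
\]
Now the \emph{hyperbolic} Kirszbraun theorem (Proposition~\ref{prop:classicalKirszbraun}, applicable since $C>1$) extends this to $h:\bar B\to\HH^n$ with $\Lip(h)\leq C$ exactly, and $h|_{B'}=\overline\varphi|_{B'}$ gives $\Lip_p(h)\leq\Lip(\overline\varphi)<C$. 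Patching equivariantly with $f$ outside $j(\Gamma_0)\cdot B$ yields the desired element of $\F_{K,\varphi}^{j,\rho}$, contradicting $p\in E$.
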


(Note that we have not yet proved that the inequality $C_0:=\Lip(\varphi)\leq C$ is an equality; this will be done in Proposition~\ref{prop:C_0=C}, and will imply by definition that $E'_{\varphi}$ is the stretch locus $E_{\varphi}$ of~$\varphi$.)

\begin{proof}
Suppose $C>1$ and consider $p\in E\cap K$.
Assuming $p\notin E'_{\varphi}$, we shall prove that $p$ lies in the closure of $E\smallsetminus K$.
Since $p\notin E'_{\varphi}$, there is a small closed ball $B$ of radius $r>0$ centered at~$p$, projecting injectively to $j(\Gamma_0)\backslash\HH^n$, such that $\Lip_B(\varphi)<\nolinebreak C$ (Lemma~\ref{lem:localLip}).
By Proposition~\ref{prop:classicalKirszbraun} and Remark~\ref{rem:classicalKirszbraun<1}, the map $\varphi|_{B\cap K}$ admits an extension $\overline{\varphi}$ to~$B$ with $\Lip_B(\overline{\varphi})<C$.
Consider an optimal $f\in\mathcal{F}_{K,\varphi}^{j,\rho}$, whose stretch locus is exactly~$E$ (Lemma~\ref{lem:optimalmap}), and let
$$C^{\ast}:=\sup_{q\in (K\cap B)\cup\partial B} \frac{d(\varphi(p),f(q))}{d(p,q)}\leq C\, .$$

We claim that $C^{\ast}=C$.
Indeed, suppose by contradiction that $C^{\ast}<C$.
For any ball $B'\subset B$ centered at~$p$, with radius $r'>0$ small enough, the map $f' : K\cup\partial B\cup B'\rightarrow\HH^n$ that coincides with $f$ on $K\cup\partial B$ and with $\varphi$ on~$B'$ is still $C$-Lipschitz.
Indeed, for any $x\in K\cup \partial B$ and $y\in B'$, if $x$ lies in the interior of $B$ then $\Lip_{\{x,y\}}(f')=\Lip_{\{x,y\}}(\overline{\varphi})<C$, and otherwise the triangle inequality gives
$$\frac{d(f'(x),f'(y))}{d(x,y)} \leq \frac{C^* r + \Lip(\overline{\varphi})r'}{r-r'}$$
as in \eqref{eqn:cutup}, which is $\leq C$ if $r'$ is small enough.
Therefore $f'$ admits a $C$-Lipschitz extension to~$B$ by Remark~\ref{rem:classicalKirszbraun<1}; working by equivariance, we obtain an element $f'\in\F_{K,\varphi}^{j,\rho}$ agreeing with~$f$ on $\HH^n\smallsetminus j(\Gamma_0)\cdot B$, such that $\Lip_p(f')\leq \Lip(\overline{\varphi})<C$, contradicting $p\in E$.
Thus $C^{\ast}=C$.

If the upper bound $C^{\ast}$ is approached by a sequence $(q_i)_{i\in\N}$ of $(K\cap B)\cup\partial B$ with $q_i \rightarrow p$, then $q_i\in K$ for all large enough~$i$ and $p\in E'_{\varphi}$, contradicting the assumption.
Therefore $(q_i)_{i\in\N}$ has an accumulation point $q\neq p$.
The geodesic segment $[p,q]$ is maximally stretched by~$f$ (Remark~\ref{rem:pathlength}.(1)), hence $[p,q]\smallsetminus K\subset E\smallsetminus K$ and any accumulation point of $[p,q]\cap K$ lies in~$E'_{\varphi}$.
Since $p\notin E'_{\varphi}$, we obtain that $p$ lies in the closure of $E\smallsetminus K$.
\end{proof}

\subsection{The stretch locus when $C_{K,\varphi}(j,\rho)=1$}\label{subsec:EforC=1}

We define $C, E, \widetilde{E}$ as in \eqref{eqn:CEE}.
When $C=1$, the stretch locus $E$ may contain pieces larger than lines that are isometrically preserved by all elements of~$\F_{K,\varphi}^{j,\rho}$.
Here is the counterpart of Lemma~\ref{lem:MaxStretchedLam} in this case.

\begin{lemma}\label{lem:1StretchedLam}
In the setting of Theorem~\ref{thm:Kirszbraunopt}, if $C=1$, then there is a canonical family $(\Omega_p)_{p\in E}$ of closed convex subsets of~$\HH^n$, of varying dimensions, with the following properties:
\begin{enumerate}[\rm (i)]
  \item any $p\in E$ lies in the interior of the corresponding~$\Omega_p$ (where we see $\Omega_p$ as a subset of its own affine span --- in particular, a point is equal to its own interior);\label{i}
  \item the interiors of $\Omega_p$ and $\Omega_q$ are either equal or disjoint for $p,q\in E$;\label{ii}
  \item the restriction to $\Omega_p$ of any $f\in\F_{K,\varphi}^{j,\rho}$ is an isometry;\label{iii}
  \item whenever two points $x\neq y$ in~$\HH^n$ satisfy $d(f(x),f(y))=d(x,y)$ for some (hence any) optimal $f\in\F_{K,\varphi}^{j,\rho}$ (Definition~\ref{def:relstretchlocus}), the geodesic segment $[x,y]$ (called a \emph{$1$-stretched} segment) is contained in some~$\Omega_p$;\label{iv}
  \item all extremal points of~$\Omega_p$ lie in the union of~$K$ and of the limit set~$\Lambda_{j(\Gamma_0)}$ of $j(\Gamma_0)$;\label{v}
  \item the intersection of~$\Omega_p$ with any supporting hyperplane is an~$\Omega_q$;\label{vi}
  \item $E=\bigcup_{p\in E\smallsetminus K} \Omega_p \cup E'_{\varphi}$ where $E'_{\varphi}=\{k\in K~|~\Lip_k(\varphi)=1\}$.\label{vii}
\end{enumerate}
\end{lemma}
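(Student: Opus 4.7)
The plan is to define $\Omega_p$ canonically as a maximal convex set and then verify the seven properties. First, using Lemma~\ref{lem:optimalmap}, I fix an optimal $f_0 \in \F_{K,\varphi}^{j,\rho}$, so $\widetilde{E}_{f_0} = \widetilde{E}$. For $p \in E$, I define $\Omega_p$ to be the maximal closed convex subset of $\HH^n$ containing $p$ such that $\Omega_p \times \Omega_p \subset \widetilde{E}$. Since a $1$-Lipschitz map $\HH^n \to \HH^n$ that preserves all pairwise distances on a convex set must act as an isometry there (triangles in $\HH^n$ being rigid), property (iii) is built into the definition. Property (iv) is also immediate: for an optimal $f$ we have $\widetilde{E}_f = \widetilde{E}$, so $d(f(x),f(y)) = d(x,y)$ gives $(x,y)\in\widetilde{E}$ and, by the $1$-Lipschitz action on subsegments, also $(p,x),(p,y)\in\widetilde{E}$ for every $p\in [x,y]$; then $\{p,x,y\}$ is contained in $\Omega_p$ by definition, and convexity yields $[x,y]\subset\Omega_p$.

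The main technical step is to prove the maximum $\Omega_p$ exists and is unique, via a closure-under-unions property: if $\Omega', \Omega''$ are two closed convex subsets of $\HH^n$ containing $p$ with $\Omega'\times\Omega', \Omega''\times\Omega''\subset\widetilde{E}$, then $\Conv{\Omega'\cup\Omega''}$ also has this property. It will suffice to show that for $x\in\Omega'$, $y\in\Omega''$, and every $f\in\F_{K,\varphi}^{j,\rho}$, one has $d(f(x),f(y)) = d(x,y)$; triangle rigidity in $\HH^n$ then forces $f$ to act isometrically on $\triangle pxy$, and hence on all segments joining $\Omega'$ to $\Omega''$. The plan for proving this equality is by contradiction: if $d(f(x),f(y)) < d(x,y)$, the hyperbolic law of cosines (together with $d(f(p),f(x)) = d(p,x)$ and $d(f(p),f(y)) = d(p,y)$) yields an angle defect $\widehat{f(x)f(p)f(y)} < \widehat{xpy}$. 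This defect should allow the construction of a local equivariant perturbation $f'$ of $f_0$ near $p$ (propagated to all $j(\Gamma_0)$-translates), using Proposition~\ref{prop:classicalKirszbraun} in the convex cocompact part of $\HH^n$ and Proposition~\ref{prop:amenableKirszbraun} near any cusps, designed so that $f'\in\F_{K,\varphi}^{j,\rho}$ but $(p,x)\notin\widetilde{E}_{f'}$; averaging $f'$ with $f_0$ via Lemma~\ref{lem:baryLipschitz} would then contradict the optimality of $f_0$. This perturbation step is the hardest part: one must make the local modification equivariant, Lipschitz-controlled throughout $\HH^n$ (including in cusps, where Proposition~\ref{prop:amenableKirszbraun} is indispensable), and genuinely stretch-reducing on the pair $(p,x)$. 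Once the key claim is established, Zorn's lemma yields a unique maximum $\Omega_p$, and properties (i) and (ii) follow directly from maximality.

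The remaining properties then fall to similar arguments. For (v): if $e\in\Omega_p$ were extremal and lay in $\HH^n\smallsetminus (K\cup\Lambda_{j(\Gamma_0)})$, one could slightly extend $\Omega_p$ across $e$ by the same Kirszbraun-style perturbation near $e$, contradicting maximality. For (vi): $\Omega_p\cap H$ is a closed convex set on which every $f\in\F_{K,\varphi}^{j,\rho}$ acts isometrically, hence by maximality coincides with $\Omega_q$ for any $q$ in its relative interior. For (vii): the inclusion $\supset$ follows from (iii) (every $f\in\F_{K,\varphi}^{j,\rho}$ being isometric on $\Omega_p$ forces $\Lip_x(f)=1$ for $x\in\Omega_p$, so $\Omega_p\subset E$); conversely, for $p\in E\smallsetminus K$ one has $p\in\Omega_p$ by construction, and for $p\in E\cap K$ the argument of Lemma~\ref{lem:partsofK} adapts to the case $C=1$, yielding either $p\in\hat{E}_\varphi$ or $p$ in the closure of $E\smallsetminus K$, which equals $\bigcup_{q\in E\smallsetminus K}\Omega_q$.
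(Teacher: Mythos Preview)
Your perturbation step cannot work, and the closure-under-convex-hulls claim is false as stated. You assume $p,x\in\Omega'$ with $\Omega'\times\Omega'\subset\widetilde{E}$, so $(p,x)\in\widetilde{E}=\bigcap_{g\in\F}\widetilde{E}_g$; this means $d(g(p),g(x))=d(p,x)$ for \emph{every} $g\in\F_{K,\varphi}^{j,\rho}$, and hence no $f'\in\F_{K,\varphi}^{j,\rho}$ with $(p,x)\notin\widetilde{E}_{f'}$ can exist --- the contradiction you aim for is unreachable. Concretely, the bending example of Remark~\ref{rem:nonconvexE} shows the claim itself fails: at a point $q$ on a bending leaf, each adjacent flat piece is a convex set through $q$ with all pairs $1$-stretched, but their convex hull contains pairs that are strictly contracted by the optimal map. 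So for such $q$ there is no unique maximal $\Omega_q$; and even were one to pick a flat piece, $q$ would sit on its boundary, so property~(\ref{i}) would fail. Maximality alone never forces $p$ into the relative interior.

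The paper avoids this by defining $\Omega_p$ infinitesimally: $V_p\subset T_p\HH^n$ is the set of directions of $1$-stretched segments containing $p$ in their \emph{interior}, which is a linear subspace because two such segments through $p$ give a four-point angle comparison pinched from both sides (yielding $\theta'=\theta$). Then $\Omega_p:=\{x\in\exp_p(V_p):f_0(x)=\psi_p(x)\}$ for the isometric germ $\psi_p$; property~(\ref{i}) is immediate and (\ref{ii}) follows since $V_x=T_x\Omega_p$ at interior points. Your sketch also omits the genuinely nontrivial Claim~\ref{cla:1stretched}: for $p\in E\smallsetminus K$ one must show $\dim V_p\geq 1$, proved by pushing the argument of Lemma~\ref{lem:MaxStretchedLam} further (the integrand in~\eqref{eqn:integrale} is nonpositive at $C=1$, hence vanishes on the support of~$\nu$, so $f_0$ is isometric on a set whose convex hull contains~$p$). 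This claim is what drives (\ref{v}): an extremal point $q\in\Omega_p$ has $\Omega_q=\{q\}$ by~(\ref{vi}), so $\dim V_q=0$, forcing $q\in K$.
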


Properties \eqref{i}--\eqref{vii} are reminiscent of the \emph{stratification} of the boundary of a convex object, with $1$-stretched segments of~$E$ replacing segments contained in the boundary of the convex object; we shall call the interiors of the sets~$\Omega_p$ \emph{strata} of~$E$, and the sets~$\Omega_p$ \emph{closed strata}.

\begin{remark}\label{rem:nonconvexE}
In dimension $n\geq 3$, the connected components of $E=E_{K,\varphi}(j,\rho)$ can be nonconvex.
Indeed, take $n=3$.
Let $\Gamma_0$ be the fundamental group of a closed surface, let $j\in\Hom(\Gamma_0,G)$ be geometrically finite, obtained by bending slightly a geodesic copy of~$\HH^2$ inside~$\HH^3$ along some geodesic lamination~$\LL$, and let $\rho\in\Hom(\Gamma_0,G)$ be obtained by bending even a little more along the same lamination~$\LL$.
Then $E$ is the first bent copy of~$\HH^2$, which can be nonconvex (though connected).
\end{remark}

\begin{proof}[Proof of Lemma~\ref{lem:1StretchedLam}]
Consider an optimal $f\in \F_{K,\varphi}^{j,\rho}$ (Lemma~\ref{lem:optimalmap}): by definition, the stretch locus $E_f=E$ is minimal, and so is the \emph{enhanced} stretch locus $\widetilde{E}_f=\widetilde{E}$.

For $p\in E$, let $W_p\subset \mathbb{P}(T_p\HH^n)$ be the set of directions of $1$-stretched segments containing $p$ in their interior.
This set is independent of~$f$ because $\widetilde{E}_f=\widetilde{E}$.
(It is for this independence property that we use the \emph{enhanced} stretch locus $\widetilde{E}$ here.)
Since $f$ is $1$-Lipschitz, the convex hull of any two such $1$-stretched segments is isometrically preserved by~$f$.
Therefore the set $W_p$ is a full projective subspace (possibly empty), equal to the projectivization of a vector subspace $V_p\subset T_p\HH^n$.
Moreover, there is a neighborhood of $p$ in $\exp_p(V_p)$ on which $f$ coincides with an isometric embedding $\psi_p : \exp_p(V_p)\rightarrow\nolinebreak\HH^n$, and the closed set
$$\Omega_p := \big\{x \in \exp_p (V_p)~|~f(x)=\psi_p(x)\big\} \subset E$$
is convex and contains $p$ in its interior.
The isometric embedding $\psi_p$ may depend on~$f$, but the set~$\Omega_p$ depends only on the data $(j,\rho, K,\varphi)$, because so does the enhanced stretch locus~$\widetilde{E}$.
We shall denote by $d_p\geq 0$ the dimension of~$V_p$.

Conditions \eqref{i} and~\eqref{iii} are satisfied by construction, and so is \eqref{iv} by taking $p$ in the interior of the given $1$-stretched segment $[x,y]$.
(Note that $\Omega_p$ may contain points of~$K$ in its interior, even when $p\notin K$.)

For any $x$ belonging to the interior of $\Omega_p$ in $\exp_p(V_p)$, we have $V_x=T_x\Omega_p$.
Indeed, $V_x\supset T_x\Omega_p$ is clear since $f|_{\mathrm{Int}(\Omega_p)}$ is an isometry; and if $x$ were in the interior of any $1$-stretched segment $s$ \emph{not} contained in $\exp_p(V_p)$, then $f$ would be isometric on the $(d_p+1)$-dimensional convex hull of $\Omega_p\cup s$, which contains $p$ in its interior: this would violate the definition of~$V_p$.
From $V_x=T_x \Omega_p$ we deduce in particular $\psi_x=\psi_p$ and $\Omega_x=\Omega_p$.

It follows that given $q\in E$, if the interiors of $\Omega_p$ and $\Omega_q$ intersect at a point $x$, then 
$\psi_p=\psi_x=\psi_q$ and $\Omega_p=\Omega_q$: thus \eqref{ii} holds.

Any $1$-stretched segment $s=[x,y]$ with an interior point $q$ in $\Omega_p$ is contained in $\Omega_p$.
Indeed, $f$ must preserve all angles $\widehat{xqp'}$ and $\widehat{yqp'}$ for $p'\in \Omega_p$, hence $f$ is an isometry on the convex hull of $s\cup \Omega_p$, which contains $p$ in its interior: therefore $s\subset \exp_p(V_p)$ by definition of $d_p$ and $s\subset \Omega_p$ by definition of $\Omega_p$.

In $\exp_p(V_p)$, the intersection of $\Omega_p$ with any supporting hyperplane $\Pi$ at a point of $\partial \Omega_p$ is the closure of an open convex subset $Q$ of some~$\HH^d$, where $0\leq d<d_p$ (with $\HH^0$ being a point).
Consider a point $q\in Q$.
By the previous paragraph, any open $1$-stretched segment through $q$ is in $\Omega_p$, hence in~$\Pi$, hence in~$Q$ (see Figure~\ref{fig:E}).
Therefore, $d=d_q$ and $\psi_q=\psi_p|_{\exp_q(V_q)}$.
It follows that $\Omega_q$ is the closure of~$Q$.
This gives~(\ref{vi}).

\begin{figure}[h!]
\begin{center}
\labellist
\small\hair 2pt
\pinlabel{$p$} at 75 160
\pinlabel{$q$} at 234 114
\pinlabel{$\Omega_p$} at 15 195
\pinlabel{$\Pi$} at 280 163
\pinlabel{$Q$} at 210 90
\endlabellist
\includegraphics[width=6cm]{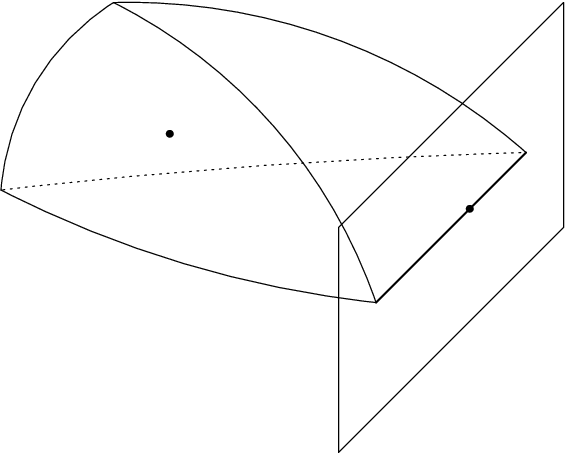}
\caption{A $3$-dimensional convex stratum $\Omega_p$ with a supporting plane~$\Pi$.}
\label{fig:E}
\end{center}
\end{figure}

By \eqref{vi}, extremal points $q\in \Omega_p$ in$\HH^n$ satisfy $\Omega_q=\{q\}$, hence $q\in K$ by Lemma~\ref{lem:1stretched} below; this gives~\eqref{v}.

We obtain \eqref{vii} by replacing $C$ by~$1$ and the call to Proposition~\ref{prop:classicalKirszbraun} by Proposition~\ref{prop:KirszbraunC<1} in the proof of Lemma~\ref{lem:partsofK}.
\end{proof}

\begin{lemma}\label{lem:1stretched}
Any $p\in E\smallsetminus K$ is contained in the interior of a $1$-stretched segment.
\end{lemma}

\begin{proof}
Consider an optimal $f\in\F_{K,\varphi}^{j,\rho}$ (Lemma~\ref{lem:optimalmap}), whose stretch locus is exactly~$E$.
Fix $p\in E\smallsetminus K$ and consider a small closed ball $B\subset\HH^n\smallsetminus K$, centered at~$p$, which projects injectively to $j(\Gamma_0)\backslash\HH^n$.
By Lemma~\ref{lem:moteur} with $(\mathbf{K},{\boldsymbol\varphi}):=(\partial B,f|_{\partial B})$, we can find a point $q\in\HH^n$ such that $C_q:=\max_{k\in\partial B} d(q,f(k))/d(p,k)$ is minimal.
In particular, $C_q\leq\nolinebreak  C_{f(p)}\leq\nolinebreak \Lip(f)=\nolinebreak 1$.

In fact $C_q=1$: otherwise, using Proposition~\ref{prop:classicalKirszbraun} as in the proofs of Lemmas \ref{lem:optimallocallycst} and~\ref{lem:MaxStretchedLam}, we could construct an element of $\mathcal{F}_{K,\varphi}^{j,\rho}$ that would be locally constant near~$p$, contradicting the fact that $p\in E$.

In particular, there cannot exist $k_1,k_2\in\partial B$ such that $d(q,f(k_i))=C_q\,d(p,k_i)$ for $i\in\{1,2\}$ and $0\leq\widehat{k_1 p k_2}<\widehat{f(k_1) q f(k_2)}\leq\pi$: otherwise we would have $d(f(k_1),f(k_2))>d(k_1,k_2)$ by basic trigonometry, contradicting $\Lip(f)=1$.

Therefore, Lemma~\ref{lem:moteur} implies the existence of a probability measure $\nu$ on $\mathbf{K}':=\{k\in\partial B\,|\,d(q,f(k))=d(p,k)\}$ such that $q$ belongs to the convex hull of the support of $f_*\nu$ and such that $\widehat{k_1 p k_2}=\widehat{f(k_1) q f(k_2)}$ for $(\nu\times\nu)$-almost all $(k_1,k_2)\in\mathbf{K}'\times\mathbf{K}'$.
This means that the continuous map $f : \HH^n\rightarrow \HH^n$ is an isometry on the support of~$\nu$ (which is contained in $\partial B$), hence has a unique $1$-Lipschitz extension (the isometric one, with which $f$ must agree) to the convex hull $X$ of this support.
Since $f(X)$ contains~$q$, we have $f(p)=q$ and there is a $1$-stretched segment through~$p$.
This completes the proof of Lemma~\ref{lem:1stretched}, hence of Lemma~\ref{lem:1StretchedLam}.
\end{proof}

\begin{remark}\label{rem:higherlamin}
When $K=\emptyset$, the closed strata~$\Omega_p$ of the lowest dimension (say $k\geq 1$) are always complete copies of~$\HH^k$: otherwise, they would by Lemma~\ref{lem:1StretchedLam}.\eqref{vi} admit supporting planes giving rise to closed strata of lower dimension.
In particular, the union of these closed strata is a $k$-dimensional geodesic lamination in the sense of Section~\ref{subsec:introstretchlocus}. In dimension $n=2$, we must have $k=1$ (unless $j$ and~$\rho$ are conjugate); this implies that the stretch locus is the union of a geodesic lamination and (possibly) certain connected components of its complement.
\end{remark}

\subsection{Proof of Theorem~\ref{thm:Kirszbraunopt}}\label{subsec:proofElamin}

Theorem~\ref{thm:Kirszbraunopt} is an immediate consequence of Corollary~\ref{cor:Ecompact}, of Lemmas \ref{lem:MaxStretchedLam} and~\ref{lem:1StretchedLam}, and of the following proposition.

\begin{proposition}\label{prop:C_0=C}
In the setting of Theorem~\ref{thm:Kirszbraunopt}, if $C_{K,\varphi}(j,\rho)\geq 1$, then $C_{K,\varphi}(j,\rho)=C_0$.
In particular, $C_0<1$ implies $C_{K,\varphi}(j,\rho)<1$.
\end{proposition}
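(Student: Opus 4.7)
Set $C := C_{K,\varphi}(j,\rho)$. The bound $C_0 \leq C$ is immediate: it is trivial when $K\neq\emptyset$, and when $K=\emptyset$ it is the inequality $C'(j,\rho)\leq C(j,\rho)$ from~\eqref{eqn:ClambdaCLip}. To prove $C \leq C_0$ under the hypothesis $C\geq 1$, I would argue by contradiction. Assuming $C_0 < C$, the plan is to produce either (i) two points $x,y\in K$ with $d(\varphi(x),\varphi(y)) = C\,d(x,y)$, which contradicts $\Lip(\varphi)=C_0<C$ in the $K\neq\emptyset$ case, or (ii) a sequence of elements $\gamma_n\in\Gamma_0$ with $j(\gamma_n)$ hyperbolic and $\lambda(\rho(\gamma_n))/\lambda(j(\gamma_n))\to C$, yielding $C'(j,\rho)\geq C$. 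In the $K=\emptyset$ case (ii) directly contradicts $C_0=C'(j,\rho)<C$; in the $K\neq\emptyset$ case, iterating any hyperbolic $j(\gamma)$ on a basepoint $x_0\in K$ and comparing $d(x_0, j(\gamma^k)\cdot x_0)\sim k\lambda(j(\gamma))$ with $d(\varphi(x_0),\rho(\gamma^k)\cdot\varphi(x_0))\geq k\lambda(\rho(\gamma))$ gives $C'(j,\rho)\leq\Lip(\varphi)=C_0$, so (ii) also contradicts the assumption in this case.

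Fix an optimal $f\in\F_{K,\varphi}^{j,\rho}$ (Lemma~\ref{lem:optimalmap}), whose stretch locus is $E:=E_{K,\varphi}(j,\rho)$; it is nonempty by Corollary~\ref{cor:Enonempty} (the required cusp-deterioration when $C=1$ is forced by $C_0<1$ via Lemma~\ref{lem:parabdet}). When $C>1$, Lemma~\ref{lem:MaxStretchedLam} identifies $E\smallsetminus K$ as a geodesic lamination of $\HH^n\smallsetminus K$ maximally stretched by~$f$, with leaves terminating on $K\cup\Lambda_{j(\Gamma_0)}$. When $C=1$, Lemma~\ref{lem:1StretchedLam} replaces ``leaves'' with closed convex strata; their lowest-dimensional instances, obtained by iterating Lemma~\ref{lem:1StretchedLam}.(vi) (and using Remark~\ref{rem:higherlamin} when $K=\emptyset$ to ensure that dimension is $\geq 1$), form a genuine geodesic sublamination with extremal endpoints in $K\cup\Lambda_{j(\Gamma_0)}$ on which $f$ acts isometrically. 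I then split into two cases. \emph{Case A:} some leaf has both extremal endpoints $x,y\in K$. Then $f$ stretches $[x,y]$ by exactly~$C$ (by Lemma~\ref{lem:MaxStretchedLam} when $C>1$, by isometricity on strata, Lemma~\ref{lem:1StretchedLam}.(iii), when $C=1$), giving witness~(i).

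\emph{Case B:} some leaf has an endpoint in $\Lambda_{j(\Gamma_0)}$; parametrize it at unit speed as $p:[0,+\infty)\to\HH^n$, with $p(t)\to\Lambda_{j(\Gamma_0)}$. By Corollary~\ref{cor:Ecompact}, $j(\Gamma_0)\backslash E$ is compact, so $(\bar{p}(t))$ accumulates in the quotient. Using closedness of the lamination in the $\mathcal{C}^1$ topology (restricting to a minimal sublamination if needed), I would extract $\gamma_n\in\Gamma_0$ and $\tau_n\to+\infty$ such that $j(\gamma_n)\cdot p(0)$ lies within $o(1)$ of $p(\tau_n)$ and the tangent to $j(\gamma_n)\cdot\ell$ at that point agrees with the tangent to $\ell$ up to $o(1)$. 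This asymptotic alignment, together with the fact that leaves stay uniformly away from the cusps (Corollary~\ref{cor:Ecompact}), forces $j(\gamma_n)$ to be hyperbolic for large~$n$, with axis $\mathcal{C}^1$-close to~$\ell$ and translation length $\lambda(j(\gamma_n))=\tau_n+o(\tau_n)$. The equivariance $f\circ j(\gamma_n)=\rho(\gamma_n)\circ f$ combined with the $C$-stretching of $f$ along~$\ell$ then yields $d(f(p(0)),\rho(\gamma_n)\cdot f(p(0)))=C\tau_n+o(\tau_n)$, and applying the same alignment argument to the image geodesic $f(\ell)$ gives $\lambda(\rho(\gamma_n))=C\tau_n+o(\tau_n)$. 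Hence $\lambda(\rho(\gamma_n))/\lambda(j(\gamma_n))\to C$, which is witness~(ii).

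The main technical obstacle is the rigorous execution of the recurrence argument in Case~B: one must construct the elements $\gamma_n$ with the right tangent alignment along a recurrent leaf, verify that both $j(\gamma_n)$ and $\rho(\gamma_n)$ are hyperbolic with the stated asymptotics on their translation lengths (ruling out parabolic behavior, which would contribute only logarithmically by Lemma~\ref{lem:disthorosphere}), and control the $o(\tau_n)$ error terms uniformly. Geometrical finiteness of~$j$ and the compactness of $j(\Gamma_0)\backslash E$, which together confine the recurrence to the thick part of the quotient, are what make these estimates go through.
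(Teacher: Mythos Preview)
Your approach is the paper's, made explicit. The paper's proof is a four-line sketch: pick $q\in E$, invoke the structure lemmas to get a $C$-stretched leaf or stratum through~$q$ with extremal points in $K\cup\Lambda_{j(\Gamma_0)}$, and assert ``$C_0\geq C$'' without distinguishing the cases you call A and~B. Your Case~B recurrence argument --- essentially the proof of Lemma~\ref{lem:maxstretchedlamin}, which in the paper appears only \emph{after} Proposition~\ref{prop:C_0=C} --- is exactly what fills that gap. Your observation that $C'(j,\rho)\leq\Lip(\varphi)$ when $K\neq\emptyset$, which lets witness~(ii) yield a contradiction in both regimes, is correct and useful.

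Two small issues. First, $E_{K,\varphi}(j,\rho)\neq\emptyset$ is part of the standing hypotheses of Theorem~\ref{thm:Kirszbraunopt}, so invoking Corollary~\ref{cor:Enonempty} is unnecessary. Second, and more substantively: your claim that ``$C_0<1$ forces cusp-deterioration via Lemma~\ref{lem:parabdet}'' works when $K\neq\emptyset$ (apply the parabolic growth estimate of that lemma to~$\varphi$ on the $j(\Gamma_0)$-invariant set~$K$), but not when $K=\emptyset$. Lemma~\ref{lem:parabdet} needs a $(j,\rho)$-equivariant map $\HH^n\to\HH^n$ with Lipschitz constant $<1$, and the hypothesis $C_0=C'(j,\rho)<1$ does not supply one; Section~\ref{ex:CC'nonreductive} even exhibits pairs with $C'(j,\rho)<1$ and $\rho$ not cusp-deteriorating. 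The fix is to bypass Corollary~\ref{cor:Ecompact} in that sub-case: if $C=1$, $K=\emptyset$, and $\rho$ is \emph{not} cusp-deteriorating, take $\gamma_0$ with $j(\gamma_0),\rho(\gamma_0)$ both parabolic, and closed geodesics of $j(\Gamma_0)\backslash\HH^n$ spending an ever larger fraction of their length in the corresponding cusp give hyperbolic $\gamma$ with $\lambda(\rho(\gamma))/\lambda(j(\gamma))\to 1$ directly, contradicting $C_0=C'(j,\rho)<1$. (This is precisely the remark the paper makes after Lemma~\ref{lem:maxstretchedlamin}.)
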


\begin{proof}
Recall that $C_0\leq C:=C_{K,\varphi}(j,\rho)$ (see \eqref{eqn:CEE}).
Let us prove the converse inequality when $C\geq 1$.

In the particular situation where $C:=C_{K,\varphi}(j,\rho)=1$ and $\rho$ is not cusp-deteriorating, we have $C_0\geq 1$ by Lemma~\ref{lem:parabdet}.
We now assume that we are not in this particular situation.
By Corollary~\ref{cor:Enonempty}, the set $E:=E_{K,\varphi}(j,\rho)$ is nonempty.

Suppose $E\subset K$.
By Lemma~\ref{lem:partsofK} or Lemma~\ref{lem:1StretchedLam}.\eqref{vii}, for any $p\in E\subset K$ we have $\Lip_p(\varphi)=C$, and so $C_0=\Lip(\varphi)\geq C$. 

Suppose $E\not\subset K$.
By Lemma~\ref{lem:MaxStretchedLam} or Lemma~\ref{lem:1StretchedLam}.(iii)--(v), any point $p\in\nolinebreak E\smallsetminus\nolinebreak K$ belongs, either to the convex hull of a subset of~$K$ on which $f$ multiplies all distances by~$C$, or to a maximally stretched geodesic ray with an endpoint in $\Lambda_{j(\Gamma_0)}$ whose image in the quotient $j(\Gamma_0)\backslash\HH^n$ is bounded (Proposition~\ref{prop:goodincusps}.(1)).
In the first case, $C_0=\Lip(\varphi)\geq C$ since $f$ coincides with $\varphi$ on~$K$.
In the second case, Lemma~\ref{lem:ClambdaCLipLipf} yields $C_0\geq C$.
\end{proof}

Theorem~\ref{thm:lamin} is contained in Lemmas \ref{lem:Fnonempty} and~\ref{lem:optimalmap}, Corollary~\ref{cor:Enonempty}, Theorem~\ref{thm:Kirszbraunopt}, and Remark~\ref{rem:higherlamin}.

\subsection{Some easy consequences of Theorem~\ref{thm:Kirszbraunopt}}\label{subsec:coroElamin}

We first prove Theorem~\ref{thm:Kirszbraunequiv}, which concerns the case where $K$ is nonempty and possibly noncompact modulo~$j(\Gamma_0)$.

\begin{proof}[Proof of Theorem~\ref{thm:Kirszbraunequiv}]
Let $K\neq\emptyset$ be a $j(\Gamma_0)$-invariant subset of~$\HH^n$.
We can always extend~$\varphi$ to the closure $\overline{K}$ of~$K$ by continuity, with the same Lipschitz constant~$C_0$.
Suppose the image of $\overline{K}$ in $j(\Gamma_0)\backslash\HH^n$ is compact.
If $C_0\geq 1$, then Theorem~\ref{thm:Kirszbraunequiv} is contained in Theorem~\ref{thm:Kirszbraunopt}.
If $C_0<1$, then $C(j,\rho)<1$ by Theorem~\ref{thm:Kirszbraunopt}, which implies Theorem~\ref{thm:Kirszbraunequiv} since $\F^{j,\rho}\neq\emptyset$ (Lemma~\ref{lem:Fnonempty}).
Now, for $C_0\geq 1$, consider the general case where the image of $\overline{K}$ in $j(\Gamma_0)\backslash\HH^n$ is not necessarily compact.
Let $(\mathscr{C}_k)_{k\in\N}$ be a sequence of $j(\Gamma_0)$-invariant subsets of~$\HH^n$ whose images in $j(\Gamma_0)\backslash\HH^n$ are compact, with $\mathscr{C}_k\subset\mathscr{C}_{k+1}$ and $\bigcup_{k\in\N} \mathscr{C}_k=\HH^n$.
For any~$k$, Theorem~\ref{thm:Kirszbraunopt} gives a $(j,\rho)$-equivariant extension $f_k : \HH^n\rightarrow\HH^n$ of $\varphi|_{\overline{K}\cap\mathscr{C}_k}$ with $\Lip(f_k)=C_0$, and we conclude using the Arzel\`a--Ascoli theorem as in Remark~\ref{rem:Knoncompact}.
\end{proof}

We then turn to Corollary~\ref{cor:CC'}, for which $K$ is empty.
Corollary~\ref{cor:CC'} for $E(j,\rho)\neq\emptyset$ is an immediate consequence of Lemma~\ref{lem:ClambdaCLipLipf} and Theorem~\ref{thm:Kirszbraunopt}.
For $E(j,\rho)=\emptyset$, we shall prove Corollary~\ref{cor:CC'} in Section~\ref{subsec:proofadmred} (Lemma~\ref{lem:ClambdaCLipred}), using a \emph{Cartan projection} $\mu$ of~$G$.
Recall however from Corollary~\ref{cor:Enonempty} that $E(j,\rho)=\emptyset$ may only happen if $C(j,\rho)=1$ and $\rho$ is not cusp-deteriorating; in that case, a direct proof of Corollary~\ref{cor:CC'} could also be obtained by considering a sequence of closed geodesics of $j(\Gamma_0)\backslash\HH^n$ that spend more and more time in a cusp whose stabilizer contains an element $\gamma\in\Gamma_0$ with both $j(\gamma)$ and~$\rho(\gamma)$ parabolic.

In dimension $n=2$, for torsion-free $\Gamma_0$, let $C'_s(j,\rho)$ be the supremum of $\lambda(\rho(\gamma))/\lambda(j(\gamma))$ over all elements $\gamma\in\Gamma_0$ corresponding to \emph{simple} closed curves in the hyperbolic surface $j(\Gamma_0)\backslash\HH^2$.
(As for $C'(j,\rho)$, we define $C'_s(j,\rho)$ to be $C(j,\rho)$ in the degenerate case when $j(\Gamma_0)\backslash\HH^2$ has no essential closed curve.)
Then $C'_s(j,\rho)\leq C'(j,\rho)\leq C(j,\rho)$ (see \eqref{eqn:ClambdaCLip}).
Here is another consequence of Theorem~\ref{thm:Kirszbraunopt}.

\begin{lemma}[$n=2$, torsion-free $\Gamma_0$]
Suppose $E(j,\rho)\neq\emptyset$.
If $C'_s(j,\rho)<1$, then~$C'(j,\rho)<\nolinebreak 1$.
\end{lemma}

\begin{proof}
If $C(j,\rho)\geq 1$, then
\begin{equation}\label{eqn:Clambdasimple}
C'_s(j,\rho) = C'(j,\rho) = C(j,\rho).
\end{equation}
Indeed, by Theorem~\ref{thm:Kirszbraunopt} and Remark~\ref{rem:higherlamin}, the image of $E(j,\rho)$ in $j(\Gamma_0)\backslash\HH^n$ contains a nonempty geodesic lamination $\LL$ with compact image.
If $\LL$ contains a simple closed curve, then \eqref{eqn:Clambdasimple} is clear; otherwise we can argue as in the proof of Lemma~\ref{lem:ClambdaCLipLipf}, but with the axis of $j(\gamma)$ projecting to a \emph{simple} closed geodesic nearly carried by~$\LL$.
\end{proof}

Note that if $E(j,\rho)=\emptyset$, then it is possible to have $C'_s(j,\rho)<1=C'(j,\rho)=C(j,\rho)$: see Section~\ref{ex:nondeteriorating}.

\subsection{The recurrent set of maximally stretched laminations}

For empty~$K$, Theorem~\ref{thm:Kirszbraunopt} states that the stretch locus $E(j,\rho)$ contains a maximally stretched $k$-dimensional geodesic lamination with compact (nonempty) image in $j(\Gamma_0)\backslash\HH^n$ as soon as $\F_{K,\varphi}^{j,\rho}$ and $E_{K,\varphi}(j,\rho)$ are nonempty and $C'(j,\rho)\geq\nolinebreak 1$.
Conversely, we make the following observation.

\begin{lemma}\label{lem:maxstretchedlamin}
Let $\Gamma_0$ be a discrete group and $(j,\rho)\in\Hom(\Gamma_0,G)^2$ a pair of representations with $j$ geometrically finite.
Let $\LL$ be a $j(\Gamma_0)$-invariant $k$-dimensional geodesic lamination of~$\HH^n$ with a compact image in $j(\Gamma_0)\backslash\HH^n$.
If $\LL$ is maximally stretched by some $(j,\rho)$-equivariant Lipschitz map $f : \HH^n\rightarrow\nolinebreak\HH^n$, then the recurrent set of~$\LL$ is contained in the stretch locus $E(j,\rho)$.
\end{lemma}

By \emph{recurrent set of~$\LL$}, we mean the projection to $j(\Gamma_0)\backslash\HH^n$ of the recurrent set of the geodesic flow $(\Phi_t)_{t\in\R}$ restricted to vectors tangent to~$\LL$.
By compactness, this recurrent set is nonempty.

Recall that in this setting we have $C'(j,\rho) = C(j,\rho) = \Lip(f)$ by Lemma~\ref{lem:ClambdaCLipLipf}.

\begin{proof}
Set $C:=C(j,\rho)=\Lip(f)$.
In order to prove that the recurrent set of~$\LL$ is contained in $E(j,\rho)$, it is sufficient to prove that for any geodesic line $(p_t)_{t\in\R}$ of~$\HH^n$ contained in~$\LL$ and projecting to a geodesic which is recurrent in $j(\Gamma_0)\backslash\HH^n$, we have
$$d(f'(p_0),f'(p_1)) = C = C\,d(p_0,p_1)$$
for all $f'\in\F^{j,\rho}$ (hence $[p_0,p_1]\subset E(j,\rho)$).
Fix $\varepsilon>0$.
By recurrence, we can find $\gamma\in\Gamma_0$ and $t>1$ with $[j(\gamma)\cdot p_0,j(\gamma)\cdot p_1]$ arbitrarily close to $[p_t,p_{t+1}]$, so that by the closing lemma (Lemma~\ref{lem:closinglemma}) the translation axis of $j(\gamma)$ passes within~$\varepsilon$ of the four points $p_0, p_1, p_t, p_{t+1}$ and the axis of $\rho(\gamma)$ within $C\varepsilon$ of their four images under~$f$.
Choose $q_0,q_1 \in\HH^n$ within $\varepsilon$ of $p_0,p_1$, respectively, on the axis of $j(\gamma)$. 
For any $f'\in\F^{j,\rho}$,
\begin{eqnarray*}
d(f'(q_0),f'(q_1)) & \geq & d\big(f'(q_0),\rho(\gamma)\cdot f'(q_0)\big) - d\big(\rho(\gamma)\cdot f'(q_0),f'(q_1)\big)\\
& \geq & \lambda(\rho(\gamma)) - d\big(f'(j(\gamma)\cdot q_0),f'(q_1)\big)\\
& \geq & C \cdot (\lambda(j(\gamma))-4\varepsilon) - \Lip(f') \cdot \big(\lambda(j(\gamma))-d(q_0,q_1)\big)\\
& = & C \cdot (d(q_0,q_1)- 4\varepsilon)
\end{eqnarray*}
since $\Lip(f')=C$.
But $p_0, p_1$ are $\varepsilon$-close to $q_0, q_1$; therefore 
\begin{eqnarray*}
d(f'(p_0),f'(p_1)) & \geq & d(f'(q_0), f'(q_1))-d(f'(p_0),f'(q_0))-d(f'(p_1),f'(q_1))\\
& \geq & C \cdot (d(q_0,q_1)- 4\varepsilon) - 2\,\Lip(f')\,\varepsilon \\
& \geq & C \cdot (d(p_0,p_1)- d(p_0,q_0)-d(p_1,q_1)-4\varepsilon) - 2C\varepsilon \\
&\geq&  C \cdot (1- 8\varepsilon).
\end{eqnarray*}
This holds for any $\varepsilon>0$, hence $d(f'(p_0),f'(p_1))=C$.
\end{proof}

\subsection{How far the stretch locus goes into the cusps}\label{subsec:injradius}

Suppose that $j$ is geometrically finite but \emph{not} convex cocompact.
For empty~$K$, we can control how far the stretch locus $E_{K,\varphi}(j,\rho)=E(j,\rho)$ goes into the cusps.

\begin{proposition}\label{prop:margulis}
There is a nondecreasing function $\Psi : (1,+\infty)\rightarrow\R_+^{\ast}$ such that for any discrete group~$\Gamma_0$, any pair $(j,\rho)\in\Hom(\Gamma_0,G)^2$ with $j$ geometrically finite and $C(j,\rho)> 1$, and any $x\in E(j,\rho)$ whose image in $j(\Gamma_0)\backslash\HH^n$ belongs to a standard cusp region of the convex core (Definition~\ref{def:standardcusp}), the cusp thickness at~$x$ is $\geq\Psi(C(j,\rho))$.
\end{proposition}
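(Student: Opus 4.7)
The plan is to make Proposition~\ref{prop:goodincusps}.(1) quantitative, showing that the sub-horoball $B'_i$ it produces may be chosen at a depth universally controlled by $C:=C(j,\rho)$ and the dimension~$n$, once $\partial B_i$ has been normalized to the boundary of the Margulis horoball of the cusp.

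Given $x\in E(j,\rho)$ lying in a standard cusp region corresponding to a horoball $B_i$ with stabilizer $S_i\subset\Gamma_0$, I would fix an optimal $f\in\F^{j,\rho}$ (Lemma~\ref{lem:optimalmap}) and apply Proposition~\ref{prop:goodincusps}.(1) with $C^{\ast}=C$ to produce a new optimal map $f_0\in\F^{j,\rho}$ together with a sub-horoball $B'_i\subsetneq B_i$ satisfying $\Lip_{B'_i}(f_0)\leq 1$. Optimality of $f_0$ yields $E_{f_0}=E(j,\rho)$; since $C>1\geq\Lip_{B'_i}(f_0)$ and the local Lipschitz constant at any stretch-locus point equals $C$, we must have $E(j,\rho)\cap B'_i=\emptyset$. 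In particular, $d(x,\partial B_i)\leq d(\partial B'_i,\partial B_i)$, reducing the problem to a universal bound on $d(\partial B'_i,\partial B_i)$.

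The main step is this universal bound. Choosing $\partial B_i$ to be the boundary of the Margulis horoball of the cusp ensures that the shortest parabolic of $j(S_i)$ translates on $\partial B_i$ by the Margulis constant $\varepsilon_n$; by the Bieberbach theorems the diameter of a fundamental domain $\D_i$ of $j(S_i)$ on $\partial B_i\cap\Conv{K}$ is then bounded by a constant $R_0$ depending only on~$n$. Examining the construction in the proof of Proposition~\ref{prop:goodincusps}.(D) for non-deteriorating cusps, the horoball $B'_i=\{y\geq b_0\}$ (in upper half-space coordinates with $B_i=\{y\geq 1\}$) works as soon as
\[\log b_0 \;\geq\; \frac{\log L + 1 + C + 2CR}{C-1},\]
where $L=\max(1,\Lip(f'))\leq C$ and $R\leq R_0$, giving $d(\partial B'_i,\partial B_i)\leq\psi(C,n)$. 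In the deteriorating case I would argue with the leaf directly: $x$ lies on a leaf $\ell$ of the lamination $E(j,\rho)$ (Theorem~\ref{thm:lamin}), which crosses $B_i$ in a geodesic segment $[P,Q]\subset\ell$ with $P,Q\in\partial B_i\cap\Conv{K}$; since $f_0$ may be taken constant on $B'_i\cap\Conv{K}$ equal to a fixed point $p_i$ of $\rho(S_i)$ (proof of Proposition~\ref{prop:goodincusps}.(C)), the image $f_0(\partial B_i\cap\Conv{K})$ lies in a bounded neighborhood of $p_i$, which combined with the identity $d(f_0(P),f_0(Q))=C\,d(P,Q)$ forces $d(P,Q)$, and hence the depth of~$x$, to be bounded uniformly in~$n$.

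Finally, since the cusp thickness at depth $t$ inside the Margulis horoball decays exponentially in $t$ at a rate depending only on~$n$, these depth bounds translate into
\[\text{cusp thickness at }x \;\geq\; \Psi(C)\]
for some nondecreasing function $\Psi:(1,+\infty)\to\R_+^{\ast}$. The main difficulty will be ensuring that all the constants are truly uniform: the Bieberbach bound on $R_0$ must be handled carefully in higher-rank cusps (and in the presence of torsion in $S_i$), and in the deteriorating case one must arrange, possibly by an amenable averaging using Proposition~\ref{prop:amenableKirszbraun}, that the optimal map~$f_0$ has the value at a reference point of $\D_i$ sufficiently close to the fixed set of $\rho(S_i)$ for the leaf-based estimate to give a bound depending only on~$n$.
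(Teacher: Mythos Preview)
Your approach is genuinely different from the paper's, and it contains a gap that I believe cannot be repaired along the lines you suggest.

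The paper does \emph{not} try to quantify Proposition~\ref{prop:goodincusps}. Instead it proves a purely geometric lemma (Lemma~\ref{lem:Esimple}): if two disjoint geodesics $\ell_0,\ell_1$ are both $C$-stretched by a single $C$-Lipschitz map and pass within $\delta$ of each other, then they must stay nearly parallel on a length $\approx|\log\delta|$. The proof of Proposition~\ref{prop:margulis} then normalizes so that the cusp thickness at the reference horosphere equals~$1$, takes a leaf $\ell\subset E(j,\rho)$ reaching height~$h$, and finds a parabolic $\gamma\in S'_i$ such that $j(\gamma)\cdot\ell$ comes within $O(1/h)$ of $\ell$ at an angle close to $\pi/3$. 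This violates Lemma~\ref{lem:Esimple} once $h$ is large in terms of~$C$ alone.

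Your key claim, that the diameter $R_0$ of a fundamental domain for $j(S_i)$ on $\partial B_i\cap\Conv{K}$ at the Margulis horosphere depends only on~$n$, is false. Bieberbach bounds the \emph{index} of the translation subgroup, not the diameter of $\partial B_i\cap N$ modulo $j(S_i)$. For a rank-$m$ cusp with $m<n-1$, the convex core can extend arbitrarily far in the $(n-1-m)$ directions transverse to the parabolic translations: concretely, for a rank-$1$ cusp in $\HH^3$, the quotient $j(S_i)\backslash(\partial B_i\cap N)$ is a Euclidean cylinder of circumference $\varepsilon_3$ and length $W$, where $W$ is a global invariant of the hyperbolic manifold and can be arbitrarily large. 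With $R\approx W$, your depth bound $d(\partial B'_i,\partial B_i)\lesssim\frac{2CR}{C-1}$ gives cusp thickness at $x$ of order $W\,e^{-2CW/(C-1)}$, which tends to $0$ as $W\to\infty$: no uniform $\Psi(C)$ results.

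There are secondary issues as well. The bound $L\leq C$ is not justified: $L=\Lip(f')$ is measured for the \emph{Euclidean} metric on the horosphere, while $f$ is $C$-Lipschitz for the hyperbolic metric, and there is no direct comparison once $f$ sends points of $\partial B_i$ to varying heights. In the deteriorating case, the depth at which Step~(C) of Proposition~\ref{prop:goodincusps} produces a constant map is governed by the $\eta$ of Step~(A), which depends on the rotation angles of the elliptic elements $\rho(s_k)$ and not merely on $C(j,\rho)$; your leaf-based alternative runs into the same circularity you flag at the end, since bounding $d(f_0(P),f_0(Q))$ requires a bound on $d(f_0(\partial B_i\cap N),p_i)$, which in turn depends on the depth of $B'_i$. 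The paper's lamination-rigidity argument sidesteps all of this by never referring to the behaviour of $f$ inside the cusp.
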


Here we use the following terminology, where $N\subset\HH^n$ is the preimage of the convex core of $j(\Gamma_0)\backslash\HH^n$.

\begin{definition} \label{def:thickness}
Let $B$ be a horoball of~$\HH^n$ such that $B\cap N$ projects to a standard cusp region of $j(\Gamma_0)\backslash\HH^n$.
Given a point $x\in B$, let $\partial B_x$ denote the horosphere concentric to~$B$ running through~$x$.
The \emph{cusp thickness} of $j(\Gamma_0)\backslash\HH^n$ at~$x$ is the Euclidean diameter in $j(\mathrm{Stab}(B))\backslash \partial B_x$ of the orthogonal projection of $N$ to $\partial B_x$.
\end{definition}

By \emph{Euclidean diameter} we mean the diameter for the metric induced by the intrinsic, Euclidean metric of $\partial B_x$; it varies exponentially~with~the~depth of $x$ in the cusp region (see \eqref{eqn:horomu} for conversion to a hyperbolic distance).
Note that the orthogonal projection of~$N$ is convex inside the Euclidean space $\partial B_x\simeq\R^{n-1}$.

We believe that an analogue of Proposition~\ref{prop:margulis} should also hold for $C(j,\rho)<1$, see Appendix \ref{app:cusps}.
It is false for $C(j,\rho)=1$ (take $j=\rho$).

Proposition~\ref{prop:margulis} will be a consequence of the following lemma, which applies to $C=C(j,\rho)$ and to leaves $\ell_0,\ell_1$ of the geodesic lamination $E(j,\rho)$.
It implies that any two leaves of $E(j,\rho)$ coming close to each other must be nearly parallel.
This is always the behavior of \emph{simple} closed curves and geodesic laminations in dimension $n=2$. 

\begin{lemma}\label{lem:Esimple}
For any $C>1$, there exists $\delta_0>0$ with the following property.
Let $\ell_0,\ell_1$ be disjoint geodesic lines of~$\HH^n$.
Suppose there exists a $C$-Lipschitz map $f : \ell_0\cup\ell_1\rightarrow\HH^n$ multiplying all distances by~$C$ on~$\ell_0$ and on~$\ell_1$.
If $\ell_0$ and~$\ell_1$ pass within $\delta\leq\delta_0$ of each other near some point $x\in\HH^n$, then they stay within distance~$1$ of each other on a length $\geq |\log \delta|-10$ before and after~$x$.
\end{lemma}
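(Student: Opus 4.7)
The plan is to combine hyperbolic trigonometry with the $C$-Lipschitz hypothesis to control both the closest-approach distance $\delta'$ and the twist angle $\theta$ between $\ell_0$ and $\ell_1$. First I would set up coordinates via their common perpendicular: let $\delta' \leq \delta$ be its length with feet $p_0 \in \ell_0$, $p_1 \in \ell_1$, and let $\theta \in [0,\pi]$ denote the twist (the angle between the direction of $\ell_1$ at $p_1$ and the parallel transport along the perpendicular of the direction of $\ell_0$ at $p_0$). Parametrizing each $\ell_i$ by arc length $s$ from $p_i$, an explicit computation in the Minkowski model of $\HH^n$ gives
\[
\cosh d\bigl(\ell_0(s),\ell_1(t)\bigr) = \cosh\delta'\cosh s\cosh t - \cos\theta\sinh s\sinh t,
\]
and minimizing over $t$ yields
\[
\cosh^2 d\bigl(\ell_0(s),\ell_1\bigr) = \cosh^2\delta'\cosh^2 s - \cos^2\theta\sinh^2 s.
\]
The condition $d(\ell_0(s),\ell_1)\leq 1$ then translates, for small $\delta'$ and $\theta$, into the arc length bound $|s|\lesssim \tfrac{1}{2}\bigl|\log(\delta'^2 + \sin^2\theta)\bigr|$.

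Since $\delta' \leq \delta$ is already given, the remaining task reduces to showing $|\sin\theta| \leq \delta\cdot e^{O(1)}$, where the $O(1)$ depends only on $C$. I would establish this using the $C$-Lipschitz hypothesis (crucially with $C > 1$ strict). The images $L_0 := f(\ell_0)$ and $L_1 := f(\ell_1)$ are geodesic lines of $\HH^n$ (since $f$ scales each $\ell_i$ isometrically by $C$), with their own closest distance $\delta''$, twist $\theta''$, and arc-length shifts $(u_0,v_0)$ between $(f(p_0),f(p_1))$ and the closest-approach points of $L_0, L_1$. Applying $C$-Lipschitz to pairs of points $\bigl(\ell_0(s), \ell_1(\pm s)\bigr)$ as $s\to+\infty$ and comparing the asymptotic growth rates on both sides with the trigonometric formulas for $d(L_0(Cs),L_1(\pm Cs))$ produces inequalities coupling $\delta', \theta, \delta'', \theta'', u_0, v_0$, and $C$. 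Combined with the direct bound $\delta'' \leq d(f(p_0),f(p_1))\leq C\delta'$, these can be simultaneously consistent only when $|\sin\theta|$ is of order at most $\delta$, provided $\delta\leq\delta_0$ is small enough depending on $C$.

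The main obstacle is this second step. The $C$-Lipschitz condition only bounds image distances from above in terms of source distances, so it cannot directly constrain source parameters. The key is to use the \emph{exact} (not merely Lipschitz) stretching on each line in both directions of divergence: the two regimes $s=t\to+\infty$ and $s=-t\to+\infty$ contrast the roles of $\cosh\delta' - \cos\theta$ and $\cosh\delta' + \cos\theta$ in the source formula, and this contrast turns out to be rigid enough, for $C > 1$, to force the twist bound. Once $|\sin\theta|\lesssim\delta$ is secured, the lemma follows at once from the first-step formula, with the $-10$ absorbing the universal constants of the estimate (and picking $\delta_0 > 0$ small enough so that $\log(2\sinh 1/\sinh\delta')$ exceeds $|\log\delta| - 10$ for all $\delta' \leq \delta \leq \delta_0$).
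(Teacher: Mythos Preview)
Your approach is essentially the same as the paper's: set up common-perpendicular coordinates, use the same trigonometric identity for $\cosh d(\ell_0(s),\ell_1(t))$, and extract a bound on the twist~$\theta$ from the asymptotics of the $C$-Lipschitz condition as $s,t\to\pm\infty$. Your point-to-line formula and the resulting arc-length bound $|s|\lesssim\tfrac12\bigl|\log(\delta'^2+\sin^2\theta)\bigr|$ match the paper's derivation exactly.

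The one genuine simplification you miss is a \emph{symmetrization trick}. Rather than carry the offsets $u_0,v_0$ between $f(p_i)$ and the perpendicular feet on the image lines $L_i$, the paper replaces $f$ by $\tfrac12 f+\tfrac12\,\sigma'\circ f\circ\sigma$, where $\sigma,\sigma'$ are the half-turns about the common perpendiculars of $(\ell_0,\ell_1)$ and $(L_0,L_1)$. This averaged map is still $C$-Lipschitz, still $C$-stretches each $\ell_i$, and now sends the feet $p_i$ to the feet on~$L_i$, so $u_0=v_0=0$. The asymptotic inequalities then read cleanly as
\[
\frac{\cosh\delta'' \mp \cos\theta''}{2}\ \le\ \Bigl(\frac{\cosh\delta' \mp \cos\theta}{2}\Bigr)^{C};
\]
summing and using $\cosh\delta''\ge 1$ yields
\[
\Bigl(\frac{\cosh\delta'+\cos\theta}{2}\Bigr)^{C}+\Bigl(\frac{\cosh\delta'-\cos\theta}{2}\Bigr)^{C}\ \ge\ 1,
\]
which for $C>1$ forces $\min(\theta,\pi-\theta)\lesssim\delta'$. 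Without this trick, your two regimes $s=t\to+\infty$ and $s=-t\to+\infty$ produce inequalities still containing the nuisance terms $\pm\epsilon_0 u_0\pm\epsilon_1 v_0$; to eliminate them you must also send $s\to-\infty$ (four asymptotic limits in all) and average pairwise. That works and recovers the same two clean inequalities, but your sketch as written only invokes two of the four limits, so as it stands the shifts do not obviously cancel.
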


(The constant $10$ is of course far from optimal.)

\begin{proof}
We can restrict to dimension $n=3$ because the geodesic span of two lines has dimension at most~$3$.
Fix $C>1$ and let $\ell_0$, $\ell_1$, and~$f$ be as above.
The images $\ell'_0:=f(\ell_0)$ and $\ell'_1:=f(\ell_1)$ are geodesic lines of~$\HH^3$.
Fix orientations on $\ell_0,\ell_1,\ell'_0,\ell'_1$ so that $f$ is orientation-preserving.
For $i\in\{ 0,1\}$, let $x_i$ be a point of~$\ell_i$ closest to~$\ell_{1-i}$, so that the geodesic segment $[x_0,x_1]$ is orthogonal to both $\ell_0$ and~$\ell_1$; let $\sigma$ be the rotational symmetry of~$\HH^3$ around the line $(x_0,x_1)$.
Similarly, let $x'_i \in \ell'_i$ be closest to~$\ell'_{1-i}$, so that the segment $[x'_0,x'_1]$ is orthogonal to $\ell'_0$ and~$\ell'_1$; let $\sigma'$ be the rotational symmetry of~$\HH^3$ around $(x'_0,x'_1)$.
Up to replacing~$f$ by
$$\frac{1}{2}\,f + \frac{1}{2}\,\sigma'\circ f\circ\sigma,$$
which is still $C$-Lipschitz (Lemma~\ref{lem:baryLipschitz}), which preserves the orientations of $\ell_0,\ell_1,\ell'_0,\ell'_1$, and which multiplies all distances by~$C$ on~$\ell_0$ and on~$\ell_1$, we may assume that $f(x_0)=x'_0$ and $f(x_1)=x'_1$.
Let $\eta$ (\resp $\eta'$) be the length of $[x_0,x_1]$ (\resp of $[x'_0,x'_1]$), and $\theta$ (\resp $\theta'$) the angle between the positive directions of $\ell_0$ and~$\ell_1$ (\resp of $\ell'_0$ and~$\ell'_1$), measured by projecting orthogonally to a plane perpendicular to $[x_0,x_1]$ (\resp to $[x'_0,x'_1]$ if $\eta'>0$).
We claim that

\emph{$(\ast)$ there exists $\Delta_0>0$, depending only on $C$, such that if $\eta\leq\Delta_0$, then}
$$\min\{ \theta,\pi-\theta\} \leq 1.005\,\eta.$$
Indeed, for $i\in\{ 0,1\}$, let $t_i>0$ be the linear coordinate of a point $p_i\in\ell_i$, measured from~$x_i$ with the chosen orientation.
By \eqref{eqn:mixedmu},
\begin{equation}\label{eqn:prismdistance}
\cosh d(p_0,p_1)=\cosh \eta \cdot \cosh t_0 \cosh t_1 - \cos \theta \cdot \sinh t_0 \sinh t_1.
\end{equation}
Therefore, using $\cosh t\sim e^t/2$, we obtain that for $t_0,t_1\rightarrow +\infty$,
$$d(p_0,p_1) = t_0 + t_1 + \log\left(\frac{\cosh\eta - \cos\theta}{2}\right) + o(1).$$
Similarly, since $f$ stretches $\ell_0$ and~$\ell_1$ by a factor of~$C$ and $f(x_i)=x'_i$,
$$d\big(f(p_0),f(p_1)\big) = Ct_0 + Ct_1 + \log\left(\frac{\cosh\eta' - \cos\theta'}{2}\right) + o(1).$$
Since $f$ is $C$-Lipschitz, we must have
$$\log\left(\frac{\cosh\eta' - \cos\theta'}{2}\right) \leq C \log\left(\frac{\cosh\eta - \cos\theta}{2}\right).$$
Note that this must also hold if we replace $\theta,\theta'$ with their complements to~$\pi$, because we can reverse the orientations of $\ell_1$ and~$\ell'_1$. 
We thus obtain
$$\frac{\cosh\eta' \pm \cos\theta'}{2} \leq \left(\frac{\cosh\eta \pm \cos\theta}{2}\right )^C.$$
Since $\cosh \eta'\geq 1$, adding the two inequalities yields
\begin{equation}\label{eqn:coshcos}
\left(\frac{\cosh\eta + \cos\theta}{2}\right)^C + \left(\frac{\cosh\eta - \cos\theta}{2}\right)^C \geq 1.
\end{equation}
Inequality \eqref{eqn:coshcos} means that $(\cos\theta,\cosh\eta)$ lies in $\R^2$ outside of a $\frac{\pi}{4}$-rotated and $\sqrt{2}$-scaled copy of the unit ball of~$\R^2$ for the $\mathrm{L}^C$-norm.
(See also \eqref{eqn:complexdistance} for an interpretation of \eqref{eqn:coshcos} in terms of the cross-ratio of the endpoints of $\ell_0, \ell_1$.)
Since $\cos\theta\in [-1,1]$ and $\cosh\eta\geq 1$, we obtain that $(\cos\theta,\cosh\eta)$ lies above some concave curve through the points $(-1,1)$ and $(1,1)$, with respective slopes $1$ and $-1$ at these points (recall that $C>1$).
In particular, if $\cosh\eta$ is very close to~$1$, then $|\cos\theta|$ must be about as close (or closer) to~$1$ (see Figure~\ref{fig:F}). 
We obtain $(\ast)$ by using the Taylor expansions of $\cosh$ and $\cos$ (of course $1.005$ can be replaced by any number $>1$).

\begin{figure}[h!]
\begin{center}
\labellist
\small\hair 2pt
\pinlabel{$x_0$} at 3 32
\pinlabel{$x_1$} at 3 73
\pinlabel{$\ell_0$} at 150 19
\pinlabel{$\ell_1$} at 190 85
\pinlabel{$p_0$} at 305 -2
\pinlabel{$p_1$} at 384 104
\pinlabel{$\theta$} at 195 30
\pinlabel{$\eta$} at -1 52
\endlabellist
\includegraphics[width=10cm]{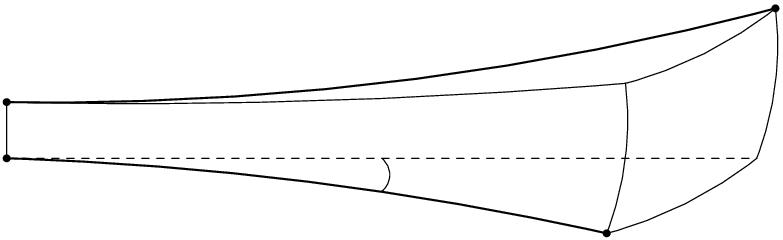}
\caption{At distance $d(p_0,x_0)=t_0<|\log \eta|$ from a point of closest approach of the two lines $\ell_0, \ell_1$, their angular drift $\approx \theta e^{t_0}$ cannot much exceed their height drift $\approx \eta e^{t_0}$.}
\label{fig:F}
\end{center}
\end{figure}

To deduce the lemma from $(\ast)$, we can minimize \eqref{eqn:prismdistance} in $t_1$ alone to find
$$\sinh^2 d(p_0, \ell_1)= \sinh^2 \eta + (\sinh^2\eta+\sin^2\theta) \sinh^2 t_0\,.$$
By $(\ast)$, for small enough~$\eta$ we have 
$$\eta^2 \leq \sinh^2 \eta + \sin^2 \theta \leq 2.004\,\eta^2,$$
hence
\begin{equation}\label{eqn:pointtoline}
\sinh^2 \eta + \eta^2 \sinh^2 t_0 \leq \sinh^2 d(p_0, \ell_1) \leq \sinh^2 \eta + 2.004\,\eta^2 \sinh^2 t_0\,.
\end{equation}
If $t_0\in [0,|\log\eta|]$ (for small~$\eta$), we have
$$\frac{\sinh^2 \eta}{\eta^2 e^{2t_0}} + 2.004\,\frac{\sinh^2 t_0}{e^{2t_0}} \leq 1.005+\frac{2.004}4 = 1.506,$$ 
hence, on the upper side of \eqref{eqn:pointtoline},
\begin{equation}\label{eqn:upperpointtoline}
\sinh^2\eta + 2.004\,\eta^2\sinh^2 t_0 \leq 1.506\,\eta^2 e^{2 t_0} \leq \sinh^2\big(\sqrt{1.506}\,\eta e^{t_0}\big)
\end{equation}
by multiplying by $\eta^2 e^{2t_0}$ and using the inequality $x\leq\sinh x$ for $x\in\R_+$.
Note that $\sqrt{1.506}\leq 1.23$.
On the other hand, using again $\sinh x\geq x$,
$$\frac{\sinh^2 \eta}{\eta^2 e^{2t_0}} + \frac{\sinh^2 t_0}{e^{2t_0}} \geq \frac{\cosh^2 t_0}{e^{2t_0}} \geq \frac{1}{4}\,,$$
hence, on the lower side of \eqref{eqn:pointtoline},
\begin{equation} \label{eqn:lowerpointtoline}
\sinh^2\eta + \eta^2\sinh^2 t_0 \geq (\eta e^{t_0} \sinh 0.48 )^2 \geq \sinh^2 (0.48\,\eta\,e^{t_0}) 
\end{equation}
by multiplying by $\eta^2 e^{2t_0}$ and using the inequality $\sinh^2 0.48<1/4$ and the convexity of $\sinh$ (recall $\eta e^{t_0}\leq 1$).
From \eqref{eqn:pointtoline}, \eqref{eqn:upperpointtoline}, and \eqref{eqn:lowerpointtoline}, it follows that for $\eta$ smaller than some $\delta_0\in (0,1)$ (depending only on~$C$),
$$0.48\,\eta\,e^{|t_0|} \leq d(p_0,\ell_1)\leq 1.23\,\eta\,e^{|t_0|}$$
as soon as $|t_0| \leq |\log \eta|$. This two-sided exponential bound means that $p_0\mapsto \log d(p_0, \ell_1)$ is essentially a $1$-Lipschitz function of $p_0$ (plus a bounded correction), which easily implies the lemma.
\end{proof}

\begin{proof}[Proof of Proposition~\ref{prop:margulis}]
Let $\Gamma_0$ be a discrete group, $(j,\rho)\in\Hom(\Gamma_0,G)^2$ a pair of representations with $j$ geometrically finite, and $B$ an open horoball of~$\HH^n$ whose image in $j(\Gamma_0)\backslash\HH^n$ intersects the convex core in a standard cusp region.
The stabilizer $S\subset\Gamma_0$ of $B$ under~$j$ has a normal subgroup $S'$ isomorphic to~$\Z^m$ for some $0<m<n$, and of index $\leq\nu(n)$ in~$S$, where $\nu(n)<+\infty$ depends only on~$n$ (see Section~\ref{subsec:geo-finiteness}).
In the upper half-space model $\R^{n-1}\times\R_+^{\ast}$ of~$\HH^n$, where $\partial_{\infty}\HH^n$ identifies with $\R^{n-1}\cup\{\infty\}$, we may assume that $B$ is centered at~$\infty$.
Let $\Omega$ be the convex hull of $\Lambda_{j(\Gamma_0)}\smallsetminus\{\infty\}$ in~$\R^{n-1}$, where $\Lambda_{j(\Gamma_0)}$ is the limit set of $j(\Gamma_0)$.
The ratio of the Euclidean diameter of $j(S')\backslash \Omega$ to that of $j(S)\backslash\Omega$ is bounded by~$2\nu(n)$.
We renormalize the metric on~$\R^{n-1}$ so that $j(S')\backslash \Omega$ has Euclidean diameter~$1$: then, by definition of cusp thickness, it is sufficient to prove that the height of points of $E(j,\rho)$ in $\R^{n-1}\times\R_+^{\ast}$ is bounded in terms of $C(j,\rho)$ alone.

There is an $m$-dimensional affine subspace $V\subset\Omega$ of~$\R^{n-1}$ which is preserved by $j(S')$ and on which $j(S')$ acts as a lattice of translations (see Section~\ref{subsec:geo-finiteness}).
Any point of~$\Omega$ lies within distance~$1$ of~$V$.

If $C(j,\rho)>1$, then by Theorem~\ref{thm:Kirszbraunopt} the stretch locus $E(j,\rho)$ is a disjoint union of geodesic lines of~$\HH^n$.
Let $\ell \subset E(j,\rho)$ be such a line, reaching a height $h$ in the upper half-space model.
We must bound~$h$.
The endpoints $\xi, \eta \in \Omega$ of $\ell$ are $2h$ apart in~$\R^{n-1}$.
Let $\xi',\eta'\in V$ be within distance $1$ from $\xi, \eta$ respectively.
There exists $\gamma\in S'$ such that $d_{\R^{n-1}}(j(\gamma)\cdot \xi',\frac{\xi'+\eta'}{2})\leq 1$.
Since $\xi', \eta'$ and their images under $j(\gamma)$ form a parallelogram, we also have $d_{\R^{n-1}}(j(\gamma)\cdot \frac{\xi'+\eta'}{2},\eta')\leq 1$.
By the triangle inequality,
$$d_{\R^{n-1}}\bigg(j(\gamma)\cdot \xi\,,\frac{\xi+\eta}{2}\bigg) \leq 3 \quad\mathrm{and}\quad d_{\R^{n-1}}\bigg(j(\gamma)\cdot \frac{\xi+\eta}{2}\,,\eta\bigg) \leq 3.$$
Adding up, it follows that the points $\frac{\xi+ 3\eta}{4}$ and $j(\gamma)\cdot \frac{3\xi+\eta}{4}$ are at Euclidean distance $\leq 3$ from each other.
But the leaves $\ell$ and $j(\gamma)\cdot \ell$ of $E(j,\rho)$ contain points at height $h\sqrt{3}/2$ above these two points, and are therefore $\leq 2\sqrt{3}/h$ apart in the hyperbolic metric.
However, $\ell$ and $j(\gamma)\cdot \ell$ form an angle close to $\pi/3$ (see Figure~\ref{fig:G}): by Lemma~\ref{lem:Esimple} (or $(\ast)$ in its proof), this places an upper bound on~$h$ (depending only on $C(j,\rho)$).
\end{proof}

\begin{figure}[h!]
\begin{center}
\labellist
\small\hair 2pt
\pinlabel{$h$} at 18 80
\pinlabel{$\Omega$} at 290 31
\pinlabel{$V$} at 288 49
\pinlabel{$\ell$} at 155 120
\pinlabel{$\xi$} at 62 40
\pinlabel{$\eta$} at 255 35
\pinlabel{$j(\gamma)\cdot \ell$} at 249 123
\pinlabel{$j(\gamma)\cdot\xi$} at 155 35
\pinlabel{$j(\gamma)\cdot\eta$} at 335 35
\pinlabel{$\partial_\infty \HH^n$} at 240 8
\pinlabel{\footnotesize{Cusp group $j(S')$}} at 137 13
\endlabellist
\includegraphics[width=12cm]{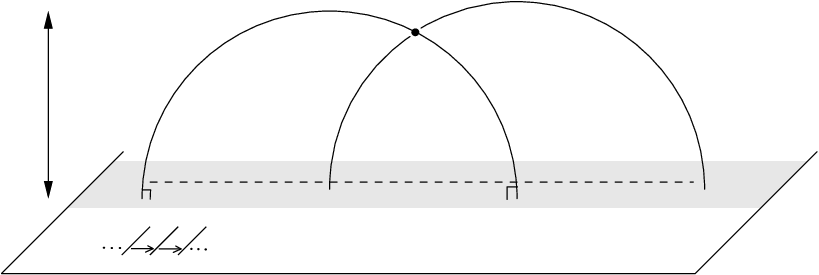}
\caption{Two leaves $\ell$ and $j(\gamma)\cdot \ell$ which nearly intersect, at an angle close to $\pi/3$ (in the upper half-space model of~$\HH^n$).}
\label{fig:G}
\end{center}
\end{figure}

In Section~\ref{subsec:C-highcontinuous}, in order to prove the upper semicontinuity of $(j,\rho)\mapsto C(j,\rho)$ where $C\geq 1$ when all the cusps of~$j$ have rank $\geq n-2$, we shall need the following consequence of Proposition~\ref{prop:margulis}.

\begin{corollary}\label{cor:uniform-thick}
Let $\Gamma_0$ be a discrete group and $(j_k,\rho_k)_{k\in\N^{\ast}}$ a sequence of elements of $\Hom(\Gamma_0,G)^2$ converging to some $(j,\rho)\in\Hom(\Gamma_0,G)^2$, where
\begin{itemize}
  \item $j$ and the $j_k$ are all geometrically finite, of the same cusp type, with all cusps of rank $\geq n-2$,
  \item there exists $C^*>1$ such that $C(j_k, \rho_k)\geq C^*$ for all $k\in\N^{\ast}$,
  \item the stretch loci $E(j_k, \rho_k)$ are nonempty (this is the case for instance if $\rho_k$ is reductive, see Lemma~\ref{lem:Fnonempty} and Corollary~\ref{cor:Enonempty}).
\end{itemize}
Then for any $k\in\N^{\ast}$ we can find a fundamental domain $\mathcal{E}_k$ of $E(j_k,\rho_k)$ for the action of $j_k(\Gamma_0)$ so that all the $\mathcal{E}_k$ are contained in some compact subset of $\HH^n$ independent of~$k$.
\end{corollary}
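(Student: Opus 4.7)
The strategy is to combine Proposition~\ref{prop:margulis} (which bounds cusp thickness at points of the stretch locus from below) with the continuity of the cusp geometry under the deformation $j_k\to j$ inside the space of geometrically finite representations of fixed cusp type.

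First I would set up a standard cusp decomposition that is compatible with all $k$ at once. Because $j$ is geometrically finite and each $j_k$ has the same cusp type, for $k$ large the parabolic fixed points $\xi_k^{(i)}\in\partial_\infty\HH^n$ of the cusp subgroups $j_k(S^{(i)})$ converge to those of~$j$. After suitable conjugations I can pick horoballs $B_k^{(i)}$ centered at $\xi_k^{(i)}$ whose $j_k(\Gamma_0)$-translates are disjoint, represent standard cusp regions in $M_k:=j_k(\Gamma_0)\backslash\HH^n$, and vary continuously in~$k$. The complement of $\bigcup_i j_k(\Gamma_0)\cdot B_k^{(i)}$ in the convex core of $M_k$ admits, by stability of geometrically finite structures, a compact fundamental domain $\mathcal{K}_k\subset\HH^n$ contained in a common compact subset $K_0\subset\HH^n$ independent of~$k$.

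Second, I apply Proposition~\ref{prop:margulis} to each $(j_k,\rho_k)$: any $x\in E(j_k,\rho_k)$ lying in a standard cusp region has cusp thickness at least $\Psi(C(j_k,\rho_k))\geq\Psi(C^*)>0$, since $\Psi$ is nondecreasing. In the upper half-space model centered at $\xi_k^{(i)}$, the cusp thickness at a point at height~$h$ is essentially $D_k^{(i)}/h$, where $D_k^{(i)}$ is the Euclidean diameter of $j_k(S^{(i)})\backslash\Omega_k^{(i)}$ on the unit horosphere, $\Omega_k^{(i)}$ being the convex hull at infinity of $\Lambda_{j_k(\Gamma_0)}\setminus\{\xi_k^{(i)}\}$. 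Under the hypothesis that the cusp has rank $\geq n-2$, the codimension of the cusp lattice inside the horosphere is at most~$1$, so $\Omega_k^{(i)}$ has bounded extent transverse to the cusp lattice and $D_k^{(i)}$ is finite; moreover, continuity of the limit set under $j_k\to j$ for this class of representations ensures $D_k^{(i)}\leq D_*<+\infty$ uniformly in~$k$ and~$i$. Hence the bound on thickness forces $h\leq H_*:=D_*/\Psi(C^*)$, uniformly.

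Third, I would choose $\mathcal{E}_k$ to be the intersection of $E(j_k,\rho_k)$ with a fundamental domain for $j_k(\Gamma_0)$ of the form $\mathcal{K}_k\cup\bigcup_i\mathcal{P}_k^{(i)}$, where $\mathcal{P}_k^{(i)}\subset B_k^{(i)}$ is a prismatic cusp fundamental domain. By Step~2 the intersection $\mathcal{E}_k\cap\mathcal{P}_k^{(i)}$ sits above $\partial B_k^{(i)}$ at height at most~$H_*$, and by Step~1 the piece in $\mathcal{K}_k$ lies in~$K_0$; together this produces the required uniform compact set.

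The main obstacle is Step~2: translating the pointwise thickness bound of Proposition~\ref{prop:margulis} into a uniform (in~$k$) depth bound requires the uniform finiteness of $D_k^{(i)}$. This is exactly where the rank $\geq n-2$ assumption enters. For cusps of rank $n-1$ the quotient $j_k(S^{(i)})\backslash\Omega_k^{(i)}$ is compact and continuous in~$k$; for rank $n-2$ the transverse direction is one-dimensional and is controlled by geometric finiteness uniformly in~$k$; but for strictly smaller rank the transverse extent of $\Omega_k^{(i)}$ modulo the cusp lattice can remain unbounded, so the argument breaks down, which is why the hypothesis must be imposed.
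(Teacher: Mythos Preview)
Your argument is correct and matches the paper's: uniform convex-core and cusp control from Proposition~\ref{prop:rem-gfopen} (what you call ``stability of geometrically finite structures'') combined with the thickness lower bound of Proposition~\ref{prop:margulis}. One minor correction: the rank $\geq n-2$ hypothesis is already needed in your Step~1, not only Step~2, and what fails for smaller rank is the \emph{uniformity} of $D_k^{(i)}$ in~$k$ under deformation --- each individual $D_k^{(i)}$ is finite by geometric finiteness alone.
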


\begin{proof}
By Proposition~\ref{prop:rem-gfopen}, there exist a compact set $\mathcal{C}\subset\HH^n$ and, for any large enough $k\in\N^{\ast}$, horoballs $H_1^k,\dots,H_c^k$ of~$\HH^n$, such that the union $\mathcal{G}$ of all geodesic rays from $\mathcal{C}$ to the centers of $H^k_1,\dots, H^k_c$ contains a fundamental domain of the convex core of $j_k(\Gamma_0)\backslash\HH^n$.
In particular, the cusp thickness of $j_k(\Gamma_0)\backslash\HH^n$ at any point of $\bigcup_{1\leq i\leq c} \partial H_i^k$ is uniformly bounded from above by some constant independent of~$k$.
On the other hand, by Proposition~\ref{prop:margulis}, the cusp thickness of $j_k(\Gamma_0)\backslash\HH^n$ at any point of $E(j_k,\rho_k)$ is uniformly bounded from below by some constant independent of~$k$.
Since cusp thickness decreases uniformly to~$0$ in all cusps (at exponential rate), this means that $E(j_k,\rho_k)\cap\mathcal{G}$ (which contains a fundamental domain of $E(j_k,\rho_k)$ for the action of $j_k(\Gamma_0)$) remains in some compact subset of $\HH^n$ independent of~$k$.
\end{proof}

\section{Continuity of the minimal Lipschitz constant}\label{sec:lipcont}

In this section we examine the continuity of the function $(j,\rho)\mapsto C(j,\rho)$ for geometrically finite~$j$ (the set $K$ of Sections \ref{sec:stretchlocus} and~\ref{sec:Kirszbraunopt} is empty).
We endow $\Hom(\Gamma_0,G)$ with its natural topology: a sequence $(j_k,\rho_k)$ converges to $(j,\rho)$ if and only if $j_k(\gamma)\rightarrow j(\gamma)$ and $\rho_k(\gamma)\rightarrow\rho(\gamma)$ for all $\gamma$ in some (hence any) finite generating subset of~$\Gamma_0$.

We first prove Proposition~\ref{prop:contCcc}, which states the \emph{continuity of $(j,\rho)\mapsto C(j,\rho)$ for convex cocompact~$j$}.
When $j$ is not convex cocompact, continuity, and even semicontinuity, fail in any dimension $n\geq 2$: see Sections \ref{ex:discontinu} and~\ref{ex:discontinu2} for counterexamples.
However, we will prove the following.

\begin{proposition}\label{prop:contpropertiesC}
Let $\Gamma_0$ be a discrete group and $j_0\in\Hom(\Gamma_0,G)$ a geometrically finite representation.
If all the cusps of~$j_0$ have rank $\geq n-2$ (for instance if we are in dimension $n\leq 3$), then
\begin{enumerate}
  \item the set of pairs $(j,\rho)$ with $C(j,\rho)<1$ is open in $\Hom_{j_0}(\Gamma_0,G)\times\Hom_{j_0\text{-}\mathrm{det}}(\Gamma_0,G)$,
  \item the set of pairs $(j,\rho)$ with $1<C(j,\rho)$ is open in $\Hom_{j_0}(\Gamma_0,G)\times\Hom(\Gamma_0,G)$,
  \item the map $(j,\rho)\mapsto C(j,\rho)$ is continuous on the set of pairs $(j,\rho)\in\Hom_{j_0}(\Gamma_0,G)\times\Hom(\Gamma_0,G)$ with $1\leq C(j,\rho)<+\infty$.
\end{enumerate}
If the cusps of~$j_0$ have arbitrary ranks, then condition (2) holds, as well as:
\begin{enumerate}
  \item[(1')] the set of $\rho$ with $C(j_0,\rho)<1$ is open in $\Hom_{j_0\text{-}\mathrm{det}}(\Gamma_0,G)$,
  \item[(3')] the map $(j,\rho)\mapsto C(j,\rho)$ is lower semicontinuous on the set of pairs $(j,\rho)\in\Hom_{j_0}(\Gamma_0,G)\times\Hom(\Gamma_0,G)$ with $1\leq C(j,\rho)$:
  $$C(j,\rho) \leq \liminf_k C(j_k,\rho_k)$$
  for any sequence $(j_k,\rho_k)$ of such pairs converging to such a pair $(j,\rho)$,
  \item[(3'')] the map $\rho\mapsto C(j_0,\rho)$ is upper semicontinuous on the set of representations $\rho\in\Hom(\Gamma_0,G)$ with $1\leq C(j_0,\rho)<+\infty$:
  $$C(j_0,\rho) \geq \limsup_k C(j_0,\rho_k)$$
  for any sequence $(\rho_k)$ of such representations converging to such a representation~$\rho$.
\end{enumerate}
\end{proposition}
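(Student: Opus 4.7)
Both will follow from Corollary~\ref{cor:CC'}, which on $\{C\geq 1\}$ gives $C(j,\rho)=C'(j,\rho)$ (the remaining degenerate case is settled in Section~\ref{subsec:proofadmred}). Since $C'$ is a supremum of ratios $\lambda(\rho(\gamma))/\lambda(j(\gamma))$ that depend continuously on $(j,\rho)$ on the open set where $j(\gamma)$ is hyperbolic, $C'$ is lower semicontinuous: this immediately yields~(3'). For~(2), if $C(j,\rho)=C'(j,\rho)>1$ then some individual ratio exceeds~$1$ and remains $>1$ under small perturbation, so $C(j',\rho')\geq C'(j',\rho')>1$.

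\textbf{Statements~(1) and~(1').} Starting from a $(j,\rho)$-equivariant map~$f$ with $\Lip(f)<1-2\delta$, I will use Proposition~\ref{prop:goodincusps}.(3) to arrange that $f$ be constant equal to a fixed point $c_i$ of $\rho(S_i)$ on a small horoball $B'_i$ in each cusp. For a nearby pair $(j',\rho')$ with $\rho'$ still cusp-deteriorating, Fact~\ref{fact:fixedpoint} provides fixed points $c'_i\in\HH^n$ of $\rho'(S_i)$ close to~$c_i$; for~(1), the rank~$\geq n-2$ hypothesis further ensures that the horoballs themselves deform continuously with~$j'$ (as in Proposition~\ref{prop:rem-gfopen} and Corollary~\ref{cor:uniform-thick}). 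I will then interpolate via Lemma~\ref{lem:partofunity} between the new constants $c'_i$ on slightly shrunken horoballs and a small deformation of~$f$ on the compact part of the convex core, exactly as in the proof of Lemma~\ref{lem:C<infty}.(2); the Leibniz-type bound of Lemma~\ref{lem:partofunity} keeps $\Lip(f')<1-\delta$ for $(j',\rho')$ close enough to $(j,\rho)$. For~(1'), $j_0$ is fixed and no rank hypothesis is needed.

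\textbf{Statements~(3) and~(3'').} Combined with~(3'), these require the corresponding upper semicontinuity. Suppose for contradiction that $(j_k,\rho_k)\to (j,\rho)$ satisfies $C(j_k,\rho_k)\to C^{\ast}>C(j,\rho)\geq 1$. After extraction $C^{\ast}>1$, so each $E(j_k,\rho_k)$ is a geodesic lamination maximally stretched by factor $C(j_k,\rho_k)$ (Theorem~\ref{thm:lamin}). Corollary~\ref{cor:uniform-thick} (for~(3), using the rank hypothesis) or Proposition~\ref{prop:margulis} applied to the fixed $j_0$ (for~(3'')) places a fundamental domain of each $E(j_k,\rho_k)$ inside a common compact set $K^{\ast}\subset\HH^n$. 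Covering a neighborhood of $K^{\ast}$ in~$T^1\HH^n$ by finitely many open sets of diameter $<\varepsilon$, a pigeonhole argument along a leaf of~$E(j_k,\rho_k)$ through $K^{\ast}$ produces $\gamma_k\in\Gamma_0$ whose $j_k$-translation axis is $\varepsilon$-close to that leaf and whose $j_k$-translation length is bounded by some $N(\varepsilon)$ depending only on $K^{\ast}$ and~$\varepsilon$. The closing-lemma estimate of Lemma~\ref{lem:maxstretchedlamin} then gives $\lambda(\rho_k(\gamma_k))/\lambda(j_k(\gamma_k))\geq C(j_k,\rho_k)-O(\varepsilon)$. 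Discreteness of $j_k(\Gamma_0)$ confines the $\gamma_k$ to a finite subset of~$\Gamma_0$ depending only on~$\varepsilon$, so I may extract $\gamma_k=\gamma(\varepsilon)$ constant; passing to the limit in~$k$ and then letting $\varepsilon\to 0$ gives $C(j,\rho)\geq C'(j,\rho)\geq C^{\ast}$, a contradiction.

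The hardest step will be statement~(1): the simultaneous perturbation of~$j$ and~$\rho$ requires careful tracking of the cusp geometry, and the rank~$\geq n-2$ hypothesis is in fact necessary, as illustrated by the counterexamples in Sections~\ref{ex:dim4upper} and~\ref{ex:dim4C<1}. A secondary subtlety in the upper semicontinuity argument is the reliance on Corollary~\ref{cor:uniform-thick}, which itself rests on the Margulis-type bound of Proposition~\ref{prop:margulis} in the regime $C>1$.
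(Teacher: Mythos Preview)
Your treatment of~(2), (3'), (3), and~(3'') is essentially the paper's approach and is correct in outline. One omission: for the upper semicontinuity argument you invoke Theorem~\ref{thm:lamin} and Corollary~\ref{cor:uniform-thick} directly for the pairs $(j_k,\rho_k)$, but both require $\F^{j_k,\rho_k}\neq\emptyset$, which can fail when $\rho_k$ is nonreductive. The paper handles this by first passing to $\rho_k^{\mathrm{red}}$ via Lemma~\ref{lem:nonred-lip} (noting $\lambda(\rho_k(\gamma))=\lambda(\rho_k^{\mathrm{red}}(\gamma))$); you should do the same.

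The real gap is in your argument for~(1) and~(1'). Your claim that Fact~\ref{fact:fixedpoint} furnishes a fixed point $c'_i$ of $\rho'(S_i)$ \emph{close to} $c_i$ is false in general. For instance, in $\HH^2$ with $S_i=\langle\beta\rangle$ and $\rho(\beta)=1$, take $\rho_k(\beta)$ to be the rotation of angle $1/k^2$ about the point $k\sqrt{-1}$: then $\rho_k(\beta)\to 1$ in $\PSL_2(\R)$, yet the unique fixed point of $\rho_k(\beta)$ escapes to infinity. In such cases the diameter $R_p$ in your Leibniz estimate blows up and the interpolation does not yield a map with Lipschitz constant~$<1$. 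This is not a cosmetic issue: the phenomenon is exactly what makes the cusp case harder than the convex cocompact one.

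The paper circumvents this by a genuinely different mechanism. After building the deformed map $f'_k$ on the thick part (as you do), it does \emph{not} attempt to force a constant value in the horoball. Instead it observes that $\Lip_p(f'_k)$ is uniformly small on the horosphere $\partial H_\ell^k\cap N_k$, and then proves a comparison lemma (Lemma~\ref{lem:horosph-sph}): a $(j|_{S_\ell},\rho_k|_{S_\ell})$-equivariant map from a subconvex piece of a horosphere to~$\HH^n$ with small \emph{local} Lipschitz constant has \emph{global} Lipschitz constant $<1$, because its image lies in a thin neighborhood of a sphere centered at a fixed point of $\rho_k(S_\ell)$ --- regardless of where that fixed point is --- and the horosphere-to-sphere distortion is explicitly controlled by \eqref{eqn:horomu} and~\eqref{eqn:distsphere}. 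A companion lemma (Lemma~\ref{lem:subconvex}) supplies the needed subconvexity of $\partial H_\ell^k\cap N_k$, and this is where the rank $\geq n-2$ hypothesis enters (via Lemma~\ref{lem:convexcusp}). Once $\Lip_{\partial H_\ell^k\cap N_k}(f'_k)<1$ is established, Theorem~\ref{thm:Kirszbraunequiv} extends $f'_k$ into the horoball with Lipschitz constant $<1$.
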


Here we denote by
\begin{itemize}
  \item $\Hom_{j_0}(\Gamma_0,G)$ the space of geometrically finite representations of $\Gamma_0$ in~$G$ with the same cusp type as the fixed representation~$j_0$;
  \item $\Hom_{j_0\text{-}\mathrm{det}}(\Gamma_0,G)$ the space of representations that are cusp-deterio\-rating with respect to~$j_0$, in the sense of Definition~\ref{def:typedet}.
\end{itemize}
These two sets are endowed with the induced topology from $\Hom(\Gamma_0,G)$.
In (3)--(3')--(3''), we endow the set of pairs $(j,\rho)$ satisfying $1\leq C(j,\rho)<+\infty$ or $1\leq C(j,\rho)$ with the induced topology from $\Hom(\Gamma_0,G)^2$.
Note that $\Hom_{j_0\text{-}\mathrm{det}}(\Gamma_0,G)$ is a semi-algebraic subset of $\Hom(\Gamma_0,G)$; it is equal to $\Hom(\Gamma_0,G)$ if and only if $j_0$ is convex cocompact.

When $j_0$ is \emph{not} convex cocompact, the condition $C(j,\rho)<1$ is \emph{not} open in $\Hom_{j_0}(\Gamma_0,G)\times\Hom(\Gamma_0,G)$ or even in $\{j_0\}\times\Hom(\Gamma_0,G)$, since the constant representation~$\rho$ (for which $C(j,\rho)=0$) may be approached by non-cusp-deteriorating representations~$\rho$ (for which $C(j,\rho)\geq\nolinebreak 1$); see also Section~\ref{ex:discontinu} for a related example.
This is why we need to restrict to cusp-deteriorating~$\rho$ in Proposition~\ref{prop:contpropertiesC}.(1).

In dimension $n\geq 4$, when $j_0$ has cusps of rank $<n-2$, conditions (1) and~(3) of Proposition~\ref{prop:contpropertiesC} do not hold: see Sections \ref{ex:dim4upper} and~\ref{ex:dim4C<1} for counterexamples.
The reason, in a sense, is that the convex core of a small geometrically finite deformation of~$j$ can be ``much larger'' than that of~$j$, due to the presence of parabolic elements that are not unipotent.
(Such discontinuous behavior of the convex core also explains why being geometrically finite is not an open condition in the presence of cusps of rank $<n-2$, even among representations of a given cusp type \cite[\S\,5]{bow98}.)

Note finally that $C(j,\rho)=+\infty$ must be ruled out in (3) and (3'') due to Lemma~\ref{lem:C<infty}: parabolic elements can be approached by hyperbolic ones.

Proposition~\ref{prop:contCcc} is proved in Section~\ref{subsec:contCcc} using a partition-of-unity argument based on Lemma~\ref{lem:partofunity}, together with a control on fundamental domains for converging convex cocompact representations (see Appendix~\ref{sec:conv-fundamental}).
Proposition~\ref{prop:contpropertiesC}.(1)--(1') is proved in Section~\ref{subsec:C<1open} following the same approach but using also a comparison between distances in horospheres and spheres of~$\HH^n$ (Lemma~\ref{lem:horosph-sph}).
Proposition~\ref{prop:contpropertiesC}.(2) and (3)--(3')--(3'') are proved in Section~\ref{subsec:C-highcontinuous}; for reductive~$\rho$, they are a consequence of the existence of a maximally stretched lamination when $C(j,\rho)\geq 1$ (Theorem~\ref{thm:lamin}).
The case of nonreductive~$\rho$ follows from the reductive case by using again a partition-of-unity argument, as we explain in Section~\ref{subsec:nonred}.

\subsection{Continuity in the convex cocompact case}\label{subsec:contCcc}

In this section we prove Proposition~\ref{prop:contCcc}.
We fix a pair of representations $(j,\rho)\in\Hom(\Gamma_0,G)^2$ with $j$ convex cocompact and a sequence $(j_k,\rho_k)_{k\in\N^{\ast}}$ of elements of $\Hom(\Gamma_0,G)^2$ converging to $(j,\rho)$.
We may and shall assume that $\Gamma_0$ is torsion-free, using Lemma~\ref{lem:finiteindex} and the Selberg lemma \cite[Lem.\,8]{sel60}.

\subsubsection{Upper semicontinuity}\label{subsubsec:uppersemicont}

We first prove that
$$C(j,\rho) \,\geq\, \limsup_{k\rightarrow +\infty} \, C(j_k, \rho_k).$$
Fix $\varepsilon>0$ and let $f : \HH^n\rightarrow \HH^n$ be a $(j,\rho)$-equivariant, $(C(j,\rho)+\varepsilon)$-Lipschitz map.
We explain how for any large enough~$k$ we can modify~$f$ into a $(j_k,\rho_k)$-equivariant map $f_k$ with $\Lip(f_k)\leq\Lip(f)+\varepsilon$.
By Lemma~\ref{lem:proj}, we only need to define~$f_k$ on the preimage $N_k\subset\HH^n$ of the convex core of $j_k(\Gamma_0)\backslash\HH^n$.
In order to build $f_k$, we will paste together shifted ``pieces'' of~$f$ using Lemma~\ref{lem:partofunity}.

Let $N\subset\HH^n$ be the preimage of the convex core of $j(\Gamma_0)\backslash\HH^n$.
By Proposition~\ref{prop:rem-ccopen}, there exists a compact set $\mathcal{C}\subset\HH^n$ such that 
$$N\subset j(\Gamma_0)\cdot\mathcal{C}~\text{ and }~N_k\subset j_k(\Gamma_0)\cdot\mathcal{C}$$ 
for all large enough $k\in\N^{\ast}$, and the injectivity radius of $j(\Gamma_0)\backslash\HH^n$ and $j_k(\Gamma_0)\backslash\HH^n$ is bounded from below by some constant $\delta>0$ independent of~$k$.
Let $B_1,\dots,B_r$ be open balls of~$\HH^n$ covering~$\mathcal{C}$, of radius $<\delta$.
For any $1\leq i\leq r$, let $\psi_i : \HH^n\rightarrow [0,1]$ be a Lipschitz, $j(\Gamma_0)$-equivariant function supported on $j(\Gamma_0)\cdot B_i$, such that $(\psi_i)_{1\leq i\leq r}$ restricts to a partition of unity on $j(\Gamma_0)\cdot\mathcal{C}$, subordinated to the covering $(j(\Gamma_0)\cdot (B_i\cap\mathcal{C}))_{1\leq i\leq r}$.
For $1\leq i\leq r$ and $k\in\N^{\ast}$, let
$$\psi_{i,k} := \frac{\Psi_{i,k}}{\sum_{i'=1}^r \Psi_{i',k}},$$
where $\Psi_{i,k} : \HH^n\rightarrow [0,1]$ is the $j_k(\Gamma_0)$-invariant function supported on $j_k(\Gamma_0)\cdot\nolinebreak B_i$ that coincides with $\psi_i$ on~$B_i$.
Then, for $k\in\N^{\ast}$ large enough, $(\psi_{i,k})_{1\leq i\leq r}$ induces a $j_k(\Gamma_0)$-equivariant partition of unity on $j_k(\Gamma_0)\cdot\mathcal{C}$, subordinated to the covering $(j_k(\Gamma_0)\cdot (B_i\cap\mathcal{C}))_{1\leq i\leq r}$.
Note that there is a constant $L>0$ such that $\psi_{i,k}$ is $L$-Lipschitz on $j_k(\Gamma_0)\cdot\mathcal{C}$ for all $1\leq i\leq r$ and large $k\in\N^{\ast}$; indeed, the $j_k(\Gamma_0)$-invariant function $\sum_{i'} \Psi_{i',k}$ is Lipschitz with constant $\leq\sum_{i'} \Lip(\psi_{i'})$ and it converges uniformly to~$1$ on each $B_i\cap\mathcal{C}$ as $k\rightarrow +\infty$, by compactness.
For $1\leq i\leq r$ and $k\in\N^{\ast}$, let
$$f_{i,k} :\, j_k(\Gamma_0)\cdot B_i \longrightarrow \HH^n$$
be the $(j_k,\rho_k)$-equivariant map that coincides with $f$ on~$B_i$.
For $k\in\N^{\ast}$ and $p\in j_k(\Gamma_0)\cdot\mathcal{C}$ , let $I_{p,k}$ be the set of indices $1\leq i\leq r$ such that $p\in j_k(\Gamma_0)\cdot B_i$.
The function
$$p \,\longmapsto\, R_{p,k} := \mathrm{diam}\{ f_{i,k}(p)~|~i\in I_{p,k} \} ,$$
defined on $j_k(\Gamma_0)\cdot\mathcal{C}$, is $j_k(\Gamma_0)$-invariant and converges uniformly to~$0$ on $\mathcal{C}$ as $k\rightarrow +\infty$.
By Lemma~\ref{lem:partofunity}, the $(j_k,\rho_k)$-equivariant map
$$f_k := \sum_{i=1}^r \psi_{i,k} f_{i,k}\ :\ j_k(\Gamma_0)\cdot\mathcal{C} \longrightarrow \HH^n$$
satisfies
\begin{eqnarray*}
\Lip_p(f_k) & \leq & \sum_{i=1}^r \big(L R_{p,k} + \psi_{i,k}(p) \,\Lip_p(f_{i,k})\big)\\
& \leq & r L \bigg(\sup_{p'\in\mathcal{C}} R_{p',k}\bigg) + \Lip(f)
\end{eqnarray*}
for all $p\in\mathcal{C}$, hence for all $p\in N_k\subset j_k(\Gamma_0)\cdot\mathcal{C}$ by equivariance.
We have seen that $\sup_{p'\in\mathcal{C}} R_{p',k}\rightarrow 0$ as $k\rightarrow +\infty$.
Therefore, for large enough~$k$, the $(j_k,\rho_k)$-equivariant map $\HH^n\rightarrow\HH^n$ obtained by precomposing $f_k$ with the closest-point projection onto~$N_k$ has Lipschitz constant $\leq\sup_{p\in N_k} \Lip_p(f_k)\leq\Lip(f)+\varepsilon$ by Lemma~\ref{lem:localLip}.
This shows that $C(j_k,\rho_k)\leq C(j,\rho)+2\varepsilon$, and we conclude by taking the $\limsup$ over~$k$ and letting $\varepsilon$ tend to~$0$.

\subsubsection{Lower semicontinuity}\label{subsubsec:lowersemicont}

Let us now prove that
$$C(j,\rho) \,\leq\, \liminf_{k\rightarrow +\infty} \, C(j_k, \rho_k).$$
If $\rho(\Gamma_0)$ has a fixed point $p$ in~$\HH^n$, then $C(j,\rho)=0$ (Remark~\ref{rem:boundedrho}) and there is nothing to prove.
We thus assume that $\rho(\Gamma_0)$ has no fixed point in~$\HH^n$.

\smallskip

$\bullet$ \textbf{Generic case.}
Consider the case where $\rho(\Gamma_0)$ has no fixed point in $\partial_{\infty}\HH^n$ and does not preserve any geodesic line of~$\HH^n$.
Then $\rho(\Gamma_0)$ contains two hyperbolic elements $\rho(\gamma_1),\rho(\gamma_2)$ whose translation axes have no common endpoint in $\partial_{\infty}\HH^n$.
For large enough~$k$, the elements $\rho_k(\gamma_1),\rho_k(\gamma_2)\in\rho_k(\Gamma_0)$ are hyperbolic too and their translation axes converge to the respective axes of $\rho(\gamma_1),\rho(\gamma_2)$.
For any $k\in\N^{\ast}$, let $f_k : \HH^n\rightarrow\HH^n$ be a $(j_k,\rho_k)$-equivariant, $(C(j_k,\rho_k)+2^{-k})$-Lipschitz map.
The same argument as in the proof of Lemma~\ref{lem:Fnonempty} shows that for any compact subset $\mathscr{C}$ of~$\HH^n$, the sets $f_k(\mathscr{C})$ all lie inside some common compact subset of~$\HH^n$.
By the Arzel\`a--Ascoli theorem, some subsequence of $(f_k)_{k\in\N^{\ast}}$ converges to a $(j,\rho)$-equivariant map $f : \HH^n\rightarrow\HH^n$.
(Here we use that $(C(j_k,\rho_k))_{k\in\N^{\ast}}$ is bounded, a consequence of the upper semicontinuity proved in Section~\ref{subsubsec:uppersemicont}.)
This implies $C(j,\rho)\leq\liminf_k C(j_k, \rho_k)$.

\smallskip

$\bullet$ \textbf{Degenerate reductive case.}
Consider the case where $\rho(\Gamma_0)$ preserves a geodesic line $\mathcal{A}$ of $\HH^n$.
The following observation is interesting in its own right.

\begin{lemma}\label{lem:Ewith2fixedpoints}
If the group $\rho(\Gamma_0)$ preserves a geodesic line $\A\subset\HH^n$ without fixing any point in~$\HH^n$, then the stretch locus $E(j,\rho)$ is a geodesic lamination whose projection to $j(\Gamma_0)\backslash\HH^n$ is compact, contained in the convex core, and whose leaves are maximally stretched.
\end{lemma}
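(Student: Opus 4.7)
The plan is to reduce the problem to studying an optimal real-valued $(j,\bar{\rho})$-equivariant Lipschitz function $F:\HH^n\to\R$, whose stretch locus will already be a geodesic lamination, and then to identify $E(j,\rho)$ with $E_F$. By Lemma~\ref{lem:finiteindex}, after replacing $\Gamma_0$ by a finite-index subgroup we may assume every element of $\rho(\Gamma_0)$ fixes each endpoint of $\mathcal{A}$, so $\rho$ acts on $\mathcal{A}$ by translations. Identifying $\mathcal{A}$ isometrically with $\R$ yields a representation $\bar{\rho}:\Gamma_0\to\Isom(\R)$, and the orthogonal projection $\pi_{\mathcal{A}}:\HH^n\to\R$ is $1$-Lipschitz and $(\rho,\bar{\rho})$-equivariant. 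The operations $f\mapsto\pi_{\mathcal{A}}\circ f$ and $F\mapsto\iota\circ F$ (with $\iota:\R\hookrightarrow\HH^n$ the isometric embedding onto $\mathcal{A}$) pair $(j,\rho)$-equivariant Lipschitz maps with image in $\mathcal{A}$ against $(j,\bar{\rho})$-equivariant real-valued Lipschitz maps, so $C:=C(j,\rho)=C(j,\bar{\rho})$. Since $\rho(\Gamma_0)$ has $0$ or $2$ fixed points at infinity, $\rho$ is reductive, and by Lemma~\ref{lem:Fnonempty} there exists an optimal (in the sense of Definition~\ref{def:relstretchlocus}) $(j,\bar{\rho})$-equivariant $C$-Lipschitz function $F$; set $f_0:=\iota\circ F\in\F^{j,\rho}$. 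Nonemptiness of $E(j,\rho)$ follows from Lemma~\ref{lem:Efnonempty}.(1).

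The key geometric step is to show that $E_F\subset\HH^n$ is a geodesic lamination. The argument faithfully mirrors Lemma~\ref{lem:MaxStretchedLam} with the target $\R$ replacing $\HH^n$: for $p\in E_F$ and a small ball $B$ of radius $r$ centered at $p$, the minimal interval $[a,b]\supset F(\partial B)$ must satisfy $b-a=2Cr$, otherwise an equivariantly propagated McShane extension of $F|_{\partial B\cup\{p\}}$ with $F(p):=(a+b)/2$ (combined with Lemma~\ref{lem:constext}) would yield $\Lip_p<C$, contradicting $p\in E_F$; continuity then forces $F$ to attain its extrema on $\partial B$ at a pair of antipodal points $x,y$, with $F$ affine of slope $C$ on the diameter $[x,y]$; uniqueness of this diameter through $p$ follows from a straight-line comparison (two distinct such diameters would place the endpoints $x_1,y_2$ at distance $<2r$ in $\HH^n$, contradicting $|F(x_1)-F(y_2)|=2Cr$), and global extension to a bi-infinite geodesic line proceeds as in Lemma~\ref{lem:MaxStretchedLam}. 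The flatness of the target $\R$ makes Toponogov's theorem unnecessary, so the argument covers all $C>0$, including $C<1$.

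To identify $E(j,\rho)$ with $E_F$: the inclusion $E(j,\rho)\subset E_{f_0}=E_F$ is immediate from $f_0\in\F^{j,\rho}$. For the reverse, given any $f\in\F^{j,\rho}$, set $F_f:=\pi_{\mathcal{A}}\circ f$; then $F_f$ is $(j,\bar{\rho})$-equivariant and $C$-Lipschitz, hence lies in $\F^{j,\bar{\rho}}$ (the bound $\Lip(F_f)\geq C(j,\bar{\rho})=C$ gives equality). By optimality of $F$, $E_F\subset E_{F_f}$; and since $\pi_{\mathcal{A}}$ is $1$-Lipschitz, $\Lip_p(F_f)\leq\Lip_p(f)$ at every point $p$, so $E_{F_f}\subset E_f$. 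Chaining these inclusions over all $f\in\F^{j,\rho}$ yields $E_F\subset E(j,\rho)$, hence equality. Compactness of $j(\Gamma_0)\backslash E(j,\rho)$ and its containment in the convex core follow from Lemma~\ref{lem:proj}. The maximal stretching of each leaf by every $f\in\F^{j,\rho}$ is obtained by noting that, for $p\in E_F$ and $q$ on the $E_F$-leaf through $p$, the pair $(p,q)$ lies in the enhanced stretch locus $\widetilde{E}_F$ (since $F$ is affine with slope $C$ on the leaf); by optimality $\widetilde{E}_F\subset\widetilde{E}_{F_f}$, so $|F_f(p)-F_f(q)|=C\,d(p,q)$; combined with $|F_f(p)-F_f(q)|\leq d(f(p),f(q))\leq C\,d(p,q)$, all inequalities become equalities, giving $d(f(p),f(q))=C\,d(p,q)$.

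The main obstacle is the equivariant adaptation of the Kirszbraun-style extension argument in the second paragraph: one must ensure that the modification of $F$ near $p$ respects $(j,\bar{\rho})$-equivariance (feasible by taking $B$ small enough to project injectively to $j(\Gamma_0)\backslash\HH^n$ and propagating the modification by the group action) and that the local drop in $\Lip_p$ genuinely survives this global equivariant propagation. Once this is handled, the cascade of inclusions $E_F\subset E_{F_f}\subset E_f$ delivers the identification $E(j,\rho)=E_F$ essentially for free, and the maximal-stretch conclusion follows from the preservation of the enhanced stretch locus under optimality.
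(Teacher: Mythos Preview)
Your argument is correct (in the convex cocompact setting implicit in this section, which you invoke via Lemma~\ref{lem:Efnonempty}.(1)) and takes a genuinely different route from the paper's. The paper exploits a \emph{scaling trick}: after the same reduction to a representation $\rho_{\underline{A}}$ with values in the abelian group $\underline{A}$ of pure translations along~$\mathcal{A}$, it passes to the power representation $\rho_{\underline{A}}^m:\gamma\mapsto\rho_{\underline{A}}(\gamma)^m$ and observes, via postcomposition with an $m$-fold dilation $h_m$ of $\mathcal{A}\simeq\R$, that $C(j,\rho_{\underline{A}}^m)=m\,C(j,\rho_{\underline{A}})$ while $E(j,\rho_{\underline{A}}^m)=E(j,\rho_{\underline{A}})$. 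Since $C(j,\rho_{\underline{A}})>0$, for large~$m$ one lands in the regime $C>1$ and Theorem~\ref{thm:lamin} applies directly, delivering the lamination structure, compactness, containment in the convex core, and maximal stretching in one stroke. Your approach instead reworks the core of Lemma~\ref{lem:MaxStretchedLam} with target~$\R$; as you correctly note, the flatness of~$\R$ makes Toponogov's theorem unnecessary, so the argument goes through for all $C>0$ without case distinction. The paper's route is shorter and reuses the heavy lifting already packaged in Theorem~\ref{thm:lamin}; yours is more self-contained and makes transparent that the line-valued problem is strictly easier than the general $\HH^n$-valued one. Two small caveats: your citation of Lemma~\ref{lem:proj} for compactness is incomplete---that lemma only gives containment in the convex core, and compactness of the projection then needs convex cocompactness of~$j$ (whereas the paper's route, via Theorem~\ref{thm:lamin}, yields compactness even for geometrically finite~$j$ with cusps); and Lemmas~\ref{lem:Fnonempty} and~\ref{lem:optimalmap} are stated for $\HH^n$-valued maps, so you are implicitly invoking their straightforward $\R$-valued analogues. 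The ``obstacle'' you flag in the last paragraph is not genuine: taking $B$ small enough to project injectively to $j(\Gamma_0)\backslash\HH^n$ and propagating the McShane extension by the group action works exactly as in the proof of Lemma~\ref{lem:MaxStretchedLam}, since the modified map agrees with $F$ on $\partial B$ and convexity of geodesic segments handles the global Lipschitz bound across $\partial(j(\Gamma_0)\cdot B)$.
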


\begin{proof}
After passing to a subgroup of index two (which does not change the stretch locus by Lemma~\ref{lem:finiteindex}), we may assume that $\rho(\Gamma_0)$ fixes both endpoints of~$\A$ in $\partial_{\infty}\HH^n$: in other words, $\rho(\Gamma_0)$ is contained in $\underline{M}\underline{A}$, where $\underline{M}$ is the subgroup of~$G$ that (pointwise) fixes~$\A$ and $\underline{A}$ is the group of pure translations along~$\A$.
The groups $\underline{M}$ and~$\underline{A}$ commute and have a trivial intersection; let $\underline{\pi}_{\underline{A}} : \underline{M}\underline{A}\rightarrow\underline{A}$ be the natural projection.
We claim that $\rho_{\scriptscriptstyle{\underline{A}}}:=\underline{\pi}_{\underline{A}}\circ\rho$ satisfies
$$C(j,\rho_{\scriptscriptstyle{\underline{A}}}) = C(j,\rho) \quad\quad\mathrm{and}\quad\quad E(j,\rho_{\scriptscriptstyle{\underline{A}}}) = E(j,\rho).$$
Indeed, any element of $\F^{j,\rho}$ (\resp of~$\F^{j,\rho_{\scriptscriptstyle{\underline{A}}}}$) remains in $\F^{j,\rho}$ (\resp in~$\F^{j,\rho_{\scriptscriptstyle{\underline{A}}}}$) after postcomposing with the closest-point projection onto~$\A$, and for a map $\HH^n\rightarrow\A$ it is equivalent to be $(j,\rho)$-equivariant or $(j,\rho_{\scriptscriptstyle{\underline{A}}})$-equivariant.
Since $\rho_{\scriptscriptstyle{\underline{A}}}(\Gamma_0)\subset\underline{A}$ is commutative, for any $m\in\Z$ we can consider the representation $\rho_{\scriptscriptstyle{\underline{A}}}^m : \gamma\mapsto\rho_{\scriptscriptstyle{\underline{A}}}(\gamma)^m$.
We claim that for $m\geq 1$,
$$C(j,\rho_{\scriptscriptstyle{\underline{A}}}^m) = m\,C(j,\rho_{\scriptscriptstyle{\underline{A}}}) \quad\quad\mathrm{and}\quad\quad E(j,\rho_{\scriptscriptstyle{\underline{A}}}^m)=E(j,\rho_{\scriptscriptstyle{\underline{A}}}).$$
Indeed, let $h_m$ be an orientation-preserving homeomorphism of $\A\simeq\R$ such that
$$d(h_m(p),h_m(q)) = m\,d(p,q)$$
for all $p,q\in\A$; for any $C>0$, the postcomposition with~$h_m$ realizes a bijection between the $(j,\rho_{\scriptscriptstyle{\underline{A}}})$-equivariant, $C$-Lipschitz maps and the $(j,\rho_{\scriptscriptstyle{\underline{A}}}^m)$-equivariant, $mC$-Lipschitz maps from $\HH^n$ to~$\A$, which preserves the stretch locus.
Since $C(j,\rho_{\scriptscriptstyle{\underline{A}}})>0$ (because $\rho(\Gamma_0)$ has no fixed point in~$\HH^n$), we have $C(j,\rho_{\scriptscriptstyle{\underline{A}}}^m)>1$ for large enough~$m$, hence we can apply Theorem~\ref{thm:lamin} to the stretch locus $E(j,\rho_{\scriptscriptstyle{\underline{A}}}^m)=E(j,\rho_{\scriptscriptstyle{\underline{A}}})$.
\end{proof}

Suppose the group $\rho(\Gamma_0)$ preserves a geodesic line of~$\HH^n$ without fixing any point in~$\HH^n$.
By Lemmas \ref{lem:ClambdaCLipLipf} and~\ref{lem:Ewith2fixedpoints}, for any $\varepsilon>0$ there exists $\gamma\in\Gamma_0$ with $j(\gamma)$ hyperbolic such that
\begin{equation}\label{eq:almostcarried1}\frac{\lambda(\rho(\gamma))}{\lambda(j(\gamma))} \geq C(j,\rho) - \varepsilon.\end{equation}
It follows that $j_k(\gamma)$ is hyperbolic and
\begin{equation}\label{eq:almostcarried2}C(j_k,\rho_k) \,\geq\, \frac{\lambda(\rho_k(\gamma))}{\lambda(j_k(\gamma))} \,\geq\, C(j,\rho) - 2\varepsilon\end{equation}
for all large enough~$k$.
We conclude by taking the $\liminf$ over~$k$ and letting $\varepsilon$ tend to~$0$.

\smallskip

$\bullet$ \textbf{Nonreductive case.}
Finally, we consider the case where the group $\rho(\Gamma_0)$ has a unique fixed point $\xi$ in $\partial_{\infty}\HH^n$, \ie $\rho$ is nonreductive (Definition~\ref{def:reductive}).
Choose an oriented geodesic line $\A$ of~$\HH^n$ with endpoint~$\xi$.
For any $\gamma\in\Gamma_0$ we can write in a unique way $\rho(\gamma)=gu$ where $g\in G$ preserves~$\A$ (\ie belongs to $\underline{M}\underline{A}$ with the notation above) and $u$ is unipotent; setting $\rho^{\mathrm{red}}(\gamma):=g$ defines a representation $\rho^{\mathrm{red}}\in\Hom(\Gamma_0,G)$ which is reductive (with image in $\underline{M}\underline{A}$).
Note that changing the line~$\A$ only modifies $\rho^{\mathrm{red}}$ by conjugating it; this does not change the constant $C(j,\rho^{\mathrm{red}})$ by Remark~\ref{rem:CLipconj}.
When $\rho$ is reductive, we set $\rho^{\mathrm{red}}:=\rho$.
Then the following holds.

\begin{lemma}\label{lem:nonred-lip-cc}
For any pair of representations $(j,\rho)\in\Hom(\Gamma_0,G)^2$ with $j$ convex cocompact,
$$C(j,\rho^{\mathrm{red}}) = C(j,\rho).$$
\end{lemma}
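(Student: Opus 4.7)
The plan is to prove the two inequalities separately. For $C(j,\rho^{\mathrm{red}}) \leq C(j,\rho)$, the strategy is to build a $1$-Lipschitz retraction $\phi : \HH^n \rightarrow \A$ that intertwines the $\rho$- and $\rho^{\mathrm{red}}$-actions. Let $\beta_\xi : \HH^n \rightarrow \R$ be the Busemann function at the fixed point $\xi \in \partial_{\infty}\HH^n$ of $\rho(\Gamma_0)$, and define $\phi(y)$ as the unique point of $\A$ with $\beta_\xi(\phi(y)) = \beta_\xi(y)$; since $\beta_\xi$ is $1$-Lipschitz and parametrizes $\A$ by arclength, so is $\phi$. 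Writing $\rho(\gamma) = g_\gamma u_\gamma$ with $g_\gamma \in \underline{M}\underline{A}$ and $u_\gamma$ unipotent (so $\rho^{\mathrm{red}}(\gamma) = g_\gamma$), the key identity is that $u_\gamma$ preserves horospheres centered at $\xi$ while $g_\gamma$ shifts $\beta_\xi$ by the signed translation length $t(g_\gamma) \in \R$ of its $\underline{A}$-component along $\A$. This yields $\beta_\xi(\rho(\gamma) y) = \beta_\xi(y) - t(g_\gamma) = \beta_\xi(\rho^{\mathrm{red}}(\gamma) y)$, and hence $\phi \circ \rho(\gamma) = \rho^{\mathrm{red}}(\gamma) \circ \phi$ on all of $\HH^n$. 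Consequently, for any $(j,\rho)$-equivariant Lipschitz map $f$, the composition $\phi \circ f$ is $(j,\rho^{\mathrm{red}})$-equivariant with $\Lip(\phi \circ f) \leq \Lip(f)$, establishing the inequality.

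For the reverse inequality $C(j,\rho^{\mathrm{red}}) \geq C(j,\rho)$, I would combine a conjugation argument with the upper semicontinuity of $\rho' \mapsto C(j,\rho')$ already established in Section~\ref{subsubsec:uppersemicont}. Let $(h_t)_{t \geq 0}$ be the one-parameter subgroup of $\underline{A}$ translating along $\A$ away from $\xi$. Composing any $(j, h_t\rho h_t^{-1})$-equivariant map with $h_t^{-1}$ on the target produces a $(j,\rho)$-equivariant map of the same Lipschitz constant (since $h_t^{-1}$ is an isometry), so $C(j, h_t \rho h_t^{-1}) = C(j,\rho)$ for every $t$. On the other hand, since $\underline{A}$ centralizes $\underline{M}\underline{A}$ element-wise (as $\underline{M}$ is contained in the centralizer of $\underline{A}$), we have $h_t \rho(\gamma) h_t^{-1} = g_\gamma \cdot (h_t u_\gamma h_t^{-1})$, and the adjoint action of $h_t$ contracts the unipotent radical fixing $\xi$ to the identity as $t \to +\infty$. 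Hence $h_t \rho(\gamma) h_t^{-1} \rightarrow g_\gamma = \rho^{\mathrm{red}}(\gamma)$ in $\Hom(\Gamma_0,G)$ for every $\gamma$, and upper semicontinuity yields
$$C(j,\rho^{\mathrm{red}}) \,\geq\, \limsup_{t\to+\infty}\, C(j, h_t \rho h_t^{-1}) \,=\, C(j,\rho).$$

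The main points to check carefully will be the structural statements about $\mathrm{Stab}_G(\xi) = \underline{M}\underline{A}\underline{N}$ that enter into both paragraphs: the element-wise commutativity of $\underline{A}$ with $\underline{M}\underline{A}$, and the fact that the adjoint action of $h_t$ on the unipotent radical $\underline{N}$ contracts to the identity (at exponential rate) as $t \to +\infty$ in the direction moving away from $\xi$. Both are standard consequences of the Langlands decomposition of the minimal parabolic of $G = \PO(n,1)$, but it is worth verifying the sign conventions so that the choice of orientation on $\A$ is consistent across the two parts. Combining the two inequalities then yields $C(j,\rho) = C(j,\rho^{\mathrm{red}})$.
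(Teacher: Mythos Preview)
Your proof is correct and follows essentially the same approach as the paper: the horospherical projection $\phi$ onto $\A$ is exactly the paper's map $\mathrm{pr}$ collapsing horospheres centered at $\xi$, and your conjugation argument with $h_t$ combined with upper semicontinuity from Section~\ref{subsubsec:uppersemicont} mirrors the paper's use of $\rho^{(i)}=a^i\rho(\cdot)a^{-i}\to\rho^{\mathrm{red}}$ together with Remark~\ref{rem:CLipconj}. You supply a bit more structural detail (Busemann functions, the Langlands decomposition) where the paper is terse, but the underlying argument is the same.
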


\begin{proof}
We may assume that $\rho$ is nonreductive.
Let $\xi$ and~$\A$ be as above and let $\mathrm{pr} : \HH^n\rightarrow\A$ be the ``horocyclic projection'' collapsing each horosphere centered at $\xi$ to its intersection point with~$\A$; it is $1$-Lipschitz.
For any $(j,\rho)$-equivariant Lipschitz map $f : \HH^n\rightarrow\HH^n$, the map $\mathrm{pr}\circ f$ is $(j,\rho^{\mathrm{red}})$-equivariant with $\Lip(\mathrm{pr}\circ f)\leq\Lip(f)$, hence
$$C(j,\rho^{\mathrm{red}}) \leq C(j,\rho).$$
Let $a\in G$ be a hyperbolic element acting as a pure translation along~$\A$, with repelling fixed point~$\xi$ at infinity.
Then $\rho^{(i)}:=a^i\rho(\cdot)a^{-i}\rightarrow\rho^{\mathrm{red}}$ as $i\rightarrow +\infty$.
By Remark~\ref{rem:CLipconj}, we have $C(j,\rho^{(i)})=C(j,\rho)$ for all $i\in\N$.
By upper semicontinuity (proved in Section~\ref{subsubsec:uppersemicont}),
$$C(j,\rho^{\mathrm{red}}) \geq \limsup_{i\rightarrow +\infty}\ C(j,\rho^{(i)}) = C(j,\rho).\qedhere$$
\end{proof}

We now go back to our sequence $(j_k,\rho_k)_{k\in\N^{\ast}}$ converging to $(j,\rho)$.
Since $\rho_k\rightarrow \rho$ and $\rho$ has conjugates converging to $\rho^{\mathrm{red}}$ (see above), a diagonal argument shows that there are conjugates $\rho'_k$ of~$\rho_k$ such that $\rho'_k\rightarrow\rho^{\mathrm{red}}$.
By the reductive case above, $\liminf_k C(j_k,\rho'_k)\geq C(j,\rho^{\mathrm{red}})$, and we conclude using Remark~\ref{rem:CLipconj} and Lemma~\ref{lem:nonred-lip-cc}.
This completes the proof of Proposition~\ref{prop:contCcc}.

\subsection{Openness of the condition $C<1$ on cusp-deteriorating pairs}\label{subsec:C<1open}

In this section we prove Proposition~\ref{prop:contpropertiesC}.(1)--(1').
The strategy is analogous to the proof of upper semicontinuity in Section~\ref{subsubsec:uppersemicont}.
The partition-of-unity argument in that proof fails in the presence of cusps, since the convex core (when nonempty) is not compact anymore.
However, we shall see that the argument can be adapted as long as convex cores deform continuously.
Such continuous behavior is ensured under the assumptions of Prop.~\ref{prop:contpropertiesC}.(1') (constant convex core) or Prop.~\ref{prop:contpropertiesC}.(1) (all cusps of rank $\geq n-2$, see Proposition~\ref{prop:rem-gfopen}).

Consider a pair $(j,\rho)\in\Hom_{j_0}(\Gamma_0,G)\times\Hom_{j_0\text{-}\mathrm{det}}(\Gamma_0,G)$ with $C(j,\rho)<1$, and a sequence $(j_k,\rho_k)_{k\in\N^{\ast}}$ of elements of $\Hom_{j_0}(\Gamma_0,G)\times\Hom_{j_0\text{-}\mathrm{det}}(\Gamma_0,G)$ converging to $(j,\rho)$.
\emph{If $j_0$ has a cusp of rank $<n-2$, we assume that $j_k=j$ for all $k\in\N^{\ast}$.}
We shall prove that $C(j_k,\rho_k)<1$ for all large enough~$k$.

We can and shall assume that $\Gamma_0$ is torsion-free (using Lemma~\ref{lem:finiteindex} and the Selberg lemma \cite[Lem.\,8]{sel60}).
We can also always assume that the convex core of $j_k(\Gamma_0)\backslash\HH^n$ is nonempty: otherwise the group $j_k(\Gamma_0)$ is elementary with a fixed point in~$\HH^n$ or a unique fixed point in $\partial_{\infty}\HH^n$, and $C(j_k,\rho_k)=0$ by Remark~\ref{rem:boundedrho}.
Therefore the convex core of $M:=j(\Gamma_0)\backslash\HH^n$ is nonempty too (because $j$ and the~$j_k$ have the same cusp type).

Let $f : \HH^n\rightarrow\HH^n$ be a $(j,\rho)$-equivariant map with $0<\Lip(f)<1$.
We shall modify~$f$ into a $(j_k,\rho_k)$-equivariant map~$f_k$ with $\Lip(f_k)<1$ for all large enough~$k$.
As usual, by Lemma~\ref{lem:proj} we only need to define~$f_k$ on the preimage $N_k\subset\HH^n$ of the convex core of $j_k(\Gamma_0)\backslash\HH^n$.
In order to build $f_k$, we shall proceed as in Section~\ref{subsubsec:uppersemicont} and paste together shifted ``pieces'' of~$f$ using Lemma~\ref{lem:partofunity}.

By Proposition~\ref{prop:goodincusps}.(3) we may assume that $f$ is constant on neighborhoods of some horoballs $H_1,\dots,H_c$ of~$\HH^n$ whose images in $M=j(\Gamma_0)\backslash\HH^n$ are disjoint and intersect the convex core of~$M$ in standard cusp regions (Definition~\ref{def:standardcusp}), representing all the cusps.
For $1\leq\ell\leq c$, let $S_{\ell}\subset\Gamma_0$ be the stabilizer of~$H_{\ell}$ under the $j$-action.
Let $N\subset\HH^n$ be the preimage of the convex core of $j(\Gamma_0)\backslash\HH^n$.
By Proposition~\ref{prop:rem-gfopen}, if the horoballs $H_1,\dots,H_c$ are small enough, then there exist a compact set $\mathcal{C}\subset\HH^n$ and, for any $k\in\N^{\ast}$, horoballs $H_1^k,\dots,H_c^k$ of~$\HH^n$, such that
\begin{itemize}
  \item the images of $H_1^k,\dots,H_c^k$ in $j_k(\Gamma_0)\backslash\HH^n$ are disjoint and intersect the convex core in standard cusp regions, for all large enough $k\in\N^{\ast}$;
  \item the stabilizer in~$\Gamma_0$ of $H_{\ell}^k$ under~$j_k$ is~$S_{\ell}$;
  \item the horoballs $H_{\ell}^k$ converge to~$H_{\ell}$ for all $1\leq\ell\leq c$;
  \item $N\subset j(\Gamma_0)\cdot (\mathcal{C}\cup\bigcup_{1\leq\ell\leq c} H_{\ell})$ and, for all large enough $k\in\N^{\ast}$,
  \begin{equation} \label{eqn:triche}
N_k\subset j_k(\Gamma_0)\cdot \bigg(\mathcal{C} \cup \bigcup_{1\leq\ell\leq c} H_{\ell}^k\bigg);   
  \end{equation}
  \item the cusp thickness (Definition~\ref{def:thickness}) of $j_k(\Gamma_0)\backslash\HH^n$ at any point of $\partial H_{\ell}^k$ is uniformly bounded by some constant $\Theta>0$ independent of~$k$;
  \item the injectivity radius of $j_k(\Gamma_0)\backslash\big(\HH^n\smallsetminus\bigcup_{\ell=1}^k j_k(\Gamma_0)\cdot H_{\ell}^k\big)$ is bounded from below by some constant $\delta>0$ independent of~$k$.
\end{itemize}
(If $j_0$ has a cusp of rank $<n-2$, then $j_k=j$ and we take $H_{\ell}^k=H_{\ell}$ for all $k\in\N^{\ast}$.)
For any $1\leq\ell\leq c$, by convergence of the horoballs $H_{\ell}^k$, the map $f$ is constant on some neighborhood of $\partial H_{\ell}^k\cap\mathcal{C}$ for large enough~$k$, which implies
\begin{equation}\label{eqn:fnearconst}
\sup_{p\in\partial H^k_{\ell}\cap\mathcal{C}}\, \Lip_p(f)=0.
\end{equation}

Let $B_1,\dots,B_r$ be open balls of~$\HH^n$ covering~$\mathcal{C}$, of radius $<\delta$.
For any $1\leq i\leq r$, let $\psi_i : \HH^n\rightarrow [0,1]$ be a Lipschitz, $j(\Gamma_0)$-equivariant function supported on $j(\Gamma_0)\cdot B_i$, such that $(\psi_i)_{1\leq i\leq r}$ restricts to a partition of unity on $j(\Gamma_0)\cdot\mathcal{C}$, subordinated to the covering $(j(\Gamma_0)\cdot (B_i\cap\mathcal{C}))_{1\leq i\leq r}$.
As in Section~\ref{subsubsec:uppersemicont}, for large enough~$k$ we can perturb the $\psi_i$ to a $j_k(\Gamma_0)$-equivariant partition of unity $(\psi_{i,k})_{1\leq i\leq r}$ of $j_k(\Gamma_0)\cdot\mathcal{C}$, subordinated to the covering $(j_k(\Gamma_0)\cdot  B_i)_{1\leq i\leq r}$, such that all the functions~$\psi_{i,k}$ are $L$-Lipschitz for some constant $L>0$ independent of $i$ and~$k$.
For $1\leq i\leq r$ and $k\in\N^{\ast}$, let
$$f_{i,k} : j_k(\Gamma_0)\cdot B_i \longrightarrow \HH^n$$
be the $(j_k,\rho_k)$-equivariant map that coincides with $f$ on~$B_i$.
As in Section~\ref{subsubsec:uppersemicont}, it follows from Lemma~\ref{lem:partofunity} that the $(j_k,\rho_k)$-equivariant map
$$f'_k := \sum_{i=1}^r \psi_{i,k}\,f_{i,k} :\ j_k(\Gamma_0)\cdot\mathcal{C} \,\longrightarrow\, \HH^n$$
satisfies
\begin{equation}\label{eqn:Lipf'k}
\Lip_p(f'_k) \leq r L R_{p,k} + \Lip_p(f)
\end{equation}
for all $p\in j_k(\Gamma_0)\cdot\mathcal{C}$, where $p\mapsto R_{p,k}$ is a $j_k(\Gamma_0)$-invariant function converging uniformly to~$0$ on~$\mathcal{C}$ as $k\rightarrow +\infty$.
By equivariance,
$$\limsup_{k\rightarrow +\infty}\, \sup_{p\in j_k(\Gamma_0)\cdot\mathcal{C}}\,\Lip_p(f'_k) \,\leq\, \Lip(f) \,<\, 1.$$
It only remains to prove that for any $1\leq\ell\leq c$ we can extend $f'_k|_{\partial H^k_{\ell}\cap N_k}$ to $H^k_{\ell}\cap N_k$ in a $(j_k|_{S_{\ell}},\rho_k|_{S_{\ell}})$-equivariant way with Lipschitz constant $<1$.
Indeed, then we can extend $f'_k$ to the orbit $j_k(\Gamma_0)\cdot (H^k_{\ell}\cap N_k)$ in a $(j_k,\rho_k)$-equivariant way; piecing together these maps for varying~$\ell$, and taking $f'_k$ on the complement of $\bigcup_{\ell=1}^c j_k(\Gamma_0) \cdot H^k_{\ell}$ in~$N_k$ (which is contained in $j_k(\Gamma_0)\cdot\mathcal{C}$), we will obtain a $(j_k,\rho_k)$-equivariant map $f_k : N_k\rightarrow\HH^n$ with $\Lip(f_k)<1$ for all large enough~$k$, which will complete the proof.

Fix $1\leq\ell\leq c$.
By Theorem~\ref{thm:Kirszbraunequiv}, in order to prove that $f'_k|_{\partial H^k_{\ell}\cap N_k}$ extends to $H^k_{\ell}\cap N_k$ in a $(j_k|_{S_{\ell}},\rho_k|_{S_{\ell}})$-equivariant way with Lipschitz constant $<1$, it is sufficient to prove that $\Lip_{\partial H^k_{\ell}\cap N_k}(f'_k)<1$.
By \eqref{eqn:fnearconst} and \eqref{eqn:Lipf'k}, for any $\varepsilon>0$ we have
\begin{equation}\label{eqn:yetanotherlabel}
\sup_{p\in\partial H^k_{\ell}\cap N_k} \Lip_p(f'_k) \leq \varepsilon
\end{equation}
for all large enough~$k$, since $\partial H^k_{\ell}\cap N_k\subset j_k(\Gamma_0)\cdot\mathcal{C}$ and the $j_k(\Gamma_0)$-invariant functions $p\mapsto R_{p,k}$ converge uniformly to~$0$ on~$\mathcal{C}$ as $k\rightarrow +\infty$.
Note that \eqref{eqn:yetanotherlabel} does not immediately give a bound on the global constant $\Lip_{\partial H^k_{\ell}\cap N_k}(f'_k)$, since the subset of horosphere $\partial H^k_{\ell} \cap N_k$ is not convex for the hyperbolic metric.
However, such a bound follows from Lemmas \ref{lem:horosph-sph} and~\ref{lem:subconvex} below, which are based on a comparison between the intrinsic metrics of horospheres and spheres in~$\HH^n$.

The idea of Lemma~\ref{lem:horosph-sph} is that \eqref{eqn:yetanotherlabel} controls the Lipschitz constant at short range, while the fixed point of $\rho_k(S_\ell)$ implies control at long range.
The difficulty is that there can be an arbitrarily large ``medium range'' to handle inbetween, since the fixed point of $\rho_k(S_\ell)$ can lie arbitrarily far out.
In dimension $n\geq 4$ this is compounded by the fact that generally $\partial H^k_{\ell} \cap N_k$ is not even convex for the \emph{Euclidean} metric of the horosphere $\partial H^k_\ell$; Lemma~\ref{lem:subconvex} deals with that issue.

For $t\geq 1$, we say that a subset $X$ of a Euclidean space is \emph{$t$-subconvex} if for any $x,y\in X$ there exists a path from $x$ to $y$ in~$X$ whose length is at most $t$ times the Euclidean distance from $x$ to~$y$.

\begin{lemma}\label{lem:horosph-sph}
Let $S$ be a discrete group.
For any $R>0$, there exists $\varepsilon>0$ with the following property: if $(j,\rho)\in\Hom(S,G)^2$ is a pair of representations with $j$ injective such that
\begin{itemize}
  \item the group $j(S)$ is discrete and preserves a horoball $H$ of~$\HH^n$,
  \item the group $\rho(S)$ has a fixed point in~$\HH^n$,
  \item there exists a closed, $j(S)$-invariant, $2$-subconvex set $\mathcal{N} \subset \partial H$ such that the quotient $j(S)\backslash\mathcal{N}$ has Euclidean diameter $\leq R$,
\end{itemize}
then any $(j,\rho)$-equivariant map $f' : \mathcal{N} \rightarrow\HH^n$ satisfying $\Lip_p(f')\leq\varepsilon$ for all $p\in \mathcal{N}$ satisfies $\Lip(f')<1$.
\end{lemma}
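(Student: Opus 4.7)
The plan is to exploit two facts: first, that the image $f'(\mathcal{N})$ is contained in a small ball (because $\rho(S)$ has a fixed point), and second, that hyperbolic distances and intrinsic horospherical distances on $\partial H$ are comparable at short range but diverge at long range. Small local Lipschitz constants will handle short-range pairs, and the boundedness of the image will handle long-range ones.

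Let $q_0\in\HH^n$ be a fixed point of~$\rho(S)$, so that $d(f'(p),q_0)$ is $j(S)$-invariant in~$p$. Fix any basepoint $p_0\in\mathcal{N}$. For an arbitrary $p\in\mathcal{N}$ we choose $\gamma\in S$ with $j(\gamma)\cdot p\in j(S)\cdot p_0+\{\text{fundamental domain}\}$; using the $2$-subconvexity of~$\mathcal{N}$ and the Euclidean diameter bound on $j(S)\backslash\mathcal{N}$, we connect $j(\gamma)\cdot p$ to~$p_0$ by a path in $\mathcal{N}$ of intrinsic length $\leq 2R$. Since the induced metric on the horosphere $\partial H$ agrees with its intrinsic flat metric, this path has hyperbolic length $\leq 2R$ as well. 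Remark~\ref{rem:pathlength}.(3) then gives
$$d(f'(p),q_0) \,=\, d\big(f'(j(\gamma)\cdot p),q_0\big) \,\leq\, d\big(f'(j(\gamma)\cdot p),f'(p_0)\big)+d(f'(p_0),q_0) \,\leq\, 2R\varepsilon+D_0,$$
where $D_0:=d(f'(p_0),q_0)$. In particular $f'(\mathcal{N})$ lies in a ball around~$q_0$ of radius~$D_0+2R\varepsilon$; by the same argument applied to any $p,q\in\mathcal{N}$ we get the two complementary bounds
\begin{equation}\label{eqn:plan1}
d(f'(p),f'(q)) \,\leq\, 2\varepsilon\,d_{\partial H}(p,q) \quad\text{and}\quad d(f'(p),f'(q)) \,\leq\, 4R\varepsilon,
\end{equation}
where $d_{\partial H}$ is the intrinsic horospherical distance. (Note: the additive constant $D_0$ drops out of the second bound once we apply the first around a small loop through~$q_0$; more directly, translate $q_0$ inside its $\rho(S)$-fixed set so that $f'(p_0)=q_0$, which is allowed since we only care about Lipschitz comparisons.)

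By formula~\eqref{eqn:disthorosph}, the hyperbolic distance on the horosphere satisfies
$$d(p,q) \,=\, 2\,\arcsinh\!\big(d_{\partial H}(p,q)/2\big).$$
Fix a threshold $D>0$ to be chosen. If $d_{\partial H}(p,q)\leq D$, then $d(p,q)\geq c_D\,d_{\partial H}(p,q)$ with $c_D:=2\arcsinh(D/2)/D>0$, so the first half of~\eqref{eqn:plan1} gives $d(f'(p),f'(q))\leq (2\varepsilon/c_D)\,d(p,q)$. If on the other hand $d_{\partial H}(p,q)\geq D$, then $d(p,q)\geq 2\arcsinh(D/2)$, while the second half of~\eqref{eqn:plan1} caps $d(f'(p),f'(q))$ by $4R\varepsilon$ independently of~$p,q$.

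It remains to choose $D$ and $\varepsilon$. Take, say, $D:=R$ (or any fixed function of~$R$); this makes $c_D$ a positive constant depending only on~$R$. Then choose $\varepsilon>0$ depending only on~$R$ so small that $2\varepsilon/c_D<1$ and $4R\varepsilon<2\arcsinh(D/2)$ hold simultaneously with strict inequality, so both cases yield $d(f'(p),f'(q))<d(p,q)$. Supping over $(p,q)\in\mathcal{N}^2$ gives $\Lip(f')<1$. The main subtlety is the long-range case, where small local Lipschitz constants do \emph{not} suffice on their own (arbitrarily distant horospherical points have only logarithmically large hyperbolic separation); this is exactly where the hypothesis that $\rho(S)$ fix a point is used, via the resulting uniform bound on the diameter of $f'(\mathcal{N})$.
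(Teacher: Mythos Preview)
Your argument has a genuine gap in the second inequality of~\eqref{eqn:plan1}. You claim $d(f'(p),f'(q))\leq 4R\varepsilon$, but your own computation gives only $d(f'(p),q_0)\leq 2R\varepsilon+D_0$, hence $d(f'(p),f'(q))\leq 4R\varepsilon+2D_0$. The constant $D_0=d(f'(p_0),q_0)$ depends on the map~$f'$ and is \emph{not} bounded in terms of~$R$: nothing in the hypotheses prevents the image $f'(\mathcal{N})$ from sitting on a huge sphere about~$q_0$. Your parenthetical fix --- ``translate~$q_0$ inside its $\rho(S)$-fixed set so that $f'(p_0)=q_0$'' --- does not work, since the fixed-point set of~$\rho(S)$ may be a single point (or any totally geodesic subspace not containing~$f'(p_0)$). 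The other suggested fix, ``apply the first bound around a small loop through~$q_0$'', is not meaningful: $q_0$ need not lie anywhere near~$\mathcal{N}$ or~$f'(\mathcal{N})$.

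This is exactly the difficulty the paper's proof is built to overcome. There, one does not try to bound the diameter of~$f'(\mathcal{N})$ directly; instead one uses that $f'(\mathcal{N})$ lies in the $2\varepsilon R$-neighborhood of the sphere $\partial B$ of radius $r=D_0$ about~$q_0$, and exploits the explicit distance formula on a sphere: projecting $f'(p),f'(q)$ to $x,y\in\partial B$, one has $d(x,y)=2\arcsinh\big(\sinh(r)\sin(d_{\partial B}(x,y)/(2\sinh r))\big)$, which is bounded by $\min\{2r,\,2\arcsinh(d_{\partial B}(x,y)/2)\}$ regardless of how large~$r$ is. Combined with $d_{\partial B}(x,y)\leq 4\varepsilon\,d_{\partial H}(p,q)$ (from the local Lipschitz bound, $2$-subconvexity, and a $2$-Lipschitz bound on the projection to~$\partial B$ near~$\partial B$), this yields an estimate that beats $d(p,q)=2\arcsinh(d_{\partial H}(p,q)/2)$ for all pairs with $d(p,q)\geq 1$, uniformly in~$r$. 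Your short-range argument (the first half of~\eqref{eqn:plan1}) is fine and matches the paper; it is the long-range case that needs the sphere comparison, not merely a diameter bound.
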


\begin{lemma}\label{lem:subconvex}
In our setting, up to enlarging the compact set $\mathcal{C}\subset \HH^n$ and replacing the horoballs $H_1,\dots,H_c$ and $H_1^k,\dots,H_c^k$ with smaller horoballs, still satisfying the list of six properties \eqref{eqn:triche}, we may assume that $\partial H^k_{\ell}\cap N_k$ is $2$-subconvex in $\partial H^k_{\ell}\simeq\R^{n-1}$ for all $1\leq\ell\leq c$ and large enough $k\in\N^{\ast}$.
\end{lemma}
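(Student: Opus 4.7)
The plan is to exploit the structure of the convex hull of the limit set near a cusp, in the upper half-space model of~$\HH^n$, through the following key geometric observation: vertical projection $\pi:\HH^n\to\R^{n-1}$ sends hyperbolic geodesics to Euclidean segments. Indeed, any hyperbolic geodesic in the upper half-space model is either a vertical ray, which $\pi$ sends to a point, or an arc of a Euclidean semicircle orthogonal to~$\R^{n-1}$ contained in a vertical plane, whose $\pi$-image is the Euclidean chord of that semicircle. This will allow me to identify $\partial H\cap N$ with a Euclidean-convex subset of $\partial H\simeq\R^{n-1}$ once the horoball $H$ is deep enough.

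Fix a cusp of~$j$ centered at a parabolic fixed point~$\xi$, and place $\xi=\infty$ in upper half-space coordinates, so that horospheres centered at~$\xi$ are the horizontal affine hyperplanes $\R^{n-1}\times\{b\}$ and $\pi$ is $(x,b)\mapsto x$. For any $\tilde{p},\tilde{q}\in N$, the hyperbolic geodesic $[\tilde{p},\tilde{q}]$ lies in~$N$ and its $\pi$-image is the Euclidean segment $[\pi(\tilde{p}),\pi(\tilde{q})]$; hence $\pi(N)$ is a Euclidean-convex subset of~$\R^{n-1}$. Moreover, since $\xi\in\Lambda_{j(\Gamma_0)}$, the set~$N$ is upward-closed in the $b$-direction: for any $(x,b')\in N$, the vertical geodesic ray from $(x,b')$ to $\xi=\infty$ lies in~$N$, so $(x,b)\in N$ for all $b\geq b'$.

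These two facts combine as follows. The inclusion $\partial H\cap N\subset\pi(N)$ (under $(x,b)\leftrightarrow x$) is trivial. Conversely, any $x\in\pi(N)$ satisfies $(x,b'_x)\in N$ for some $b'_x>0$, whence by upward-closedness $(x,b)\in N$ for all $b\geq b'_x$. Using Fact~\ref{fact:geomfinite}, $\pi(N)$ lies in a bounded Euclidean tube around the $j(S')$-invariant affine subspace~$\mathcal{V}$, on which $j(S')\simeq\Z^m$ acts cocompactly by translations. A compactness argument on a $j(S')$-fundamental domain of~$\pi(N)$ therefore produces a uniform threshold $b_0>0$ such that $\partial H\cap N=\pi(N)$ for every horoball $H=\R^{n-1}\times[b,\infty)$ with $b\geq b_0$. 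For such deep~$H$, the set $\partial H\cap N$ is Euclidean-convex in $\partial H\simeq\R^{n-1}$, hence $1$-subconvex.

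The same argument applies verbatim to each $(j_k,N_k)$, yielding a required depth $b_0^k$ in each cusp of~$j_k$. The principal remaining point, and the main technical obstacle of the proof, is to choose the $b_0^k$ uniformly in~$k$. This follows from the convergence $j_k\to j$, the common cusp type assumption, and the continuity of the geometric finiteness structure (Proposition~\ref{prop:rem-gfopen}): together they ensure that the transverse Euclidean extent of $\pi_k(N_k)$ and the cocompactness constants for $j_k(S_\ell)$ acting on cusp horospheres remain uniformly bounded as $k\to+\infty$. Choosing horoballs $H_\ell^k\subset H_\ell$ sufficiently deep simultaneously for all large~$k$ then yields the Euclidean-convexity --- in particular, $2$-subconvexity --- of each $\partial H_\ell^k\cap N_k$.
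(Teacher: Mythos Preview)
Your observations that $\pi(N)$ is Euclidean-convex (since vertical projection sends hyperbolic geodesics to Euclidean segments) and that $N$ is upward-closed (since $\xi=\infty\in\Lambda_{j(\Gamma_0)}$) are correct and pleasant. The gap is in the ``compactness argument'' producing a uniform~$b_0$. What you need is that the height function
\[
h(x)=\inf\{b>0:(x,b)\in N\}
\]
is bounded on a $j(S')$-fundamental domain of $\pi(N)$. But $h$, being a supremum of continuous functions (one for each supporting half-ball of~$N$), is only \emph{lower} semicontinuous; equivalently, closedness of~$N$ gives that $\{h>b\}$ is open, not $\{h<b\}$. Lower semicontinuous functions on compact sets attain their infimum, not their supremum, so pointwise finiteness of~$h$ plus compactness does not yield a uniform bound. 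Concretely, $\partial H\cap N$ is the horospherical slice of a hyperbolically convex set, and horospheres are not totally geodesic: there is no general reason for such a slice to be Euclidean-convex. The paper, accordingly, does \emph{not} claim convexity of $\partial H_\ell\cap N$ for arbitrary cusp rank.

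The paper's proof takes a genuinely different route in the general-rank case ($j_k=j$): it shows only that $N\supset V\times[b_0,\infty)$ for the $m$-flat~$V$, and then builds explicit short paths in $\partial H_\ell\cap N$ between any two points $p,q$ --- an arc of Euclidean circle cut out by an ideal triangle when $p,q$ are close, and a concatenation of such arcs with a straight segment in $V\times\{b\}$ when they are far --- yielding $2$-subconvexity directly. Only in the rank~$\geq n-2$ case (where the~$j_k$ vary) does the paper prove actual convexity, and there the argument is not compactness but a density statement: $\Lambda_{j(\Gamma_0)}$ is $\delta$-dense in~$\Omega$, which forces every supporting half-ball of~$N$ to have height~$<\delta$ over~$\Omega$ (this is Lemma~\ref{lem:convexcusp}, whose proof uses crucially that $\partial\Omega$ is empty or a pair of hyperplanes). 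Your uniformization step over~$k$ via Proposition~\ref{prop:rem-gfopen} is then in the right spirit, but the individual convexity it presupposes needs this density argument, not compactness.
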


Here Lemma~\ref{lem:horosph-sph} applies to $\mathcal{N}:=N_k\cap\partial H^k_{\ell}$ (which is $2$-subconvex by Lemma~\ref{lem:subconvex}) and to $f':=f'_k|_{\mathcal{N}}$ (which satisfies \eqref{eqn:yetanotherlabel}).
Note that $\rho_k(S_{\ell})$ has a fixed point in~$\HH^n$ by Fact~\ref{fact:fixedpoint}, since $\rho_k$ is cusp-deteriorating with respect to~$j_k$, and that the Euclidean diameter of $j_k(S_{\ell})\backslash (N_k\cap\partial H^k_{\ell})$ is uniformly bounded for $k\in\N^{\ast}$, by the uniform bound $\Theta$ on cusp thickness.
Therefore it is sufficient to prove Lemmas \ref{lem:horosph-sph} and~\ref{lem:subconvex} to complete the proof of Proposition~\ref{prop:contpropertiesC}.(1)--(1').

\begin{proof}[Proof of Lemma~\ref{lem:horosph-sph}]
Fix $R>0$ and let $j,\rho,H,\mathcal{N}$ be as in the statement.
Consider a $(j,\rho)$-equivariant map $f' : \mathcal{N}\rightarrow\HH^n$ such that $\Lip_p(f')\leq\varepsilon$ for all $p\in\mathcal{N}$, for some $\varepsilon>0$.
Let us show that if $\varepsilon$ is smaller than some constant independent of~$f'$, then $\Lip(f')<1$.
Let $d_{\partial H}$ be the natural Euclidean metric on $\partial H$.
By \eqref{eqn:horomu}, for any $p,q\in\mathcal{N}$,
\begin{equation}\label{eqn:horoline}
d(p,q) = 2\,\arcsinh\left(\frac{d_{\partial H}(p,q)}{2}\right).
\end{equation}
If $d(p,q)\leq 1$, then $d(p,q)\geq\kappa\,d_{\partial H}(p,q) $ for some universal $\kappa>0$ (specifically, $\kappa=(2\sinh(1/2))^{-1}$ by concavity of $\arcsinh$). 
On the other hand, $d(f'(p),f'(q))\leq 2\varepsilon\,d_{\partial H}(p,q)$ by Remark~\ref{rem:pathlength}.(3) and $2$-subconvexity, hence
$$\frac{d(f'(p),f'(q))}{d(p,q)} \leq \frac{2\varepsilon}{\kappa} < \frac{2}{3}$$
for all $p,q\in\mathcal{N}$ with $0<d(p,q)\leq 1$ as soon as $\varepsilon<\kappa/3$.
We now assume that this is satisfied and consider pairs of points $p,q\in\mathcal{N}$ with $d(p,q)\geq 1$.

Let $\partial B$ be a sphere of~$\HH^n$ centered at a fixed point of~$\rho(S)$, and containing $f'(p_0)$ for some $p_0\in\mathcal{N}$ (see Figure~\ref{fig:H}).
Then $f'(j(\gamma)\cdot p_0)\in\partial B$ for all $\gamma\in S$, since $f'$ is $(j,\rho)$-equivariant and $\rho(S)$ preserves $\partial B$.
By Remark~\ref{rem:pathlength}.(3) and $2$-subconvexity, the set $f'(\mathcal{N})$ is contained in the $2\varepsilon R$-neighborhood of $\partial B$.
If the radius of $\partial B$ is $\leq 1/3$, then as soon as $\varepsilon<\frac{1}{24R}$ we have
$$d(f'(p),f'(q)) \,\leq\, \frac{2}{3} + 4\varepsilon R \,\leq\, \frac{5}{6}\,d(p,q)$$
for all $p,q\in\mathcal{N}$ with $d(p,q)\geq 1$, hence $\Lip(f')\leq 5/6<1$.
We now assume that the radius $r$ of $\partial B$ is $>1/3$ (possibly very large!).
There exists a universal constant $\eta>0$ such that the closest-point projection onto any sphere of~$\HH^n$ of radius $>1/3$ is $2$-Lipschitz on the $\eta$-neighborhood (inner and outer) of this sphere.
In particular, if $\varepsilon\leq\frac{\eta}{2R}$, which we shall assume from now on, then the projection onto $\partial B$ is $2$-Lipschitz on the set $f'(\mathcal{N})$.

\begin{figure}[h!]
\begin{center}
\labellist
\small\hair 2pt
\pinlabel{$\HH^n$} at 20 170
\pinlabel{$\HH^n$} at 430 220
\pinlabel{$p_0$} at 90 232
\pinlabel{$H$} at 110 160
\pinlabel{$j(S)$} at 193 165
\pinlabel{$f'$} at 352 178
\pinlabel{$f'(\partial H)$} at 470 66
\pinlabel{$f'(p_0)$} at 430 165
\pinlabel{$\rho(S)$} at 527 100
\pinlabel{$B$} at 599 90
\endlabellist
\includegraphics[width=12.5cm]{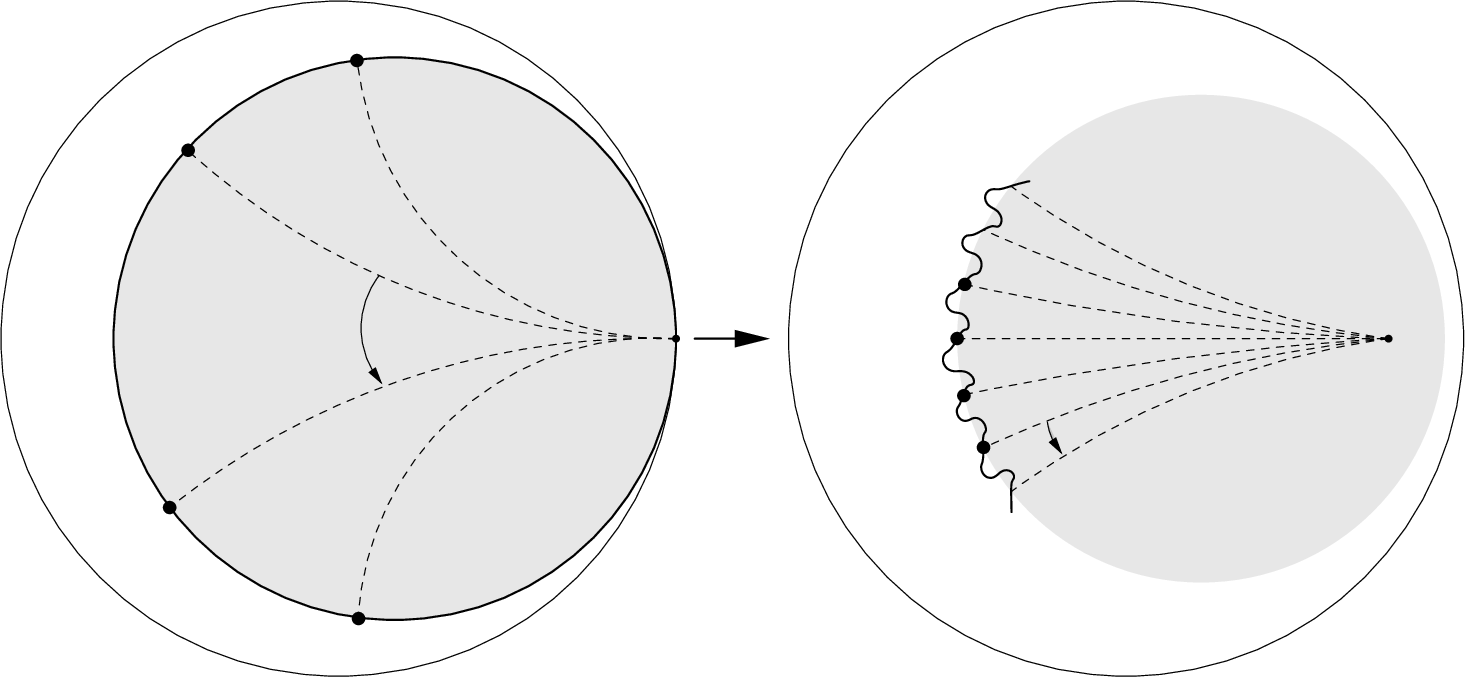}
\caption{An equivariant map $f'$, contracting at small scale, taking a horosphere to (or near) a sphere, is contracting at all scales.}
\label{fig:H}
\end{center}
\end{figure}

Let $x,y\in\partial B$ be the respective projections of $f'(p),f'(q)$; the distances $d(x,f'(p))$ and $d(y,f'(q))$ are bounded from above by $2\varepsilon R$.
Let $d_{\partial B}(x,y)$ be the length of the shortest path from $x$ to~$y$ that is contained in the sphere~$\partial B$.
The formulas \eqref{eqn:circlearclength} and \eqref{eqn:circle} yield
\begin{equation}\label{eqn:distsphere}
d(x,y) = 2\,\arcsinh\left(\sinh(r) \cdot \sin\left(\frac{d_{\partial B}(x,y)}{2\sinh(r)}\right)\right).
\end{equation}
On the other hand, by $2$-subconvexity, we can find a path $\omega$ from $p$ to $q$ in $\mathcal{N}$ of length at most $2\,d_{\partial H}(p,q)$. Then $d_{\partial B}(x,y)$ is bounded from above by the length of the projection of the path $f'(\omega)$ to $\partial B$, hence, by Remark~\ref{rem:pathlength}.(3),
\begin{equation}\label{eqn:distineq}
d_{\partial B}(x,y) \leq 4\varepsilon\,d_{\partial H}(p,q).
\end{equation}
Using $\sin(t)\leq\min\{1,t\}$ for $t\geq 0$, it follows from \eqref{eqn:distsphere} and \eqref{eqn:distineq} that
\begin {eqnarray*}
d(f'(p),f'(q)) & \leq & d(f'(p),x) + d(x,y) + d(y,f'(q))\\
& \leq & \min\big\{ 2r, 2\,\arcsinh(d_{\partial B}(x,y)/2)\big\} + 4\varepsilon R\\
& \leq & \min\big\{2r, 2\,\arcsinh\big(2\varepsilon\,d_{\partial H}(p,q)\big)\big\} + 4\varepsilon R.
\end{eqnarray*}
Comparing with \eqref{eqn:horoline}, we see that if $\varepsilon$ is smaller than some constant depending only on~$R$, then
$$d(f'(p),f'(q)) < d(p,q)$$
for all $p,q\in\mathcal{N}$ with $d(p,q)\geq 1$.
Since $d(f'(p),f'(q))$ is bounded independently of $p$ and~$q$, the ratio $d(f'(p),f'(q))/d(p,q)$ is uniformly bounded away from~$1$ by compactness of $\mathcal{N}$ modulo $j(S)$.
This proves that $\Lip_{\mathcal{N}}(f')<\nolinebreak 1$.
\end{proof}

\begin{proof}[Proof of Lemma~\ref{lem:subconvex}]
Fix $1\leq\ell\leq c$, where $c$ is still the number of cusps.

\smallskip
\noindent
$\bullet$ \textbf{Subconvexity for $\partial H_{\ell}\cap N$.}
We first prove that, up to replacing $H_{\ell}$ with some smaller, concentric horoball, the set $\partial H_{\ell}\cap N$ is $2$-subconvex~in~$\partial H_{\ell}$.
This will prove Lemma~\ref{lem:subconvex} when $j_0$ has a cusp of rank $<n-2$, since in that case $j_k=j$ and $H_{\ell}^k=H_{\ell}$ for all~$k$ by assumption.

The stabilizer $S_{\ell}\subset\Gamma_0$ of $H_{\ell}$ under~$j$ has a finite-index normal subgroup~$S'$ isomorphic to~$\Z^m$ for some $0<m<n$ (see Section~\ref{subsec:geo-finiteness}).
Consider the upper half-space model $\R^{n-1}\times\R_+^{\ast}$ of~$\HH^n$, so that $\partial_{\infty}\HH^n$ identifies with $\R^{n-1}\cup\{\infty\}$.
We may assume that $H_{\ell}$ is centered at infinity, so that $\partial H_{\ell}=\R^{n-1}\times\{ b\} $ for some $b>0$.
Let $\Omega$ be the convex hull of $\Lambda_{j(\Gamma_0)}\smallsetminus\{\infty\}$ in~$\R^{n-1}$, where $\Lambda_{j(\Gamma_0)}$ is the limit set of $j(\Gamma_0)$.
The group $j(S')$ acts on~$\R^{n-1}$ by Euclidean isometries and there exists an $m$-dimensional affine subspace $V\subset\Omega$, preserved by $j(S')$, on which $j(S')$ acts as a lattice of translations (see Section~\ref{subsec:geo-finiteness}).

We claim that $N$ contains $V\times [b_0,+\infty)$ for some $b_0>0$.
Indeed, since $V\subset\Omega$, some point $p_0\in V\times\R_+^{\ast}\subset\HH^n$ belongs to~$N$.
The convex hull in~$\HH^n$ of the orbit $j(S')\cdot p_0$ is also contained in~$N$.
This convex hull contains all the $j(S')$-translates of the (compact) convex hull of
$$\big\{ j(\gamma_1^{\varepsilon_1}\dots \gamma_m^{\varepsilon_m})\cdot p_0\, | \, (\varepsilon_1,\dots,\varepsilon_m) \in \{0,1\}^m \big\} ,$$
where $(\gamma_1,\dots,\gamma_m)$ is a generating subset of~$S'$; the union $X$ of these $j(S')$-translates projects vertically onto the whole of~$V$ and has bounded height since $j(S')$ preserves the horospheres centered at~$\infty$.
Then $N$ contains $V\times [b_0,+\infty)$ where $b_0>0$ is the maximal height of~$X$.

Up to replacing $H_{\ell}$ with some smaller, concentric horoball, we may assume that $b\geq\max\{ b_0,7\delta\}$, where $\delta>0$ is the Euclidean diameter of $j(S')\backslash\Omega$.
Let us show that $\partial H_{\ell}\cap N$ is then $2$-subconvex.
Consider $p,q\in\partial H_{\ell}\cap N$, with respective orthogonal projections $\zeta_p,\zeta_q$ to~$\R^{n-1}$.
We have $d_{\partial H_{\ell}}(p,q)=d_{\R^{n-1}}(\zeta_p,\zeta_q)/b$.

Suppose $d_{\R^{n-1}}(\zeta_p,\zeta_q)\leq 6\delta$.
By definition of~$\delta$, we can find a point $\zeta\in\nolinebreak\Lambda_{j(\Gamma_0)}\smallsetminus\{ \infty\} \subset\R^{n-1}$ with $d_{\R^{n-1}}(\zeta,\zeta_p)\leq\delta$.
The hyperbolic triangle $(p,q,\zeta)$ is contained in~$N$.
Since $b\geq 7\delta$, both edges $(p,\zeta]$ and $(q,\zeta]$ lie outside $H_{\ell}=\R^{n-1}\times[b,+\infty)$. It follows that the intersection of this triangle $(p,q,\zeta)$ with $\partial H_{\ell}$ is an arc of Euclidean circle from $p$ to~$q$, of angular measure $\leq \pi$, and hence~has~Euclidean length at most $\frac{\pi}{2}\,d_{\partial H_{\ell}}(p,q)\leq 2\,d_{\partial H_{\ell}}(p,q)$.

Suppose $d_{\R^{n-1}}(\zeta_p,\zeta_q)\geq 6\delta$.
Since $\zeta_p,\zeta_q\in\Omega$, by definition of~$\delta$ we can find points $p',q'$ in $N\cap (V\times\{ b\})$ whose orthogonal projections $\zeta_{p'},\zeta_{q'}$ to~$\R^{n-1}$ satisfy
$$d_{\R^{n-1}}(\zeta_p,\zeta_{p'}) \leq \delta \quad\quad\mathrm{and}\quad\quad d_{\R^{n-1}}(\zeta_q,\zeta_{q'}) \leq \delta.$$
Then $d_{\partial H_{\ell}}(p,p')=d_{\R^{n-1}}(\zeta_p,\zeta_{p'})/b\leq\delta/b$, and similarly $d_{\partial H_{\ell}}(q,q')\leq\delta/b$.
As above, there is an arc of Euclidean circle from $p$ to~$p'$ in $\partial H_{\ell}\cap N$, of length at most $2\,d_{\partial H}(p,p')\leq 2\delta/b$.
Similarly, there is an arc of Euclidean circle from $q'$ to~$q$ in $\partial H_{\ell}\cap N$, of Euclidean length $\leq 2\delta/b$.
Concatenating these arcs with the Euclidean segment $[p',q']\subset V\times\{b\}$, which is contained in $\partial H_{\ell}\cap N$ and has Euclidean length $b^{-1}\,d_{\R^{n-1}}(\zeta_{p'},\zeta_{q'})$, we find a path from $p$ to~$q$ in $\partial H_{\ell}\cap N$ of Euclidean length at most 
$$\frac{d_{\R^{n-1}}(\zeta_{p'},\zeta_{q'})+4\delta}{b} \leq \frac{d_{\R^{n-1}}(\zeta_p,\zeta_q)+6\delta}{b} \leq 2\,d_{\partial H_{\ell}}(p,q).$$
This proves that $\partial H_{\ell}\cap N$ is $2$-subconvex in~$\partial H_{\ell}$.

\smallskip
\noindent
$\bullet$ \textbf{Convexity for $\partial H_{\ell}^k\cap N_k$ in the case of cusps of rank $\geq n-2$.}
Finally, we suppose that all cusps of~$j_0$ have rank $\geq n-2$, in which case the representation $j_k$ is allowed to vary with~$k$.
Recall that the cusp thickness of $j_k(\Gamma_0)\backslash\HH^n$ at $\partial H_k^{\ell}$ is bounded by some constant $\Theta>0$ independent of $\ell$ and~$k$.
If we replace every horoball $H_k^{\ell}$ with the smaller, concentric horoball at distance $\log(3 \Theta)$ from $\partial H_k^{\ell}$ (and correspondingly enlarge the compact set $\mathcal{C}\subset \HH^n$), we obtain new horoballs $H_k^{\ell}$ still satisfying the list of six properties \eqref{eqn:triche}, such that the cusp thickness of $j_k(\Gamma_0)\backslash\HH^n$ at $\partial H_k^{\ell}$ is $\leq 1/3$\ for all $\ell$ and~$k$.
Then $\partial H_{\ell}^k\cap N_k$ is convex in $\partial H_{\ell}^k$ by Lemma~\ref{lem:convexcusp}, hence in particular $2$-subconvex.
\end{proof}

\subsection{The constant $C(j,\rho)$ for nonreductive~$\rho$}\label{subsec:nonred}

In order to prove conditions (2), (3), (3)', (3)'' of Proposition~\ref{prop:contpropertiesC} (in Section~\ref{subsec:C-highcontinuous}), we shall rely on the existence of a maximally stretched lamination for $C(j,\rho)\geq 1$, given by Theorem~\ref{thm:lamin}.
However, Theorem~\ref{thm:lamin} assumes that the space $\F^{j,\rho}$ of equivariant maps realizing the best Lipschitz constant $C(j,\rho)$ is nonempty: this holds for reductive~$\rho$ (Lemma~\ref{lem:Fnonempty}), but may fail otherwise (see Section~\ref{ex:nonreductive2}).
In order to deal with nonreductive~$\rho$, we first establish the following lemma, which extends Lemma~\ref{lem:nonred-lip-cc}.

\begin{lemma}\label{lem:nonred-lip}
For any pair of representations $(j,\rho)\in\Hom(\Gamma_0,G)^2$ with $j$ geometrically finite,
$$C(j,\rho) = C(j,\rho^{\mathrm{red}}),$$
unless the representation $\rho$ is not cusp-deteriorating and $C(j,\rho^{\mathrm{red}})<1$, in which case $C(j,\rho)=1$.
\end{lemma}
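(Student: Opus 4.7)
The plan is to follow the strategy of Lemma~\ref{lem:nonred-lip-cc}, but to replace the upper-semicontinuity step (which used Proposition~\ref{prop:contCcc}, available only for convex cocompact $j$) by a direct partition-of-unity construction that handles cusps by hand. I assume $\rho$ nonreductive (the statement is trivial otherwise); let $\xi\in\partial_\infty\HH^n$ be its unique fixed point, fix an oriented geodesic $\A$ ending at $\xi$ and a pure translation $a\in G$ along $\A$ with repelling fixed point $\xi$. The conjugates $\rho^{(i)}:=a^i\rho(\cdot)a^{-i}$ tend pointwise to $\rho^{\mathrm{red}}$ and satisfy $C(j,\rho^{(i)})=C(j,\rho)$ by Remark~\ref{rem:CLipconj}. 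The easy inequality $C(j,\rho^{\mathrm{red}})\leq C(j,\rho)$ is proved verbatim as in Lemma~\ref{lem:nonred-lip-cc}, using the $1$-Lipschitz horospherical projection $\mathrm{pr}:\HH^n\to\A$ which intertwines $\rho$ with $\rho^{\mathrm{red}}$.

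For the reverse inequality in the non-exceptional case, I will establish $\limsup_i C(j,\rho^{(i)})\leq C(j,\rho^{\mathrm{red}})$ directly. Given $\varepsilon>0$, I start with an almost-optimal $(j,\rho^{\mathrm{red}})$-equivariant map $f_\star$ of Lipschitz constant $\leq C(j,\rho^{\mathrm{red}})+\varepsilon$. A preliminary point is that $\rho^{\mathrm{red}}$ inherits cusp-deterioration from $\rho$: on any cusp stabilizer $S_\ell$, the elements $\rho^{(i)}(\gamma)=a^i\rho(\gamma)a^{-i}$ remain elliptic, their limit lies in the Levi $MA$ (which contains no parabolic element), and lower semicontinuity of translation length forces the limit to be elliptic. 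This lets me put $f_\star$ into standard cusp form via Proposition~\ref{prop:goodincusps}. Next, on the thick (compact) part of a fundamental domain of the convex core, I adapt the partition-of-unity construction of Section~\ref{subsubsec:uppersemicont} (which is easier here since only $\rho$ varies, not $j$) to convert $f_\star$ into a $(j,\rho^{(i)})$-equivariant map whose Lipschitz constant on that part is within $\varepsilon$ of $\Lip(f_\star)$ for large $i$, by the Leibniz-type estimate of Lemma~\ref{lem:partofunity}. In each cusp $H_\ell$ with stabilizer $S_\ell$, the boundary data on $\partial H_\ell$ is extended equivariantly into $H_\ell$: when $C(j,\rho^{\mathrm{red}})\geq 1$, via the amenable Kirszbraun theorem (Proposition~\ref{prop:amenableKirszbraun}); when $C(j,\rho^{\mathrm{red}})<1$ and $\rho$ is cusp-deteriorating, by collapsing the cusp onto a fixed point of $\rho^{(i)}(S_\ell)$ (which exists by Fact~\ref{fact:fixedpoint}), in the spirit of Proposition~\ref{prop:goodincusps}(3). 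Letting $i\to\infty$ and then $\varepsilon\to 0$ yields $C(j,\rho)\leq C(j,\rho^{\mathrm{red}})$.

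In the exceptional case, where $\rho$ is not cusp-deteriorating and $C(j,\rho^{\mathrm{red}})<1$: Lemma~\ref{lem:parabdet} forces $C(j,\rho)\geq 1$, and the matching upper bound $C(j,\rho)\leq 1$ is obtained by gluing $f_\star$ on the thick part with an explicit $1$-Lipschitz $(j|_{S_\ell},\rho|_{S_\ell})$-equivariant map inside each cusp, built by comparing the Euclidean actions of $j(S_\ell)$ and $\rho(S_\ell)$ on their respective horospheres via Lemma~\ref{lem:disthorosphere}. The main technical obstacle throughout is controlling the Lipschitz constant at the thick/cusp interface, where the partition-of-unity construction on the thick part must mesh with the cusp extension without inflating $\Lip$; this is handled by choosing the horoballs $H_\ell$ deep enough that $\rho^{(i)}$ is indistinguishable from $\rho^{\mathrm{red}}$ on the relevant horospherical scale, so that the boundary data from the thick part is nearly constant and the cusp extension need only be a mild equivariant perturbation of that constant.
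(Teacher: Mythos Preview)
Your plan is correct and follows the same overall strategy as the paper: the same easy inequality via horocyclic projection onto $\A$, the same conjugation $\rho^{(i)}=a^i\rho(\cdot)a^{-i}\to\rho^{\mathrm{red}}$, and a partition-of-unity argument to bound $\limsup_i C(j,\rho^{(i)})$ from above. Two points of comparison are worth noting. First, the paper begins by postcomposing the near-optimal $(j,\rho^{\mathrm{red}})$-equivariant map with the closest-point projection onto~$\A$ itself, so that the reference map takes values in~$\A$; this makes the cusp-deteriorating case cleaner than your split into thick part plus amenable-Kirszbraun extension. Indeed, since $\rho(S_\ell)$ is elliptic and fixes~$\xi$, it fixes pointwise a full geodesic line~$\A'$ through~$\xi$, so $\rho^{(i)}(S_\ell)$ fixes the line $a^i\cdot\A'$, which converges to~$\A$; one can therefore pick fixed points $p_\ell^{(i)}\in a^i\cdot\A'$ converging to the (single) value $f(B_\ell)\in\A$, set $f_{\ell,i}\equiv p_\ell^{(i)}$ on the horoball, and run a \emph{single} partition of unity over all of~$N$ (balls and horoballs together). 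This sidesteps entirely the horosphere-interface issue you flag at the end. Second, in the exceptional (non-cusp-deteriorating) case the paper does split into thick part plus cusp as you do, but the passage from small \emph{local} Lipschitz constant on $\partial H_\ell\cap N$ to a \emph{global} Lipschitz bound $\leq 1$ there is not automatic (the horosphere is not convex): the paper isolates this as a separate horosphere-to-horosphere comparison lemma (Lemma~\ref{lem:horosph-horosph}), which is the precise content of your ``nearly constant boundary data'' heuristic. Your reference to Lemma~\ref{lem:disthorosphere} alone would not suffice here; the actual extension into the cusp is then by Proposition~\ref{prop:amenableKirszbraun} applied to that boundary data, rather than by constructing an explicit map and gluing.
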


Here $\rho^{\mathrm{red}}\in\Hom(\Gamma_0,G)$ is the ``reductive part'' of~$\rho$, defined in Section~\ref{subsubsec:lowersemicont}: if $\rho$ is nonreductive, then the group $\rho^{\mathrm{red}}(\Gamma_0)$ preserves some geodesic line of $\HH^n$ with an endpoint in $\partial_{\infty}\HH^n$ equal to the fixed point of $\rho(\Gamma_0)$.
Since $\rho^{\mathrm{red}}$ is well defined up to conjugation, the constant $C(j,\rho^{\mathrm{red}})$ is well defined by Remark~\ref{rem:CLipconj}.
If $\rho$ is reductive, then $\rho^{\mathrm{red}}:=\rho$.

\begin{proof}[Proof of Lemma~\ref{lem:nonred-lip}]
We may assume that $\rho$ is nonreductive, with fixed point $\xi\in\partial_{\infty}\HH^n$.
Then $\rho^{\mathrm{red}}$ is cusp-deteriorating and preserves an oriented geodesic line $\A$ of~$\HH^n$ with endpoint~$\xi$.
If the group $j(\Gamma_0)$ is elementary and fixes a unique point in $\partial_{\infty}\HH^n$, then $C(j,\rho)=1$ by Corollary~\ref{cor:jrhononred} and $C(j,\rho^{\mathrm{red}})=0$ by Remark~\ref{rem:boundedrho}.
We now assume that we are not in this case, which means that the convex core of $M:=j(\Gamma_0)\backslash\HH^n$ is nonempty.
As in the proof of Lemma~\ref{lem:nonred-lip-cc}, by using the projection onto~$\A$ along concentric horocycles we see that
$$C(j,\rho^{\mathrm{red}}) \leq C(j,\rho),$$
and there is a sequence $(a_k)_{k\in\N^{\ast}}$ of pure translations along~$\A$, with repelling fixed point~$\xi$, such that the conjugates $\rho_k:=a_k\rho(\cdot)a_k^{-1}$ (which still fix~$\xi$) converge to~$\rho^{\mathrm{red}}$ as $k\rightarrow +\infty$.
By invariance of $C(j,\rho)$ under conjugation (Remark~\ref{rem:CLipconj}), it is sufficient to prove that
$$\limsup_{k\rightarrow +\infty}\, C(j,\rho_k) \leq
  \left \{ \begin{array}{lll}  C(j,\rho^{\mathrm{red}}) & \text{if $\rho$ is cusp-deteriorating}, \\ \max\big(1,C(j,\rho^{\mathrm{red}})\big) & \text{otherwise}.
  \end{array} \right .$$
To prove this, we use a partition-of-unity argument as in Sections \ref{subsubsec:uppersemicont} and~\ref{subsec:C<1open}.
Fix $\varepsilon>0$.
By using Proposition~\ref{prop:goodincusps} and postcomposing with the closest-point projection onto~$\A$, we can find a $(j,\rho^{\mathrm{red}})$-equivariant map $f : \HH^n\rightarrow \A$ with $\Lip(f)\leq C(j,\rho^{\mathrm{red}})+\varepsilon/2$ that is constant on neighborhoods of some horoballs $B_1,\dots,B_c$ of~$\HH^n$ whose images in $M=j(\Gamma_0)\backslash\HH^n$ are disjoint and intersect the convex core in standard cusp regions (Definition~\ref{def:standardcusp}), representing all the cusps.
We shall use~$f$ to build $(j,\rho_k)$-equivariant maps~$f_k$ with $\Lip(f_k)$ bounded from above by $\Lip(f)+\varepsilon$ or $1 +\varepsilon$, as the case may be, for all large enough~$k$.
Let $S_1,\dots,S_c\subset\Gamma_0$ be the respective stabilizers of $B_1,\dots,B_c$ under~$j$; the singleton $f(B_i)$ is fixed by $\rho(S_i)$ for all $1\leq i\leq c$.
Let also $B_{c+1},\dots,B_r$ be open balls of~$\HH^n$, each projecting injectively to $j(\Gamma_0)\backslash\HH^n$, such that $\bigcup_{i=1}^r j(\Gamma_0)\cdot B_i$ contains the preimage $N\subset\HH^n$ of the convex core of~$M$.
For $c<i\leq r$, let $f_{i,k} : j(\Gamma_0)\cdot B_i\rightarrow\HH^n$ be the $(j,\rho_k)$-equivariant map that coincides with $f$ on~$B_i$.

We first assume that $\rho$ is cusp-deteriorating.
For $1\leq i \leq c$, all the elements of $\rho(S_i)$ are elliptic, hence $\rho(S_i)$ fixes a point in~$\HH^n$ (Fact~\ref{fact:fixedpoint}).
Since it also fixes $\xi\in\partial_{\infty}\HH^n$, it fixes pointwise a full line $\A'$ with endpoint~$\xi$.
Then $\rho_k(S_i)=a_k\rho(S_i)a_k^{-1}$ fixes pointwise the line $a_k\cdot\A'$, which converges to~$\A$ as $k\rightarrow +\infty$.
In particular, we can find a sequence $(p_{i,k})_{k\in\N^{\ast}}$ that converges to the singleton $f(B_i)\in\A$ as $k\rightarrow +\infty$, with $p_{i,k}$ fixed by $\rho_k(S_i)$ for all~$k$.
For $1\leq i \leq c$ and $k\in\N^{\ast}$, let
$$f_{i,k} :\, j(\Gamma_0)\cdot B_i \longrightarrow \HH^n$$
be the $(j,\rho_k)$-equivariant map that is constant equal to~$p_{i,k}$ on the horoball~$B_i$.
Let $(\psi_i)_{1\leq i\leq r}$ be a Lipschitz partition of unity subordinated to the covering $(j(\Gamma_0)\cdot B_i)_{1\leq i\leq r}$ of~$N$, and let $L:=\max_{1\leq i\leq r} \Lip(\psi_i)$.
By Lemma~\ref{lem:partofunity}, the $(j,\rho_k)$-equivariant map
$$f_k := \sum_{i=1}^r \psi_i\,f_{i,k} :\ N \longrightarrow \HH^n$$
satisfies
$$\Lip_p(f_k) \leq r L R_{p,k} + \Lip_p(f)$$
for all $p\in N$, where the $j(\Gamma_0)$-invariant function
$$p \longmapsto R_{p,k} := \max_{i,i'}\ d\big(f_{i,k}(p),f_{i',k}(p)\big)$$
converges uniformly to~$0$ for $p\in\bigcup_{i=1}^r B_i$, as $k\rightarrow +\infty$.
For large enough~$k$ this yields $\Lip_N(f_k)\leq\Lip(f)+\varepsilon/2$ by \eqref{eqn:supLip}, hence
$$C(j,\rho_k) \leq C(j,\rho^{\mathrm{red}})+\varepsilon$$
by Lemma~\ref{lem:proj}.
Letting $\varepsilon$ go to~$0$, we obtain
$\limsup_k C(j,\rho_k) \leq C(j,\rho^{\mathrm{red}})$
as desired.

Suppose now that $\rho$ is \emph{not} cusp-deteriorating.
We proceed as in the cusp-deteriorating case, but work with the union of balls $\bigcup_{c<i\leq r} j(\Gamma_0)\cdot B_i$ instead of the union of balls and horoballs~$\bigcup_{1\leq i\leq r} j(\Gamma_0)\cdot B_i$.
Let $(\psi_i)_{c< i\leq r}$ be a Lipschitz partition of unity of $N':=N\smallsetminus \bigcup_{1\leq\ell\leq c} j(\Gamma_0)\cdot B_{\ell}$ subordinated to the covering $(j(\Gamma_0)\cdot B_i)_{c< i\leq r}$, and let $L:=\max_{c< i\leq r} \Lip(\psi_i)$.
As in the cusp-deteriorating case, by Lemma~\ref{lem:partofunity}, the $(j,\rho_k)$-equivariant map
$$f'_k := \sum_{c<i\leq r} \psi_i\,f_{i,k} :\ N' \longrightarrow \HH^n$$
satisfies
$$\Lip_p(f'_k) \leq \Lip_p(f) + \varepsilon/2$$
for all $p\in N'$ when $k$ is large enough.
In particular, for $1\leq\ell\leq c$, since $f$ is constant on a neighborhood of the horoball~$B_{\ell}$, we obtain $\Lip_p(f'_k)\leq\varepsilon/2$ for all $p\in N\cap \partial B_{\ell}$.
It is sufficient to prove that
\begin{equation}\label{eqn:Liphorospherefk}
\Lip_{N\cap \partial B_{\ell}}(f'_k) \leq 1
\end{equation}
for all $1\leq\ell\leq c$, since Theorem~\ref{thm:Kirszbraunequiv} (or Proposition~\ref{prop:amenableKirszbraun}) then lets us extend $f'_k|_{N\cap \partial B_{\ell}}$ to a $1$-Lipschitz, $(j|_{S_{\ell}},\rho_k|_{S_{\ell}})$-equivariant map $(B_{\ell}\cup\partial B_{\ell})\cap N \rightarrow\HH^n$.
We can then extend $f'_k$ to the orbit $j(\Gamma_0)\cdot (B_{\ell}\cup\partial B_{\ell})\cap N$ in a $(j,\rho_k)$-equivariant way.
Piecing together these maps for varying $1\leq\ell\leq c$, and taking $f'_k$ on~$N'$, we then obtain a $(j,\rho_k)$-equivariant map $f_k : \HH^n\rightarrow\HH^n$ with $\Lip(f_k)\leq\max(1,\Lip(f)+\varepsilon/2)$ for all large enough~$k$ (using \eqref{eqn:supLip}).
Letting $\varepsilon$ go to~$0$, we obtain $\limsup_k C(j,\rho_k)\leq\max(1,C(j,\rho^{\mathrm{red}}))$, as desired.
To prove \eqref{eqn:Liphorospherefk}, it is sufficient to establish the following analogue of Lemma~\ref{lem:horosph-sph}, which together with Lemma~\ref{lem:subconvex} completes the proof of Lemma~\ref{lem:nonred-lip}.
\end{proof}

\begin{lemma}\label{lem:horosph-horosph}
Let $S$ be a discrete group.
For any $R>0$, there exists $\varepsilon>0$ with the following property: if $(j,\rho)\in\Hom(S,G)^2$ is a pair of representations with $j$ injective such that
\begin{itemize}
  \item the group $j(S)$ is discrete and preserves a horoball $H$ of~$\HH^n$,
  \item the group $\rho(S)$ has a fixed point in~$\partial_{\infty}\HH^n$,
  \item there exists a closed, $j(S)$-invariant, $2$-subconvex set $\mathcal{N}\subset \partial H$ such that the quotient $j(S)\backslash \mathcal{N}$ has (Euclidean) diameter $\leq R$,
\end{itemize}
then any $(j,\rho)$-equivariant map $f' : \mathcal{N}\rightarrow\HH^n$ satisfying $\Lip_p(f')\leq\varepsilon$ for all $p\in \mathcal{N}$ satisfies $\Lip(f')\leq 1$.
\end{lemma}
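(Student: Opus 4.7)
The plan is to parallel the proof of Lemma~\ref{lem:horosph-sph}, replacing the $\rho(S)$-invariant sphere $\partial B$ by either a horosphere centered at $\eta$ or a geodesic line ending at $\eta$, depending on the structure of $\rho(S)$; the weaker conclusion $\Lip(f')\leq 1$ (as opposed to $<1$) reflects the fact that arcs on horospheres at $\eta$ can be arbitrarily long. For $p,q\in\mathcal{N}$ with $d(p,q)\leq 1$, the small-scale argument of Lemma~\ref{lem:horosph-sph} carries over verbatim: from \eqref{eqn:horoline} one has $d(p,q)\geq\kappa\,d_{\partial H}(p,q)$ with $\kappa=(2\sinh(1/2))^{-1}$, and combining with the pointwise Lipschitz bound and $2$-subconvexity via Remark~\ref{rem:pathlength}.(3) yields $d(f'(p),f'(q))\leq(2\varepsilon/\kappa)\,d(p,q)\leq d(p,q)$ whenever $\varepsilon\leq\kappa/2$.

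For $d(p,q)\geq 1$, I will exploit the structure of $\rho(S)$ as follows. By the first Bieberbach theorem, the cusp group $j(S)$ admits a finite-index normal subgroup $S_0\simeq\Z^m$, and $\rho(S_0)$ is an abelian subgroup of the stabilizer of $\eta$ in~$G$, identified via an $\eta$-centered upper half-space model with the group of Euclidean similarities of~$\R^{n-1}$. Either (Case~A) every element of $\rho(S_0)$ has trivial dilation factor and hence preserves every horosphere at $\eta$, or (Case~B) $\rho(S_0)$ contains a hyperbolic element, whose axis $\mathcal{A}_0$ is preserved by all of $\rho(S_0)$ by commutativity. Passing to a further finite-index subgroup of $S$ (which only rescales the diameter bound $R$ by a factor at most $\nu(n)$), I may assume $\rho(S)$ itself preserves the corresponding invariant object. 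In Case~A, pick $p_0\in\mathcal{N}$ and let $\partial B'$ be the horosphere at $\eta$ through $f'(p_0)$; then $\rho(S)\cdot f'(p_0)\subset\partial B'$ and, by subconvexity combined with the pointwise Lipschitz bound, $f'(\mathcal{N})$ lies in the $2\varepsilon R$-neighborhood of $\partial B'$. The horospherical projection $\pi:\HH^n\to\partial B'$ has pointwise differential of operator norm equal to the height ratio, hence $\leq e^{2\varepsilon R}$ on $f'(\mathcal{N})$; the $(j,\rho|_{\partial B'})$-equivariant map $g:=\pi\circ f'$ then satisfies $d_{\partial B'}(g(p),g(q))\leq 2e^{2\varepsilon R}\varepsilon\,d_{\partial H}(p,q)$ by Remark~\ref{rem:pathlength}.(3), so combining with \eqref{eqn:horoline} applied on both $\partial H$ and $\partial B'$ gives
\[d(f'(p),f'(q))\leq 2\arcsinh\big(e^{2\varepsilon R}\varepsilon\,d_{\partial H}(p,q)\big)+4\varepsilon R.\]
Setting $y:=d_{\partial H}(p,q)/2\geq\sinh(1/2)$ (from $d(p,q)\geq 1$), the target bound $\leq 2\arcsinh(y)=d(p,q)$ reduces to $\arcsinh(y)-\arcsinh(2e^{2\varepsilon R}\varepsilon y)\geq 2\varepsilon R$, which holds uniformly for $y\geq\sinh(1/2)$ once $\varepsilon$ is small enough depending on~$R$.

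Case~B will be handled analogously, with the $1$-Lipschitz closest-point projection $\pi:\HH^n\to\mathcal{A}_0$ in place of the horospherical projection; note that $\pi$ commutes with the $\rho(S)$-action since $\rho(S)$ preserves $\mathcal{A}_0$. The main obstacle will be here: unlike in Case~A, $f'(p_0)$ does not \emph{a priori} lie on $\mathcal{A}_0$, and one must first bound the tube radius $d_0:=d(f'(p_0),\mathcal{A}_0)$. Taking $h\in S_0$ with $\rho(h)$ hyperbolic of translation length $\lambda:=\lambda(\rho(h))>0$, and with $j(h)$ acting on $\partial H$ as a Euclidean translation of displacement $D_h$, the subconvexity bound yields $d(f'(p_0),\rho(h)^n\cdot f'(p_0))\leq 2\varepsilon n D_h$ for every $n\geq 1$, whereas the hyperbolic displacement formula reads $\cosh d(p,h^n\cdot p)=1+(\cosh(n\lambda)-1)\cosh^2 d_0$. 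Letting $n\to+\infty$ forces $\lambda\leq 2\varepsilon D_h$, and then specializing to $n=1$ bounds $\cosh d_0$, hence $d_0$, in terms of $\varepsilon D_h$. With these controls in hand (propagated to all of $f'(\mathcal{N})$ by equivariance), the remainder of the argument closes as in Case~A and yields the required $\Lip(f')\leq 1$.
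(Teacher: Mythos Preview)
Your Case~A is essentially the paper's argument: the paper simply takes $\partial B'$ to be the horosphere at the fixed point $\eta$ through $f'(p_0)$, uses the closest-point projection onto $\partial B'$ (which is your horospherical projection) with a crude $2$-Lipschitz bound on a thin collar in place of your sharper $e^{2\varepsilon R}$ bound, and concludes via the same comparison of the two $\arcsinh$ formulas. The paper does not split into cases at all --- it just runs Case~A.

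Your Case~B, however, has a genuine gap, and in fact cannot be repaired. From the displacement formula you only get
\[
\cosh(2\varepsilon D_h)\ \geq\ 1+(\cosh\lambda -1)\cosh^2 d_0,
\]
and this bounds $d_0$ only if you have a \emph{lower} bound on $\lambda$; you have derived only the upper bound $\lambda\leq 2\varepsilon D_h$. Worse, the lemma as stated is actually false in Case~B. Take $n=2$, $S=\Z$, $j(1)$ the unit horocyclic translation on $\partial H\simeq\R$ (so $\mathcal N=\partial H$ and $R=1$), and $\rho(1)$ a hyperbolic translation of length $\lambda\leq\varepsilon$ along an axis $\mathcal A_0$; define $f'(x)$ to be the point of $\mathcal A_0$ at signed arc length $\lambda x$ from a basepoint. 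Then $f'$ is $(j,\rho)$-equivariant with $\Lip_p(f')=\lambda\leq\varepsilon$ for every $p$, yet
\[
\frac{d(f'(0),f'(k))}{d(0,k)}=\frac{k\lambda}{2\,\arcsinh(k/2)}\xrightarrow[k\to\infty]{}+\infty,
\]
so $\Lip(f')=+\infty$. Thus no bound on $d_0$ could save the argument.

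The resolution is that Case~B never arises where the lemma is invoked (the proof of Lemma~\ref{lem:nonred-lip}): there $S$ is a cusp stabilizer, so $j(\gamma)$ is parabolic or elliptic for every $\gamma\in S$; if some $\rho(\gamma)$ were hyperbolic then, by Lemma~\ref{lem:muparabolic}, so would $\rho^{\mathrm{red}}(\gamma)$ be, forcing $C(j,\rho)=C(j,\rho^{\mathrm{red}})=+\infty$ and rendering Lemma~\ref{lem:nonred-lip} trivial. In the nontrivial case $\rho(S)$ therefore contains no hyperbolic element, hence preserves every horosphere centered at $\eta$, and your Case~A (equivalently, the paper's proof) suffices. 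You should either add this hypothesis to the lemma or note that Case~B is vacuous in the application.
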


\begin{proof}
We proceed as in the proof of Lemma~\ref{lem:horosph-sph}, but the sphere $\partial B$ will now be a horosphere.
Fix $R>0$ and let $j,\rho,H,\mathcal{N}$ be as in the statement.
Consider a $(j,\rho)$-equivariant map $f' : \mathcal{N}\rightarrow\HH^n$ such that $\Lip_p(f')\leq\varepsilon$ for all $p\in\mathcal{N}$, for some $\varepsilon>0$.
Let us show that if $\varepsilon$ is smaller than some constant independent of~$f'$, then $\Lip(f')\leq 1$.
As in the proof of Lemma~\ref{lem:horosph-sph}, if $\varepsilon$ is smaller than some universal constant, then $d(f'(p),f'(q))\leq d(p,q)$ for all $p,q\in\mathcal{N}$ with $d(p,q)\leq 1$.
We now consider $p,q\in\mathcal{N}$ with $d(p,q)\geq 1$.
Let $\partial B$ be a horosphere centered at the fixed point of~$\rho(S)$ in $\partial_{\infty}\HH^n$, containing $f'(p_0)$ for some $p_0\in \mathcal{N}$.
As in the proof of Lemma~\ref{lem:horosph-sph}, the set $f'(\mathcal{N})$ is contained in the $2\varepsilon R$-neighborhood of $\partial B$.
We now use the existence of a universal constant $\eta>0$ such that the closest-point projection onto any horosphere of~$\HH^n$ is $2$-Lipschitz on the $\eta$-neighborhood (inner and outer) of this horosphere.
In particular, if $\varepsilon\leq\frac{\eta}{2R}$, which we shall assume from now on, then the projection onto $\partial B$ is $2$-Lipschitz on the set $f'(\mathcal{N})$.

Denoting by $x,y\in\partial B$ the projections of $f'(p),f'(q)$, the (in)equalities \eqref{eqn:horoline} and \eqref{eqn:distineq} still hold, but \eqref{eqn:distsphere} becomes
$$d(x,y) = 2\,\arcsinh\left(\frac{d_{\partial B}(x,y)}{2}\right),$$
where $d_{\partial B}$ is the natural Euclidean metric on $\partial B$.
We obtain
\begin{eqnarray*}
d(f'(p),f'(q)) & \leq & d(f'(p),x) + d(x,y) + d(y,f'(q))\\
& \leq & 2\,\arcsinh\big(2\varepsilon\,d_{\partial H}(p,q)\big) + 2\varepsilon R.
\end{eqnarray*}
Comparing with \eqref{eqn:horoline} we see that if $\varepsilon$ is small enough then $d(f'(p),f'(q))\leq d(p,q)$ for all $p,q\in\partial H\cap N$ with $d(p,q)\geq 1$.
Hence, $\Lip(f')\leq 1$.
\end{proof}

\subsection{Semicontinuity for $C(j,\rho)\geq 1$ in the general geometrically finite case}\label{subsec:C-highcontinuous}

We now complete the proof of Proposition~\ref{prop:contpropertiesC}.
Condition~(1) when all the cusps of~$j_0$ have rank $\geq n-2$ and condition~(1') in general have already been proved in Section~\ref{subsec:C<1open}.
We now show that for pairs $(j,\rho)$ of representations with $j$ geometrically finite representations of fixed cusp type,
\begin{itemize}
  \item[(2)] the condition $1<C$ is open,
  \item[(3')] the function $(j,\rho)\mapsto C(j,\rho)$ is lower semicontinuous on the set of pairs where $1\leq C$,
  \item[(3'')] it is upper semicontinuous on the set of pairs where $1\leq C<+\infty$ when either all the cusps of~$j_0$ have rank $\geq n-2$ (for instance $n\leq 3$) or the representation~$j$ is constant.
\end{itemize}
Upper semicontinuity on the set of pairs where $1\leq C<+\infty$ does not hold in general in dimension $n\geq 4$: see Section~\ref{ex:dim4upper}. 
The condition $C(j,\rho)=+\infty$ is open in $\Hom(\Gamma_0,G)_{j_0}\times\Hom(\Gamma_0,G)$ by Lemma~\ref{lem:C<infty}, hence in (2) and~(3') we may actually restrict to $1\leq C < +\infty$.

Let $(j_k,\rho_k)_{k\in\N^{\ast}}$ be a sequence of elements of $\Hom(\Gamma_0,G)_{j_0}\times\Hom(\Gamma_0,G)$ converging to an element $(j,\rho)\in\Hom(\Gamma_0,G)_{j_0}\times\Hom(\Gamma_0,G)$ with $C(j,\rho)<+\infty$.
It is sufficient to prove the following two statements:
\begin{enumerate}
  \item[(A)] if $C(j,\rho)>1$, then $C(j,\rho)\leq\liminf_k C(j_k, \rho_k)$,
  \item[(B)] if $C^{\ast}:=\limsup_k C(j_k,\rho_k)>1$ and if either all the cusps of~$j_0$ have rank $\geq n-2$ or $j_k=j$ for all $k\in\N^{\ast}$, then $C(j,\rho)\geq C^{\ast}$.
\end{enumerate}

If $\rho$ is reductive, then (A) is an easy consequence of Corollary~\ref{cor:CC'} (here $E(j,\rho)\neq\emptyset$ by Corollary~\ref{cor:Enonempty}, in which case Corollary~\ref{cor:CC'} has been proved in Section~\ref{subsec:coroElamin}):
namely, for any $\varepsilon>0$ there is an element $\gamma\in\Gamma_0$ with $j(\gamma)$ hyperbolic such that
$$\frac{\lambda(\rho(\gamma))}{\lambda(j(\gamma))} \geq C(j,\rho)-\varepsilon.$$
If $k$ is large enough, then $\lambda(j_k(\gamma))$ is hyperbolic and $\lambda(\rho_k(\gamma))/\lambda(j_k(\gamma))\geq C(j,\rho)-2\varepsilon$ by continuity of~$\lambda$, hence $C(j_k,\rho_k)\geq C(j,\rho)-2\varepsilon$ by \eqref{eqn:ClambdaCLip}.
We conclude by letting $\varepsilon$ tend to~$0$.
If $\rho$ is nonreductive, then $C(j,\rho)>1$ entails $C(j,\rho^{\mathrm{red}})=C(j,\rho)$ by Lemma~\ref{lem:nonred-lip}, and the $\rho_k$ have conjugates converging to $\rho^{\mathrm{red}}$ (see the end of Section~\ref{subsubsec:lowersemicont}), so we just apply the reductive case to obtain~(A).

To prove~(B), suppose that $C^{\ast}>1$ and that either all the cusps of~$j_0$ have rank $\geq n-2$ or $j_k=j$ for all $k\in\N^{\ast}$.
Up to passing to a subsequence, we may assume $C(j_{k},\rho_{k})>1$ for all $k\in\N^{\ast}$ and $C(j_{k},\rho_{k})\rightarrow C^{\ast}$.
Then
$$C(j_{k},\rho_{k}^{\mathrm{red}}) = C(j_{k},\rho_{k})$$
for all $k\in\N^{\ast}$ by Lemma~\ref{lem:nonred-lip}.
We now use Theorem~\ref{thm:lamin}, and either Proposition~\ref{prop:margulis} (if $j_k=j$) or Corollary~\ref{cor:uniform-thick} (if all the cusps of~$j_0$ have rank $\geq n-2$): in either case we obtain that the stretch locus $E(j_{k},\rho_{k}^{\mathrm{red}})$ is a (nonempty) geodesic lamination admitting a fundamental domain that remains in some \emph{compact} subset of~$\HH^n$, independent of~$k$.
This implies, up to passing to a subsequence, that $E(j_{k},\rho_{k}^{\mathrm{red}})$ converges to some (nonempty) $j(\Gamma_0)$-invariant geodesic lamination~$\LL$, with a compact image in $j(\Gamma_0)\backslash\HH^n$.
For any $\varepsilon>0$, a closed curve $\varepsilon$-nearly carried by~$\LL$ is also nearly carried by $E(j_{k},\rho_{k}^{\mathrm{red}})$ and will give (as in the proof of Lemma~\ref{lem:ClambdaCLipLipf}) an element $\gamma\in\Gamma_0$ such that $j_{k}(\gamma)$ is hyperbolic and
$$\frac{\lambda(\rho_{k}(\gamma))}{\lambda(j_{k}(\gamma))} = \frac{\lambda(\rho_{k}^{\mathrm{red}}(\gamma))}{\lambda(j_{k}(\gamma))} \geq C(j_{k},\rho_{k}^{\mathrm{red}}) - \varepsilon$$
for all large enough~$k$. The right hand side converges to $C^{\ast}-\varepsilon$, hence
by continuity of~$\lambda$,
$$\frac{\lambda(\rho(\gamma))}{\lambda(j(\gamma))} \geq C^{\ast}-\varepsilon,$$
therefore $C(j,\rho)\geq C^{\ast}-\varepsilon$ by \eqref{eqn:ClambdaCLip} (in particular, $C^{\ast}<+\infty$). We conclude by letting $\varepsilon$ tend to~$0$.

This completes the proof of Proposition~\ref{prop:contpropertiesC}.

\section{Application to properly discontinuous actions on $G=\PO(n,1)$}\label{sec:properactions}

In this section we prove the results of Section~\ref{subsec:introquotientsG} on the geometrically finite quotients of $G:=\PO(n,1)$, namely Theorem~\ref{thm:adm} (properness criterion) and Theorems \ref{thm:deformcompact} and~\ref{thm:deform} (deformation).
We adopt the notation and terminology of Section~\ref{subsec:introquotientsG}, and proceed as in \cite{kasPhD}.
Note that all the results remain true if $G$ is replaced by $\OO(n,1)$, $\SO(n,1)$, or $\SO(n,1)_0$.

In Section~\ref{subsec:Cmu} we start by introducing a constant $C_{\mu}(j,\rho)$, which we use in Section~\ref{subsec:implicationsproperness} to state a refinement of Theorem~\ref{thm:adm}.
This refinement is proved in Sections \ref{subsec:proofadmred} and~\ref{subsec:proofadmnonred}.
Before that, in Section~\ref{subsec:Cartanproj} we discuss the connection with the general theory of properly discontinuous actions on reductive homogeneous spaces, and in Section~\ref{subsec:topology} we make two side remarks.
Section~\ref{subsec:proofdeform} is devoted to the proof of Theorems \ref{thm:deformcompact} and~\ref{thm:deform}, and Section~\ref{subsec:(G,X)struct} to their interpretation in terms of completeness of geometric structures.

\subsection{The constant $C_{\mu}(j,\rho)$}\label{subsec:Cmu}

We shall refine Theorem~\ref{thm:adm} by characterizing properness, not only in terms of the constants $C(j,\rho)$ of \eqref{eqn:defC} and $C'(j,\rho)$ of \eqref{eqn:defClambda}, but also in terms of a third constant $C_{\mu}(j,\rho)$.
We start by introducing this constant.

Fix a basepoint $p_0\in\HH^n$ and let $\mu : G\rightarrow\R_+$ be the displacement function relative to~$p_0$:
\begin{equation}\label{eqn:defmu}
\mu(g) := d(p_0,g\cdot p_0)
\end{equation}
for all $g\in G$.
This function is continuous, proper, and surjective; we shall see in Section~\ref{subsec:Cartanproj} that it corresponds to a \emph{Cartan projection} of~$G$.
Note that $\mu(g^{-1})=\mu(g)$ for all $g\in G$ because $G$ acts on~$\HH^n$ by isometries.
By the triangle inequality,
\begin{equation}\label{eqn:triangineqmu}
\mu(gg') \leq \mu(g) + \mu(g'),
\end{equation}
and
\begin{equation}\label{eqn:lambdaleqmu}
\lambda(g) \leq \mu(g)
\end{equation}
for all $g,g'\in G$, where $\lambda : G\rightarrow\R_+$ is the translation length function of \eqref{eqn:deflambda}.
For hyperbolic~$g$, the function $k\mapsto\mu(g^k)$ grows linearly because $\mu(g^k)-k\lambda(g)$ is bounded (for instance by twice the distance from $g\cdot\nolinebreak p_0$ to the translation axis $\A_g$ of~$g$).
For parabolic~$g$, the function $k\mapsto\mu(g^k)$ grows logarithmically (Lemma~\ref{lem:disthorosphere}), while for elliptic~$g$ it is bounded.
Therefore,
\begin{equation}\label{eqn:lambdalimmu}
\lambda(g) = \lim_{k\rightarrow +\infty} \frac{1}{k}\,\mu(g^k)
\end{equation}
for all $g\in G$.

For any discrete group~$\Gamma_0$ and any pair $(j,\rho)\in\Hom(\Gamma_0,G)^2$ of representations, we denote by $C_{\mu}(j,\rho)$ the infimum of constants $t\geq 0$ for which the set $\{ \mu(\rho(\gamma))-t\,\mu(j(\gamma))\,|\,\gamma\in\Gamma_0\} $ is bounded from above, \ie for which $\mu(\rho(\gamma))\leq t\, \mu(j(\gamma))+O(1)$ as $\gamma$ ranges over $\Gamma_0$.
Note that
\begin{equation}\label{eqn:ClambdamuLip}
C'(j,\rho) \leq C_{\mu}(j,\rho) \leq C(j,\rho).
\end{equation}
Indeed, the left-hand inequality follows from \eqref{eqn:lambdalimmu}.
The right-hand inequality follows from the fact that for any $(j,\rho)$-equivariant map $f : \HH^n\rightarrow\HH^n$ and any $\gamma\in\Gamma_0$,
\begin{eqnarray*}
\mu(\rho(\gamma)) & = & d(p_0,\rho(\gamma)\cdot p_0)\\
& \leq & d\big(f(p_0),\rho(\gamma)\cdot f(p_0)\big) + 2\,d\big(p_0,f(p_0)\big)\\
& = & d\big(f(p_0),f(j(\gamma)\cdot p_0)\big) + 2\,d\big(p_0,f(p_0)\big)\\
& \leq & \Lip(f)\,d(p_0,j(\gamma)\cdot p_0) + 2\,d\big(p_0,f(p_0)\big) \\
&=& \Lip(f) \, \mu(j(\gamma)) +  2\,d\big(p_0,f(p_0)\big).
\end{eqnarray*}

\subsection{A refinement of Theorem~\ref{thm:adm}}\label{subsec:implicationsproperness}

Let $\Gamma_0$ be a discrete group.
In Sections \ref{subsec:proofadmred} and~\ref{subsec:proofadmnonred}, we shall refine Theorem~\ref{thm:adm} by establishing the following implications for any pair $(j,\rho)\in\Hom(\Gamma_0,G)^2$ with $j$ geometrically finite.
We refer to Definitions~\ref{def:typedet} and~\ref{def:admissible} for the notions of cusp-deterioration and left admissibility; recall that any $\rho$ is cusp-deteriorating if $j$ is convex cocompact.

$$\xymatrix{
{\scriptsize\circled{1}}\quad C(j,\rho)<1 
\ar@{<=>}[d] 
\ar@{=>}[r] & 
C(j,\rho^{\mathrm{red}})<1 
\ar@/^-2pc/@{=>}[l]_{\text{if \circled{5} holds}} \quad {\scriptsize\circled{i}} 
\ar@{<=>}[d]  \\
{\scriptsize\circled{2}} \quad (j,\rho)\text{ left admissible} 
\ar@{<=>}[d] 
\ar@{=>}[r] &
(j,\rho^{\mathrm{red}})\text{ left admissible} \quad{\scriptsize\circled{ii}} 
\ar@{<=>}[d]  \\
{\scriptsize\circled{3}} \quad C_{\mu}(j,\rho)<1 
\ar@{<=>}[d] 
\ar@{=>}[r] & 
C_{\mu}(j,\rho^{\mathrm{red}})<1 \quad {\scriptsize\circled{iii}} 
\ar@{<=>}[d]  \\
{\scriptsize\circled{4}} \quad C'(j,\rho)<1 \text{ and } \rho \text{ cusp-deteriorating} 
\ar@{=>}[d] 
\ar@{=>}[r] & 
C'(j,\rho^{\mathrm{red}})<1 \quad {\scriptsize\circled{iv}} 
\ar@{=>}[d]  \\
{\scriptsize\circled{5}} \quad \rho\text{ cusp-deteriorating} 
\ar@{=>}[r] & 
\rho^{\mathrm{red}}\text{ cusp-deteriorating} \quad {\scriptsize\circled{v}}
}$$

We define the ``reductive part'' $\rho^{\mathrm{red}}$ of~$\rho$ as in Section~\ref{subsubsec:lowersemicont}: in the generic case when $\rho$ is reductive (Definition~\ref{def:reductive}), we set $\rho^{\mathrm{red}}:=\rho$.
In the degenerate case when $\rho$ is nonreductive, we fix a Levi factor $\underline{M}\underline{A}$ of the stabilizer $P$ in~$G$ of the fixed point at infinity of~$\rho(\Gamma_0)$ (see Section~\ref{subsec:proofadmnonred}), denote by $\underline{\pi} : P\rightarrow\underline{M}\underline{A}$ the natural projection, and set $\rho^{\mathrm{red}}:=\underline{\pi}\circ\rho$, so that $\rho^{\mathrm{red}}$ is reductive and preserves an oriented geodesic line $\A\subset\HH^n$, depending only on~$\underline{M}\underline{A}$.

The implications $\scriptsize\circled{1}\Rightarrow\scriptsize\circled{3}$ and $\scriptsize\circled{i}\Rightarrow\scriptsize\circled{iii}\Rightarrow\scriptsize\circled{iv}$ are immediate consequences of \eqref{eqn:ClambdamuLip}, while $\scriptsize\circled{3}\Rightarrow\scriptsize\circled{4}\Rightarrow\scriptsize\circled{5}$ and $\scriptsize\circled{iii}\Rightarrow\scriptsize\circled{v}$ follow from \eqref{eqn:ClambdamuLip} and from the estimate $\mu(g^k)=2\log k+O(1)$ for parabolic~$g$ (Lemma~\ref{lem:disthorosphere}); these implications do not require any geometrical finiteness assumption on~$j$.
The implications $\scriptsize\circled{1}\Rightarrow\scriptsize\circled{i}$ and $\scriptsize\circled{i}\Rightarrow\scriptsize\circled{1}$, the latter assuming~$\scriptsize\circled{5}$, are immediate consequences of Lemma~\ref{lem:nonred-lip}.
We shall explain:
\begin{itemize}
  \item $\scriptsize\circled{3}\Rightarrow\scriptsize\circled{2}$ and $\scriptsize\circled{iii}\Rightarrow\scriptsize\circled{ii}$ and $\scriptsize\circled{2}\Rightarrow\scriptsize\circled{5}$ in Section~\ref{subsec:Cartanproj},
  \item $\scriptsize\circled{iv}\Rightarrow\scriptsize\circled{iii}$ and $\scriptsize\circled{ii}\Rightarrow\scriptsize\circled{i}$ in Section~\ref{subsec:proofadmred},
  \item $\scriptsize\circled{2}\Rightarrow\scriptsize\circled{ii}$, $\scriptsize\circled{3}\Rightarrow\scriptsize\circled{iii}$, $\scriptsize\circled{4}\Rightarrow\scriptsize\circled{iv}$, and $\scriptsize\circled{5}\Rightarrow\scriptsize\circled{v}$ in Section~\ref{subsec:proofadmnonred}.
\end{itemize}

\subsection{General theory of properly discontinuous actions and sharpness}\label{subsec:Cartanproj}

Before proving the implications above, we discuss the connection with the general theory of properly discontinuous actions on reductive homogeneous spaces.

The group~$G$ endowed with the transitive action of $G\times G$ by right and left multiplication identifies with the homogeneous space $(G\times G)/\Diag(G)$, where $\Diag(G)$ is the diagonal of $G\times G$.
Let $\underline{K}$ be the stabilizer in~$G$ of the basepoint $p_0$ of \eqref{eqn:defmu}: it is a maximal compact subgroup of $G=\PO(n,1)$, isomorphic to $\OO(n)$.
Let $\underline{A}$ be a one-parameter subgroup of~$G$ whose nontrivial elements are hyperbolic, all pure translations, with a translation axis~$\A$ passing through~$p_0$.
Choose an endpoint $\xi\in\partial_{\infty}\HH^n$ of~$\A$ and let $\underline{A}^+$ be the subsemigroup of~$\underline{A}$ sending $p_0$ into the geodesic ray $[p_0,\xi)$.
Then the \emph{Cartan decomposition} $G=\underline{K}\underline{A}^+\underline{K}$ holds: any element $g\in G$ may be written as $g=kak'$ for some $k,k'\in\underline{K}$ and a unique $a\in\underline{A}^+$ (see \cite[Th.\,IX.1.1]{hel01}).
The Cartan projection $\mu$ of \eqref{eqn:defmu} is the projection onto~$\underline{A}^+$ composed with an appropriate identification of $\underline{A}^+$ with~$\R_+$ (namely the restriction of $\lambda$ to~$\underline{A}^+$). 
Likewise, the group $G\times G$ admits the Cartan decomposition
$$G\times G = (\underline{K}\times\underline{K})(\underline{A}^+\!\times\underline{A}^+)(\underline{K}\times\underline{K})\,,$$
with Cartan projection
$$\mu_{\bullet} = \mu\times\mu\ :\ G\times G \longrightarrow \R_+\times\R_+.$$
The general \emph{properness criterion} of Benoist \cite{ben96} and Kobayashi \cite{kob96} states, in this context, that a closed subgroup~$\Gamma$ of $G\times G$ acts properly on~$G$ by right and left multiplication if and only if the set $\mu_{\bullet}(\Gamma)$ ``drifts away from the diagonal at infinity'', in the sense that for any $R>0$, there is a compact subset of $\R_+\times\R_+$ outside of which any point of $\mu_{\bullet}(\Gamma)$ lies at distance $>R$ from the diagonal of $\R_+\times\R_+$.
Consider a group $\Gamma_0^{j,\rho}$ as in \eqref{eqn:Gamma(j,rho)}, with $j$ injective and discrete.
Then the properness criterion states that $\Gamma_0^{j,\rho}$ acts properly discontinuously on~$G$ (\ie $(j,\rho)$ is admissible in the sense of Definition~\ref{def:admissible}) if and only if
\begin{equation}\label{eqn:propcrit}
\text{for any $R\geq 0$, }\quad \quad |\mu(j(\gamma)) - \mu(\rho(\gamma))| > R \quad\text{for almost all }\gamma\in\Gamma_0
\end{equation}
(\ie for all $\gamma\in\Gamma_0$ but finitely many exceptions).
In particular, this gives the implications $\scriptsize\circled{3}\Rightarrow\scriptsize\circled{2}$ and $\scriptsize\circled{iii}\Rightarrow\scriptsize\circled{ii}$ of Section~\ref{subsec:implicationsproperness} above. 
It also gives $\scriptsize\circled{2}\Rightarrow\scriptsize\circled{5}$ by the contrapositive: if $\rho$ is not cusp-deteriorating, then there exists an element $\gamma\in\Gamma_0$ with $j(\gamma), \rho(\gamma)$ both parabolic, hence $j(\gamma^k)=2\log k +O(1)$ and $\rho(\gamma^k)=2\log k+O(1)$ as $k\rightarrow +\infty$, violating \eqref{eqn:propcrit}.
(Note that we needed no geometrical finiteness assumption on~$j$ so far.)

By \cite[Th.\,1.3]{kas08}, if $\Gamma_0$ is residually finite (for instance finitely generated) and $\Gamma_0^{j,\rho}$ acts properly discontinuously on~$G$, then the set $\mu_{\bullet}(\Gamma_0^{j,\rho})$ lies on \emph{one side only} of the diagonal of $\R_+\times\R_+$, up to a finite number of points.
This means, up to switching $j$ and~$\rho$, that condition \eqref{eqn:propcrit} is in fact equivalent to the following stronger condition:
\begin{equation}\label{eqn:propcritbelow}
\text{for any $R\geq 0$, }\quad \quad  \mu(\rho(\gamma)) < \mu(j(\gamma)) - R \quad\text{for almost all }\gamma\in\Gamma_0,
\end{equation}
and that properness implies $\lambda(\rho(\gamma))<\lambda(j(\gamma))$ for all $\gamma\in\Gamma_0$ (using \eqref{eqn:lambdalimmu}).
Condition~\eqref{eqn:propcritbelow} is a necessary and sufficient condition for \emph{left admissibility} in the sense of Definition~\ref{def:admissible}; right admissibility is obtained by switching $j$ and~$\rho$.

The implication $\scriptsize\circled{2}\Rightarrow\scriptsize\circled{3}$ of Section~\ref{subsec:implicationsproperness} for geometrically finite~$j$ (which will be proved in Sections \ref{subsec:proofadmred} and~\ref{subsec:proofadmnonred} below) can be interpreted as follows.

\begin{theorem}\label{thm:sharp}
Let $\Gamma$ be a discrete subgroup of $G\times G$ such that the set~$\mu_{\bullet}(\Gamma)$ lies below the diagonal of $\R_+\times\R_+$ (up to a finite number of points) and such that the projection of~$\Gamma$ to the first factor of $G\times G$ is geometrically finite.
Then $\Gamma$ acts properly discontinuously on~$G$ by right and left multiplication if and only if there are constants $C<1$ and $D\in \mathbb{R}$ such that
$$\mu(\gamma_2) \leq C\,\mu(\gamma_1) + D$$
for all $\boldsymbol\gamma=(\gamma_1,\gamma_2)\in\Gamma$.
\end{theorem}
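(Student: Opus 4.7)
The strategy is to reduce Theorem~\ref{thm:sharp} to the case of graphs of representations, where it becomes the equivalence $\scriptsize\circled{2}\Leftrightarrow\scriptsize\circled{3}$ of Section~\ref{subsec:implicationsproperness} (equivalently, the refinement of Theorem~\ref{thm:adm} stated there). First, by the Selberg lemma \cite[Lem.\,8]{sel60}, $\Gamma$ admits a torsion-free finite-index subgroup $\Gamma^{\circ}$; proper discontinuity of the action on~$G$ is preserved when passing between $\Gamma$ and $\Gamma^{\circ}$, and so is the existence of constants $C<1, D$ in the conclusion (replacing $D$ by $D+2\max_{\gamma\in\Gamma/\Gamma^{\circ}}\mu_{\bullet}(\gamma)$ suffices when lifting the bound from $\Gamma^{\circ}$ to $\Gamma$, using \eqref{eqn:triangineqmu}). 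By the structural result of \cite{kr85,kas08}, since $\Gamma^{\circ}$ is torsion-free, discrete in $G\times G$, and the hypothesis forces $\mu_{\bullet}(\Gamma^{\circ})$ to lie on one side only of the diagonal up to finitely many points, $\Gamma^{\circ}$ is of the form $\Gamma_0^{j,\rho}$ for a discrete group $\Gamma_0$ and representations $j,\rho\in\Hom(\Gamma_0,G)$ with $j$ injective and discrete. The hypothesis on the first-factor projection then makes $j$ geometrically finite.

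Next, I would translate both conditions in terms of $(j,\rho)$. The existence of $C<1$ and $D$ with $\mu(\rho(\gamma)) \leq C\,\mu(j(\gamma)) + D$ for all $\gamma\in\Gamma_0$ is equivalent to $C_{\mu}(j,\rho)<1$: one direction is by definition of $C_{\mu}$, and the other follows because $\mu\circ j$ is proper on $\Gamma_0$ (so given any $C_{\mu}(j,\rho)<C<1$, the set $\{\mu(\rho(\gamma))-C\,\mu(j(\gamma))\}_{\gamma\in\Gamma_0}$ is bounded above). Proper discontinuity of the $\Gamma_0^{j,\rho}$-action, together with the below-diagonal hypothesis, is precisely left admissibility of $(j,\rho)$ in the sense of Definition~\ref{def:admissible}. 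Thus Theorem~\ref{thm:sharp} reduces to the equivalence of conditions $\scriptsize\circled{2}$ and $\scriptsize\circled{3}$ of Section~\ref{subsec:implicationsproperness}.

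The implication $\scriptsize\circled{3}\Rightarrow\scriptsize\circled{2}$ is the easy direction: if $\mu(\rho(\gamma))\leq C\,\mu(j(\gamma))+D$ with $C<1$, then $\mu(j(\gamma))-\mu(\rho(\gamma))\geq(1-C)\,\mu(j(\gamma))-D\to +\infty$ as $\gamma$ exits finite subsets of $\Gamma_0$ (since $\mu\circ j$ is proper), so the Benoist--Kobayashi properness criterion \eqref{eqn:propcrit} is satisfied.

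The main obstacle is the converse $\scriptsize\circled{2}\Rightarrow\scriptsize\circled{3}$, which is the deep direction and uses the full Lipschitz-map theory of the paper. Arguing by contrapositive, assume $C_{\mu}(j,\rho)\geq 1$; the goal is to produce elements violating \eqref{eqn:propcritbelow}. In the reductive case, one has $C'(j,\rho)\leq C_{\mu}(j,\rho)\leq C(j,\rho)$ by \eqref{eqn:ClambdamuLip}, and Theorem~\ref{thm:lamin} guarantees (since $C(j,\rho)\geq C_\mu(j,\rho)\geq 1$) a maximally stretched geodesic lamination in the convex core; the closing-lemma argument from the proof of Lemma~\ref{lem:maxstretchedlamin} then produces elements $\gamma\in\Gamma_0$ with $\lambda(\rho(\gamma))/\lambda(j(\gamma))$ arbitrarily close to $C(j,\rho)\geq 1$, hence with $\mu(\rho(\gamma))-\mu(j(\gamma))$ bounded below by a constant independent of $\gamma$, contradicting \eqref{eqn:propcritbelow}. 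The nonreductive case reduces to the reductive one by replacing $\rho$ with its reductive part $\rho^{\mathrm{red}}$, using that $C_{\mu}(j,\rho)=C_{\mu}(j,\rho^{\mathrm{red}})$ (which is essentially $\scriptsize\circled{3}\Leftrightarrow\scriptsize\circled{iii}$, obtained by projecting along concentric horocycles onto the $\rho(\Gamma_0)$-invariant geodesic line used to define $\rho^{\mathrm{red}}$) together with Lemma~\ref{lem:nonred-lip}. This completes the reduction and yields Theorem~\ref{thm:sharp}.
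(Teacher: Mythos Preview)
Your strategy is exactly the paper's: Theorem~\ref{thm:sharp} is presented there as a restatement of the equivalence $\circled{2}\Leftrightarrow\circled{3}$, proved via the chain of implications in Sections~\ref{subsec:proofadmred}--\ref{subsec:proofadmnonred}. Three points in your execution need correction, however.

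First, the reduction to graph form is immediate from the hypothesis and does not require Selberg or \cite{kr85,kas08}: ``the projection of~$\Gamma$ to the first factor is geometrically finite'' already means, in the paper's terminology, that $\mathrm{pr}_1|_{\Gamma}$ is injective with discrete image, so $\Gamma=\Gamma_0^{j,\rho}$ with $\Gamma_0\simeq\Gamma$. Your appeal to \cite{kr85,kas08} is in fact circular, since those results deduce graph form \emph{from} proper discontinuity, which is part of what you want to characterize.

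Second, in the reductive case you invoke Theorem~\ref{thm:lamin} whenever $C(j,\rho)\geq 1$, but that theorem only yields a maximally stretched lamination when $C(j,\rho)>1$, or when $C(j,\rho)=1$ and $\rho$ is cusp-deteriorating. The remaining subcase ($C(j,\rho)=1$, $\rho$ reductive and not cusp-deteriorating) is handled separately in the proof of Lemma~\ref{lem:ClambdaCLipred}: take $\gamma$ with $j(\gamma),\rho(\gamma)$ both parabolic and use $\mu(j(\gamma^k)),\mu(\rho(\gamma^k))=2\log k+O(1)$ to violate~\eqref{eqn:propcritbelow} directly.

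Third, and most importantly, your nonreductive reduction asserts $C_{\mu}(j,\rho)=C_{\mu}(j,\rho^{\mathrm{red}})$, but this is false in general: Lemma~\ref{lem:muparabolic} gives only $\mu\circ\rho^{\mathrm{red}}\leq\mu\circ\rho$, hence $C_{\mu}(j,\rho^{\mathrm{red}})\leq C_{\mu}(j,\rho)$, and the example of Section~\ref{ex:CC'nonreductive} shows the inequality can be strict (there $C_{\mu}(j,\rho^{\mathrm{red}})=C'(j,\rho)<1=C_{\mu}(j,\rho)$). The paper's route avoids this by passing through the Lipschitz constant~$C$ rather than~$C_{\mu}$: from $\circled{2}$ one gets $\circled{5}$ (cusp-deterioration) and $\circled{ii}$; the reductive case gives $\circled{ii}\Rightarrow\circled{i}$; Lemma~\ref{lem:nonred-lip} then yields $\circled{i}+\circled{5}\Rightarrow\circled{1}$; and $\circled{1}\Rightarrow\circled{3}$ follows from~\eqref{eqn:ClambdamuLip}. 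Your citation of Lemma~\ref{lem:nonred-lip} is the right ingredient, but it controls $C$, not $C_{\mu}$, and the detour through $\circled{1}$ is essential.
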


The point of Theorem~\ref{thm:sharp} is that if $\Gamma$ acts properly discontinuously on~$G$, then the set $\mu_{\bullet}(\Gamma)$ ``drifts away from the diagonal at infinity'' \emph{linearly}; in other words, $\Gamma$ is \emph{sharp} in the sense of \cite[Def.\,4.2]{kk12}.
In particular, Theorem~\ref{thm:sharp} corroborates the conjecture \cite[Conj.\,4.10]{kk12} that any discrete group acting properly discontinuously \emph{and cocompactly} on a reductive homogeneous space should be sharp.
Sharpness has analytic consequences on the discrete spectrum of the (nonelliptic) Laplacian defined by the natural pseudo-Riemannian structure of signature $(n,n(n-1)/2)$ on the quotients of~$G$: see~\cite{kk12}.

\subsection{Properness and the topology of the quotients of $G=\PO(n,1)$}\label{subsec:topology}

Let us make two side remarks.

\medskip

\noindent $\bullet$ First, here is for convenience a short proof of the properness criterion \eqref{eqn:propcrit} of Benoist and Kobayashi in our setting. Note that there is no geometrical finiteness assumption here.

\begin{proof}[Proof of the properness criterion of Benoist and Kobayashi]
Suppose that condition \eqref{eqn:propcrit} holds.
Let $\mathscr{C}$ be a compact subset of~$G$ and let
$$R := \max_{g\in\mathscr{C}} \mu(g).$$
By the subadditivity \eqref{eqn:triangineqmu} of~$\mu$, for any $g\in\mathscr{C}$ and $\gamma\in\Gamma_0$,
$$\mu\left ( \rho(\gamma)g j(\gamma)^{-1} \right ) \geq |\mu(j(\gamma)) - \mu(\rho(\gamma))| - \mu(g).$$
By \eqref{eqn:propcrit}, the right-hand side is $>R$ for almost all $\gamma\in\Gamma_0$, hence $\mathscr{C}\cap \rho(\gamma)\mathscr{C}j(\gamma)^{-1}=\emptyset$ for almost all $\gamma\in\Gamma_0$.
Thus the action of $\Gamma_0^{j,\rho}$ on~$G$ is properly discontinuous.
Conversely, suppose that \eqref{eqn:propcrit} does \emph{not} hold, \ie there exists $R>0$ and a sequence $(\gamma_m)_{m\in\N}$ of pairwise distinct elements of~$\Gamma_0$ such that
$$|\mu(j(\gamma_m)^{-1}) - \mu(\rho(\gamma_m)^{-1})| \leq R$$
for all $m\in\N$.
By definition \eqref{eqn:defmu} of~$\mu$, this means that for any $m\in\N$ there is an element $k_m\in\underline{K}$ such that $d(\rho(\gamma_m)^{-1}\cdot p_0,k_m j(\gamma_m)^{-1}\cdot p_0)\leq R$.
Since $\rho(\gamma_m)$ acts on~$\HH^n$ by an isometry, we obtain
$$\mu(\rho(\gamma_m)k_m j(\gamma_m)^{-1}) = d(p_0,\rho(\gamma_m)k_m j(\gamma_m)^{-1}\cdot p_0) \leq R.$$
Therefore $\mathscr{C}\cap \rho(\gamma_m)\mathscr{C} j(\gamma_m)^{-1}\neq\emptyset$, where $\mathscr{C}$ is the compact subset of~$G$ consisting of the elements~$g$ with $\mu(g)\leq R$. This shows that the action of $\Gamma_0^{j,\rho}$ on~$G$ is not properly discontinuous.
\end{proof}

\noindent $\bullet$ Second, still without any geometrical finiteness assumption, here is a topological consequence of the inequality $C(j,\rho)< 1$; we refer to \cite{dgk12} for further developments and applications.

\begin{proposition} \label{prop:quotients}
Let $\Gamma_0$ be a discrete group and $(j,\rho)\in\Hom(\Gamma_0,G)^2$ a pair of representations with $j$ injective and discrete.
If $C(j,\rho)<1$, then the group
$$\Gamma_0^{j,\rho} = \{ (j(\gamma),\rho(\gamma))~|~\gamma\in\Gamma_0\} $$
acts properly discontinuously on~$G$ by right and left multiplication and the quotient is homeomorphic to a $\underline{K}$-bundle over $M:=j(\Gamma_0)\backslash\HH^n$, where $\underline{K}\cong\OO(n)$ is a maximal compact subgroup of $G=\PO(n,1)$.  
\end{proposition}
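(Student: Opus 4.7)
The plan has two parts, one for each claim of the proposition.

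\textbf{Proper discontinuity.} First I would invoke \eqref{eqn:ClambdamuLip} to get $C_{\mu}(j,\rho)\le C(j,\rho)<1$. Unwinding the definition of~$C_{\mu}$ yields constants $C^{\ast}<1$ and $D\ge 0$ with $\mu(\rho(\gamma))\le C^{\ast}\mu(j(\gamma))+D$ for every $\gamma\in\Gamma_0$. Since $j$ is injective and discrete, $\mu(j(\gamma))\to+\infty$ as $\gamma$ leaves every finite subset of~$\Gamma_0$, so
\[
\mu(j(\gamma))-\mu(\rho(\gamma)) \ge (1-C^{\ast})\mu(j(\gamma))-D \to+\infty.
\]
The Benoist--Kobayashi properness criterion \eqref{eqn:propcrit} of Section~\ref{subsec:Cartanproj} then produces proper discontinuity of $\Gamma_0^{j,\rho}$ on~$G$.

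\textbf{Bundle structure.} The plan is to exhibit an explicit $\Gamma_0^{j,\rho}$-equivariant locally trivial fiber bundle $\Phi\colon G\to\HH^n$ with fiber~$\underline{K}$, where the target carries the action by~$j$; quotienting by $\Gamma_0^{j,\rho}$ then gives the asserted bundle over~$M$. Fix a $(j,\rho)$-equivariant map $f\colon\HH^n\to\HH^n$ with $\Lip(f)<1$, provided by the hypothesis $C(j,\rho)<1$. The key observation is that for each $g\in G$, the map $x\mapsto g\cdot f(x)$ of~$\HH^n$ is $\Lip(f)$-Lipschitz (since $g$ acts by isometries), hence by the Banach fixed point theorem admits a unique fixed point, which I denote~$\Phi(g)$; by construction,
\[
g\cdot f(\Phi(g)) = \Phi(g),
\]
and the Banach-with-parameter theorem makes $\Phi$ continuous on~$G$.

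Equivariance is a one-line check using $f\circ j(\gamma)=\rho(\gamma)\circ f$: substituting $y=\Phi(g)$ gives
\[
j(\gamma)g\rho(\gamma)^{-1}\cdot f(j(\gamma)y) = j(\gamma)g\rho(\gamma)^{-1}\rho(\gamma)f(y) = j(\gamma)gf(y) = j(\gamma)y,
\]
so $j(\gamma)\Phi(g)$ is a fixed point of $x\mapsto j(\gamma)g\rho(\gamma)^{-1}\cdot f(x)$ and uniqueness forces $\Phi(j(\gamma)g\rho(\gamma)^{-1})=j(\gamma)\Phi(g)$. The fiber $\Phi^{-1}(y)$ is the set of isometries taking $f(y)$ to~$y$, i.e.~a coset of $\mathrm{Stab}_G(f(y))$, hence homeomorphic to $\underline{K}\cong\OO(n)$. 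Local triviality follows from the continuous dependence $g\mapsto\Phi(g)$ combined with continuous local sections $y\mapsto g_y$ of~$\Phi$ (e.g.\ $g_y$ the hyperbolic translation from $f(y)$ to~$y$ after a continuous choice of frame) and a continuous family of identifications $\underline{K}\simeq\mathrm{Stab}_G(f(y))$ via conjugation by a continuous choice of $h_y$ with $h_y\cdot p_0=f(y)$. Passing to the $\Gamma_0^{j,\rho}$-quotient yields the claimed $\underline{K}$-bundle $\bar\Phi\colon\Gamma_0^{j,\rho}\backslash G\to M$.

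\textbf{Main obstacle.} The conceptual crux is finding the right equivariant map~$\Phi$: naive orbit maps $g\mapsto g\cdot p_0$ fail to be $\Gamma_0^{j,\rho}$-equivariant because $\rho(\gamma)$ does not fix~$p_0$. The idea of inserting the contracting $(j,\rho)$-equivariant map~$f$ as the nonlinearity in a parameterized contraction is what produces a canonical fixed point depending on~$g$ in a $\Gamma_0^{j,\rho}$-equivariant way through the first factor~$j$; once this observation is in place, the verifications of equivariance, fiber identification, and local triviality are routine.
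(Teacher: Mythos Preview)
Your proof is correct and the core construction---defining $\Phi(g)$ as the unique fixed point of the contraction $x\mapsto g\cdot f(x)$---is exactly the paper's approach; the paper writes the fibers as $\mathcal{L}_p=\{g\in G\mid g\cdot f(p)=p\}$ and defines $\pi(g)$ to be the unique $p$ with $g\in\mathcal{L}_p$, which is your $\Phi$.

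The one genuine difference is how proper discontinuity is obtained. You treat it as a separate step, passing through $C_\mu(j,\rho)\le C(j,\rho)<1$ and invoking the Benoist--Kobayashi criterion. The paper instead reads proper discontinuity directly off the fibration: once $\pi:G\to\HH^n$ is continuous, $(\Gamma_0^{j,\rho},j(\Gamma_0))$-equivariant, and has compact fibers, proper discontinuity of $\Gamma_0^{j,\rho}$ on $G$ follows immediately from proper discontinuity of $j(\Gamma_0)$ on $\HH^n$. This is slightly more self-contained (no Cartan projection, no external criterion), and it makes the two conclusions of the proposition flow from a single construction rather than two independent arguments. Your route is perfectly valid and is essentially the implication $\scriptsize\circled{1}\Rightarrow\scriptsize\circled{3}\Rightarrow\scriptsize\circled{2}$ discussed elsewhere in Section~\ref{subsec:implicationsproperness}, but the paper's packaging is a bit cleaner here.
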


\begin{proof}
The group~$\underline{K}$ is the stabilizer in~$G$ of some point of~$\HH^n$.
Choose a $(j,\rho)$-equivariant map $f : \HH^n\rightarrow\HH^n$ with $\Lip(f)<1$.
For any $p\in\HH^n$,
$$\mathcal{L}_p := \{g\in G ~|~  g\cdot p = f(p) \} $$
is a right-and-left coset of~$\underline{K}$.
An element $g\in G$ belongs to~$\mathcal{L}_p$ if and only if $p$ is a fixed point of $g^{-1}\circ f$; since $\Lip(g^{-1}\circ f)=\Lip(f)<1$, such a fixed point exists and is unique, which shows that $g$ belongs to exactly one set~$\mathcal{L}_p$.
We denote this $p$ by $\Pi(g)$.
The fibration $\Pi : G\rightarrow\HH^n$ is continuous: if $h\in G$ is close enough to~$g$ so that $d(\Pi(g),h^{-1}\circ f\circ\Pi(g))\leq (1-\Lip(f))\,\varepsilon$, then $h^{-1}\circ f$ takes the $\varepsilon$-ball centered at $\Pi(g)$ to itself, hence $\Pi(h)$ is within $\varepsilon$ from~$\Pi(g)$.
Moreover, $\Pi : G\rightarrow\HH^n$ is by construction $(\Gamma_0^{j,\rho},j(\Gamma_0))$-equivariant:
$$\rho(\gamma)\,\mathcal{L}_p\,j(\gamma)^{-1} = \mathcal{L}_{j(\gamma)\cdot p}$$
for all $\gamma\in\Gamma_0$ and $p\in\HH^n$.
Since the action of $j(\Gamma_0)$ on~$\HH^n$ is properly discontinuous, the action of $\Gamma_0^{j,\rho}$ on~$G$ is properly discontinuous.
The fibration~$\Pi$ descends to a topological fibration of the quotient of $G$ by $\Gamma_0^{j,\rho}$, with base $M=j(\Gamma_0)\backslash\HH^n$ and fiber~$\underline{K}$.
Note that for constant~$\rho$, \ie $\rho(\Gamma_0)=\{1\}$, this fibration naturally identifies with the orthonormal frame bundle of~$M$.
\end{proof}

\subsection{Proof of the implications of Section~\ref{subsec:implicationsproperness} when $\rho$ is reductive}\label{subsec:proofadmred}

We first consider the generic case where $\rho$ is reductive (Definition~\ref{def:reductive}), \ie $\rho^{\mathrm{red}}=\rho$.
We have already explained the easy implications $\scriptsize\circled{i}\Rightarrow\scriptsize\circled{iii}\Rightarrow\scriptsize\circled{iv}$ and $\scriptsize\circled{iii}\Rightarrow\scriptsize\circled{v}$ (Section~\ref{subsec:implicationsproperness}), as well as $\scriptsize\circled{iii}\Rightarrow\scriptsize\circled{ii}$ which is an immediate consequence of the properness criterion of Benoist and Kobayashi (Section~\ref{subsec:Cartanproj}).
We now explain $\scriptsize\circled{iv}\Rightarrow\scriptsize\circled{iii}$ and $\scriptsize\circled{ii}\Rightarrow\scriptsize\circled{i}$.

The implication $\scriptsize\circled{iv}\Rightarrow\scriptsize\circled{iii}$ is an immediate consequence of the following equality.

\begin{lemma}\label{lem:ClambdaCmured}
Let $\Gamma_0$ be a discrete group and $(j,\rho)\in\Hom(\Gamma_0,G)^2$ a pair of representations.
If $\rho$ is reductive, then $C'(j,\rho)=C_{\mu}(j,\rho)$.
\end{lemma}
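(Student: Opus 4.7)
The inequality $C'(j,\rho) \leq C_{\mu}(j,\rho)$ is always true by \eqref{eqn:ClambdamuLip}, so the task will be to prove the reverse inequality when $\rho$ is reductive. If $j(\Gamma_0)$ contains no hyperbolic element, then by definition $C'(j,\rho) = C(j,\rho) \geq C_{\mu}(j,\rho)$ and the lemma is immediate, so I will assume that $j(\Gamma_0)$ contains a hyperbolic element. Following the trichotomy for reductive $\rho$ used in the proof of Lemma~\ref{lem:Fnonempty}, I will split into three sub-cases:
(a) $\rho(\Gamma_0)$ has a fixed point in $\HH^n$;
(b) $\rho(\Gamma_0)$ preserves a geodesic line $\A\subset\HH^n$ but fixes no point of $\HH^n$;
(c) $\rho(\Gamma_0)$ is non-elementary, i.e.\ has no fixed point in $\HH^n$ and preserves no geodesic line.

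Case (a) is immediate: each $\rho(\gamma)$ is elliptic with the same fixed point $q$, so $\mu(\rho(\gamma)) \leq 2 d(p_0,q)$ uniformly and $\lambda(\rho(\gamma))=0$, giving $C_{\mu}(j,\rho) = 0 = C'(j,\rho)$. For case (b), after passing to the index-$\leq 2$ subgroup of $\Gamma_0$ whose $\rho$-image lies in $\underline{M}\underline{A}$ (the pointwise stabilizer of the endpoints of $\A$, where $\underline{A}$ is the translation subgroup along $\A$ and $\underline{M}$ is the compact subgroup fixing $\A$ pointwise)---which preserves both $C'$ and $C_{\mu}$ by Lemma~\ref{lem:finiteindex} combined with the subadditivity \eqref{eqn:triangineqmu}---one may assume $\rho(\Gamma_0)\subset\underline{M}\underline{A}$. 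Writing $\pi_{\A}:\underline{M}\underline{A}\to\underline{A}$ for the natural projection and using that the closest-point projection $\HH^n\to\A$ is $1$-Lipschitz while any $g\in\underline{M}\underline{A}$ translates the foot of $p_0$ on $\A$ by exactly $\lambda(\pi_{\A}(g))$, one obtains the uniform estimate
\begin{equation*}
\lambda(\pi_{\A}(g)) \,\leq\, \mu(g) \,\leq\, \lambda(\pi_{\A}(g)) + 2\,d(p_0,\A),
\end{equation*}
together with $\lambda(g)=\lambda(\pi_{\A}(g))$. Thus $\mu(\rho(\gamma)) = \lambda(\rho(\gamma)) + O(1)$ uniformly in $\gamma$. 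A short check (using $C(j,\rho)<\infty$ via Lemma~\ref{lem:C<infty}, together with boundedness of $\overline{\langle\rho(\gamma)\rangle}$ when $j(\gamma)$ is elliptic) shows that $j(\gamma)$ non-hyperbolic forces $\lambda(\rho(\gamma))=0$. Combining with $\mu(j(\gamma))\geq \lambda(j(\gamma))$ and the definition of $C'$ yields $\mu(\rho(\gamma)) \leq C'(j,\rho)\,\mu(j(\gamma)) + O(1)$, hence $C_{\mu}(j,\rho)\leq C'(j,\rho)$.

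Case (c) carries the main content. Here $\rho(\Gamma_0)$ is non-elementary; $j(\Gamma_0)$ is also necessarily non-elementary, since otherwise $\Gamma_0$ would be virtually cyclic and $\rho(\Gamma_0)$ would also be elementary. Suppose for contradiction $C_{\mu}(j,\rho)>C'(j,\rho)$ and pick a sequence $\gamma_k\in\Gamma_0$ with $\mu(j(\gamma_k))\to+\infty$ and $\mu(\rho(\gamma_k))/\mu(j(\gamma_k))\to\tau>C'(j,\rho)$. After extraction, the Cartan data of $j(\gamma_k)$ and of $\rho(\gamma_k)$ will stabilize, yielding attracting/repelling boundary limits $\eta_{\pm}^j,\eta_{\pm}^{\rho}\in\partial_{\infty}\HH^n$ characterized by $j(\gamma_k)^{\pm 1}\cdot p_0 \to \eta_{\pm}^j$ and $\rho(\gamma_k)^{\pm 1}\cdot p_0 \to \eta_{\pm}^{\rho}$. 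Non-elementarity of both $j(\Gamma_0)$ and $\rho(\Gamma_0)$ then provides, via a short ping-pong, elements $\delta_1,\delta_2\in\Gamma_0$ such that each of $j(\delta_i),\rho(\delta_i)$ is hyperbolic with fixed points at infinity disjoint from $\{\eta_{\pm}^j,\eta_{\pm}^{\rho}\}$. For an integer $N$ fixed independently of $k$, the elements $\gamma_k^*:=\delta_1^N\gamma_k\delta_2^N$ should, for large $k$, have the property that both $j(\gamma_k^*)$ and $\rho(\gamma_k^*)$ are hyperbolic with translation axes passing within a uniformly bounded distance of $p_0$, so that $\mu(\cdot)-\lambda(\cdot)=O(1)$ uniformly, while $\mu(j(\gamma_k^*))$ and $\mu(\rho(\gamma_k^*))$ differ from $\mu(j(\gamma_k))$ and $\mu(\rho(\gamma_k))$ by at most a $k$-independent constant. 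This will yield $\lambda(\rho(\gamma_k^*))/\lambda(j(\gamma_k^*))\to\tau>C'(j,\rho)$, contradicting the definition of $C'$.

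The main obstacle will be this final step: verifying that sandwiching $\gamma_k$ between high powers $\delta_1^N$ and $\delta_2^N$, whose attracting/repelling endpoints are well-separated from $\eta_{\pm}^j$ and $\eta_{\pm}^{\rho}$, indeed pulls the axes of $j(\gamma_k^*)$ and $\rho(\gamma_k^*)$ near $p_0$. This is a quantitative version of the classical principle that hyperbolic elements are generic in the regular part of a non-elementary subgroup of $\PO(n,1)$, and should follow from the explicit behavior of the Cartan decomposition under left and right multiplication by bounded elements.
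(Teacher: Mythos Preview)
Your approach differs genuinely from the paper's. The paper invokes the Abels--Margulis--Soifer/Benoist machinery as a black box: for reductive $\rho$, there exist a finite set $F\subset\Gamma_0$ and a constant $D\geq 0$ such that for every $\gamma\in\Gamma_0$ some $f\in F$ makes $\rho(\gamma f)$ proximal, i.e.\ $|\mu(\rho(\gamma f))-\lambda(\rho(\gamma f))|\leq D$. A short chain of inequalities---using only $\lambda\leq\mu$ on the $j$-side and the definition of $C'$---then gives $\mu(\rho(\gamma))\leq C'(j,\rho)\,\mu(j(\gamma))+c$ uniformly, hence $C_\mu\leq C'$. This handles your cases (a)--(c) simultaneously in three lines once the external theorem is cited.

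Your cases (a) and (b) are correct in spirit and pleasantly self-contained, though the appeal to Lemma~\ref{lem:C<infty} in (b) presupposes $j$ geometrically finite, which the lemma as stated does not. Case (c), however, has a real gap. The ``short ping-pong'' you invoke to produce $\delta_1,\delta_2\in\Gamma_0$ whose images under \emph{both} $j$ and $\rho$ are hyperbolic, with fixed points avoiding $\{\eta_\pm^j,\eta_\pm^\rho\}$, is not short: producing elements with prescribed dynamical behavior \emph{simultaneously} in two unrelated representations is essentially the content of the AMS--Benoist theorem you are trying to avoid. (Your claim that $j(\Gamma_0)$ is automatically non-elementary also assumes $j$ injective, which the lemma does not.) Even granting the existence of $\delta_1,\delta_2$, the final step you flag as the ``main obstacle'' still needs work.

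Note that the paper's route is more economical than yours even in principle: it requires proximality only for $\rho(\gamma f)$, handling the $j$-side via the trivial bound $\lambda(j(\gamma f))\leq\mu(j(\gamma f))$; it never controls the axis of $j(\gamma f)$. Your construction, by contrast, tries to force both $j(\gamma_k^*)$ and $\rho(\gamma_k^*)$ to have axes near $p_0$, which is strictly harder. If you want a self-contained argument in case (c), you would do better to mimic this asymmetry: run a rank-one ping-pong only on the $\rho$-side to produce a finite $F$ with the proximality property, and then follow the paper's inequality chain.
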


\begin{proof}
By \eqref{eqn:ClambdamuLip}, we always have $C'(j,\rho)\leq C_{\mu}(j,\rho)$.
Let us prove the converse inequality.
If $\rho$ is reductive, then by \cite[Th.\,4.1]{ams95} and \cite[Lem.\,2.2.1]{ben97} there are a finite subset $F$ of~$\Gamma_0$ and a constant $D\geq 0$ with the following property:
for any $\gamma\in\Gamma_0$ there is an element $f\in F$ such that
$$|\mu(\rho(\gamma f)) - \lambda(\rho(\gamma f))| \leq D$$
(the element $\gamma f$ is \emph{proximal} --- see \cite{ben97}).
Then \eqref{eqn:triangineqmu} and \eqref{eqn:lambdaleqmu} imply
\begin{eqnarray*}
\mu(\rho(\gamma)) & \leq & \mu(\rho(\gamma f)) + \mu(\rho(f))\\
& \leq & \lambda(\rho(\gamma f)) + D + \mu(\rho(f))\\
& \leq & C'(j,\rho)\,\lambda(j(\gamma f)) + D + \mu(\rho(f))\\
& \leq & C'(j,\rho)\,\mu(j(\gamma f)) + D + \mu(\rho(f))\\
& \leq & C'(j,\rho)\,\mu(j(\gamma)) + c,
\end{eqnarray*}
where we set
$$c := D + \max_{f\in F} \big(C'(j,\rho)\,\mu(j(f)) + \mu(\rho(f))\big) <\infty.$$
Thus $C_{\mu}(j,\rho)\leq C'(j,\rho)$, which completes the proof.
\end{proof}

The implication $\scriptsize\circled{ii}\Rightarrow\scriptsize\circled{i}$ (or its contrapositive) for geometrically finite~$j$ is a consequence of the existence of a maximally stretched lamination when $C(j,\rho)\geq 1$ (Theorem~\ref{thm:lamin}).
We first establish the following.

\begin{lemma}\label{lem:ClambdaCLipred}
Let $\Gamma_0$ be a discrete group and $(j,\rho)\in\Hom(\Gamma_0,G)^2$ a pair of representations with $j$ geometrically finite.
If $\rho$ is reductive and $C(j,\rho)\geq 1$, then there is a sequence $(\gamma_k)_{k\in\N}$ of pairwise distinct elements of~$\Gamma_0$ such that $\mu(\rho(\gamma_k))-C(j,\rho)\,\mu(j(\gamma_k))$ is uniformly bounded from below; in particular (using Lemma~\ref{lem:ClambdaCmured} and \eqref{eqn:ClambdamuLip}),
\begin{equation}\label{eqn:ClambdaCLipred}
C'(j,\rho) = C_{\mu}(j,\rho) = C(j,\rho).
\end{equation}
\end{lemma}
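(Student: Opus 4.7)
The strategy is to split into two cases jointly covering the hypothesis $C:=C(j,\rho)\geq 1$ with $\rho$ reductive, and to exhibit a sequence $(\gamma_k)$ with $\mu(\rho(\gamma_k))-C\mu(j(\gamma_k))\geq -D$ and $\mu(j(\gamma_k))\to\infty$; this will force $C_{\mu}(j,\rho)\geq C$, hence $C_{\mu}(j,\rho)=C$ by \eqref{eqn:ClambdamuLip}, and then Lemma~\ref{lem:ClambdaCmured} (which uses reductivity) will yield $C'(j,\rho)=C_{\mu}(j,\rho)=C(j,\rho)$. By Lemma~\ref{lem:Fnonempty} we have $\F^{j,\rho}\neq\emptyset$, and by Corollary~\ref{cor:Enonempty} we have $E(j,\rho)\neq\emptyset$ unless $C=1$ and $\rho$ is non-cusp-deteriorating; moreover, by Corollary~\ref{cor:Ecompact} the maximally stretched lamination provided by Theorem~\ref{thm:lamin} has compact image in $j(\Gamma_0)\backslash\HH^n$ as soon as $C>1$ or $\rho$ is cusp-deteriorating.

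\emph{Case A (either $C>1$, or $C=1$ and $\rho$ cusp-deteriorating).} I would mimic the closing-lemma argument from the proof of Lemma~\ref{lem:maxstretchedlamin}, but bookkeeping the Cartan projection $\mu$ rather than the translation length $\lambda$. Fix an optimal $f\in\F^{j,\rho}$ and a point $q_0$ on a recurrent leaf $\ell$ of the maximally stretched lamination $\LL\subset E(j,\rho)$ given by Theorem~\ref{thm:lamin} (taking $\ell$ to be a geodesic line inside one of the $k$-plane leaves if $C=1$); such a recurrent leaf exists because the image of $\LL$ is compact. For each $k$ the closing lemma (Lemma~\ref{lem:closinglemma}) yields pairwise distinct $\gamma_k\in\Gamma_0$ such that $j(\gamma_k)$ is hyperbolic, its axis $\A_k$ lies within $\varepsilon_k\leq 1$ (in the $\mathcal{C}^1$ sense) of $\ell$ over a length $t_k\to+\infty$, $|\lambda(j(\gamma_k))-t_k|\leq 2\varepsilon_k$, and $\lambda(\rho(\gamma_k))\geq C\lambda(j(\gamma_k))-4C\varepsilon_k$. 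The bound $d(q_0,\A_k)\leq\varepsilon_k\leq 1$ gives $\mu_{q_0}(j(\gamma_k))\leq\lambda(j(\gamma_k))+2$, and combined with $\mu_{q_0}(\rho(\gamma_k))\geq\lambda(\rho(\gamma_k))$ this produces $\mu_{q_0}(\rho(\gamma_k))\geq C\mu_{q_0}(j(\gamma_k))-6C$. Changing basepoint from $q_0$ to the ambient $p_0$ adds only a bounded error, giving the desired uniform bound.

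\emph{Case B ($C=1$ and $\rho$ not cusp-deteriorating).} Here there exists $\gamma_\ast\in\Gamma_0$ with both $j(\gamma_\ast)$ and $\rho(\gamma_\ast)$ parabolic. Lemma~\ref{lem:disthorosphere} gives $\mu(j(\gamma_\ast^k))=2\log k+O(1)$ and $\mu(\rho(\gamma_\ast^k))=2\log k+O(1)$, so the pairwise distinct sequence $\gamma_k:=\gamma_\ast^k$ satisfies $\mu(\rho(\gamma_k))-C\mu(j(\gamma_k))=O(1)$, as required.

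The main obstacle is Case A: extracting a lower bound that uses the \emph{exact} constant $C$ from the closing-lemma approximation, which only delivers $\lambda(\rho(\gamma_k))/\lambda(j(\gamma_k))\to C$ asymptotically. This works because the closing lemma produces an \emph{additive} error $O(\varepsilon_k)=O(1)$ in the inequality $\lambda(\rho(\gamma_k))\geq C\lambda(j(\gamma_k))-O(\varepsilon_k)$, rather than a multiplicative one, so the remainder stays bounded as $k\to\infty$; and because anchoring the construction at a point $q_0$ on $\ell$ keeps every axis $\A_k$ at bounded distance from the basepoint, ensuring that $\mu$ and $\lambda$ differ by $O(1)$ on the whole sequence.
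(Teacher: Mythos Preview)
Your proof is correct and follows the same two-case split as the paper; Case~B is essentially identical. The implementation of Case~A differs, however. You invoke the closing lemma (as in Lemma~\ref{lem:maxstretchedlamin}) to produce hyperbolic $\gamma_k$ whose $j$-axes shadow a recurrent leaf~$\ell$, obtain an additive bound $\lambda(\rho(\gamma_k))\geq C\lambda(j(\gamma_k))-O(1)$, and then convert $\lambda$ to~$\mu$ using that the basepoint lies at bounded distance from the axes. The paper is more direct: it never passes through~$\lambda$ or the closing lemma at all. It simply takes \emph{any} sequence of distinct $\gamma_k$ with $d(j(\gamma_k)\cdot p_0,\ell)\leq R$ (such $\gamma_k$ abound because the image of~$\ell$ is compact), projects $j(\gamma_k)\cdot p_0$ to a point $y_k\in\ell$, and runs a triangle-inequality chain using the equivariance of~$f$ and the fact that $f$ multiplies distances by~$C$ on~$\ell$:
\[
\mu(\rho(\gamma_k)) \;\geq\; d\bigl(f(y_0),f(j(\gamma_k)\cdot p_0)\bigr) - \Delta \;\geq\; C\,d(y_0,y_k) - CR - \Delta \;\geq\; C\,\mu(j(\gamma_k)) - 3CR - \Delta.
\]
This avoids recurrence entirely and sidesteps the (minor) point you need in the $C=1$ case of finding a recurrent geodesic \emph{inside} a higher-dimensional leaf of the lamination. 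Your route has the virtue of recycling Lemma~\ref{lem:maxstretchedlamin} almost verbatim; the paper's route is shorter and works uniformly for any maximally stretched line in the compact part, recurrent or not.
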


The equality $C'(j,\rho)=C(j,\rho)$ is Corollary~\ref{cor:CC'}, which has already been proved in Section~\ref{subsec:coroElamin} when the stretch locus $E(j,\rho)$ is nonempty.
Here we do not make any assumption on $E(j,\rho)$.

\begin{proof}[Proof of Lemma~\ref{lem:ClambdaCLipred}]
If $C(j,\rho)=1$ and $\rho$ is \emph{not} cusp-deteriorating, then there exists $\gamma\in\Gamma_0$ with $j(\gamma)$ and~$\rho(\gamma)$ both parabolic.
Since $\mu(\rho(\gamma^k))$ and $\mu(j(\gamma^k))$ are both equal to $2\log(k)+O(1)$ as $k\rightarrow +\infty$ (Lemma~\ref{lem:disthorosphere}), the sequence $(\mu(\rho(\gamma_k))-C(j,\rho)\,\mu(j(\gamma_k)))_{k\in\N}$ is uniformly bounded from below.

If $C(j,\rho)>1$ or if $C(j,\rho)=1$ and $\rho$ is cusp-deteriorating, then by Theorem~\ref{thm:lamin} there is a $(j,\rho)$-equivariant map $f : \HH^n\rightarrow\HH^n$ with minimal Lipschitz constant $C(j,\rho)$ that stretches maximally some geodesic line $\ell$ of~$\HH^n$ whose image in $j(\Gamma_0)\backslash\HH^n$ lies in a compact part of the convex core.
Consider a sequence $(\gamma_k)_{k\in\N}$ of pairwise distinct elements of~$\Gamma_0$ such that $d(j(\gamma_k)\cdot p_0,\ell)$ is uniformly bounded by some constant $R>0$.
For $k\in\N$, let $y_k$ be the closest-point projection to~$\ell$ of $j(\gamma_k)\cdot p_0$.
If $p_0\in\HH^n$ is the basepoint defining~$\mu$ in \eqref{eqn:defmu} and if we set $\Delta:=d(p_0,f(y_0))+d(p_0,f(p_0))$, then the triangle inequality implies
\begin{eqnarray*}
\mu(\rho(\gamma_k)) 
& =    & d(p_0,\rho(\gamma_k)\cdot p_0)\\
& \geq & d\big(f(y_0),\rho(\gamma_k)\cdot f(p_0)\big) - \Delta\\
& =    & d\big(f(y_0),f(j(\gamma_k) \cdot p_0)\big)   - \Delta \\
& \geq & d\big(f(y_0),f(y_k)\big) - d\big(f(y_k),f(j(\gamma_k) \cdot p_0)\big) - \Delta \\
& \geq & C(j,\rho)\,d(y_0,y_k) - C(j,\rho)\,R - \Delta \\
& \geq & C(j,\rho) \big(d(p_0,j(\gamma_k)\cdot p_0)-2R\big) - C(j,\rho)\,R - \Delta \\
& =    & C(j,\rho)\,\mu(j(\gamma_k)) - 3\,C(j,\rho)\,R - \Delta.
\end{eqnarray*}
Thus the sequence $(\mu(\rho(\gamma_k))-C(j,\rho)\,\mu(j(\gamma_k)))_{k\in\N}$ is uniformly bounded from below.
\end{proof}

We can now prove the implication $\scriptsize\circled{ii}\Rightarrow\scriptsize\circled{i}$.

\begin{corollary}
Let $\Gamma_0$ be a discrete group and $(j,\rho)\in\Hom(\Gamma_0,G)^2$ a pair of representations with $j$ geometrically finite.
If $\rho$ is reductive and $(j,\rho)$ left admissible, then $C(j,\rho)<1$.
\end{corollary}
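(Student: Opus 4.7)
The plan is to argue the contrapositive: assuming $C(j,\rho)\geq 1$, I will show that $(j,\rho)$ cannot be left admissible. The key input is Lemma~\ref{lem:ClambdaCLipred}, which applies here since $\rho$ is reductive and $j$ is geometrically finite: it yields a sequence $(\gamma_k)_{k\in\N}$ of pairwise distinct elements of $\Gamma_0$ and a constant $c\geq 0$ such that
$$\mu(\rho(\gamma_k)) \,\geq\, C(j,\rho)\,\mu(j(\gamma_k)) - c \quad\text{for all }k\in\N.$$

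Combining this with $C(j,\rho)\geq 1$ and $\mu(j(\gamma_k))\geq 0$, I immediately obtain $\mu(\rho(\gamma_k))-\mu(j(\gamma_k))\geq -c$ for every $k$. On the other hand, left admissibility of $(j,\rho)$ implies that the stronger one-sided form \eqref{eqn:propcritbelow} of the Benoist--Kobayashi properness criterion holds: for every $R>0$,
$$\mu(\rho(\gamma)) \,<\, \mu(j(\gamma)) - R \quad\text{for almost all }\gamma\in\Gamma_0.$$
Applying this with $R:=c+1$ to the (pairwise distinct) $\gamma_k$ produces the required contradiction.

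The only subtle point in this plan is the passage from plain admissibility, which a priori only yields the two-sided condition \eqref{eqn:propcrit}, to the one-sided condition \eqref{eqn:propcritbelow} associated with \emph{left} admissibility. This is exactly the content of \cite[Th.\,1.3]{kas08}, invoked in Section~\ref{subsec:Cartanproj}: since $\Gamma_0$ is finitely generated (as part of the geometric finiteness assumption on $j$), hence residually finite, the set $\mu_\bullet(\Gamma_0^{j,\rho})$ is forced to lie on one side of the diagonal of $\R_+\times\R_+$ up to finitely many points, and left admissibility singles out the side where $\mu\circ\rho<\mu\circ j$. No further ingredient is needed: Lemma~\ref{lem:ClambdaCLipred}, which does the real work, has already been established on the basis of Theorem~\ref{thm:lamin} together with the logarithmic estimate $\mu(g^k)=2\log k+O(1)$ for parabolic~$g$ (Lemma~\ref{lem:disthorosphere}) that handles the delicate case $C(j,\rho)=1$ with $\rho$ not cusp-deteriorating.
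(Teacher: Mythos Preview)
Your proof is correct and follows essentially the same route as the paper: assume $C(j,\rho)\geq 1$, invoke Lemma~\ref{lem:ClambdaCLipred} to produce a sequence $(\gamma_k)$ with $\mu(\rho(\gamma_k))-\mu(j(\gamma_k))$ bounded below, and contradict the one-sided properness condition \eqref{eqn:propcritbelow} coming from \cite[Th.\,1.3]{kas08}. The paper's version additionally records the intermediate conclusion $C_\mu(j,\rho)=C(j,\rho)=1$ and phrases the contradiction via the two-sided criterion \eqref{eqn:propcrit}, but this is cosmetic; your direct use of \eqref{eqn:propcritbelow} is equally valid and arguably cleaner.
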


\begin{proof}
Assume that $\rho$ is reductive and $(j,\rho)$ left admissible.
By \cite[Th.\,1.3]{kas08}, condition \eqref{eqn:propcritbelow} is satisfied, hence $C_{\mu}(j,\rho)\leq 1$.
Suppose by contradiction that $C(j,\rho)\geq 1$.
By Lemma~\ref{lem:ClambdaCLipred}, we have $C_{\mu}(j,\rho)=C(j,\rho)=1$ and there is a sequence $(\gamma_k)\in(\Gamma_0)^{\N}$ of pairwise distinct elements with $|\mu(\rho(\gamma_k))-\mu(j(\gamma_k))|$ uniformly bounded.
This contradicts the properness criterion \eqref{eqn:propcrit} of Benoist and Kobayashi.
\end{proof}

\subsection{Proof of the implications of Section~\ref{subsec:implicationsproperness} when $\rho$ is nonreductive}\label{subsec:proofadmnonred}

We now prove the implications of Section~\ref{subsec:implicationsproperness} for geometrically finite~$j$ when $\rho$ is nonreductive (Definition~\ref{def:reductive}).
We have already explained the easy implications $\scriptsize\circled{1}\Rightarrow\scriptsize\circled{3}\Rightarrow\scriptsize\circled{4}\Rightarrow\scriptsize\circled{5}$ (Section~\ref{subsec:implicationsproperness}) and $\scriptsize\circled{3}\Rightarrow\scriptsize\circled{2}\Rightarrow\scriptsize\circled{5}$ (Section~\ref{subsec:Cartanproj}), as well as $\scriptsize\circled{1}\Rightarrow\scriptsize\circled{i}$ and $\scriptsize\circled{i}\Rightarrow\scriptsize\circled{1}$ under the cusp-deterioration assumption~$\scriptsize\circled{5}$ (Section~\ref{subsec:implicationsproperness}).
Moreover, in Section~\ref{subsec:proofadmred} we have established the implications $\scriptsize\circled{i}\Leftrightarrow\scriptsize\circled{ii}\Leftrightarrow\scriptsize\circled{iii}\Leftrightarrow\scriptsize\circled{iv}\Rightarrow\scriptsize\circled{v}$ for the ``reductive part'' $\rho^{\mathrm{red}}$ of~$\rho$.
Therefore, we only need to explain the ``horizontal'' implications $\scriptsize\circled{2}\Rightarrow\scriptsize\circled{ii}$, $\scriptsize\circled{3}\Rightarrow\scriptsize\circled{iii}$, $\scriptsize\circled{4}\Rightarrow\scriptsize\circled{iv}$, and $\scriptsize\circled{5}\Rightarrow\scriptsize\circled{v}$

Let $\rho\in\Hom(\Gamma_0,G)$ be nonreductive.
The group $\rho(\Gamma_0)$ has a unique fixed point $\xi$ in the boundary at infinity $\partial_{\infty}\HH^n$ of~$\HH^n$.
Let $P$ be the stabilizer of $\xi$ in~$G$: it is a proper parabolic subgroup of~$G$.
Choose a Levi decomposition $P=(\underline{M}\underline{A})\ltimes\underline{N}$, where $\underline{A}\cong\R_+^{\ast}$ is a Cartan subgroup of~$G$ (\ie a one-parameter subgroup of purely translational, commuting hyperbolic elements), $\underline{M}\cong\OO(n-1)$ is a compact subgroup of~$G$ such that $\underline{M}\underline{A}$ is the centralizer of $\underline{A}$ in~$G$, and $\underline{N}\cong\R^{n-1}$ is the unipotent radical of~$P$.
For instance, for $n=2$ (\resp $n=3$), the identity component of the group~$G$ identifies with $\PSL_2(\R)$ (\resp with $\PSL_2(\C)$), and we can take~$\underline{A}$ to be the projectivized real diagonal matrices, $\underline{N}$ the projectivized upper triangular unipotent matrices, and the identity component of~$\underline{M}$ to be the projectivized diagonal matrices with entries of module~$1$.
We set $\rho^{\mathrm{red}}:=\underline{\pi}\circ\rho$, where $\underline{\pi} : P\rightarrow\underline{M}\underline{A}$ is the natural projection.

The implications $\scriptsize\circled{2}\Rightarrow\scriptsize\circled{ii}$, $\scriptsize\circled{3}\Rightarrow\scriptsize\circled{iii}$, $\scriptsize\circled{4}\Rightarrow\scriptsize\circled{iv}$, and $\scriptsize\circled{5}\Rightarrow\scriptsize\circled{v}$ of Section~\ref{subsec:implicationsproperness} are consequences of the following easy observation.

\begin{lemma}\label{lem:muparabolic}
After possibly changing the basepoint $p_0\in\HH^n$ of \eqref{eqn:defmu} (which modifies $\mu$ only by a bounded additive amount, by the triangle inequality), we have
$$\lambda(g) = \lambda(\underline{\pi}(g)) = \mu(\underline{\pi}(g)) \leq \mu(g)$$
for all $g\in P$.
\end{lemma}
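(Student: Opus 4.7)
My plan is to choose the basepoint $p_0$ on the translation axis $\mathcal{A}\subset\HH^n$ of $\underline{A}$, which has $\xi$ as an endpoint; this only modifies $\mu$ by a bounded additive amount as announced. Recall that $\underline{M}$ fixes $\mathcal{A}$ pointwise, $\underline{A}$ translates along it, and $\underline{N}$ preserves the horospheres centered at~$\xi$, which are orthogonal to~$\mathcal{A}$. With this choice, the two identities $\mu(\pi(g)) \leq \mu(g)$ and $\lambda(\pi(g)) = \mu(\pi(g))$ would be routine: the second follows by writing $\pi(g) = ma$ and noting that $ma\cdot p_0 = a\cdot p_0 \in \mathcal{A}$ (since $m$ fixes $p_0$ and commutes with $a$), while the first follows from the fact that the $1$-Lipschitz closest-point projection $\pi_{\mathcal{A}}:\HH^n\rightarrow\mathcal{A}$ satisfies $\pi_{\mathcal{A}}(g\cdot p_0) = \pi(g)\cdot p_0$, because the $\underline{N}$-orbit of $p_0$ lies in a horosphere that projects orthogonally onto the single point $p_0$.

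The heart of the proof is the identity $\lambda(g) = \lambda(\pi(g))$. Writing $g = hn$ with $h = \pi(g)$, I would exploit the fact that $G$ has real rank one, so $\underline{N}$ is abelian, and a straightforward induction using $nh = h(h^{-1}nh)$ gives
$$g^k = h^k \cdot N_k, \qquad N_k := \prod_{i=0}^{k-1}\, h^{-i} n h^i \in \underline{N}.$$
By \eqref{eqn:lambdalimmu}, the claim reduces to showing $\mu(g^k) = k\lambda(h) + O(1)$. The conjugation action of $h = ma$ on the abelian group $\underline{N}\cong\R^{n-1}$ is an orthogonal transformation coming from~$m$ composed with scalar multiplication by $e^{\pm\lambda(a)}$, the sign depending on whether $\xi$ is repelling or attracting for~$h$.

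In the principal case, when $\xi$ is repelling for~$h$, conjugation by $h^{-1}$ is a strict contraction of~$\underline{N}$, so $N_k$ is a convergent product, uniformly bounded in~$k$; since $g^k\cdot p_0 = h^k\cdot(N_k\cdot p_0)$ and $h^k$ is an isometry, $|\mu(g^k) - \mu(h^k)|$ is bounded, finishing that case. The attracting case is handled by applying the same argument to $g^{-1} = h^{-1}\cdot(hn^{-1}h^{-1})$, whose $\underline{M}\underline{A}$-part is $h^{-1}$ (for which $\xi$ is now repelling), and using $\lambda(g^{-1}) = \lambda(g)$. The remaining case $h\in\underline{M}$ (so $\lambda(h) = 0$) is easier: the action on~$\underline{N}$ is orthogonal, giving $\|N_k\|\leq k\|n\|$ in the intrinsic Euclidean metric of the horosphere through~$p_0$, so the conversion formula \eqref{eqn:horomu} between horospherical and hyperbolic distance yields $\mu(g^k) = O(\log k)$. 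The only subtle point is the dichotomy between the repelling and attracting situations (the product $N_k$ diverges in the latter), which the passage to $g^{-1}$ circumvents cleanly.
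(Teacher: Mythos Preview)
Your core argument for $\lambda(g)=\lambda(\pi(g))$ via the expansion $g^k=h^kN_k$ and the limit formula \eqref{eqn:lambdalimmu} is correct and takes a genuinely different, more algebraic route from the paper. One cosmetic slip: you have the labels swapped --- conjugation by $h^{-1}$ contracts $\underline{N}$ precisely when $\xi$ is \emph{attracting} for $h$ (in the upper half-space model with $\xi=\infty$, if $a$ dilates by $e^t$ with $t>0$ then $\infty$ is attracting and $\text{Ad}(a^{-1})$ scales $\underline{N}$ by $e^{-t}<1$). Since you treat both cases symmetrically this is harmless.

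There is, however, a real error in your argument for $\mu(\pi(g))\leq\mu(g)$. The closest-point projection $\pi_{\mathcal{A}}$ does \emph{not} collapse the horosphere through $p_0$ to the single point $p_0$: its level sets are totally geodesic hyperplanes orthogonal to $\mathcal{A}$, not horospheres. Concretely, with $\xi=\infty$, $\mathcal{A}=\{0\}\times\R_+^\ast$ and $p_0=(0,1)$, the foot of the perpendicular from $n\cdot p_0=(v,1)$ to $\mathcal{A}$ is $(0,\sqrt{1+|v|^2})$, not $p_0$; so $\pi_{\mathcal{A}}(g\cdot p_0)\neq\pi(g)\cdot p_0$ in general. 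The fix is easy: either use instead the projection \emph{along horospheres centered at $\xi$} (sending $(x,y)$ to $(0,y)$), which is $1$-Lipschitz and genuinely $(P,\pi(P))$-equivariant; or simply note that once $\mu(\pi(g))=\lambda(g)$ is established, the desired inequality is nothing but \eqref{eqn:lambdaleqmu}.

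The paper's proof is shorter and uses exactly this horospherical projection onto $\mathcal{A}$. Since that map is $(P,\pi(P))$-equivariant and restricts to an \emph{isometry} on every geodesic line ending at $\xi$, for hyperbolic $g\in P$ (whose axis ends at $\xi$) one reads off directly that $\pi(g)$ translates $\mathcal{A}$ by exactly $\lambda(g)$; for parabolic or elliptic $g$, each horosphere is preserved, so $\pi(g)\in\underline{M}$ and both sides vanish. Your iteration argument is more computational but has the mild advantage of not splitting into the hyperbolic/parabolic/elliptic trichotomy.
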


\begin{proof}
Take the basepoint $p_0\in\HH^n$ on the geodesic line~$\A$ preserved by~$\underline{M}\underline{A}$, which is pointwise fixed by~$\underline{M}$ and on which the elements of~$A$ act by translation. Then $\lambda(\underline{\pi}(g))=\mu(\underline{\pi}(g))$ for all $g\in P$. 
The projection onto~$\A$ along horospheres centered at~$\xi$ is $(P,\underline{\pi}(P))$-equivariant, and restricts to an isometry on any line ending at~$\xi$: therefore, if $g$ is hyperbolic, preserving such a line, then $\underline{\pi}(g)$ translates along $\A$ by $\lambda(g)$ units of length, yielding $\lambda(g)=\lambda(\underline{\pi}(g))$.
If $g$ is parabolic or elliptic, then $\lambda(g)=0$ and $g$ preserves each horosphere centered at~$\xi$, hence $\lambda(\underline{\pi}(g))=0$.
\end{proof}

\begin{proof}[Proof of $\scriptsize\circled{3}\Rightarrow\scriptsize\circled{iii}$ and $\scriptsize\circled{4}\Rightarrow\scriptsize\circled{iv}$]
Lemma~\ref{lem:muparabolic} and \eqref{eqn:ClambdamuLip} imply
$$C'(j,\rho) = C'(j,\rho^{\mathrm{red}}) = C_{\mu}(j,\rho^{\mathrm{red}}) \leq C_{\mu}(j,\rho),$$
which immediately yields the implications.
\end{proof}

\begin{proof}[Proof of $\scriptsize\circled{2}\Rightarrow\scriptsize\circled{ii}$]
Assume that $(j,\rho)$ is left admissible.
By \cite{kas08}, the pair $(j,\rho)$ is not right admissible and the stronger form \eqref{eqn:propcritbelow} of the properness criterion of Benoist and Kobayashi holds.
Since $\mu\circ\underline{\pi}\leq\mu$ by Lemma~\ref{lem:muparabolic}, the condition \eqref{eqn:propcritbelow} also holds for $\rho^{\mathrm{red}}=\underline{\pi}\circ\rho$, hence $(j,\rho^{\mathrm{red}})$ is left admissible.
\end{proof}

\begin{proof}[Proof of $\scriptsize\circled{5}\Rightarrow\scriptsize\circled{v}$]
Assume that $\rho$ is cusp-deteriorating.
For any $\gamma\in\Gamma_0$ with $j(\gamma)$ parabolic, $\rho^{\mathrm{red}}(\gamma)$ is not hyperbolic, otherwise $\rho(\gamma)$ would be hyperbolic too by Lemma~\ref{lem:muparabolic}, and it is not parabolic since $\rho^{\mathrm{red}}$ takes values in the group $\underline{M}\underline{A}$ which has no parabolic element.
\end{proof}

\subsection{Deformation of properly discontinuous actions}\label{subsec:proofdeform}

Theorems \ref{thm:deformcompact} and~\ref{thm:deform} follow from Theorem~\ref{thm:adm} (properness criterion) and Proposition~\ref{prop:contCcc} (continuity of $(j,\rho)\mapsto C(j,\rho)$ for convex cocompact~$j$), together with a classical cohomological argument for cocompactness.

\begin{proof}[Proof of Theorems \ref{thm:deformcompact} and~\ref{thm:deform}]
Let $\Gamma$ be a finitely generated discrete subgroup of $G\times G$ acting properly discontinuously on~$G$ by right and left multiplication.
By the Selberg lemma \cite[Lem.\,8]{sel60}, there is a finite-index subgroup $\Gamma'$ of~$\Gamma$ that is torsion-free.
By \cite{kr85} and \cite{sal00} (case $n=2$) and \cite{kas08} (general case), up to switching the two factors of $G\times G$, the group~$\Gamma'$ is of the form $\Gamma_0^{j,\rho}$ as in \eqref{eqn:Gamma(j,rho)}, where $\Gamma_0$ is a torsion-free discrete subgroup of~$G$ and $j,\rho\in\Hom(\Gamma_0,G)$ are two representations of $\Gamma_0$ in~$G$ with $j$ injective and discrete, and $(j,\rho)$ is left admissible in the sense of Definition~\ref{def:admissible}.
Assume that $j$ is convex cocompact.
Then $C(j,\rho)<1$ by Theorem~\ref{thm:adm}.
By Proposition~\ref{prop:contCcc} and the fact that being convex cocompact is an open condition (see \cite[Prop.\,4.1]{bow98} or Proposition~\ref{prop:rem-ccopen}), there is a neighborhood $\mathcal{U}\subset\Hom(\Gamma,G\times G)$ of the natural inclusion such that for any $\varphi\in\mathcal{U}$, the group $\varphi(\Gamma')$ is of the form $\Gamma_0^{j',\rho'}$ for some $(j',\rho')\in\Hom(\Gamma_0,G)^2$ with $j'$ convex cocompact and $C(j',\rho')<1$.
In particular, $\varphi(\Gamma')$ is discrete in $G\times G$ and acts properly discontinuously on~$G$ by Theorem~\ref{thm:adm}, and the same conclusion holds for~$\varphi(\Gamma)$.

Assume that the action of $\Gamma$ on~$G$ is cocompact.
We claim that the action of $\varphi(\Gamma)$ on~$G$ is cocompact for all $\varphi\in\mathcal{U}$.
Indeed, let $\Gamma'=\Gamma_0^{j,\rho}$ be a finite-index subgroup of~$\Gamma$ as above; it is sufficient to prove that the action of $\varphi(\Gamma')$ is cocompact for all $\varphi\in\mathcal{U}$.
Since $\varphi(\Gamma')$ is of the form $\Gamma_0^{j',\rho'}$ with $j'$ injective, the group $\varphi(\Gamma')$ has the same cohomological dimension as~$\Gamma'$.
We then use the fact that when a torsion-free discrete subgroup of $G\times G$ acts properly discontinuously on~$G$, it acts cocompactly on~$G$ if and only if its cohomological dimension is equal to the dimension of the Riemannian symmetric space of~$G$, namely~$n$ in our case (see \cite[Cor.\,5.5]{kob89}).
(Alternatively, the cocompactness of the action of $\Gamma_0^{j',\rho'}$ on~$G$ also follows from Proposition~\ref{prop:quotients} and from the cocompactness of~$j'(\Gamma_0)$.)

Finally, assume that the action of $\Gamma$ on~$G$ is free.
This means that for any $\boldsymbol\gamma\in\Gamma\smallsetminus\{ 1\}$, the elements $\mathrm{pr}_1(\boldsymbol\gamma)$ and $\mathrm{pr}_2(\boldsymbol\gamma)$ are \emph{not} conjugate in~$G$, where $\mathrm{pr}_i : G\times G\rightarrow G$ denotes the $i$-th projection.
In fact, since the action of $\Gamma$ on~$G$ is properly discontinuous, $\mathrm{pr}_1(\boldsymbol\gamma)$ and $\mathrm{pr}_2(\boldsymbol\gamma)$ can never be conjugate in~$G$ when $\boldsymbol\gamma$ is of infinite order.
Therefore freeness is seen exclusively on torsion elements.
We claim that $\Gamma$ has only finitely many conjugacy classes of torsion elements.
Indeed, $\Gamma$ has a finite-index subgroup of the form $\Gamma_0^{j,\rho}$ with $j$ injective and convex cocompact (up to switching the two factors of $G\times G$), and a convex cocompact subgroup of~$G$ (or more generally a geometrically finite subgroup) has only finitely many conjugacy classes of torsion elements (see \cite{bow93}).
For any nontrivial torsion element $\boldsymbol\gamma\in\Gamma$, there is a neighborhood $\mathcal{U}_{\boldsymbol\gamma}\subset\Hom(\Gamma,G\times G)$ such that for all $\varphi\in\mathcal{U}_{\boldsymbol\gamma}$, the elements $\mathrm{pr}_1(\varphi(\boldsymbol\gamma))$ and $\mathrm{pr}_2(\varphi(\boldsymbol\gamma))$ are not conjugate in~$G$; then $\mathrm{pr}_1(\varphi(\boldsymbol\gamma'))$ and $\mathrm{pr}_2(\varphi(\boldsymbol\gamma'))$ are also not conjugate for any $\Gamma$-conjugate $\boldsymbol\gamma'$ of~$\boldsymbol\gamma$.
\end{proof}

The same argument, replacing Proposition~\ref{prop:contCcc} (continuity of $(j,\rho)\mapsto C(j,\rho)$ for convex cocompact~$j$) by Proposition~\ref{prop:contpropertiesC}.(1) (openness of the condition $C(j,\rho)<1$ for geometrically finite~$j$ and cusp-deteriorating~$\rho$ in dimension $n\leq 3$), yields the following.

\begin{theorem}\label{thm:deformwithcusps}
For $G=\PO(2,1)$ or $\PO(3,1)$ (\ie $\PSL_2(\R)$ or $\PSL_2(\C)$ up to index two), let $\Gamma$ be a discrete subgroup of $G\times G$ acting properly discontinuously on~$G$, with a geometrically finite quotient (Definition~\ref{def:cococo}).
There is a neighborhood $\mathcal{U}\subset\Hom_{\mathrm{det}}(\Gamma,G\times G)$ of the natural inclusion such that for any $\varphi\in\mathcal{U}$, the group $\varphi(\Gamma)$ is discrete in $G\times G$ and acts properly discontinuously on~$G$, with a geometrically finite quotient; moreover, this quotient is compact (\resp is convex cocompact, \resp is a manifold) if the initial quotient of~$G$ by~$\Gamma$ was.
\end{theorem}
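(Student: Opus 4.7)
The plan is to mimic the proof of Theorems~\ref{thm:deformcompact} and~\ref{thm:deform} given just above, with one key substitution: Proposition~\ref{prop:contCcc} (continuity of $C$ for convex cocompact~$j$) is replaced by Proposition~\ref{prop:contpropertiesC}.(1) (openness of the condition $C<1$ for geometrically finite~$j$ of fixed cusp type and cusp-deteriorating~$\rho$), which applies precisely because $n\leq 3$ forces every cusp to have rank $\geq n-2$. First, apply the Selberg lemma to extract a torsion-free finite-index subgroup $\Gamma'\subset\Gamma$. By \cite{kr85,sal00,kas08}, after possibly swapping the two factors of $G\times G$, we may write $\Gamma'=\Gamma_0^{j,\rho}$ as in \eqref{eqn:Gamma(j,rho)} with $(j,\rho)$ left admissible, and by Definition~\ref{def:cococo} we may assume~$j$ geometrically finite; the hypothesis that the deformations lie in $\Hom_{\mathrm{det}}$ means $\rho$ stays cusp-deteriorating with respect to~$j$. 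By Theorem~\ref{thm:adm}, left admissibility gives $C(j,\rho)<1$.

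Next, consider a representation $\varphi\in\Hom_{\mathrm{det}}(\Gamma,G\times G)$ close to the natural inclusion. Its restriction to~$\Gamma'$ is again of the form $\Gamma_0^{j',\rho'}$, with $j'$ close to~$j$ and $\rho'$ close to~$\rho$; by assumption~$\rho'$ remains cusp-deteriorating with respect to~$j'$, and~$j'$ is geometrically finite with the cusp type of~$j$ (openness of geometrical finiteness with fixed cusp type being provided by Proposition~\ref{prop:rem-gfopen}). Since every cusp of~$j$ has rank in $\{1,\dots,n-1\}\subset[n-2,n-1]$ when $n\in\{2,3\}$, Proposition~\ref{prop:contpropertiesC}.(1) applies to the fixed cusp type of~$j$ and guarantees that the set of pairs with $C<1$ is open in $\Hom_{j}(\Gamma_0,G)\times\Hom_{j\text{-}\mathrm{det}}(\Gamma_0,G)$. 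Shrinking~$\mathcal{U}$ we may therefore assume $C(j',\rho')<1$, and Theorem~\ref{thm:adm} then implies that $\varphi(\Gamma')$ acts properly discontinuously on~$G$, with quotient geometrically finite by Definition~\ref{def:cococo}. Properness and geometrical finiteness of the quotient pass from~$\Gamma'$ to the finite-index overgroup~$\Gamma$.

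To transfer the additional properties, one copies the arguments at the end of the proof of Theorems~\ref{thm:deformcompact} and~\ref{thm:deform} verbatim. Convex cocompactness of the quotient: convex cocompactness is open inside the geometrically finite representations (\eg by Proposition~\ref{prop:rem-ccopen}), so if $j$ is convex cocompact so is~$j'$, and Definition~\ref{def:cococo} delivers the claim; alternatively one observes there are no cusps to worry about and the $n\leq 3$ restriction is harmless. Compactness of the quotient: the cohomological dimension of $\varphi(\Gamma')=\Gamma_0^{j',\rho'}$ equals that of $\Gamma'$ and hence equals~$n$, so \cite[Cor.\,5.5]{kob89} gives cocompactness; one can alternatively invoke the $\underline{K}$-bundle description of Proposition~\ref{prop:quotients}, combined with cocompactness of $j'(\Gamma_0)$ on its convex core plus (in the cocompact case) on all of~$\HH^n$. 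Manifold property: geometrical finiteness of~$j$ yields only finitely many conjugacy classes of torsion elements in~$\Gamma$ (see \cite{bow93}), so freeness of the action is an open condition, handled by shrinking~$\mathcal{U}$ once more.

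The step where the restrictions in the theorem really bite is Step~2: openness of the condition $C<1$. Proposition~\ref{prop:contpropertiesC}.(1) is genuinely false when a cusp has rank $<n-2$ (see Section~\ref{ex:dim4C<1}), which is exactly the reason why the theorem is restricted to $n\in\{2,3\}$ and why we must stay inside the cusp-deteriorating locus $\Hom_{\mathrm{det}}$; a constant~$\rho$ approached by non-cusp-deteriorating~$\rho'$ is the prototypical obstruction, and it is only the combination of the low-dimensional hypothesis with the cusp-deteriorating restriction that neutralizes it. All other ingredients---the Selberg lemma, the structure theorem $\Gamma'=\Gamma_0^{j,\rho}$, the properness criterion of Theorem~\ref{thm:adm}, openness of geometrical finiteness of fixed cusp type, the cohomological and torsion-counting arguments---are either direct imports from the proof already given or standard facts.
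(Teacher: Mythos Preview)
Your proposal is correct and follows exactly the approach the paper indicates: the paper's proof of this theorem is literally the one-sentence remark that ``the same argument'' as for Theorems~\ref{thm:deformcompact} and~\ref{thm:deform} works, ``replacing Proposition~\ref{prop:contCcc} by Proposition~\ref{prop:contpropertiesC}.(1)''. One minor simplification: you need not invoke Proposition~\ref{prop:rem-gfopen} to ensure $j'$ remains geometrically finite of the same cusp type, since this is already built into the definition of $\Hom_{\mathrm{det}}(\Gamma,G\times G)$ given immediately after the theorem.
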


The set $\Hom_{\mathrm{det}}(\Gamma,G\times G)$ is defined as follows.
We have seen that the group~$\Gamma$ has a finite-index subgroup~$\Gamma'$ of the form $\Gamma_0^{j,\rho}$ or $\Gamma_0^{\rho,j}$, where $\Gamma_0$ is a discrete subgroup of~$G$ and $j,\rho\in\Hom(\Gamma_0,G)$ are two representations of $\Gamma_0$ in~$G$ with $j$ injective and geometrically finite and $(j,\rho)$ left admissible in the sense of Definition~\ref{def:admissible}.
By Lemma~\ref{lem:parabdet}, the representation~$\rho$ is cusp-deteriorating with respect to~$j$ in the sense of Definition~\ref{def:typedet}.
We define $\Hom_{\mathrm{det}}(\Gamma,G\times G)$ to be the set of group homomorphisms from $\Gamma$ to $G\times G$ whose restriction to $\Gamma'$ is of the form $(j',\rho')$ (if $\Gamma'\cong\Gamma_0^{j,\rho}$) or $(\rho',j')$ (if $\Gamma'\cong\Gamma_0^{\rho,j}$) with $j'$ injective and geometrically finite, of the cusp type of~$j$, and $\rho'$ cusp-deteriorating with respect to~$j$.

It is necessary to restrict to $\Hom_{\mathrm{det}}(\Gamma,G\times G)$ in Theorem~\ref{thm:deformwithcusps}, for the following reasons:
\begin{itemize}
  \item as mentioned in the introduction, for a given $j$ with cusps, the constant representation $\rho=1$ can have small, non-cusp-deteriorating deformations $\rho'$, for which $(j,\rho')$ is nonadmissible;
  \item if we allow for small deformations $j'$ of~$j$ with a \emph{different} cusp type than~$j$ (fewer cusps), then the pair $(j,\rho)$ can have small, \emph{nonadmissible} deformations $(j',\rho')$ with $\rho'$ cusp-deteriorating with respect to~$j'$: this shows that we must fix the cusp type.
\end{itemize}

Note that properly discontinuous actions on $G=\PO(3,1)$ of finitely generated groups $\Gamma=\Gamma_0^{j,\rho}$ with $j$ geometrically infinite do \emph{not} deform into properly discontinuous actions in general, for the group $j(\Gamma_0)$ (\emph{e.g.}\ the fiber group of a hyperbolic surface bundle over the circle) may have small deformations $j'(\Gamma_0)$ that are not even discrete (\emph{e.g.}\ small perturbations of a nearby cusp group in the sense of \cite{mcm91}).
It would be interesting to know whether in this case one can define an analogue of $\Hom_{\mathrm{det}}(\Gamma,G\times G)$ in which admissibility becomes again an open condition; see also Section~\ref{sec:geom-infinite}.

\subsection{Interpretation of Theorem~\ref{thm:deformcompact} in terms of $(\mathbf{G},\mathbf{X})$-structures}\label{subsec:(G,X)struct}

We can translate Theorem~\ref{thm:deformcompact} in terms of geometric structures, in the sense of Ehresmann and Thurston, as follows.
We set $\mathbf{X}=G=\PO(n,1)$ and $\mathbf{G}=G\times G$, where $G\times G$ acts on~$G$ by right and left multiplication.
Let $N$ be a manifold with universal covering~$\widetilde{N}$.
Recall that a \emph{$(\mathbf{G},\mathbf{X})$-structure} on~$N$ is a (maximal) atlas of charts on~$N$ with values in~$\mathbf{X}$ such that the transition maps are given by elements of~$\mathbf{G}$.
Such a structure is equivalent to a pair $(h,D)$ where $h : \pi_1(N)\rightarrow\mathbf{G}$ is a group homomorphism called the \emph{holonomy} and $D : \widetilde{N}\rightarrow\mathbf{X}$ an $h$-equivariant local diffeomorphism called the \emph{developing map}; the pair $(h,D)$ is unique modulo the natural action of~$\mathbf{G}$ by
$$\mathbf{g}\cdot (h,D) = \left (\mathbf{g}h(\cdot)\mathbf{g}^{-1},\mathbf{g} D\right ).$$
A $(\mathbf{G},\mathbf{X})$-structure on~$N$ is said to be \emph{complete} if the developing map is a covering; this is equivalent to a notion of geodesic completeness for the natural pseudo-Riemannian structure induced by the Killing form of the Lie algebra of~$G$ (see \cite{gol85}).
For $n>2$, the fundamental group of $G_0=\PO(n,1)_0$ is finite, hence completeness is equivalent to the fact that the $(\mathbf{G},\mathbf{X})$-structure identifies~$N$ with the quotient of~$\mathbf{X}$ by some discrete subgroup~$\Gamma$ of~$\mathbf{G}$ acting properly discontinuously and freely on~$\mathbf{X}$, up to a finite covering.
For $n=2$, this characterization of completeness still holds for \emph{compact} manifolds~$N$ \cite[Th.\,7.2]{kr85}.
Therefore, Theorem~\ref{thm:deformcompact} can be restated as follows.

\begin{corollary}\label{cor:open(G,X)}
Let $\mathbf{X}=G=\PO(n,1)$ and $\mathbf{G}=G\times G$, acting on~$\mathbf{X}$ by right and left multiplication.
The set of holonomies of complete $(\mathbf{G},\mathbf{X})$-structures on any compact manifold~$N$ is open in $\Hom(\pi_1(N),\mathbf{G})$.
\end{corollary}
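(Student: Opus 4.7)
The plan is to read Corollary~\ref{cor:open(G,X)} as the geometric restatement of Theorem~\ref{thm:deformcompact}, using the characterization of completeness for $(\mathbf{G},\mathbf{X})$-structures that is recalled immediately above the corollary. Let $(h_0, D_0)$ be a complete $(\mathbf{G},\mathbf{X})$-structure on the compact manifold $N$ with holonomy $h_0\in\Hom(\pi_1(N),\mathbf{G})$. By the characterization of completeness (which uses the finiteness of $\pi_1(G_0)$ when $n>2$, and \cite[Th.\,7.2]{kr85} together with \cite{kli96} when $n=2$), there exists a finite cover $p\colon N'\to N$ whose pulled-back $(\mathbf{G},\mathbf{X})$-structure realizes $N'\simeq \Gamma_0\backslash\mathbf{X}$, where $\Gamma_0:=h_0(\pi_1(N'))$ is a discrete subgroup of $\mathbf{G}$ acting properly discontinuously, freely, and cocompactly on $\mathbf{X}$.

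Next I would apply Theorem~\ref{thm:deformcompact} to $\Gamma_0$ in order to obtain an open neighborhood $\mathcal{V}_0\subset\Hom(\Gamma_0,\mathbf{G})$ of the natural inclusion such that every $\varphi\in\mathcal{V}_0$ has discrete image acting properly discontinuously, freely, and cocompactly on $\mathbf{X}$. Simultaneously, I would invoke the Ehresmann--Thurston principle on the compact manifold $N$: the map sending a $(\mathbf{G},\mathbf{X})$-structure on $N$ to its holonomy is open, so there is an open neighborhood $\mathcal{U}\subset\Hom(\pi_1(N),\mathbf{G})$ of $h_0$ such that every $h\in\mathcal{U}$ is the holonomy of some $(\mathbf{G},\mathbf{X})$-structure $(h,D_h)$ obtained by a small deformation of $(h_0,D_0)$. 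Shrinking $\mathcal{U}$ if necessary, I would arrange that for every $h\in\mathcal{U}$ the restriction $h|_{\pi_1(N')}$ corresponds, via the identification $h_0|_{\pi_1(N')}$ with the inclusion of $\Gamma_0$, to an element of $\mathcal{V}_0$; hence $h(\pi_1(N'))$ still acts properly discontinuously, freely, and cocompactly on $\mathbf{X}$.

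It then remains to verify that the deformed structure $(h,D_h)$ is complete. The developing map $D_h\colon\widetilde{N}\to\mathbf{X}$ is an $h$-equivariant local diffeomorphism; viewing $\widetilde{N}$ as the universal cover of $N'$, it descends to a local diffeomorphism $\bar{D}_h\colon N'\to h(\pi_1(N'))\backslash\mathbf{X}$. Both source and target are compact connected manifolds of the same dimension $\dim\mathbf{X}$, so $\bar{D}_h$ is a (finite) covering map. Lifting back to universal covers and using that $\pi_1(\mathbf{X})$ is finite (for $n>2$), one concludes that $D_h$ itself is a covering of $\mathbf{X}$, that is, the deformed structure is complete. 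Passing back from $N'$ to $N$ is harmless: freeness and proper discontinuity for $h(\pi_1(N))$ follow from the finite-index version on $h(\pi_1(N'))$ exactly as in the proof of Theorem~\ref{thm:deformcompact}, via a Selberg-lemma argument to handle torsion and the fact that proper discontinuity passes to supergroups of finite index.

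The only substantive step beyond the already-proved Theorem~\ref{thm:deformcompact} is the verification that completeness is preserved under small deformations. The main obstacle is exactly this compactness/covering argument: one must coordinate the deformed developing map, the finite cover $N'$ of $N$, and the $\mathbf{G}$-action on $\mathbf{X}$ carefully enough to apply the classical fact that a local diffeomorphism between compact connected manifolds of equal dimension is a covering. Once this is in hand, the corollary is immediate.
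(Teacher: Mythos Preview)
Your argument is correct and follows the paper's approach, which treats the corollary as an immediate restatement of Theorem~\ref{thm:deformcompact} via the characterization of completeness recalled just above it; you simply make explicit the step the paper leaves implicit, invoking Ehresmann--Thurston and the fact that a local diffeomorphism between closed manifolds of the same dimension is a covering. Two minor remarks: the finiteness of $\pi_1(\mathbf{X})$ is not actually needed in your covering step (a lift of a covering map through a covering map is again a covering, so the argument works uniformly for all~$n\geq 2$), and your final paragraph is redundant since the $D_h$ you have shown to be a covering is already the developing map of the structure on~$N$ itself.
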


We note that for a compact manifold~$N$, the so-called \emph{Ehresmann--Thurston principle} asserts that the set of holonomies of \emph{all} (not necessarily complete) $(\mathbf{G},\mathbf{X})$-structures on~$N$ is open in $\Hom(\pi_1(N),\mathbf{G})$ (see \cite{thu77}).
For $n=2$, Klingler \cite{kli96} proved that all $(\mathbf{G},\mathbf{X})$-structures on~$N$ are complete, which implies Corollary~\ref{cor:open(G,X)}.
For $n>2$, it is not known whether all $(\mathbf{G},\mathbf{X})$-structures on~$N$ are complete; it has been conjectured to be true at least for $n=3$ \cite{dz09}.
The question is nontrivial since the Hopf--Rinow theorem does not hold for non-Riemannian manifolds.

\section{A generalization of the Thurston metric on Teichm\"uller~space}\label{sec:Thurston}

In this section we prove Proposition~\ref{prop:generalThurston}, which generalizes the Thurston metric on Teichm\"uller space to higher dimension, in a geometrically finite setting.
(Corollary~\ref{cor:CC'} has already been proved as part of Lemma~\ref{lem:ClambdaCLipred}; see also Section~\ref{subsec:coroElamin}.)

\subsection{An asymmetric metric on the space of geometrically finite structures}\label{subsec:generalized-dTh}

Let $M$ be a hyperbolic $n$-manifold and $\T(M)$ the space of conjugacy classes of geometrically finite representations of $\Gamma_0:=\pi_1(M)$ into $G=\PO(n,1)$ with the homeomorphism type and cusp type of~$M$.
In this section we assume that $M$ contains more than one essential closed curve.
We define a function $d_{\mathrm{Th}} : \T(M)\times\T(M)\to\R$ as in \eqref{eqn:thurston-distance-corrected}: for any $j,\rho\in\T(M)$,
$$d_{\mathrm{Th}}(j,\rho) := \log \left ( C(j,\rho) \frac{\delta(\rho)}{\delta(j)}\right ),$$
where $\delta(j)$ is the \emph{critical exponent} of~$j$.
Recall that, by definition,
\begin{equation}\label{eqn:defcritexp}
\delta(j) := \limsup_{R\rightarrow +\infty}\ \frac{1}{R}\,\log\#\big(j(\Gamma_0)\cdot p\cap B_p(R)\big),
\end{equation}
where $p$ is any point of~$\HH^n$ and $B_p(R)$ denotes the ball of radius~$R$ centered at~$p$ in~$\HH^n$.
We have $\delta(j)\in (0,n-1]$ \cite{bea66,sul79} and the limsup is in fact a limit \cite{pat76,sul79,rob02}.
The Poincar\'e series $\sum_{\gamma\in\Gamma_0} e^{-s\,d(p,j(\gamma)\cdot p)}$ converges for $s>\delta(j)$ and diverges for $s\leq\delta(j)$.
Equivalently, $\delta(j)$ is the Hausdorff dimension of the limit set of $j(\Gamma_0)$ \cite{sul79,sul84}.

If $M$ is a surface of finite volume, then $\T(M)$ is the Teichm\"uller space of~$M$, we have $\delta \equiv 1$ on $\T(M)$, and $d_{\mathrm{Th}}$ is the \emph{Thurston metric} on Teichm\"uller space, which was introduced in \cite{thu86} (see Section~\ref{subsec:Thurstonmetric}).
This metric is \emph{asymmetric}: in general $d_{\mathrm{Th}}(j,\rho)\neq d_{\mathrm{Th}}(\rho,j)$, see \cite[\S\,2]{thu86}.

\begin{lemma} \label{lem:dThcont}
The function $d_{\mathrm{Th}} : \T(M)\times\T(M)\rightarrow\R$ is continuous as soon as $M$ is convex cocompact or all the cusps have rank $\geq n-2$.
In particular, it is always continuous if $n\leq 3$.
\end{lemma}

\begin{proof}
If $M$ is convex cocompact or all the cusps have rank $\geq n-2$, then the function $(j,\rho)\mapsto C(j,\rho)$ is continuous on $\T(M)\times\T(M)$ by Proposition~\ref{prop:contCcc}, Lemma~\ref{lem:parabdet}, and Proposition~\ref{prop:contpropertiesC}.(3).
Moreover, convergence in $\T(M)$ implies geometric convergence (see Proposition~\ref{prop:rem-gfopen}), and so $\delta$ is continuous on $\T(M)$ by \cite[Th.\,7.3]{mcm99}.
\end{proof}

The following remark justifies the introduction of the correcting factor $\delta(\rho)/\delta(j)$ in the definition \eqref{eqn:thurston-distance-corrected} of~$d_{\mathrm{Th}}$.

\begin{remark}\label{rem:dTh<0}
If $M$ has infinite volume, then $\log C(j,\rho)$ can take negative values, and $\log C(j,\rho)=0$ does \emph{not} imply $j=\rho$.
\end{remark}

\begin{proof}
The following example is taken from \cite[proof of Lemma~3.4]{thu86}; see also \cite{pt10}.
Let $M$ be a pair of pants, \ie a hyperbolic surface of genus~$0$ with three funnels.
Let $\alpha$ be an infinite embedded geodesic of~$M$ whose two ends go out to infinity in the same funnel, and let $\alpha'$ be another nearby geodesic (see Figure~\ref{fig:I}).
\begin{figure}[h!]
\begin{center}
\labellist
\small\hair 2pt
\pinlabel{$\alpha$} at 193 240
\pinlabel{$\alpha'$} at 237 220
\endlabellist
\includegraphics[width=7cm]{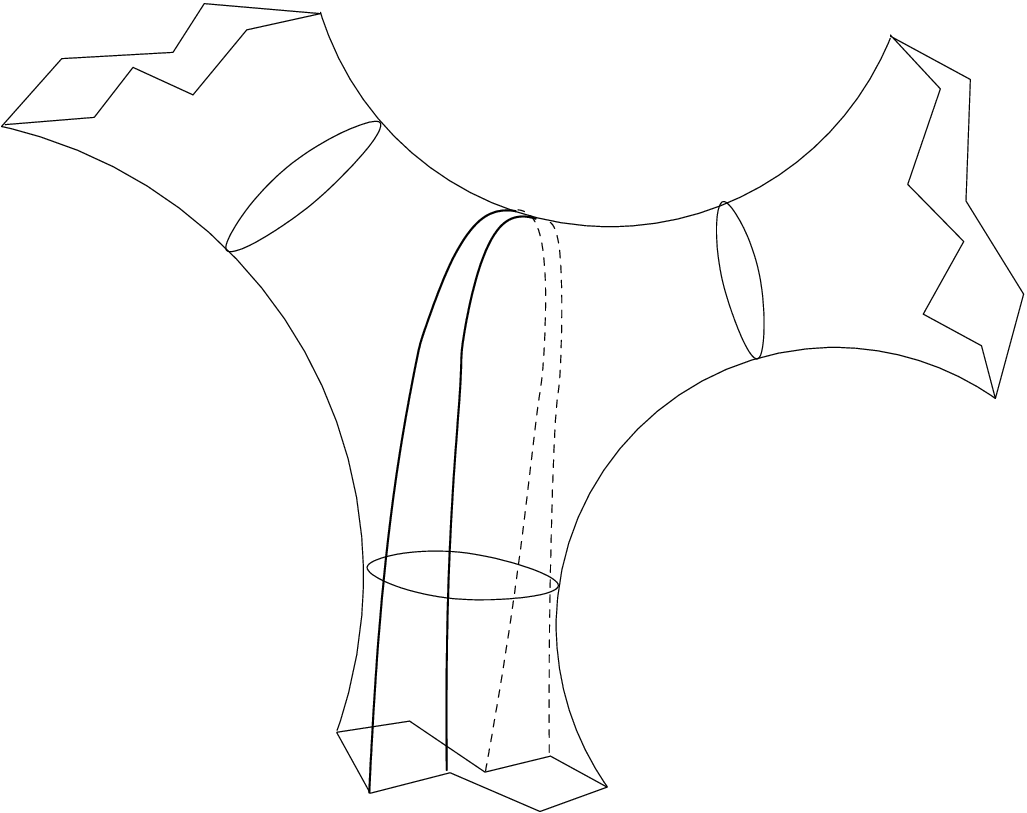}
\caption{The strip between the geodesics $\alpha$ and~$\alpha'$ can be collapsed to create a new hyperbolic metric with shorter curves. In general, the closed geodesic at the bottom (met by $\alpha, \alpha'$) will \emph{not} collapse to a closed geodesic of the new metric.}
\label{fig:I}
\end{center}
\end{figure}
Cutting out the strip between $\alpha$ and~$\alpha'$ and gluing back so that the endpoints of the common perpendicular to $\alpha, \alpha'$ are identified yields a new hyperbolic surface $M'$ such that two boundary components of the convex core of~$M'$ have the same lengths as in~$M$, and the third one is shorter.
There is a $1$-Lipschitz map between $M$ and~$M'$, and the corresponding holonomies $j\neq\rho$ satisfy $\log C(j,\rho)=0$.
In fact, it is easy to see that after repeating the process with all three funnels we obtain an element $\rho'\in\T(M)$ with $\log C(j,\rho')<0$.
\end{proof}

With the correcting factor $\delta(\rho)/\delta(j)$, the following holds.

\begin{lemma}\label{lem:dThgeq0}
For any $j,\rho,j_1,j_2,j_3\in\T(M)$,
\begin{enumerate}
  \item $d_{\mathrm{Th}}(j,\rho)\geq 0$,
  \item $d_{\mathrm{Th}}(j_1,j_3)\leq d_{\mathrm{Th}}(j_1,j_2)+d_{\mathrm{Th}}(j_2,j_3)$.
\end{enumerate}
\end{lemma}

\begin{proof}
Let $f : \HH^n\rightarrow\HH^n$ be a $(j,\rho)$-equivariant Lipschitz map.
For any $p\in\HH^n$, $\gamma\in\Gamma_0$, and $R>0$, if $j(\gamma)\cdot p\in B_p(R/\Lip(f))$, then
$$\rho(\gamma)\cdot f(p) = f\big(j(\gamma)\cdot p\big) \in B_{f(p)}(R),$$
hence $\delta(\rho)\geq\delta(j)/\Lip(f)$ by definition \eqref{eqn:defcritexp} of~$\delta$.
Then (1) follows by letting $\Lip(f)$ tend to $C(j,\rho)$ and taking the logarithm.
The triangle inequality (2) follows from the general inequality $\Lip(f_1\circ f_2)\leq\Lip(f_1)\,\Lip(f_2)$.
\end{proof}

As in Section~\ref{subsec:Thurstonmetric}, let $\T(M)_{\mathrm{Zs}}$ be the subset of $\T(M)$ consisting of elements $j$ such that the Zariski-closure of $j(\Gamma_0)$ in~$G$ is simple (\eg equal to~$G$).
In order to prove Proposition~\ref{prop:generalThurston}, it remains to prove the following.

\begin{proposition}\label{prop:critexp}
If $j, \rho\in \T(M)_{\mathrm{Zs}}$ are distinct, then $d_{\mathrm{Th}}(j,\rho)>0$.
\end{proposition}

Note that Proposition~\ref{prop:critexp} is not true when $j$ or~$\rho$ does not belong to $\T(M)_{\mathrm{Zs}}$: for instance, if $j\in\T(M)_{\mathrm{Zs}}$ takes values in $\PO(2,1)\subset G=\PO(n,1)$, then we may multiply it by a homomorphism with values in the centralizer $\mathrm{O}(n-2)$ of $\PO(2,1)$ in~$G$ without changing the set\linebreak $\{ \mu(j(\gamma))\,|\,\gamma\in\nolinebreak\Gamma_0\}$; this yields a homomorphism $\rho$ in $\T(M)\smallsetminus\T(M)_{\mathrm{Zs}}$ with $d_{\mathrm{Th}}(j,\rho)=d_{\mathrm{Th}}(\rho,j)=0$.

\subsection{Proof of Proposition~\ref{prop:critexp}}

In our setting, the rigidity of the marked length spectrum $\{ \lambda(j(\gamma))\,|\,\gamma\in\Gamma_0\}$ is well known: see \cite[Th.\,2]{kim01}.
We shall use a slightly stronger property, namely the rigidity of the \emph{projectivized} marked length spectrum.

\begin{lemma}\label{lem:rigid-proj-length-spec}
Let $j,\rho\in\T(M)_{\mathrm{Zs}}$ and $C>0$.
If $\lambda(\rho(\gamma))=C\,\lambda(j(\gamma))$ for all $\gamma\in\Gamma_0$, then $j=\rho$ in $\T(M)_{\mathrm{Zs}}$.
\end{lemma}

\begin{proof}
For $i\in\{ 1,2\}$, let $\mathrm{pr}_i : G\times G\rightarrow G$ be the $i$-th projection.
Let $H_1$ (\resp $H_2$) be the identity component (for the real topology) of the Zariski closure of $j(\Gamma_0)$ (\resp $\rho(\Gamma_0)$) in~$G$, and let $H$ be the identity component of the Zariski closure of $(j,\rho)(\Gamma_0)$ in $G\times G$.
Since $H_1$ is simple, the kernel of $\mathrm{pr}_2|_H$ is either $\{ 1\}$ or~$H_1$.
It cannot be $H_1$, otherwise $H=H_1\times H_2$ would be semisimple of real rank~$2$, and so by \cite{ben97} the cone spanned by $\{ (\lambda(\rho(\gamma)),\lambda(j(\gamma)))\,|\,\gamma\in\Gamma_0\}$ would have nonempty interior, contradicting the fact that $\lambda(\rho(\gamma))=C\,\lambda(j(\gamma))$ for all $\gamma\in\Gamma_0$.
Therefore $\mathrm{pr}_2|_H$ is injective, and similarly $\mathrm{pr}_1|_H$ is injective, and so there is a $(j,\rho)$-equivariant isomorphism $H_1\rightarrow H_2$.

Note that any noncompact simple subgroup $H_i$ of $G=\PO(n,1)$ is conjugate to some $\PO(k,1)$ embedded in the standard way, for $k\geq 2$, up to finite index.
Indeed, any $H_i$-orbit in $\HH^n$ is a totally geodesic subspace of~$\HH^n$ \cite[Chap.\,IV, Th.\,7.2]{hel01}, hence is a copy of~$\HH^k$ for some $k\geq 2$, and there is only one way to embed $\HH^k$ into~$\HH^n$ up to isometry (this is clear in the hyperboloid model).

Therefore the $(j,\rho)$-equivariant isomorphism $H_1\rightarrow H_2$ above is given by conjugation by some element of~$G$, and $j$ and~$\rho$ are conjugate under~$G$.
\end{proof}

\begin{proof}[Proof of Proposition~\ref{prop:critexp}]
Assume that $d_{\mathrm{Th}}(j,\rho)=0$ for some $j\neq\rho$ in $\T(M)_{\mathrm{Zs}}$, and let $C:=C(j,\rho)=\delta(j)/\delta(\rho)$.
By Lemma~\ref{lem:rigid-proj-length-spec}, there exists $\gamma_0\in \Gamma_0$ such that $\lambda(\rho(\gamma_0))<C\, \lambda(j(\gamma_0))$.
Let $f : \HH^n\rightarrow\HH^n$ be a $(j,\rho)$-equivariant, $C$-Lipschitz map (Lemma~\ref{lem:Fnonempty}).
The translation axis $\A\subset\HH^n$ of $j(\gamma_0)$ cannot be $C$-stretched by~$f$ since $\lambda(\rho(\gamma_0))< C\lambda(j(\gamma_0))$. 
Therefore we can find $p,q\in\A$ and $\Delta>0$ such that $d(f(p),f(q))\leq C\, d(p,q)-(2+C)\Delta$.
Let $B_p$ (\resp $B_q$) be the ball of diameter $\Delta$ centered at $p$ (\resp $q$), so that $d(f(p'),f(q'))\leq C\, d(p',q')-\Delta$ for all $p'\in B_p$ and $q'\in B_q$.
We can assume moreover that $p,q$ are close enough in the sense that no segment $[p',q']$ with $p'\in B_p$ and $q'\in B_q$ intersects any ball $j(\gamma)\cdot B_p$ or $j(\gamma)\cdot B_q$ with $\gamma\in\Gamma_0\smallsetminus\{1\}$.

Let $\widetilde{\mathcal{U}}$ be the open set of all vectors $(x,\overrightarrow{v})$ in the unit tangent bundle $T^1\HH^n$ such that $x\in B_p$ and $\exp_x(\R_+\overrightarrow{v})$ intersects $B_q$.
Let $X:=j(\Gamma_0)\backslash T^1\HH^n$ be the unit tangent bundle of the quotient manifold $j(\Gamma_0)\backslash\HH^n$, and $\mathcal{U}\subset X$ the projection of~$\widetilde{\mathcal{U}}$.
For $\gamma\in\Gamma_0$ with $j(\gamma)$ hyperbolic, let $N_{\gamma}$ be the number of times that the axis of~$j(\gamma)$ traverses $\mathcal{U}$ in~$X$ (see Figure~\ref{fig:J}); the triangle inequality yields
\begin{equation}\label{eqn:count-passages}
\lambda(\rho(\gamma)) \leq C \, \lambda(j(\gamma)) - N_{\gamma} \Delta.
\end{equation}
\begin{figure}[h!]
\begin{center}
\labellist
\small\hair 2pt
\pinlabel{$p$} at 70 125
\pinlabel{$f(p)$} at 125 25
\pinlabel{$B_p$} at 25 125
\pinlabel{$q$} at 390 125
\pinlabel{$f(q)$} at 325 25
\pinlabel{$B_q$} at 430 125
\pinlabel{$f$} at 227 83
\pinlabel{\footnotesize{$N_\gamma$ copies of the axis $\A_{j(\gamma)}$}} at 227 125
\endlabellist
\includegraphics[width=8cm]{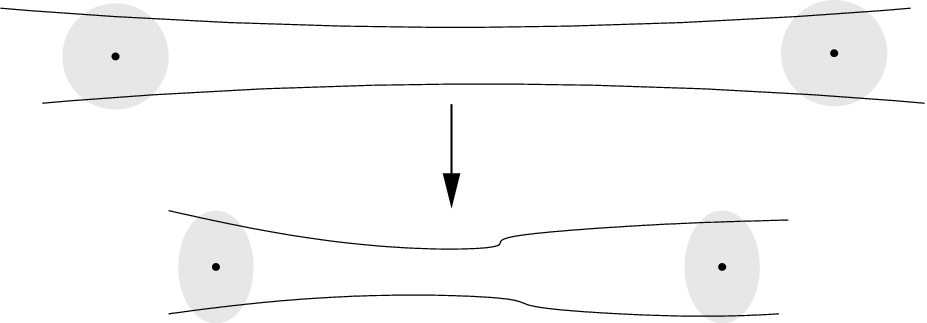}
\caption{Illustration of the proof of Proposition~\ref{prop:critexp} when $N_\gamma=2$.}
\label{fig:J}
\end{center}
\end{figure}
Let $\nu$ be the Bowen--Margulis--Sullivan probability measure on $X$ (see \cite[\S\,1.C]{rob03}).
We have $\nu(\mathcal{U})>0$ since $\mathcal{U}$ intersects the projection of the axis of $j(\gamma_0)$; therefore we can find a continuous function $\psi : X \rightarrow [0,1]$ with compact support contained in~$\mathcal{U}$ such that $\|\psi\|_\infty=1$ and $\varepsilon:=\int_X \psi\,\mathrm{d}\nu >0$.
For any $\gamma\in\Gamma_0$ with $j(\gamma)$ hyperbolic and primitive (\ie not a power of any other $j(\gamma')$), we denote by $\nu_{\gamma}$ the uniform probability measure on~$X$ supported on the axis of~$j(\gamma)$.
Since the support of~$\psi$ is contained in~$B_p$,
\begin{equation}\label{eqn:intpsinugamma}
\int_X \psi\,\mathrm{d}\nu_{\gamma} \leq \|\psi\|_{\infty} \cdot N_{\gamma} \frac{\mathrm{diam}(B_p)}{\lambda(j(\gamma))} = \Delta \frac{N_{\gamma}}{\lambda(j(\gamma))}\,.
\end{equation}
For $R>0$, let $\Gamma_0^{R,j}$ be the set of elements $\gamma\in\Gamma_0$ such that $j(\gamma)$ is primitive hyperbolic and $\lambda(j(\gamma))\leq R$.
By \cite[Th.\,5.1.1]{rob03} (see also \cite{lal89,dp96} for special cases),
\begin{equation}\label{eqn:convergencegeod}
\delta(j)R \, e^{-\delta(j)R} \sum_{\gamma\in\Gamma_0^{R,j}}\ \int_X \psi\,\mathrm{d}\nu_{\gamma} \underset{\scriptscriptstyle R\rightarrow +\infty}{\longrightarrow} \int_X \psi\,\mathrm{d}\nu = \varepsilon.
\end{equation}
Moreover, this convergence is still true if we replace $\psi$ with the constant function equal to~$1$ on~$X$ \cite[Cor.\,5.3]{rob03}, yielding
\begin{equation}\label{eqn:cardGamma0jR}
\#(\Gamma_0^{j,R}) \,\underset{\scriptscriptstyle R\rightarrow +\infty}{\sim}\, \frac{e^{\delta(j)R}}{\delta(j)R}.
\end{equation}
Combined, formulas \eqref{eqn:intpsinugamma}, \eqref{eqn:convergencegeod}, \eqref{eqn:cardGamma0jR} imply that the average value of $N_{\gamma}/\lambda(j(\gamma))$, for $\gamma$ ranging over $\Gamma_0^{R,j}$, is $\geq\frac{\varepsilon}{2\Delta}$ for all large enough~$R$.
Since
$$\frac{N_{\gamma}}{\lambda(j(\gamma))} \,\leq\, \frac{1}{d(p,q)-\Delta} \,\leq\, \frac{1}{2\Delta}$$
for all $\gamma\in\Gamma_0^{R,j}$, this classically implies that a proportion $\geq\frac{\varepsilon}{2}$ of elements $\gamma\in\Gamma_0^{R,j}$ satisfy $N_{\gamma}/\lambda(j(\gamma))\geq\frac{\varepsilon}{4\Delta}$, which by \eqref{eqn:count-passages} entails
$$\lambda(\rho(\gamma)) \,\leq\, C \lambda(j(\gamma)) - N_{\gamma} \Delta \,\leq\, \Big(C-\frac{\varepsilon}{4}\Big)\,\lambda(j(\gamma)) \,\leq\, \Big(C-\frac{\varepsilon}{4}\Big) R.$$
Thus
$$\#\Big(\Gamma_0^{(C-\frac{\varepsilon}{4})R,\rho}\Big) \,\geq\, \frac{\varepsilon}{2}\,\#(\Gamma_0^{R,j})$$ 
for all large enough~$R$. 
Then \eqref{eqn:cardGamma0jR} yields $(C-\frac{\varepsilon}{4})\,\delta(\rho)\geq\delta(j)$, hence $C\frac{\delta(\rho)}{\delta(j)}>\nolinebreak 1$.
\end{proof}

\section{The stretch locus in dimension 2}\label{sec:dim2}

We now focus on results specific to dimension $n=2$.
We first consider the case $C(j,\rho)>1$, for which we recover and extend two aspects of the classical theory \cite{thu86} of the Thurston metric on Teichm\"uller space.
The first aspect is the \emph{chain recurrence} of the lamination $E(j,\rho)$, which we prove in Section~\ref{subsec:chainrec}.
Building on chain recurrence, the second aspect is the \emph{upper semicontinuity} of $E(j,\rho)$ for the Hausdorff topology, namely
$$E(j,\rho) \supset \limsup_{k\rightarrow +\infty}\,E(j_k,\rho_k)$$
for any $(j_k,\rho_k)\rightarrow (j,\rho)$ with $\rho$ and $\rho_k$ reductive, which we prove in Section~\ref{subsec:semicontE}.

We also consider the case $C(j,\rho)<1$ and provide some evidence for Conjecture~\ref{conj:gramination} (describing the stretch locus $E(j,\rho)$) in Section~\ref{subsec:EforC<1}.

In fact, we believe that chain recurrence (suitably defined) should probably also hold in higher dimension for $C(j,\rho)>1$, but we shall use the classification of geodesic laminations on surfaces to prove it here.
Semicontinuity should also hold in higher dimension, not only for $C(j,\rho)>1$ but also in some form for $C(j,\rho)\leq 1$: this is natural to expect in view of Propositions \ref{prop:contCcc} and~\ref{prop:contpropertiesC}.(3) (if the minimal Lipschitz constant varies continuously, so should the stretch locus).
However, our proof hinges on chain recurrence and on the fact that $f$ multiplies arc length along the leaves of the stretch locus: this property does not obviously have a counterpart when $C(j,\rho)<1$ (the stretch locus being no longer a lamination in general), and is at any rate harder to prove (the Kirszbraun--Valentine theorem no longer applies as in Lemma~\ref{lem:MaxStretchedLam}).

\subsection{Chain recurrence in the classical setting}\label{subsec:Thurstonchainrec}

We first recall the notion of chain recurrence and, for readers interested in the more technical aspects of \cite{thu86}, we make the link between the ``maximal, ratio-maximizing, chain recurrent lamination'' $\mu(j,\rho)$ introduced by Thurston in the latter paper, and the stretch locus $E(j,\rho)$ introduced in the present paper.

On a hyperbolic surface~$S$, a geodesic lamination is called \emph{recurrent} if every half-leaf returns arbitrarily often, arbitrarily close to its starting point.
In \cite{thu86}, Thurston introduced the weaker notion of \emph{chain recurrence}.

\begin{definition}\label{def:chainrec}
A geodesic lamination $\dot{\LL}$ on~$S$ is called \emph{chain recurrent} if for every $\dot{p}\in\dot{\LL}$ and $\varepsilon>0$, there exists a simple closed geodesic~$\mathcal{G}$ passing within $\varepsilon$ of~$\dot{p}$ and staying $\varepsilon$-close to~$\dot{\LL}$ in the $\mathcal{C}^1$ sense.
\end{definition}

By ``$\varepsilon$-close in the $\mathcal{C}^1$ sense'' we mean that any unit-length segment of~$\mathcal{G}$ lies $\varepsilon$-close to a segment of~$\dot{\LL}$ (for the Hausdorff metric).
In particular, any recurrent lamination is chain recurrent.
The following is well known.

\begin{fact}\label{fact:classif-lamin}
Any geodesic lamination on~$S$ consists of finitely many disjoint recurrent components, together with finitely many isolated leaves spiraling from one recurrent component to another (possibly the same).
The total number of recurrent components and of isolated leaves can be bounded by an integer depending only on the topology of~$S$.
\end{fact}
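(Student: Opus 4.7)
The plan is to establish the structural decomposition first and the cardinality bound afterwards, both via classical arguments on geodesic laminations of hyperbolic surfaces of finite type.

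I would first identify the ``recurrent components'' with the \emph{minimal} sublaminations of $\dot{\LL}$, i.e.\ the nonempty closed sublaminations containing no proper nonempty closed sub-sublamination. Zorn's lemma guarantees that every closed nonempty sublamination contains a minimal one, and every leaf of a minimal sublamination is dense in it; so the union $\dot{\LL}_{\mathrm{rec}}$ of the minimal sublaminations is exactly the set of points $\dot{p}\in\dot{\LL}$ for which each half-leaf through $\dot{p}$ is dense in some sublamination of $\dot{\LL}$.

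Next, to analyze the complement $\dot{\LL}\smallsetminus\dot{\LL}_{\mathrm{rec}}$, I would pick a leaf $\ell$ not contained in $\dot{\LL}_{\mathrm{rec}}$. The limit set in $\dot{\LL}$ of each half-leaf of~$\ell$ is a closed sublamination, hence contains a minimal sublamination. Using the fact that two geodesics of~$\HH^2$ which remain close for time $T$ are $O(e^{-T})$-close on that interval, together with the standard description of complementary regions of a minimal lamination as crowns and ideal polygons, I would show that this limit set is in fact \emph{equal} to a single minimal sublamination $\dot{\LL}_i$, and that the corresponding half-leaf of $\ell$ spirals monotonically onto $\dot{\LL}_i$. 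In particular $\ell$ is an isolated leaf of $\dot{\LL}$ (no other leaf accumulates on it transversally), and both its ends spiral onto recurrent components.

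Finally, to bound the number of components: each minimal sublamination is either a simple closed geodesic or fills an essential connected subsurface $S_i\subset S$; the $S_i$ are pairwise disjoint up to shared boundary curves, and a standard Euler characteristic argument bounds both the number of pairwise disjoint essential subsurfaces and the number of pairwise disjoint simple closed geodesics of $S$ in terms of $|\chi(S)|$ alone. For the isolated leaves, cutting $S$ along $\dot{\LL}_{\mathrm{rec}}$ produces finitely many complementary regions of finite type with $\sum_i|\chi(S_i)|\leq|\chi(S)|$; the isolated leaves inject into these regions as pairwise disjoint properly embedded arcs whose ends spiral onto the ideal boundary of the region, and the maximum number of such disjoint arcs is again bounded by a function of $|\chi(S)|$.

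The main obstacle is the second step: ruling out pathological accumulation of a non-recurrent leaf. \emph{A priori} a half-leaf could accumulate on two distinct minimal sublaminations, or approach one of them in a non-monotone way. Showing that this cannot happen requires the exponential contraction property of geodesics in $\HH^2$, which forces a half-leaf that once comes sufficiently close to $\dot{\LL}_i$ to remain nearby and spiral onto $\dot{\LL}_i$ alone; the remaining steps are then essentially classical Euler characteristic bookkeeping.
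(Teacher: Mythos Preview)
The paper does not give a proof of this fact: it is introduced with the sentence ``The following is well known'' and then used without further justification. So there is no paper proof to compare against; the authors are simply quoting a classical structure theorem for geodesic laminations on finite-type hyperbolic surfaces (see e.g.\ Casson--Bleiler, \emph{Automorphisms of surfaces after Nielsen and Thurston}, or Canary--Epstein--Green).

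Your sketch follows the standard route and is essentially correct. One comment on the step you flag as the main obstacle: the phrase ``once sufficiently close to $\dot{\LL}_i$, the half-leaf must remain nearby'' is not literally true as a local statement (a half-leaf can fellow-travel a leaf of $\dot{\LL}_i$ for a long time and then peel off). The clean way to organize this is to work with the full $\omega$-limit set $\Lambda$ of the half-leaf, which is a compact sublamination not containing $\ell$; then $\ell^+$ is eventually a geodesic ray in the open surface $S\smallsetminus\Lambda$ that stays in an arbitrarily small neighborhood of the frontier. The complementary regions of $\Lambda$ are ideal polygons and crowned subsurfaces, and a geodesic ray confined to a thin collar of the boundary of such a region must enter a single spike and spiral monotonically into it. This simultaneously shows that $\Lambda$ is the closure of a single boundary leaf (hence is contained in one minimal component, so equals it) and that $\ell$ is isolated. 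Your Euler-characteristic bookkeeping in the last step is exactly the standard argument.
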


By Fact~\ref{fact:classif-lamin}, chain recurrence implies that for any $\varepsilon>0$, any $\dot{p}\in\dot{\LL}$, and any direction of travel along $\dot{\LL}$ from~$\dot{p}$, one can return to~$\dot{p}$ (with the same direction of travel) by following leaves of~$\dot{\LL}$ and occasionally jumping to nearby leaves within distance $\varepsilon$.
(For example, if $\dot{\LL}$ has an isolated leaf spiraling to a simple closed curve and no leaf spiraling out, then $\dot{\LL}$ is \emph{not} chain recurrent.)
By Fact~\ref{fact:classif-lamin}, the number of necessary $\varepsilon$-jumps can be bounded by a number~$m$ depending only on the topology of the surface, and the distances in-between the jumps can be taken arbitrarily large.
In the sequel, we shall call a sequence of leaf segments, separated by a number $\leq m$ of $\varepsilon$-jumps, an \emph{$\varepsilon$-quasi-leaf} of~$\dot{\LL}$.
The closing lemma (Lemma~\ref{lem:closinglemma}) implies that conversely any $\varepsilon$-quasi-leaf can be $\varepsilon$-approximated, in the $\mathcal{C}^1$ sense, by a simple closed geodesic.

We now briefly discuss the relation to \cite{thu86} (this will not be needed from Lemma~\ref{lem:mu=E} onwards).
Let $S$ be a hyperbolic surface of finite volume.
In \cite{thu86}, Thurston associated to any pair $(j,\rho)$ of distinct elements of the Teichm\"uller space $\mathcal{T}(S)$ of~$S$ (\ie type-preserving, geometrically finite representations of $\Gamma_0:=\pi_1(S)$ into $\PO(2,1)\cong\PGL_2(\R)$, of finite covolume, up to conjugation) a subset $\mu(j,\rho)$ of~$S$, defined as the union of all chain recurrent laminations $\dot{\LL}$ that are \emph{ratio-maximizing}, in the sense that there exists a locally $C(j,\rho)$-Lipschitz map from a neighborhood of $\dot{\LL}$ in $(S,j)$ to a neighborhood of $\dot{\LL}$ in $(S,\rho)$, in the correct homotopy class, that multiplies arc length by $C(j,\rho)$ on each leaf of~$\dot{\LL}$.
He proved that $\mu(j,\rho)$ is a lamination \cite[Th.\,8.2]{thu86}, necessarily chain recurrent, and that this lamination is $C(j,\rho)$-stretched by some $C(j,\rho)$-Lipschitz homeomorphism $(S,j)\rightarrow (S,\rho)$, in the correct homotopy class, whose local Lipschitz constant is $<C(j,\rho)$ everywhere outside of $\mu(j,\rho)$.
Indeed, this last property follows from the existence of a concatenation of ``stretch paths'' going from $j$ to~$\rho$ in $\mathcal{T}(S)$ \cite[Th.\,8.5]{thu86} and from the definition of stretch paths in terms of explicit homeomorphisms of minimal Lipschitz constant \cite[\S\,4]{thu86}.
Therefore, the preimage $\widetilde{\mu}(j,\rho)\subset\HH^2$ of Thurston's chain recurrent lamination $\mu(j,\rho)\subset S\simeq j(\Gamma_0)\backslash\HH^2$ contains the stretch locus $E(j,\rho)$ that we have introduced in this paper.
In fact, this inclusion is an equality, as the following variant of Lemma~\ref{lem:maxstretchedlamin} shows (with $\Gamma_0=\pi_1(S)$ and $j,\rho\in\T(S)$).

\begin{lemma}\label{lem:mu=E}
(in dimension $n=2$)
Let $\Gamma_0$ be a torsion-free discrete group and $(j,\rho)\in\Hom(\Gamma_0,G)^2$ a pair of representations with $j$ geometrically finite.
Let $\widetilde{\mu}\subset\HH^2$ be the preimage of some \emph{chain recurrent} lamination on~$S$.
If $\widetilde{\mu}$ is maximally stretched by some $(j,\rho)$-equivariant Lipschitz map $f : \HH^2\rightarrow\HH^2$, then $\widetilde{\mu}$ is contained in the stretch locus $E(j,\rho)$.
\end{lemma}

\begin{proof}
We proceed as in the proof of Lemma~\ref{lem:maxstretchedlamin}, but using closed quasi-leaves instead of recurrent leaves.
Set $C:=C(j,\rho)$.
Consider a geodesic segment $[x,y]$ contained in $\mu$.
By chain recurrence and by the closing lemma (Lemma~\ref{lem:closinglemma}), for any $\varepsilon>0$ there is a simple closed geodesic $\mathcal{G}$ on $(S,j)$ that passes within $\varepsilon$ of~$x$ and is $\varepsilon$-close to an $\varepsilon$-quasi-leaf $\mathcal{L}$ of $\mu$. 
We may assume that $\mathcal{L}$ consists of $m$ or fewer leaf segments, of which one contains $[x,y]$.
Let $\gamma\in\Gamma_0$ correspond to the closed geodesic~$\mathcal{G}$.
Then $\lambda(j(\gamma))=\mathrm{length}(\mathcal{G})\leq\mathrm{length}(\mathcal{L})+m\varepsilon$, and since each leaf segment of~$\mathcal{L}$ is $C$-stretched by~$f$ we see, using the closing lemma again, that
$$\lambda(\rho(\gamma)) \geq C \cdot \big(\mathrm{length}(\mathcal{L}) - 3m\varepsilon\big) \geq C \cdot \big(\lambda(j(\gamma)) - 4m\varepsilon\big).$$
By considering $p,q,p',q'\in\HH^2$ such that $p,q$ project to~$x,y\in \mathcal{L}$ and $p',q'$ to points within $\varepsilon$ from $x,y$ in $\mathcal{G}$, we obtain, exactly as in the proof of Lemma~\ref{lem:maxstretchedlamin}, that for any $f'\in\F^{j,\rho}$,
$$d(f'(p),f'(q)) \geq C \cdot d(p,q) - (4m+4)C\varepsilon.$$
This holds for any $\varepsilon>0$, hence $d(f'(p),f'(q))=Cd(p,q)$ and $p$ belongs to the stretch locus of~$f'$.
\end{proof}

\subsection{Chain recurrence for $C(j,\rho)>1$ in general}\label{subsec:chainrec}

We now prove that the stretch locus $E(j,\rho)$ is chain recurrent in a much wider setting than \cite{thu86}, allowing $j(\Gamma_0)$ to have infinite covolume in~$G$ and $\rho$ to be any representation of $\Gamma_0$ in~$G$ with $C(j,\rho)>1$ (not necessarily injective or discrete).

\begin{proposition}\label{prop:chainrec}
(in dimension $n=2$)
Let $\Gamma_0$ be a torsion-free discrete group and $(j,\rho)\in\Hom(\Gamma_0,G)^2$ a pair of representations with $j$ geometrically finite and $\rho$ reductive (Definition~\ref{def:reductive}).
If $C(j,\rho)>1$, then the image in $S:=j(\Gamma_0)\backslash\HH^2$ of the stretch locus $E(j,\rho)$ is a (nonempty) chain recurrent lamination.
\end{proposition}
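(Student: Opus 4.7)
Since $\rho$ is reductive, Lemma~\ref{lem:Fnonempty} and Corollary~\ref{cor:Enonempty} give that $\F^{j,\rho}$ and $E:=E(j,\rho)$ are both nonempty. With $C:=C(j,\rho)>1$, Theorem~\ref{thm:lamin} says that $E$ is a geodesic lamination of $\HH^2$ maximally stretched by every $f\in\F^{j,\rho}$, and Corollary~\ref{cor:Ecompact} gives that its image $\dot E\subset S$ is compact. So the task reduces to proving that $\dot E$ is chain recurrent.

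Apply Fact~\ref{fact:classif-lamin} to decompose $\dot E$ as a disjoint union of finitely many recurrent sublaminations $\mathcal R_1,\dots,\mathcal R_a$ and finitely many isolated leaves $\ell_1,\dots,\ell_b$, each $\ell_k$ spiraling from some $\mathcal R_{i(k)}$ onto some $\mathcal R_{j(k)}$. Encode this into a directed multigraph $\mathcal G$ with vertices $\{\mathcal R_\alpha\}$ and one edge per isolated leaf. Recurrent components are tautologically chain recurrent, and by the closing lemma (Lemma~\ref{lem:closinglemma}) chain recurrence of $\dot E$ at a point of $\ell_k$ reduces to $\ell_k$ lying on an oriented cycle of $\mathcal G$ (self-loops included). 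It therefore suffices to show that every isolated leaf of $\dot E$ lies on an oriented cycle.

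Suppose, for contradiction, that some $\ell=\ell_k$ does not. Pick an optimal $f_0\in\F^{j,\rho}$ with $E_{f_0}=E$ (Lemma~\ref{lem:optimalmap}). The aim is to construct another $f_1\in\F^{j,\rho}$ whose stretch locus avoids the lift $\tilde\ell$: averaging with $f_0$ via Lemma~\ref{lem:baryLipschitz} will then yield an optimal map with stretch locus strictly inside $E\setminus\tilde\ell$, contradicting the minimality definition $E=\bigcap_{f\in\F^{j,\rho}}E_f$. The key property enabled by $\ell$ being off every oriented cycle of $\mathcal G$ is that one of its two transverse sides is a ``dead end'': the adjacent complementary region of $S\setminus\mathcal R$ on that side is not entered or exited by any other isolated leaf of $\dot E$, so there is a globally consistent transverse direction $\nu$ along $\ell$ pointing into that region. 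Pick a thin tubular neighborhood $U\subset S$ of $\ell$ meeting $\dot E$ only along $\ell$ itself and its spiral ends into $\mathcal R_{i(k)},\mathcal R_{j(k)}$; Theorem~\ref{thm:lamin} then forces $\Lip_p(f_0)<C$ for every $p\in U\setminus\ell$, and by upper semicontinuity (Lemma~\ref{lem:localLip}) this inequality holds uniformly on any compact sub-tube of $U$ away from the spiral ends.

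The main obstacle is to convert this slack into a strict decrease of stretch along $\ell$. Build $f_1$ to coincide with $f_0$ outside the orbit of $U$. Inside a $j(\Gamma_0)$-invariant lift $\tilde U$, push $\tilde\ell$ a small amount $t>0$ in the direction $\tilde\nu$ to a nearby geodesic arc $\tilde\ell'$ with $\mathrm{length}(\tilde\ell')<\mathrm{length}(\tilde\ell)$, redefine $f_1$ on $\tilde\ell'$ to be a constant-speed parametrization of a geodesic joining the two values that $f_0$ takes on $\partial\tilde U\cap\tilde\ell$ (so that $\Lip(f_1|_{\tilde\ell'})<C$ strictly, thanks to the shortening), and interpolate over $\tilde U\setminus\tilde\ell'$ using a $(j,\rho)$-equivariant partition of unity (Lemma~\ref{lem:partofunity}) between $f_0$ and this new definition. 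The Leibniz-type bound \eqref{eqn:Lipbound1} combined with the uniform slack $\sup_{U'} \Lip(f_0)<C$ on compact sub-tubes shows that for small enough $t$ one has $\Lip(f_1)\leq C$ globally. Since $\Lip(f_1|_{\tilde\ell'})<C$, the lift $\tilde\ell$ is no longer in the stretch locus of $f_1$, and the averaging argument above closes the proof. The delicate points are (i) singling out the transverse direction $\nu$ from the non-cyclicity of $\ell$ in $\mathcal G$, and (ii) verifying that the Lipschitz constant of the interpolated map stays $\leq C$, both of which rely essentially on the planar topology of $S$ and on the uniform sub-$C$ estimate for $f_0$ away from $\dot E$.
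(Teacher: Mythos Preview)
There is a genuine gap in the construction of $f_1$. An isolated leaf $\ell$ of $\dot E$ is a bi-infinite geodesic spiraling onto the recurrent components at both ends; it has no finite length and never meets $\partial U$ if $U$ is a genuine tubular neighborhood of~$\ell$. So the phrases ``$\mathrm{length}(\tilde\ell')<\mathrm{length}(\tilde\ell)$'' and ``the two values that $f_0$ takes on $\partial\tilde U\cap\tilde\ell$'' are not well defined. If instead you mean $U$ to be a compact tube around a sub-arc $[a,b]\subset\ell$, the construction fails for a decisive reason: since $[a,b]$ is maximally stretched by $f_0$, we have $d(f_0(a),f_0(b))=C\,d(a,b)$, and since you require $f_1=f_0$ on $\partial U\ni a,b$, any $C$-Lipschitz map $f_1$ on $U$ must also stretch $[a,b]$ by the full factor~$C$ (Remark~\ref{rem:pathlength}.(1)). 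Thus no modification supported in such a tube can remove any point of $\ell$ from the stretch locus. The ``dead-end side'' assertion is also not a consequence of $\ell$ lying off every oriented cycle of $\mathcal G$ (two isolated leaves with the same source and target already give a counterexample), and even granting it, the transverse push does not address the boundary obstruction above.

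The paper's argument circumvents exactly this obstruction by modifying $f_0$ \emph{longitudinally} rather than transversely, and on a much larger set than a tube around~$\ell$. From the failure of chain recurrence it extracts an \emph{oriented} sublamination $\dot\Upsilon\subset\dot E$ (the forward chain closure of a point~$\dot p$, with the initial half-leaf removed) with the asymmetry that at least one leaf of $\dot E\smallsetminus\dot\Upsilon$ is \emph{incoming} toward $\dot\Upsilon$ but none is outgoing. On the lift $\Upsilon$ it replaces $f_0$ by $f_\varepsilon:=f_0\circ\Phi_{-\varepsilon}$, where $\Phi$ is the geodesic flow along the oriented leaves; a careful estimate in the ideal spikes between asymptotic leaves shows that $f_\varepsilon$ remains $C$-Lipschitz on $\Upsilon$ and extends $C$-Lipschitz across a neighborhood while agreeing with $f_0$ outside. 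The point is that this changes the values of the optimal map on the recurrent component that $\ell$ spirals \emph{into}, which is precisely what allows the incoming leaf through~$p$ to be strictly less than $C$-stretched --- something a modification supported away from $\Upsilon$ cannot achieve.
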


\begin{proof}
Let $f_0\in\F^{j,\rho}$ be optimal (Definition~\ref{def:relstretchlocus}), with stretch locus $E:=E(j,\rho)$. 
By Theorem~\ref{thm:lamin} and Lemma~\ref{lem:Fnonempty}, we know that $E$ is a nonempty, $j(\Gamma_0)$-invariant geodesic lamination.
Suppose by contradiction that its image $\dot{E}$ in $S=j(\Gamma_0)\backslash\HH^2$ is not chain recurrent.
We shall ``improve'' $f_0$ by decreasing its stretch locus, which will be absurd.

Given $\dot{p}\in\dot{E}$ and a direction (``forward'') of travel from~$\dot{p}$, define the \emph{forward chain closure} $\dot{E}_p$ of $\dot{p}$ in~$\dot{E}$ as the subset of~$\dot{E}$ that can be reached from~$\dot{p}$, starting forward, by following $\varepsilon$-quasi-leaves of~$\dot{E}$ for positive time, for any $\varepsilon>0$.
Clearly, $\dot{E}_p$ is the union of a closed sublamination of~$\dot{E}$ and of an open half-leaf issued from~$\dot{p}$. If $\dot{E}_p$ contains $\dot{p}$ for all $\dot{p}\in\dot{E}$ and choices of forward direction, then for any $\varepsilon>0$ we can find a closed $\varepsilon$-quasi-leaf of $\dot{E}$ through~$\dot{p}$. 
Since $\dot{E}$ is not chain recurrent by assumption, this is not the case: we can therefore choose a point $\dot{p}\in\dot{E}$ and a direction of travel such that $\dot{E}_p$ does \emph{not} contain~$\dot{p}$.

Then $\dot{E}_p$ is orientable: otherwise for any $\varepsilon>0$ we could find an $\varepsilon$-quasi-leaf of~$\dot{E}$ through~$\dot{p}$ by following a quasi-leaf from~$\dot{p}$, ``jumping'' onto another (quasi)-leaf with the reverse orientation, and getting back to~$\dot{p}$, which would contradict the fact that $\dot{E}_p$ does not contain~$\dot{p}$.

Let $\dot{\Upsilon}$ be the lamination of~$S$ obtained by removing from~$\dot{E}_p$ the (isolated) half-leaf issued from~$\dot{p}$. Then $\dot{\Upsilon}$ inherits an orientation from the ``forward'' orientation of $\dot{E}_p$.
No leaf of $\dot{E}\smallsetminus\dot{\Upsilon}$ can be \emph{outgoing} from~$\dot{\Upsilon}$, otherwise it would automatically belong to~$\dot{\Upsilon}$.
But at least one leaf of $\dot{E}\smallsetminus\dot{\Upsilon}$ is \emph{incoming} towards~$\dot{\Upsilon}$: namely, the leaf $\dot{\ell}$ containing~$\dot{p}$.

The geodesic lamination~$\dot{\Upsilon}$ fills some subsurface $\Sigma\subset S$ with geodesic boundary (possibly reduced to a single closed geodesic).
Let $\dot{\mathcal{U}}\subset S$ be a uniform neighborhood of~$\Sigma$, with the same topological type as~$\Sigma$, such that $\dot{\mathcal{U}}\cap\dot{E}$ is the union of the oriented lamination~$\dot{\Upsilon}$ and of some (at least one) incoming half-leaves.
Up to shifting the point~$\dot{p}$ along its leaf~$\dot{\ell}$, we may assume that $\dot{p}\in\partial\dot{\mathcal{U}}$.

Let $\mathcal{U}$ and $\Upsilon$ be the (full) preimages of $\dot{\mathcal{U}}$ and $\dot{\Upsilon}$ in~$\HH^2$.
To reach a contradiction, we shall modify $f_0$ on~$\mathcal{U}$. 
The modification on~$\Upsilon$ itself is simply to replace $f|_{\Upsilon}$ with $f_{\varepsilon}:=f_0\circ\Phi_{-\varepsilon}$, where $(\Phi_t)_{t\in\R}$ is the flow on the oriented lamination~$\Upsilon$ and $\varepsilon>0$ is small enough.
We make the following two claims, for $C:=C(j,\rho)$:
\begin{enumerate}
  \item[$(i)$] the map~$f_{\varepsilon}$ is still $C$-Lipschitz on~$\Upsilon$, for all small enough $\varepsilon>0$;
  \item[$(ii)$] the map~$f_{\varepsilon}$ extends to a $C$-Lipschitz, $(j,\rho)$-equivariant map $f$ on~$\mathcal{U}$, that agrees with $f_0$ on~$\partial\mathcal{U}$, for all small enough $\varepsilon>0$.
\end{enumerate}
\begin{figure}[h!]
\begin{center}
\labellist
\small\hair 2pt
\pinlabel{$p$} at 92 206
\pinlabel{$\Upsilon$} at 170 145
\pinlabel{$\partial \mathcal{U}$} at 170 201
\pinlabel{$\ell$} at 107 170
\pinlabel{$q$} at 102 62
\pinlabel{$q'$} at 117 50
\pinlabel{$\Phi_{-\varepsilon}(q')$} at 139 70
\endlabellist
\includegraphics[width=5.5cm]{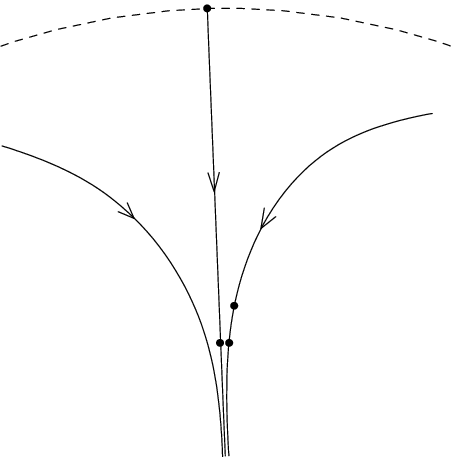}
\caption{Flowing back by $\Phi_{-\varepsilon}$ brings $q'$ closer to $p$.}
\label{fig:K}
\end{center}
\end{figure}
This will prove that the leaf $\ell$ of~$E$ containing a lift~$p$ of~$\dot{p}$ did not have to be maximally stretched after all, a contradiction: indeed, consider $q\in\ell$ far enough from $p$, at distance $<\varepsilon/4$ from some point $q'\in\Upsilon$, such that $\Phi_{-\varepsilon}(q')$ is still within $<\varepsilon/4$ from the point of~$\ell$ at distance $\varepsilon$ from $q$ (see Figure~\ref{fig:K}).
Then $d(p,\Phi_{-\varepsilon}(q'))\leq d(p,q)-\varepsilon/2$, which implies
\begin{eqnarray*}
d(f_\varepsilon(p),f_\varepsilon(q)) & \leq & d(f_\varepsilon(p),f_\varepsilon(q')) + d(f_\varepsilon(q'),f_\varepsilon(q))\\
& \leq & C\,\big(d(p,\Phi_{-\varepsilon}(q')) + \varepsilon/4\big)\\
& \leq & C\, (d(p,q)-\varepsilon/4) < C\, d(p,q).
\end{eqnarray*}

\smallskip

$\bullet$ \textbf{Proof of $(ii)$ assuming $(i)$.}
By Remark~\ref{rem:pathlength}.(2), it is sufficient to consider one connected component $A$ of $\mathcal{U}\smallsetminus\Upsilon$ in $\HH^2$ and, assuming~$(i)$, to prove that for any small enough $\varepsilon>0$ the map $f_{\varepsilon}$ extends to a $C$-Lipschitz, $(j,\rho)$-equivariant map $f$ on~$A$, that agrees with $f_0$ on~$\partial A$.
Fix such a connected component~$A$; its image in~$S$ is an annulus.
By Theorem~\ref{thm:Kirszbraunequiv}, it is sufficient to prove that $d(f_{\varepsilon}(x),f_0(y))\leq C d(x,y)$ for any geodesic segment $[x,y]$ across $A$ with $x\in\Upsilon$ and $y\in\partial A$.
Note that the length of such segments is uniformly bounded from below, by $d(\Upsilon,\partial A)$.

\begin{figure}[h!]
\begin{center}
\labellist
\small\hair 2pt
\pinlabel{$\ell'$} at 70 80
\pinlabel{$x$} at 105 50
\pinlabel{$y$} at 100 98
\pinlabel{$E_{\delta}$} at 150 84
\pinlabel{$A$} at 220 90
\pinlabel{$\Upsilon$} at 270 58
\pinlabel{$\partial \mathcal{U}$} at 320 115
\endlabellist
\includegraphics[width=12cm]{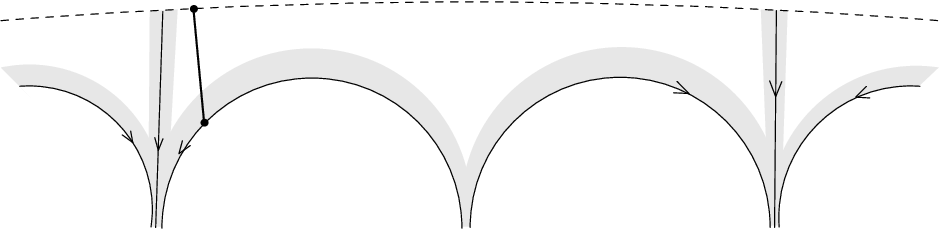}
\caption{A segment $[x,y]$ across the lifted annulus $A$.}
\label{fig:L}
\end{center}
\end{figure}

For any $0<\delta<d(\Upsilon,\partial A)$, let $E_{\delta}$ be the $\delta$-neighborhood of the lamination~$E$ in the lifted annulus~$A$ (see Figure~\ref{fig:L}).
By Lemma~\ref{lem:localLip}, since $f_0$ is optimal, 
\begin{equation}\label{eqn:LipAeta}
\sup_{x\in A\smallsetminus E_{\delta}} \Lip_{x}(f_0) < C.
\end{equation}
If no leaf of~$E$ entering~$\Upsilon$ meets~$A$, then all geodesic segments $[x,y]$ as above spend a definite amount of length (at least $d(\dot{\Upsilon},\partial A)-\delta$) in $A\smallsetminus E_{\delta}$, and so \eqref{eqn:LipAeta} implies
$$d(f_0(x),f_0(y)) \leq C\,(d(x,y) - \varepsilon_0)$$
for some $\varepsilon_0>0$ independent of $[x,y]$.
Therefore
$$d(f_{\varepsilon}(x),f_0(y)) \leq d(f_{\varepsilon}(x),f_0(x)) + d(f_0(x),f_0(y)) \leq C\,d(x,y)$$
for all $0<\varepsilon<\varepsilon_0$.
Now suppose that there are leaves of~$E$ entering~$\Upsilon$ that meet~$A$. 
The collection of such leaves is finite modulo the stabilizer of $A$.
There exists $\delta>0$ such that if $[x,y]$ is contained in the $\delta$-neighborhood of some leaf $\ell'$ of~$E$ entering~$A$, then the function $t\mapsto d(\Phi_{-t}(x),y)$ is decreasing for $t\in [0,1]$, because the direction of the flow $\Phi$ at $x$ is essentially the same as the direction of~$\ell'$; in particular,
$$d(f_{\varepsilon}(x),f_0(y)) \leq C\,d(\Phi_{-\varepsilon}(x),y) \leq C\,d(x,y)$$
for all $\varepsilon\in (0,1]$.
There also exists $\delta'\in (0,\delta)$ such that if a geodesic segment $[x,y]$ as above is \emph{not} contained in the $\delta$-neighborhood of one of the finitely many leaves entering~$\Upsilon$, then it meets $A\smallsetminus E_{\delta'}$; in particular, it spends a definite amount of length (at least $\delta'/2$) in $A\smallsetminus E_{\delta'/2}$, and we conclude as above, using \eqref{eqn:LipAeta} with $\delta'/2$ instead of~$\delta$.

\smallskip

$\bullet$ \textbf{Proof of $(i)$.}
By Remark~\ref{rem:pathlength}.(2), it is enough to consider one connected component $A$ of $\mathcal{U}\smallsetminus\Upsilon$ in $\HH^2$ and prove that $d(f_{\varepsilon}(x),f_{\varepsilon}(y))\leq C d(x,y)$ for all $x,y\in\Upsilon\cap\partial A$.
If $\Upsilon\cap\partial A$ is a geodesic line (corresponding to a closed geodesic of~$\dot{\Upsilon}$), then $(\Phi_{-\varepsilon})|_{\Upsilon\cap\partial A}$ is an isometry and so $\Lip_{\Upsilon\cap\partial A}(f_{\varepsilon})\leq\Lip_{\Upsilon\cap\partial A}(f)=C$ (in fact $\Upsilon\cap\partial A$ is $C$-stretched by~$f_{\varepsilon}$).
Otherwise, $\Upsilon\cap\partial A$ is a countable union of geodesic lines $D_i$, $i\in\Z$, with $D_i$ and~$D_{i+1}$ asymptotic to each other, both oriented in the direction of the ideal spike they bound if $i$ is odd, and both oriented in the reverse direction if $i$ is even; the leaves of~$E$ entering~$\Upsilon$ do so in the spikes.
Suppose by contradiction that there is a sequence $(\varepsilon_k)\in (\R_+^{\ast})^{\N}$ tending to~$0$ and, for every $k\in\N$, a pair $(x_k,y_k)$ of points of $\Upsilon\cup\partial A$ such that
\begin{equation}\label{eqn:chainrecurrent}
d(f_{\varepsilon_k}(x_k),f_{\varepsilon_k}(y_k)) > C\cdot d(x_k,y_k).
\end{equation}
Note that the Hausdorff distance from $[x_k,y_k]$ to the nearest leaf segment of~$\Upsilon$ tends to zero as $k\rightarrow +\infty$.
Indeed, as above, for any $\delta>0$, if a geodesic segment $[x,y]$ is \emph{not} contained in the $\delta$-neighborhood $E_{\delta}$ of $E$ in~$A$, then it spends a definite amount of length (at least $\delta/2$) in $A\smallsetminus E_{\delta/2}$, and \eqref{eqn:LipAeta} with $\delta/2$ instead of~$\delta$ forces $d(f_{\varepsilon}(x),f_{\varepsilon}(y))\leq C d(x,y)$ for small enough~$\varepsilon$.
This proves that the Hausdorff distance from $[x_k,y_k]$ to the nearest segment of~$E$ tends to zero as $k\rightarrow +\infty$, and this segment actually lies in $\Upsilon$ because $x_k$ and~$y_k$ both belong to~$\Upsilon$ and there are locally only finitely many leaves of~$E$ entering~$\Upsilon$.

Up to replacing $x_k$ and~$y_k$ by $j(\Gamma_0)$-translates and passing to a subsequence, we can in fact suppose that there exists $i\in\Z$ such that both $d(x_k,D_i)$ and $d(y_k,D_i)$ tend to zero as $k\rightarrow +\infty$; indeed, the set of lines~$D_i$ is finite modulo the stabilizer of~$A$.
Up to switching $x_k$ and~$y_k$ and passing to a subsequence, we can suppose that either $(x_k,y_k)\in D_i\times D_{i+1}$ for all~$k$, or $(x_k,y_k)\in D_{i-1}\times D_{i+1}$ for all~$k$; the case $(x_k,y_k)\in D_i\times D_i$ is excluded by the assumption \eqref{eqn:chainrecurrent} since $D_i$ is $C$-stretched under~$f_{\varepsilon_k}$. 

Let $y'_k$ be the point of~$D_i$ on the same horocycle as $y_k$ in the ideal spike of~$A$ bounded by~$D_i$ and $D_{i+1}$, and let $\eta_k\geq 0$ be the length of the piece of horocycle from $y_k$ to~$y'_k$. 
If $x_k\in D_{i-1}$, define similarly an arc of horocycle from $x_k$ to $x'_k\in D_i$, of length $\xi_k$; otherwise, set $(x'_k,\xi_k)=(x_k,0)$.
Since $d(x_k,D_i)$ and $d(y_k,D_i)$ tend to zero as $k\rightarrow +\infty$, so do $\xi_k$ and~$\eta_k$.

\begin{figure}[h!]
\begin{center}
\labellist
\small\hair 2pt
\pinlabel{$x^\ast$} at 87 153
\pinlabel{$D_i$} at 150 140
\pinlabel{$D_i$} at 200 29
\pinlabel{$x_k$} at 185 157
\pinlabel{$x_k$} at 85 58
\pinlabel{$y_k$} at 249 176
\pinlabel{$y_k$} at 322 62
\pinlabel{$y'_k$} at 245 142
\pinlabel{$y'_k$} at 305 18
\pinlabel{$\eta_k$} at 245 158
\pinlabel{$\eta_k$} at 312 40
\pinlabel{$D_{i+1}$} at 295 160
\pinlabel{$D_{i+1}$} at 350 40
\pinlabel{$D_{i-1}$} at 48 36
\pinlabel{$x'_k$} at 103 20
\pinlabel{$\xi_k$} at 90 40
\endlabellist
\includegraphics[width=12cm]{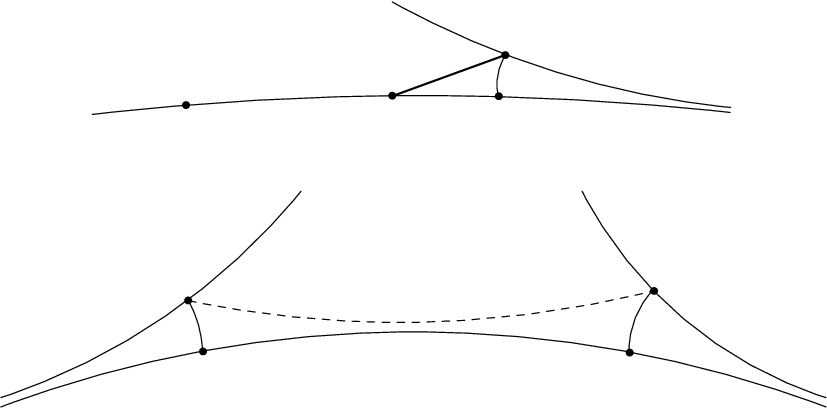}
\caption{Distance estimates between points of $D_{i-1}, D_i,D_{i+1}$.}
\label{fig:M}
\end{center}
\end{figure}

We claim that, up to passing to a subsequence and replacing $x_k$ and~$y_k$ by other points on the same leaves, still subject to \eqref{eqn:chainrecurrent}, we can assume that $d(x_k,y_k)\rightarrow\nolinebreak +\infty$ as $k\rightarrow +\infty$.
Indeed, this is already the case if $(x_k,y_k)\in D_{i-1}\times D_{i+1}$ for all $k$, because $\xi_k,\eta_k\rightarrow 0$. If $(x_k,y_k)\in D_i\times D_{i+1}$ for all $k$, note that the $C$-Lipschitz map~$f$ stretches $D_i$ and~$D_{i+1}$ maximally and sends them to two geodesic lines of~$\HH^2$, necessarily asymptotic.
Moreover, $f(y_k)$ and $f(y'_k)$ lie at the same depth in the spike bounded by $f(D_i)$ and $f(D_{i+1})$: indeed, the distance between the horocycles through $f(y_k)$ and through $f(y'_k)$ is independent of~$k$; if it were nonzero, then for large enough~$k$ we would obtain a contradiction with the fact that $f$ is Lipschitz (recall $\eta_k\rightarrow 0$).
Using \eqref{eqn:exphorodist}, we see that there exists $Q\geq 0$ such that for any integer~$k$, the piece of horocycle connecting $f(y_k)$ to $f(y'_k)$ has length $Q\eta_k^C$, and the piece of horocycle connecting $f_{\varepsilon_k}(y_k)$ to $f_{\varepsilon_k}(y'_k)$ has length $Qe^{\pm \varepsilon_k}\eta_k^C$, which is $\ll \eta_k$ since $C>1$.
In particular, $x_k\neq y'_k$ for all large enough~$k$ (since $x_k$ and~$y_k$ satisfy \eqref{eqn:chainrecurrent}); in other words, $x_k$ and~$y_k$ lie at distinct depths inside the spike of~$A$ bounded by $D_i$ and~$D_{i+1}$ (see Figure~\ref{fig:M}, top).
If $y_k$ lies deeper than~$x_k$ (which we can assume by symmetry), then for any $x^{\ast}\in D_i$ less deep than~$x_k$,
$$\frac{\pi}{2} < \widehat{x^{\ast} x_k y_k} < \widehat{f_{\varepsilon_k}(x^{\ast}) f_{\varepsilon_k}(x_k) f_{\varepsilon_k}(y_k)} < \pi.$$
Moreover, $d(f_{\varepsilon_k}(x^{\ast}),f_{\varepsilon_k}(x_k))=Cd(x^{\ast},x_k)$ and \eqref{eqn:chainrecurrent} holds, hence
$$d(f_{\varepsilon_k}(x^{\ast}),f_{\varepsilon_k}(y_k)) > Cd(x^{\ast},y_k)$$
by Toponogov's theorem \cite[Lem.\,II.1.13]{bh99}.
Thus, up to replacing $x_k$ by some fixed~$x^{\ast}$, we may assume that $d(x_k,y_k)\rightarrow +\infty$.

Using \eqref{eqn:twospikes}, we see that
$$d(x_k,y_k) = d(x'_k,y'_k) + (\xi_k^2 + \eta_k^2)(1+o(1))$$
(see Figure~\ref{fig:M}, bottom).
Similarly, given that the length of the piece of horocycle from $f(x_k)$ to~$f(x'_k)$ in the spike bounded by $D_{i-1}$ and~$D_i$ is $Q\xi_k^C$ for some $Q\geq 0$ independent of~$k$ (see above) and that the length of the piece of horocycle from $f(y_k)$ to~$f(y'_k)$ in the spike bounded by $D_i$ and~$D_{i+1}$ is $Q'\eta_k^C$ for some $Q'\geq 0$ independent of~$k$, we obtain
$$d(f_{\varepsilon_k} (x_k),f_{\varepsilon_k}(y_k))\leq 
d(f_{\varepsilon_k}(x'_k),f_{\varepsilon_k}(y'_k)) + (Q^2\xi_k^{2C} + {Q'}^2\eta_k^{2C})(1+o(1)).$$
Since $d(f_{\varepsilon_k}(x'_k),f_{\varepsilon_k}(y'_k))= C d(x'_k,y'_k)$ and since $\xi_k^C=o(\xi_k)$ and $\eta_k^C=o(\eta_k)$, we find that $d(f_{\varepsilon_k} (x_k),f_{\varepsilon_k}(y_k))\leq C d(x_k,y_k)$ for all large enough~$k$, contradicting \eqref{eqn:chainrecurrent}.
This completes the proof of~$(i)$.
\end{proof}

\subsection{Semicontinuity for $C(j,\rho)>1$}\label{subsec:semicontE}

The notion of chain recurrence (Definition~\ref{def:chainrec}) is closed for the Hausdorff topology: any compactly-supported lamination which is a Hausdorff limit of chain recurrent laminations is chain recurrent \cite[Prop.~6.1]{thu86}.
It is therefore relevant to consider (semi)continuity issues.

In the classical setting, Thurston \cite[Th.\,8.4]{thu86} proved that his maximal ratio-maximizing chain recurrent lamination $\mu(j,\rho)$ varies in an upper semicontinuous way as $j$ and~$\rho$ vary over the Teichm\"uller space $\mathcal{T}(S)$ of~$S$.
In other words, by Lemma~\ref{lem:mu=E}, the stretch locus $E(j,\rho)$ varies in an upper semicontinuous way over $\mathcal{T}(S)$: for any sequence $(j_k,\rho_k)_{k\in\N}$ of elements of $\mathcal{T}(S)^2$ converging to $(j,\rho)$,
$$E(j,\rho) \supset \limsup_{k\rightarrow +\infty} E(j_k,\rho_k),$$
where the limsup is defined with respect to the Hausdorff topology.

We now work in a more general setting and show how the chain recurrence of the stretch locus $E(j,\rho)$ (Proposition~\ref{prop:chainrec}) implies upper semicontinuity.

\begin{proposition}\label{prop:semicontE}
In dimension $n=2$, the stretch locus $E(j,\rho)$ is upper semicontinuous on the open subset of $\Hom_{j_0}(\Gamma_0,G)\times\Hom(\Gamma_0,G)^{\mathrm{red}}$ where $C(j,\rho)>1$.
\end{proposition}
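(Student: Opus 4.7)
The plan is to extract a Hausdorff limit $\widetilde{\mathcal{L}}$ of the stretch loci $E(j_k,\rho_k)$ together with a pointwise limit $f$ of optimal maps $f_k\in\mathcal{F}^{j_k,\rho_k}$, then argue that $\widetilde{\mathcal{L}}$ is chain recurrent and maximally stretched by $f$, and finally invoke the argument of Lemma~\ref{lem:mu=E} to conclude $\widetilde{\mathcal{L}}\subset E(j,\rho)$.

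Let $(j_k,\rho_k)$ converge to $(j,\rho)$ in $\Hom_{j_0}(\Gamma_0,G)\times\Hom(\Gamma_0,G)^{\mathrm{red}}$, with $C(j_k,\rho_k)>1$ for all $k$. In dimension $n=2$ all cusps of $j_0$ have rank $\geq n-2$, so Proposition~\ref{prop:contpropertiesC}.(3) applies and gives $C:=C(j,\rho)=\lim_k C(j_k,\rho_k)>1$. Pick optimal $f_k\in\mathcal{F}^{j_k,\rho_k}$ (Lemma~\ref{lem:optimalmap}); by Theorem~\ref{thm:lamin} the stretch locus of $f_k$ is the nonempty $j_k(\Gamma_0)$-invariant geodesic lamination $E(j_k,\rho_k)$, along every leaf of which $f_k$ multiplies distances by $C(j_k,\rho_k)$. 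Since the $C(j_k,\rho_k)$ are bounded below away from $1$ and the $\rho_k$ are reductive with nonempty $E(j_k,\rho_k)$, Corollary~\ref{cor:uniform-thick} furnishes fundamental domains $\mathcal{E}_k$ of $E(j_k,\rho_k)$ contained in some common compact $\mathcal{K}\subset\HH^2$.

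Fix $p_0\in\mathcal{K}$. Exactly as in the reductive case analysis in the proof of Lemma~\ref{lem:Fnonempty} (and in Section~\ref{subsubsec:lowersemicont}), one can pre- or post-compose each $f_k$ with a well-chosen isometry $g_k\in G$ of $\HH^2$ (without changing the stretch factor or the equivariance class up to conjugation of $\rho_k$) so that the images $f_k(p_0)$ stay in a compact subset of $\HH^2$; the uniform bound $\Lip(f_k)\to C$ and Arzel\`a--Ascoli then produce, along a subsequence, a pointwise limit $f:\HH^2\to\HH^2$ that is $(j,\rho)$-equivariant and $C$-Lipschitz. Passing to a further subsequence, the sets $E(j_k,\rho_k)\cap U$ converge in the Hausdorff topology for every compact $U\subset\HH^2$ to a closed union of geodesics; saturating under $j(\Gamma_0)$ and using that the $\mathcal{E}_k$ lie in $\mathcal{K}$ and $j_k\to j$, we obtain a $j(\Gamma_0)$-invariant geodesic lamination $\widetilde{\mathcal{L}}$ with compact projection $\dot{\mathcal{L}}$ to $j(\Gamma_0)\backslash\HH^2$.

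Passing to the limit in the identities $d(f_k(x),f_k(y))=C(j_k,\rho_k)\,d(x,y)$ for $x,y$ on a leaf of $E(j_k,\rho_k)$, the limit map $f$ multiplies arc length by $C$ on every leaf of $\widetilde{\mathcal{L}}$. Each projected lamination $\dot{E}(j_k,\rho_k)$ is chain recurrent by Proposition~\ref{prop:chainrec}, and chain recurrence is closed under Hausdorff limits of compactly supported laminations (\cite[Prop.~6.1]{thu86}), so $\dot{\mathcal{L}}$ is chain recurrent. The argument of Lemma~\ref{lem:mu=E} then applies verbatim: any leaf segment $[x,y]\subset\widetilde{\mathcal{L}}$ is $\mathcal{C}^1$-shadowed by $\varepsilon$-quasi-leaves of $\dot{\mathcal{L}}$ approximated via the closing lemma (Lemma~\ref{lem:closinglemma}) by simple closed geodesics of $(j(\Gamma_0)\backslash\HH^2,j)$, giving elements $\gamma\in\Gamma_0$ with $\lambda(\rho(\gamma))/\lambda(j(\gamma))\geq C-O(\varepsilon)$; letting $\varepsilon\to 0$ and combining with equivariance forces every optimal $f'\in\mathcal{F}^{j,\rho}$ to satisfy $d(f'(x),f'(y))=C\,d(x,y)$, so $[x,y]\subset E(j,\rho)$. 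Hence $\widetilde{\mathcal{L}}\subset E(j,\rho)$, which is exactly the upper semicontinuity claim.

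The main obstacle will be in the second paragraph: ensuring the compensating isometries $g_k$ can be chosen so that Arzel\`a--Ascoli produces a $(j,\rho)$-equivariant limit $f$ (as opposed to a limit only equivariant for some conjugate of $\rho$), and checking that the Hausdorff limit $\widetilde{\mathcal{L}}$ really captures a geodesic lamination rather than degenerating. Corollary~\ref{cor:uniform-thick} is the essential ingredient preventing the stretch loci from escaping into the cusps, and Proposition~\ref{prop:contpropertiesC}.(3) is what propagates the maximal stretching factor $C$ to the limit.
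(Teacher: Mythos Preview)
Your outline shares with the paper the extraction of a Hausdorff limit lamination $\widetilde{\mathcal{L}}$ (via Corollary~\ref{cor:uniform-thick}), its chain recurrence, and the closing-lemma mechanism. The paper, however, does \emph{not} extract a limit of the maps $f_k$. Instead it argues by contradiction: assuming some $p\in\widetilde{\mathcal{L}}\smallsetminus E(j,\rho)$, it invokes Lemma~\ref{lem:optimallocallycst} to obtain an $f\in\mathcal{F}^{j,\rho}$ that is \emph{constant} on a ball $B_p(\delta)$. A simple closed geodesic $\mathcal{G}$ (corresponding to some $\gamma\in\Gamma_0$) is then chosen to pass through $B_p(\delta)$ and to be $\mathcal{C}^1$-shadowed by a quasi-leaf of $\widetilde{\mathcal{L}}$; by Hausdorff convergence, the $j_k$-representative of $\mathcal{G}$ is shadowed by a quasi-leaf of $E(j_k,\rho_k)$, and maximal stretching along $E(j_k,\rho_k)$ yields the \emph{additive} bound $|\lambda(\rho_k(\gamma))-C_k\lambda(j_k(\gamma))|\leq\delta/2$. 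Passing to the limit in $k$ (continuity of $\lambda$ and Proposition~\ref{prop:contpropertiesC}) gives $|\lambda(\rho(\gamma))-C\lambda(j(\gamma))|\leq\delta/2$, contradicting the fact that $f$ collapses a length-$\delta$ subsegment of $\mathcal{G}$ (which forces this gap to be $\geq C\delta$).

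The upshot is that the paper's route completely sidesteps the obstacle you correctly identify: there is no need for Arzel\`a--Ascoli on the $f_k$, hence no worry about $f_k(p_0)$ escaping when $\rho$ lies in the degenerate reductive stratum (line-preserving) while the $\rho_k$ are generic. Your approach can presumably be pushed through by conjugating and appealing to Remark~\ref{rem:CLipconj}, but this is exactly the delicate point you flag, and it is not free. The paper's trick of transferring the length estimate \emph{through} the sequence $(j_k,\rho_k)$ rather than through a limiting map --- and then colliding it with the locally-constant optimal map from Lemma~\ref{lem:optimallocallycst} --- is both shorter and more robust.
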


Here we denote by $\Hom_{j_0}(\Gamma_0,G)$ the space of geometrically finite representations of $\Gamma_0$ in~$G$ with the same cusp type as the fixed representation~$j_0$ (as in Section~\ref{sec:lipcont}) and by $\Hom(\Gamma_0,G)^{\mathrm{red}}$ the space of reductive representations $\rho\in\Hom(\Gamma_0,G)$ (Definition~\ref{def:reductive}).
These two sets are endowed with the induced topology from $\Hom(\Gamma_0,G)$.
The condition $C(j,\rho)>1$ is open by Proposition~\ref{prop:contpropertiesC}.(2).

\begin{proof}[Proof of Proposition~\ref{prop:semicontE}]
By Lemma~\ref{lem:finiteindex}, we may assume that $\Gamma_0$ is torsion-free.
Let $(j_k,\rho_k)_{k\in\N}$ be a sequence of elements of $\Hom_{j_0}(\Gamma_0,G)\times\Hom(\Gamma_0,G)^{\mathrm{red}}$ with $C(j_k,\rho_k)>1$ converging to some $(j,\rho)\in\Hom_{j_0}(\Gamma_0,G)\times\Hom(\Gamma_0,G)^{\mathrm{red}}$ with $C(j,\rho)>1$.
Since the stretch locus $E(j,\rho)$ is empty with our definitions when $C(j,\rho)=+\infty$, and the condition $C(j,\rho)=+\infty$ is open by Lemma \ref{lem:C<infty}, we may assume that the $C(j_k, \rho_k)$ and $C(j,\rho)$ are all finite.
Recall from Section~\ref{subsec:C-highcontinuous} the proof of the fact (labelled (B) there) that $\limsup C(j_k, \rho_k)$, if greater than $1$, gives a lower bound for $C(j,\rho)$. By the same argument as in that proof, up to passing to a subsequence, the stretch loci $E(j_k,\rho_k)$ are $j_k(\Gamma_0)$-invariant geodesic laminations that converge to some $j(\Gamma_0)$-invariant geodesic lamination~$\LL$, compact in $j(\Gamma_0)\backslash\HH^n$.
Moreover, the image of $\LL$ in $j(\Gamma_0)\backslash\HH^n$ nearly carries simple closed curves corresponding to elements $\gamma\in\Gamma_0$ with $\lambda(\rho(\gamma))/\lambda(j(\gamma))$ arbitrarily close to $C(j,\rho)$.
However, this does not immediately imply that $\LL$ is contained in $E(j,\rho)$: we need to improve the ``multiplicative error'' to an ``additive error''.
The idea is similar to Lemmas \ref{lem:maxstretchedlamin} and~\ref{lem:mu=E}, but with varying $j,\rho$.

Suppose by contradiction that $\LL$ contains a point $p\notin E(j,\rho)$.
According to Lemma~\ref{lem:optimallocallycst}, there is an element $f\in\F^{j,\rho}$ that is constant on some ball centered at~$p$, with radius~$\delta>0$.
Since the $E(j_k,\rho_k)$ are chain recurrent (Proposition~\ref{prop:chainrec}), so is~$\LL$.
Let $\mathcal{G}$ be a simple closed geodesic in $j(\Gamma_0)\backslash\HH^2$ passing within $\delta/2$ of $p$ and approached by a $\frac{\delta}{16mC}$-quasi-leaf of~$\LL$ (in the sense of Section~\ref{subsec:Thurstonchainrec}), made of at most $m$ leaf segments of~$\LL$, where $m$ is the integer (depending only on the topology of $S=j(\Gamma_0)\backslash\HH^2$) defined after Fact~\ref{fact:classif-lamin}.
Let $\gamma\in\Gamma_0$ correspond to~$\mathcal{G}$.
By Hausdorff convergence, for large enough~$k$, the geodesic representative of~$\mathcal{G}$ in $j_k(\Gamma_0)\backslash\HH^2$ is approached by a $\frac{\delta}{8mC}$-quasi-leaf of $E(j_k,\rho_k)$, made of at most $m$ leaf segments.
Since $E(j_k,\rho_k)$ is maximally stretched by a factor $C_k:=C(j_k, \rho_k)$ by any element of~$\F^{j_k,\rho_k}$, it follows, as in the proof of Lemma~\ref{lem:mu=E}, that
$$\big|\lambda(\rho_k(\gamma)) - C_k\,\lambda(j_k(\gamma))\big| \leq 4mC_k \cdot \frac{\delta}{8mC}\,.$$
Since $C_k$ tends to~$C$ by Proposition~\ref{prop:contpropertiesC}.(2)--(3) and since $\lambda$ is continuous, the left-hand side converges to $|\lambda(\rho(\gamma))-C\,\lambda(j(\gamma))|$ as $k\rightarrow +\infty$, while the right-hand side converges to $\delta/2$.
However, this left-hand limit is $\geq C\delta$ since $f$ is constant on the ball of radius~$\delta$ centered at~$p$, which contains a segment of~$\mathcal{G}$ of length~$\delta$.
This is absurd, hence $\LL\subset E(j,\rho)$.
\end{proof}

\subsection{The stretch locus for $C(j,\rho)<1$} \label{subsec:EforC<1}

Still in dimension $n=2$, let $\Gamma_0$, $(j,\rho)$, $K\subset\HH^n$ compact, and~$\varphi:K\rightarrow \HH^n$ be as in Section~\ref{sec:stretchlocus} (with $K$ possibly empty).
The relative stretch locus $E_{K,\varphi}(j,\rho)$ behaves very differently depending on whether $C_{K,\varphi}(j,\rho)$ is smaller than, equal to, or larger than~$1$.
Let us give a simple example to illustrate the contrast.

\begin{example}\label{ex:K=3points}
We take $\Gamma_0$ to be trivial.
Fix $o\in \HH^2$ and let $(a_s)_{s\geq 0}$, $(b_s)_{s\geq 0}$, and $(c_s)_{s\geq 0}$ be three geodesic rays issued from~$o$, parametrized at unit speed, forming angles of $2\pi/3$ at~$o$.
Let $K=\{ a_t,b_t,c_t\}$ for some $t>0$ and let $\varphi : K\rightarrow\HH^2$ be given by $\varphi(a_t)=a_T$, $\varphi(b_t)=b_T$, and $\varphi(c_t)=c_T$ for some $T>0$.
Then $\Lip(\varphi)=g(T)/g(t)$, where $g : \R_+\rightarrow\R_+$ is given by $g(s)=d(a_s,b_s)$.
By convexity of the distance function, the function~$g$ is \emph{strictly convex}, asymptotic to $\sqrt{3}s$ for $s\rightarrow 0$ and to $2s$ for $s\rightarrow +\infty$.
(Explicitly, $g(s) = 2\,\arcsinh (\sqrt{3/4}\sinh s)$ by \eqref{eqn:circle}.)
\begin{itemize}
  \item If $t<T$, then $\Lip(\varphi)>T/t>1$ by strict convexity of~$g$.
  By Theorem~\ref{thm:Kirszbraunopt}, the map~$\varphi$ extends to~$\HH^2$ with the same Lipschitz constant and with stretch locus the perimeter of the triangle $a_t b_t c_t$; this stretch locus is the smallest possible by Remark~\ref{rem:pathlength}.(1).
  \item If $t=T$, then $\varphi$ is $1$-Lipschitz and has a unique $1$-Lipschitz extension to the (filled) triangle $a_t b_t c_t$, namely the identity map.
  An optimal extension to~$\HH^2$ is obtained by precomposing with the closest-point projection onto the triangle $a_t b_t c_t$; the stretch locus is this triangle.
  \item If $t>T$, then $\Lip(\varphi)<T/t<1$ by strict convexity of~$g$.
  However, the optimal Lipschitz constant of an extension of $\varphi$ to~$\HH^2$ cannot be less than $T/t$: indeed, such an extension may be assumed to fix~$o$ by symmetry, and $\frac{d(o,a_T)}{d(o,a_t)}=T/t$.
  It follows from the construction used in Section~\ref{ex:C'neqC} below that a $(T/t)$-Lipschitz extension of $\varphi$ to~$\HH^2$ does indeed exist, and the stretch locus is equal to the union of the geodesic segments $[o,a_t]$, $[o,b_t]$, $[o,c_t]$.
\end{itemize}
Although the stretch locus may vary abruptly in the above, note that this variation is upper semicontinuous in $(t,T)$ for the Hausdorff topology, in agreement with a potential generalization of Proposition~\ref{prop:semicontE} to $C\leq 1$.
\end{example}

We now consider the case when $K$ is empty.
Here is some evidence in favor of Conjecture~\ref{conj:gramination}, which claims that for $C(j,\rho)<1$ the stretch locus $E(j,\rho)$ should be what we call a \emph{gramination}:
\begin{itemize}
  \item In Section~\ref{ex:C'neqC}, we give a construction, for certain Coxeter groups~$\Gamma_0$, of pairs $(j,\rho)$ with $j$ convex cocompact, $j(\Gamma_0)\backslash\HH^2$ compact, and $C(j,\rho)<1$, for which the stretch locus $E(j,\rho)$ is a trivalent graph.
  \item Consider the examples constructed in \cite[\S\,4.4]{sal00}: for any compact hyperbolic surface~$S$ of genus~$g$ and any integer~$k$ with $|k|\leq 2g-2$, Salein constructed highly symmetric pairs $(j,\rho)\in\Hom(\pi_1(S),G)^2$ with $j$ Fuchsian such that $\rho$ has Euler number~$k$; a construction similar to Section~\ref{ex:C'neqC} shows that the stretch locus of such a pair $(j,\rho)$ is a regular graph of degree~$4g$.
  \item In Section~\ref{ex:C'neqCnoncompact}, we give a construction of pairs $(j,\rho)$ with $j$ convex cocompact, $j(\Gamma_0)\backslash\HH^2$ \emph{noncompact}, and $C(j,\rho)<1$, for which the stretch locus $E(j,\rho)$ is a trivalent graph.
  It is actually possible to generalize this construction and obtain, for any given convex cocompact hyperbolic surface~$S$ of infinite volume and any given trivalent graph $\mathcal{G}$ retract of~$S$, an \emph{open} set of pairs $(j,\rho)\in\Hom(\Gamma_0,G)^2$ with $j$ convex cocompact for which the stretch locus $E(j,\rho)$ is a trivalent graph of~$\HH^2$, with geodesic edges, whose projection to $j(\Gamma_0)\backslash\HH^2$ is a graph isotopic to~$\mathcal{G}$ (see Remarks~\ref{rem:graph}).
  \item It is also possible to construct examples of pairs $(j,\rho)\in\Hom(\Gamma_0,G)^2$ with $j$ geometrically finite and $C(j,\rho)<1$ for which the stretch locus $E(j,\rho)$ is a geodesic lamination: see Sections \ref{ex:nonreductive1} and~\ref{ex:nonreductive2} for instance.
\end{itemize}

Here is perhaps a first step towards proving Conjecture~\ref{conj:gramination}.

\begin{lemma}\label{lem:Ecoconvex}
In dimension $n=2$, let $(j,\rho)\in\Hom(\Gamma_0,G)^2$ be a pair of representations with $j$ geometrically finite and $\F^{j,\rho}\neq\emptyset$.
Each connected component of the complement of the stretch locus $E(j,\rho)\subset\HH^2$ is convex.
\end{lemma}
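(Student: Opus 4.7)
The argument splits on the value of $C := C(j,\rho)$.

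When $C > 1$, Lemma~\ref{lem:MaxStretchedLam} identifies $E(j,\rho)$ as a geodesic lamination of $\HH^2$, whose complement is automatically a disjoint union of convex ideal polygons, so we are done. When $C = 1$, Lemma~\ref{lem:1StretchedLam}.(\ref{vii}) together with Remark~\ref{rem:higherlamin} show that in dimension $2$ the set $E$ is the union of a geodesic lamination $\LL$ with a possibly empty subfamily of the ideal-polygon components of $\HH^2 \setminus \LL$; every component of $\HH^2 \setminus E$ is therefore another such ideal polygon, hence convex. (The degenerate case $j$ conjugate to $\rho$ is vacuous since then $E = \HH^2$.)

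The substantive case is $C < 1$. Fix an optimal $f_0 \in \F^{j,\rho}$ with stretch locus exactly $E(j,\rho)$ (Lemma~\ref{lem:optimalmap}). Let $U$ be a component of $\HH^2 \setminus E$ and suppose, for a contradiction, that there are $p, q \in U$ with $[p,q] \not\subset U$; pick $r \in (p,q) \cap E$. The goal is to produce some $f \in \F^{j,\rho}$ with $\Lip_r(f) < C$, violating $r \in E = \bigcap_{f \in \F^{j,\rho}} E_f$. The plan is to imitate the construction in the proof of Lemma~\ref{lem:optimallocallycst}: connect $p$ to $q$ by an arc $\gamma \subset U$, take a relatively compact tubular convex neighborhood $W$ of $\gamma$ with $\overline W \subset U$ whose projection to $j(\Gamma_0) \backslash \HH^2$ is injective, note by upper semicontinuity (Lemma~\ref{lem:localLip}) that $\Lip_{\overline W}(f_0) \leq C^\ast < C$, and use Proposition~\ref{prop:KirszbraunC<1} to build a $(j,j)$-equivariant self-map $J$ of $\HH^2$ that is the identity outside $j(\Gamma_0) \cdot W$, collapses a convex subneighborhood of $\gamma$ in $W$ to a single point, and has $\Lip(J) < C/C^\ast$. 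Then $f := f_0 \circ J$ is an optimal $(j,\rho)$-equivariant map (with $\Lip_x(f) \leq C^\ast \cdot \Lip(J) < C$ on $j(\Gamma_0) \cdot W$ and $\Lip_x(f) = \Lip_x(f_0) \leq C$ elsewhere) which is constant in a neighborhood of $\gamma$.

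To reach the point $r$, the plan is to vary $\gamma$: take a sequence of arcs $\gamma_i \subset U$ from $p$ to $q$ whose convex hulls Hausdorff-exhaust the planar region between $\gamma$ and $[p,q]$, and build corresponding optimal maps $f_i$ each constant on a neighborhood of $\gamma_i$. A barycenter (Lemma~\ref{lem:baryLipschitz}) of the $f_i$ is then an optimal map whose enhanced stretch locus is contained in the intersection of those of the $f_i$, and in particular excludes $r$, contradicting $r \in E$. The main obstacle is that no single arc $\gamma_i$ ever passes through $r$ (which lies in $E \subset \HH^2 \setminus U$), so the neighborhoods of constancy of the $f_i$ a priori shrink to zero as $\gamma_i \to [p,q]$; the technical crux is to choose the $\gamma_i$ and the collapsing maps $J_i$ uniformly, so that the barycenter is genuinely constant on a neighborhood of $r$. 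This requires exploiting the specifically two-dimensional fact that $U$ is a planar domain: the family of arcs in $U$ from $p$ to $q$ is "thick" enough to approximate $[p,q]$ from both sides while staying uniformly away from $E$ on a positive fraction of each arc, which in turn provides a uniform lower bound on the size of the collapsing regions via Proposition~\ref{prop:KirszbraunC<1}.
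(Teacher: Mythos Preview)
Your case analysis for $C>1$ and $C=1$ is correct but unnecessary: the paper's argument is uniform in~$C$, and in fact the substantive issue you isolate for $C<1$ is the entire content of the lemma.

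The gap in your $C<1$ argument is not a technical crux that can be overcome by uniformity; it is structural. Each of your precompositions $J_i$ is the identity outside $j(\Gamma_0)\cdot W_i$, where $\overline{W_i}\subset U$. Since $r\in E\subset\HH^2\smallsetminus U$, the point $r$ lies outside every $W_i$ (at least locally, working in a small embedded ball), so $f_i=f_0\circ J_i$ agrees with $f_0$ near~$r$ and hence $\Lip_r(f_i)=\Lip_r(f_0)=C$. Thus $r\in E_{f_i}$ for every~$i$, and the barycenter inherits this: Lemma~\ref{lem:baryLipschitz} only says the stretch locus of a barycenter is contained in the \emph{intersection} of the individual stretch loci, and $r$ lies in all of them. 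No amount of thickening or uniformizing the tubes $W_i\subset U$ can make them contain~$r$.

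The paper's idea is different and avoids this obstruction entirely. One does not try to make the precomposition constant near~$r$; one only needs it to be $1$-Lipschitz on a region containing~$r$ and \emph{strictly} contracting at~$r$. Concretely: take a single short arc $A\subset U$ close to~$r$ with endpoints $x,y$, bounding together with $[x,y]$ a small disk $D\ni r$. On $D':=D\smallsetminus B$ (where $B\subset U$ is a thin band around~$A$), set
\[
J_\varepsilon := \varepsilon\,\pi_{[x,y]} + (1-\varepsilon)\,\mathrm{id}.
\]
This is a barycenter of two $1$-Lipschitz maps, hence $1$-Lipschitz on all of~$D'$ (including at points of~$E$), and strictly contracting at~$r$ since $r\notin[x,y]$. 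On the band~$B'=D\cap B$ one extends by Kirszbraun; the Lipschitz constant there tends to~$1$ as $\varepsilon\to 0$, so can be made $<C/C^\ast$ where $C^\ast:=\Lip_B(f_0)<C$. Then $g_\varepsilon:=f_0\circ J_\varepsilon$ is $(j,\rho)$-equivariant, has global Lipschitz constant $\leq C$ (check separately on $D'$, on $B'$, and on the complement of $j(\Gamma_0)\cdot D$), and satisfies $\Lip_r(g_\varepsilon)<C$, contradicting $r\in E(j,\rho)$. The point is that the ``expansion budget'' of $J_\varepsilon$ is spent only near the arc~$A\subset U$, where $f_0$ has room to spare, while on the part of~$D$ that may meet~$E$ the map $J_\varepsilon$ is harmlessly $1$-Lipschitz.
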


\begin{proof}
Suppose by contradiction that the open set $\HH^2\smallsetminus E(j,\rho)$ has a nonconvex component~$\mathcal{U}$.
There exists a smooth arc $A_0\subset \mathbb{H}^2\smallsetminus E(j,\rho)$ whose endpoints $x_0, y_0$ are connected by a segment intersecting $E(j,\rho)$. 
We can assume that $A_0\cup [x_0, y_0]$ is a Jordan curve, and is arbitrarily small: this can be seen by moving the segment $[x_0, y_0]$ until it stops intersecting $E(j, \rho)$, and then doing a small perturbative argument. 
In particular, we may assume that $A_0\cup [x_0, y_0]$ embeds into $j(\Gamma_0)\backslash \mathbb{H}^2$ under the quotient map.

We can perturb the Jordan curve $A_0\cup [x_0,y_0]$ to a Jordan curve $A\cup [x,y]$ whose inner (open) disk $D$ intersects $E(j,\rho)$ in some point $z$, with $A\subset\mathcal{U}$ (see Figure~\ref{fig:N}).

\begin{figure}[h!]
\begin{center}
\labellist
\small\hair 2pt
\pinlabel{$x$} at 15 10
\pinlabel{$y$} at 265 65
\pinlabel{$z$} at 140 78
\pinlabel{$A$} at 170 150
\pinlabel{$A'$} at 160 123
\pinlabel{$E(j,\rho)$} at 200 20
\pinlabel{$B'$} at 50 135
\pinlabel{$D'$} at 90 90
\pinlabel{$V$} at 38 30
\pinlabel{$V'$} at 240 77
\endlabellist
\includegraphics[width=6cm]{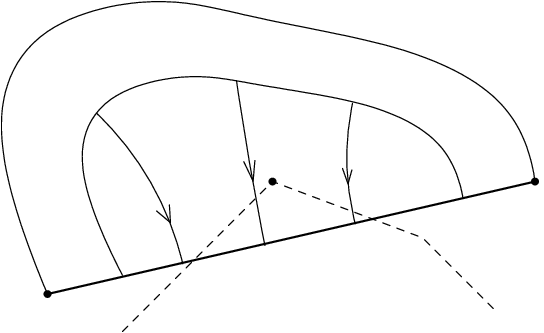}
\caption{Improving the local Lipschitz constant at~$z$.}
\label{fig:N}
\end{center}
\end{figure}

Let $B\subset\mathcal{U}$ be a compact neighborhood of~$A$.
Choose an optimal, $C$-Lipschitz equivariant map~$f$ (Lemma~\ref{lem:optimalmap}): by construction, $\Lip_B(f)=:C^{\ast}<C$.
We define a $(j,\rho)$-equivariant map $g_{\varepsilon}:=f\circ J_{\varepsilon} : \HH^2\rightarrow\HH^2$, where $J_{\varepsilon}$ is the small deformation of the identity map~$\mathrm{id}_{\HH^2}$ given as follows:
\begin{itemize}
  \item on $\HH^2\smallsetminus j(\Gamma_0)\cdot D$, take $J_{\varepsilon}$ to be the identity map;
  \item on $D':=D\smallsetminus B$, take
  $$J_{\varepsilon} := \varepsilon \cdot \pi_{[x,y]} + (1-\varepsilon)\cdot \text{id}_{D'}$$
  where $\pi_{[x,y]}$ denotes the closest-point projection onto $[x,y]$; extend $(j,\rho)$-equivariantly to $j(\Gamma_0)\cdot D'$;
  \item on $B':= D\cap B$, note that $J_{\varepsilon}$ is already defined on $\partial B'$ and use Proposition~\ref{prop:classicalKirszbraun} to find an optimal extension to $B'$; extend $(j,\rho)$-equivariantly to $j(\Gamma_0)\cdot B'$.
\end{itemize}
We have $\Lip_{D'}(J_{\varepsilon})\leq 1$ because $\Lip (\pi_{[x,y]})\leq 1$ (use Lemma~\ref{lem:baryLipschitz}). 
Also, we claim that $\Lip_{\partial B'}(J_{\varepsilon})\leq C/C^{\ast}$ for $\varepsilon$ small enough.
This is true because $\partial B'$ is the union of two subsegments $V,V'$ of $[x,y]$ and two disjoint arcs, namely $A$ and another, nearly parallel arc $A'$: the only pairs of points $(\xi,\xi')\in (\partial B')^2$ that $J_{\varepsilon}|_{\partial B'}$ can move \emph{apart} are in $A\times A'$ (up to order), but $d(\xi,\xi')$ is then bounded from below by a positive constant $d(A,A')$.
Thus, $\Lip_{\partial B'}(J_{\varepsilon})$ (and hence $\Lip(J_{\varepsilon})$) goes to $1$ as $\varepsilon$ goes to $0$, which yields $\Lip(g_{\varepsilon})\leq C$ as soon as $\Lip(J_{\varepsilon})\leq C/C^{\ast}$.
However, since $\pi_{[x,y]}$ is contracting near $z\in D'$, we have $\Lip_z(g_{\varepsilon})<C$, hence $z\notin E_{g_{\varepsilon}}$.
This contradicts the optimality of $f$, as $z\in\nolinebreak E(j,\rho)$.
\end{proof}

\section{Examples and counterexamples}\label{sec:ex}

All examples below are in dimension $n=2$, except the last two.
For $n=2$, we use the upper half-plane model of~$\HH^2$ and identify $G=\PO(2,1)$ with $\PGL_2(\R)$ and its identity component~$G_0$ with $\PSL_2(\R)$.

Example~\ref{ex:infinite} deals with infinitely generated~$\Gamma_0$.
Examples \ref{ex:nonreductive1} to~\ref{ex:C'neqCnoncompact} concern convex cocompact~$j$, while Examples \ref{ex:discontinu} to~\ref{ex:dim4C<1} illustrate phenomena that arise only in the presence of cusps.

\subsection{A left admissible pair $(j,\rho)$ with $C(j,\rho)=1$, for infinitely generated~$\Gamma_0$}\label{ex:infinite}

In this section, we give an example of an infinitely generated discrete subgroup $\Gamma$ of $G\times G$ that acts properly discontinuously on~$G$ but that does not satisfy the conclusion of Theorem~\ref{thm:sharp}; in other words, $\Gamma$ is not sharp in the sense of \cite[Def.\,4.2]{kk12}.

In the upper half-plane model of~$\HH^2$, let $A_k$ (\resp $B_k$) be the half-circle of radius 1 (\resp $\log k$) centered at~$k^2$, oriented clockwise.
Let $A'_k$ (\resp $B'_k$) be the half-circle of radius~$1$ (\resp $\log k$) centered at $k^2+k$, oriented counterclockwise (see Figure~\ref{fig:O}).
Let $\alpha_k\in G_0$ (\resp $\beta_k\in G_0$) be the shortest hyperbolic translation identifying the geodesic represented by $A_k$ with $A'_k$ (\resp $B_k$ with $B'_k$); its axis is orthogonal to $A_k$ and~$A'_k$ (\resp to $B_k$ and~$B'_k$), hence its translation length $\lambda(\alpha_k)$ (\resp $\lambda(\beta_k)$) is equal to the distance between $A_k$ and~$A'_k$ (\resp $B_k$ and~$B'_k$).
An elementary computation (see \eqref{eqn:rainbow} below) shows that
\begin{equation}\label{eqn:nocrossratio} \begin{array}{rclcl}
\lambda(\alpha_k) & = & 2\, \mathrm{arccosh} (k/2) &=&  2\log k + o(1),\\
\lambda(\beta_k) & = & 2\, \mathrm{arccosh} (\frac{k}{2\log k}) &=& 2\log k - 2 \log \log k +o(1)\,.
\end{array}\end{equation}
Consider the free group $\Gamma_N=\langle\gamma_k\rangle_{k\geq N}$ and its injective and discrete representations $j,\rho$ given by $j(\gamma_k)=\alpha_k$ and $\rho(\gamma_k)=\beta_k$.
Since $\lambda(\beta_k)/\lambda(\alpha_k)$ goes to~$1$, we have $C(j,\rho)\geq 1$ and $C(\rho,j)\geq 1$.
However, we claim that for $N$ large enough, the group $\Gamma_N^{j,\rho}=\{ (j(\gamma),\rho(\gamma))\,|\,\gamma\in\Gamma_N\}$ is left admissible (Definition~\ref{def:admissible}), acting properly discontinuously on~$G$.

Indeed, fix the basepoint $p_0=\sqrt{-1}\in\HH^2$ and consider a reduced word $\gamma=\gamma_{k_1}^{\varepsilon_1}\dots\gamma_{k_m}^{\varepsilon_m}\in\Gamma_N$, where $\varepsilon_i=\pm 1$.
Let $\D_A\subset \HH^2$ be the fundamental domain of~$\HH^2$ for the action of~$j(\Gamma_N)$ that is bounded by the geodesics $A_k, A'_k$ for $k\geq N$.
Let $\D_B$ be the fundamental domain for the action of~$\rho(\Gamma_N)$ that is bounded by the geodesics $B_k, B'_k$ for $k\geq N$.

The geodesic segment from $p_0$ to $j(\gamma)\cdot p_0$ projects in the fundamental domain~$\D_A$ to a union of $m+1$ geodesic segments $I_0, \dots, I_m$: namely, $I_i$ connects the half-circle $A_{k_i}$ or $A'_{k_i}$ (depending on~$\varepsilon_i$) to the half-circle $A_{k_{i+1}}$ or $A'_{k_{i+1}}$ (depending on~$\varepsilon_{i+1}$), unless $i=0$ or $m$, in which case one of the endpoints is~$p_0$ (see Figure~\ref{fig:O}).
Moreover, the geodesic line carrying $I_i$ hits $\partial_{\infty}\HH^2$ near the centers of these half-circles, since all half-circles $A_k, A'_k$ are far from one another and from $p_0$ (compared to their radii).
Therefore, the ends of~$I_i$ are nearly orthogonal to the $A_k, A'_k$ and the length of~$I_i$ can be approximated by the distance from some side of $\D_A$ to another (or to~$p_0$).
The error is $o(1)$ for each segment~$I_i$, uniformly as $N\rightarrow +\infty$.

\begin{figure}[h!]
\begin{center}
\labellist
\small\hair 2pt
\pinlabel{$p_0=\sqrt{-1}$} at 80 30
\pinlabel{$I_0$} at 185 180
\pinlabel{$I_1$} at 780 165
\pinlabel{$A_{n_1}$} at 410 30
\pinlabel{$B_{n_1}$} at 440 60
\pinlabel{$A'_{n_1}$} at 650 30
\pinlabel{$B'_{n_1}$} at 680 60
\pinlabel{$A_{n_2}$} at 990 30
\pinlabel{$B_{n_2}$} at 1010 85
\endlabellist
\includegraphics[width=12.5cm]{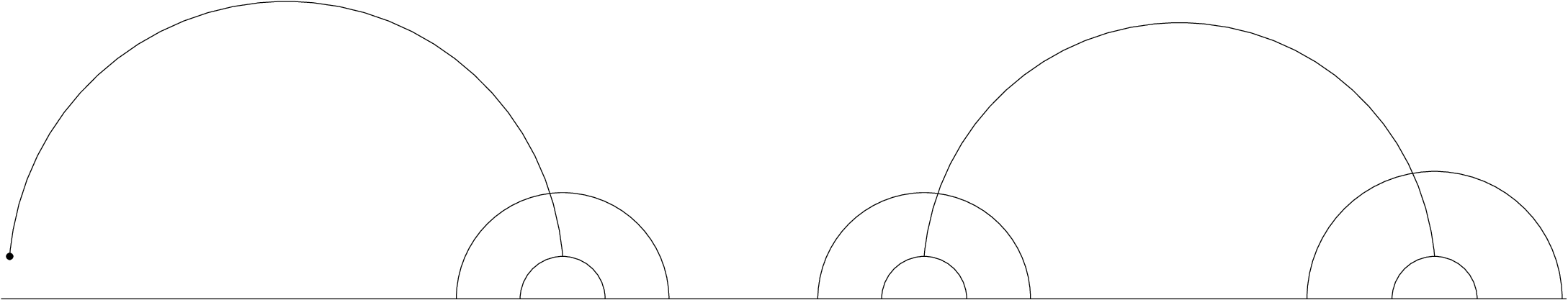}
\caption{For infinitely generated~$\Gamma_0$, construction of an admissible pair $(j,\rho)$ with $C(j,\rho)=1$.}
\label{fig:O}
\end{center}
\end{figure}

The distance from $p_0$ to $\rho(\gamma)\cdot p_0$ is likewise a sum of lengths of segments $J_0,\dots, J_m$ between boundary components of $\D_B$: the segment $J_i$ meets $B_{k_i}$ (\resp $B'_{k_i}, B_{k_{i+1}}, B'_{k_{i+1}}$) exactly when $I_i$ meets $A_{k_i}$ (\resp $A'_{k_i}, A_{k_{i+1}}, A'_{k_{i+1}}$).
Therefore,~$I_i$ is longer than~$J_i$ by roughly the sum of $d(A_{k_i},B_{k_i})=d(A'_{k_i},B'_{k_i})$ and $d(A_{k_{i+1}},B_{k_{i+1}})=d(A'_{k_{i+1}},B'_{k_{i+1}})$.
Using \eqref{eqn:nocrossratio}, we obtain
$$\mathrm{length}(I_i) - \mathrm{length}(J_i) = \log\log k_i + \log\log k_{i+1} + o(1)$$
(with one term stricken out for $i=0$ or~$m$), with uniform error as $N\rightarrow +\infty$.
In particular, for $N$ large enough the left-hand side is always $\geq 1$.
Finally,
\begin{eqnarray*}
\mu(j(\gamma)) - \mu(\rho(\gamma)) & = & d(p_0,j(\gamma)\cdot p_0) - d(p_0,\rho(\gamma)\cdot p_0)\\
& = & \sum_{i=0}^m \big(\mathrm{length}(I_i) - \mathrm{length}(J_i)\big)\\
& \geq & \max\Big\{m, \log \log \Big(\max_{1\leq i\leq m} k_i\Big)\Big\},
\end{eqnarray*}
which clearly diverges to~$+\infty$ as $\gamma=\gamma_{k_1}^{\varepsilon_1}\dots\gamma_{k_m}^{\varepsilon_m}$ exhausts the countable group~$\Gamma_N$.
Therefore the group~$\Gamma_N^{j,\rho}$ acts properly discontinuously on~$G$ by the properness criterion of Benoist and Kobayashi (Section~\ref{subsec:Cartanproj}).

\subsection{A nonreductive~$\rho$ with $\F^{j,\rho}\neq\emptyset$}\label{ex:nonreductive1}

Let $\Gamma_0$ be a free group on two generators $\alpha,\beta$ and let $j\in\Hom(\Gamma_0,G)$ be the holonomy representation of a hyperbolic one-holed torus~$S$ of infinite volume, such that the translation axes $\A_{j(\alpha)}$ and~$\A_{j(\beta)}$ of $\alpha$ and~$\beta$ meet at a right angle at a point $p\in\HH^2$ (see Figure~\ref{fig:P}).

\begin{figure}[h!]
\begin{center}
\labellist
\small\hair 2pt
\pinlabel{$\HH^2$} at 70 240
\pinlabel{$\mathcal{U}$} at 45 165
\pinlabel{$\mathcal{U}$} at 247 165
\pinlabel{$s$} at 160 50
\pinlabel{$t$} at 52 128
\pinlabel{$t'$} at 241 128
\pinlabel{$s'$} at 160 242
\pinlabel{$\A_{j(\alpha)}$} at 170 190
\pinlabel{$\A_{j(\beta)}$} at 100 126
\pinlabel{$p$} at 155 134
\pinlabel{$\mathcal{D}$} at 180 110
\pinlabel{$f$} at 315 155
\pinlabel{$f(s)$} at 467 40
\pinlabel{$f(t)$} at 473 120
\pinlabel{$f(p)$} at 467 142
\pinlabel{$f(s')$} at 465 247
\pinlabel{$f(t')$} at 528 125
\endlabellist
\includegraphics[width=12cm]{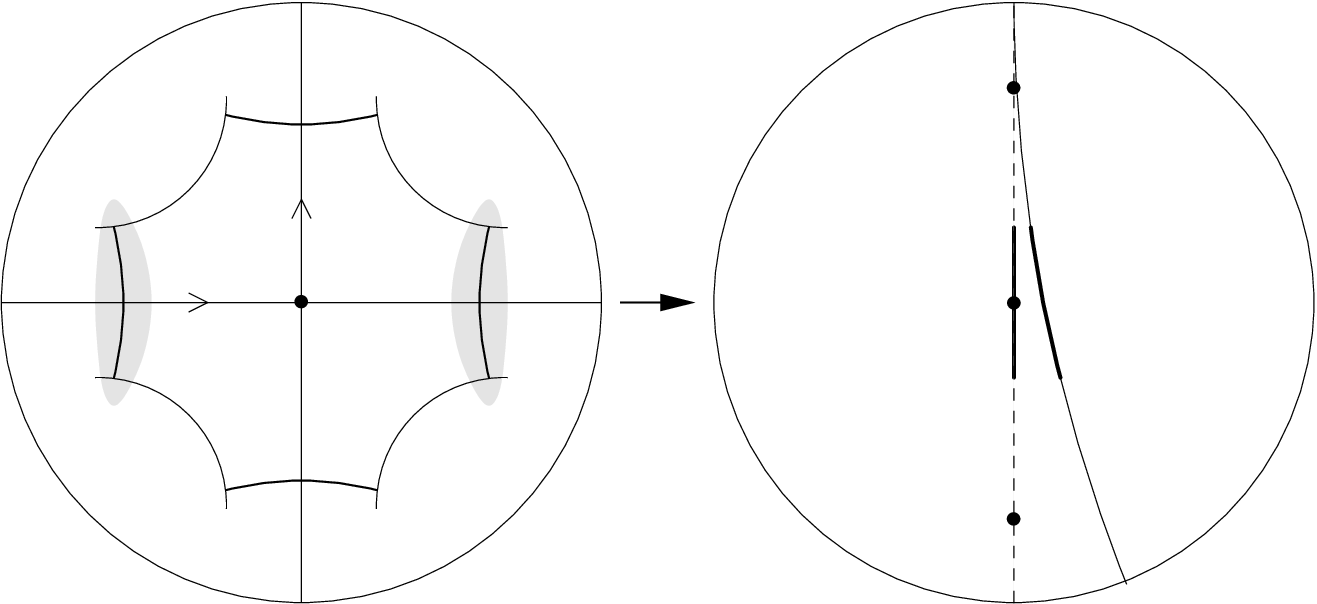}
\caption{A nonreductive representation $\rho$ such that the stretch locus $E(j,\rho)$ is the $j(\Gamma_0)$-orbit of the axis~$\A_{j(\alpha)}$.}
\label{fig:P}
\end{center}
\end{figure}

We first consider the representation $\rho_0\in\Hom(\Gamma_0,G)$ given by $\rho_0(\alpha)=j(\alpha)^2$ and $\rho_0(\beta)=1$.
It is reductive with two fixed points in $\partial_{\infty}\HH^2$.
We claim that $C(j,\rho_0)=2=\lambda(\rho_0(\alpha))/\lambda(j(\alpha))$ and that the image of the stretch locus $E(j,\rho)$ in $j(\Gamma_0)\backslash\HH^2$ is the closed geodesic corresponding to~$\alpha$.
Indeed, consider the Dirichlet fundamental domain $\mathcal{D}$ of the convex core centered at~$p$, for the action of~$j(\Gamma_0)$.
It is bounded by four segments of the boundary of the convex core, and by four other segments $s,s',t,t'$ such that $j(\alpha)$ maps $s$ to~$s'$ and $j(\beta)$ maps $t$ to~$t'$.
Let $\pi_{\A_{j(\alpha)}}$ be the closest-point projection onto~$\A_{j(\alpha)}$ and $h$ the orientation-preserving homeomorphism of~$\A_{j(\alpha)}$ such that $d(p,h(q)) = 2\,d(p,q)$ for all $q\in\A_{j(\alpha)}$.
The map $h\circ\pi_{\A_{j(\alpha)}} : \D\rightarrow\HH^2$ is $2$-Lipschitz and extends to a $2$-Lipschitz, $(j,\rho_0)$-equivariant map $f_0 : \HH^2\rightarrow\A_{j(\alpha)}$ whose stretch locus is exactly $j(\Gamma_0)\cdot\A_{j(\alpha)}$.

Consider a small, nonreductive deformation $\rho\in\Hom(\Gamma_0,G)$ of~$\rho_0$ such that $\rho(\alpha)=\rho_0(\alpha)=j(\alpha)^2$ and such that $\rho(\beta)$ has a parabolic fixed point in $\partial_{\infty}\HH^2$ common with $j(\alpha)$.
Then $C(j,\rho)=C(j,\rho_0)=2$ by Lemma~\ref{lem:nonred-lip-cc}.
We claim that $\F^{j,\rho}$ is nonempty if $\rho(\beta)$ is close enough to~$\mathrm{Id}_{\HH^2}$.
Indeed, let us construct a $(j,\rho)$-equivariant deformation $f$ of~$f_0$ which is still $2$-Lipschitz.
By Lemma~\ref{lem:localLip}, we have $\Lip_{\mathcal{U}}(f_0)<C$ for some neighborhood $\mathcal{U}$ of $t\cup t'$.
Therefore, the map~$f$ defined on $s\cup s'\cup t \cup t'$ by $f|_{t\cup s \cup s'}=f_0|_{t \cup s \cup s'}$ and $f|_{t'}=\rho(\beta)\circ f_0|_{t'}$ is still $2$-Lipschitz if $\rho(\beta)$ is close enough to~$\mathrm{Id}_{\HH^2}$.
This map~$f$ extends, with the same Lipschitz constant~$2$, to all of~$\D$ (by the Kirszbraun--Valentine theorem, Proposition~\ref{prop:classicalKirszbraun}), hence $(j,\rho)$-equivariantly to~$\HH^2$.

This construction can be adapted to any hyperbolic surface $S$ of infinite volume when the stretch locus $E(j,\rho_0)$ is a multicurve.

\subsection{A nonreductive~$\rho$ with $\F^{j,\rho}=\emptyset$}\label{ex:nonreductive2}

Let again $\Gamma_0$ be a free group on two generators $\alpha,\beta$ and let $j\in\Hom(\Gamma_0,G)$ be the holonomy representation of a hyperbolic one-holed torus~$S$ of infinite volume.

Let $\LL$ be the preimage in~$\HH^2$ of an oriented \emph{irrational} measured lamination of~$S$.
We first construct a reductive representation $\rho_0\in\Hom(\Gamma_0,G)$ with two fixed points in $\partial_{\infty}\HH^2$ such that $E(j,\rho_0)=\LL$ and $C(j,\rho_0)<1$.
It is sufficient to construct a differential $1$-form $\omega$ of class $\mathrm{L}^{\infty}$ on~$S$ with the following properties:
\begin{enumerate}
  \item $\omega$ is locally the differential of some $1$-Lipschitz function $\varphi$,
  \item $\int_I\omega= \mathrm{length}(I)$ for any segment of leaf $I$ of (the image in~$S$ of)~$\LL$, oriented positively.
\end{enumerate}
Indeed, if such an~$\omega$ exists, then for any geodesic line $\A$ of~$\HH^2$, any isometric identification $\A\simeq\R$, and any $C\in (0,1)$, we can define a representation $\rho_0\in\Hom(\Gamma_0,G)$ as follows: if $\gamma\in\Gamma_0\smallsetminus\{ 1\}$ corresponds to a loop $\mathcal{G}_{\gamma}$ on~$S$, then $\rho_0(\gamma)$ is the hyperbolic element of~$G$ translating along~$\A$ with length $C\int_{\mathcal{G}_{\gamma}}\omega\in\R$.
Such a representation~$\rho_0$ satisfies $E(j,\rho_0)=\LL$ and $C(j,\rho_0)=C$ because for any basepoint $p\in\HH^2$, the map
$$f_0 :\ q\in\HH^2 \,\longmapsto\, C \int_{[p,q]} \widetilde{\omega} \,\in \R \simeq \A$$
(where $\widetilde{\omega}$ is the $j(\Gamma_0)$-invariant $1$-form on~$\HH^2$ lifting~$\omega$) is $(j,\rho_0)$-equivariant, has Lipschitz constant exactly~$C$, and stretches $\LL$ maximally, and we can use Lemma~\ref{lem:maxstretchedlamin}.

\begin{figure}[h!]
\begin{center}
\labellist
\small\hair 2pt
\pinlabel{$\mathcal{B}$} at 230 250
\pinlabel{$\partial$} at 210 165
\pinlabel{$H$} at 170 190
\pinlabel{$H'$} at 290 110
\pinlabel{$\LL$} at 40 276
\pinlabel{$\LL$} at 605 175
\pinlabel{$f$} at 435 210
\pinlabel{$\{\xi\}$} at 445 250
\pinlabel{$\propto \! e^{-t}$} at 560 100
\pinlabel{$t$} at 615 106
\pinlabel{$J$} at 655 95
\endlabellist
\includegraphics[width=12cm]{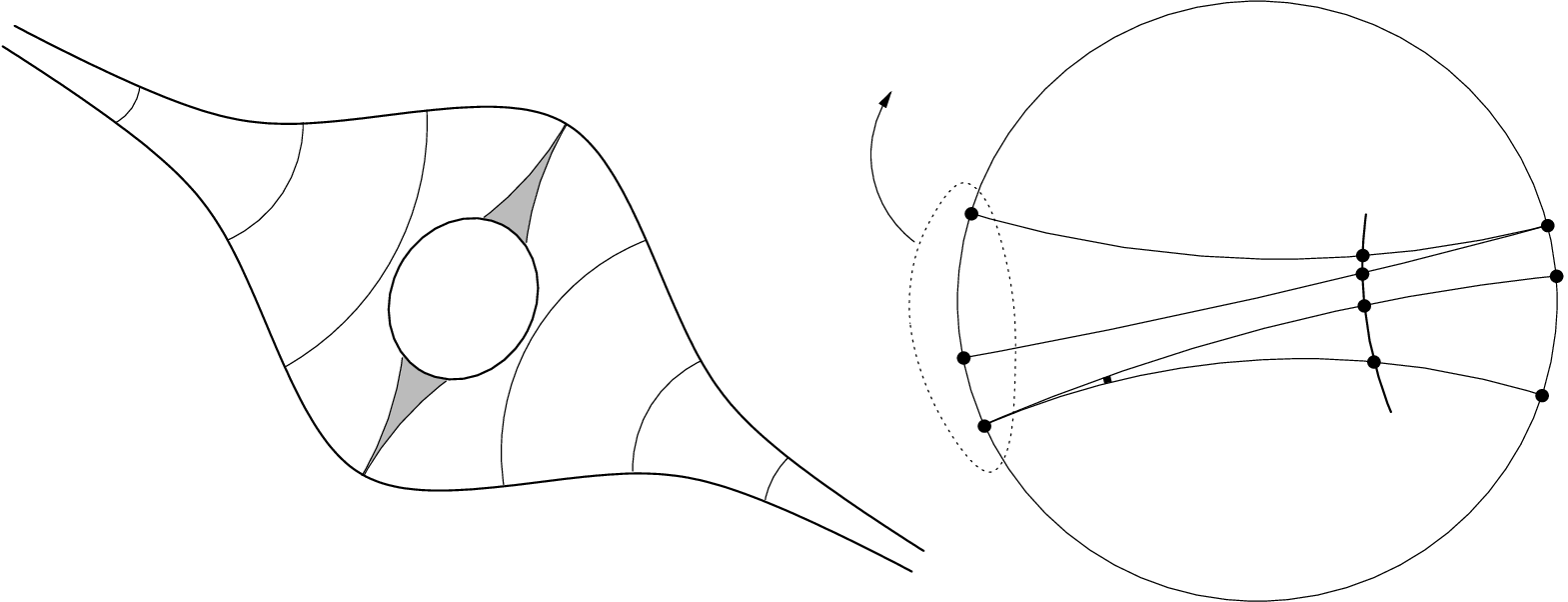}
\caption{\emph{Left}: the one-holed bigon $\mathcal{B}$ bounded by the irrational lamination~$\LL$. The symbol $\partial$ denotes the boundary of the convex core. The function $\varphi$ is constant on the shaded area; elsewhere its level curves are pieces of horocycles. \emph{Right}: the Lipschitz map $f$ must collapse all lines of~$\LL$ if $C<1$, because $e^{-Ct} \gg e^{-t}$ for large $t$.}
\label{fig:Q}
\end{center}
\end{figure}

Let us construct a $1$-form~$\omega$ as above. The idea is similar to the ``stretch maps'' of \cite{thu86}.
The complement of the image of~$\LL$ in the convex core of~$S$ is a one-holed biinfinite bigon~$\mathcal{B}$; each of its two spikes can be foliated by pieces of horocycles (see Figure~\ref{fig:Q}).
Let $H$ and~$H'$ be horoball neighborhoods of the two spikes, tangent in two points of~$\LL$ (one for each side of~$\mathcal{B}$).
We take $\omega=d\varphi$ where
$$\varphi :=
\left \{ \begin{array}{ll}
0 & \text{on }\mathcal{B}\smallsetminus (H\cup H'),\\
d(\cdot,\partial H) & \text{on }H,\\
-d(\cdot,\partial H') & \text{on }H'.
\end{array} \right .$$
This form $\omega$ extends continuously to all of~$\LL$.

Let now $\rho\in\Hom(\Gamma_0,G)$ be a nonreductive representation such that the fixed point $\xi$ of $\rho(\Gamma_0)$ in $\partial_{\infty}\HH^2$ is one of the two fixed points of $\rho_0(\Gamma_0)$.
Then $C(j,\rho)=C:=C(j,\rho_0)$ by Lemma~\ref{lem:nonred-lip-cc}.
We claim that $\F^{j,\rho}=\emptyset$, \emph{i.e.}\ there exists no $C$-Lipschitz, $(j,\rho)$-equivariant map $f : \HH^2\rightarrow\HH^2$. Indeed, suppose by contradiction that such an $f$ exists.

We first note that $f$ stretches maximally every leaf of~$\LL$.
Indeed, the ``horocyclic projection'' taking any $p\in\HH^2$ to the intersection of the translation axis $\A$ of~$\rho_0$ with the horocycle through~$p$ centered at~$\xi$ is $1$-Lipschitz.
After postcomposing~$f$ with this horocyclic projection, we obtain a $C$-Lipschitz map $f_1$ which is $(j,\rho_0)$-equivariant, hence has to stretch maximally every leaf of $E(j,\rho_0)=\LL$ (Theorem~\ref{thm:lamin}).
Then $f$ also stretches maximally every leaf of~$\LL$.
In fact, this argument shows that on any leaf of~$\LL$, the map~$f$ coincides with~$f_1$ postcomposed with some parabolic (or trivial) isometry of~$\HH^2$ fixing~$\xi$, depending on the leaf; the leaf endpoint in $\partial_{\infty}\HH^2$ which is sent to~$\xi$ by~$f_1$ is also sent to~$\xi$ by~$f$.
(Actually, by density of leaves the $(j,\rho_0)$-equivariant restrictions $f_0|_{\LL}$ and $f_1|_{\LL}$ differ only by a translation along the axis $\A$ of~$\rho_0$, but we will not need this.)

Let us now prove that $f$ maps all the leaves of~$\LL$ to the same geodesic line of~$\HH^2$.
This will provide a contradiction since $f(\LL)$ is $\rho(\Gamma_0)$-invariant and $\rho(\Gamma_0)$ has only one fixed point in $\partial_{\infty}\HH^2$ (namely~$\xi$).
Let $J$ be a short geodesic segment of~$\HH^2$ transverse to the lamination~$\LL$, such that $J\smallsetminus\LL$ is the union of countably many open subintervals~$J_k$.
Then each~$J_k$ intercepts an ideal sector bounded by two leaves of~$\LL$ that are asymptotic to each other on one of the two sides, left or right, of~$J$.
Orient~$J$ so that all the half-leaves on the left of~$J$ are mapped under~$f_1$ to geodesic rays with endpoint~$\xi$.
Then any two leaves asymptotic on the right of~$J$ have the same image under~$f$: indeed, the right parts of the image leaves are asymptotic because $f$ is Lipschitz, and the left parts are asymptotic because $\rho$ sends all the left endpoints to~$\xi$.
Consider two leaves $\ell,\ell'$ of~$\LL$ that are asymptotic on the left of~$J$, bounding together an infinite spike.
At depth $t \gg 1$ inside the spike, $\ell$ and $\ell'$ approach each other at rate~$e^{-t}$ (see \eqref{eqn:expdistcusps}), and their images under~$f$, if distinct, approach each other at the slower exponential rate $e^{-Ct}$ (recall that $C<1$); since $f$ is Lipschitz, this forces $f(\ell)=f(\ell')$.
Therefore, all the sectors intercepted by the~$J_k$ are collapsed by~$f$.
Since by \cite{birman-series} the length of $J$ is the sum of the lengths of the $J_k$, passing to the limit we see that all the leaves of~$\LL$ meeting~$J$ have the same image under~$f$.
We conclude by observing that the projection of $J\cup\LL$ to~$S$ carries the full fundamental group of~$S$.

This proves that $\F^{j,\rho}=\emptyset$.
It is not clear whether the same can happen when $C(j,\rho)\geq 1$, but the natural conjecture would be that it does not.

\subsection{A pair $(j,\rho)$ with $C'(j,\rho)<C(j,\rho)<1$ and $j(\Gamma_0)\backslash\HH^n$ compact}\label{ex:C'neqC}

While the constants $C(j,\rho)$ and $C'(j,\rho)$ are equal above~$1$ (Corollary~\ref{cor:CC'}), they can differ below~$1$.
To prove this, we only need to exhibit a pair $(j,\rho)$ such that any closed geodesic of $j(\Gamma_0)\backslash\HH^2$ spends a definite (nonzero) proportion of its length in a compact set $V$  disjoint from the stretch locus (on~$V$ the local Lipschitz constant of an optimal Lipschitz equivariant map stays bounded away from $C(j,\rho)$, see Lemma~\ref{lem:localLip}).

In~$\HH^2$, consider a positively oriented hyperbolic triangle $ABC$ with angles
$$\widehat{A}=\frac{\pi}{3}, \quad \widehat{B}=\frac{\pi}{2}, \quad \widehat{C}=\frac{\pi}{14}$$
and another, smaller triangle $A'B'C'$ with the same orientation and with angles
$$\widehat{A'}=\frac{\pi}{3}, \quad \widehat{B'}=\frac{\pi}{2}, \quad \widehat{C'}=\frac{\pi}{7}\,.$$
The edge $[A',\!B']$ is shorter than $[A,\!B]$; let $\varphi : [A,\!B]\rightarrow [A',\!B']$ be the uniform parametrization, with $\varphi(A)=A'$ and $\varphi(B)=B'$, so that
$$C_0 := \Lip(\varphi) = \frac{d(A', B')}{d(A,B)} <1.$$

\begin{claim}\label{claim:exC'neqC}
The map~$\varphi$ admits a $C_0$-Lipschitz extension $f$ to the filled triangle $ABC$, taking the geodesic segment $[A,\!C]$ (\resp $[C,\!B]$) to the geodesic segment $[A',\!C']$ (\resp $[C',\!B']$), and with stretch locus the segment $[A,\!B]$.
\end{claim}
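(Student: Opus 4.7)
The plan is to build $f$ explicitly in Fermi coordinates adapted to the geodesic carrying $[A,B]$, and then to perform a localized correction near $[A,C]$. Since $\widehat{B}=\widehat{B'}=\pi/2$, the edge $[B,C]$ is perpendicular to $[A,B]$ at~$B$, and similarly on the target side. In Fermi coordinates $(t,r)$ on the source, with $A=(0,0)$, $B=(d(A,B),0)$, and $r$ measured on the side of $C$, one has $C=(d(A,B),d(B,C))$; analogous coordinates $(t',r')$ on the target place $A'=(0,0)$, $B'=(d(A',B'),0)$, and $C'=(d(A',B'),d(B',C'))$.

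First I would establish the two slack inequalities
\[
\alpha\,:=\,\frac{d(A',C')}{d(A,C)}\,<\,C_0\,,\qquad \beta\,:=\,\frac{d(B',C')}{d(B,C)}\,<\,C_0\,.
\]
The hyperbolic law of sines in each triangle, combined with the identity $\sin(\pi/7)=2\sin(\pi/14)\cos(\pi/14)$, yields
\[
\frac{\sinh d(A',C')}{\sinh d(A,C)}\,=\,\frac{\sinh d(B',C')}{\sinh d(B,C)}\,=\,\frac{1}{2\cos(\pi/14)}\cdot\frac{\sinh d(A',B')}{\sinh d(A,B)}\,.
\]
Strict convexity of $\sinh$ on $[0,\infty)$ forces $\sinh u/\sinh v<u/v$ whenever $0<u<v$, and $1/(2\cos(\pi/14))<1$; these two facts combine to deliver both inequalities above.

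Second, I would define a first candidate $f_0(t,r):=(C_0 t,\beta r)$. The target metric $\cosh^2(r')\,dt'^2+dr'^2$ pulls back under $f_0$ to $C_0^2\cosh^2(\beta r)\,dt^2+\beta^2\,dr^2$, which is dominated by $C_0^2$ times the source metric $\cosh^2(r)\,dt^2+dr^2$, with equality iff $r=0$. Hence $f_0$ is $C_0$-Lipschitz with stretch locus exactly $\{r=0\}\cap ABC=[A,B]$; one verifies $f_0|_{[A,B]}=\varphi$, $f_0(C)=C'$, and that $f_0$ maps the vertical edge $[B,C]$ (the segment $t=d(A,B)$) onto $[B',C']$ by a uniform dilation of ratio~$\beta$.

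The main obstacle, which I expect to be the technical heart of the proof, is that $f_0([A,C])$ is typically only a curve from $A'$ to $C'$, not the geodesic segment $[A',C']$ itself. To enforce the missing boundary condition I would introduce a second candidate $f_1$, constructed in Fermi coordinates adapted to $[A,C]$ instead, which sends $[A,C]$ uniformly onto $[A',C']$ with ratio~$\alpha$ and agrees with $f_0$ outside a thin collar $\mathcal{V}$ of $[A,C]$ disjoint from $[A,B]$. Blending $f_0$ and $f_1$ via a cutoff pair $(\psi_0,\psi_1)$ summing to~$1$, with $\psi_1$ supported in $\mathcal{V}$ and equal to~$1$ on $[A,C]$, the Leibniz-type bound of Lemma~\ref{lem:partofunity} reads
\[
\Lip_p(f)\,\leq\, 2\Lip_p(\psi_1)\cdot d(f_0(p),f_1(p))\,+\,C_0\,-\,\psi_1(p)(C_0-\alpha)\,.
\]
The strict gap $C_0-\alpha>0$ from the first step provides exactly the budget required to absorb the patching error, provided the collar is made thin enough that $d(f_0(p),f_1(p))\leq \psi_1(p)(C_0-\alpha)/(2\Lip_p(\psi_1))$ throughout~$\mathcal{V}$. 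The delicate point is constructing $f_1$ so that, on the outer boundary of $\mathcal{V}$ where $\psi_1\to 0$, it matches $f_0$ closely enough to satisfy this bound; the natural device is to define $f_1$ by postcomposing $f_0$ with the closest-point projection onto $[A',C']$ followed by a small rescaling, so that the discrepancy $d(f_0,f_1)$ vanishes linearly with the distance to the collar's outer boundary and is therefore dominated by~$\psi_1$. The stretch locus of the resulting $f$ is then still $[A,B]$, as $\psi_1$ vanishes in a neighborhood of $[A,B]$ and $f_1$ satisfies $\Lip_p(f_1)\leq\alpha<C_0$ throughout~$\mathcal{V}$.
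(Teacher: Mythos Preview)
Your first candidate $f_0(t,r)=(C_0 t,\beta r)$ is fine, and your inequalities $\alpha,\beta<C_0$ are correct. The gap is in the patching step, specifically at the vertex~$A$.

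Since $\widehat{A}=\widehat{A'}=\pi/3$ and $\beta<C_0$, the curve $f_0([A,C])$ leaves $A'$ at angle $\arctan\bigl((\beta/C_0)\tan(\pi/3)\bigr)<\pi/3$ with $[A',B']$, i.e.\ it immediately dips \emph{below} the segment $[A',C']$, into the interior of the target triangle. So $f_0$ fails to send $[A,C]$ to $[A',C']$ right at~$A$, not just far from it. Your collar $\mathcal{V}$ is required to be disjoint from $[A,B]$ (otherwise $f|_{[A,B]}\neq\varphi$), hence cannot contain a full neighborhood of~$A$. Near~$A$ the edges $[A,B]$ and $[A,C]$ are a distance $\sim\sin(\pi/3)\,d(A,p)$ apart, so any $\psi_1$ with $\psi_1\equiv 1$ on $[A,C]$ and $\psi_1\equiv 0$ on $[A,B]$ has $\Lip_p(\psi_1)\gtrsim d(A,p)^{-1}$; meanwhile $d(f_0(p),f_1(p))$ is only linear in $d(A,p)$ (two curves through $A'$ meeting at a nonzero angle). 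The product $\Lip_p(\psi_1)\cdot d(f_0(p),f_1(p))$ therefore does \emph{not} tend to~$0$ near~$A$, and there is no reason it should be dominated by the fixed budget $C_0-\alpha$. Making the collar ``thin enough'' does not help: the obstruction is at the vertex, not in the interior of the edge.

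The paper avoids patching entirely. Instead of scaling vertically by~$\beta$, it scales by a function $\Psi$ with $\Psi'<C_0$ chosen so that the image of $[A,C]$ lands on or \emph{above} $[A',C']$ (outside the target triangle rather than inside); the choice $\Psi(v)=C_0 v$ already achieves this by the concavity/convexity of $\tanh$/$\sinh$, and one can take $\Psi$ slightly smaller, or use an explicit $\Psi_{\widehat A}$ that lands exactly on $[A',C']$. Then a single postcomposition with the $1$-Lipschitz closest-point projection onto the filled triangle $A'B'C'$ brings the boundary where it belongs, with no partition of unity and no difficulty at the vertices.
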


\begin{proof}
Let $\ell$ be the geodesic line of~$\HH^2$ containing $[A,\!B]$, oriented from $A$ to~$B$.
Any point $p\in\HH^2$ may be reached in a unique way from~$A$ by first applying a translation of length $v(p )\in\R$ along the geodesic line orthogonal to~$\ell$ at $A$, positively oriented with respect to~$\ell$ (``vertical direction''), then a translation of length $h(p )\in\R$ along ~$\ell$ itself (``horizontal direction'').
The real numbers $h(p)$ and~$v(p)$ are called the \emph{Fermi coordinates} of~$p$ with respect to $(\ell,A)$.
Similarly, let $h'$ and~$v'$ be the Fermi coordinates with respect to $(\ell ',A')$, where $\ell'$ is the geodesic line containing $[A',\!B']$, oriented from $A'$ to~$B'$ (see Figure~\ref{fig:R}).

\begin{figure}[h!]
\begin{center}
\labellist
\small\hair 2pt
\pinlabel{$A$} at -4 4
\pinlabel{$B$} at 93 4
\pinlabel{$C$} at 93 165
\pinlabel{$A'$} at 165 4
\pinlabel{$B'$} at 238 4
\pinlabel{$C'$} at 238 100
\pinlabel{$p$} at 65 50
\pinlabel{$h(p)$} at 35 10
\pinlabel{$v(p)$} at 68 25
\pinlabel{$f^{\ast}$} at 140 75
\pinlabel{$f^{\ast}(C)$} at 247 130
\endlabellist
\includegraphics[width=7cm]{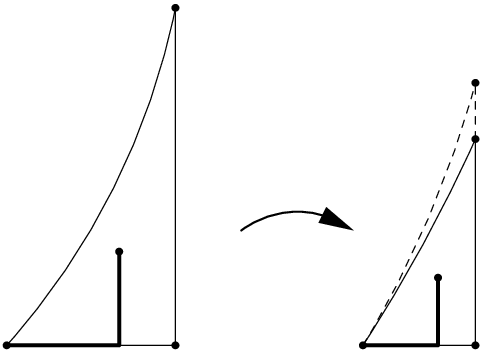}
\caption{Defining a contracting map between right-angled triangles.}
\label{fig:R}
\end{center}
\end{figure}

Let $\Psi : \R_+\rightarrow\R_+$ be a diffeomorphism whose derivative~$\Psi'$ is everywhere $<C_0$ on $\R_+^{\ast}$ and let $f^{\ast} : \HH^2\rightarrow\HH^2$ be given, in Fermi coordinates, by
$$h'(f^{\ast}(p)) = C_0\,h(p) \quad\quad\text{and}\quad\quad v'(f^{\ast}(p)) = \Psi(v(p)).$$
Then $\Lip_p(f^{\ast})<C_0$ for all $p\notin\ell$.
Indeed, the differential of $f^{\ast}$ at $p\notin\ell$ has principal value $\Psi'(v(p))<C_0$ in the vertical direction and, by \eqref{eqn:equidistpoints}, principal value $C_0\frac{\cosh\Psi(v(p))}{\cosh v(p)}<C_0$ in the horizontal direction.

We shall take $f:=\pi_{\scriptscriptstyle A'\!B'\!C'}\circ f^{\ast}$ for a suitable choice of~$\Psi$, where $\pi_{\scriptscriptstyle A'\!B'\!C'}$ is the closest-point projection onto the filled triangle $A'B'C'$.
Since we wish $f$ to map $[A,\!C]$ to $[A',\!C']$, we need to choose~$\Psi$ so that for any $p\in [A,\!C]$ the point $f^{\ast}(p)$ lies \emph{above} (or on) the edge $[A',\!C']$.
By \eqref{eqn:trigo},
$$\tan \widehat{pAB} = \frac{\tanh v(p)}{\sinh h(p)}$$
and
$$\tan \widehat{f^{\ast}(p)A'B'} = \frac{\tanh v'(f^{\ast}(p))}{\sinh h'(f^{\ast}(p))} = \frac{\tanh\Psi(v(p))}{\sinh(C_0 v(p))}\,.$$
Note that $\tanh(C_0t)>C_0\tanh(t)$ and $\sinh(C_0t)<C_0\sinh(t)$ for all $t>0$, by strict concavity of $\tanh$ and convexity of $\sinh$ (recall that $0<C_0<1$).
Therefore the function $\Psi : t\mapsto C_0t$ yields a map~$f^{\ast}$ with $f^{\ast}([A,\!C])$ above $[A',\!C']$.
We can decrease this function slightly to obtain~$\Psi$ with $\Psi'(t)<C_0$ for all $t>0$ while keeping $f^{\ast}([A,\!C])$ above $[A',\!C']$.

In fact, by the above formulas, we can also ensure $f^{\ast}([A,\!C])=[A',\!C']$ directly, by taking $\Psi=\Psi_{\widehat{A}}$ where
\begin{equation}\label{eqn:exactbisect}
\Psi_{\widehat{A}}(v)=\sigma^{-1}(C_0 \, \sigma (v))~\text{ with }~\sigma(v) = \arcsinh\left(\frac{\tanh v}{\tan\widehat{A}}\right):
\end{equation}
then $\Psi_{\widehat{A}}'<C_0$ (on $\R_+^{\ast}$) easily follows from $C_0<1$ and from the concavity of~$\sigma$.
\end{proof}

Let $\overline{\Gamma}_0$ be the group generated by the orthogonal reflections in the sides of $ABC$ and $\overline{j}$ its natural inclusion in $G=\PGL_2(\R)$.
Let $\overline{\rho}\in\Hom(\overline{\Gamma}_0,G)$ be the representation taking the reflections in $[A,\!B]$, $[B,\!C]$, $[C,\!A]$ to the reflections in $[A',\!B']$, $[B',\!C']$, $[C',\!A']$ respectively; it is well defined (relations are preserved) because $\pi/7$ is a multiple of $\pi/14$.
The group $\overline{\Gamma}_0$ has a finite-index normal subgroup $\Gamma_0$ which is torsion-free and such that $\overline{j}(\Gamma_0)$ and $\overline{\rho}(\Gamma_0)$ are orientation-preserving, \ie with values in $G_0=\PSL_2(\R)$.
Let $j,\rho\in\Hom(\Gamma_0,G)$ be the corresponding representations.
The map~$f$ given by Claim~\ref{claim:exC'neqC} extends, by reflections in the sides of $ABC$, to a $C_0$-Lipschitz, $(j,\rho)$-equivariant map on~$\HH^2$.
Its stretch locus is the $\overline{\Gamma}_0$-orbit of the segment $[A,\!B]$, which is the $1$-skeleton of a tiling of $\HH^2$ by regular $14$-gons meeting $3$ at each vertex.

We claim that $C(j,\rho)=C_0$ and that $f$ is an optimal element of~$\F^{j,\rho}$, in the sense of Definition~\ref{def:relstretchlocus}.
Indeed Lemma~\ref{lem:finiteindex}, applied to $\overline{\Gamma}_0$ and its finite-index subgroup~$\Gamma_0$, shows that there exists an element $\overline{f}\in\F^{j,\rho}$ which is optimal and $(\overline{j},\overline{\rho})$-equivariant.
In particular, if $p\in\HH^2$ is fixed by some $\overline{j}(\overline{\gamma})\in\overline{j}(\overline{\Gamma}_0)$, then $\overline{f}(p)$ is fixed by~$\overline{\rho}(\overline{\gamma})$.
Applying this to the three sides of the triangle $ABC$, we see that $\overline{f}$ sends $A,B,C$ to $A',B',C'$ respectively.
In particular,
$$C(j,\rho) \geq \frac{d(A', B')}{d(A,B)} = C_0.$$
Since $f$ is $C_0$-Lipschitz with stretch locus the $\overline{\Gamma}_0$-orbit of the segment $[A,\!B]$, this shows that $C(j,\rho)=C_0$ and that $E(j,\rho)$ is the stretch locus of~$f$; in other words, $f$ is an optimal element of~$\F^{j,\rho}$.

It is easy to see that no geodesic of $\HH^2$ can spend more than a bounded proportion of its length near the regular trivalent graph $E(j,\rho)$, which implies that $C'(j,\rho)<C(j,\rho)$.

\subsection{A pair $(j,\rho)$ with $C'(j,\rho)<C(j,\rho)<1$ and $j(\Gamma_0)\backslash\HH^n$ noncompact}\label{ex:C'neqCnoncompact}

Let $\Gamma_0$ be a free group on two generators and $j\in\Hom(\Gamma_0,G)$ the holonomy representation of a hyperbolic three-holed sphere~$S$ with three funnels.
Let $\mathcal{G}$ be a geodesic trivalent graph on~$S$, with two vertices $v,w$ and three edges, such that the natural symmetry of~$S$ switches $v$ and~$w$ and preserves each edge.
Let $\ell_1,\ell_2,\ell_3>0$ be the lengths of the three edges and $\theta_1,\theta_2,\theta_3 \in (0,\pi)$ the angles between consecutive edges at both vertices, so that $\theta_1+\theta_2+\theta_3=2\pi$.
The preimage of~$\mathcal{G}$ in~$\HH^2$ is an embedded trivalent tree~$T$.
For any $C_0\in (0,1)$, there exists an \emph{immersed} trivalent tree~$T'$ with the same combinatorics as~$T$, with all (oriented) angles between adjacent edges of~$T'$ the same as in~$T$, but with all edges of length~$\ell_i$ in~$T$ replaced by edges of length~$C_0\ell_i$ in~$T'$.
The natural map $\varphi : T\rightarrow T'$, multiplying all lengths along edges by $C_0$, is $(j, \rho)$-equivariant for a unique $\rho\in\Hom(\Gamma_0,G)$.
If $C_0<1$ is large enough, then the immersed tree~$T'$ is in fact embedded, and $\rho$ is convex cocompact.

\begin{claim}\label{claim:exC'neqCnoncompact}
Suppose $\ell_1, \ell_2, \ell_3$ are large enough so that the bisecting rays at the vertices of~$T$ meet either outside of the preimage $N\subset\HH^2$ of the convex core of~$S$, or not at all.
Then the map~$\varphi$ extends to a $(j,\rho)$-equivariant map $f : \HH^2\rightarrow\HH^2$ with Lipschitz constant~$C_0$ and stretch locus~$T$.
\end{claim}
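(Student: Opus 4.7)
The strategy is to decompose the preimage $N\subset \HH^2$ of the convex core into cells indexed by pairs (edge, side) of $T$, to define on each cell an explicit Fermi-coordinate map extending $\varphi$, and then to extend to all of $\HH^2$ by composing with the closest-point projection onto $N$. At each vertex of $T$, the three bisecting rays of the angles $\theta_1,\theta_2,\theta_3$ divide a neighborhood into three sectors, each containing one of the three edges at that vertex. Together with the edges themselves, this cuts $N$ into cells, one per pair (edge, side): the cell associated to an edge $e=[v,w]$ and a chosen side is bounded by $e$, the bisecting rays at $v$ and $w$ on that side, and (when present) an arc of $\partial N$. The hypothesis on bisecting rays ensures that these cells are pairwise disjoint in $N$ and cover $N$. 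Moreover, the $\Z/2$ symmetry switching $v\leftrightarrow w$ while preserving each edge forces the bisecting rays at both endpoints of $e$ on any given side to make the \emph{same} angle $\theta/2$ with $e$, where $\theta$ is the angle between $e$ and the adjacent edge on that side.

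On each cell I would use Fermi coordinates $(h,\eta)$ based at the geodesic line containing $e$ (with $h=0$ at $v$ and $\eta>0$ on the chosen side), and define
$$f_e \,:\, (h,\eta)\,\longmapsto\, (C_0 h,\, \Psi_{\theta/2}(\eta)),$$
where the target is expressed in analogous Fermi coordinates based at $\varphi(e)\subset T'$ and $\Psi_{\theta/2}$ is the function of equation~(\ref{eqn:exactbisect}). The hyperbolic metric reads $d\eta^2+\cosh^2(\eta)\,dh^2$ in these coordinates, so the pullback by $f_e$ is $\Psi'_{\theta/2}(\eta)^2\,d\eta^2+C_0^2\cosh^2(\Psi_{\theta/2}(\eta))\,dh^2$, which is dominated by $C_0^2\,ds^2$ thanks to the bounds $\Psi'_{\theta/2}\le C_0$ and $\Psi_{\theta/2}(\eta)\le \eta$ (both strict for $\eta>0$, since $C_0<1$ and $\sigma_{\theta/2}$ is increasing and concave). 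Hence $f_e$ is $C_0$-Lipschitz on the cell, with local Lipschitz constant strictly less than $C_0$ off $e$. By the defining property of $\Psi_{\theta/2}$, the bisecting ray at $v$ on the chosen side (the half-line $\{h=\sigma_{\theta/2}(\eta)\}$) is sent onto the bisecting ray at $\varphi(v)\in T'$ at angle $\theta/2$ with $\varphi(e)$; the symmetric computation at $w$ handles the other bisecting ray, and is valid precisely because the endpoint angles are both equal to $\theta/2$.

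To glue the cell maps, I would consider two adjacent cells sharing a bisecting ray at a vertex $v$ between edges $e$ and $e'$. For a point at hyperbolic distance $t$ from $v$ on the shared ray, the Fermi coordinates $(\sigma_{\theta/2}(\eta_t),\eta_t)$ with respect to either $e$ or $e'$ coincide (the bisector is equidistant from $e$ and $e'$ by definition), so both $f_e$ and $f_{e'}$ send the point to the same target point, at distance $\arccosh\bigl(\cosh(C_0\sigma_{\theta/2}(\eta_t))\cosh(\Psi_{\theta/2}(\eta_t))\bigr)$ from $\varphi(v)$ on the target bisecting ray (which is the common image of the shared ray under both maps). Gluing all cell maps yields a continuous, $C_0$-Lipschitz map $f_0:N\to\HH^2$ extending $\varphi$; its $(j,\rho)$-equivariance is automatic from the canonical, purely angle-intrinsic nature of the construction and the $j(\Gamma_0)$-invariance of $T$. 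Finally I would set $f:=f_0\circ\pi_N$ where $\pi_N:\HH^2\to N$ is the $1$-Lipschitz, $(j,j)$-equivariant closest-point projection onto $N$: this $f$ is $C_0$-Lipschitz and $(j,\rho)$-equivariant, and its stretch locus is exactly $T$ because $\pi_N$ is strictly contracting off $N$, $f_0$ has local Lipschitz constant $<C_0$ off $T$ inside $N$, and on $T$ the map $\varphi$ multiplies arclength by $C_0$.

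The main obstacle is the consistent gluing of cell maps along bisecting rays, where the $\Z/2$ symmetry of $S$ is essential: it forces the two endpoint angles of each cell to be equal to a common value $\theta/2$, so that a single function $\Psi_{\theta/2}$ matches the desired target bisectors at both ends simultaneously. Without this symmetry one would need to interpolate between distinct $\Psi_{\theta_v/2}$ and $\Psi_{\theta_w/2}$ along $e$, which would require extra care to preserve the pointwise Lipschitz bound $C_0$.
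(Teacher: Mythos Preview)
Your proof is correct and follows essentially the same approach as the paper: decompose $N$ into quadrilateral cells (one per edge-side pair), define $f$ on each cell via the Fermi-coordinate map $(h,\eta)\mapsto (C_0 h,\Psi_{\theta_i/2}(\eta))$, observe that the $\Z/2$ symmetry of~$S$ forces the two endpoint half-angles of each cell to coincide so that a single $\Psi_{\theta_i/2}$ handles both bisecting boundaries, and glue. You supply more explicit detail than the paper does---the pullback-metric computation for the pointwise Lipschitz bound and the distance-from-$\varphi(v)$ formula verifying that the two cell maps agree on a shared bisecting ray---whereas the paper simply refers back to Claim~\ref{claim:exC'neqC} for the Lipschitz estimate and asserts the well-definedness on bisectors without the explicit check.
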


\begin{proof}
Let $e$ be an edge of~$T$ and $e', e''$ two of its neighbors, so that $e',e,e''$ are consecutive edges of some complementary component of~$T$.
By symmetry of the pair of pants $S$, the edge $e$ forms the same angle $\theta_i$ with $e'$ and with $e''$.
Let $\beta'$ and $\beta''$ be the corresponding bisecting rays, issued from the endpoints of~$e$.
Let $Q\subset\HH^2$ be the compact quadrilateral bounded by $e$, $\beta$, $\beta'$, and a segment of the boundary of~$N$.
There is a similarly defined quadrilateral on each side of each edge of~$T$, and their union is~$N$: therefore, it is sufficient to define the map $f$ on~$Q$ in a way that is consistent (along the bisecting rays $\beta', \beta''$) for $Q$ and neighboring quadrilaterals $Q',Q''$ (see Figure~\ref{fig:S}).

\begin{figure}[h!]
\begin{center}
\labellist
\small\hair 2pt
\pinlabel{$e$} at 190 48
\pinlabel{$e'$} at 80 90
\pinlabel{$e''$} at 300 90
\pinlabel{$Q$} at 190 75
\pinlabel{$Q'$} at 110 100
\pinlabel{$Q''$} at 270 100
\pinlabel{$\beta'$} at 141 83
\pinlabel{$\beta''$} at 238 80
\pinlabel{$\partial N$} at 190 100
\pinlabel{$T$} at 30 135
\pinlabel{$\theta_i/2$} at 140 64
\endlabellist
\includegraphics[width=9cm]{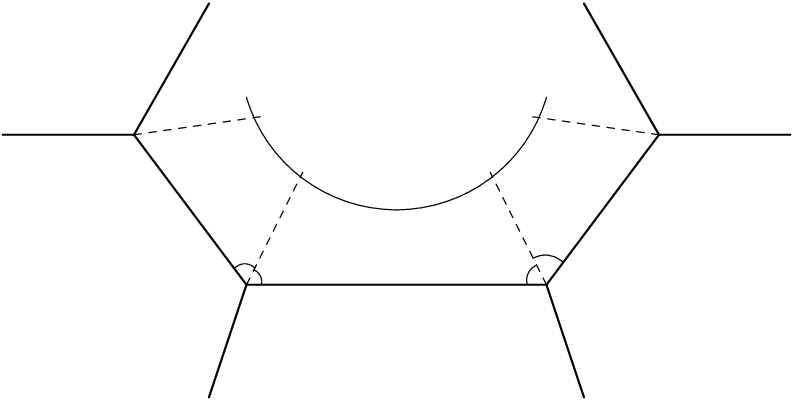}
\caption{Defining a contracting map on the convex hull $N$ of a tree~$T$, one quadrilateral $Q$ at a time.}
\label{fig:S}
\end{center}
\end{figure}

The construction is similar to Claim~\ref{claim:exC'neqC}, whose notation we borrow: let $(h,v):Q\rightarrow \R \times\R_+$ be the Fermi coordinates with respect to the edge~$e$, and $(h',v')$ the Fermi coordinates with respect to $\varphi(e)$.
Define $f|_Q$ by $h'(f(p))=C_0\, h(p)$ and $v'(f(p))=\Psi_{\theta_i/2}(v(p))$ for all $p\in Q$, where $\Psi_{\theta_i/2}$ is given by \eqref{eqn:exactbisect}.
Since the quadrilaterals $Q,Q',Q''$ have all their angles along $T$ equal to $\theta_i/2$, the map $f$ just defined takes the bisecting rays $\beta, \beta'$ to the bisecting rays of the corresponding angles of $T'$, in a well-defined manner.
The proof that $f$ is $C_0$-Lipschitz on~$N$ is the same as in Claim~\ref{claim:exC'neqC}.
\end{proof}

We claim that $C(j,\rho)=C_0$ and that $f$ is an optimal element of~$\F^{j,\rho}$, in the sense of Definition~\ref{def:relstretchlocus}.
Indeed, since $\mathcal{G}$ is invariant under the natural symmetry of~$S$, the group~$\Gamma_0$ is contained, with index two, in a discrete subgroup $\overline{\Gamma}_0$ of $G=\PGL_2(\R)$.
Let $\overline{j}\in\Hom(\overline{\Gamma}_0,G)$ be the natural inclusion and let $\overline{\rho}\in\Hom(\overline{\Gamma}_0,G)$ be the natural extension of~$\rho$. 
All reflections in perpendicular bisectors of edges of $T$ (\resp $T'$) belong to $\overline{j}(\overline{\Gamma}_0)$ (\resp $\overline{\rho}(\overline{\Gamma}_0)$).
Lemma~\ref{lem:finiteindex}, applied to $(\overline{\Gamma}_0,\Gamma_0)$, shows that there exists an element $\overline{f}\in\F^{j,\rho}$ which is optimal and $(\overline{j},\overline{\rho})$-equivariant.
Let us show that $\overline{f}$ agrees with $\varphi$ on~$T$.
Let $v$ be a vertex of~$T$ and let $e_1,e_2,e_3$ be the three incident edges of~$T$, connecting $v$ to $v_1, v_2, v_3$, with perpendicular bisectors $\mathcal{M}_1,\mathcal{M}_2,\mathcal{M}_3$.
For $1\leq i\leq 3$,  by $(\overline{j},\overline{\rho})$-equivariance,
$\overline{f}(v_i)$ is the symmetric image of~$\overline{f}(v)$ with respect to the perpendicular bisector $\mathcal{M}'_i$ of~$\varphi(e_i)$.
In particular, $d(\overline{f}(v),\overline{f}(v_i))=2\,d(\overline{f}(v),\mathcal{M}'_i)$.
Note that the convex function
$$q \,\longmapsto\, \max_{1\leq i\leq 3}\, \frac{d(q,\mathcal{M}'_i)}{d(v,\mathcal{M}_i)}$$
is always $\geq C_0$ on~$\HH^2$, with equality if and only if $q=\varphi(v)$, in which case all three ratios are equal to~$C_0$.
Therefore $C(j,\rho)=\Lip(\overline{f})\geq C_0$ and the constant $C_0$ is achieved, if at all, \emph{only} by maps that agree with $\varphi$ on the vertices of the tree $T$.
Since the map~$f$ of Claim~\ref{claim:exC'neqCnoncompact} is $C_0$-Lipschitz with stretch locus~$T$, this shows that $C(j,\rho)=C_0$ and that $E(j,\rho)=T$; in other words, $f$ is an optimal element of~$\F^{j,\rho}$.

As in Section~\ref{ex:C'neqC}, it is easy to see that no closed geodesic can spend more than a bounded proportion of its length near the trivalent graph~$\mathcal{G}$, which implies $C'(j,\rho)<C(j,\rho)=C_0$.

\begin{remarks}\label{rem:graph}
\begin{itemize}
  \item This construction actually gives an \emph{open} set of pairs $(j,\rho)\in\Hom(\Gamma_0,G)^2$ with $j$ convex cocompact and
  $$C'(j,\rho) < C(j,\rho) < \nolinebreak 1.$$
  Indeed, $\Hom(\Gamma_0,G)^2$ has dimension~$12$ and we have $12$ independent parameters, namely $\ell_1$, $\ell_2$, $\ell_3$, $\theta_1$, $\theta_2$, $C_0$, and a choice of a unit tangent vector in~$\HH^2$ for $T$ and for~$T'$ (\emph{i.e.}\ conjugation of~$j$ and~$\rho$).
  The map from this parameter space to $\Hom(\Gamma_0,G)^2$ is injective since different parameters give rise to different stretch loci; therefore its image is open by Brouwer's invariance of domain theorem.
  \item There is no constraint on $C_0\in (0,1)$: in particular, $\rho$ could be noninjective or nondiscrete.
  \item A similar construction works for any trivalent topological graph that is a retract of a convex cocompact hyperbolic surface.
  The invariance of the geodesic realizations under the natural symmetry of the three-holed sphere is replaced by a minimization property for the sum of weighted edge-lengths of the graph.
\end{itemize}
\end{remarks}

\medskip

All remaining examples show phenomena specific to the presence of cusps.

\subsection{The function $(j,\rho)\mapsto C(j,\rho)$ is not upper semicontinuous when $j(\Gamma_0)$ has parabolic elements}\label{ex:discontinu}

The following example shows that Proposition~\ref{prop:contCcc} fails in the presence of cusps, even if we restrict to $C<1$.
(It certainly fails for larger $C$ since the constant representation $\rho$ can have non-cusp-deteriorating deformations, for which $C\geq 1$.)

Let $\Gamma_0$ be a free group on two generators $\alpha,\beta$ and let $j\in\Hom(\Gamma_0,G)$ be given by
$$j(\alpha) = \begin{pmatrix} 1 & 3\\ 0 & 1\end{pmatrix} \ \text{ and }\ j(\beta) = \begin{pmatrix} 1 & 0\\ -3 & 1\end{pmatrix}.$$
The quotient $j(\Gamma_0)\backslash\HH^2$ is homeomorphic to a sphere with three holes, two of which are cusps (corresponding to the orbits of $0$ and $\infty$ in $\partial_{\infty}\HH^2$).
Let $\rho\in\Hom(\Gamma_0,G)$ be the constant representation, so that $C(j,\rho)=0$.
We shall exhibit a sequence $\rho_k\rightarrow\rho$ with $C(j,\rho_k)<1$ for all~$k$ and $C(j,\rho_k)\rightarrow 1$.

Define $\rho_k(\alpha)$ (\resp $\rho_k(\beta)$) to be the rotation centered at $A_k:=2^k \sqrt{-1}$ (\resp $B_k:=2^{-k} \sqrt{-1}$), of angle $2\pi/(2^k k)$.
Note that a circle of radius $r$ in~$\HH^2$ has circumference $2\pi\sinh(r)$ (see \eqref{eqn:circlearclength}), which is equivalent to $\pi e^r$ as $r\rightarrow +\infty$.
Therefore,
$$d\big(\sqrt{-1},\rho_k(\alpha)\cdot \sqrt{-1}\big) \ \underset{\scriptstyle k\rightarrow +\infty}{\sim}\  \frac{\pi}{k} \quad\underset{\scriptstyle k\rightarrow +\infty}\longrightarrow\quad 0,$$
hence $(\rho_k)_{k\in\N}$ converges to the constant representation~$\rho$.
By construction, $\rho_k(\alpha^{2^{k-1}k})$ is a rotation of angle~$\pi$ centered at $A_k$, and $\rho_k(\beta^{2^{k-1}k})$ a rotation of angle~$\pi$ centered at~$B_k$.
Therefore if $\omega_k=\alpha^{2^{k-1}k} \beta^{2^{k-1}k}$ then $\rho_k(\omega_k)$ is a translation of length $2d(A_k, B_k)=4k\log 2$.
On the other hand, one can compute explicitly $|\text{Tr}\, (j(\omega_k))|= (3\cdot 2^{k-1} k)^2-2$ which shows that $j(\omega_k)$ is a translation of length $4(k\log 2 + \log k)+O(1)$.
It follows that
$$C(j,\rho_k) \geq 1 - \frac{\log k}{k\log 2}+O\Big(\frac{1}{k}\Big),$$
which goes to $1$ as $k\rightarrow +\infty$.
See Figure~\ref{fig:T} for an interpretation of $\rho_k$ as the holonomy of a singular hyperbolic metric.

\begin{figure}[h!]
\begin{center}
\labellist
\small\hair 2pt
\pinlabel{$\HH^2\ \mathrm{mod}\ j(\Gamma_0)$} at 240 200
\pinlabel{$f\mod \Gamma_0$} at 268 75
\pinlabel{$\frac{2\pi}{2^k k}$} at 0 85
\pinlabel{$\frac{2\pi}{2^k k}$} at 480 85
\endlabellist
\includegraphics[width=10cm]{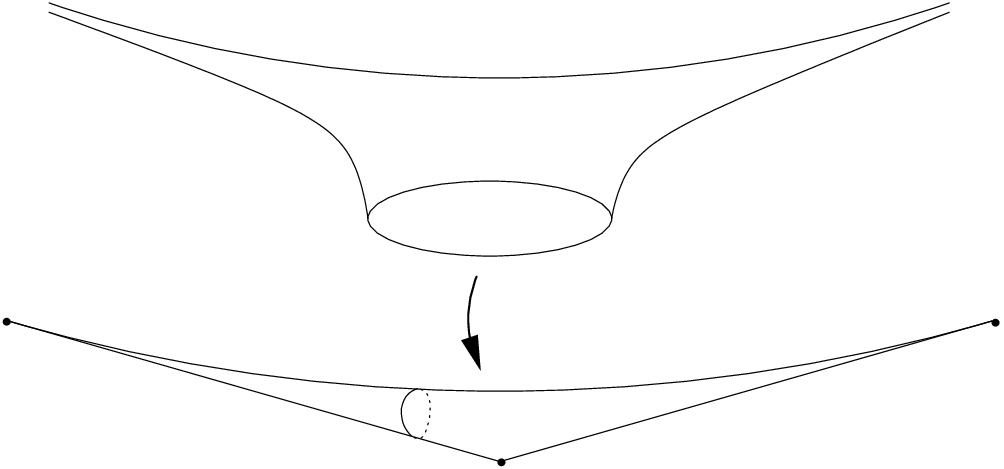}
\caption{The representation $\rho_k$ can be seen as the holonomy of a singular hyperbolic metric on a sphere with three cone points of angle $\frac{2\pi}{2^k k}, \frac{2\pi}{2^k k},$ and close to $2\pi$. The angle at the third cone point determines the distance between the other two, and is adjusted so that no equivariant map $f$ can be better than $(1-o(1))$-Lipschitz, as $k\rightarrow +\infty$.}
\label{fig:T}
\end{center}
\end{figure}

However, we have $C(j,\rho_k)<1$ for all~$k$: otherwise, by Corollary~\ref{cor:Enonempty} and Lemmas \ref{lem:Fnonempty}, \ref{lem:MaxStretchedLam}, and~\ref{lem:1StretchedLam}, the stretch locus $E(j,\rho_k)$ would contain a maximally stretched geodesic lamination $\LL_k$ with compact image $\dot{\LL}_k$ in $j(\Gamma_0)\backslash\HH^2$.
Necessarily any recurrent component of $\dot{\LL}_k$ would be a geodesic boundary component of the convex core (a three-holed sphere carries no other recurrent geodesic laminations!), corresponding to $\alpha\beta\in\Gamma_0$.
Therefore we would have $\lambda(\rho_k(\alpha\beta))=C(j,\rho)\,\lambda(j(\alpha\beta))\geq\lambda(j(\alpha\beta))>0$.
This is impossible since $\rho_k$ tends to the constant representation and $\lambda$ is continuous.

Note that by placing $A_k, B_k$ at $t^{\pm k}\sqrt{-1}$ for different values of $t$ in $(1,2]$ (without changing the rotation angle of $\rho_k(\alpha)$ and $\rho_k(\beta)$), we could also have forced $C(j,\rho_k)$ to converge to any value in $(0,1]$.

\subsection{The function $(j,\rho)\mapsto C(j,\rho)$ is not lower semicontinuous when $j(\Gamma_0)$ has parabolic elements}\label{ex:discontinu2}

Let $\Gamma_0$ be a free group on two generators $\alpha,\beta$ and $j\in\Hom(\Gamma_0,G)$ the holonomy representation of a hyperbolic metric on a once-punctured torus, with $j(\alpha\beta\alpha^{-1}\beta^{-1})$ parabolic.
We assume that $j(\Gamma_0)$ admits an ideal square $Q$ of~$\HH^2$ as a fundamental domain, with the axes of $j(\alpha)$ and $j(\beta)$ crossing the sides of~$Q$ orthogonally.
Fix two points $p,q\in \HH^2$ distance 1 apart.
For each $k\geq 1$, let $r_k\in \HH^2$ be the point at distance $k$ from $p$ and~$q$, so that $pqr_k$ is counterclockwise oriented.
Fix a small number $\delta>0$ and let $\rho_k$ be the representation of $\Gamma_0$ taking $\alpha$ (\resp $\beta$) to the translation of length~$\delta$ along the oriented geodesic line $(p,r_k)$ (\resp $(q,r_k)$) --- see Figure~\ref{fig:U}.

\begin{figure}[h!]
\begin{center}
\labellist
\small\hair 2pt
\pinlabel{$\HH^2$} at 20 110
\pinlabel{$\HH^2$} at 310 150
\pinlabel{$Q$} at 75 130
\pinlabel{$f$} at 225 113
\pinlabel{$j(\alpha)$} at 120 125
\pinlabel{$j(\beta)$} at 85 92
\pinlabel{$Q_k$} at 385 152
\pinlabel{$p$} at 310 80
\pinlabel{$q$} at 345 45
\pinlabel{$r_k$} at 406 140
\pinlabel{$\rho_k(\alpha)$} at 325 113
\pinlabel{$\rho_k(\beta)$} at 375 80
\endlabellist
\includegraphics[width=12cm]{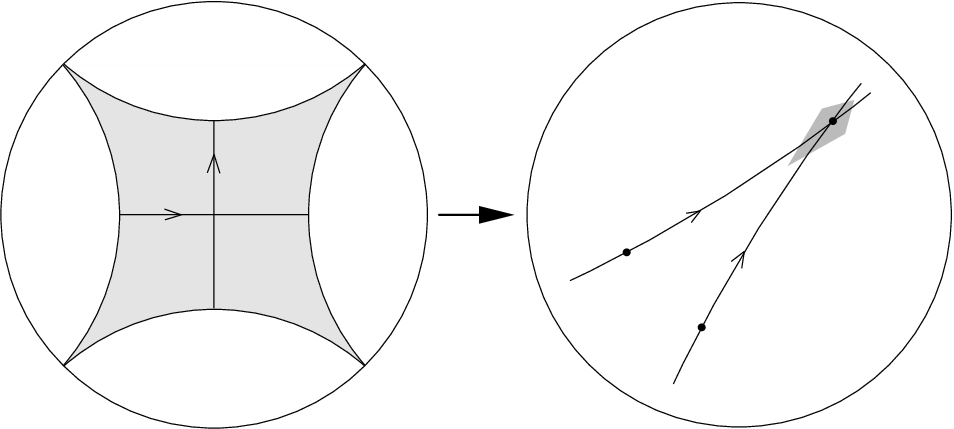}
\caption{If $\lambda(\rho_k(\alpha))$ and $\lambda(\rho_k(\beta))$ are small enough, then $C(j,\rho_k)$ stays small and bounded away from~$1$.}
\label{fig:U}
\end{center}
\end{figure}

As $k\rightarrow +\infty$, the representations $\rho_k$ converge to a representation~$\rho$ fixing exactly one point at infinity (the limit of $(r_k)_{k\geq 1}$), and $\rho(\alpha\beta\alpha^{-1}\beta^{-1})$ is parabolic: hence $C(j,\rho)\geq 1$. However, $C(j,\rho_k)$ is bounded away from~$1$ from above.
To see this, observe that the fixed points of $\rho_k(\alpha\beta\alpha^{-1}\beta^{-1})$, $\rho_k(\beta\alpha^{-1}\beta^{-1}\alpha)$, $\rho_k(\alpha^{-1}\beta^{-1}\alpha\beta)$, $\rho_k(\beta^{-1}\alpha\beta\alpha^{-1})$ are the vertices of a quadrilateral~$Q_k$ with four sides of equal length, centered at~$r_k$, of size roughly $2\delta$.
The maps $\rho_k(\alpha),\rho_k(\beta)$ identify pairs of opposite sides of~$Q_k$.
Taking $\delta$ very small, it is not difficult to construct maps $Q\rightarrow Q_k$ (taking whole neighborhoods of the ideal vertices of $Q$ to the vertices of $Q_k$) that are equivariant with very small Lipschitz constant.

Note however that the inequality $C(j,\rho)\leq \liminf_k C(j_k, \rho_k)$ of lower semicontinuity holds as soon as the Arzel\`a--Ascoli theorem applies for maps $f_k\in \mathcal{F}^{j_k,\rho_k}$, \ie as soon as the sequence $(f_k(p))_{k\geq 1}$ does not escape to infinity in~$\HH^2$: this fails only when $\rho$ fixes exactly one point at infinity.

\subsection{A reductive, non-cusp-deteriorating~$\rho$ with $E(j,\rho)=\emptyset$}\label{ex:nondeteriorating}

Let $S$ be a hyperbolic surface of infinite volume with at least one cusp and $j\in\Hom(\Gamma_0,G)$ its holonomy representation, where $\Gamma_0:=\pi_1(S)$.
Consider a collection of disjoint geodesics $\alpha_1,\dots,\alpha_m$ of~$S$ with both ends going out in the funnels, subdividing the convex core of~$S$ into contractible polygons and polygons with one puncture (cusp).
We apply Thurston's construction from the proof of Remark~\ref{rem:dTh<0}: for each~$\alpha_i$ we consider another geodesic~$\alpha'_i$ very close to, but disjoint from~$\alpha_i$, and construct the holonomy~$\rho$ of a new hyperbolic metric by cutting out the strips bounded by $\alpha_i\cup\alpha'_i$ and gluing back the boundaries, identifying the endpoints of the common perpendicular to $\alpha_i$ and~$\alpha'_i$ (see Figure~\ref{fig:V}).

\begin{figure}[h!]
\begin{center}
\labellist
\small\hair 2pt
\pinlabel{$\alpha_1$} at 285 350
\pinlabel{$\alpha'_1$} at 268 340
\pinlabel{$\alpha_2$} at 285 270
\pinlabel{$\alpha'_2$} at 250 260
\pinlabel{$f\ \mathrm{mod}\ \Gamma_0$} at 90 160
\pinlabel{$j(\Gamma_0)\backslash\HH^2$} at 80 290
\pinlabel{$\rho(\Gamma_0)\backslash\HH^2$} at 270 100
\endlabellist
\includegraphics[width=11cm]{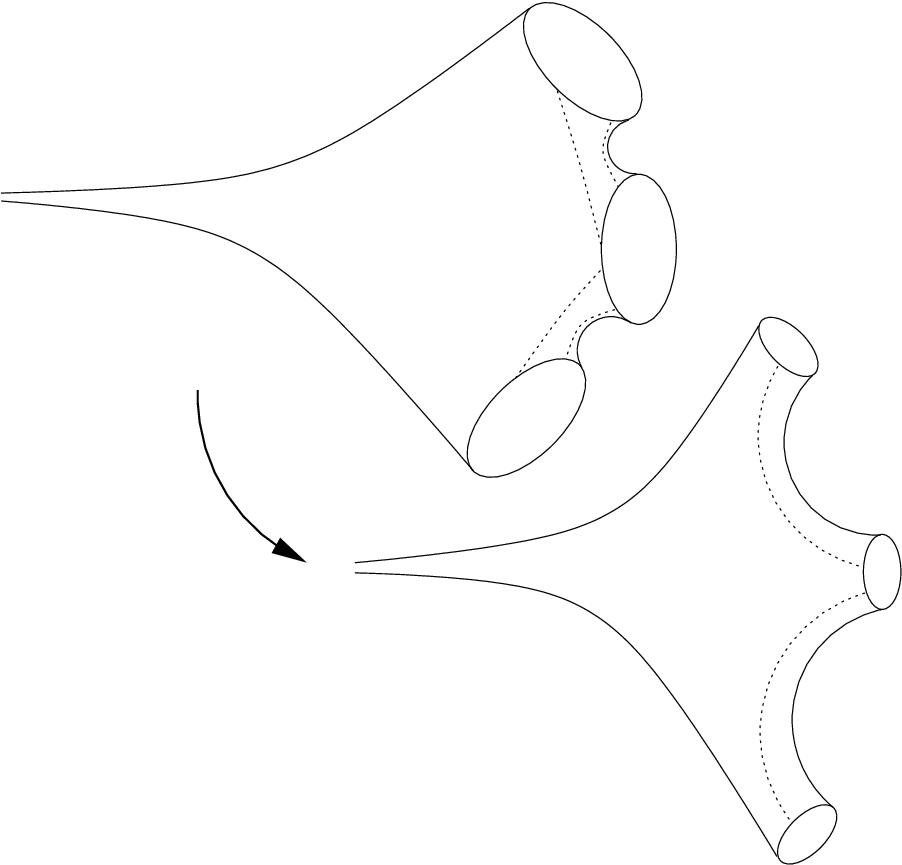}
\caption{In the second surface (with strips removed), simple closed curves are uniformly shorter than in the first.}
\label{fig:V}
\end{center}
\end{figure}

It is easy to check that the $(j,\rho)$-equivariant map $f : \HH^2\rightarrow\HH^2$ defined by this ``cut and paste'' procedure is $1$-Lipschitz, hence $C(j,\rho)\leq 1$.
In fact, $C(j,\rho)=1$ since $\rho$ is not cusp-deteriorating (Lemma~\ref{lem:parabdet}).
However, $E(j,\rho)=\emptyset$: otherwise, \eqref{eqn:Clambdasimple} would imply $C'_s(j,\rho)=1$, where $C'_s(j,\rho)$ is the supremum of $\lambda(\rho(\gamma))/\lambda(j(\gamma))$ over all elements $\gamma\in\Gamma_0$ corresponding to \emph{simple} closed curves $\mathcal{G}$ in~$S$. 
To see that this is impossible, first note that any such $\mathcal{G}$ intersects the arcs $\alpha_i$ nontrivially, yielding $\lambda(\rho(\gamma))<\lambda(j(\gamma))$. 
In fact, $\mathcal{G}$ stays in the complement of some cusp neighborhoods, which is compact: this means that $\mathcal{G}$ intersects the $\alpha_i$ a number of times roughly proportional to the length of $\mathcal{G}$. 
Moreover, each of these intersections is responsible for a definite (additive) drop in length between $\lambda(j(\gamma))$ and $\lambda(\rho(\gamma))$: this simply follows from the fact that $\alpha_i$ is a definite distance away from $\alpha'_i$, and forms with $\mathcal{G}$ an angle which can be bounded away from $0$ (again by compactness: $\alpha_i$ exits the convex core and $\mathcal{G}$ must not).
This implies $C'_s(j,\rho)<1$.
Therefore $E(j,\rho)=\emptyset$.
A similar argument can be found in \cite{pt10}.

(This is an example where $C'_s(j,\rho)<1=C(j,\rho)=C'(j,\rho)$, the last equality coming from Lemma~\ref{lem:ClambdaCLipred}.)

\subsection{A nonreductive, non-cusp-deteriorating~$\rho$ with $C'(j,\rho)<1=C(j, \rho)$ (and $E(j,\rho)=\emptyset$)}\label{ex:CC'nonreductive}

Let $\Gamma_0$ be a free group on two generators $\alpha,\beta$ and $j\in\Hom(\Gamma_0,G)$ the holonomy representation of a hyperbolic three-holed sphere with one cusp and two funnels, such that $j(\alpha)$ is hyperbolic and $j(\beta)$ parabolic.

For any nonreductive $\rho\in\Hom(\Gamma_0,G)$, if $\rho(\alpha)$ and $\rho(\beta)$ are \emph{not} hyperbolic (for instance if $\rho(\Gamma_0)$ is unipotent), then $C'(j,\rho)=0$; if moreover $\rho(\beta)$ is parabolic, then $\rho$ is not cusp-deteriorating and so $C(j,\rho)\geq 1$ by Lemma~\ref{lem:parabdet}, which implies $E(j,\rho)=\emptyset$ and $C(j,\rho)=1$ by Theorem~\ref{thm:lamin}.

Here is another example with $C'(j,\rho)>0$.
Let $\rho\in\Hom(\Gamma_0,G)$ be a nonreductive representation with $\rho(\alpha)$ hyperbolic and $\rho(\beta)$ parabolic; set $\varepsilon:=\lambda(\rho(\alpha))>0$.
There exists $L>0$ with the following property (see \cite[p.\,122]{dop00}, together with Lemma~\ref{lem:disthorosphere}): for any nontrivial cyclically reduced word $\gamma=\alpha^{m_1}\beta^{m_2}\alpha^{m_3}\beta^{m_4}\dots$ in $\Gamma_0$, with $m_2\cdots m_s\neq 0$ where $m_s$ is the last exponent,
$$\lambda(j(\gamma))\geq L\left( \sum_{i\in [1,s] \text{ odd}} |m_i| + \sum_{i\in [1,s] \text{ even}}(1+\log |m_i|) \right).$$
On the other hand, for such a~$\gamma$,
$$\lambda(\rho(\gamma))=\varepsilon \left | \sum_{i\in [1,s] \text{ odd}} m_i \right |,$$
hence $\lambda(\rho(\gamma))/\lambda(j(\gamma))\leq \varepsilon/L$.
This shows that $C'(j,\rho)\leq\varepsilon/L$, which is $<1$ for $\varepsilon$ small enough.
However, since $\rho(\beta)$ is parabolic we have $C(j,\rho)\geq 1$ as above, which implies $E(j,\rho)=\emptyset$ and $C(j,\rho)=1$ by Theorem~\ref{thm:lamin}.

\subsection{In dimension $n\geq 4$, the function $(j,\rho)\mapsto C(j,\rho)$ is not upper semicontinuous even above~$1$}\label{ex:dim4upper}

When $n\geq 4$, the existence of nonunipotent parabolic elements, coming from cusps of rank $<n-2$, destroys certain semicontinuity properties of~$C$.
We first give an example, in dimension $n=4$, where
$$1 \leq C(j,\rho) < \liminf_k C(j_k, \rho_k)$$
for some $(j_k,\rho_k)\rightarrow (j,\rho)$ with $j,j_k$ geometrically finite of the same cusp type, with a cusp of rank~$1$.
This shows that condition~(3) of Proposition~\ref{prop:contpropertiesC} is not satisfied in general for $n\geq 4$.

Identify $\partial_{\infty}\HH^4$ with $\R^3\cup\{\infty\}$ and let $G:=\PO(4,1)$.
For $\xi\in\R^3$, we denote by $P_{\xi}\subset\HH^4$ the copy of~$\HH^3$ bordered by the unit sphere of~$\R^3$ centered at~$\xi$.
Let $\Gamma_0$ be a free group on two generators $\alpha$ and~$\beta$, and let $j\in\Hom(\Gamma_0,G)$ be the representation such that
\begin{itemize}
 \item $j(\alpha)$ is the unipotent isometry of~$\HH^4$ fixing~$\infty$ and acting on~$\R^3$ by translation along the vector $(2\pi,0,0)$;
\item $j(\beta)$ is the pure translation (hyperbolic element) taking $\xi:=(3,0,0)$ to $\infty$, and $\infty$ to $\eta:=(0,0,0)$, and $P_{\xi}$ to $P_{\eta}$.
\end{itemize}
It is a standard argument (sometimes called ``ping pong'') that $j(\alpha)$ and $j(\beta)$ generate a free discrete group in~$G$; the representation $j$ is geometrically finite and the quotient manifold $j(\Gamma_0)\backslash\HH^4$ has one cusp, with stabilizer $\langle\alpha\rangle\subset\Gamma_0$.
Take $\rho=\rho_k=j$, so that $C(j,\rho)=1$.
Choose an integer $p\geq 2$ and, for $k\geq 1$, let $j_k\in\Hom(\Gamma_0,G)$ be the representation such that
\begin{itemize}
  \item $j_k(\alpha)$ is the parabolic element of~$G$ fixing~$\infty$ and acting on~$\R^3$ as the corkscrew motion preserving the line $\ell_k:=\{0\}\times \R\times \{k\}$, with rotation angle $2\pi/k$ around~$\ell_k$ and progression $\sqrt[p]{k}/k$ along~$\ell_k$;
  \item $j_k(\beta)=j(\beta)$.
\end{itemize}
It is an easy exercise to check that $j_k\rightarrow j$ as $k\rightarrow +\infty$.
Moreover, $j_k$ is geometrically finite with the same cusp type as~$j$ for large~$k$, by a standard ping pong argument.
(Note however that fundamental domains for the action of $j_k(\Gamma_0)$ do \emph{not} converge to a fundamental domain for the action of $j(\Gamma_0)$, but to a smaller set.)
The element $\rho_k(\alpha^k \beta)=j(\alpha^k \beta)$ takes $\xi$ to~$\infty$, and $\infty$ to $j(\alpha^k)(\eta)=(2k\pi,0,0)$, and $P_{\xi}$ to $P_{j(\alpha^k)(\eta)}$; by \eqref{eqn:hyperbolicH4},
\begin{equation}\label{eqn:lambdaj(a^kb)}
\lambda\big(j(\alpha^k \beta)\big) \geq 2\log 2\pi k - R
\end{equation}
for some $R>0$ independent of $k$.
On the other hand, $j_k(\alpha^k \beta)$ takes $\xi$ to~$\infty$, and $\infty$ to $j_k(\alpha^k)(\eta)=(0,\sqrt[p]{k},0)$, and $P_{\xi}$ to $P_{j_k(\alpha^k)(\eta)}$, hence
$$\lambda\big(j_k(\alpha^k \beta)\big) \leq 2\log \sqrt[p]{k} + R + 1$$
by \eqref{eqn:hyperbolicH4}.
It follows, by \eqref{eqn:ClambdaCLip}, that 
$$C(j_k,\rho_k)\geq \frac{\lambda(\rho_k(\alpha^k \beta))}{\lambda(j_k(\alpha^k \beta))} \geq \frac{2\log 2\pi k -R}{2\log \sqrt[p]{k}+ R+1}\,,$$
which accumulates only to values $\geq p$ as $k\rightarrow +\infty$.
Since $p$ was arbitrary, we see that $(j',\rho')\mapsto C(j',\rho')$ is not even bounded near $(j,\rho)$.

\subsection{The condition $C(j,\rho)<1$ is not open in dimension $n\geq 4$}\label{ex:dim4C<1}

We finally give an example, in dimension $n=4$, where
$$C(j,\rho) < 1 < \liminf_k C(j_k, \rho_k)$$
for $(j_k,\rho_k)\rightarrow (j,\rho)$ with $j,j_k$ geometrically finite of the same cusp type, with a cusp of rank~$1$, and with $\rho_k$ (and $\rho$) cusp-deteriorating.
This proves that condition~(1) of Proposition~\ref{prop:contpropertiesC} need not be satisfied for $n\geq 4$ when there is a cusp of rank $<n-2$.

Let $\Gamma_0$ be a free group on two generators $\alpha$ and~$\beta$, and let $j$ and~$j_k$ be as in Section~\ref{ex:dim4upper}.
We take a representation $\rho\in\Hom(\Gamma_0,G)$ such that
\begin{itemize}
  \item $\rho(\alpha)=1\in G$,
  \item $\rho(\beta)$ is a pure translation along some line $\ell$ of~$\HH^4$.
\end{itemize}
Since $\rho(\Gamma_0)$ is contained in the stabilizer of~$\ell$, multiplying the translation length of~$\rho(\beta)$ by some constant $\varepsilon>0$ multiplies the translation length of \emph{all} elements $\rho(\gamma)$ by~$\varepsilon$.
Therefore, up to taking $\lambda(\rho(\beta))$ small enough, we may assume $C(j,\rho)<1$.
Up to conjugating $\rho$, we can furthermore assume that there exist $\xi,\eta \in \R^3$ (distance $2\cosh \frac{\lambda(\rho(\beta))}{2}$ apart by \eqref{eqn:rainbow}) such that $\rho(\beta)$ takes $\xi$ to~$\infty$, and $\infty$ to~$\eta$, and $P_{\xi}$ to~$P_{\eta}$.
We still normalize to $\eta=(0,0,0)$ for convenience.
We then take $\rho_k\in\Hom(\Gamma_0,G)$ such that
\begin{itemize}
  \item $\rho_k(\alpha)$ is an elliptic transformation fixing pointwise the hyperbolic $2$-plane bordered by the line $\ell'_k:=\{0\}\times \R\times \{\sqrt{k}\}$ of $\R^3$ (compactified at~$\infty$), and acting as a rotation of angle $\pi/k$ in the orthogonal direction,
  \item $\rho_k(\beta)=\rho(\beta)$.
\end{itemize}
Clearly $\rho_k(\alpha)\rightarrow\rho(\alpha)$ as $k\rightarrow +\infty$, since this holds in restriction to any horosphere centered at~$\infty$ (such a horosphere is stable under~$\rho_k(\alpha)$).
This time, $\rho_k(\alpha^k \beta)$ takes $\xi$ to~$\infty$, and $\infty$ to $\rho_k(\alpha^k)(\eta)=(0,0,2\sqrt{k})$, and $P_{\xi}$ to $P_{\rho_k(\alpha^k)(\eta)}$, hence
$$\lambda\big(\rho_k(\alpha^k \beta)\big) \geq 2\log 2\sqrt{k} -R$$
by \eqref{eqn:hyperbolicH4}.
Using \eqref{eqn:lambdaj(a^kb)}, we obtain
$$C(j_k,\rho_k)\geq \frac{\lambda(\rho_k(a^k b))}{\lambda(j_k(a^k b))} \geq \frac{2\log 2\sqrt{k}-R}{2\log \sqrt[p]{k}+R+1}\,,$$
which accumulates only to values $\geq p/2$ as $k\rightarrow +\infty$.
Since $p$ was arbitrary, we see that $(j',\rho')\mapsto C(j',\rho')$ is not bounded near $(j,\rho)$, even in restriction to cusp-deteriorating~$\rho'$.

\appendix

\section{Some hyperbolic trigonometry}\label{sec:appendix}

We collect a few well-known formulas in hyperbolic trigonometry, from which we derive several formulas used at various places in the paper.

\subsection{Distances in $\HH^2$ and~$\HH^3$}

Let $n=2$ or~$3$.
We use the upper half-space model of~$\HH^n$: if $n=2$, then $\HH^n\simeq\{ z\in\C : \mathrm{Im}(z)>0\} $, the hyperbolic metric is given by
$$\mathrm{d}s^2 = \frac{\mathrm{d}|z|^2}{\mathrm{Im}(z)^2},$$
the isometry group $G$ of~$\HH^n$ identifies with $\PGL_2(\R)$ acting by M\"obius transformations, and $\partial_{\infty}\HH^n\simeq\R\cup\{ \infty\}$.
If $n=3$, then $\HH^n\simeq\C\times\R_+^{\ast}$, the hyperbolic metric is given by
$$\mathrm{d}s^2 = \frac{\mathrm{d}|a|^2 + \mathrm{d}b^2}{b^2}$$
for $(a,b)\in\C\times\R_+^{\ast}$, the identity component $G_0$ of~$G$ identifies with $\PSL_2(\C)$, which acts on the boundary $\partial_{\infty}\HH^n\simeq\C\cup\{ \infty\}$ by M\"obius transformations, and this action extends in a natural way to~$\HH^n$.
The matrix
$$T_{\eta} := \begin{pmatrix} e^{\eta/2} & 0 \\ 0 & e^{-\eta/2} \end{pmatrix} \,\in G_0$$
defines a translation of (complex) length~$\eta$ along the geodesic line with endpoints $0,\infty\in\partial_{\infty}\HH^n$.
Set $p_0:=\sqrt{-1}\in\HH^n$ if $n=2$, and $p_0:=(0,1)\in\HH^n$ if $n=3$.
Then
$$R_{\theta} : = \begin{pmatrix} \cos(\theta/2) & \sin(\theta/2) \\ -\sin(\theta/2) & \cos(\theta/2) \end{pmatrix} \,\in G_0$$
defines a rotation of angle~$\theta$ around~$p_0$ if $n=2$, and around the geodesic line (containing~$p_0$) with endpoints $\pm\sqrt{-1}\in\partial_{\infty}\HH^n$ if $n=3$.
The stabilizer of~$p_0$ in~$G_0$ is $\underline{K}=\mathrm{PSO}(2)$ if $n=2$, and $\underline{K}=\PSU(2)$ if $n=3$.
For any $g=\begin{pmatrix} a & b\\ c & d\end{pmatrix}\in\nolinebreak G_0$,
\begin{equation}\label{eqn:formulamu}
2\cosh d(p_0,g\cdot p_0) = \left\Vert\begin{pmatrix} a & b\\ c & d\end{pmatrix}\right\Vert^2 := |a|^2 + |b|^2 + |c|^2 + |d|^2.
\end{equation}
Indeed this holds for $g=T_{\eta}$ and the right-hand side is invariant under multiplication of~$g$ by elements of~$\underline{K}$ on either side (recall the Cartan decomposition $G_0=\underline{K}\underline{A}\underline{K}$ for $\underline{A}:=\{ T_{\eta}\,|\,\eta\in\R\} $, see Section~\ref{subsec:Cartanproj}).
Suppose $n=2$; applying \eqref{eqn:formulamu} to $g=\begin{pmatrix} v^{1/2} & uv^{-1/2} \\ 0 & v^{-1/2} \end{pmatrix}$, we find in particular that for any $u,v\in\R$ with $v>0$,
\begin{equation}\label{eqn:distinH2}
d\big(\sqrt{-1},u+\sqrt{-1}v) = \arccosh\left(\frac{u^2+v^2+1}{2v}\right).
\end{equation}

\subsubsection{Horospherical distances}

Applying \eqref{eqn:formulamu} to $g=\begin{pmatrix} 1 & L \\ 0 & 1 \end{pmatrix}$, we see that for any points $p,q$ on a common horosphere~$\partial H$, the distance $d(p,q)$ from $p$ to~$q$ in $\HH^n$ and the distance $L=d_{\partial H}(p,q)$ of the shortest path from $p$ to~$q$ contained in the horosphere~$\partial H$ (``horocyclic distance'') satisfy
\begin{equation}\label{eqn:horomu}
d(p,q) = \arccosh\Big(1+\frac{d_{\partial H}(p,q)^2}{2}\Big) = 2\,\arcsinh\Big(\frac{d_{\partial H}(p,q)}{2}\Big).
\end{equation}
Let $t\mapsto p_t$ and $t\mapsto q_t$ be the geodesic rays from~$p$ and $q$ to the center $\xi\in\partial_{\infty}\HH^n$ of the horosphere~$\partial H$, parametrized by arc length.
Then
\begin{equation}\label{eqn:exphorodist}
d_{\partial H_t}(p_t,q_t) = e^{-t}\,d_{\partial H}(p,q)
\end{equation}
for all $t\geq 0$, where $\partial H_t$ is the horocycle through $p_t$ and~$q_t$ centered at~$\xi$.
Using \eqref{eqn:horomu} and the concavity of $\arcsinh$, we find that there exists $D>1$ such that
\begin{equation}\label{eqn:expdistcusps}
e^{-t}\,d(p,q) \leq d(p_t,q_t) \leq D\, e^{-t}\,d(p,q)
\end{equation}
for all $t\geq 0$; moreover, an upper bound on $d(p,q)$ yields one on~$D$.

\subsubsection{Distances in two ideal spikes of~$\HH^2$}

The following situation is considered in the proof of Proposition~\ref{prop:chainrec}.
Let $\zeta_1\neq \zeta_2\neq \zeta_3\neq \zeta_4$ be points of $\partial_{\infty}\HH^2$, not necessarily all distinct.
Let $D_{i-1}$, $D_i$, $D_{i+1}$ be the geodesic lines of~$\HH^2$ running from~$\zeta_1$ to~$\zeta_2$, from~$\zeta_2$ to~$\zeta_3$, and from~$\zeta_3$ to~$\zeta_4$ respectively.
Consider two points $x\in D_{i-1}$ and $x'\in D_i$ on a common horocycle centered at~$\zeta_2$ and let $\xi\geq 0$ be their horocyclic distance.
Similarly, consider two points $y\in D_{i+1}$ and $y'\in D_i$ on a common horocycle centered at~$\zeta_3$ and let $\eta\geq 0$ be their horocyclic distance.
Setting $L:=d(x',y')$, we have
\begin{equation}\label{eqn:twospikes}
d(x,y) = L + \xi^2 + \eta^2 + o(\xi^2+\eta^2)
\end{equation}
as $\xi^2+\eta^2+e^{-L}\rightarrow 0$.
Indeed, by \eqref{eqn:formulamu},
\begin{eqnarray*}
\cosh d(x,y) & = & \frac{1}{2} \left\Vert \begin{pmatrix} 1 & 0\\ \xi & 1\end{pmatrix} T_L \begin{pmatrix} 1 & -\eta\\ 0 & 1\end{pmatrix}\right\Vert^2\\
& = & \cosh L + (\sinh L) \cdot (\xi^2 + \eta^2) (1+o(1))
\end{eqnarray*}
and we conclude using the degree-$1$ Taylor series of $\cosh$ at~$L$.

\subsubsection{Distances in a prism in~$\HH^3$}

The following situation is considered in the proof of Lemma~\ref{lem:Esimple}.
Consider a geodesic segment $I$ of~$\HH^3$, of length $\eta\geq\nolinebreak 0$, together with two oriented geodesic lines $\ell,\ell'$ of~$\HH^3$ meeting $I$ orthogonally at its endpoints, and forming an angle $\theta\in [0,\pi]$ with each other.
Note that $I$ is the shortest geodesic segment between $\ell$ and~$\ell'$; the complex number $L:=\eta+i\theta\in\C$ is called the \emph{complex distance} between $\ell$ and~$\ell'$ and will be expressed in terms of cross-ratios in Section~\ref{subsubsec:complexdistance}.
For now, let us compute the distance between points $p\in\ell$ and $q\in\ell'$, at respective signed distances $s$ and~$t$ from~$I$.
Note that $T'_s:=R_{\pi/2}T_s R_{-\pi/2}\in G_0$ is a translation of length~$s$ along the geodesic line from $-1\in\partial_{\infty}\HH^3$ to $1\in\partial_{\infty}\HH^3$, which intersects the translation axis of~$T_{L}$ (with endpoints $0,\infty\in\partial_{\infty}\HH^3$) perpendicularly at the basepoint $p_0=(0,1)\in\HH^3$.
Define $g:=T'_{-s}T_{L}T'_t$.
Without loss of generality, we may assume that $p=p_0$ and $q=g\cdot p$, and that $I=[T'_{-s}\cdot p_0,T'_{-s}T_{L}\cdot p_0]$.
Using \eqref{eqn:formulamu} and the identities $2|\cosh \frac{L}{2}|^2=\cosh \eta + \cos \theta$ and $2|\sinh \frac{L}{2}|^2=\cosh \eta - \cos \theta$, this gives
\begin{equation}\label{eqn:mixedmu}
 \cosh d(p,q)=\cosh \eta \cosh s \cosh t - \cos \theta \sinh s \sinh t~.
\end{equation}

When $\eta=0$ and $\theta=0$ or $\pi$ we recover the formulas for $\cosh(s\pm t)$.

When $\eta=0$ and $s=t$, we find that points $p,q$ on a circle of radius $s$, forming an angle $\theta$ from the center, are a distance $\sim \theta \sinh(s)$ apart when $\theta$ is small. 
This estimate is needed in the proof of Lemma~\ref{lem:horosph-sph}: approaching the arc of circle $\mathscr{C}$ from $p$ to~$q$ with a union of short geodesic segments, we find in the limit
\begin{equation}\label{eqn:circlearclength}
\mathrm{length}(\mathscr{C}) = \theta \sinh r.
\end{equation}

When $\theta=0$ and $s=t$, we find that points $p,q$ at (signed) distance $s$ from a straight line $\A$ of~$\HH^2$, whose projections to $\A$ are distance $\eta$ apart, satisfy 
\begin{equation}\label{eqn:equidistpoints}
d(p,q) \sim \eta \cosh s
\end{equation}
when $\eta$ is small.
(This situation is considered in the proof of Claim~\ref{claim:exC'neqC}.)

\subsubsection{Line-to-line distances}\label{subsubsec:complexdistance}

For any $\xi_-,\xi_+,\xi'_-,\xi'_+\in\partial_{\infty}\HH^3$, the complex distance $L=\eta+i\theta$ between the oriented lines $(\xi_-,\xi_+)$ and $(\xi'_-,\xi'_+)$ satisfies
\begin{equation} \label{eqn:complexdistance}
\cosh \eta = \frac{1+|c|}{|1-c|} \quad \text{and} \quad \cos \theta = \frac{1-|c|}{|1-c|},
\end{equation}
where $c:=[\xi_-:\xi_+:\xi'_-:\xi'_+]$ is the cross-ratio of $\xi_-,\xi_+,\xi'_-,\xi'_+$, defined so that $[\infty:0:1:\xi]=\xi$.
Indeed, by invariance under the action of~$G_0$, it is sufficient to check \eqref{eqn:complexdistance} when $(\xi_-,\xi_+)=(-1,1)$ and the shortest geodesic segment between $(\xi_-,\xi_+)$ and $(\xi'_-,\xi'_+)$ is contained in $(0,\infty)$.
In this case, $(\xi'_-,\xi'_+)$ is of the form $(-z,z)$ for some $z\in\C^{\ast}$ and we have
$$\left\{ \begin{array}{l}
\cosh \eta = (|z|+|z|^{-1})/2,\\
\cos \theta = \mathrm{Re}(z/|z|),\\
c = (1-z)^2/(1+z)^2;
\end{array}\right.$$
it is then elementary to check that \eqref{eqn:complexdistance} holds.

Now let $n\geq 2$ be arbitrary and consider as above the upper half space model of~$\HH^n$, so that $\partial_{\infty}\HH^n$ identifies with $\R^{n-1}\cup\{\infty\}$.
In Sections \ref{ex:dim4upper} and~\ref{ex:dim4C<1} we use the following consequence of \eqref{eqn:complexdistance}: there exists $R>0$ such that for any $D\geq 2$, any $\xi,\eta\in\R^{n-1}\subset\partial_{\infty}\HH^n$ distance $D$ apart for the Euclidean metric, and any $g\in\mathrm{Isom}(\mathbb{H}^n)$, if $g(\xi)=\infty=g^{-1}(\eta)$ and if $g$ maps the unit hemisphere (geodesic hyperplane) $P_{\xi}$ centered at~$\xi$ to the unit hemisphere $P_{\eta}$ centered at~$\eta$, then $g$ is hyperbolic and its translation length $\lambda(g)$ satisfies
\begin{equation}\label{eqn:hyperbolicH4}
|\lambda(g) - 2\log D| \leq R.
\end{equation}
Indeed, by \eqref{eqn:complexdistance} the hyperbolic distance between the closest points of $P_{\xi}$ and~$P_{\eta}$ is
\begin{equation}
\label{eqn:rainbow}
2\,\arccosh(D/2),
\end{equation}
and the intersection point of $P_{\xi}$ with the line $(\xi,\infty)$ is at hyperbolic distance $2\,\arcsinh(D/2)$ from $P_{\eta}\cap(\eta,\infty)$ by \eqref{eqn:horomu}.
The translation length $\lambda(g)$ is bounded in-between these two values, which are both $2\log D + O(1)$.

\subsection{Relations in a right-angled hyperbolic triangle}

Consider a triangle $ABC$ in~$\HH^2$ with angles $\widehat{A},\widehat{B},\widehat{C}$ and opposite edge lengths $a,b,c$.
Suppose $\widehat{B}=\pi/2$.
Then
\begin{equation}\label{eqn:trigo}
\tan\widehat{A} = \frac{\tanh a}{\sinh c}, \quad \cos\widehat{A} = \frac{\tanh c}{\tanh b}, \quad\text{ and }\ \sin\widehat{A} = \frac{\sinh a}{\sinh b}.
\end{equation}
Indeed, let $(\alpha,\beta,\gamma):=(e^{a/2},e^{b/2},e^{c/2})$ and $(X,Y):=(\cos \frac{\widehat{A}}{2}, \sin \frac{\widehat{A}}{2})$: following the perimeter of the triangle in the order $C,A,B,C$ shows that
$$
T_{-b}\,R_{\widehat{A}}\,T_c\,R_{-\pi/2}\,T_a = \begin{pmatrix} X\frac{\alpha \gamma}{\beta} + Y \frac{\alpha}{\beta\gamma}& -X\frac{\gamma}{\alpha\beta}+Y\frac{1}{\alpha\beta\gamma}\\ X\frac{\alpha\beta}{\gamma}-Y\frac{\alpha\beta\gamma}{1}& X\frac{\beta}{\alpha \gamma} + Y \frac{\beta\gamma}{\alpha} \end{pmatrix}
$$
must be (projectively) a rotation matrix, namely $R_{-\widehat{C}}$.
After multiplying all entries by $\alpha\beta\gamma$, this means
$$\alpha^2(X\gamma^2+Y)=\beta^2(X+Y\gamma^2) ~\text{ and }~ \alpha^2\beta^2 (X-Y\gamma^2)=X\gamma^2-Y.$$
It follows that
\begin{eqnarray*}
\tanh a &=& \frac{\alpha^2-\alpha^{-2}}{\alpha^2+\alpha^{-2}} = \frac{\beta^2 \frac{X+Y\gamma^2}{X\gamma^2+Y}-\beta^2 \frac{X-Y\gamma^2}{X\gamma^2-Y}}{\beta^2 \frac{X+Y\gamma^2}{X\gamma^2+Y}+\beta^2 \frac{X-Y\gamma^2}{X\gamma^2-Y}} \\
&=& \frac{\gamma^2-\gamma^{-2}}{2} \, \frac{2XY}{X^2-Y^2}  = \sinh c \tan\widehat{A} 
\end{eqnarray*}
and
\begin{eqnarray*}
\tanh b &=& \frac{\beta^2-\beta^{-2}}{\beta^2+\beta^{-2}} = \frac{\alpha^2 \frac{X\gamma^2+Y}{X+Y\gamma^2}-\alpha^2 \frac{X-Y\gamma^2}{X\gamma^2-Y}}{\alpha^2 \frac{X\gamma^2+Y}{X+Y\gamma^2}+\alpha^2 \frac{X-Y\gamma^2}{X\gamma^2-Y}} \\
&=& \frac{\gamma^2-\gamma^{-2}}{\gamma^2+\gamma^{-2}} \, \frac{X^2+Y^2}{X^2-Y^2} = \frac{\tanh c}{\cos\widehat{A}}.
\end{eqnarray*}
The last identity in \eqref{eqn:trigo} follows from the first two and from the Pythago\-rean identity $\cosh b=\cosh a\cosh c$, which is just \eqref{eqn:mixedmu} for $(\eta,\theta)=(0,\pi/2)$.

As a consequence of the last identity in \eqref{eqn:trigo}, if $x,y$ are two points on a circle of radius~$r$ in~$\HH^2$, forming an angle~$\theta$ from the center, then
\begin{equation}\label{eqn:circle}
\sin\frac{\theta}{2} = \frac{\sinh(d(x,y)/2)}{\sinh r}.
\end{equation}

\subsection{The closing lemma}

Finally, we recall the following classical statement; see \cite[Th.\,4.5.15]{bbs85} for a proof.

\begin{lemma}\label{lem:closinglemma}
For any $\delta>0$ and $D>0$, there exists $\varepsilon>0$ with the following property: given any broken line $\mathcal{L}=p_0\cdots p_{k+1}$ in~$\HH^n$, if $d(p_i, p_{i+1})\geq D$ for all $1\leq i<k$ and if the angle $\widehat{p_{i-1}p_ip_{i+1}}$ is $\geq\pi-\varepsilon$ for all $1\leq i\leq k$, then $\mathcal{L}$ stays within distance~$\delta$ from the segment $[p_0, p_{k+1}]$, and has total length at most $d(p_0,p_{k+1})+k\delta$.
Moreover, when $\delta$ is fixed, $\varepsilon=\delta$ will do for all large enough~$D$.
\end{lemma}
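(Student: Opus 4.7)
The plan is to prove the lemma by induction on the number $k$ of interior vertices $p_1,\dots,p_k$, based on a sharp one-bend trigonometric estimate combined with an inner induction that controls the cumulative angular drift along the broken line.

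The base case $k=1$ reduces to planar hyperbolic trigonometry, since three points of $\HH^n$ always lie in a common copy of $\HH^2$. For a triangle $p_0p_1p_2$ with angle $\pi-\varepsilon$ at $p_1$ and side lengths $a,b$, the hyperbolic law of cosines \eqref{eqn:mixedmu}, combined with the identity $\cosh(a+b)-\cosh c = 2\sinh\tfrac{a+b+c}{2}\sinh\tfrac{a+b-c}{2}$, gives
\[
a+b-d(p_0,p_2) \,\leq\, \frac{(1-\cos\varepsilon)\sinh a\sinh b}{\sinh\frac{a+b+d(p_0,p_2)}{2}} \,=\, O(\varepsilon^2),
\]
while dropping a perpendicular from $p_1$ to $[p_0,p_2]$ and applying \eqref{eqn:trigo} to the two right-angled sub-triangles bounds $d(p_1,[p_0,p_2])$ by $O(\varepsilon)$. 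Both quantities are $\leq\delta$ for $\varepsilon$ small enough, uniformly in $a,b>0$.

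For the inductive step I would exploit the telescoping identity
\[
\sum_{i=0}^{k} d(p_i,p_{i+1}) - d(p_0,p_{k+1}) \,=\, \sum_{j=1}^{k}\Delta_j, \quad\text{with}\quad \Delta_j := d(p_0,p_j)+d(p_j,p_{j+1})-d(p_0,p_{j+1}),
\]
reducing the length inequality to showing $\Delta_j\leq\delta$ for each $j$. By the base case, $\Delta_j\leq\delta$ follows once the angle $\widehat{p_0p_jp_{j+1}}$ at $p_j$ is close enough to $\pi$; this angle differs from the hypothesized $\widehat{p_{j-1}p_jp_{j+1}}\geq\pi-\varepsilon$ by the deviation $\theta_j := \widehat{p_0p_jp_{j-1}}$. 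The main obstacle is to show that $\theta_j$ stays small \emph{uniformly in $j$}, which is exactly where the long-segment condition $d(p_{j-1},p_j)\geq D$ plays its role: applying the hyperbolic law of sines in the triangle $p_0p_{j-1}p_j$ gives the recursion
\[
\sin\theta_j \,\leq\, \sin(\varepsilon+\theta_{j-1})\cdot\frac{\sinh d(p_0,p_{j-1})}{\sinh d(p_0,p_j)}\,,
\]
and the length bound built up so far yields $d(p_0,p_j)-d(p_0,p_{j-1})\geq d(p_{j-1},p_j)-\delta\geq D-\delta$, so the ratio is at most $2e^{-D}$. Starting from $\theta_1=0$ and handling the first step $j=2$ (where $d(p_0,p_1)$ may be short) directly by comparing $\sinh d(p_0,p_1)$ with $\sinh d(p_0,p_2)\geq\sinh(D-d(p_0,p_1))$, the resulting linear recursion gives the uniform bound $\theta_j\leq\varepsilon\,e^{-D}/(1-e^{-D})$, independent of $j$ and $k$, which closes the main induction for the length bound.

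The displacement bound is obtained by a second induction on $k$: the base-case perpendicular estimate places $p_k$ within $\delta$ of $[p_0,p_{k+1}]$; the thin-triangle property of $\HH^n$ then keeps $[p_0,p_k]$ in a uniformly bounded neighborhood of $[p_0,p_{k+1}]$ (depending only on $\delta$, not on $k$, by Gromov hyperbolicity); combining with the inductive bound that the sub-broken-line $p_0\cdots p_k$ lies within $\delta$ of $[p_0,p_k]$ yields the bound at every vertex, and hence at every point of $\mathcal{L}$ by affinity within each geodesic segment (after adjusting the initial $\delta$-budget to absorb the thin-triangle overhead). The ``moreover'' statement that $\varepsilon=\delta$ suffices for $D$ large follows by tracking the explicit dependence on $D$ of both the base-case $O(\varepsilon)$ estimate and the geometric factor $e^{-D}/(1-e^{-D})$, which can be made arbitrarily small fractions of $\delta$ simultaneously.
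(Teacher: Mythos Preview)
The paper does not actually prove this lemma: it is stated as a classical fact with a reference to \cite[Th.\,4.5.15]{bbs85}. So there is no argument in the paper to compare against, only the question of whether your outline is sound.

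Your length bound is essentially correct. The telescoping via $\Delta_j$ works, and the law-of-sines recursion does give the uniform control $\theta_j=O(\varepsilon)$, provided you run the induction on $j$ \emph{simultaneously} for $\theta_j$ and $\Delta_{j-1}$ (each feeds into the other through the distance increment $d(p_0,p_j)-d(p_0,p_{j-1})\geq D-\Delta_{j-1}$); this circularity is harmless since the contraction factor $e^{-(D-\delta)}$ keeps the recursion stable. One small point: you need $D>\delta$ for the contraction to be genuine, but this is free since the lemma for $(\delta',D)$ with $\delta'<D$ implies it for any $\delta\geq\delta'$.

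Your displacement argument, however, has a real gap. Your induction on $k$ runs: (i) $p_k$ lies within $h=O(\varepsilon)$ of $[p_0,p_{k+1}]$; (ii) hence $[p_0,p_k]$ lies within $h$ of $[p_0,p_{k+1}]$ by CAT(0) convexity; (iii) by the inductive hypothesis $p_0\cdots p_k$ lies within $\delta$ of $[p_0,p_k]$; so (iv) $p_0\cdots p_{k+1}$ lies within $\delta+h$ of $[p_0,p_{k+1}]$. But $\delta+h>\delta$, so the induction does not close: iterating produces an error growing like $kh$, and no fixed ``$\delta$-budget'' or thin-triangle constant can absorb an unbounded sum. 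The fix is to drop the induction entirely and run your $\theta_j$-recursion from \emph{both} endpoints, setting $\theta'_j:=\widehat{p_{k+1}\,p_j\,p_{j+1}}$ and bounding it by the same law-of-sines argument with distances measured to $p_{k+1}$. Then the angle $\widehat{p_0\,p_j\,p_{k+1}}\geq\widehat{p_{j-1}\,p_j\,p_{j+1}}-\theta_j-\theta'_j\geq\pi-O(\varepsilon)$ for every interior vertex~$p_j$, and your one-bend perpendicular estimate applied to the triangle $p_0p_jp_{k+1}$ yields $d(p_j,[p_0,p_{k+1}])=O(\varepsilon)$ directly, with no accumulation in~$k$.
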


Taking limits as $k\rightarrow +\infty$, this implies in particular that for a broken line $(p_i)_{i\in\Z}$ invariant under a hyperbolic element $g\in G$ taking each $p_i$ to~$p_{i+m}$, under the same assumptions on length and angle we have
$$\Big|\lambda(g) - \sum_{i=1}^m d(p_i,p_{i+1})\Big| \leq m\delta.$$

\section{Converging fundamental domains}\label{sec:conv-fundamental}

Let $\Gamma_0$ be a discrete group.
It is well known that, in any dimension $n\geq 2$, the set of convex cocompact representations of $\Gamma_0$ into $G=\PO(n,1)=\mathrm{Isom}(\HH^n)$ is open in $\Hom(\Gamma_0,G)$ (see \cite[Prop.\,4.1]{bow98} for instance).
The set of geometrically finite representations is open in the set of representations $\Gamma_0\rightarrow G$ of fixed cusp type if $n\leq 3$ \cite{mar74}, or if all cusps have rank $\geq n-2$ \cite[Prop.~1.8]{bow98}, but not in general for $n\geq 4$ \cite[\S\,5]{bow98}.

In Sections \ref{subsubsec:uppersemicont} and~\ref{subsec:C<1open} of the paper, where we examine the continuity properties of the function $(j,\rho)\mapsto C(j,\rho)$, we need, not only this openness, but also a control on fundamental domains in~$\HH^n$ for converging sequences of geometrically finite representations.
Propositions \ref{prop:rem-ccopen} and~\ref{prop:rem-gfopen} below are certainly well known to experts, but we could not find a proof in the literature.
Note that they easily imply the Hausdorff convergence of the limit sets, but are a priori slightly stronger.

\subsection{The convex cocompact case}

\begin{proposition}\label{prop:rem-ccopen}
Let $\Gamma_0$ be a discrete group and $(j_k)_{k\in\N^{\ast}}$ a sequence of elements of $\Hom(\Gamma_0,G)$ converging to a convex cocompact representation  $j\in\Hom(\Gamma_0,G)$.
Then for any large enough $k\in\N^{\ast}$ the representation $j_k$ is convex cocompact.
Moreover, there exists a compact set $\mathcal{C}\subset\HH^n$ that contains fundamental domains of the convex cores of $j(\Gamma_0)\backslash\HH^n$ and $j_k(\Gamma_0)\backslash\HH^n$ for all large enough $k\in\N^{\ast}$.
If $\Gamma_0$ is torsion-free, then the injectivity radius of $j_k(\Gamma_0)\backslash \HH^n$ is bounded away from~$0$ as $k\rightarrow +\infty$.
\end{proposition}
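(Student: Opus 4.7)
The proof I would give rests on the characterization of convex cocompactness in terms of quasi-isometric embeddings of orbit maps, and on the stability of this property under small perturbations. Fix $p_0\in\HH^n$ and a finite symmetric generating set $S$ of the (finitely generated) group~$\Gamma_0$. Since $j$ is convex cocompact, there exist $\lambda,C>0$ such that for all $\gamma\in\Gamma_0$,
$$\lambda^{-1}|\gamma|_S - C \leq d(p_0, j(\gamma)\cdot p_0) \leq \lambda\,|\gamma|_S + C,$$
where $|\gamma|_S$ denotes the word length with respect to~$S$.

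The central step is to establish a uniform version of this double inequality for~$j_k$ when $k$ is large. The upper bound is immediate from the convergence $j_k(s)\to j(s)$ for the finitely many $s\in S$. For the lower bound, I would invoke the local-to-global principle for quasi-geodesics in the Gromov-hyperbolic space~$\HH^n$: there exists $L=L(\lambda,C)$ such that any discrete path that behaves as a $(2\lambda,2C)$-quasi-geodesic on all subpaths of length~$\leq L$ is globally a quasi-geodesic with controlled constants (see, e.g., \cite[Ch.\,III.H]{bh99}). Applied to the orbit maps of the~$j_k$, whose restrictions to words of length~$\leq L$ converge uniformly to those of~$j$ as $k\to+\infty$, this yields constants $\lambda'',C''$ independent of~$k$ such that for all $\gamma\in\Gamma_0$ and all sufficiently large~$k$,
$$(\lambda'')^{-1}|\gamma|_S - C'' \leq d(p_0, j_k(\gamma)\cdot p_0) \leq \lambda''\,|\gamma|_S + C''.$$

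From this uniform quasi-isometric embedding property, standard arguments show that $j_k$ is discrete, faithful, and convex cocompact for large~$k$: the limit set $\Lambda_{j_k(\Gamma_0)}$ is the image of the Gromov boundary $\partial\Gamma_0$ under a continuous boundary extension of the orbit map, and $j_k(\Gamma_0)$ acts cocompactly on the convex hull of~$\Lambda_{j_k(\Gamma_0)}$. Moreover, the lower bound shows that every point of this convex hull lies within distance $R=R(\lambda'',C'')$ of some orbit point $j_k(\gamma)\cdot p_0$, so a Dirichlet-type fundamental domain for the convex core action can be chosen inside $\mathcal{C}:=B_{p_0}(R+\lambda''+C'')$, uniformly in~$k$. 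The homeomorphism type of the quotient is locally constant in~$j_k$ by the Ehresmann--Thurston principle applied to the compact convex core, viewed as a $(G,\HH^n)$-structured orbifold with boundary.

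For the injectivity radius statement when $\Gamma_0$ is torsion-free, I would combine the uniform quasi-isometry with continuity of translation lengths: for $\gamma\in\Gamma_0$ with $|\gamma|_S$ exceeding some $k$-independent threshold, $j_k(\gamma)$ moves every point of~$\mathcal{C}$ by at least any prescribed amount, so short loops based in $j_k(\Gamma_0)\backslash\HH^n$ correspond to a finite set of elements $\gamma\neq 1$ (independent of~$k$); for each of these, torsion-freeness of~$\Gamma_0$ and convex cocompactness of~$j$ give $\lambda(j(\gamma))>0$, and continuity of~$\lambda$ gives a uniform lower bound on $\lambda(j_k(\gamma))$. The main obstacle is the uniform lower bound on the orbit spread for~$j_k$ (the second displayed inequality), where the Gromov-hyperbolicity of~$\Gamma_0$ and the local-to-global principle for quasi-geodesics do the essential work; once this is in hand, the remaining assertions are deductions from classical geometric group theory and the Ehresmann--Thurston principle.
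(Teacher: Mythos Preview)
Your argument is correct and takes a genuinely different route from the paper's. The paper builds an explicit $j(\Gamma_0)$-invariant geodesic triangulation $\Delta$ of a convex neighborhood of the convex core, using the hyperbolic Delaunay cellulation with respect to a suitably dense equivariant point set; it then observes that the finitely many simplex orbits deform continuously under $j\mapsto j_k$, so that for $k$ large the perturbed simplices still triangulate a (locally, hence globally) convex region containing the new convex core. The homeomorphism of quotients and the uniform fundamental domain fall out directly from the combinatorial identity of the two triangulations, and the injectivity-radius bound is read off from the stars of vertices.

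Your approach via the local-to-global principle for quasi-geodesics is shorter and more conceptual once that machinery is granted, and it makes transparent why the constants are uniform in~$k$. Two small points deserve care: the convex core is not in general a smooth manifold with boundary, so invoking Ehresmann--Thurston for it directly is delicate---you should rather apply it to a smooth compact core (a compact $C^\infty$ submanifold with boundary that is a deformation retract of the quotient); and for the injectivity-radius step you implicitly use that translation axes of $j_k(\gamma)$ stay within bounded distance of~$p_0$, which follows from your uniform quasi-isometry plus the Morse lemma but should be said. The main trade-off is extensibility: the paper's triangulation method is set up precisely so that it carries over, with ideal vertices, to the geometrically finite case treated immediately afterwards (Proposition~\ref{prop:rem-gfopen}), whereas your quasi-isometric embedding argument breaks down in the presence of parabolics, since the orbit map is then no longer a quasi-isometric embedding.
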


Proposition \ref{prop:rem-ccopen} for torsion-free $\Gamma_0$ implies the general case, due to the Selberg lemma \cite[Lem.\,8]{sel60}. 
We henceforth assume $\Gamma_0$ to be torsion-free.

\begin{proof}
We build fundamental domains as finite unions of simplices coming from $j(\Gamma_0)$-invariant triangulations: the main step is the following.

\begin{claim}\label{cla:ccopen}
There exists a $j(\Gamma_0)$-invariant geodesic triangulation $\Delta$ of a nonempty convex subset of $\HH^n$ which is finite modulo $j(\Gamma_0)$ and induces dihedral angles $<\pi$ on the boundary.
\end{claim}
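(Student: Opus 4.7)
The plan is to construct a $j(\Gamma_0)$-invariant polyhedral decomposition of a suitable $j(\Gamma_0)$-invariant convex subset of~$\HH^n$, then refine it into a geodesic triangulation by a barycentric subdivision compatible with the face identifications. Let $N:=\Conv{\Lambda_{j(\Gamma_0)}}$. If $N$ is empty or one-dimensional (\emph{i.e.}\ $j(\Gamma_0)$ is finite, or elementary cyclic hyperbolic preserving a geodesic), the claim is immediate: take the singleton $\{p\}$ for a fixed point~$p\in\HH^n$, or the $j(\Gamma_0)$-orbit of a fundamental geodesic segment of the translation axis. From now on, assume $\dim N\geq 2$.

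First I would fix a basepoint $p_0$ in the interior of~$N$ in general position with respect to the orbit $j(\Gamma_0)\cdot p_0$, meaning that no $n+2$ of its elements lie on a common sphere of~$\HH^n$; this is generic since $\Gamma_0$ is countable. Consider the Dirichlet polytope
$$D:=\{x\in\HH^n\,|\,d(x,p_0)\leq d(x,j(\gamma)\cdot p_0)\text{ for all }\gamma\in\Gamma_0\}$$
and set $D':=D\cap N$. By convex cocompactness the quotient $j(\Gamma_0)\backslash N$ is compact, hence $D'$ is a compact convex polytope with finitely many faces of two types: \emph{pairing} faces $D\cap j(\gamma)\cdot D$ (matched with another face of~$D'$ by the isometry $j(\gamma)^{-1}$) and \emph{boundary} faces contained in~$\partial N$ (unpaired). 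The translates $j(\gamma)\cdot D'$ then tile~$N$ and meet only along faces, giving a $j(\Gamma_0)$-invariant polyhedral cell structure on~$N$, finite modulo $j(\Gamma_0)$.

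Next I would refine this into a geodesic triangulation by barycentric subdivision: inductively, from codimension~$n$ down to~$0$, insert into each face its canonical $\mathrm{CAT}(0)$ barycenter (the minimizer of the sum of squared distances to the vertices of the face, in the sense of Lemma~\ref{lem:bary}) and cone this new vertex over the triangulation already constructed on the boundary of the face. The crucial point is that this construction is \emph{intrinsic}: by the equivariance~\eqref{eqn:baryequiv}, any isometry between two faces sends barycenters to barycenters, so paired faces of~$D'$ receive identical geodesic triangulations. Consequently, gluing together the $j(\Gamma_0)$-translates of the resulting finite geodesic simplicial complex $\Delta_{D'}$ produces a well-defined $j(\Gamma_0)$-invariant geodesic triangulation $\Delta$ of~$N$, finite modulo $j(\Gamma_0)$.

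The main obstacle will be checking that the cones produced at each inductive step are honest nondegenerate geodesic simplices — that is, the barycenter of a face never falls into the geodesic span of a proper subset of the face's vertices. This is where the general-position hypothesis on~$p_0$ is used: it ensures that each face of~$D'$ has vertices in sufficiently general position, and the strict convexity of the squared distance function in~$\HH^n$ (used to prove Lemma~\ref{lem:bary}) then places the barycenter strictly in the relative interior, allowing the cone construction to proceed. A small generic perturbation of~$p_0$ within $\mathrm{int}(N)$ provides a convenient fallback if any degeneracy persists.
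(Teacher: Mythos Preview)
Your approach has a gap in dimension $n\geq 3$: the set $D'=D\cap N$ is in general \emph{not} a polytope with finitely many faces. The Dirichlet domain $D$ contributes only finitely many hyperplane faces near the compact set $D'$, but the boundary $\partial N$ of the convex core is a \emph{pleated} hypersurface, bent along the bending lamination of $j(\Gamma_0)$. When this lamination is irrational (e.g.\ for a generic quasi-Fuchsian group in $\HH^3$), a short transverse arc meets it in a Cantor set, and the compact piece $D\cap\partial N$ meets infinitely many of the totally geodesic complementary regions, each lying in a distinct $2$-plane of~$\HH^3$. Thus $D'$ has infinitely many boundary faces and cannot be geodesically triangulated with finitely many simplices; your barycentric subdivision, which presupposes a finite face poset, never gets started. (In dimension $n=2$ your argument does work, since $\partial N$ is then a locally finite union of complete geodesics.)

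The paper sidesteps this by abandoning $N$ as the target convex set. It picks a $j(\Gamma_0)$-invariant, finite-orbit set $X$ that is suitably dense in the uniform $1$-neighbor\-hood $\mathcal{N}$ of~$N$, and builds the hyperbolic \emph{Delaunay cellulation} of $\Conv{X}$ via the hyperboloid model: faces of the Euclidean convex hull of $X\subset\R^{n+1}$ visible from the origin, projected back to~$\HH^n$. A short argument using the uniform convexity of $1$-hyperballs forces every empty circumscribed ball to meet $\mathcal{N}$ in a set of diameter $\leq 1$, so the cellulation is locally (hence globally, modulo $j(\Gamma_0)$) finite; genericity of~$X$ makes it a triangulation. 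The point is that $\Conv{X}$ is polyhedral \emph{by construction}, so the pleating of $\partial N$ is never seen. Your Dirichlet-plus-barycentric idea could be rescued by first replacing $N$ with such a polyhedral approximation.
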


Let us prove Claim~\ref{cla:ccopen} (note that the projection of $\Delta$ to $M:=j(\Gamma_0)\backslash\HH^n$ will automatically contain the convex core).
The idea is to use a classical construction, the hyperbolic \emph{Delaunay decomposition} (an analogue of the Euclidean Delaunay decomposition of \cite{del34}), and make sure that it is finite modulo $j(\Gamma_0)$.
Let $N\subset\HH^n$ be the preimage of the convex core of $M=j(\Gamma_0)\backslash\HH^n$ and let $\mathcal{N}$ be the uniform $1$-neighborhood of~$N$. 
For $R\geq 0$, we call $R$-\emph{hyperball} of~$\HH^n$ any convex region of~$\HH^n$ bounded by a connected hypersurface at constant distance~$R$ from a hyperplane.
Since $N$ is the intersection of all half-spaces containing $N$, we see that $\mathcal{N}$ is the intersection of all $1$-hyperballs containing~$\mathcal{N}$.
By the strict convexity of the distance function in~$\HH^n$, there exists $\alpha>0$ such that whenever points $p,q$ of a $1$-hyperball are distance $\geq 1$ apart, the ball of radius~$\alpha$ centered at the midpoint of $[p,q]$ is also contained in the $1$-hyperball.
Therefore, whenever $p,q\in\mathcal{N}$ are distance $\geq 1$ apart, the ball of radius~$\alpha$ centered at the midpoint of $[p,q]$ is also contained in~$\mathcal{N}$.

Let $X$ be a $j(\Gamma_0)$-invariant subset of~$\mathcal{N}$ that is finite modulo $j(\Gamma_0)$ and intersects every ball of radius $\geq \alpha/2$ centered at a point of~$\mathcal{N}$.
We view $X$ as a subset of~$\R^{n+1}$ via the embedding of~$\HH^n$ as the upper hyperboloid~sheet
$$\mathcal{H} := \big\{ x\in\R^{n+1} :\ x_1^2 + \dots + x_n^2 - x_{n+1}^2 = -1 \;\text{and}\; x_{n+1}>0\big\} .$$
Consider the convex hull $\widehat{X}$ of $X$ in~$\R^{n+1}$. 
There is a natural bijection between the following two sets:
\begin{itemize}
  \item the set of supporting hyperplanes of $\widehat{X}$ separating $X$ from $0\in\R^{n+1}$,
  \item the set of open balls, horoballs, or hyperballs of~$\HH^n$ that are disjoint from~$X$ but whose boundary intersects~$X$.
\end{itemize}
Namely, the bijection is given by taking any supporting hyperplane to the set of points of~$\mathcal{H}$ that it separates from~$X$ (see Figure~\ref{fig:X}).
This set is a ball (\resp a horoball, \resp a hyperball) if the intersection of $\mathcal{H}$ with the supporting hyperplane is an ellipsoid (\resp a paraboloid, \resp a hyperboloid).
The degenerate case of a supporting hyperplane tangent to~$\mathcal{H}$ corresponds to an open ball of radius $0$ (the empty set!) centered at a point of $X$; the limit case of a supporting hyperplane containing $0\in\R^{n+1}$ corresponds to a $0$-hyperball, \ie a half-space of~$\HH^n$.

\begin{figure}[h!]
\begin{center}
\labellist
\small\hair 2pt
\pinlabel{$\mathcal{H}$} at 200 180
\pinlabel{$\HH^2$} at 190 60
\endlabellist
\includegraphics[width=12cm]{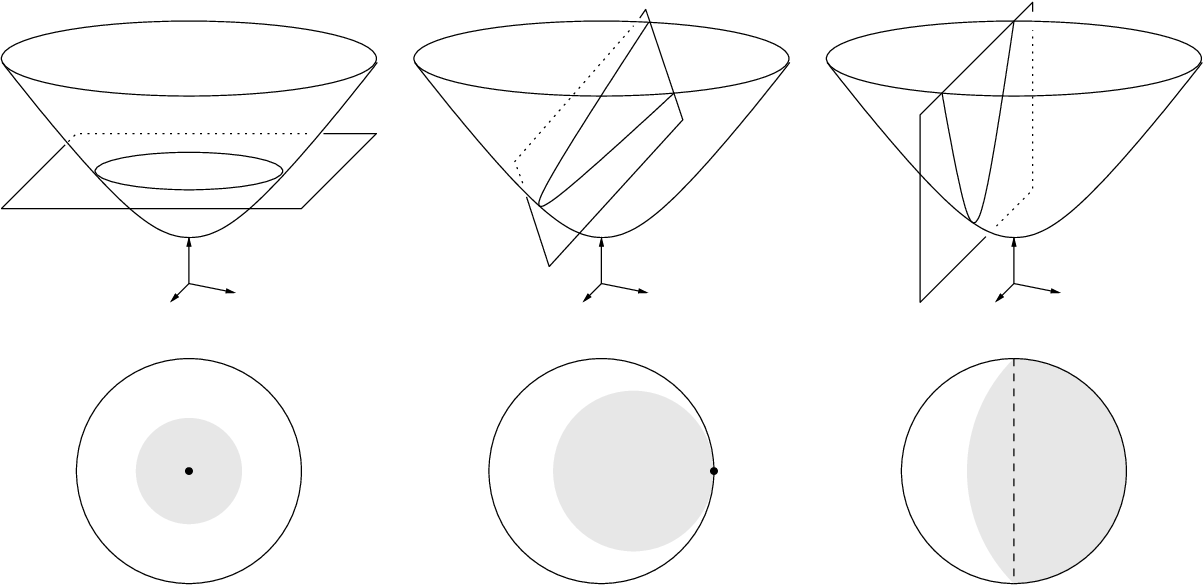}
\caption{Balls, horoballs, and hyperballs of~$\HH^n$ are intersections of the hyperboloid sheet $\mathcal{H}$ with affine half-spaces of~$\R^{n+1}$ containing the origin.}
\label{fig:X}
\end{center}
\end{figure}

For any supporting hyperplane of~$\widehat{X}$, the corresponding open ball, horoball, or hyperball $B\subset\HH^n$ intersects $\mathcal{N}$ in a region of diameter $\leq 1$. 
Indeed, if $p,q\in B\cap\mathcal{N}$ were distance $>1$ apart, then the ball $B'$ of radius~$\alpha$ centered at the midpoint of $[p,q]$ would be contained in~$\mathcal{N}$, by choice of~$\alpha$.
But at least one hemisphere of~$B'$ (the hemisphere closest to the center or to the defining hyperplane of~$B$, depending on whether $B$ is a (horo)ball or a hyperball) would also be contained in~$B$. 
A ball of radius $\alpha/2$ contained in this hemisphere would intersect~$X$ (by assumption on~$X$), while being contained in~$B$: impossible.
Thus $\partial B\cap\mathcal{N}$ has diameter $\leq 1$.
In particular, $\partial B\cap X$ has diameter $\leq 1$.
In particular, $\partial B\cap X$ is finite.

Let $Y\subset \partial \widehat{X}$ be the union of all points that belong to supporting hyperplanes separating $X$ from $0\in\R^{n+1}$.
(In other words, $Y$ is the portion of $\partial\widehat{X}$ that is ``visible from the origin''. 
There can also be an ``invisible'' portion, corresponding to hyperballs whose \emph{complement} is disjoint from~$X$.)
By the previous paragraph, $Y$ has the structure of a \emph{locally finite} polyhedral hypersurface in~$\R^{n+1}$, with vertex set~$X$.
Projecting each polyhedron of~$Y$ to the hyperboloid $\mathcal{H}\simeq \HH^n$ (along the rays through the origin $0\in\R^{n+1}$), we obtain a cellulation $\Delta$ of the convex hull $\Conv{X}$ of $X$ in~$\HH^n$, called the \emph{Delaunay cellulation} of $\Conv{X}$ relative to~$X$.
It is characterized by the fact that any cell of~$\Delta$ is inscribed in a hypersurface of~$\HH^n$ bounding some open ball, horoball, or hyperball disjoint from~$X$.
The cellulation $\Delta$ is $j(\Gamma_0)$-invariant and finite modulo $j(\Gamma_0)$.
Since $j(\Gamma_0)$ is torsion-free, up to taking the points of~$X$ in general position we may assume that $\Delta$ is a triangulation and induces dihedral angles $<\pi$ on the boundary of~$\widehat{X}$.
This completes the proof of Claim~\ref{cla:ccopen}.

Proposition~\ref{prop:rem-ccopen} easily follows from Claim~\ref{cla:ccopen}.
Indeed, let $F\subset\HH^n$ be a finite set such that $X=j(\Gamma_0)\cdot F$.
The vertices of a $d$-dimensional simplex of the triangulation $\Delta$ can be listed in the form $j(\gamma_0)\cdot\nolinebreak p_0,\dots,j(\gamma_d)\cdot\nolinebreak p_d$, where $p_0,\dots,p_d\in F$ and $\gamma_0,\dots,\gamma_d\in\Gamma_0$.
By finiteness of the triangulation, when $j_k$ is close enough to $j$ the points $j_k(\gamma_0)\cdot p_0,\dots,j_k(\gamma_d)\cdot p_d$ still span a simplex and these simplices (obtained by following the combinatorics of~$\Delta$) still triangulate a region of~$\HH^n$ that is locally convex, hence globally convex.
In particular, this region of~$\HH^n$ contains the preimage of the convex core of $j_k(\Gamma_0)\backslash\HH^n$.
Thus $j_k(\Gamma_0)$ is still convex cocompact for large $k\in\nolinebreak\N^{\ast}$.
Any compact neighborhood $\mathcal{C}$ of a union $U$ of representatives of simplex orbits of~$\Delta$ under $j(\Gamma_0)$ contains a fundamental domain of the convex core of $j_k(\Gamma_0)\backslash\HH^n$ for all large enough~$k$. 

To bound the injectivity radius of $j_k(\Gamma_0)\backslash\HH^n$ away from~$0$, we argue as follows.
For any $p\in F$, let $U_p$ be the union of all simplices of $\Delta$ containing~$p$.
Then $p$ is an interior point of~$U_p$.
Provided $X$ is dense enough in~$\mathcal{N}$, each $U_p$ projects injectively to $M=j(\Gamma_0)\backslash\HH^n$. 
For $\varepsilon>0$, let $U_p^{\varepsilon}$ be the complement in~$U_p$ of the $\varepsilon$-neighborhood of $\partial U_p$. 
If $\varepsilon$ is small enough, then any point of~$\Delta$ has a translate belonging to some $U_p^{\varepsilon}$ with $p\in F$, whose $\varepsilon$-neighborhood therefore projects injectively to~$M$. 
This property remains true as $\Delta$ (hence the finitely many sets~$U_p$) are deformed slightly, up to taking a smaller~$\varepsilon$. 
This completes the proof of Proposition~\ref{prop:rem-ccopen}.
\end{proof}

Note that in the above proof, for torsion-free~$\Gamma_0$, the hyperbolic manifolds $j_k(\Gamma_0)\backslash\HH^n$ and $j(\Gamma_0)\backslash\HH^n$ are in fact homeomorphic since their convex cores admit combinatorially identical triangulations.

\subsection{The geometrically finite case when all cusps have rank $\geq n-2$}

Here is an analogue of Proposition~\ref{prop:rem-ccopen} for geometrically finite representations of fixed cusp type with all cusps of rank $\geq n-2$.
Note that cusps always have rank $\geq n-2$ in dimension $n\leq 3$.

\begin{proposition}\label{prop:rem-gfopen}
Let $\Gamma_0$ be a discrete group, $j\in\Hom(\Gamma_0,G)$ a geometrically finite representation with all cusps of rank $\geq n-2$, and $(j_k)_{k\in\N^{\ast}}$ a sequence of elements of $\Hom(\Gamma_0,G)$ converging to~$j$, all of the same cusp type as~$j$ (Definition~\ref{def:typedet}).
Then for any large enough $k\in\N^{\ast}$ the representation $j_k$ is geometrically finite and $(j_k)_{k\in\N}$ converges geometrically to~$j$.
More precisely, if $H_1,\dots,H_c$ are horoballs of~$\HH^n$ whose images in $j(\Gamma_0)\backslash\HH^n$ are disjoint, small enough, and intersect the convex core in standard cusp regions (Definition~\ref{def:standardcusp}), representing all the cusps, then there exist a compact set $\mathcal{C}\subset\HH^n$ and, for any large enough $k\in\N^{\ast}$, horoballs $H_1^k,\dots,H_c^k$ of~$\HH^n$, such that
\begin{itemize}
  \item the images of $H_1^k,\dots,H_c^k$ in $j_k(\Gamma_0)\backslash\HH^n$ are disjoint and intersect the convex core in standard cusp regions;
  \item the stabilizer in~$\Gamma_0$ of $H_i^k$ under~$j_k$ is the stabilizer in~$\Gamma_0$ of $H_i$ under~$j$ for all $1\leq i\leq c$;
  \item the horoballs $H_i^k$ converge to~$H_i$ for all $1\leq i\leq c$, as $k\rightarrow +\infty$;
\item the union of $\mathcal{C}$ and of $H_1\cup\dots\cup H_c$ (resp.\ $H^k_1\cup\dots\cup H^k_c$) contains a fundamental domain of the convex core of $j(\Gamma_0)\backslash\HH^n$ (resp.\ $j_k(\Gamma_0)\backslash\HH^n$);
  \item the union of all geodesic rays from $\mathcal{C}$ to the centers of $H_1,\dots, H_c$ (resp.\ $H^k_1,\dots, H^k_c$) contains a fundamental domain of the convex core of $j(\Gamma_0)\backslash\HH^n$ (resp.\ $j_k(\Gamma_0)\backslash\HH^n$); in particular, the cusp thickness (Definition~\ref{def:thickness}) of $j_k(\Gamma_0)\backslash\HH^n$ at any point of $\bigcup_{1\leq i\leq c} \partial H_i^k$ is uniformly bounded by some constant independent of~$k$.
\end{itemize}
Moreover, if $j(\Gamma_0)$ is torsion-free, then the infimum of injectivity radii of $j_k(\Gamma_0)\backslash\HH^n$ at projections of points of~$\mathcal{C}$ is bounded away from~$0$ as $k\rightarrow +\infty$.
\end{proposition}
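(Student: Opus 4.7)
The plan is to adapt the Delaunay triangulation argument of Proposition~\ref{prop:rem-ccopen} to the geometrically finite setting by treating the thick part and the cusp regions separately, using the rank $\geq n-2$ assumption to guarantee cusp stability under deformation.

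First I would locate, for each cusp, a deformed horoball. Let $S_i \subset \Gamma_0$ be the stabilizer of $H_i$ under~$j$; the cusp-type assumption means $j_k(S_i)$ consists of parabolic and elliptic elements, with the parabolic ones being exactly the $j$-parabolics. Picking a primitive $\gamma \in S_i$ with $j(\gamma)$ parabolic, the fixed point $\xi_i^k$ of $j_k(\gamma)$ in $\partial_\infty\HH^n$ varies continuously in $k$ and converges to the fixed point $\xi_i$ of $j(\gamma)$. Thanks to the rank $\geq n-2$ hypothesis, $\xi_i^k$ remains the \emph{unique} fixed point of the whole group $j_k(S_i)$ on $\partial_\infty\HH^n$: indeed, the $\Z^{n-1}$ or $\Z^{n-2}$ translational finite-index subgroup of $j(S_i)$ already enforces this rigidly, and the translational part of $j_k(S_i)$ deforms continuously in a finite generating set. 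I then select horoballs $H_i^k$ centered at $\xi_i^k$ and invariant under $j_k(S_i)$, converging to~$H_i$, and small enough that their projections to $j_k(\Gamma_0)\backslash\HH^n$ are disjoint standard cusp regions --- a property which, for small $H_i$, passes to small perturbations by compactness in the thick part.

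Next I would construct, as in Claim~\ref{cla:ccopen}, a $j(\Gamma_0)$-invariant geodesic triangulation $\Delta$ of the preimage $N\subset\HH^n$ of the convex core of $j(\Gamma_0)\backslash \HH^n$, finite \emph{modulo} $j(\Gamma_0)$ in the thick part $N\smallsetminus \bigcup_i j(\Gamma_0)\cdot H_i$ (by the hyperbolic Delaunay construction applied to a $j(\Gamma_0)$-invariant discrete set that is $\alpha$-dense in the $1$-neighborhood of the thick core), and equal inside each $H_i\cap N$ to the cone with apex $\xi_i$ over a finite-modulo-$j(S_i)$ periodic simplicial decomposition of the Euclidean convex set $\partial H_i\cap N$ (compact modulo $j(S_i)$ by Fact~\ref{fact:geomfinite}, convex by Lemma~\ref{lem:convexcusp} type arguments if $H_i$ is deep enough). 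Transporting the combinatorics of the thick-part triangulation to $j_k$, as in the proof of Proposition~\ref{prop:rem-ccopen}, the finitely many simplex orbits remain in general position for large $k$ and assemble to a locally convex, hence globally convex, region $R_k \subset \HH^n$ containing the thick part of $N_k$; closing off with the cones from $\xi_i^k$ over the corresponding translated triangulations of $\partial H_i^k\cap N_k$ yields a convex region containing $N_k$, with a fundamental domain of the convex core contained in a fixed compact $\mathcal{C}$ (from the thick-part combinatorics) together with the cusp cones. This will give simultaneously geometrical finiteness of $j_k$, the homeomorphism identification of the quotients, the cusp-thickness bound at $\partial H_i^k$ (the fundamental domains of $\partial H_i^k\cap N_k$ stay uniformly close to those of $\partial H_i\cap N$), the geodesic-ray description of the fundamental domain, and the injectivity-radius lower bound on~$\mathcal{C}$, exactly as in Proposition~\ref{prop:rem-ccopen}.

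The main obstacle is the first step: verifying that the limit set of $j_k(\Gamma_0)$ near $\xi_i^k$ remains confined to $H_i^k$ so that no unexpected pieces of the convex core emerge in the cusp. This is precisely the phenomenon ruled out by the rank $\geq n-2$ hypothesis (and realized by the counterexamples of Sections~\ref{ex:dim4upper} and~\ref{ex:dim4C<1} when the rank is smaller). The key point is that when $j(S_i)$ has a $\Z^m$ subgroup with $m\geq n-2$ acting as translations on an $m$-dimensional affine subspace $V_i\subset\partial H_i$, every point of $\Lambda_{j(\Gamma_0)}\cap\partial_\infty H_i\smallsetminus\{\xi_i\}$ sits within Euclidean distance at most $1$ from~$V_i$; a perturbed translational subgroup $j_k(\Z^m)$ acts similarly on a nearby affine subspace $V_i^k$, and a standard ping-pong/\emph{fundamental-polyhedron} compactness argument in the thick part forces $\Lambda_{j_k(\Gamma_0)}\cap\partial_\infty H_i^k\smallsetminus\{\xi_i^k\}$ to remain within Euclidean distance $2$ of $V_i^k$, so that $\partial H_i^k\cap N_k$ stays bounded modulo $j_k(S_i)$ and converges to $\partial H_i\cap N$. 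Once this confinement is established, the rest of the argument is a routine adaptation of the convex cocompact case.
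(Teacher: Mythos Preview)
Your overall architecture --- Delaunay triangulation on the thick part, cones from the parabolic fixed points over a horospherical triangulation in the cusps, then transport the combinatorics to $j_k$ --- is exactly the paper's strategy. The gap is in the deformation step, and it is precisely the place where the rank $\geq n-2$ hypothesis does its real work.

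The issue is convexity near a rank-$(n-2)$ cusp. By Lemma~\ref{lem:convexcusp}, $\partial H_i\cap\mathcal{N}$ is a slab between two parallel Euclidean hyperplanes of $\partial H_i$; coning from $\xi_i$ over each of these gives a totally geodesic hyperplane of~$\HH^n$, so the boundary of your triangulated region $\Delta$ near $\xi_i$ is \emph{flat} (two hyperplanes tangent at $\xi_i$), not strictly convex. A generic small perturbation of a flat dihedral angle destroys convexity. So ``the finitely many simplex orbits remain in general position and assemble to a locally convex region'' is false as stated: you must constrain the perturbation to keep those infinite faces flat. The paper handles this by (a) choosing the sample set $X$ so that each boundary hyperplane of $\partial H_i\cap\mathcal{N}$ carries its own dense subset of $X$, forcing the boundary of the horospherical Delaunay cellulation $\Delta_{\partial H_i}$ to be an $(n-2)$-dimensional Euclidean Delaunay cellulation sitting in a single hyperplane; and (b) when deforming, placing the perturbed vertices $p_i^k$ of that boundary piece on a $j_k(S_i)$-invariant $(n-2)$-plane of $\partial H_i^k$, so that the cone from $\xi_i^k$ is again a single geodesic hyperplane. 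Convexity of the deformed complex $\Delta_k$ is then checked face by face: strictly convex dihedral angles survive perturbation, and the flat ones stay exactly flat by construction.

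This also dissolves the circularity you flag as the ``main obstacle''. You propose to first control $\Lambda_{j_k(\Gamma_0)}$ near $\xi_i^k$ by a ping-pong argument, then cone over $\partial H_i^k\cap N_k$. But $N_k$ is exactly what you do not yet know, and a ping-pong argument presupposes a fundamental polyhedron, which is what you are building. The paper's route sidesteps this entirely: once $\Delta_k$ is convex and $j_k(\Gamma_0)$-invariant, it \emph{automatically} contains $N_k$, with no prior control on the limit set needed. The limit-set confinement you want is then a consequence, not an input.
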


Again, we can and will assume that $\Gamma_0$ is torsion-free.
Proposition~\ref{prop:rem-gfopen} fails in dimension $n\geq 4$ when $j$ has a cusp of rank $<n-2$, as can be seen from \cite[\S\,5]{bow98} or by adapting the examples of geometrically finite representations~$j_k$ from Sections \ref{ex:dim4upper} and~\ref{ex:dim4C<1}.

In order to prove Proposition~\ref{prop:rem-gfopen}, we need the following lemma, which is also used in Section~\ref{subsec:C<1open}.

\begin{lemma}\label{lem:convexcusp}
Let $j\in\Hom(\Gamma_0,G)$ be a geometrically finite representation with all cusps of rank $\geq n-2$, and let $\mathcal{N}\subset\HH^n$ be a uniform neighborhood of the preimage $N$ of the convex core of $j(\Gamma_0)\backslash\HH^n$.
For any horoball $H$ of~$\HH^n$ such that $H\cap N$ projects to a standard cusp region and such that the cusp thickness (Definition~\ref{def:thickness}) of $j(\Gamma_0)\backslash\HH^n$ at any point of~$H$ is $\leq 1/3$, the set $\partial H\cap\mathcal{N}$ is convex in $\partial H\simeq\R^{n-1}$, equal to 
\begin{itemize}
\item the full Euclidean space $\partial H$ if the cusp has rank $n-1$;
\item the region contained between two parallel (possibly equal) Euclidean hyperplanes of $\partial H$ if the cusp has rank $n-2$.
\end{itemize}
\end{lemma}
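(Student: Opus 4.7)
The plan is to work in the upper half-space model of $\HH^n$, with the center $\xi$ of $H$ normalized to be $\infty$, so that $\partial H = \R^{n-1} \times \{b\}$ and $j(S)$ acts on $\R^{n-1}$ by Euclidean isometries. Its finite-index subgroup $j(S') \cong \Z^m$ acts as a full-rank translation lattice along an $m$-dimensional linear subspace $V \subset \R^{n-1}$, where $m \in \{n-1, n-2\}$ is the rank of the cusp. Let $\Omega \subset \R^{n-1}$ denote the Euclidean convex hull of $\Lambda_{j(\Gamma_0)} \smallsetminus \{\infty\}$. The first key identification to establish is $N \cap \partial H = \Omega \times \{b\}$ and $N \cap H = \Omega \times [b,+\infty)$, a standard consequence of Fact~\ref{fact:geomfinite}: in the present coordinates $N \cap H$ is the vertical extension of $\partial H \cap N$ toward the cusp point at infinity, provided $\partial H$ is deep enough in the cusp --- a condition I will verify is guaranteed by the cusp-thickness bound.

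The main structural claim is that $\Omega$ splits as $V + C$, where $C$ is a bounded, closed, convex subset of the $((n-1)-m)$-dimensional orthogonal complement $W$ of $V$ in $\R^{n-1}$. For any $p \in \Lambda_{j(\Gamma_0)} \smallsetminus \{\infty\}$, the orbit $j(S')\cdot p$ is a full-rank translation lattice inside the affine subspace $p + V$, and a convex hull of such a lattice equals the whole affine subspace (just as $\Conv{\Z} = \R$); hence $p+V \subset \Omega$. Writing any $q \in \Omega$ as a finite convex combination $\sum \lambda_i p_i$ with $p_i \in \Lambda_{j(\Gamma_0)} \smallsetminus \{\infty\}$ and adding the same $v \in V$ to each $p_i$ (keeping $p_i + v$ in $p_i + V \subset \Omega$) shows $q + v \in \Omega$, so $\Omega$ is $V$-translation-invariant and therefore of the claimed product form. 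Boundedness of $C$ follows from the cocompactness of $j(S) \backslash (\partial H \cap N)$ built into Fact~\ref{fact:geomfinite}. When $m = n-1$ this forces $\Omega = \R^{n-1}$; when $m = n-2$, $W \simeq \R$ and $C$ is a bounded interval, so $\Omega$ is a closed slab bounded by two hyperplanes parallel to $V$.

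To compute $\mathcal{N} \cap \partial H$, I would run a direct optimization in the upper half-space metric. Since $N \cap H = \Omega \times [b,+\infty)$, the hyperbolic distance from $p = (x,b) \in \partial H$ to the nearest point of this vertical cylinder is minimized at $(x',b')$ with $x'$ the Euclidean closest-point projection of $x$ onto $\Omega$ and $b' = \sqrt{b^2 + d^2}$, where $d := d_{\R^{n-1}}(x,\Omega)$; the resulting distance is $\arccosh\!\bigl(\sqrt{1+(d/b)^2}\bigr)$. Equating this to the thickness $\varepsilon$ of $\mathcal{N}$ gives $d/b = \sinh \varepsilon$, which is precisely the intrinsic horospheric distance from $p$ to $\Omega$ on the flat horosphere $\partial H \simeq \R^{n-1}$. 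Thus $\mathcal{N} \cap \partial H$ is exactly the horospheric $(\sinh \varepsilon)$-neighborhood of $\Omega$ in $\partial H$, which is Euclidean-convex and keeps the same shape as $\Omega$: either all of $\partial H$, or a slab bounded by two parallel hyperplanes.

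The main obstacle --- and where the hypothesis ``cusp thickness at points of $H$ is $\leq 1$'' is used essentially --- is ruling out that the closest point of $N$ to some $p \in \partial H$ might lie in $N \smallsetminus H$ (below $\partial H$) rather than in $N \cap H$, which could produce extra points in $\mathcal{N} \cap \partial H$ not of the clean horospheric-neighborhood form above. Since the ``rounded cap'' of $N$ (the locus of geodesics between finite-to-finite limit points) projects vertically into $\Omega$, and since the cusp thickness at $\partial H$ equals $b^{-1}$ times the Euclidean diameter of $j(S) \backslash \Omega$ (up to a bounded factor from $[j(S):j(S')]$), the hypothesis forces $b$ to dominate the Euclidean transverse extent of $\Omega$. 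A careful but standard comparison between hyperbolic and horospheric distances then shows that $\partial H$ sits comfortably above the rounded cap, the upward optimization performed above genuinely attains the true distance $d_{\HH^n}(p,N)$, and the identification of $\mathcal{N} \cap \partial H$ as the announced horospheric neighborhood is complete.
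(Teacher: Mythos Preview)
Your overall plan is sound, and the explicit distance computation identifying $\mathcal{N} \cap \partial H$ with the horospheric $(\sinh\varepsilon)$-neighborhood of $\Omega$ is more detailed than the paper's terse ``easily implies the lemma''. But there is a genuine gap at the ``first key identification''.

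The claim $N \cap \partial H = \Omega \times \{b\}$ is \emph{not} a consequence of Fact~\ref{fact:geomfinite}: that fact only says $N \cap H$ is the hyperbolic convex hull of $N \cap \partial H$, without identifying the latter set. The inclusion $N \cap \partial H \subset \Omega \times \{b\}$ is easy (the vertical projection sends $N$ into $\Omega$), but the reverse inclusion --- that $N$ becomes a vertical cylinder over $\Omega$ above a certain height --- is the substance of the lemma and corresponds to the paper's equation~\eqref{eqn:convexcusp}. The paper proves it by a half-ball argument: any point of $\Omega \times [\delta, +\infty)$ outside $N$ would lie in an open half-ball $\mathcal{B}\subset\HH^n$ whose boundary sphere in $\R^{n-1}$ misses $\Lambda_{j(\Gamma_0)}$, and one bounds the radius of $\mathcal{B}$ (hence the height it reaches inside $\Omega\times\R_+^*$) using the $\delta$-density of $\Lambda_{j(\Gamma_0)} \smallsetminus \{\infty\}$ in $\Omega$ and in each boundary hyperplane of~$\Omega$. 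Your last paragraph does not supply this argument. This is also where the rank $\geq n-2$ hypothesis genuinely enters, via what the paper flags as ``the key point'': because $\dim V^\perp \leq 1$, the linear parts of elements of $j(S')$ lie in $\OO(1)$ and hence (up to index~$2$) are trivial, so $j(S')$ acts by pure translation on all of $\R^{n-1}$, not merely on~$V$. You already rely on this implicitly when you write ``the orbit $j(S')\cdot p$ is a full-rank translation lattice inside $p+V$''; for cusps of rank $<n-2$ the group $j(S')$ may act by screw motions with irrational angle (as in Sections~\ref{ex:dim4upper}--\ref{ex:dim4C<1}), orbits spiral out of $p+V$, and both your shape argument for $\Omega$ and the density argument collapse.
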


\begin{proof}[Proof of Lemma~\ref{lem:convexcusp}]
The stabilizer $S\subset\Gamma_0$ of $H$ under~$j$ has a finite-index normal subgroup~$S'$ isomorphic to~$\Z^m$, where $m\in\{ n-1,n-2\} $ is the rank of the cusp (see Section~\ref{subsec:geo-finiteness}).
In the upper half-space model $\R^{n-1}\times\R_+^{\ast}$ of~$\HH^n$, in which $\partial_{\infty}\HH^n$ identifies with $\R^{n-1}\cup\{\infty\}$, we may assume that $H$ is centered at infinity, so that $\partial H=\R^{n-1}\times\{ b\} $ for some $b>0$.
Let $\Omega$ be the convex hull of $\Lambda_{j(\Gamma_0)}\smallsetminus\{\infty\}$ in~$\R^{n-1}$, where $\Lambda_{j(\Gamma_0)}$ is the limit set of $j(\Gamma_0)$.
The group $j(S)$ acts on~$\R^{n-1}$ by Euclidean isometries and there is an $m$-dimensional affine subspace $V\subset\Omega$, preserved by $j(S)$, on which $j(S')$ acts as a lattice of translations (see Section~\ref{subsec:geo-finiteness}).
This implies that $\Omega$ is either $\R^{n-1}$ (if $m=n-1$) or the region contained between two parallel hyperplanes of $\R^{n-1}$ (if $m=n-2$).
Let $\delta>0$ be the Euclidean diameter of $j(S)\backslash\Omega$.
Then $\delta$ is the cusp thickness of $j(\Gamma_0)\backslash\HH^n$ at $\R^{n-1}\times\{1\}$, or alternatively the cusp thickness of $j(\Gamma_0)\backslash\HH^n$ is $\leq 1/3$ exactly on $\R^{n-1}\times [3\delta,+\infty)$.
Every $S$-orbit in $\Omega$ is $\delta$-dense in $\Omega$.
Lemma \ref{lem:convexcusp} reduces to:
\begin{equation}\label{eqn:convexcusp}
N \cap \big(\R^{n-1}\times [3\delta,+\infty)\big) = \Omega\times [3\delta,+\infty).
\end{equation}
The left-hand side is always contained in the right-hand side since $N$ is the convex hull in~$\HH^n$ of the limit set $\Lambda_{j(\Gamma_0)}$.
For the converse, suppose a point $p\in\Omega\times\R_+^{\ast}$ lies outside~$N$.
Then $p$ belongs to a closed half-space $D$ of~$\HH^n$ disjoint from~$N$, whose ideal boundary $\partial_\infty D$ is a Euclidean $(n-1)$-dimensional ball disjoint from~$\Lambda_{j(\Gamma_0)}$.
If $\partial_\infty D$ is centered inside~$\Omega$ (\eg if $m=n-1$), then $\partial_\infty D$ has radius $<\delta$ because $\Lambda_{j(\Gamma_0)}$ is $\delta$-dense in~$\Omega$. This yields $p\in \Omega\times (0,\delta)\subset\Omega\times (0,3\delta)$, as desired.

Now suppose $\partial_\infty D$ is centered outside~$\Omega$; this may only happen if $m=n-2$.
Let $P$ be the connected component of $\partial \Omega$ closest to the center of $\partial_\infty D$, and $P'$ the other component: $P$ and $P'$ are parallel hyperplanes of $\R^{n-1}$ both intersecting~$\Lambda_{j(\Gamma_0)}$ (and possibly equal, if $j(\Gamma_0)$ preserves a copy of~$\HH^{n-1}$). 
We claim that $P\cap \Lambda_{j(\Gamma_0)}$ is $3\delta$-dense in $P$: indeed, fix $\xi\in P\cap \Lambda_{j(\Gamma_0)}$. 
If $j(S)\cdot\xi\subset P$ (in particular if $P=P'$) we are done, as $j(S)\cdot\xi$ is already $\delta$-dense in~$\Omega$. 
If not, since $j(S)\cdot\xi\subset P\cup P'$ we can choose $\xi'\in P'\cap j(S)\cdot\xi$. 
Let $S'\subset S$ be the stabilizer of $P$, which has index two in $S$, so that $j(S')\backslash\Omega$ has twice the Euclidean volume of $j(S)\backslash\Omega$. 
Let $U$ and $U'$ be (closed) Dirichlet fundamental domains of $j(S)\backslash\Omega$ for the Euclidean metric, centered at $\xi$ and $\xi'$ respectively. 
Any lifts of $U$ and $U'$ to $j(S')\backslash \Omega$ must overlap, because of their total volume.
Since $U$ and $U'$ are contained in $\delta$-balls centered at $\xi$ and $\xi'$, this shows that $\xi$ and $\xi'$ are at most $2\delta$ apart in the quotient $j(S')\backslash\Omega$. 
Since $j(S)\cdot\xi=j(S')\cdot\{\xi,\xi'\}$ is $\delta$-dense in~$\Omega$, we get that $j(S')\xi\subset P$ is $3\delta$-dense (and contained in $P\cap \Lambda_{j(\Gamma_0)}$).

The intersection $\partial_\infty D \cap P$, being disjoint from $\Lambda_{j(\Gamma_0)}$, is therefore an $(n-2)$-dimensional Euclidean ball of radius $<3\delta$. 
Since the hemisphere $D$ is centered outside $\Omega$, this shows that $D$ does not achieve any height $\geq 3\delta$ inside $\Omega\times \R_+^\ast$.
In particular, $p\in D\cap(\Omega\times\R_+^{\ast})\subset\Omega\times (0,3\delta)$.
This proves~\eqref{eqn:convexcusp}. 
\end{proof}

\begin{proof}[Proof of Proposition~\ref{prop:rem-gfopen}]
We proceed as in the proof of Proposition~\ref{prop:rem-ccopen} and first establish the following analogue of Claim~\ref{cla:ccopen}.

\begin{claim}\label{cla:gfopen}
There exists a $j(\Gamma_0)$-invariant geodesic triangulation $\Delta$ of a non\-empty convex subset of~$\HH^n$ with the following properties:
\begin{itemize}
  \item $\Delta$ is finite modulo $j(\Gamma_0)$, with vertices lying both in $\HH^n$ and in $\partial_{\infty}\HH^n$;
  \item the vertices in $\partial_{\infty}\HH^n$ are exactly the parabolic fixed points of $j(\Gamma_0)$;
  \item no edge of~$\Delta$ connects two such (ideal) vertices;
  \item in a neighborhood of a parabolic fixed point $\xi$ of rank $n-2$, the boundary of~$\Delta$ consists of two totally geodesic hyperplanes of~$\HH^n$ meeting only at~$\xi$.
\end{itemize}
\end{claim}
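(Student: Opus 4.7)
The plan is to build $\Delta$ by combining a Delaunay-type triangulation of the "thick part" with a coned-off triangulation of each cusp, in the spirit of the Epstein--Penner decomposition. First I would fix small horoballs $H_1,\dots,H_c$ centered at representatives $\xi_1,\dots,\xi_c$ of the $j(\Gamma_0)$-orbits of parabolic fixed points, deep enough that their projections to $M=j(\Gamma_0)\backslash\HH^n$ are disjoint standard cusp regions and that Lemma~\ref{lem:convexcusp} applies: $\partial H_i\cap\mathcal{N}$ is Euclidean-convex in $\partial H_i\simeq\R^{n-1}$, namely all of $\partial H_i$ for a rank $n-1$ cusp, and a strip between two parallel $(n-2)$-dimensional affine subspaces for a rank $n-2$ cusp. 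Denote by $S_i\subset\Gamma_0$ the stabilizer of $H_i$ under~$j$.

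Next, for each $i$, I would choose a $j(S_i)$-invariant Euclidean triangulation $\tau_i$ of $\partial H_i\cap\mathcal{N}$ by finitely many simplices modulo $j(S_i)$, in general position, with vertex set $Y_i$; for rank $n-2$ cusps, I insist that the two boundary components of $\partial H_i\cap\mathcal{N}$ are unions of $(n-2)$-faces of $\tau_i$. Coning each simplex of $\tau_i$ to $\xi_i$ yields a $j(S_i)$-invariant ideal triangulation of the convex hull of $(\partial H_i\cap\mathcal{N})\cup\{\xi_i\}$ in $\HH^n$, finite modulo $j(S_i)$, with no edge joining $\xi_i$ to another ideal vertex.

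On the "thick part" $\mathcal{N}\smallsetminus\bigcup_i j(\Gamma_0)\cdot\mathrm{int}(H_i)$, which is compact modulo $j(\Gamma_0)$, I would build a Delaunay triangulation $\Delta_0$ exactly as in the proof of Claim~\ref{cla:ccopen}, with the added constraint that its vertex set $X\supset\bigcup_i j(\Gamma_0)\cdot Y_i$ be in general position and that the induced triangulation of each $\partial H_i$ coincide with $\tau_i$. Gluing $\Delta_0$ to the $j(\Gamma_0)$-orbits of the coned-off cusp triangulations produces a $j(\Gamma_0)$-invariant geodesic triangulation $\Delta$, finite modulo $j(\Gamma_0)$, whose ideal vertices are precisely the parabolic fixed points of $j(\Gamma_0)$, and which contains no edge between two ideal vertices. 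For a rank $n-2$ cusp, coning the two parallel boundary hyperplanes of the strip $\partial H_i\cap\mathcal{N}$ to $\xi_i$ produces exactly the two totally geodesic hyperplanes of $\HH^n$ meeting only at $\xi_i$ that the claim requires.

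The main obstacle is the matching condition on $\partial H_i$: the unconstrained Delaunay construction of Claim~\ref{cla:ccopen} need not respect the prescribed boundary triangulations $\tau_i$. The remedy is to carry out a relative or constrained Delaunay construction, inserting the prescribed boundary simplices first and then locally re-triangulating on the thick side to preserve $j(\Gamma_0)$-equivariance and finiteness modulo $j(\Gamma_0)$; the empty-ball characterization of Delaunay cells makes this compatible with the convex-hull-in-Minkowski-space picture. A secondary point is to keep all vertex choices in general position so that $\Delta$ is a genuine simplicial triangulation rather than a mere cellulation, which is handled by an arbitrarily small perturbation of $X$ and of the $Y_i$, as in the torsion-free case of Claim~\ref{cla:ccopen}.
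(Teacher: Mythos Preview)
Your architecture---Delaunay on the thick part, coning in the cusps---matches the paper's, but you diverge at the crucial step of matching along the horospheres, and there the proposal has a real gap.

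You want to prescribe $\tau_i$ on $\partial H_i\cap\mathcal{N}$ first and then force a ``constrained Delaunay'' $\Delta_0$ on the thick part to induce $\tau_i$. But the horospheres are not totally geodesic, so the hyperbolic Delaunay complex of any set $X\supset Y_i$ does \emph{not} stop at $\partial H_i$: it triangulates $\mathrm{Conv}(X)$, which protrudes into $H_i$, producing thin-part cells whose interface with the thick part is determined by $X$, not by you. In general that interface will be the \emph{Euclidean} Delaunay triangulation of $Y_i$ in $\partial H_i$, so you cannot choose $\tau_i$ freely; and even that identification requires a careful argument. The paper's remedy is the opposite of constraining: it chooses $X$ with precise density and separation conditions (points off $\partial H_i$ stay $\geq\alpha'$ away from $\partial H_i$; points on $\partial H_i$ are $\alpha'/2$-dense; for rank $n-2$, extra density on the two boundary lines of the strip) so that \emph{no global Delaunay cell can straddle $\partial H_i$}. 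This forces the cellulation to split cleanly into thick-part, interface, and thin-part cells, and it forces the interface to coincide with the Euclidean Delaunay of $X\cap\partial H_i$. Only then does the paper discard the thin-part cells and cone the interface to~$\xi_i$. Your ``constrained Delaunay'' paragraph does not provide a substitute for this mechanism.

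A second omission is convexity. The claim asks for a triangulation of a \emph{convex} subset of~$\HH^n$, and this is what the subsequent deformation argument needs (local convexity of the perturbed $\Delta_k$ implies global convexity, hence containment of the convex core). You never check that your glued $\Delta$ is convex. In the paper this takes work: for rank $n-1$ cusps one uses that the thin-part cells already filled all of $H_i$, so coning changes the triangulation but not the underlying set; for rank $n-2$ cusps one must check both that the two coned hyperplanes are flat (this uses the extra density condition on the boundary of the strip) and that the dihedral angles at $\partial(\Delta_{\partial H_i})$ remain convex after the thin simplices are removed. Your sketch addresses only the shape of the cone boundary near $\xi_i$, not the transition at the horosphere.
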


Let $\mathcal{N}\subset\HH^n$ be the uniform $1$-neighborhood of the preimage $N$ of the convex core of $M=j(\Gamma_0)\backslash\HH^n$. 
Let $X$ be a $j(\Gamma_0)$-invariant subset of~$\mathcal{N}$ that is locally finite modulo $j(\Gamma_0)$ and intersects every ball of diameter $\geq\alpha$ centered at a point of~$\mathcal{N}$, where $\alpha>0$ is chosen as in the proof of Claim~\ref{cla:ccopen}: whenever points $p,q$ of a $1$-hyperball of~$\HH^n$ are distance $\geq 1$ apart, the ball of radius~$\alpha$ centered at the midpoint of $[p,q]$ is also contained in the $1$-hyperball.
By a similar argument to the proof of Claim~\ref{cla:ccopen}, the Delaunay cellulation $\Delta$ of $\Conv{X}$ with respect to~$X$ is locally finite, with all cells equal to compact polyhedra of diameter $\leq 1$.
It remains to make $\Delta$ finite modulo $j(\Gamma_0)$ by modifying it inside each cusp.
For this purpose, we choose $X$ carefully. 

Let $H_1,\dots,H_c$ be open horoballs of~$\HH^n$, centered at points $\xi_1,\dots,\xi_c\in\partial_{\infty}\HH^n$, whose images in $j(\Gamma_0)\backslash\HH^n$ are disjoint and intersect the convex core in standard cusp regions, representing all the cusps.
We take these horoballs at distance $>2$ from each other in $j(\Gamma_0)\backslash\HH^n$, and small enough so that the conclusions of Lemma~\ref{lem:convexcusp} are satisfied.
Choose the $j(\Gamma_0)$-invariant, locally finite set~$X$ in general position subject to the following constraints:
\begin{itemize}
 \item[($\ast$)] $X\smallsetminus\bigcup_{i=1}^c \partial H_i$ stays at distance $\geq \alpha'$ from $\partial\mathcal{N}$ and from each $\partial H_i$, for some $\alpha'\in (0,\alpha)$ to be determined below;
  \item[($\ast\ast$)] for any $1\leq i\leq c$, the set $X\cap\partial H_i$ intersects any ball of~$\HH^n$ of radius $\alpha'/2$ centered at a point of $\partial H_i\cap\mathcal{N}$;
  \item[($\ast\!\ast\!\ast$)] if the stabilizer of $H_i$ has rank $n-2$, then $X$ intersects any Euclidean ball of radius $\alpha'/8$ in the boundary of $\partial H_i\cap\mathcal{N}$ in $\partial H_i\simeq\R^{n-1}$, while all other points of $X$ in $\partial H_i\cap\mathcal{N}$ are at distance $\geq \alpha'/4$ from the boundary of $\partial H_i\cap\mathcal{N}$ (which by Lemma~\ref{lem:convexcusp} consists of two parallel $(n-2)$-dimensional Euclidean hyperplanes of $\partial H_i$).
\end{itemize}
Qualitatively, this implies $X$ is especially concentrated on the horospheres $\partial H_i$ and even more on $\partial \mathcal{N} \cap \partial H_i$, but with a little buffer around these high-concentration regions.

Consider the Delaunay cellulation $\Delta$ of $\Conv{X}$ with respect to such a set~$X$.
Suppose two vertices $x,y$ of a given cell of~$\Delta$ (inscribed in a hypersurface bounding an open ball, horoball, or hyperball $B$ of~$\HH^n$ disjoint from~$X$) lie on opposite sides of one of the horospheres $\partial H_i$.
By ($\ast$), the points $x$ and~$y$ lie at distance $\geq\alpha'$ from $\partial\mathcal{N}$, hence so does the intersection point $\{z\}=[x,y]\cap\partial H_i$.
But at least one half of the ball of radius $\alpha'$ centered at~$z$ is contained in~$B$, hence $B\cap X\neq\emptyset$ by ($\ast\ast$): impossible.
Therefore the given cell of~$\Delta$ either has all its vertices in the closure of $H_i$, or has all its vertices in $\HH^n\smallsetminus H_i$. We can thus partition the cells of~$\Delta$ (of any dimension) into
\begin{itemize}                                                                                                                                                                                                                                                                                                                  \item \emph{interface cells}, with all their vertices in some $j(\gamma)\cdot\partial H_i$;
\item \emph{thin-part cells}, with all their vertices in the closure of some $j(\gamma)\cdot H_i$ (not all in the horosphere $j(\gamma)\cdot\partial H_i$);
\item \emph{thick-part cells}, with all their vertices in $\HH^n\smallsetminus j(\Gamma_0)\cdot\bigcup_{i=1}^c H_i$ (not all in the horospheres $j(\gamma)\cdot\partial H_i$).                                                                                                                                                                                                                                                                                                                   \end{itemize}
Consider the \emph{Euclidean} Delaunay cellulation $\Delta_{\partial H_i}$ of the Euclidean convex hull of $X\cap\partial H_i$ in $\partial H_i$, with respect to $X\cap\partial H_i$, in the classical sense (see~\cite{del34}): by definition, any cell of $\Delta_{\partial H_i}$ is inscribed in some Euclidean sphere bounding an open Euclidean ball of $\partial H_i$ disjoint from $X\cap\partial H_i$.

\begin{claim}
The geodesic straightenings of the  Euclidean triangulations $\Delta_{\partial H_i}$ give exactly the interface cells.
\end{claim}

\begin{proof}
For any interface cell $W$ of~$\Delta$, the projection of $W$ to $\partial H_i$ is a cell of $\Delta_{\partial H_i}$.
Indeed, if $W$ is inscribed in an open ball, horoball, or hyperball $B$ of~$\HH^n$ disjoint from~$X$, then the projection of~$W$ is inscribed in $B\cap\partial H_i$, which is a Euclidean ball (or half-plane) of $\partial H_i$ disjoint from~$X$.

Conversely, for any cell $W_E$ of $\Delta_{\partial H_i}$, the geodesic straightening of~$W_E$ is contained in~$\Delta$ as an interface cell.
Indeed, suppose $W_E$ is inscribed in an open Euclidean ball $B_E$ of $\partial H_i$, disjoint from $X\cap\partial H_i$, and centered in $\partial H_i\cap\mathcal{N}$.
By ($\ast\ast$), the hyperbolic ball $B$ concentric to~$B_E$ such that $B\cap\partial H_i=B_E$ has radius $\leq\alpha'/2$, hence is disjoint from $X$ by ($\ast$), which means that the geodesic straightening of~$W_E$ is contained in~$\Delta$.
Therefore, we just need to see that $W_E$ is always inscribed in such a ball~$B_E$.

If $H_i$ has rank $n-1$, this follows from the fact that $\partial H_i \cap \mathcal{N}=\partial H_i$ by Lemma~\ref{lem:convexcusp}.
If $H_i$ has rank $n-2$, this follows from ($\ast\!\ast\!\ast$): if $W_E$ is inscribed in a Euclidean open ball $B'_E$ of $\partial H_i$, disjoint from $X\cap\partial H_i$, and centered outside $\partial H_i\cap\mathcal{N}$, then $X\cap\partial B'_E$ is contained in a boundary component $P$ of $\mathcal{N}\cap\partial H_i$ (an $(n-2)$-dimensional Euclidean hyperplane by Lemma~\ref{lem:convexcusp}) and $W_E$ is inscribed in another ball $B_E$ of $\partial H_i$, still disjoint from~$X$, but centered at the projection of $p$ to~$P$.
\end{proof}

In fact, ($\ast\!\ast\!\ast$) also implies that the Euclidean Delaunay cellulation $\Delta_P$ of $P$ with respect to $X\cap P$ is contained in $\Delta_{\partial H_i}$.
Up to taking the points of~$X$ in generic position in $P$, in $\partial H_i$, and in~$\HH^n$, we can make sure that all three Delaunay cellulations $\Delta_P\subset\Delta_{\partial H_i}\subset\Delta$ (where the last inclusion holds up to geodesic straigthening) are in fact triangulations.

It follows from the comparison between hyperbolic and Euclidean Delaunay cellulations above that any geodesic ray escaping to the point at infinity $\xi_i\in\partial_{\infty}\HH^n$ of the cusp crosses the interface cells at most once.
Therefore the thin-part cells form a star-shaped domain relative to $\xi_i$.
We now modify $\Delta$ by removing all thin-part simplices and coning the interface simplices of $\Delta_{\partial H_i}$ off to~$\xi_i$.
We repeat for each cusp (these operations do not interfere, since the distance between two horoballs $H_i$ is larger than twice the diameter of any cell), and still denote by $\Delta$ the resulting complex (see Figure~\ref{fig:W}): it is now finite modulo $j(\Gamma_0)$.

\begin{figure}[h!]
\begin{center}
\labellist
\small\hair 2pt
\pinlabel{$x_0$} at 188 65
\pinlabel{$x_1$} at 207 77
\pinlabel{$x_2$} at 225 86
\pinlabel{$y_0$} at 35 64
\pinlabel{$y_1$} at 55 77
\pinlabel{$\partial H_i$} at 20 50
\pinlabel{$\partial_{\infty}\HH^3$} at 188 6
\pinlabel{$j(S')$} at 140 15
\pinlabel{$\partial\mathcal{N}$} at 20 119
\endlabellist
\includegraphics[width=11cm]{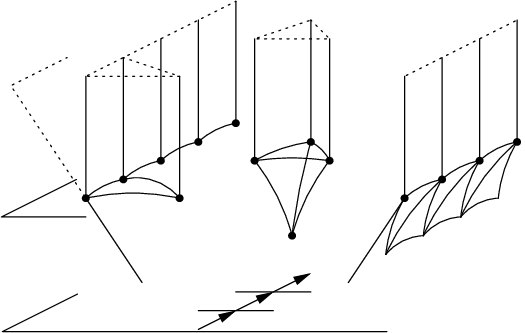}
\caption{The triangulation $\Delta$ in a rank-1 cusp bounded by a horosphere $\partial H_i$ centered at~$\infty$ in the upper half-space model of~$\HH^3$. At great height, the uniform neighborhood $\mathcal{N}$ of the convex core is bounded by just two oblique Euclidean planes. To simplify the picture, we have chosen $X$ to intersect each boundary line $P$ of $\mathcal{N}\cap \partial H_i$ in only one $j(S')$-orbit, $(x_s)_{s\in\Z}$ or $(y_s)_{s\in\Z}$. Since $j(S')$ is unipotent, the triangles $(\infty,x_s,x_{s+1})$ are coplanar. In the center we showed a thick-part tetrahedron and a thin-part tetrahedron (after coning off) which share an interface triangle. }
\label{fig:W}
\end{center}
\end{figure}

To complete the proof of Claim~\ref{cla:gfopen}, we must check that the new complex $\Delta$ is still convex.
This is clear at the cusps of rank $n-1$, since the corresponding $H_i$ satisfy $\mathcal{N}\cap H_i=H_i$ by Lemma~\ref{lem:convexcusp}.
At a cusp of rank $n-2$, above the interface $\Delta_{\partial H_i}$ (which is convex in $\partial H_i$ by the above discussion), the boundary of $\Delta$ consists of two geodesic hyperplanes tangent at infinity (by Lemma~\ref{lem:convexcusp}), and is therefore convex.
At the boundary of $\Delta_{\partial H_i}$, dihedral angles are convex because they already were before removal of the thin simplices.
This completes the proof of Claim~\ref{cla:gfopen}.

We now deduce Proposition~\ref{prop:rem-gfopen} from Claim~\ref{cla:gfopen}.
As above, let $H_1,\dots,H_c$ be horoballs of~$\HH^n$, centered at points $\xi_1,\dots,\xi_c\in\partial_{\infty}\HH^n$, whose images in $j(\Gamma_0)\backslash\HH^n$ are disjoint and intersect the convex core in standard cusp regions, representing all the cusps.
Let $p_1,\dots,p_r\in\HH^n$  be orbit representatives of the vertices of~$\Delta$ lying in~$\HH^n$. 
For $1\leq i\leq c$ and $k\in\N^{\ast}$, let $\xi_i^k\in\nolinebreak\partial_{\infty}\HH^n$ be the fixed point of $j_k(S_i)$, where $S_i$ is the stabilizer in~$\Gamma_0$ of $\xi_i$ under~$j$. 
Since converging parabolic elements have converging fixed points, $(\xi_i^k)_{k\in\N^{\ast}}$ converges to $\xi_i$ for all $1\leq i\leq c$.
We can thus find horoballs $H_i^k$ centered at~$\xi_i^k$ that converge to~$H_i$.
Whenever the corresponding cusp has rank $n-2$, the direction of the $j_k(S_i)$-invariant $(n-2)$-planes in $\partial H_i^k$ converges to the direction of the $j(S_i)$-invariant $(n-2)$-planes in $\partial H_i$.
For $1\leq i\leq r$, we also choose a sequence $(p_i^k)_{k\in\N^{\ast}}$ of points of~$\HH^n$, converging to~$p_i$, such that  if $[j(\gamma)\cdot p_i,j(\gamma')\cdot p_{i'}]$ is a boundary edge of $\Delta_{\partial H_i}$ (such as $[x_0, x_1]$ in Figure~\ref{fig:W}), then $j_k(\gamma)\cdot p_i^k$ and $j_k(\gamma')\cdot p_{i'}^k$ belong to a horocycle of $\partial H_i^k$ contained in some $j_k(S_i)$-stable $(n-2)$-plane of $\partial H_i^k$.
(Inside each boundary component of $j(S_i)\backslash \Delta_{\partial H_i}$, it is enough to enforce this condition over boundary edges that form a spanning tree in the quotient.)

The simplices spanned by the $j_k(\Gamma_0)\cdot p_i^k$ and $j_k(\Gamma_0)\cdot\xi_i^k$, following the combinatorics of~$\Delta$, still locally form a triangulation for large~$k$, because there are only finitely many orbits of simplices to check.
It remains to check that the $j_k(\Gamma_0)$-invariant collection $\Delta_k$ of such simplices triangulates a convex region. 
This can be ensured locally, at every codimension-2 face $W$ contained in the boundary of~$\Delta$.
If $W$ is compact, then the dihedral angle of $\Delta_k$ at $W$ goes to that of~$\Delta$, which is strictly convex. 
If $W$ has an ideal vertex~$\xi_i$, then $\partial\Delta$ is flat at~$W$ by Claim~\ref{cla:gfopen}, and $\partial\Delta_k$ is flat by choice of the~$p_i^k$.
Therefore $\Delta_k$ triangulates a convex region, which necessarily contains the convex core of $j_k(\Gamma_0)\backslash\HH^n$. 
In particular, $j_k(\Gamma_0)$ is still geometrically finite for large~$k$ (in the absence of torsion, the quotient hyperbolic manifolds are in fact homeomorphic since their convex cores admit combinatorially identical triangulations).
For the compact set $\mathcal{C}$ of Proposition~\ref{prop:rem-gfopen}, we can take a neighborhood of a union of orbit representatives of the compact simplices of~$\Delta$.
To bound injectivity radii away from~$0$, we argue as in the convex cocompact case, but in restriction to thick-part simplices only.
\end{proof}

\section{Open questions}\label{sec:questions}

Here we collect a few open questions, organized by themes; some of them were already raised in the core of the paper.

\subsection{General theory of extension of Lipschitz maps in $\HH^2$} \label{sec:questionlip}

Does there exist a function $F : (0,1)\rightarrow (0,1)$ such that for any compact subset $K$ of~$\HH^2$ and any Lipschitz map $\varphi : K\rightarrow\HH^2$ with $\Lip(\varphi)<1$, there is an extension $f : \HH^2\rightarrow\HH^2$ of~$\varphi$ with $\Lip(f)\leq F(\Lip(\varphi))$?
By controlling the sizes of the neigborhoods $\mathcal{U}_p$ in the proofs of Proposition~\ref{prop:KirszbraunC<1} and Lemma~\ref{lem:extend-neighb-p}, it is possible to deal with the case where a bound on the diameter of~$K$ has been fixed a priori.
An encouraging sign for unbounded~$K$ is that in Example~\ref{ex:K=3points}, where $K$ consists of three equidistant points, $C_{K,\varphi}(j,\rho)=\Lip(\varphi)+o(1)$ as the diameter of $K$ goes to infinity with $\Lip(\varphi)\in(0,1)$ fixed.

Fix a compact subset $K$ of~$\HH^2$ and a Lipschitz map $\varphi : K\rightarrow\HH^2$.
Is it possible to find an extension $f : \HH^2\rightarrow\HH^2$ of~$\varphi$ with minimal Lipschitz constant $C_{K,\varphi}(j,\rho)$, which is optimal in the sense of Definition~\ref{def:relstretchlocus} and satisfies $\Lip_p(f)=\Lip_p(\varphi)$ for all points $p\in K$ outside the relative stretch locus $E_{K,\varphi}(j,\rho)$?

Under the same assumptions, if $C:=C_{K,\varphi}(j,\rho)<1$, is it true that for any $p\in E_{K,\varphi}(j,\rho)\smallsetminus K$, there exists a point $q\neq p$ such that $[p,q]$ is $C$-stretched, \ie $d(f(p),f(q))=Cd(p,q)$?
By definition of the relative stretch locus, some segments near $p$ are nearly $C$-stretched, but it is not clear whether we can take $p$ as an endpoint.

\subsection{Geometrically infinite representations $j$ in dimension $n=3$} \label{sec:geom-infinite}

Does Theorem~\ref{thm:adm} hold for finitely generated $\Gamma_0$ but geometrically infinite~$j$?
To prove this in dimension~$3$, using the Ending Lamination Classification \cite{bcm12}, one avenue would be to extend Theorem~\ref{thm:lamin} in a way that somehow allows the stretch locus $E(j,\rho)$ to be an ending lamination. 
One would also need to prove a good quantitative rigidity statement for infinite ends: at least, that if two geometrically infinite manifolds $j(\Gamma_0)\backslash\HH^3$ and $j'(\Gamma_0)\backslash\HH^3$ have a common ending lamination, then $|\mu(j(\gamma_k))-\mu(j'(\gamma_k))|$ is bounded for some appropriate sequence $(\gamma_k)_{k\in\N}$ of elements of~$\Gamma_0$ whose associated loops go deeper and deeper into the common end.
(Here $\mu :\nolinebreak G\rightarrow\nolinebreak\R_+$ is the Cartan projection of \eqref{eqn:defmu}.)

\subsection{Nonreductive representations~$\rho$}

For $(j,\rho)\in\Hom(\Gamma_0,G)^2$ with $j$ geometrically finite and $\rho$ reductive, we know (Lemma~\ref{lem:Fnonempty}) that the infimum $C(j,\rho)$ of Lipschitz constants for $(j,\rho)$-equivariant maps $\HH^n\rightarrow\HH^n$ is always achieved (\ie $\F^{j,\rho}\neq\emptyset$).
Is it still always achieved for nonreductive~$\rho$ when $C(j,\rho)\geq 1$?
When $C(j,\rho)<1$, we know that it may or may not be achieved: see the examples in Sections \ref{ex:nonreductive1} and~\ref{ex:nonreductive2}.

\subsection{Behavior in the cusps for equivariant maps with minimal Lipschitz constant} \label{app:cusps}

Is there always a $(j,\rho)$-equivariant, $C(j,\rho)$-Lipschitz map that is constant in each deteriorating cusp? 
The answer is yes for $C(j,\rho)\geq 1$ (Proposition~\ref{prop:goodincusps}), but for $C(j,\rho)<\nolinebreak 1$ we do not even know if the stretch locus $E(j,\rho)$ has a compact image in $j(\Gamma_0)\backslash\HH^n$.
If it does, then one might ask for a uniform bound: do Proposition~\ref{prop:margulis} and Corollary~\ref{cor:uniform-thick} extend to $C(j,\rho)<1$?

Suppose that $C(j,\rho)=1$ and that $\rho$ is \emph{not} cusp-deteriorating.
If the stretch locus $E(j,\rho)$ is nonempty, does it contain a geodesic lamination whose image in $j(\Gamma_0)\backslash\HH^n$ is compact?

\subsection{Generalizing the Thurston metric}\label{subsec:dTh-questions}

To what extent can the $2$-dimen\-sional theory of the Thurston (asymmetric) metric on Teichm\"uller space be transposed to higher dimension?
In particular, how do the two asymmetric metrics $d_{\mathrm{Th}}$ and $d'_{\mathrm{Th}}$ of Section~\ref{subsec:Thurstonmetric} compare on the deformation space $\T(M)$ of a geometrically finite hyperbolic manifold~$M$?

The topology and geometry of $\T(M)$, or of level sets of the critical exponent $\delta : \T(M)\to (0,n-1]$, seem difficult but interesting to study.
Are any two points connected by a $d_{\mathrm{Th}}$-geodesic?
Is there an analogue of stretch paths (particular geodesics introduced in \cite{thu86})?
Is it possible to relate infinitesimal $d_{\mathrm{Th}}$-balls to the space of projective measured laminations~as~in~\cite{thu86}?

\subsection{Chain recurrence of the stretch locus}\label{subsec:ch-rec}

In dimension $n\geq 3$, when $C(j,\rho)>1$, does the stretch locus $E(j,\rho)$ have a chain-recurrence property as in Proposition~\ref{prop:chainrec}, in the sense that any point in the geodesic lamination $j(\Gamma_0)\backslash E(j,\rho)$ sits on a closed quasi-leaf?
Since there is no classification of geodesic laminations (Fact~\ref{fact:classif-lamin}) available in higher dimension, quasi-leaves can be generalized in at least two ways: either with a bound $\varepsilon\rightarrow 0$ on the \emph{total} size of all jumps from one leaf to the next, or (weaker) on the size of each jump separately.
It is not clear whether the two definitions coincide, even under constraints such as the conclusion of Lemma~\ref{lem:Esimple}.

In dimension $n\geq 2$, does chain recurrence, suitably defined, extend to the convex strata of Lemma~\ref{lem:1StretchedLam} when $C(j,\rho)=1$?

\subsection{Semicontinuity of the stretch locus}

Is the stretch locus map $(j,\rho)\mapsto E(j,\rho)$ upper semicontinuous for the Hausdorff topology when $C(j,\rho)$ is arbitrary, in arbitrary dimension~$n$?
Proposition~\ref{prop:semicontE} answers this question affirmatively in dimension $n=2$ for $C(j,\rho)>1$; the case $C(j,\rho)=1$ might allow for a proof along the same lines, using chain recurrence (suitably generalized as in Section~\ref{subsec:ch-rec} above).

\subsection{Graminations}

If $C(j,\rho)<1$ and $\F^{j,\rho}\neq\emptyset$, is the stretch locus $E(j,\rho)$ generically a trivalent geodesic tree (as in the example of Section~\ref{ex:C'neqCnoncompact})?
Is it, in full generality, what in Conjecture~\ref{conj:gramination} we called a \emph{gramination}, namely the union of a closed discrete set~$F$ and of a lamination in the complement of~$F$ (with leaves possibly terminating on~$F$)?

\vspace{0.5cm}

\end{document}